\newcommand\reallywidehat[1]{%
\savestack{\tmpbox}{\stretchto{%
  \scaleto{%
    \scalerel*[\widthof{\ensuremath{#1}}]{\kern-.6pt\bigwedge\kern-.6pt}%
    {\rule[-\textheight/2]{1ex}{\textheight}}%WIDTH-LIMITED BIG WEDGE
  }{\textheight}% 
}{0.5ex}}%
\stackon[1pt]{#1}{\tmpbox}%
}
\title{Rigid cohomology of locally noetherian schemes \\ Part 2: Crystals}
\author{Bernard Le Stum}
\date{Version of \today}
\newtheorem{thm}{Theorem}[section]
\newtheorem{prop}[thm] {Proposition}
\newtheorem{cor}[thm] {Corollary}
\newtheorem{lem}[thm] {Lemma}
\theoremstyle{definition}
\newtheorem{dfn}[thm] {Definition}
\newenvironment{xmp}[1][Example]{\begin{trivlist} \item[\hskip \labelsep {\bfseries #1}]}{\end{trivlist}}
\newcommand{\Addresses}{{% additional braces for segregating \footnotesize
 \bigskip
 \footnotesize

Bernard Le Stum, \textsc{IRMAR, Université de Rennes,
Campus de Beaulieu, 35042 Rennes cedex, France}\par\nopagebreak
\texttt{bernard.le-stum@univ-rennes1.fr}

}}
\begin{document}

\maketitle
\bigskip

\begin{abstract}
We introduce the general notions of an overconvergent site and a constructible crystal on an overconvergent site.
We show that if $V$ is a geometric materialization of a locally noetherian formal scheme $X$ over an analytic space $O$ defined over $\mathbb Q$, then the category of constructible crystals on $X/O$ is equivalent to the category of constructible modules endowed with an overconvergent connection on the tube $\,]X[_V$ of $X$ in $V$.
We also show that the cohomology of a constructible crystal is then isomorphic to the de Rham cohomology of its realization on the tube $\,]X[_V$.
This is a generalization of rigid cohomology.
Finally, we prove universal cohomological descent and universal effective descent with respect to  constructible crystals with respect to the $h$-topology.
This encompass flat and proper descent and generalizes all previous descent results in rigid cohomology.
\end{abstract}

\tableofcontents

\addcontentsline{toc}{section}{Introduction}
\section*{Introduction}

\subsection*{History}

The guideline for this work goes back to the milestone \cite{Grothendieck68} of Alexander Grothendieck and the thesis \cite{Berthelot74} of his student Pierre Berthelot (see also \cite{BerthelotOgus78}) who, incidentally, happened to be my thesis advisor.
The strategy consists in developing a sufficiently general theory of crystals and their cohomology.
One then shows that this corresponds locally to modules endowed with an integrable connection and their de Rham cohomology.

The objects we are interested in (overconvergent isocrystals and rigid cohomology) were introduced by Berthelot in the early 80's (\cite{Berthelot96c*} and \cite{Berthelot83*} - see also \cite{LeStum07}).
In \cite{Ogus90}, Arthur Ogus gave a crystalline interpretation of convergent cohomology, which coincides with rigid cohomology on proper (possibly singular) varieties.
Recently, Dingxin Zhang showed in \cite{Zhang19} that Ogus'method still works with weakly formal schemes and dagger rigid spaces, allowing him to treat open varieties as well and recover rigid cohomology in general.
In the meantime, in \cite{LeStum11}, I already explained how one can describe rigid cohomology in the spirit of crystalline cohomology.
The present article is mostly an enhancement of \cite{LeStum11} (we use adic spaces instead of Berkovich analytic spaces).

I also introduced in \cite{LeStum16} the notion of a constructible isocrystal that generalizes the notion of an overconvergent isocrystal (see also \cite{LeStum14} for the one-dimensional case).
With respect to overconvergent isocrystals, they play the role that constructible sheaves play with respect to lisse sheaves.
Our purpose in this article is to give a treatment as general as possible of the theory of constructible isocrystals and their cohomology, show that one recovers locally a de Rham theory and prove in the end both cohomological and effective descent with respect to the $h$-topology.

\subsection*{Overview}

The basic notion here is that of an \emph{overconvergent space}. This is couple made of a formal scheme $X$ and an adic space $V$ but they are not unrelated.
More precisely, there should exist another formal scheme $P$ such that $X$ embeds in $P$ and $V$ maps to the adic space $P^{\mathrm{ad}}$ associated to $P$.
We turn them into a category called the \emph{absolute overconvergent site}.
The topology on this site is simply the topology inherited from the topology of the (various) adic spaces $V$.

We will make some noetherian hypothesis that are necessary for the very definition of $P^{\mathrm{ad}}$.
However, giving a map from $V$ to $P^{\mathrm{ad}}$ is equivalent to giving a map from the integral version $V^+$ of $V$ to $P$.
It is therefore possible to totally remove the noetherian hypothesis in the definition of the overconvergent site.
In particular, we would then be able to treat the classical situation when $V$ is an analytic variety defined over a general non-archimedean field $K$ (for a non discrete valuation).
I decided not to do that mostly because the geometry of the formal scheme $P$ reflects beautifully into the associated adic space $P^{\mathrm{ad}}$ and this gives a very nice bridge from the algebraic to the analytic world.

Let us give ourselves an (analytic) overconvergent space $(C,O)$ and a formal scheme $X$ (locally formally of finite type) over $C$.
We are interested in defining the notion of a \emph{constructible crystal} $E$ on $X$ over $O$ and compute its cohomology.
This is related to the classical situation as follows.
We would fix a non archimedean field $K$ with valuation ring (assumed to be discrete here) $\mathcal V$ and residue field $k$.
We would choose a formal scheme (locally finitely presented) $S$ over $\mathcal V$ and let $C := S_k$ be its special fiber (which is an algebraic variety over $k$) and $O := S_K$ be its generic fiber (which is an analytic variety over $K$).
We would then let $X$ be an algebraic variety over $C$.
In this situation, there exists a notion of overconvergent isocrystal on $X/O$ and this corresponds in our theory to a constructible crystal which is finitely presented.
One may also consider rigid cohomology for this overconvergent isocrystal and this will be identical to the cohomology of our constructible crystal.

In order to define our (constructible) crystals on $X/O$ ($C$ being understood from the context) and their cohomology, we proceed exactly as in Berthelot's thesis.
We define a ringed site $(X/O)^\dagger$ and a \emph{crystal} will be a module on this ringed site which is rigidified by requiring some transition maps to be bijective.
It is said to be \emph{constructible} if there exists a locally finite locally closed covering on which it becomes finitely presented.
There exists a morphism from $(X/O)^\dagger$ to the \emph{tube} of $C$ in $O$ (which is identical to $O$ in the classical situation) and the cohomology of $E$ is simply derived pushforward along this map.

For more flexibility and in order to allow some generalization, I will introduce the general notion of \emph{an overconvergent site}: this is a fibered category over the absolute overconvergent site.
It is endowed with the inherited topology.
As an elementary example, we may consider the absolute overconvergent site itself, or the absolute \emph{analytic} overconvergent site (whose objects are overconvergent spaces $(X,V)$ with $V$ analytic) or the overconvergent site represented by some overconvergent space $(X,V)$ (whose objects are simply morphisms $(Y,W) \to (X,V)$).
More interesting, with $(C,O)$ and $X$ as above, the objects of the overconvergent site $(X/O)^\dagger$ are morphisms $(U,V) \to (C,O)$ endowed with a factorization of the first map through $X$.

Following Berthelot, I proved in \cite{LeStum17*} a \emph{strong fibration theorem} that has the following immediate consequence: if $V$ is a \emph{geometric materialization} of $X$, then the corresponding morphism $(X,V) \to (X/O)^\dagger$ is a local epimorphism, or a covering if you prefer (theorem \ref{strfib}).
\emph{Geometric materialization} means that $V$ comes from a morphism of formal schemes which is formally smooth and partially proper around $X$.
It is then possible to describe the crystals on $X/O$ and their cohomology by working directly on $(X,V)$.
More precisely, assuming $O$ is analytic and defined over $\mathbb Q$, one shows that the category of constructible crystals on $X/O$ is equivalent to the category of modules endowed with an overconvergent integrable connection on the tube of $X$ (theorem \ref{crismic}).
Moreover, the cohomology of the crystal coincides with the de Rham cohomology of the module (corollary \ref{cohdR}).

We prove in the end that constructible crystals satisfy \emph{total descent} 
(we mean universally effective descent and universally cohomological descent) for the $h$-topology.
The \emph{$h$-topology} is generated by zariski open coverings and partially proper surjective maps.
This applies in particular to faithfully flat or proper dominant maps.
As a consequence, we recover classical effective descent (due to Christopher Lazda) an cohomological descent (due to Bruno Chiarellotto and Nobuo Tsuzuki) for overconvergent isocrystals.

\subsection*{Content}

In the first section, we give the definition of an overconvergent site and many examples.
In particular, we show that even if we mostly concentrate afterwards on the case of an overconvergent site of the form $(X/O)^\dagger$ (as explained above), the theory applies to algebraic stacks as well and we can also consider the convergent or even the partially overconvergent situation.
We then derive from the strong fibration theorem that a geometric materialization provides a local epimorphism as explained above.

In the second section, we study (abelian) sheaves on an overconvergent site $T$.
We start with the case of a representable site $T := (X,V)$ and consider its relation with the site of the tube of $X$ in $V$ (for the induced topology).
In general, one can always identify an object of $T$ with a morphism of overconvergent site $(X,V) \to T$ and any sheaf $E$ on $T$ will provide by pullback a family of realizations $E_V$ on the tubes.
As one may expect for such a theory, a sheaf on $T$ is completely understood from its realizations $E_V$ (and the transition maps).
Moreover, if $T' \to T$ is a morphism of overconvergent sites, derived pushforward may also be recovered from the cohomology on the various tubes of the overconvergent spaces $(X,V)$ that are defined over $T$.
We end this section with a fine study of the behavior of sheaves along a formal embedding $\gamma : Y \hookrightarrow X$.
In particular, in the analytic case, we introduce the notion of an \emph{overconvergent direct image} that replaces usual direct image (which does not preserve crystals in general).

In section three, we first introduce the structural ring $\mathcal O^\dagger_T$ of an overconvergent site $T$.
Its realization on the tube of an overconvergent space $(X,V)$ is the restriction $\mathcal O^\dagger_V$ to the tube of $X$ of the structural sheaf $\mathcal O_V$ of the adic space $V$.
This allows us to define as usual a crystal as an $\mathcal O^\dagger_T$-module with bijective linear transition maps.
As a first elementary example, the category of crystals on a representable site $(X,V)$ is equivalent to the category of $\mathcal O_V^\dagger$-modules.
An important issue in the theory concerns preservation of crystals under derived pushforward.
We show that the question reduces to a base change condition which should be easier to investigate.
Finally, we show that overconvergent direct image under a formal embedding preserves crystals and this provides an adjoint for usual inverse image (on crystals).

We focus in section four on the case of a formal scheme $X$ defined over some overconvergent space $(C,O)$ as above.
At some point, we will consider a geometric materialization $V$ but we start with the case of a general morphism $(X,V) \to (C,O)$.
Such a morphism always factors through $(X/O)^\dagger$ and we may consider the sieve $(X,V/O)^\dagger \subset (X/O)^\dagger$ generated by the image of $(X,V)$.
When $V$ is a geometric materialization, this is a covering sieve so that the properties of $(X,V/O)^\dagger$ will transfer directly to $(X/O)^\dagger$.
In general, assuming that $V$ is flat and locally of finite type over $O$, one can define the notions of an \emph{(overconvergent) stratification} or an \emph{(overconvergent) integrable connection} on an $\mathcal O_V^\dagger$-module.
They fit into a diagram
\[
\xymatrix{
\mathrm{Cris}(X/O)^\dagger \ar[r] \ar[d]^{(\star)} &  \mathrm{MIC}(X,V/O)^\dagger \ar@{^{(}->}[r] &\mathrm{MIC}(X,V/O)
\\ \mathrm{Cris}(X,V/O)^\dagger \ar[r]^-\simeq &  \mathrm{Strat}(X,V/O)^\dagger \ar[r]^{(1)} \ar@{->>}[u] &\mathrm{Strat}(X,V/O) \ar[u]^{(\star)}.}
\]
The $\star$ indicates that this is an equivalence when $V$ is a geometric materialization and $O$ is defined over $\mathbb Q$.
In order to go further, we also introduce the notion of a \emph{derived linearization} that may be seen as a sort of an inverse for realization of crystals.
More precisely, to any $\mathcal O_V^\dagger$-module $\mathcal F$ endowed with an integrable connection, one associates a complex $\mathrm RL_{\mathrm{dR}}\mathcal F$ on $(X,V/O)^\dagger$.
When $E$ is a crystal on $(X,V/O)^\dagger$, there exists an adjunction map $E \to \mathrm RL_{\mathrm{dR}}E_V$ and $E$ is called a \emph{de Rham crystal} when this is an isomorphism.
In this case, the cohomology of $E$ coincides with the de Rham cohomology of $E_V$.

In section five, we finally introduce the finiteness conditions that are necessary to go further.
We first study the cohomology of an (abelian) sheaf on the tube of a formal scheme $X$ in an analytic adic space $V$.
The problem is that, even when $V$ is quasi-compact, the tube need not be.
We show however that the tube is always paracompact and that cohomology can be computed on small neighborhoods.
As a first consequence, we prove that $\mathcal O_V^\dagger$ is always a coherent ring.
We then introduce the notion of a constructible crystal.
This is an enhanced version of the notion of a finitely presented crystal since we also allow any overconvergent direct image of a finitely presented crystal along a formal embedding.
There also exists a notion of constructible module on a tube and we prove the fundamental result that the map $(1)$ above is fully faithful on constructible modules endowed with an overconvergent stratification (in the case of a geometric materialization).

We prove our main statements in section six.
As a direct consequence of the previous results, we show that the category of constructible crystals on $(X/O)^\dagger$ is equivalent to the category of constructible modules on the tube of $X$ endowed with an overconvergent integrable connection in the case of a geometric materialization $V$ defined over $\mathbb Q$.
Next, we prove the overconvergent Poincaré lemma and deduce that a constructible crystal $E$ on $X/O$ is de Rham on $V$.
In particular, the cohomology of $E$ coincides with the de Rham cohomology of its realization $E_V$.
We end the section with a description of the situation in the Monsky-Washnitzer setting.

In section seven, we prove total descent with respect to  constructible crystals for the $h$-topology.
We follow Tsuzuki's strategy.
We first show that a morphism $(Y,W) \to (X,V)$ of analytic overconvergent spaces satisfies total descent with respect to  constructible crystals when it comes from a finite faithfully flat morphism of formal schemes.
For morphisms of analytic overconvergent sites, we proceed by induction on the dimension.
We first do the case of a proper birational morphism.
Then, we treat the case of a finite surjective map.
One can then deduce the general case from these.

In the appendix, we first review the theory of fibered categories and fix some terminology and notations. In particular, we show that we may always see an object of a fibered category $T$ as a morphism $U \to T$ from a representable fibered category.
Then, we briefly recall how effective descent works.
We spend more time on cohomological descent because the classical theory is not sufficient for us.
The point is that the category of crystals is not a thick subcategory of the category of all modules.
However, it was pointed out by David Zureick-Brown that this condition is not necessary if one follows Brian Conrad's strategy. 
Due to the lack of precise reference, we work out the details. 

\subsection*{Many thanks}

Several parts of this paper were influenced by the conversations that I had with many mathematicians and I want to thank in particular Ahmed Abbes, Tomoyuki Abe, Richard Crew, Veronika Ertl, Kazuhiro Fujiwara, Michel Gros, Fumiharu Kato,  Christopher Lazda, Vincent Mineo-Kleiner, Laurent Moret-Bailly, Matthieu Romagny and Alberto Vezzani.

\subsection*{Notations/conventions:}

\begin{enumerate}
\item We keep all notations/conventions from \cite{LeStum17*}.
We will assume throughout the paper that formal schemes are \emph{locally noetherian} and that adic spaces are \emph{locally of noetherian type} (locally of finite type over a noetherian ring of definition).
\item
We will not worry about our universe which is fixed but may be increased when necessary.
Also, we will not bother much either about infinite complexes that can always be truncated in order to satisfy our needs.
\item
If $\mathcal C$ is a category (resp.\ a site), we denote by $\widehat {\mathcal C}$ (resp.\ $\widetilde {\mathcal C}$) the topos of all presheaves (resp.\ sheaves) on $\mathcal C$.
When $f : \mathcal C' \to \mathcal C$ is a functor (resp.\ a morphism of sites), we should write $\widehat f$ (resp.\ $\widetilde f$) for the corresponding morphism of topoi but we will generally still use the letter $f$ when we believe that there is no ambiguity.
\item When $\mathcal A$ is a sheaf of rings, we will denote by $\mathrm{Mod}(\mathcal A)$ the category of $\mathcal A$-modules.
For a ringed site $T$, we may also write $\mathrm{Mod}(T)$ instead of $\mathrm{Mod}(\mathcal O_T^\dagger)$.

\end{enumerate}

%%%%%%%%%%%%%%%%%%%%%
\section{Overconvergent sites}

In order to treat the case of algebraic stacks as well (see Zureick-Brown's Ph.\ D. thesis \cite{ZureickBrown10}), it will be necessary to work with fibered categories.
Actually, the strategy in \cite{LeStum11} was to work over discrete fibered categories even if we were only interested in algebraic varieties in the end.
An overconvergent site will therefore be defined as a fibered category over the absolute overconvergent site introduced in \cite{LeStum17*}.

%%%%%%%%%%%%%
\subsection{Definition}

We describe here the categories on which our crystals will live.
This is done in a very general way, allowing further generalization, even if most results afterwards will only hold in somehow classical situations.
We start by recalling the context from \cite{LeStum17*}.

\emph{Formal schemes} are always assumed to be locally noetherian.
This is a technical condition that insures that everything will work smoothly.
We define a \emph{formal embedding} $X \hookrightarrow P$ as a locally closed embedding of formal schemes.
They form a category in the obvious way.
When $X$ is closed in $P$, we will consider the \emph{completion} $P^{/X}$ of $P$ along $X$ which is obtained by refining locally the topology on the defining rings.
\emph{Adic spaces} are always assumed to be locally of noetherian type: it means that they are locally defined by Huber (i.e.\ $f$-adic)  rings that are finitely generated over some noetherian ring of definition.
Again, this is a technical condition that could be refined a bit (although general adic spaces are quite hard to deal with in general).
To any (locally noetherian) formal scheme $P$, one can associate an adic space $P^{\mathrm{ad}}$ by locally considering continuous valuations instead of open primes, and there exists a specialization map $\mathrm{sp} : P^{\mathrm{ad}} \to P$.
If $X \hookrightarrow P$ is a formal embedding, then one defines the \emph{tube} of $X$ in $P$ as follows: if $X$ is closed, then we set $\,]X[_{P} := P^{/X,\mathrm{ad}}$ and in general, we use the standard boolean argument.
Note that this is \emph{not} the same thing as the inverse image under specialization.
More precisely, in the analytic case, the tube of an open (resp.\ closed) subset is the closure (resp.\ the interior) of its inverse image under specialization.

An \emph{overconvergent space} is a triple $(X \hookrightarrow P \leftarrow V)$ made of a formal embedding $X \hookrightarrow P$ and a morphism $V \to P^{\mathrm{ad}}$ of adic spaces.
The tube $]X[_V$ of $X$ in $V$ is then the inverse image in $V$ of the tube of $X$ in $P$.
The overconvergent space is said to be \emph{analytic} if $V$ is an analytic (adic) space.
Again overconvergent spaces form a category in the obvious way and a morphism in this category is called a \emph{formal morphism} of overconvergent spaces.
The category $\mathbf{Ad}^\dagger$ of overconvergent spaces is obtained by making invertible \emph{strict neighborhoods}: formal morphisms
\[
(f,v,u) : (X' \hookrightarrow P' \leftarrow V') \to (X \hookrightarrow P \leftarrow V)
\]
such that $f$ is an isomorphism, $v$ is locally noetherian, $u$ is an open immersion and the morphism $]f[_u$ on the tubes is surjective (in which case it is automatically a homeomorphism).
We will then denote by $(X,V)$ the corresponding object because the formal scheme $P$ now plays a secondary role.
We endow this category with the topology coming from the adic side and call the corresponding site (resp.\ topos) the \emph{absolute overconvergent site} (resp.\ \emph{topos}).

We send the reader to the beginning of appendix \ref{fibcat} for a brief review of the theory of fibered categories.

%%%%%%%%%%%%%%%%%%
\begin{dfn}
An \emph{overconvergent site} is a fibered category $T$ over the absolute overconvergent site $\mathbf{Ad}^\dagger$.
A morphism of overconvergent sites is a morphism of fibered categories between two overconvergent sites.
\end{dfn}

There also exists $2$-morphisms and overconvergent sites actually form a $2$-category $\mathbb Fib(\mathbf{Ad}^\dagger)$.
An overconvergent site $T$ is always implicitly endowed with the inherited topology (the coarsest topology making the projection cocontinuous).
A morphism of overconvergent sites $f : T' \to T$ in the above sense is then automatically a morphism of sites.
When $E$ is a sheaf on $T$, we will often simply denote by $E_{|T'} := f^{-1}E$ its inverse image on $T'$.

We will call an overconvergent site $T$ \emph{classic}\footnote{We do not want to say \emph{discrete} because it has too many meanings in mathematics.} if it is fibered in sets (or more generally in equivalence relations - this is equivalent).
Classic overconvergent sites form a genuine subcategory which is equivalent to the category $\widehat {\mathbf{Ad}^\dagger}$ of presheaves on the absolute overconvergent site $\mathbf{Ad}^\dagger$.
In particular, this is a topos.
We are mostly interested in classic overconvergent sites.
I believe however that, even in this case, the language of fibered categories is more natural.

As explained in appendix \ref{fibcat}, we can always consider an overconvergent space $(X,V)$ as a (classic) \emph{representable} overconvergent site.
We will say a little more about this in the examples below.
It is then equivalent to give an object of an overconvergent site $T$ lying over the overconvergent \emph{space} $(X,V)$ or a morphism of overconvergent \emph{sites} $s : (X,V) \to T$.
Moreover, giving a morphism in the overconvergent site $T$ is equivalent to giving a morphism $(f,u) : (X',V') \to (X,V)$ of overconvergent spaces together with a natural transformation
\begin{align*}
\eta : \xymatrix{
 (X',V') \xtwocell[0,2]{}\omit{<2>} \ar[rd]_-{(f,u)} \ar@/^.2cm/[rr]^{s'}
&& T \\
& (X,V). \ar[ru]_-s &}
\end{align*}
We will usually ignore the transformation $\eta$ which really plays a secondary role in the theory.
This is also harmless in the sense that we are mostly interested in classic overconvergent sites in which case $\eta$ does not appear at all (the diagram is simply assumed to be commutative).

\subsection{Examples} \label{fundxmp}

%%%%%%%%%%%

\begin{enumerate}
\item \label{repcas}
We may - and will - consider any object $(X, V)$ of the absolute overconvergent site $\mathbf{Ad}^\dagger$ (resp.\ any morphism in $\mathbf{Ad}^\dagger$) as an overconvergent site (resp.\ a morphism of overconvergent sites).
We may write $\mathbf{Ad}^\dagger_{/(X,V)}$ instead of $(X, V)$ in order to avoid confusion between the site and the space.
An object of the overconvergent site $(X,V)$ is then an overconvergent space $(Y, W)$ endowed with a (structural) morphism $(Y,W) \to (X,V)$ of overconvergent spaces and a morphism in the overconvergent site $(X,V)$ is a morphism of overconvergent spaces which is compatible with the structural morphisms.
This provides a first example of a classic overconvergent site.

In the same way, we will always consider a formal scheme $X$ (resp.\ a morphism of formal schemes) as a fibered category (resp.\ a morphism of fibered categories) over the site $\mathbf{FS}$ of (locally noetherian) formal schemes (with the \emph{coarse} topology).

\item
Let us consider the forgetful functor
\[
\mathbf{Ad}^\dagger \to \mathbf{FS}, \quad (U, V) \mapsto U.
\]
Any fibered category $T$ over $\mathbf{FS}$ will then give rise to an overconvergent site
\[
T^\dagger := \mathbf{Ad}^\dagger \times_{\mathbf{FS}} T.
\]
An object of $T^\dagger$ is a couple made of an overconvergent space $(U, V)$ and a morphism $s \colon U \to T$ of fibered categories (up to an automorphism of $U$).
Note that giving $s$ is equivalent to giving an object of $T$ lying over $U$.

Since we can always consider a formal scheme $X$ as a fibered category over $\mathbf{FS}$, we obtain, as a particular case, the overconvergent site $X^\dagger$.
This is another example of a classic overconvergent site.
An object of $X^\dagger$ is a couple made of an overconvergent space $(U, V)$ and a morphism of formal schemes $s \colon U \to X$.
%In the simplest case, we fall back onto the previous example:\[\mathrm{Spec}(\mathbb Z)^\dagger \simeq (\mathrm{Spec}(\mathbb Z), \mathrm{Spv}(\mathbb Z)).\]

\item
If $(X, V)$ is an overconvergent space, then there exists a natural morphism of overconvergent sites $(X,V) \to X^\dagger$.
It sends an overconvergent space $(Y, W)$ over $(X,V)$ to the couple made of the overconvergent space $(Y, W)$ and the morphism $Y \to X$.
We may denote the \emph{full image} of this morphism by $(X,V)^\dagger$ (which should not be confused with $(X,V)$ itself).
In other words, $(X,V)^\dagger$ is the sieve generated by $(X,V)$ inside $X^\dagger$.
This is a classic overconvergent site again.
An object of $(X,V)^\dagger$ is an overconvergent space $(Y,W)$ together with a \emph{specific} morphism $Y \to X$ that extends to \emph{some} morphism $(Y,W) \to (X,V)$ (only existence is required).

\item \label{mainob}
Assume now that we are given an overconvergent (base) space $(C, O)$ and a morphism of fibered categories $T \to C$ over $\mathbf {FS}$.
Note that $T$ could be an algebraic stack for example.
By functoriality, there exists a morphism of overconvergent sites   $T^\dagger \to C^\dagger$  and we also have at our disposal the morphism $(C,O) \to C^\dagger$.
Then, we set
\[
(T/O)^\dagger := (C,O) \times_{C^\dagger} T^\dagger
\]
(we do not mention $C$ in the notations because it plays an accessory role).
An object of $(T/O)^\dagger$ is an overconvergent space $(U,V)$ over $(C, O)$ together with a \emph{fixed} factorization $U \to T \to C$.
A morphism in $(T/O)^\dagger$ is a morphism of overconvergent spaces which is compatible with the given factorizations.
This $(T/O)^\dagger$ is actually the main object of interest for us (especially when $T$ is representable) and it is a classic overconvergent site when the category $T$ is fibered in sets (for example representable).
When $O = S^{\mathrm{an}}$, we may also write $(T/S)^\dagger$ and when $O = \mathrm{Spa}(R,R^+)$, we may write $(T/R)^\dagger$.
The same convention applies to the examples derived below.

Note that, if $(X,V)$ is an overconvergent space, then $(X,V) = (X/V)^\dagger$ so that our example here is a generalization of example \eqref{repcas}.

\item \label{compbs}
We can mix the last two examples if we are given a morphism of overconvergent spaces $(X,V) \to (C, O)$.
More precisely, in this situation, there exists a canonical morphism $(X, V) \to (X/O)^\dagger$.
We will denote its \emph{full image} by $(X,V/O)^\dagger$.
This classic overconvergent site is the right place to do the computations.
An object of $(X,V/O)^\dagger$ is an overconvergent space $(Y,W)$ over $(C, O)$ together with a \emph{specific} factorization $Y \to X \to C$ that extends to \emph{some} factorization $(Y,W) \to (X,V) \to (C,O)$ (only existence is required).
Beware that the inclusion
\[
(X,V/O)^\dagger \hookrightarrow (C,O) \times_{C^\dagger} (X,V)^\dagger
\]
is strict in general and $(X,V)^\dagger$ should actually be considered as a toy example that will not play any role in the future.

\item There exists a way to include the convergent case into the picture.
An overconvergent space $(X \hookrightarrow P \leftarrow V)$ is said to be \emph{convergent} if $X$ is closed in $P$ (and we may then assume that $\,]X[_{V} =V$).
We will denote by $\mathbf{Ad}^\wedge$ be the full subcategory of $\mathbf{Ad}^\dagger$ consisting of convergent spaces.
This is an overconvergent site.
We can define, if  $T$ is any fibered category over $\mathbf{FS}$ the (over-) convergent site
\[
T^\wedge := \mathbf{Ad}^\wedge \times_{\mathbf{FS}} T
\]
in analogy with what we did above so that $T^\wedge \subset T^\dagger$.
Also, if $(X,V)$ is a convergent space, we may consider the full image $(X,V)^\wedge$ of the morphism $(X,V) \to X^\wedge$ and we have again $(X,V)^\wedge \subset (X,V)^\dagger$.
Given an overconvergent variety $(C,O)$ and a morphism of fibered categories $T \to C$, we may also set
\[
(T/O)^\wedge := (C,O) \times_{C^\dagger} T^\wedge
\]
so that $(T/O)^\wedge \subset (T/O)^\dagger$.
Finally, if we are given a convergent space $(X,V)$ and a morphism $(X,V) \to (C, O)$, we denote by $(X,V/O)^\wedge$ the full image of $(X, V) \to (X/O)^\wedge$ and again $(X,V/O)^\wedge \subset (X,V/O)^\dagger$.
We shall not discuss this situation in the present article but it is closely related to Ogus convergent site in \cite{Ogus90}.

\item The \emph{partially overconvergent} situation also fits into our pattern as we shall now explain.
If we are given a formal embedding $X \hookrightarrow Y$, we can then consider the morphism of overconvergent sites
\[
Y^\dagger \to X^\dagger, \quad (U, V) \mapsto (U \times_{Y} X, V)
\]
and denote by $(X \subset Y)^\dagger$ the full image of $Y^\wedge$ into $X^\dagger$.
An object of $(X \subset Y)^\dagger$ is an overconvergent space $(X' \hookrightarrow P' \leftarrow V')$ together with a morphism $X' \to X$ that has the following property: there exists a left cartesian diagram
\[
\xymatrix{X' \ar@{^{(}->}[r] \ar[d] & Y' \ar[d] \ar@{^{(}->}[r] & P' \\ X \ar@{^{(}->}[r] & Y}
\]
such that $\,]Y'[_{V'}$ is open in $V'$.
On can then define $(X \subset Y/O)^\dagger$ and $(X \subset Y, V/O)^\dagger$ as we did before.
Again, we shall not study this situation at all.
Even if it might sound convenient to rely on this construction for the purpose of localization, it is actually a lot more flexible to work locally on the adic side (see corollary 5.16 of \cite{LeStum17*}.
\item \emph{Analytic} overconvergent spaces form a full \emph{fibered} subcategory $\mathbf{An}^\dagger \subset \mathbf{Ad}^\dagger$: if $f : (Y,W) \to (X,V)$ is any morphism of overconvergent spaces with $(X,V)$ analytic, then $(Y,W)$ is also analytic.
In particular, any fibered category $T$ over $\mathbf{An}^\dagger$ is automatically an overconvergent site.
We will then call $T$ \emph{analytic}.
More generally, if $T$ is any overconvergent site, then we may consider the category
\[
T^{\mathrm{an}} := \mathbf{An}^\dagger \times_{\mathbf{Ad}^\dagger} T
\]
of analytic spaces over $T$ as an analytic overconvergent site.
This applies of course to all the previous examples.
This is only in the case of analytic overconvergent sites that we expect to do anything interesting.
\end{enumerate}

%%%%%%%%%%%%%%%%%
\subsection{Geometric materialization}

An overconvergent site is a very general object, but, unless there exists, at least locally, a \emph{geometric materialization} (definition below), we will feel mostly helpless.
The power of these geometric materializations relies on the fibration theorems of \cite{LeStum17*}.

Recall first from definitions 4.12 and 4.13 of \cite{LeStum17*} that a morphism of formal embeddings $(X \hookrightarrow P)  \to (C \hookrightarrow S)$ is said to be formally smooth (resp.\  partially proper) if there exists a neighborhood $Q$ of $X$ in $P$ (resp.\ a closed subspace $Y$ of $P$ containing $X$) such that the morphism $Q \to S$ (resp.\  $P^{/Y} \to S$) is  formally smooth (resp.\ partially proper).
Recall next from definition 5.17 of \cite{LeStum17*} that a formal morphism of overconvergent spaces
\begin{equation} \label{morphism}
\xymatrix{X \ar@{^{(}->}[r]\ar[d] & P \ar[d]^v & V \ar[l] \ar[d]\\ C \ar@{^{(}->}[r] & S &O.\ar[l]}
\end{equation}
is said to be partially proper (resp.\ formally smooth) if the left hand square is partially proper (resp.\ formally smooth) and the right hand square is \emph{cartesian} in the sense that $V$ is a neighborhood of $X$ in $(v^{\mathrm{ad}})^{-1}(O) := P^{\mathrm{ad}}\times_{S^{\mathrm{ad}}} O$.
We can apply the same process to any property of open (resp.\ closed) nature such as formally \'etale (resp.\ partially finite) for example.

%%%%%%%%%%%%%%
\begin{dfn}
A \emph{geometric materialization} is a formal morphism of \emph{analytic} overconvergent spaces which is partially proper and formally smooth.
\end{dfn}

Recall that both conditions imply, by definition, that the morphism is also right cartesian.
We may simply say that a morphism $(X, V) \to (C,O)$ of analytic overconvergent spaces is a geometric materialization when it can be represented by a formal morphism as in \eqref{morphism} which is a geometric materialization.
In practice, we may even say that $V \to O$ is a geometric materialization of $X \to C$ or that $V$ is a geometric materialization of $X$ over $O$.

%%%%%%%%%
\begin{xmp}
\begin{enumerate}
\item If $X$ is a quasi-projective variety over $C$, then there exists an obvious geometric materialization
\[
\xymatrix{X \ar@{^{(}->}[r]\ar[d] & \mathbb P^n_{S} \ar[d] & \mathbb P^n_{O} \ar[l] \ar[d]\\ C \ar@{^{(}->}[r] & S &O.\ar[l]}
\]
\item Any formal scheme $X$ which is locally formally of finite type over $C$ has locally a geometric materialization over $O$.
\item Let $\mathcal V$ be a discrete valuation ring with residue field $k$ and fraction field $K$ of characteristic zero and $\mathcal W$ a finite unramified extension of $\mathcal V$ with residue field $l$ and fraction field $L$.
Then 
\[
\xymatrix{\mathrm{Spec}(l) \ar@{^{(}->}[r]\ar[d] & \mathrm{Spf}(\mathcal W) \ar[d] & \mathrm{Spa}(L) \ar[l] \ar[d]\\ \mathrm{Spec}(k) \ar@{^{(}->}[r] &  \mathrm{Spf}(\mathcal V)  & \mathrm{Spa}(K) \ar[l]}
\]
is a geometric materialization (the morphism is actually (partially) finite and (formally) étale).
\item
Let $\mathcal V$ be a discrete valuation ring with perfect residue field $k$ of characteristic $p > 0$ and fraction field $K$ of characteristic zero.
Any finite separable extension of $k((t))$ has the form $l((s))$ for a  finite separable extension 
$l$ of $k$.
If $\mathcal W$ is an unramified lifting of $l$ with fraction field $L$, then there exists a geometric materialization
\[
\xymatrix{\eta_{l} := \mathrm{Spec}(l((s))) \ar@{^{(}->}[r]\ar[d] & \mathbb A^{\mathrm{b}}_{\mathcal W} := \mathrm{Spf}(\mathcal W[[s]])  \ar[d] & \mathbb D^{\mathrm{b}}_{L} = \mathrm{Spa}(L \otimes_{\mathcal W} \mathcal W[[s]]) \ar[l] \ar[d]\\ \eta_{k} := \mathrm{Spec}(k((t)))  \ar@{^{(}->}[r] &  \mathbb A^{\mathrm{b}}_{\mathcal V} := \mathrm{Spf}(\mathcal V[[t]]) & \mathbb D^{\mathrm{b}}_{K} = \mathrm{Spa}(K \otimes_{\mathcal V} \mathcal V[[t]]) \ar[l]}
\]
(we only consider here the $p$-adic and not $t$-adic or $(p,t)$-adic topology).
\item (Monsky-Washnitzer setting) Let $R$ be a noetherian ring, $X$ a smooth affine \emph{scheme} of finite type over $R$ and  $S \to \mathrm{Spec}(R)$ a morphism of formal schemes.
From the choice of a presentation $X \hookrightarrow \mathbb A^n_R$, one easily deduces a geometric materialization of $X_{C}$ over $O$:
\[
\xymatrix{X_{C} \ar@{^{(}->}[r]\ar[d] & \mathbb P^n_{S} \ar[d] & X_{O} \ar[l] \ar[d]\\ C\ar@{^{(}->}[r] & S &O.\ar[l]}
\]
In the particular case $S = \mathrm{Spf}(R^{/I})$ and $C = \mathrm{Spf}(R/I)$ for some ideal $I \subset R$, it follows from \cite{Arabia01} that \emph{any} smooth affine scheme over $C$ has the above form $X_C$.
\end{enumerate}
\end{xmp}

The main purpose of  \cite{LeStum17*} was to prove the strong fibration theorem.
This has the following immediate consequence:

%%%%%%%%%%%%%%
\begin{prop} \label{lsect}
If a geometric materialization $(X', V') \to (X, V)$ induces an isomorphism $X' \simeq X$, then this is a local epimorphism\footnote{What we call local epimorphism (resp.\ local isomorphism) is the same thing as a covering (resp.\ bicovering).} of overconvergent spaces.
\end{prop}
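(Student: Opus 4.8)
The plan is to reduce the statement to the existence of local sections on tubes and then to read these off from the strong fibration theorem of \cite{LeStum17*}. Recall that the topology on $\mathbf{Ad}^\dagger$ is induced from the tubes, so that the one-element family $u \colon (X',V') \to (X,V)$ is a covering (equivalently, a local epimorphism, by the footnote) as soon as the sieve it generates is a covering sieve; since covering sieves are generated by open coverings of the tube, it suffices to exhibit an open covering $]X[_V = \bigcup_i U_i$ together with, for each $i$, a morphism of overconvergent spaces $s_i$ from the object associated with $U_i$ (via a strict neighborhood) to $(X',V')$ whose composite with $u$ is the structural morphism to $(X,V)$. In other words, I would reduce the whole proposition to the construction of \emph{local sections} of $u$ over the tube $]X[_V$, after which pullback-stability of the assertion comes for free from the axioms of a Grothendieck topology.

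Next I would set up the geometry. Represent $u$ by a formal morphism as in \eqref{morphism} which is formally smooth and partially proper and whose left-hand square induces the given isomorphism $X' \simeq X$. These are exactly the hypotheses of the strong fibration theorem, whose conclusion I would invoke in its local form: after replacing $V'$ by a strict neighborhood of $X$, the induced morphism on tubes $]X[_{V'} \to ]X[_V$ is, locally over $]X[_V$, isomorphic to the projection of an open polydisc bundle $]X[_V \times \mathbb D^n \to ]X[_V$. The zero section of such a bundle then furnishes, over each member $U_i$ of a suitable open covering of $]X[_V$, a section of the map on tubes.

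Finally I would upgrade these tube sections to sections in $\mathbf{Ad}^\dagger$. Using that a map to $P'^{\mathrm{ad}}$ is the same as a map of the integral model to $P'$, and using formal smoothness to lift the structural map along the formal embedding $X \hookrightarrow P'$, the zero section determines a genuine morphism of overconvergent spaces $s_i$ as required, compatible with the maps to $O$; since the $U_i$ cover $]X[_V$ these assemble into a covering family factoring through $u$, and the generated sieve is therefore a covering sieve, which is what we wanted. The main obstacle is not the sieve-theoretic reduction, which is formal, but the passage between the purely adic polydisc-bundle picture and the category $\mathbf{Ad}^\dagger$ in which strict neighborhoods have been inverted: one must check that the zero section is compatible with the structural formal embedding and with the base, and it is precisely \emph{partial properness} (beyond mere formal smoothness) that guarantees the fibers are full open polydiscs admitting such sections rather than proper subdomains of them. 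Essentially all of the genuine work is packaged inside the strong fibration theorem; once its local description is granted, the local epimorphism property is immediate.
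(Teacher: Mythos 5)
Your proposal is correct and follows essentially the same route as the paper: the paper's proof consists precisely of invoking the strong fibration theorem (theorem 5.21 of \cite{LeStum17*}) to conclude that the morphism $(X',V') \to (X,V)$ locally admits a section, which immediately yields the local epimorphism property. Your additional unpacking — the sieve-theoretic reduction, the local polydisc-bundle picture with its zero section, and the passage from tube sections back to morphisms in $\mathbf{Ad}^\dagger$ — is exactly the content the paper leaves packaged inside the citation.
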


\begin{proof}
It follows from theorem 5.21 of \cite{LeStum17*} that, locally, the morphism $(X', V') \to (X, V)$ has a section.
\end{proof}

Recall that we defined in parts \eqref{mainob} and \eqref{compbs})  of section \ref{fundxmp}, the overconvergent sites of the form $(X/O)^\dagger$ and  $(X,V/O)^\dagger$.

%%%%%%%%%%%%%%
\begin{cor} \label{cpro}
If a geometric materialization $(X', V') \to (X, V)$ over some overconvergent space $(C, O)$ induces an isomorphism $f \colon X' \simeq X$, then it induces a local isomorphism
\[
(X',V'/O)^\dagger \to (X,V/O)^\dagger.
\]
\end{cor}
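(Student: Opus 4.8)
The plan is to reduce the statement to Proposition \ref{lsect} together with the elementary fact that a monomorphism of presheaves which is a local epimorphism is automatically a local isomorphism.

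First I would use the isomorphism $f\colon X' \simeq X$ to identify the ambient sites. Since $(X'/O)^\dagger$ and $(X/O)^\dagger$ depend on $X'$ and $X$ only through the fixed factorization $U \to X \to C$, the isomorphism $f$ induces an isomorphism $(X'/O)^\dagger \xrightarrow{\sim} (X/O)^\dagger$ of classic overconvergent sites. Under this identification both $(X',V'/O)^\dagger$ and $(X,V/O)^\dagger$ become subpresheaves (sieves) of $(X/O)^\dagger$. I would then check the inclusion $(X',V'/O)^\dagger \subseteq (X,V/O)^\dagger$: an object $(Z,W)$ of the former carries a factorization $Z \to X'$ extending to some $(Z,W) \to (X',V')$; composing with the structural morphism $(X',V') \to (X,V)$ and with $f$ shows that $(Z,W)$ also lies in the latter sieve. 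Hence the morphism in the statement is the inclusion of sieves, in particular a monomorphism of presheaves.

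It then remains to show this inclusion is a local epimorphism, for a monomorphism that is a local epimorphism becomes an isomorphism after sheafification and is therefore a local isomorphism. For the local epimorphism property I would argue that $(X',V'/O)^\dagger$ is a covering subobject of $(X,V/O)^\dagger$, i.e.\ that for every object $(Y,W)$ of $(X,V/O)^\dagger$ the restricted sieve covers $(Y,W)$. By definition of $(X,V/O)^\dagger$ there is a structural morphism $g\colon (Y,W) \to (X,V)$ over $(C,O)$. By Proposition \ref{lsect} the geometric materialization $h\colon (X',V') \to (X,V)$ is a local epimorphism; since local epimorphisms of presheaves are stable under base change, the projection of $(Y,W) \times_{(X,V)} (X',V')$ onto $(Y,W)$ is again a local epimorphism. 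This fibre product maps to $(X',V')$, hence factors through the sieve $(X',V'/O)^\dagger$, so the restriction of $(X',V'/O)^\dagger$ to $(Y,W)$ is a covering sieve, as desired.

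The main obstacle I anticipate is bookkeeping rather than conceptual: one must make sure that, under the identification supplied by $f$, the base-changed object $(Y,W) \times_{(X,V)} (X',V')$ carries exactly the factorization data that places it inside $(X',V'/O)^\dagger$, and that its projection to $(Y,W)$ is compatible with the inclusion of sieves. Granting stability of local epimorphisms under base change (which holds for presheaves because epimorphisms of sheaves are stable under pullback and sheafification is exact), the argument closes: the morphism is simultaneously a monomorphism and a local epimorphism, hence a local isomorphism.
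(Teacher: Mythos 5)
Your proposal is correct and follows essentially the same route as the paper: identify $(X'/O)^\dagger \simeq (X/O)^\dagger$ via $f$, invoke Proposition \ref{lsect} to get that $(X',V') \to (X,V)$ is a local epimorphism, and conclude that the induced map between the two sieves (full images) is a monomorphism which is also a local epimorphism, hence a local isomorphism. The paper states this more tersely, leaving implicit the base-change/local-factorization step that you spell out with the presheaf fibre product, but the underlying argument is the same.
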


\begin{proof}
Since $f$ is an isomorphism, we have $(X'/O)^\dagger \simeq (X/O)^\dagger$.
On the other hand, we know from proposition \ref{lsect} that the morphism $(X', V') \to (X, V)$ is a local epimorphism.
It follows that the map induced between their full images in $(X'/O)^\dagger$ and $(X/O)^\dagger$ is a local isomorphism. 
\end{proof}

In other words, in the situation of corollary \ref{cpro}, there exists an equivalence of topoi:
\[
\widetilde{(X', V'/O)^\dagger} \simeq \widetilde{(X,V/O)^\dagger}.
\]

More interesting, one can prove the following generalization of proposition 2.5.14 of \cite{LeStum11}):

%%%%%%%%%%%%%%%%%%%
\begin{thm} \label{strfib}
If $(X,V) \to (C, O)$ is a geometric materialization, then the induced morphism $(X, V) \to (X/O)^\dagger$ (resp.\ $(X,V/O)^\dagger \to (X/O)^\dagger$) is a local epimorphism (resp.\ a local isomorphism).
\end{thm}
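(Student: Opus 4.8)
\section*{Proof proposal}

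The plan is to first establish the \emph{resp.}\ statement — that the sieve inclusion $(X,V/O)^\dagger \hookrightarrow (X/O)^\dagger$ is a local isomorphism — and then deduce the first assertion essentially for free. Indeed, $(X,V/O)^\dagger$ is by construction the full image of $(X,V) \to (X/O)^\dagger$ (example \eqref{compbs}), so the morphism $(X,V) \to (X,V/O)^\dagger$ is an epimorphism of presheaves, hence a local epimorphism; composing it with the local isomorphism $(X,V/O)^\dagger \hookrightarrow (X/O)^\dagger$ will then show that $(X,V) \to (X/O)^\dagger$ is a local epimorphism. Moreover, being an inclusion of subpresheaves of $(X/O)^\dagger$, the map $(X,V/O)^\dagger \hookrightarrow (X/O)^\dagger$ is a monomorphism, hence a local monomorphism, and will be a local isomorphism as soon as it is a local epimorphism. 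It therefore suffices to prove that \emph{every object of $(X/O)^\dagger$ is covered by objects lying in $(X,V/O)^\dagger$}.

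So let $(U \hookrightarrow Q \leftarrow W)$ be an object of $(X/O)^\dagger$, that is an overconvergent space over $(C,O)$ equipped with a fixed factorization $U \to X \to C$; note that $W$ is automatically analytic since it lies over the analytic space $O$. I would pick a representative \eqref{morphism} of the geometric materialization, with formal embeddings $X \hookrightarrow P$ and $C \hookrightarrow S$, a map $P \to S$ that is formally smooth and partially proper around $X$, and $V$ a neighborhood of $\,]X[$ in $P^{\mathrm{ad}} \times_{S^{\mathrm{ad}}} O$. Using $U \to X \hookrightarrow P$ together with the embedding $U \hookrightarrow Q$ I would form the formal embedding $U \hookrightarrow P \times_S Q$ (it is one, because $U \hookrightarrow Q$ is and $X \times_S Q \hookrightarrow P \times_S Q$ is) and set $W' := V \times_O W$. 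Since $V \to P^{\mathrm{ad}}$ and $W \to Q^{\mathrm{ad}}$ are compatible over $S^{\mathrm{ad}}$, the space $W'$ maps to $P^{\mathrm{ad}} \times_{S^{\mathrm{ad}}} Q^{\mathrm{ad}} = (P \times_S Q)^{\mathrm{ad}}$, so that $(U \hookrightarrow P \times_S Q \leftarrow W')$ is an analytic overconvergent space. The first projections furnish a morphism $(U,W') \to (X,V)$ over $(C,O)$ refining $U \to X$, which exhibits $(U,W')$ as an object of $(X,V/O)^\dagger$, while the second projections give a formal morphism $(U,W') \to (U,W)$ inducing the identity on $U$.

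The key point is then that this morphism $(U,W') \to (U,W)$ is itself a geometric materialization. Base changing along $Q \to S$ shows that $P \times_S Q \to Q$ is formally smooth and partially proper around $U$, since the image of $U$ lands in $X \times_S Q$. For the right cartesian condition one computes $(P\times_S Q)^{\mathrm{ad}} \times_{Q^{\mathrm{ad}}} W = P^{\mathrm{ad}} \times_{S^{\mathrm{ad}}} W$; under the map to $P^{\mathrm{ad}} \times_{S^{\mathrm{ad}}} O$ induced by $W \to O$ the tube $\,]U[$ maps into $\,]X[$, so that $W' = V \times_O W$, being the preimage of the neighborhood $V$ of $\,]X[$, is a neighborhood of $\,]U[$ in $P^{\mathrm{ad}} \times_{S^{\mathrm{ad}}} W$. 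This is precisely the assertion that the right square is cartesian, and as $W'$ induces an isomorphism on the formal scheme $U$, proposition \ref{lsect} applies: $(U,W') \to (U,W)$ is a local epimorphism of overconvergent spaces.

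Finally, being a local epimorphism, the map $(U,W') \to (U,W)$ admits sections locally on $(U,W)$: there is a covering $\{(U,W_i) \to (U,W)\}$ together with factorizations $(U,W_i) \to (U,W')$. Composing with $(U,W') \to (X,V)$ shows that each $(U,W_i)$ lies in $(X,V/O)^\dagger$, so $(U,W)$ is covered by objects of $(X,V/O)^\dagger$, as required. I expect the main obstacle to be the bookkeeping in the previous paragraph — checking that the base-changed datum genuinely satisfies every clause in the definition of a geometric materialization, in particular the formal smoothness and partial properness around $U$ and the fact that $W'$ is a neighborhood of the tube $\,]U[$ — rather than the topos-theoretic reduction, which is essentially formal.
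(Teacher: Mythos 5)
Your proof is correct and is essentially the paper's own argument: both hinge on forming $(U \hookrightarrow P \times_S Q \leftarrow V \times_O W)$ and applying the strong fibration theorem of \cite{LeStum17*} (packaged here as proposition \ref{lsect}) to the projection onto $(U \hookrightarrow Q \leftarrow W)$, which induces an isomorphism on the formal schemes. The only differences are cosmetic: the paper establishes the local-epimorphism assertion first and then deduces the local isomorphism as in corollary \ref{cpro}, whereas you reverse the order, and you spell out the base-change verifications (formal smoothness, partial properness, right cartesianness of $V \times_O W$) that the paper leaves implicit.
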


\begin{proof}
We have to show that any morphism $(Y, W) \to (X/O)^\dagger$ factors locally through $(X,V)$.
In other words, we are given a commutative diagram
\[
\xymatrix{Y \ar@{^{(}->}[r] \ar[d] & Q \ar@/^1pc/[dd] & \ar[l] W \ar@{-->}[d] \ar@/^1pc/[dd]\\ X \ar@{^{(}->}[r] \ar[d] & P \ar[d] & \ar[l] V \ar[d] \\ C \ar@{^{(}->}[r] & S & \ar[l] O}
\]
with a geometric materialization at the bottom and we need to (locally) fill the dotted arrow up.
It is sufficient to consider the commutative diagram
\[
\xymatrix{Y \ar@{^{(}->}[r] \ar@{=}[d] & Q & \ar[l] W \\ Y \ar@{^{(}->}[r] \ar[d] & P \times_S Q\ar[d]^{p_1}\ar[u]^{p_2} & \ar[l] W' \ar[d]\ar[u] \\ X \ar@{^{(}->}[r] \ar[d] & P \ar[d] & \ar[l] V \ar[d] \\ C \ar@{^{(}->}[r] & S & \ar[l] O}
\]
and apply the strong fibration theorem 5.21 of \cite{LeStum17*} to the upper rectangle.
The other assertion is then obtained exactly as in corollary \ref{cpro}.
\end{proof}

In the situation of theorem \ref{strfib}, there exists therefore an equivalence of topoi:
\[
\widetilde{(X,V/O)^\dagger} \simeq \widetilde{(X/O)^\dagger}.
\]
It means that, when $X$ has a geometric materialization $V$, we can work directly on $V$ in order to obtain general results about $X$.
It also means that the choice of $V$ does not matter as long as it is a geometric materialization.
This last argument was Berthelot's philosophy for the development of rigid cohomology.

As a consequence of the \'etale version of the strong fibration theorem, we also have the following invariance with respect to the basis :

%%%%%%%%%%%%%%%%%
\begin{prop} \label{etis}
Assume that the following formal morphism of \emph{analytic} overconvergent spaces
\[
\xymatrix{ C' \ar@{^{(}->}[r] \ar[d] & S'  \ar[d]&  O'  \ar[l] \ar[d] \\
C \ar@{^{(}->}[r] & S  & O  \ar[l]}
\]
is partially proper and formally \emph{\'etale}.
If $T$ is a fibered category over $C'$, then 
\[
(T/O')^\dagger \simeq (T/O)^\dagger.
\]
\end{prop}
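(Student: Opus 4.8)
The plan is to construct an explicit equivalence between the two categories, whose only nontrivial ingredient is the étale strong fibration theorem used to lift the adic datum. First I would write down the obvious functor $\Phi \colon (T/O')^\dagger \to (T/O)^\dagger$ obtained by composing the structural morphism with $(C',O') \to (C,O)$: an object consisting of an overconvergent space $(U,V) \to (C',O')$ together with a factorization $U \to T \to C'$ is sent to the same overconvergent space now viewed over $(C,O)$ via $U \to C' \to C$ and $V \to O' \to O$. This is well defined because $T$ is fibered over $C'$ and $C' \to C$, and it is clearly compatible with the topologies, which on both sides are inherited from the adic spaces $V$ and are left unchanged by $\Phi$.

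The substance is to produce a quasi-inverse, equivalently to show that $\Phi$ is fully faithful and essentially surjective. So I would start from an object $(U,V)$ of $(T/O)^\dagger$: the factorization $U \to T \to C$ already refines to $U \to T \to C'$ because $T$ lives over $C'$, so the formal part of the lift is free; what must be manufactured is a map $V \to O'$ over $V \to O$ turning $(U,V) \to (C,O)$ into a morphism $(U,V) \to (C',O')$ of overconvergent spaces. Choosing a representative $(U \hookrightarrow Q \leftarrow V)$ with $Q \to S$, I would form the base change $Q' := Q \times_S S'$, which by construction is formally étale and partially proper over $Q$ and whose formal part is the identity of $U$; the compatible maps $U \hookrightarrow Q$ and $U \to C' \hookrightarrow S'$ provide a formal embedding $U \hookrightarrow Q'$ lifting $U \hookrightarrow Q$.

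At this point I would invoke the étale strong fibration theorem (the étale analogue of theorem 5.21 of \cite{LeStum17*}) for the morphism $Q' \to Q$: since it is formally étale and partially proper and induces an isomorphism on the formal scheme $U$, the map $V \to Q^{\mathrm{ad}}$ lifts, \emph{uniquely}, to a map $V \to {Q'}^{\mathrm{ad}}$, and composing with ${Q'}^{\mathrm{ad}} \to {S'}^{\mathrm{ad}}$ we obtain a map $V \to {S'}^{\mathrm{ad}}$ over $S^{\mathrm{ad}}$; partial properness then guarantees that the induced $V \to {S'}^{\mathrm{ad}} \times_{S^{\mathrm{ad}}} O$ factors through the neighbourhood $O'$ of $C'$. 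This produces the desired lift $(U,V) \to (C',O')$ and hence an object of $(T/O')^\dagger$ whose $\Phi$-image is the original one, giving essential surjectivity. The same uniqueness of the adic lift shows that any morphism in $(T/O)^\dagger$ between images of objects automatically respects the lifted $O'$-structure, hence comes from a unique morphism in $(T/O')^\dagger$, which is full faithfulness; uniqueness moreover lets the local lifts glue, so that $\Phi$ is an equivalence of sites and not merely of topoi.

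The main obstacle is precisely this lifting step. Existence of the lift $V \to {Q'}^{\mathrm{ad}}$ is what the fibration theorem buys us and is in general only local on $V$, whereas the fact that one obtains an equivalence on the nose (rather than only after sheafification) rests on the étaleness, which forces the lift to be unique and therefore to glue; one must also check carefully that partial properness is exactly what keeps the lifted adic map inside the chosen neighbourhood $O'$ rather than merely inside ${S'}^{\mathrm{ad}} \times_{S^{\mathrm{ad}}} O$. Once these two points are secured, the equivalence of topoi $\widetilde{(T/O')^\dagger} \simeq \widetilde{(T/O)^\dagger}$ follows formally.
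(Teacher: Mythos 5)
Your proof is correct and follows essentially the same route as the paper: both reduce to showing that any morphism $(U,V) \to (C,O)$ whose formal part factors through $C'$ factors uniquely through $(C',O')$, and both get this from the étale strong fibration theorem of \cite{LeStum17*} applied to the base change $Q \times_S S' \to Q$ (the paper cites it as theorem 5.20 and simply replaces $(U \hookrightarrow P \leftarrow V)$ by $(U \hookrightarrow P_{S'} \leftarrow V_{O'})$). One caveat on your phrasing: the lift $V \to Q'^{\mathrm{ad}}$ need not exist on all of $V$ but only after shrinking $V$ to a strict neighborhood of the tube — this is harmless (and is exactly what the paper's ``replace'' means), because strict neighborhoods are inverted in $\mathbf{Ad}^\dagger$, so no global gluing over $V$ is actually required.
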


Recall again that our assumptions require the morphism to be right cartesian.

\begin{proof}
We can assume that $T = C'$ and then pull the whole thing back.
We give ourselves a formal morphism of overconvergent spaces
\[
(U \hookrightarrow P \leftarrow V) \to (C \hookrightarrow S \leftarrow O)
\]
such that $U \to C$ factors through $C'$ and we have to show that our morphism factors uniquely through $(C' \hookrightarrow S' \leftarrow O')$.
Thanks to theorem 5.20 of  \cite{LeStum17*}, we can replace $(U \hookrightarrow P \leftarrow V)$ with $(U \hookrightarrow P_{S'} \leftarrow V_{O'})$ and the question then becomes trivial.
\end{proof}

\begin{xmp}
\begin{enumerate}
\item
Consider a right cartesian formal morphism
\[
\xymatrix{C' \ar@{^{(}->}[r] \ar[d] & S'  \ar[d]&  O'  \ar[l] \ar[d] \\
C \ar@{^{(}->}[r] & S  & O  \ar[l]}
\]
with $O$ analytic.
We denote by $\overline C$ (resp.\ $\overline C'$) the closure of $C$ in $S$ (resp.\ $C'$ in $S'$).
Assume that $\overline C' \to \overline C$ is finite \'etale.
If $T$ is a fibered  category over $C'$, then $(T/O)^\dagger \simeq (T/O')^\dagger$.
\item This is a particular case of the previous example: we can consider a finite extension of ultrametric fields $K \to K'$.
Then, if $T$ is an overconvergent site over the residue field $k'$ of $K'$, we have $(T/K)^\dagger \simeq (T/K')^\dagger$.
\end{enumerate}
\end{xmp}

%%%%%%%%%%%%%%%%%%%%%%%%%%
\section{Sheaves}

We now study the category of sheaves of sets or abelian sheaves on an overconvergent site.
In the end, we will investigate more closely the behavior of sheaves under a formal embedding.
This is the first step for understanding the notion of a constructible crystal.

%%%%%%%%%%%%%%%%%%%%%%%%%%
\subsection{Sites associated to an overconvergent space}

As already mentioned, we may always consider an overconvergent space as an overconvergent site.
However, this is a big site and it is necessary for computations to be able to work on a small site.

Thus, we can consider the big and small sites of an overconvergent space $(X,V)$:
\[
(X,V) \quad \left(= \mathbf{An}^\dagger_{/(X,V)}\right) \quad \mathrm{and} \quad \,]X[_{V} \quad \left(= \mathbf{Open}(\,]X[_V\right)
\]
(in which we consider the ordered set of open subsets as a site).
It should be clear from the context if we see $(X,V)$ as an overconvergent space (resp.\ $\,]X[_{V}$ as a topological space) or as a site.
There exists a functor
\begin{align} \label{smalbig}
\psi_{V} : \,]X[_{V} \to (X,V), \quad \,]X[_{V'} \mapsto (X,V')
\end{align}
(this is well defined) from which we can derive the usual morphisms relating big and small sites:

%%%%%%%%%%%%%
\begin{lem} \label{adjret}
Let $(X,V)$ be an overconvergent space.
Then, there exists a morphism of sites (resp.\ topoi)
\[
\varphi_{V} : (X,V) \to \,]X[_{V} \quad \left(\mathrm{resp.} \quad \psi_{V} :\widetilde {\,]X[_{V}} \to  \widetilde{(X,V)}\right)
\]
such that $\widetilde \varphi_V$ is an adjoint retraction for $\psi_V$.
\end{lem}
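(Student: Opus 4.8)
The plan is to deduce everything from the general big-site/small-site formalism applied to the functor $\psi_V$ of \eqref{smalbig}. First I would check that $\psi_V$ is well defined: an open $U\subseteq\,]X[_V$ is of the form $\,]X[_{V'}$ for some open immersion $V'\hookrightarrow V$, and if $\,]X[_{V_1'}=\,]X[_{V_2'}=U$ then $V_1'\cap V_2'\hookrightarrow V_i'$ is a strict neighborhood (the induced map on tubes is a homeomorphism), hence invertible in $\mathbf{Ad}^\dagger$; thus the object $(X,V')$ depends only on $U$ and $\psi_V$ is a genuine functor, clearly fully faithful. Writing $u:=\psi_V$, I would then invoke the presheaf-level adjoints $u_p\dashv u^p\dashv {}_pu$, where $u^p$ is restriction along $\psi_V$ (the realization $E\mapsto E_V$ on the tube), and reduce the lemma to two properties: that $\psi_V$ is \emph{continuous} and that it is \emph{cocontinuous}, together with the mild exactness needed to upgrade a continuous functor to a morphism of sites.

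For continuity I would observe that $\psi_V$ sends an open covering $\{U_i\}$ of $U=\,]X[_{V'}$ to the family $\{(X,V_i')\to(X,V')\}$, which is a covering for the topology inherited from the adic spaces precisely because that topology is the one detected on the tubes; moreover $\psi_V$ carries intersections to fibre products, since $(X,V_1')\times_{(X,V')}(X,V_2')=(X,V_1'\cap V_2')$ realizes on tubes as $U_1\cap U_2$. Continuity then gives that $u^p$ preserves sheaves, producing the morphism of sites $\varphi_V$ with $\widetilde\varphi_{V*}=u^p$ the restriction (realization) functor and $\widetilde\varphi_V^*=u_s$ its sheafified left Kan extension; left-exactness of $u_s$ is checked exactly as in the classical comparison of big and small sites.

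The heart of the matter is cocontinuity, and this is the step I expect to be the main obstacle. I must show that every covering of $(X,V')=\psi_V(U)$ in the big site $(X,V)$ is refined by the image under $\psi_V$ of an open covering of $U$. By construction a covering family $\{(Y_a,W_a)\to(X,V')\}$ is detected on tubes, so the maps $\,]Y_a[_{W_a}\to U$ form an open covering of the topological space $U$. Over the open $U_a\subseteq U$ equal to the image of $\,]Y_a[_{W_a}$, the overconvergent space $(X,V_a')$ with $\,]X[_{V_a'}=U_a$ receives a morphism from a suitable (strict-neighborhood) representative of $(Y_a,W_a)$, so that $\psi_V(U_a)\to(X,V')$ factors through $(Y_a,W_a)\to(X,V')$. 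It is here that the invertibility of strict neighborhoods in $\mathbf{Ad}^\dagger$ is essential: it is what lets me pass from an equality of tubes to an actual morphism of overconvergent spaces, and this is the only ingredient that uses the specific geometry of the overconvergent site rather than formal nonsense. Cocontinuity then yields the morphism of topoi $\psi_V:\widetilde{\,]X[_V}\to\widetilde{(X,V)}$ whose inverse image $\psi_V^*$ is again the restriction $u^p$, so that $\psi_V^*=\widetilde\varphi_{V*}$; equivalently we obtain a chain of adjoints $\widetilde\varphi_V^*\dashv u^p\dashv\psi_{V*}$ with $u^p=\widetilde\varphi_{V*}=\psi_V^*$.

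Finally, to see that $\widetilde\varphi_V$ is an adjoint retraction for $\psi_V$, I would use that $\psi_V$ is fully faithful, whence the unit $\mathrm{id}\to u^p u_p$ is an isomorphism at the presheaf level. After sheafification this gives $\psi_V^*\circ\widetilde\varphi_V^*\simeq\mathrm{id}_{\widetilde{\,]X[_V}}$, that is $\widetilde\varphi_V\circ\psi_V\simeq\mathrm{id}$, which is the retraction. Combined with the identity $\psi_V^*=\widetilde\varphi_{V*}$ established above, this is exactly what it means for $\widetilde\varphi_V$ to be an adjoint retraction of $\psi_V$, and completes the argument; the one genuinely non-formal point is the cocontinuity of the previous paragraph.
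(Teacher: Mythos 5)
Your proof is correct and follows essentially the same route as the paper: the paper's entire proof is the observation that the functor \eqref{smalbig} is continuous, cocontinuous, fully faithful and left exact, followed by a citation of the Stacks Project (Tag 00XU) for the resulting adjunctions, and your argument simply unwinds those same four verifications and their formal consequence. The details you supply (well-definedness and cocontinuity via invertibility of strict neighborhoods, coverings being detected on tubes, the retraction coming from full faithfulness) are exactly what the paper delegates to that general big-site/small-site formalism.
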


\begin{proof}
Follows from the fact that the functor \eqref{smalbig} is continuous, cocontinuous, fully faithful and left exact (use then \cite[\href{https://stacks.math.columbia.edu/tag/00XU}{Tag 00XU}]{stacks-project} for example).
\end{proof}

We may write $\varphi_{\,]X[_V}$ and  $\psi_{\,]X[_V}$ if we want to specify the formal scheme $X$ in the notations.
We may also write $\varphi_{X}$ and  $\psi_{X}$ when $V$ is understood from the context.
As a consequence of the proposition, we see that there exists a sequence of three adjoint functors on sheaves:
\[
\varphi_{V}^{-1} = \psi_{V!}, \quad \varphi_{V*} = \psi_{V}^{-1}, \quad \varphi^!_{V} = \psi_{V*}.
\]
In particular, $\varphi_{V*}$ commutes with all limits, colimits, tensor products and internal Hom.
Moreover, $\varphi_{V}^{-1}$ is exact and fully faithful so that $\varphi_{V*} \circ \varphi_{V}^{-1} = \mathrm{Id}$.

There exists an alternative approach.
We can consider the functor
\[
\Phi : \mathbf{Ad}^\dagger \to \mathbf{Ger}, \quad (X, V) \mapsto \,]X[_V^\dagger
\]
from the absolute overconvergent site to the category of germs of adic spaces (prepseudo-adic spaces up to strict neighborhoods).
Here $\,]X[_V^\dagger$ denotes the germ of the prepseudo-adic space $(V, \,]X[_V)$.
The functor $\Phi$ is continuous, cocontinuous and left exact (but not fully faithful).
It induces a couple of adjoint morphisms of sites/topoi $\Pi$ and $\Phi$ (in that order).
Moreover, there exists a commutative diagram
\[
\xymatrix{\mathbf{Ad}^\dagger \ar[rr]^{\Phi} &&  \mathbf{Ger} \\
(X,V) \ar[u] && \,]X[_V \ar[u] \ar[ll]_{\psi_V} } 
\]
implying various compatibilitiies (the right vertical functor is also continuous, cocontinuous and left exact - and moreover fully faithful).

The following result is elementary but fundamental:

 %%%%%%%%%%%%%%
 \begin{lem}  \label{comphi}
 If $(f,u) : (X',V') \to (X,V)$ is a morphism of overconvergent spaces, then the diagram
\[
\xymatrix{
\widetilde{(X',V')}\ar[rr]^{\varphi_{V'}}  \ar[d]^{(f,u)}  &&  \widetilde{\,]X'[}_{V'}  \ar[d]^{]f[_{u}}   \\ \ar[rr]^{\varphi_{V}}   \widetilde{(X,V)} && \widetilde{\,]X[}_{V} 
 }
 \]
is commutative.
 \end{lem}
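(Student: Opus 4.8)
The plan is to verify the commutativity after passing to direct image functors. Since a morphism of topoi is determined up to canonical isomorphism by its pushforward, it suffices to produce a natural isomorphism $(\,]f[_{u})_{*}\circ \varphi_{V'*} \simeq \varphi_{V*}\circ (f,u)_{*}$ between the two composite direct images $\widetilde{(X',V')} \to \widetilde{\,]X[_{V}}$. The advantage of working with $_*$ rather than $^{-1}$ is that each of the four functors involved is an honest restriction (precomposition), with no sheafification needed. Indeed, by Lemma \ref{adjret} we have $\varphi_{V*} = \psi_V^{-1}$ and $\varphi_{V'*} = \psi_{V'}^{-1}$, and these are restriction along the functor \eqref{smalbig}; the topological direct image $(\,]f[_{u})_{*}$ is restriction along the preimage functor $(\,]f[_{u})^{-1}$ on open subsets; and, since $(f,u)$ is a localization morphism (it localizes the site $(X,V)$ at the object $(X',V')\to(X,V)$), its direct image $(f,u)_{*}$ is restriction along the base change functor $b \colon (Z,W) \mapsto (Z,W)\times_{(X,V)}(X',V')$.

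First I would record these four descriptions precisely, in particular justifying the formula $(f,u)_{*}E\,(Z,W) = E\big((Z,W)\times_{(X,V)}(X',V')\big)$ by the Yoneda/adjunction computation valid for a localization morphism (every object of these slice sites being representable, so that base change computes the right adjoint of $(f,u)^{-1}$). Restriction along functors is contravariantly functorial, so both composite direct images are then restriction along a single functor $\mathrm{Open}(\,]X[_{V}) \to (X',V')$: the top–right path is restriction along $\psi_{V'}\circ (\,]f[_{u})^{-1}$ and the left–bottom path is restriction along $b\circ \psi_{V}$. Thus the lemma reduces to the identity of functors $\psi_{V'}\circ (\,]f[_{u})^{-1} \simeq b\circ \psi_{V}$.

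Finally I would check this functor identity on an open $\,]X[_{W}\subseteq\,]X[_{V}$, with $W\subseteq V$ the open adic subspace defining it. The left–bottom path sends it to $(X,W)$ and then to $(X,W)\times_{(X,V)}(X',V')$, while the top–right path sends it to the preimage open $(\,]f[_{u})^{-1}(\,]X[_{W}) = \,]X'[_{W'}$ and then to $(X',W')$. The two agree precisely because $\,]f[_{u}$ is the restriction to tubes of the morphism $(f,u)$, so that $(\,]f[_{u})^{-1}(\,]X[_{W}) = \,]X'[_{W'}$ with $W' = V'\times_V W$, and because fibre products of overconvergent spaces are compatible with the tube construction, giving $(X,W)\times_{(X,V)}(X',V') = (X', V'\times_V W) = (X',W')$. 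This last point — that forming tubes commutes with the relevant fibre products and that $\,]f[_{u}$ really is the tube of $(f,u)$ — is the only geometric input and is where I expect any actual work to sit; however it is built into the construction of $\mathbf{Ad}^\dagger$ and of the tube in \cite{LeStum17*}, so once it is invoked the commutativity of the square is formal. One must also remember that opens of $\,]X[_{V}$ are taken up to strict neighbourhoods, but this is exactly the equivalence under which \eqref{smalbig} and $\,]f[_{u}$ are defined, so it causes no trouble.
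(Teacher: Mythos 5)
Your strategy---checking the square on direct images and observing that every functor in sight is restriction (precomposition) along a functor of sites---is legitimate, and when you finally evaluate on an open $\,]X[_{W}$ you perform exactly the computation that constitutes the paper's proof: both paths produce $(X',W')$ with $W'=u^{-1}(W)$. The paper does this on the adjoint side, comparing the inverse images of the opens $\,]X[_{W}$ under the two composite morphisms of sites, so the geometric content of the two arguments is identical.

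The one step that does not survive scrutiny as written is the assertion that $(f,u)_{*}$ is restriction along a base change functor $b\colon (Z,W)\mapsto (Z,W)\times_{(X,V)}(X',V')$ defined on the \emph{whole} site $(X,V)$. That formula for the right adjoint of $(f,u)^{-1}$ is valid only when the fiber product in question is representable, and in $\mathbf{Ad}^\dagger$ such fiber products do not exist in general: formal schemes are assumed locally noetherian and adic spaces locally of noetherian type, and fiber products need not preserve either condition. This is precisely why the paper has to impose the ``has self-products'' hypothesis when it sets up overconvergent stratifications (noting that it holds when $V$ is locally of finite type over $O$); no such hypothesis is available for the arbitrary morphism $(f,u)$ of the lemma, so your functor $b$ need not be well defined. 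The gap is inessential and easily repaired: the composite $\varphi_{V*}\circ (f,u)_{*}$ only ever evaluates $(f,u)_{*}E$ on objects of the form $(X,W)$ with $W\subset V$ open, and there the fiber product $(X,W)\times_{(X,V)}(X',V') = (X',u^{-1}(W))$ does exist, since $u^{-1}(W)$ is merely an open subspace of $V'$. So restrict $b$ to the image of $\psi_{V}$, or compute $((f,u)_{*}E)(X,W) = E(X',u^{-1}(W))$ directly by the Yoneda/adjunction argument for that particular object, and your proof closes.
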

 
 \begin{proof}
If $W \subset V$ is an open subset, we have
\begin{align*}
(\varphi_{V} \circ(f,u))^{-1}(\,]X[_{W}) &=  (f,u)^{-1}(\varphi_{V}^{-1}(\,]X[_{W}))
\\&=  (f,u)^{-1}(X,W)
\\&=  (X',V') \times_{(X,V)} (X,W)
\\&=:  (X',W')
 \end{align*}
 and
 \begin{align*}
(|f[_{u} \circ \varphi_{V'})^{-1}(\,]X[_{W}) &= \varphi_{V'}^{-1} (|f[_{u}^{-1}(\,]X[_{W}))
\\&= \varphi_{V'}^{-1} (\,]X'[_{W'}) \\ &= (X',W'). \qedhere
 \end{align*}
\end{proof}

If $(X,V)$ is an overconvergent space, then we will usually denote by $i_X : \,]X[_V \hookrightarrow V$ the inclusion of the tube and we may sometimes write $\mathcal F_{|\,]X[_V} := i_X^{-1}\mathcal F$.
In order to make the link with Berthelot's original theory, we can mention the following:

%%%%%%%%%%%%
\begin{lem} \label{FKlim}
If $V$ is \emph{analytic} and $\mathcal F$ is a sheaf on $V$, then
\[
\varinjlim j'_*j'^{-1}\mathcal F \simeq i_{X*}i_X^{-1} \mathcal F
\]
when $j' : V' \hookrightarrow V$ runs through the inclusions of open neighborhoods of $\,]X[_V$ in $V$.
\end{lem}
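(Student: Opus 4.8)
The plan is to construct a natural comparison morphism between the two sides and then to check that it is an isomorphism stalk by stalk. Write $i := i_X : \,]X[_V \hookrightarrow V$ for the inclusion of the tube and, for each open neighborhood $V'$ of $\,]X[_V$, factor $i$ as $i = j' \circ i'$ with $i' : \,]X[_V \hookrightarrow V'$. From the unit $\mathrm{id} \to i'_*i'^{-1}$ one obtains, after applying $j'_*$, a morphism $j'_*j'^{-1}\mathcal F \to j'_*i'_*i'^{-1}j'^{-1}\mathcal F = i_*i^{-1}\mathcal F$; these are compatible with the restriction maps attached to inclusions $V'' \subseteq V'$ of neighborhoods, so they assemble into
\[
\Theta : \varinjlim_{V'} j'_*j'^{-1}\mathcal F \longrightarrow i_*i^{-1}\mathcal F .
\]
The open neighborhoods $V'$ form a directed system under reverse inclusion (any two contain their intersection), so the colimit is filtered. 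Since a morphism of sheaves is an isomorphism as soon as it is so on every stalk, and since the stalk of a filtered colimit of sheaves is the colimit of the stalks, it suffices to prove that $\Theta_x$ is bijective for each $x \in V$.

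For $x \in \,]X[_V$ the point lies in every neighborhood $V'$, so $(j'_*j'^{-1}\mathcal F)_x = \mathcal F_x$ with all transition maps the identity; hence the left-hand stalk is $\mathcal F_x$. On the other side $i$ is the inclusion of a subspace, so for $x$ in that subspace $(i_*i^{-1}\mathcal F)_x = (i^{-1}\mathcal F)_x = \mathcal F_x$, and one checks that $\Theta_x$ is the identity. This case is therefore immediate.

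The substance lies in the stalk at a point $x \notin \,]X[_V$. Using $(j'_*j'^{-1}\mathcal F)_x = \varinjlim_{U \ni x}\mathcal F(U\cap V')$ and interchanging the two filtered colimits, one is reduced to comparing $\varinjlim_U \varinjlim_{V'} \mathcal F(U \cap V')$ with $\varinjlim_U (i^{-1}\mathcal F)(U \cap \,]X[_V)$, where $U$ runs over the open neighborhoods of $x$. By definition $(i^{-1}\mathcal F)(U \cap \,]X[_V) = \varinjlim_W \mathcal F(W)$ over the opens $W \supseteq U \cap \,]X[_V$, so the entire point is the cofinality statement that the sets $U \cap V'$, as $V'$ shrinks, are cofinal among such $W$; concretely, for a given $U$ and $W$ one must produce an open neighborhood $V'$ of the tube with $U \cap V' \subseteq W$.

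I expect this cofinality to be the only real obstacle, and it is exactly where the hypothesis that $V$ be \emph{analytic} enters. It requires that $\,]X[_V$ admit a fundamental system of open neighborhoods in $V$ and, more delicately, that these neighborhoods contract onto the tube finely enough to dominate an arbitrary $W$ — the awkward part being the behaviour along the topological boundary of $U$, where one needs the separation afforded by the tautness and paracompactness of the analytic situation. Granting these geometric properties of tubes, which rest on the fibration theory of \cite{LeStum17*}, the two colimits coincide, $\Theta_x$ is bijective, and the verification is complete.
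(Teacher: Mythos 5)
Your construction of the comparison map $\Theta$, the reduction to stalks, and the easy case $x \in \,]X[_V$ all match the paper's (very terse) strategy, but your proof stops exactly where the content of the lemma lies: the cofinality statement at a point $x \notin \,]X[_V$ is asserted rather than proved (``I expect\ldots'', ``Granting these geometric properties\ldots''), and the ingredients you point to for it are the wrong ones. What analyticity actually buys, and what the paper's proof rests on, is that $\,]X[_V$ is \emph{stable under generalization} (proposition 4.27 of \cite{LeStum17*}). This immediately gives the identity $\,]X[_V = \bigcap V'$: if $x \notin \,]X[_V$, then $\overline{\{x\}} \cap \,]X[_V = \emptyset$ (a point $y$ in this intersection would have $x$ as a generalization, and stability under generalization would force $x \in \,]X[_V$), so $V \setminus \overline{\{x\}}$ is an open neighborhood of the tube excluding $x$. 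That is the single geometric input; upgrading this pointwise exclusion to the set-level cofinality you need (for $U$ quasi-compact open, excluding all of $U \setminus W$ at once) is then a standard compactness argument in the constructible topology of the ambient locally spectral space. By contrast, tautness and paracompactness play no role here: paracompactness of tubes is only established later in the paper (lemma \ref{tateret}), under hypotheses ($V$ Tate and paracompact) that the present lemma does not assume; and the fibration theorems of \cite{LeStum17*} concern formally smooth, partially proper morphisms and are irrelevant to this purely topological statement.

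There is also a second, smaller gap: your ``by definition'' identification $(i^{-1}\mathcal F)(U \cap \,]X[_V) = \varinjlim_W \mathcal F(W)$ computes the \emph{presheaf} inverse image, whereas the sheaf $i^{-1}\mathcal F$ is its sheafification; equating their sections over $U \cap \,]X[_V$ is itself a nontrivial point, valid for quasi-compact subsets stable under generalization in a spectral space, i.e.\ it needs the same circle of ideas (compare the paper's proposition \ref{limsh}, which is proved \emph{after}, and partly by means of, the present lemma). So as it stands the proposal is a correct outline whose two substantive steps are both left open, and it misses the key fact --- generization-stability of the tube in the analytic case --- on which the paper's one-line proof turns.
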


\begin{proof}
Since $V$ is analytic, $\,]X[_V$ is stable under generalization (consequence of \cite{LeStum17*}, proposition 4.27) and therefore $\,]X[_V = \bigcap V'$.
The assertion is then readily checked on the stalks.
\end{proof}

%%%%%%%%%%
\subsection{Realization}

We will see that a sheaf on an overconvergent site is fully understood when we know its realizations on various overconvergent spaces.
We let $T$ be an overconvergent site.

%%%%%%%%%%%%%%%%%
\begin{dfn}
If $E$ is a sheaf on $T$ and $(X, V)$ is an overconvergent space over $T$, then the \emph{realization} of $E$ on $(X,V)$ (we will simply say on $V$) is $E_{V} := \varphi_{V*}E_{|(X,V)}$ on $\,]X[_V$.
\end{dfn}

It is worth mentioning that the realization functor $E \mapsto E_{V}$ commutes with all limits, colimits, tensor products an internal Hom.
Note that, even if it doesn't show, $E_V$ does depend on the structural map $(X,V) \to T$ and not only on $(X,V)$.
Again, we may write $E_{\,]X[_V}$ if we want to specify the role of $X$ or else $E_X$ when this is $V$ which is understood from the context.
Also, by definition, if $T' \to T$ is a morphism of overconvergent sites, and $(X',V')$ is an overconvergent space over $T'$, then $(E_{|T'})_{V'} = E_{V'}$.

%%%%%%%%%%%
\begin{lem} \label{pulbk}
If $(f, u) : (X', V') \to (X,V)$ is a morphism of overconvergent spaces and $\mathcal F$ is a sheaf on $\,]X[_{V}$, then
\begin{align} \label{phif}
(\varphi_{V}^{-1}\mathcal F)_{V'} = ]f[_{u}^{-1}\mathcal F.
\end{align}
\end{lem}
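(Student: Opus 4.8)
The plan is to unfold the definition of the realization functor and then reduce the whole statement to the commutative square of Lemma~\ref{comphi} combined with the retraction identity recorded right after Lemma~\ref{adjret}. There is no genuine difficulty here: the argument is a formal diagram chase, and the only thing demanding care is bookkeeping the order of composition when passing from a commuting square of morphisms of topoi to the corresponding identity of inverse-image functors.

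First I would note that the representable overconvergent site here is $T = (X,V)$, and the overconvergent space $(X',V')$ lies over it via the structural morphism $(f,u)$. By the definition of realization and of the restriction $(\cdot)_{|(X',V')} = (f,u)^{-1}(\cdot)$, we have
\[
(\varphi_V^{-1}\mathcal F)_{V'} = \varphi_{V'*}\bigl((f,u)^{-1}\varphi_V^{-1}\mathcal F\bigr).
\]
Next I would invoke Lemma~\ref{comphi}: the commutativity of the displayed square of topoi means $]f[_u \circ \varphi_{V'} = \varphi_V \circ (f,u)$ as morphisms $\widetilde{(X',V')} \to \widetilde{\,]X[}_V$. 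Passing to inverse images reverses the order of composition, so this becomes the identity of functors $\varphi_{V'}^{-1}\,]f[_u^{-1} = (f,u)^{-1}\varphi_V^{-1}$. Substituting into the expression above yields
\[
(\varphi_V^{-1}\mathcal F)_{V'} = \varphi_{V'*}\varphi_{V'}^{-1}\bigl(\,]f[_u^{-1}\mathcal F\bigr).
\]

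Finally I would apply the fact, established just after Lemma~\ref{adjret}, that $\varphi_{V'}^{-1}$ is fully faithful, so that $\varphi_{V'*}\circ\varphi_{V'}^{-1} = \mathrm{Id}$ on $\widetilde{\,]X'[}_{V'}$. This collapses the right-hand side to $]f[_u^{-1}\mathcal F$, giving exactly the claimed equality~\eqref{phif}. As anticipated, the main (and only) obstacle is the contravariance of inverse image under composition; once that is handled correctly, both Lemma~\ref{comphi} and the retraction identity plug in directly and the proof closes.
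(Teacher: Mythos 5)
Your proof is correct and follows exactly the paper's own argument: unfold the definition of realization, apply the commutative square of Lemma~\ref{comphi} at the level of inverse-image functors to get $(f,u)^{-1}\varphi_{V}^{-1} = \varphi_{V'}^{-1}]f[_{u}^{-1}$, and conclude with the retraction identity $\varphi_{V'*}\circ\varphi_{V'}^{-1}=\mathrm{Id}$. Nothing to add.
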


\begin{proof}
This follows from lemma \ref{comphi}:
\[
(\varphi_{V}^{-1}\mathcal F)_{V'} = \varphi_{V'*}(f,u)^{-1}\varphi_{V}^{-1}\mathcal F = \varphi_{V'*}\varphi_{V'}^{-1}]f[_{u}^{-1}\mathcal F= ]f[_{u}^{-1}\mathcal F. \qedhere
\]
\end{proof}

Assume that we are given a morphism $(f,u) : (X',V') \to (X,V)$ of overconvergent spaces over $T$.
Then, if $E$ is a sheaf on $T$, there exists a \emph{transition map}
\[
\xymatrix{
]f[_{u}^{-1}E_{V} \ar@{=}[d] \ar[rr]^{\eta^{-1}}&& E_{V'}\ar@{=}[d].\\ ]f[_{u}^{-1}\varphi_{V*}E_{|V}\ar[r] & \varphi_{V'*}(f,u)^{-1}E_{|V} \ar@{=}[r]&\varphi_{V'*}E_{|V'}.}
\]
We wrote $\eta^{-1}$ because the morphism $(f,u)$ also comes with a natural transformation $\eta$ and the transition map also depends on this transformation.
%When $\eta$ is an equivalence (which is automatic when $T$ is fibered in groupoids and in particular when $T$ is classic), we call the transition map \emph{cartesian}.
It should be noticed that, by adjunction, the transition maps may as well be given on the form $E_{V} \to ]f[_{u*}E_{V'}$.

There exists another interpretation of the transition map:

%%%%%%%%%%%%%
\begin{lem} \label{alttr}
If $(f,u) : (X',V') \to (X,V)$ is a morphism of overconvergent spaces over $T$, then the transition map for a sheaf $E$ on $T$ is the realization on $V'$ of the adjunction map $\varphi_{V}^{-1}\varphi_{V*}E \to E$.
\end{lem}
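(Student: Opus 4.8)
The plan is to unwind both sides into (co)unit data for the adjunction $(\varphi_V^{-1},\varphi_{V*})$ of lemma \ref{adjret} and observe that they coincide; the statement is formal, its only content being the compatibility of lemma \ref{comphi} with these adjunctions. I write $\epsilon_V \colon \varphi_V^{-1}\varphi_{V*} \to \mathrm{Id}$ and $\theta_V \colon \mathrm{Id} \to \varphi_{V*}\varphi_V^{-1}$ for the counit and unit of the adjunction (and likewise for $V'$), and abbreviate $E_{|V} := E_{|(X,V)}$. Recall from the discussion following lemma \ref{adjret} that $\varphi_{V'}^{-1}$ is fully faithful, so that $\theta_{V'}$ is an isomorphism and $\varphi_{V'*}\varphi_{V'}^{-1} = \mathrm{Id}$.

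The adjunction map of the statement is the counit $\epsilon_V$ at $E_{|V}$, namely $\varphi_V^{-1}\varphi_{V*}E_{|V} \to E_{|V}$, a morphism of sheaves on the site $(X,V)$. Realizing a sheaf $G$ on $(X,V)$ on the overconvergent space $(X',V')$ lying over it through $(f,u)$ means forming $G_{V'} = \varphi_{V'*}(f,u)^{-1}G$, so the arrow to study is $\varphi_{V'*}(f,u)^{-1}(\epsilon_{V,E_{|V}})$. First I would pin down its source and target. For the target, transitivity of restriction gives $(f,u)^{-1}E_{|V} = E_{|V'}$, whence $(E_{|V})_{V'} = \varphi_{V'*}E_{|V'} = E_{V'}$. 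For the source I would invoke lemma \ref{pulbk} with $\mathcal F = \varphi_{V*}E_{|V} = E_V$, giving $(\varphi_V^{-1}E_V)_{V'} = ]f[_u^{-1}E_V$; equivalently, lemma \ref{comphi} yields $(f,u)^{-1}\varphi_V^{-1} = \varphi_{V'}^{-1}]f[_u^{-1}$, so that applying $\varphi_{V'*}$ and using $\varphi_{V'*}\varphi_{V'}^{-1} = \mathrm{Id}$ collapses the source to $]f[_u^{-1}E_V$. Both the realized counit and the transition map of the displayed diagram therefore run $]f[_u^{-1}E_V \to E_{V'}$.

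It then remains to compare the two arrows, and this is where the single genuine point lies: the middle arrow in the definition of the transition map is the base change (Beck--Chevalley) transformation $]f[_u^{-1}\varphi_{V*} \to \varphi_{V'*}(f,u)^{-1}$ attached to the commutative square of lemma \ref{comphi}. By the very definition of the mate it factors as
\[
]f[_u^{-1}\varphi_{V*} \xrightarrow{\ \theta_{V'}\ } \varphi_{V'*}\varphi_{V'}^{-1}]f[_u^{-1}\varphi_{V*} = \varphi_{V'*}(f,u)^{-1}\varphi_V^{-1}\varphi_{V*} \xrightarrow{\ \varphi_{V'*}(f,u)^{-1}\epsilon_V\ } \varphi_{V'*}(f,u)^{-1},
\]
the middle equality being again lemma \ref{comphi}. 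Under the identification $\varphi_{V'*}\varphi_{V'}^{-1} = \mathrm{Id}$ the unit $\theta_{V'}$ is the identity, so the base change map reduces to $\varphi_{V'*}(f,u)^{-1}\epsilon_V$, which is exactly the realized counit found above. Evaluating at $E_{|V}$ identifies the transition map with the realization on $V'$ of $\epsilon_V$, as asserted.

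The main obstacle is thus bookkeeping rather than mathematics: one must recognize the middle arrow of the transition diagram as the Beck--Chevalley mate of lemma \ref{comphi} and check that the unit $\theta_{V'}$ is suppressed by fully faithfulness of $\varphi_{V'}^{-1}$. I would also keep an eye on the structural $2$-cell $\eta$ carried by $(f,u)$ over $T$ (the one giving the label $\eta^{-1}$ to the transition map): it plays no role in the classic case and is harmlessly absorbed into the identification $(f,u)^{-1}E_{|V} = E_{|V'}$.
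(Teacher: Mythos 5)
Your proof is correct and follows essentially the same route as the paper: the paper's own proof is the one-line "follows from lemma \ref{pulbk} applied to $\mathcal F = E_V$", whose content is precisely your computation — identify the source of the realized counit via lemma \ref{comphi}, and use $\varphi_{V'*}\circ\varphi_{V'}^{-1}=\mathrm{Id}$ to see that the base change arrow in the definition of the transition map collapses to $\varphi_{V'*}(f,u)^{-1}(\epsilon_V)$. You have merely written out the mate factorization that the paper leaves implicit.
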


\begin{proof}
Follows from lemma \ref{pulbk} applied to $\mathcal F = E_{V}$.
\end{proof}

%%%%%%%%%%
\begin{prop} \label{realsh}
Giving a sheaf $E$ on $T$ is equivalent to giving the collection of its realizations $E_{V}$ for $s : (X,V) \to T$ together with transition maps $\eta^{-1} : ]f[_{u}^{-1}E_{V} \to E_{V'}$ for all morphisms
\begin{align*}
\eta : \xymatrix{
 (X',V') \xtwocell[0,2]{}\omit{<2>} \ar[rd]_{(f,u)} \ar@/^.2cm/[rr]^{s'}
&& T \\
& (X,V) \ar[ru]_s &}
\end{align*}
in such a way that the diagram
\[
\xymatrix{ ]f'[_{u'}^{-1}(]f[_{u}^{-1}E_{V}) \ar[rr]^-{]f'[_{u'}^{-1}(\eta^{-1})} \ar@{=}[d] && ]f'[_{u'}^{-1} E_{V'} \ar[d]^{\eta'^{-1}}
\\ ]f\circ f'[_{u \circ u'}^{-1}E_{V} \ar[rr]^-{(f'^{-1}(\eta)\circ \eta')^{-1}} && E_{V''}
}
\]
commutes when
\begin{align*}
\eta' : \xymatrix{
 (X'',V'') \xtwocell[0,2]{}\omit{<2>} \ar[rd]_{(f',u')} \ar@/^.2cm/[rr]^{s''}
&& T \\
& (X',V') \ar[ru]_{s'} &}
\end{align*}
is another morphism (cocycle condition) and $]\mathrm{Id}[_{u}^{-1} E_{V} = E_{V'}$ when $u : V' \hookrightarrow V$ is an open immersion (normalization condition).
\end{prop}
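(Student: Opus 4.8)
The plan is to exhibit the realization functor $E \mapsto (E_V, \eta^{-1})$ as an equivalence between $\widetilde T$ and the category $\mathcal D$ of \emph{descent data}: collections $(E_V)$ of sheaves on the tubes $\,]X[_V$, indexed by the objects $s : (X,V) \to T$ of $T$, equipped with transition maps satisfying the cocycle and normalization conditions. The forward construction is already set up before the statement, so the first task is to check that the realizations of a genuine sheaf do land in $\mathcal D$. Normalization is immediate from Lemma~\ref{pulbk} applied to $\mathcal F = E_V$: when $f = \mathrm{Id}$ and $u$ is an open immersion, $\,]\mathrm{Id}[_u^{-1}$ is just restriction to the open subset $\,]X[_{V'} \subset \,]X[_V$. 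The cocycle condition follows from Lemma~\ref{alttr}: since every transition map is the realization of the \emph{single} adjunction map $\varphi_V^{-1}\varphi_{V*}E \to E$, the composition of transition maps is governed by the pseudofunctoriality $\,]f\circ f'[_{u\circ u'}^{-1} = \,]f'[_{u'}^{-1}\,]f[_u^{-1}$ that Lemma~\ref{comphi} encodes.

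The heart of the proof is the reconstruction, i.e.\ a quasi-inverse $\mathcal D \to \widetilde T$. Given descent data $(E_V, \eta^{-1})$, I would define a presheaf $P$ on $T$ by $P(t) := \Gamma(\,]X[_V, E_V)$ for $t = [\,s : (X,V) \to T\,]$, with restriction along a morphism $((f,u),\eta) : t' \to t$ given by $\Gamma(\,]X[_V, E_V) \xrightarrow{\,]f[_u^{-1}} \Gamma(\,]X'[_{V'}, \,]f[_u^{-1}E_V) \xrightarrow{\eta^{-1}} \Gamma(\,]X'[_{V'}, E_{V'})$. Functoriality of $P$ is exactly the cocycle condition, and $P(\mathrm{id}) = \mathrm{id}$ is the normalization condition, so $P$ is a well-defined presheaf. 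The crucial point is that this uses the realizations over \emph{all} objects of $T$ simultaneously: a single $E_V$ does not recover $E_{|(X,V)}$ on the big site, but the global sections of the whole family recover the presheaf $E$ on all of $T$, the general (non-open-immersion) restriction maps being precisely the information carried by the transition maps.

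Next I would verify that $P$ is a sheaf for the inherited topology. Because the topology on $\mathbf{Ad}^\dagger$ comes from the adic side and leaves the formal scheme unchanged, covering families of $(X,V)$ are open covers $\{(X,V_i) \to (X,V)\}$ of the tube $\,]X[_V$; and since the topology on $T$ is the coarsest making the projection to $\mathbf{Ad}^\dagger$ cocontinuous, covering families of $t$ are cartesian lifts of such tube covers. For such a cover, normalization identifies $E_{V_i}$ with $E_V|_{\,]X[_{V_i}}$, so the sheaf axiom for $P$ at $t$ becomes precisely the gluing condition for $E_V$ over the open cover $\{\,]X[_{V_i}\}$ of $\,]X[_V$, which holds by hypothesis; Lemma~\ref{adjret} is what guarantees that being a sheaf on the overconvergent site $(X,V)$ is detected on the tube. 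Finally, the two composites are isomorphic to the identity by inspection: starting from $E$ one gets $P(t) = \Gamma(\,]X[_V, \varphi_{V*}E_{|(X,V)}) = \Gamma((X,V), E_{|(X,V)}) = E(t)$ naturally, and starting from $(E_V)$ one gets $P_V(\,]X[_{V'}) = P(s') = \Gamma(\,]X[_{V'}, E_{V'}) = E_V(\,]X[_{V'})$ naturally, again using normalization.

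The main obstacle I expect is the sheaf-condition step: one must pin down exactly which families are covering for the inherited topology on a fibered category and check that the sheaf axiom on $T$ matches the tube gluing with no leftover conditions, all the while keeping careful track of the $2$-cells $\eta$ and the pseudofunctorial coherence so that the cocycle identity is applied in the correct form. Everything else is formal, the preparatory Lemmas~\ref{comphi}, \ref{pulbk} and \ref{alttr} having been arranged precisely to discharge the cocycle and normalization bookkeeping.
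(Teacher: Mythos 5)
Your proposal is correct and follows essentially the same route as the paper: the quasi-inverse is the identical construction (sections of the realizations on the tubes, $E(X',V') := \Gamma(\,]X'[_{V'}, E_{V'})$, with restriction maps obtained from the transition maps via adjunction), the cocycle condition yielding the presheaf property and the normalization condition yielding the sheaf property, with the final compatibility checks left as routine in both cases. The only difference is organizational: the paper first reduces to the representable case $T = (X,V)$ by invoking the crystalline description of sheaves on a fibered category over a site (section \ref{topfib} of the appendix), whereas you work directly on all of $T$ by analyzing covering sieves for the inherited topology — which amounts to inlining that same reduction.
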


\begin{proof}
Due to the general description of sheaves on a fibered category over a site (see section \ref{topfib}), we may assume that $T = (X,V)$ is representable.
The family $E_{V'}$ for $(X',V') \to (X,V)$ being given, we simply set
\[
E(X',V') := \Gamma(\,]X'[_{V'}, E_{V'}).
\]
If, for $(f,u) : (X'', V'') \to (X',V')$, we interpret the transition map as a morphism $E_{V'} \to ]f[_{u*}E_{V''}$, then we obtain the restriction maps
\[
E(X',V') = \Gamma(\,]X'[_{V'}, E_{V'}) \to  \Gamma(\,]X'[_{V'}, ]f[_{u*}E_{V''}) = \Gamma(\,]X''[_{V''}, E_{V''}) = E(X'',V'').
\]
The cocycle condition implies that this does define a presheaf and the normalization condition shows that it is actually a sheaf.
It remains to do all the tedious verifications that we do obtain an equivalence of categories.
\end{proof}

It is important to recall that, in the proposition, $E_{V}$ actually depends on the structural map $s : (X,V) \to T$ and not only on $(X,V)$.

There exists a more formal interpretation of the previous statement.
Let us consider the fibered category\footnote{This is actually a category bifibered in dual of topoi.} $\mathrm{Sets}^\dagger$ over $\mathbf {Ad}^\dagger$ whose fiber over $(X,V)$ is opposite to the category of sheaves of sets on $]X[_V$.
If $(f,u) : (Y,W) \to (X,V)$ is a morphism and $\mathcal F$ (resp. $\mathcal G$) is a sheaf on $]X[_V$ (resp. $]Y[_W$), then a morphism $\mathcal G$ to $\mathcal F$ over $(f,u)$ is a morphism of sheaves $\mathcal F \to ]f[_{u*}\mathcal G$, or equivalently a morphism $]f[_u^{-1}\mathcal F \to \mathcal G$.
Now, if $T$ is an overconvergent site, then proposition \ref{realsh} may be rephrased by saying that there exists an equivalence of topoi:
\[
\widetilde T \simeq \mathrm{Hom}_{\mathbf {Ad}^\dagger}(T, \mathrm{Sets}^\dagger).
\]
Alternatively, we may consider the fibered category $\mathrm{Sets}_T^\dagger  := T \times_{\mathbf{Ad}^\dagger} \mathrm{Sets}^\dagger$ over $T$ and obtain an equivalence
\[
\widetilde T \simeq \mathrm{Hom}_T(T, \mathrm{Sets}_T^\dagger)
\]
with the topos of sections of $\mathrm{Sets}^\dagger_T$.

The following is worth mentioning:

%%%%%%%%%%%
\begin{cor} \label{comlim}
If $\underline E \to \underline E'$ is a morphism of diagrams of sheaves on $T$, then
\[
\varprojlim \underline E \simeq \varprojlim \underline E' \Leftrightarrow \forall (X,V) \to T, \varprojlim \underline {E_{V}} \simeq \varprojlim \underline {E'_{V}}
\]
and
\[
\varinjlim \underline E \simeq \varinjlim \underline E' \Leftrightarrow \forall (X,V) \to T, \varinjlim \underline {E_{V}} \simeq \varinjlim \underline {E'_{V}}
\]
\end{cor}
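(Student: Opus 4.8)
The plan is to reduce the whole statement to one conservativity principle — a morphism of sheaves on $T$ is an isomorphism if and only if all of its realizations are isomorphisms — combined with the fact, recorded right after the definition of realization, that the functor $E \mapsto E_V$ commutes with arbitrary limits and colimits.

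First I would make explicit that the morphism of diagrams $\underline E \to \underline E'$ induces a map $\varprojlim \underline E \to \varprojlim \underline E'$ (resp.\ $\varinjlim \underline E \to \varinjlim \underline E'$), and that the asserted equivalences are to be read as saying that this induced map is an isomorphism. Realizing on a fixed overconvergent space $s : (X,V) \to T$ and using that $\varphi_{V*}$ and $(f,u)^{-1}$ — hence the realization functor itself — commute with limits and colimits, I obtain canonical identifications $(\varprojlim \underline E)_V \simeq \varprojlim \underline{E_V}$ and $(\varinjlim \underline E)_V \simeq \varinjlim \underline{E_V}$, and likewise for $\underline{E'}$, compatibly with the maps induced by $\underline E \to \underline E'$. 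Thus the realization on $V$ of $\varprojlim \underline E \to \varprojlim \underline E'$ is exactly the arrow $\varprojlim \underline{E_V} \to \varprojlim \underline{E'_V}$ appearing on the right-hand side, and symmetrically for colimits.

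The heart of the argument is then the claim that a morphism $F \to G$ of sheaves on $T$ is an isomorphism precisely when every realization $F_V \to G_V$ is. This I would extract directly from proposition \ref{realsh}: that statement identifies $\widetilde T$ with the category of families $(E_V)_{(X,V) \to T}$ equipped with transition maps satisfying the cocycle and normalization conditions, and an arrow in such a category is invertible if and only if it is invertible on each $E_V$. Granting this, the map $\varprojlim \underline E \to \varprojlim \underline E'$ is an isomorphism if and only if each of its realizations is, which by the previous paragraph is exactly the condition that $\varprojlim \underline{E_V} \to \varprojlim \underline{E'_V}$ be an isomorphism for every $(X,V) \to T$; the colimit case is identical.

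The only genuine subtlety — and the main, albeit minor, obstacle — is this conservativity of the family of realization functors. I do not expect anything deep, since it is essentially built into proposition \ref{realsh}, but one must check that the inverses of the individual realizations are compatible with the transition maps, so that they assemble into a bona fide inverse in $\widetilde T$. This compatibility is forced: the transition maps of $F$ and $G$ are natural in the morphisms $(f,u)$ and the component maps $F_V \to G_V$ are assumed invertible, so the inverse family automatically inherits the transition data. Once this is in place, no further computation is needed.
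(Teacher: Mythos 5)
Your proposal is correct and follows essentially the same route as the paper: the paper's proof is precisely that proposition \ref{realsh} makes the family of realization functors conservative and commuting with all limits and colimits. Your additional check that the inverses of the component isomorphisms are automatically compatible with the transition maps is the (straightforward) verification the paper leaves implicit.
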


\begin{proof}
Follows from proposition \ref{realsh} which implies that the family of realizations is conservative and commutes with all limits and colimits.
\end{proof}

Note also that there exists an analogous assertion for tensor products and internal Hom.

As a consequence proposition \ref{realsh}, we also see that the topos $\widetilde T$ has enough points: it is actually sufficient to consider the (genuine) points $x \in \,]X[_{V}$ for all overconvergent spaces $(X,V)$ over $T$.
There exists an obvious analog of corollary \ref{comlim} with points instead of realizations but it only holds for \emph{finite} limits and general colimits (because inverse image does not preserve infinite limits in general).

For further use, let us also show the following :

%%%%%%%%%%%%%%%%%
\begin{lem} \label{cartcdb}
Let
\[
\xymatrix{(X',V') \ar[r]^{(f,u)} \ar[d]^{s'} & (X,V) \ar[d]^{s} \\ T' \ar[r]^v & T }
\]
be a $2$-cartesian diagram of overconvergent sites with a representable upper map.
If $E$ is a sheaf on $T'$, then
\[
(v_{*}E)_{V} \simeq ]f[_{u*}E_{V'}. 
\]
For a complex $E$ of abelian sheaves, we have $(\mathrm R v_{*}E)_{V} \simeq \mathrm R]f[_{u*}E_{V'}$.
\end{lem}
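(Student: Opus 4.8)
The plan is to unwind both sides of the claimed identity into inverse/direct images on the relevant topoi and to reduce the entire statement to a single base change isomorphism attached to the given square.

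First I would record what the two sides are. By the definition of realization, $(v_*E)_V = \varphi_{V*}\,s^{-1}(v_*E)$, where $s\colon (X,V)\to T$ is the structural map and $s^{-1}$ is restriction along it; likewise $E_{V'} = \varphi_{V'*}\,s'^{-1}E$. Reading Lemma \ref{comphi} on direct images, that is, as the equality of morphisms of topoi $]f[_{u*}\circ\varphi_{V'*} = \varphi_{V*}\circ(f,u)_*$, I get $]f[_{u*}E_{V'} = \varphi_{V*}\,(f,u)_*\,s'^{-1}E$. Since both expressions now begin with $\varphi_{V*}$, it suffices to produce a natural isomorphism
\[
s^{-1}v_*E \simeq (f,u)_*\,s'^{-1}E,
\]
i.e.\ the base change isomorphism for the $2$-commutative square $v\circ s' \simeq s\circ(f,u)$.

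The base change is where representability enters, and it is the heart of the matter. Because the square is $2$-cartesian with representable upper map, $(X',V')=(X,V)\times_T T'$, and for any object $g\colon(Y,W)\to(X,V)$ of the big site the fibre product $(Y',W'):=(Y,W)\times_{(X,V)}(X',V')$ is again representable. I would then evaluate both sheaves on such a $g$. Using that $s$ is representable and the description of $v^{-1}$ on representables (section \ref{topfib}), cocontinuity of $v$ gives $(s^{-1}v_*E)(g) = (v_*E)(s\circ g) = \mathrm{Hom}(v^{-1}h_{s\circ g},E) = E(Y',W')$, while the standard formula for direct image along the representable morphism $(f,u)$ gives $((f,u)_*s'^{-1}E)(g) = (s'^{-1}E)(Y',W') = E(Y',W')$. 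The two agree, and it is precisely the cartesian square that identifies the fibre products; this yields the underived isomorphism.

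For the derived statement I would first observe that realization is \emph{exact}: it equals $\varphi_{V*}\circ s^{-1} = \psi_V^{-1}\circ s^{-1}$, a composite of inverse-image functors, so it commutes with the formation of $\mathrm R$, and since $\varphi_{V*}$, $\varphi_{V'*}$ are exact one has $\mathrm R\varphi_{V*}=\varphi_{V*}$. Repeating the reduction above with $\mathrm R$ inserted brings the claim down to the derived base change $s^{-1}\mathrm Rv_*E \simeq \mathrm R(f,u)_*s'^{-1}E$. I would prove this by resolving $E$ by injectives $E\to I^\bullet$ on $T'$, so that $\mathrm Rv_*E = v_*I^\bullet$, and applying the underived base change termwise; it then remains to check that $s'^{-1}I^\bullet$ computes $\mathrm R(f,u)_*s'^{-1}E$, i.e.\ that each $s'^{-1}I^n$ is $(f,u)_*$-acyclic. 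This acyclicity is the main obstacle: it amounts to knowing that restriction along the representable object $s'$ behaves like a localization, so that $s'^{-1}$ admits an exact left adjoint and hence preserves injectives (after which $\varphi_{V'*}$, having the exact left adjoint $\varphi_{V'}^{-1}$, keeps the realization injective on the tube, whence $(f,u)_*$-acyclic). I would isolate this localization property of representable objects as a separate point, citing the fibered-category appendix \ref{fibcat} rather than reproving it here.
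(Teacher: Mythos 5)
Your overall strategy coincides with the paper's: use Lemma \ref{comphi} to rewrite $]f[_{u*}E_{V'}$ as $\varphi_{V*}(f,u)_*s'^{-1}E$, reduce everything to the base-change identity $s^{-1}\circ v_*\simeq (f,u)_*\circ s'^{-1}$ attached to the $2$-cartesian square, and settle the derived statement by exactness of the realization functors together with preservation of injectives by $s'^{-1}$ and $\varphi_{V'*}$. That last part of your argument is correct and is exactly what the paper's one-line justification (``all exact and preserve injectives'') amounts to.

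The gap is in your hand-made proof of the base change. You assert that, because the square is $2$-cartesian with representable upper map, the fibre product $(Y',W'):=(Y,W)\times_{(X,V)}(X',V')$ is again representable for \emph{every} object $g\colon (Y,W)\to(X,V)$ of the big site; both of your evaluation formulas (the identification of $v^{-1}h_{s\circ g}$ with a representable sheaf, and the formula $((f,u)_*s'^{-1}E)(g)=(s'^{-1}E)(Y',W')$) rest on this. It does not follow from the hypotheses: the category $\mathbf{Ad}^\dagger$ does not admit fibre products in general, since fibre products of locally noetherian formal schemes need not be locally noetherian and fibre products of adic spaces locally of noetherian type need not exist in that category --- this is precisely why the paper must impose ``self-products'' hypotheses in the sections on stratifications and on descent. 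Representability of the single fibre product $(X,V)\times_T T'$ gives no control over its further base changes. The paper sidesteps the issue entirely by invoking Lemma \ref{basfib} of the appendix, which is a base-change statement for the $2$-fibre product $T'\times_T(Y,W)$ regarded as a fibered category with its inherited topology, and so requires no representability whatsoever. Your evaluation argument can be repaired in the same spirit --- replace $(Y',W')$ by the fibered category $(Y,W)\times_T T'$ and check that both sides compute morphisms into $E$ from the sheaf this fibered category generates --- but as written the step you rely on fails in this category.
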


\begin{proof}
Since the diagram is $2$-cartesian, it follows from lemma \ref{basfib} in the appendix that $s^{-1} \circ v_{*} = (f,u)_{*} \circ s'^{-1}$ and we know from proposition \ref{comphi} that $\varphi_{V*} \circ (f,u)_{*} = ]f[_{u*} \circ \varphi_{V'*}$.
It follows that
\[
(v_{*}E)_{V} = \varphi_{V*}s^{-1}v_{*}E = \varphi_{V*}(f,u)_{*}s'^{-1}E = ]f[_{u*}\varphi_{V'*}s'^{-1}E = ]f[_{u*}E_{V'}.
\]
The last assertion then follows from the fact that $\varphi_{V*}$ and $\varphi_{V'*}$ as well as $s^{-1}$ and $s'^{-1}$ are all exact and preserve injectives.
\end{proof}

%%%%%%%%%%%%%%%%%
\subsection{Cohomology}

We will define here absolute and relative cohomology and explain how one can move back and forth between them.

If $T$ is an overconvergent site over some overconvergent space $(C,O)$ with structural morphism $f : T \to (C,O)$, then we will denote by
\[
p_{T/O} := \varphi_{O} \circ f :  T \to ]C[_{O}
\]
the composite morphism of sites.

%%%%%%%%%%
\begin{dfn} \label{absco}
Let $T$ be an overconvergent site over some overconvergent space $(C,O)$.
If $E$ is a complex of abelian sheaves on $T$, then its \emph{absolute cohomology} is $\mathrm Rp_{T/O*}E$.
\end{dfn}

We may also write
\[
\mathcal H^k(T/O, E) = \mathrm R^kp_{T/O*}E.
\]
In the case $T = (X/O)^\dagger$ or $T = (X,V/O)^\dagger$, we will simply write $\mathcal H^k((X/O)^\dagger, E)$ or $\mathcal H^k((X,V/O)^\dagger, E)$ respectively (and not repeat $O$).

\begin{xmp}
\begin{enumerate}
\item
This absolute cohomology is meant to be a generalization of Berthelot's rigid cohomology.
More precisely, assume $\mathcal V$ is a complete discrete valuation ring with residue field $k$ and $S$ is a formal scheme topologically of finite type over $\mathcal V$.
If $E$ is an overconvergent isocrystal on some algebraic variety $X/S$, then one can define $\mathcal H_{\mathrm{rig}}^k(X/S, E)$.
One may then interpret $E$ as a sheaf on $(X/S)^\dagger$ and show that
\[
\mathcal H_{\mathrm{rig}}^k(X/S, E) = \mathcal H^k((X/S)^\dagger, E)
\]
(see theorem \ref{crismic} and corollary \ref{cohdR} below).
\item
Using lemma \ref{comphi}, we see that if $(f,u) : (X,V) \to (C,O)$ is a morphism of overconvergent spaces and $E$ is a complex of abelian sheaves on $(X,V)$, then
\[
\mathrm Rp_{X,V/O*}E = \mathrm R]f[_{u*}E_{V}
\]
only depends on the realization of $E$ on $V$ (in this a very specific situation).
\end{enumerate}
\end{xmp}

It should be mentioned that, if $g : T' \to T$ is a morphism of overconvergent sites and $E'$ is a complex of abelian sheaves on $T'$, then
\[
\mathrm Rp_{T'/O*}E' = \mathrm Rp_{T/O*}\mathrm Rg_*E'.
\]

We can switch between absolute and relative cohomology (derived direct image) as follows:

%%%%%%%%%%%%
\begin{prop} \label{bicom}
\begin{enumerate}
\item If $T$ is an overconvergent site over an overconvergent space $(C,O)$ with structural morphism $f : T \to (C,O)$ and $E$ is a complex of abelian sheaves on $T$, then
\[
\mathrm Rp_{T/O*}E \simeq (\mathrm Rf_{*}E)_{O}.
\]
\item If $f : T' \to T$ is a morphism of overconvergent sites, $(X,V)$ is an overconvergent space over $T$, and $E$ is a complex of abelian sheaves on $T'$ then
\[
(\mathrm Rf_{*}E)_{V} \simeq \mathrm Rp_{T'_{V}/V*}E_{|T'_{V}}
\]
with $T'_{V} := T' \times_{T} (X,V) \to (X,V)$.
\end{enumerate}
\end{prop}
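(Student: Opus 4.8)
The plan is to prove the two parts in order: part (1) is essentially the factorization $p_{T/O} = \varphi_O \circ f$ combined with the exactness of realization, and part (2) follows from it together with a base change isomorphism. For (1), I would first recall from the discussion following Lemma~\ref{adjret} that $\varphi_{O*} = \psi_O^{-1}$ is exact, so that $\mathrm R\varphi_{O*} = \varphi_{O*}$ and, applied to a complex, realization is computed term by term; in particular $(\mathrm Rf_* E)_O = \varphi_{O*}\,\mathrm Rf_* E$. On the other hand, $f$ being a morphism of sites, the functor $f_*$ admits the exact left adjoint $f^{-1}$ and hence preserves injectives, so the Grothendieck spectral sequence for the composite $p_{T/O} = \varphi_O \circ f$ collapses to $\mathrm Rp_{T/O*} = \mathrm R\varphi_{O*} \circ \mathrm Rf_* = \varphi_{O*} \circ \mathrm Rf_*$. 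Comparing the two expressions yields $\mathrm Rp_{T/O*}E \simeq (\mathrm Rf_* E)_O$.

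For (2), I would introduce the structural map $s : (X,V) \to T$ and the $2$-cartesian square
\[
\xymatrix{
T'_V \ar[r]^{g} \ar[d]^{f_V} & T' \ar[d]^f \\
(X,V) \ar[r]^s & T,
}
\]
so that $E_{|T'_V} = g^{-1}E$ and $p_{T'_V/V} = \varphi_V \circ f_V$. Unwinding the definition of realization gives $(\mathrm Rf_* E)_V = \varphi_{V*}\,s^{-1}\mathrm Rf_* E$, while part (1) applied to $f_V : T'_V \to (X,V)$ gives $\mathrm Rp_{T'_V/V*}E_{|T'_V} = \varphi_{V*}\,\mathrm Rf_{V*}\,g^{-1}E$. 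The assertion therefore reduces to the derived base change isomorphism $s^{-1}\mathrm Rf_* E \simeq \mathrm Rf_{V*}\,g^{-1}E$ for the square above, after which one applies the exact functor $\varphi_{V*}$ to both sides.

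The main work, and the one genuine obstacle, is this derived base change. At the level of (abelian) sheaves the identity $s^{-1}f_* = f_{V*}\,g^{-1}$ is exactly Lemma~\ref{basfib}, valid for any $2$-cartesian square of fibered categories; note that here I do \emph{not} assume $T'_V$ representable, so Lemma~\ref{cartcdb} does not apply directly, only the more primitive Lemma~\ref{basfib}. To promote this to the derived level I would argue as in the last sentence of the proof of Lemma~\ref{cartcdb}: the inverse-image functors $s^{-1}$ and $g^{-1}$ are exact and, being restrictions to representable fibres (localizations admitting exact left adjoints), preserve injectives. Evaluating on an injective complex $E$ then gives $s^{-1}\mathrm Rf_* E = s^{-1}f_* E = f_{V*}\,g^{-1}E = \mathrm Rf_{V*}\,g^{-1}E$, which is the isomorphism sought. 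The delicate point is precisely the injective-preservation of $g^{-1}$ when the top map is \emph{not} representable, which is why I isolate it as the crux of the argument.
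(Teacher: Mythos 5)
Your proof is correct and follows essentially the paper's own argument: part (1) is exactly the paper's observation that exactness of $\varphi_{O*}$ makes the composite collapse to $\mathrm Rp_{T/O*} = \varphi_{O*}\circ \mathrm Rf_{*}$, and part (2) uses the same $2$-cartesian square, the same reduction to derived base change, and the same final application of exactness of $\varphi_{V*}$. The step you isolate as the crux --- promoting Lemma~\ref{basfib} to the derived level when the top map is not representable --- is precisely the statement recorded in the appendix immediately after Lemma~\ref{basfib}, namely that $j_{1}^{-1}\circ \mathrm Rj_{2*} = \mathrm Rp_{2*}\circ p_{1}^{-1}$ ``formally follows'' for arbitrary $2$-cartesian squares of fibered categories, so you may simply invoke that statement instead of re-deriving it via injective preservation.
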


\begin{proof}
Since $\varphi_{V*}$ is exact, the first assertion is trivial and the second one follows from lemma \ref{basfib} in the appendix.
More precisely, there exists a $2$-cartesian diagram
\[
\xymatrix{T'_{V} \ar[r]^{f'}\ar[d]^{j'}  & (X,V) \ar[d]^j \\ T' \ar[r]^f & T}
\]
of overconvergent sites so that
\[
\xymatrix{
(\mathrm Rf_{*}E)_{V} \ar@{=}[d] \ar@{=}[rr]&& \mathrm Rp'_{V*}E_{|T'_V}\ar@{=}[d].\\ \varphi_{V*}j^{-1}\mathrm Rf_{*}E\ar@{=}[r] & \varphi_{V*} Rf'_{*}j'^{-1}E \ar@{=}[r]&\mathrm  R(\varphi_{V*} \circ f'_{*})j'^{-1}E }\qedhere
\]
\end{proof}

%%%%%%%%%%%%%%%
\begin{xmp}
\begin{enumerate}
\item
If $f : X \to C$ is a morphism of formal schemes, $(C', O')$ is an overconvergent space over $C$ (which simply means that we are given a morphism of formal schemes $C' \to C$) and $E$ is a complex of abelian sheaves on $X^\dagger$, then
\[
(\mathrm Rf_{*}E)_{O'} \simeq \mathrm Rp_{X_{C'}/O'*}E_{|X_{C'}/O'}.
\]
\item More interesting: if $(C,O)$ is an overconvergent space, $f : Y \to X$ is a morphism of formal schemes over $C$, $(U,V)$ is an overconvergent space over $(X/O)^\dagger$ and $E$ is a complex of abelian sheaves on $(Y/O)^\dagger$ then
\[
(\mathrm Rf_{*}E)_{V} \simeq \mathrm Rp_{Y \times_{X} U/V*}E_{|Y \times_{X} U/V}.
\]
This is a powerful tool to reduce the study of relative cohomology to absolute cohomology. We shall come back to this question later.
\item Another interesting case is the following: if $(X,V) \to (C,O)$ is a morphism of overconvergent spaces, we may consider the morphism
\[
j_V : (X,V) \to (X,V/O)^\dagger.
\]
If $E$ is a complex of abelian sheaves on $(X,V)$ and $(X',V')$ is an overconvergent space over $(X,V/O)^\dagger$, then we have
\[
(\mathrm Rj_{V*}E)_{V'} \simeq \mathrm R]p_{1}[_*E_{V' \times_O V}.
\]
The same formula holds with $(X/O)^\dagger$ instead of $(X,V/O)^\dagger$.
\end{enumerate}
\end{xmp}

%%%%%%%%%%%%%%
\subsection{Embeddings}

Formal embeddings will play an important role in the future because they are used to reduce the study of constructible crystals to finitely presented crystals (classically called overconvergent isocrystals).

So, we want now to investigate the behaviour of abelian sheaves on an overconvergent site along a formal embedding.
We start with the following general consideration: if $f : Y \to X$ is morphism of formal schemes and $T$ is an overconvergent site over $X^\dagger$, we let $T_{Y} = T \times_{X^\dagger} Y^\dagger\ (= T \times_{X} Y)$ and still denote by $f : T_{Y} \to T$ the corresponding map.
If $E$ is a sheaf on $T$, we will also write $E_{|Y} := f^{-1}E$ which is a sheaf on $T_{Y}$.
If $g : T' \to T$ is a morphism of overconvergent sites, $g_Y : T'_Y\to T_Y$ denotes its pullback and $E$ is a sheaf (resp.\ a complex of abelian sheaves) on $T'$, then it follows from lemma \ref{basfib} that
\[
(g_*E)_{|Y} \simeq g_{Y*}E_{|Y} \quad (\mathrm{resp.}\ (\mathrm Rg_*E)_{|Y} \simeq \mathrm Rg_{Y*}E_{|Y}).
\]

Now, we let $\gamma : Y \hookrightarrow X$ be a formal embedding and $T$ an overconvergent site over $X^\dagger$.

%%%%%%%%%%%%
\begin{lem}
$\widetilde T_{Y}$ is an open subtopos of $\widetilde T$.
Moreover, if $Y' \hookrightarrow X$ is another formal embedding with $Y \cap Y'= \emptyset$, then $\widetilde T_{Y} \cap \widetilde T_{Y'} = \widetilde \emptyset$.
\end{lem}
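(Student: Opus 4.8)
The plan is to realize $\widetilde T_Y$ as the slice $\widetilde T_{/U}$ of $\widetilde T$ over a subterminal object $U$, which is exactly what it means for it to be an open subtopos. First I would build $U$ directly from its realizations, using proposition \ref{realsh}. For an overconvergent space $s\colon(X',V')\to T$ over $T$, composing the structural morphism with $T\to X^\dagger$ gives a morphism of formal schemes $X'\to X$; I set $U_{V'}$ to be the terminal sheaf $*$ on $]X'[_{V'}$ when $X'\to X$ factors through $\gamma\colon Y\hookrightarrow X$, and the empty sheaf $\emptyset$ otherwise. Since $\gamma$ is a monomorphism, factoring through $Y$ is a \emph{property} of $X'\to X$ rather than extra data, so this assignment is unambiguous.

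Next I would check that these realizations, together with the only possible transition maps, satisfy the cocycle and normalization conditions of proposition \ref{realsh}, and hence glue to an honest sheaf $U$ on $T$. This is the point to handle with care. Given a morphism $(f,u)\colon(X'',V'')\to(X',V')$ over $T$, the induced $X''\to X$ factors through $X'\to X$, so whenever $X'$ lies over $Y$ so does $X''$; consequently the required transition map $]f[_u^{-1}U_{V'}\to U_{V''}$ is in every case either an identity, or the canonical arrow $\emptyset=\,]f[_u^{-1}\emptyset\to *$, or the canonical arrow out of $\emptyset$, and the verifications are immediate. Because each realization $U_{V'}\hookrightarrow *$ is a monomorphism and the family of realizations is conservative and exact (corollary \ref{comlim}), the resulting map $U\to *$ is a monomorphism; that is, $U$ is subterminal. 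Leaning on proposition \ref{realsh} here is precisely what lets me avoid a direct saturation argument for the defining sieve.

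It then remains to identify $\widetilde T_{/U}$ with $\widetilde{T_Y}$. Since $U$ is subterminal, a sheaf $F$ on $T$ lies in the slice exactly when $F\xrightarrow{\ \sim\ }F\times_* U$, which by corollary \ref{comlim} means $F_{V'}=\emptyset$ for every $(X',V')$ whose formal part does not factor through $Y$. By proposition \ref{realsh} such an $F$ is determined by its realizations on the overconvergent spaces lying over $Y$ together with the same transition data, i.e.\ by a sheaf on $T_Y=T\times_{X^\dagger}Y^\dagger$; this yields the desired equivalence $\widetilde T_{/U}\simeq\widetilde{T_Y}$, which is the standard identification of a slice over a subterminal object with sheaves on the corresponding open subsite (see \cite[\href{https://stacks.math.columbia.edu/tag/00XU}{Tag 00XU}]{stacks-project}). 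Hence $\widetilde{T_Y}$ is open.

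Finally, for the disjointness statement I would argue that forming $T_{(-)}$ turns intersections of formal embeddings into intersections of subtopoi. Indeed $T_Y\cap T_{Y'}$ consists of the objects of $T$ whose formal part factors through both $Y$ and $Y'$, hence through $Y\cap Y'=\emptyset$. An overconvergent space with empty formal part has empty tube, so its realization topos is trivial; equivalently, the subterminal object $U\wedge U'=U\times_* U'$ has all realizations equal to $\emptyset$ and is therefore the initial sheaf. Thus $\widetilde{T_Y}\cap\widetilde{T_{Y'}}=\widetilde T_{/(U\wedge U')}=\widetilde T_{/\emptyset}=\widetilde\emptyset$. The only real obstacle throughout is the sheaf axiom for $U$, which the realization criterion disposes of cleanly.
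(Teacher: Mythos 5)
Your proof is correct and is essentially the paper's argument made explicit: the paper's one-line proof rests on exactly the fact you isolate --- that $\gamma$ being a monomorphism makes ``lying over $Y$'' a property rather than data, so that $T_Y$ is a sieve in $T$ corresponding to a subterminal object $U$ of $\widetilde T$ with $\widetilde T_{/U} \simeq \widetilde{T_Y}$ --- and declares the rest formal. Your construction of $U$ via proposition \ref{realsh}, and the computation that $U \wedge U'$ has empty realizations because an empty formal part forces an empty tube, are faithful expansions of the paper's ``formally follows'' and ``automatic (pullback of disjoint open subtopoi)'' respectively.
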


\begin{proof}
The first assertion formally follows from the fact that a formal embedding is a monomorphism of formal schemes and the second one is also automatic (pullback of disjoint open subtopoi).
\end{proof}

As a first consequence, we see that the morphism of topoi $\gamma : \widetilde T_{Y} \hookrightarrow \widetilde T$ is an embedding which means that $\gamma^{-1} \circ \gamma_* = \mathrm{Id}$ or equivalently that $\gamma_*$ (or $\gamma_{!}$) is fully faithful.
Also, if $\gamma' : Y' \hookrightarrow X$ is another formal embedding with $Y \cap Y'= \emptyset$, then we will have $\gamma^{-1} \circ \gamma'_{*} = 0$ (use lemma \ref{basfib} of the appendix for example).
Note however that $\gamma'_{*} \circ \gamma^{-1} \neq 0$ in general.

The open subtopos $\widetilde T_{Y}$ has a closed complement (as a subtopos) and there exists therefore a functor $\underline \Gamma_{\complement Y}$ of sections with support outside $Y$ that may also be defined as
\[
\Gamma_{\complement Y}E := \ker(E \to \gamma_{*}E_{|Y})
\]
on abelian sheaves.
This functor has an exact adjoint and therefore preserves injectives and commutes with all limits.
As a consequence, there exists a distinguished triangle
\[
\mathrm R\Gamma_{\complement Y}E  \to E \to \mathrm R\gamma_{*}E_{|Y} \to \cdots
\]
when $E$ is a complex of abelian sheaves on $T$.
This will be especially interesting when $Y = U$ is an open subset or when $Y=Z$ is a closed subset in which case we will be able to interpret the complement as a genuine closed or open subset.

In what follows, if $E$ a sheaf on $T$, we will rather write $E_{X'}$ than $E_{V'}$ for the realization on some $(X', V')$.
We will give now an explicit description of direct image under a formal embedding (note that we simply write $]\gamma[$ for the morphism induced on the tubes):

%%%%%%%%%%%%%
\begin{prop} \label{gamet}
If $(X',V')$ is an overconvergent space over $T$, $E$ is a sheaf on $T_{Y}$ and $\gamma' : Y' \hookrightarrow X'$ denotes the inverse image of $\gamma$, then we have 
\[
(\gamma_{*}E)_{X'} = ]\gamma'[_{*}E_{Y'}.
\]
If $E$ is a complex of abelian sheaves on $T_{|Y}$, then $(\mathrm R\gamma_{*}E)_{X'} = \mathrm R]\gamma'[_{*}E_{Y'}$.
\end{prop}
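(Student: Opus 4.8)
The plan is to deduce this directly from the base change statement of Lemma~\ref{cartcdb}; essentially all the content lies in exhibiting the correct $2$-cartesian square of overconvergent sites and identifying one corner with $(Y',V')$. So first I would take the morphism of overconvergent sites $\gamma : T_Y \to T$ together with the representable object $s : (X',V') \to T$, and form the fiber product $(X',V') \times_T T_Y$.

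Since $T_Y = T \times_{X^\dagger} Y^\dagger$ by definition and the composite $(X',V') \to T \to X^\dagger$ is the object of $X^\dagger$ classified by the morphism of formal schemes $X' \to X$ underlying $(X',V')$, there is a canonical identification $(X',V') \times_T T_Y \simeq (X',V') \times_{X^\dagger} Y^\dagger$. The key point is then that the formal embedding $\gamma : Y \hookrightarrow X$ is a monomorphism of formal schemes and that locally closed embeddings are stable under base change: an object $(Z,W) \to (X',V')$ of the right hand side admits at most one lift of $Z \to X'$ to $Z \to Y' := X' \times_X Y$, and it admits one precisely when $Z \to X'$ factors through $Y'$. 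Hence the fiber product is the representable overconvergent space $(Y',V')$, carried by the same ambient formal scheme and the same $V'$ as $(X',V')$ but with formal scheme $Y'$, and the induced upper map $(Y',V') \to (X',V')$ has formal component $\gamma' : Y' \hookrightarrow X'$ and adic component the identity on $V'$; in particular its realization on tubes is $]\gamma'[ : \,]Y'[_{V'} \to \,]X'[_{V'}$.

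This produces the $2$-cartesian diagram
\[
\xymatrix{(Y',V') \ar[r]^{\gamma'} \ar[d] & (X',V') \ar[d]^{s} \\ T_Y \ar[r]^\gamma & T }
\]
with representable upper map, to which Lemma~\ref{cartcdb} applies verbatim (taking $v = \gamma$, the right hand overconvergent space to be $(X',V')$, and the upper arrow $(f,u) = (\gamma', \mathrm{id}_{V'})$, so that $]f[_{u} = \,]\gamma'[$). It yields $(\gamma_* E)_{V'} \simeq \,]\gamma'[_* E_{V'}$ for a sheaf $E$ on $T_Y$, which in the notation $E_{X'}, E_{Y'}$ adopted just before the statement reads $(\gamma_* E)_{X'} = \,]\gamma'[_* E_{Y'}$; the final clause of Lemma~\ref{cartcdb} gives simultaneously the derived statement $(\mathrm R\gamma_* E)_{X'} = \mathrm R]\gamma'[_* E_{Y'}$ for a complex of abelian sheaves.

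The only step requiring care is the identification of the fiber product as $(Y',V')$ together with the asserted upper map. Once the monomorphism property of the formal embedding $\gamma$ and the stability of locally closed embeddings under base change are invoked, everything else is a purely formal application of Lemma~\ref{cartcdb}, so I do not anticipate any genuine obstacle.
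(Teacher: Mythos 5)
Your proposal is correct and follows essentially the same route as the paper: the paper's proof consists precisely of observing that the square with upper map $(\gamma',\mathrm{Id}) : (Y',V') \to (X',V')$ and lower map $\gamma : T_Y \to T$ is $2$-cartesian and then invoking Lemma \ref{cartcdb}. Your additional verification that the fiber product $(X',V') \times_T T_Y$ is indeed $(Y',V')$ (via $T_Y = T \times_{X^\dagger} Y^\dagger$ and the monomorphism property of $\gamma$) just makes explicit what the paper leaves to the reader.
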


\begin{proof}
Since the diagram
\[
\xymatrix{(Y',V') \ar[r]^{(\gamma',\mathrm{Id})} \ar[d]^{j'} & (X',V') \ar[d]^{j} \\ T_{Y} \ar[r]^\gamma & T }
\]
is $2$-cartesian, this follows from lemma \ref{cartcdb} (or directly from lemma \ref{basfib} of the appendix).
\end{proof}

Be careful however that if $E$ is a sheaf on $T$, then $(\gamma^{-1}E)_{V'} \neq ]\gamma[^{-1}E_{V'}$ in general
 (again from lemma \ref{cartcdb}).
 
 Let us finish with the following remark: if we are given an overconvergent space $(X \overset \delta \hookrightarrow P \leftarrow V)$, then we have $]P[_V = V$ and we can therefore identify the inclusion map $i_X : \,]X[_V \hookrightarrow V$ with $]\delta[$.
There exists a partial converse: if $(X \overset \delta \hookrightarrow P \leftarrow V)$ is an analytic convergent ($X$ closed in $P$) space, then $W := \,]X[_V$ is an open subset of $V$, and if $\gamma : Y \hookrightarrow X$ is a formal embedding, then we can identify $]\gamma[$ with the inclusion map $i_Y : \,]Y[_W \hookrightarrow W$.

%%%%%%%%%%%%%%%%%%%%%
\subsection{Overconvergent direct image}

Usual direct image under a formal embedding will not preserve crystals in general and will have to be replaced by a more subtle functor.

As before, we denote by $\gamma : Y \hookrightarrow X$ a formal embedding.
We let $T$ be an \emph{analytic} overconvergent site over $X^\dagger$.
Recall that analytic means that all overconvergent spaces $(X,V)$ defined over $T$ are analytic.

We first recall that if $h$ is a locally closed embedding of topological spaces, then there exists a couple of adjoint functors $h_{!}, h^!$ on abelian sheaves such that $h_{!} = h_{*}$ (resp.\ $h^!=h^{-1}$) when $h$ is a closed (resp.\ an open) embedding.
The functor $h_{!}$ (resp.\ $ h^!$) commutes with inverse (resp.\ direct) images on cartesian diagrams.
We will have to be careful in using these exceptional direct and inverse images because the tube turns closed into open and conversely (on analytic spaces).

We may now introduce two new functors (still writing $E_{X'}$ instead of $E_{V'}$ for the realization):

%%%%%%%%%%%%%
\begin{prop}The assignments:
\begin{enumerate}
\item
for an abelian sheaf $E$ on $T_{Y}$,
\[
(X', V') \mapsto (\gamma_{\dagger}E)_{X'} := ]\gamma'[_{!}E_{Y'},
\]
\item
for an abelian sheaf $E$ on $T$,
\[
(X', V') \mapsto (j^{\dagger}_{Y}E)_{X'} := ]\gamma'[_{!}]\gamma'[^{-1}E_{X'},
\]
\end{enumerate}
where $\gamma' : Y' \hookrightarrow X'$ denotes the inverse image of $\gamma$, both define a sheaf on $T$.
\end{prop}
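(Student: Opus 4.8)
The plan is to invoke the realization criterion of Proposition \ref{realsh}: since the realizations $(\gamma_\dagger E)_{X'}$ and $(j^\dagger_Y E)_{X'}$ are already prescribed, it suffices to produce, for every morphism of overconvergent spaces over $T$, a transition map between these realizations, and then to check that the resulting data satisfy the cocycle and normalization conditions. Everything will be reduced to the corresponding data already carried by $E$ (on $T_Y$ in the first case, on $T$ in the second), together with the base change behaviour of the functor $h_!$ recalled before the statement.

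First I would construct the transition maps. Given a morphism $(f,u) : (X'',V'') \to (X',V')$ over $T$, pulling back $\gamma$ produces a cartesian square of formal schemes relating $\gamma' : Y' \hookrightarrow X'$ and $\gamma'' : Y'' \hookrightarrow X''$, which on tubes yields a morphism $]g[_u : \,]Y''[_{V''} \to\, ]Y'[_{V'}$ fitting into a commutative square with $]\gamma'[$, $]\gamma''[$ and $]f[_u$. Since this square of tubes is cartesian and $]\gamma'[$ is a locally closed embedding, the functor $]\gamma'[_!$ commutes with the inverse image $]f[_u^{-1}$, giving a base change isomorphism $]f[_u^{-1}\,]\gamma'[_! \simeq\, ]\gamma''[_!\,]g[_u^{-1}$. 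For the first assignment I would compose this with $]\gamma''[_!$ applied to the transition map $]g[_u^{-1}E_{Y'} \to E_{Y''}$ furnished by $E$ on $T_Y$:
\[
]f[_u^{-1}(\gamma_\dagger E)_{X'} = ]f[_u^{-1}\,]\gamma'[_!E_{Y'} \simeq\, ]\gamma''[_!\,]g[_u^{-1}E_{Y'} \longrightarrow\, ]\gamma''[_!E_{Y''} = (\gamma_\dagger E)_{X''}.
\]
For the second assignment the same base change isomorphism, combined with the identity $]g[_u^{-1}\,]\gamma'[^{-1} =\, ]\gamma''[^{-1}]f[_u^{-1}$ coming from commutativity of the square of tubes, rewrites the pullback as $]\gamma''[_!\,]\gamma''[^{-1}\,]f[_u^{-1}E_{X'}$, to which I would apply $]\gamma''[_!\,]\gamma''[^{-1}$ of the transition map $]f[_u^{-1}E_{X'} \to E_{X''}$ of $E$ on $T$.

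Next I would check the cocycle condition. For a composable pair of morphisms the required hexagon decomposes into two independent pieces that interleave: the cocycle condition already satisfied by $E$, and the transitivity of the base change isomorphisms $]f[_u^{-1}\,]\gamma'[_! \simeq\, ]\gamma''[_!\,]g[_u^{-1}$ along the two stacked cartesian squares of tubes. Since both hold, their combination yields the cocycle condition for $\gamma_\dagger E$ and for $j^\dagger_Y E$. The normalization condition is easier: when $(f,u) = (\mathrm{Id},u)$ with $u$ an open immersion, the base change isomorphism is an equality and the transition map of $E$ is an equality by its own normalization, so the transition map for $\gamma_\dagger E$ (resp.\ $j^\dagger_Y E$) is an equality as well.

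The main obstacle I expect is justifying that the square of tubes is genuinely cartesian, so that the base change isomorphism for $]\gamma'[_!$ is available; this rests on the compatibility of the tube construction with the pullback $Y'' = X'' \times_X Y$, which in the analytic setting has to be extracted from the properties of tubes established in \cite{LeStum17*}. Once that compatibility and the transitivity of base change are in hand, the remaining verifications are exactly the routine bookkeeping alluded to at the end of the proof of Proposition \ref{realsh}.
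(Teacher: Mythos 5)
Your proposal is correct and follows essentially the same route as the paper: both arguments rest on the realization criterion of Proposition \ref{realsh}, the cartesian square of tubes induced by pulling back $\gamma$, and the resulting base change identity $]f[^{-1} \circ ]\gamma'[_{!} = ]\gamma''[_{!} \circ ]g[^{-1}$, composed with the transition maps of $E$. The only cosmetic difference is that you spell out the cocycle and normalization verifications, which the paper leaves implicit, and you flag the cartesianness of the tube square as an obstacle where the paper simply cites the functoriality of tubes (and corollary 4.26 of \cite{LeStum17*} for $]\gamma'[$ being a locally closed embedding).
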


\begin{proof}
Since $T$ is analytic, it follows from corollary 4.26 of \cite{LeStum17*} that the map $]\gamma'[: \,]Y'[_{V'} \hookrightarrow \,]X'[_{V'}$ is a locally closed embedding (of topological spaces) and  $]\gamma'[_{!}$ therefore makes sense.
Assume now that we are given a morphism
\[
(f,u) : (X'', V'') \to (X',V')
\]
over $T$, and denote by $g : Y'' \to Y'$ the map induced between the inverse images of $Y$ and by $\gamma'' : Y'' \hookrightarrow X''$ the inverse image of $\gamma$.
Then there exists a cartesian diagram of topological spaces
\[
\xymatrix{\,]Y''[\ar@{^{(}->}[rr]^{]\gamma''[}\ar[d]^{]g[} &&\,]X''[ \ar[d]^{]f[} \\\,]Y'[ \ar@{^{(}->}[rr]^{]\gamma'[} &&\,]X'[}
\]
and it follows that $]f[^{-1} \circ ]\gamma'[_{!} = ]\gamma''[_{!} \circ ]g[^{-1}$.
If $E$ is a sheaf on $T_{Y}$, then there exists therefore a canonical map
\[
]f[^{-1} ]\gamma'[_{!}E_{Y'} = ]\gamma''[_{!} ]g[^{-1}E_{Y'}  \to  ]\gamma''[_{!}E_{Y''}
\]
and the first assertion follows.
The second one is proved in the same way: if $E$ is a sheaf on $T$, we have
\[
\xymatrix{
]f[^{-1} ]\gamma'[_{!}]\gamma'[^{-1}E_{X'} \ar@{=}[d] \ar@{-->}[rr]&&  ]\gamma''[_{!}]\gamma''[^{-1}E_{X''}.\\ ]\gamma''[_{!} ]g[^{-1}]\gamma'[^{-1}E_{X'} \ar@{=}[rr] && ]\gamma''[_{!} ]\gamma''[^{-1} ]f[^{-1} E_{X'} \ar[u]}\qedhere
\]
\end{proof}

It would be interesting to investigate if there exists analogous functors for more general maps $f : Y \to X$.
Note however that there exists no such assertion with respect to the exceptional inverse images $]\gamma[^!$'s.

%%%%%%%%%%%%%
\begin{dfn} \label{newf}
The functor $\gamma_{\dagger}$ (resp.\ $j^{\dagger}_{Y}$) is the \emph{overconvergent direct image functor} (resp.\ the \emph{overconvergent sections functor}).
\end{dfn}

It is clear that, if $T' \to T$ is any morphism of overconvergent sites, then
\[
\gamma_\dagger E_{|T'_Y} \simeq (\gamma_\dagger E)_{|T'} \quad \mathrm{and} \quad j_\dagger E_{|T'} \simeq (j_\dagger E)_{|T'}.
\]

The functor $\gamma_{\dagger}$ should not be confused with $\gamma_{!}$ and the functor $j^\dagger_{Y}$ should not be confused with $\gamma_{!} \circ \gamma^{-1}$.
Be also careful that these functors do \emph{not} preserve injectives (doing so may lead to erroneous conclusions).
Note also that if $\alpha : U \hookrightarrow X$ is a formal \emph{open} embedding, then we have $\alpha_{*} = \alpha_{\dagger}$.
It follows that all the properties or overconvergent direct image will automatically apply to direct image for formal open embeddings.

%%%%%%%%%%%%%%%%%
\begin{dfn}
Assume $T$ is defined over some overconvergent space $(C,O)$.
Then, the \emph{absolute cohomology with support in $Y$} of a complex $E$ of abelian sheaves on $T$ is
$
 \mathrm Rp_{T/O*}j^{\dagger}_{Y}E.
$
\end{dfn}

We may then also write
\[
\mathcal H^k_Y(T/O, E) := \mathrm R^kp_{T/O*}j^{\dagger}_{Y}E.
\]

%%%%%%%%%%%%%%%%
\begin{prop}
Both functors $\gamma_\dagger$ and $j_Y^\dagger$ are exact and preserve all colimits.
Moreover, $\gamma_\dagger$ is fully faithful and $\gamma_\dagger = j^\dagger_Y \circ \gamma_*$.
\end{prop}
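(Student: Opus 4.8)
The plan is to reduce every assertion to a statement about realizations on tubes, where the two functors become the classical exceptional operations $]\gamma'[_!$ and $]\gamma'[^{-1}$ attached to the locally closed embedding $]\gamma'[ : \,]Y'[_{V'} \hookrightarrow \,]X'[_{V'}$. By corollary \ref{comlim} (equivalently proposition \ref{realsh}) the family of realization functors is conservative and commutes with all limits and colimits, so it detects exactness and preserves and reflects colimits. Since $(\gamma_\dagger E)_{X'} = \,]\gamma'[_! E_{Y'}$ and $(j_Y^\dagger E)_{X'} = \,]\gamma'[_! \,]\gamma'[^{-1} E_{X'}$ by definition, and since $]\gamma'[_!$ and $]\gamma'[^{-1}$ are both exact and commute with all colimits (standard properties of exceptional direct and inverse image along a locally closed embedding of topological spaces), the exactness and the commutation with colimits of $\gamma_\dagger$ and $j_Y^\dagger$ will follow at once.

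For the identity $\gamma_\dagger = j_Y^\dagger \circ \gamma_*$ I would again compare realizations. By proposition \ref{gamet} one has $(\gamma_* E)_{X'} = \,]\gamma'[_* E_{Y'}$, hence
\[
(j_Y^\dagger \gamma_* E)_{X'} = \,]\gamma'[_! \,]\gamma'[^{-1} \,]\gamma'[_* E_{Y'} = \,]\gamma'[_! E_{Y'} = (\gamma_\dagger E)_{X'},
\]
the middle equality being the canonical isomorphism $]\gamma'[^{-1} \,]\gamma'[_* \simeq \mathrm{Id}$ valid for any embedding. It remains to check compatibility of these identifications with the transition maps, which is a routine verification from the construction of the transition maps of $\gamma_*$ and $j_Y^\dagger$; proposition \ref{realsh} then upgrades the fiberwise equality to an equality of functors on $T$.

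The real work lies in the full faithfulness of $\gamma_\dagger$. First I would show $\gamma^{-1} \circ \gamma_\dagger = \mathrm{Id}$: realizing on an object $(Y',V')$ lying over $T_Y$, the inverse image of $\gamma$ is the identity, so $]\gamma'[$ degenerates to the identity embedding and $(\gamma_\dagger E)_{Y'} = E_{Y'}$; since the realization of $\gamma^{-1}$ on such an object merely recovers that of the original sheaf, this gives $\gamma^{-1}\gamma_\dagger = \mathrm{Id}$, whence $\gamma_\dagger$ is faithful with $\gamma^{-1}$ as a retraction on Hom-sets. For fullness it then suffices to prove $\gamma_\dagger \gamma^{-1}\phi = \phi$ for every $\phi : \gamma_\dagger E \to \gamma_\dagger F$. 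Realizing on an arbitrary $(X',V')$ over $T$, the naturality of the transition map of $\gamma_\dagger F$ along $(\gamma',\mathrm{Id}) : (Y',V') \to (X',V')$ --- whose transition map is precisely the canonical isomorphism $]\gamma'[^{-1}\,]\gamma'[_! \simeq \mathrm{Id}$ --- identifies $\phi_{Y'}$ with $]\gamma'[^{-1}\phi_{X'}$; applying $]\gamma'[_!$ and using that $]\gamma'[_!$ is fully faithful (so that $]\gamma'[_!\,]\gamma'[^{-1}$ restores any morphism between objects in its essential image) yields $(\gamma_\dagger\gamma^{-1}\phi)_{X'} = \,]\gamma'[_! \phi_{Y'} = \phi_{X'}$, as desired.

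The delicate point, which I expect to be the main obstacle, is exactly this bookkeeping of transition maps. One must resist writing $\gamma^{-1}$ as $]\gamma'[^{-1}$ on realizations (false in general, as already warned after proposition \ref{gamet}), and instead exploit that over the inverse image objects $(Y',V')$ the embedding $]\gamma'[$ is trivial. Since the exceptional inverse images $]\gamma'[^!$ do not glue into a functor on $T$, there is no global right adjoint of $\gamma_\dagger$ built tube-wise to invoke, and the proof must proceed through the explicit comparison of transition maps combined with the tube-wise full faithfulness of $]\gamma'[_!$, rather than through a formal adjunction.
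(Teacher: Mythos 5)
Your proposal is correct and follows essentially the same route as the paper, whose entire proof is the one-line reduction to the analogous facts about $]\gamma[_{!}$, $]\gamma[_{!}\circ]\gamma[^{-1}$ and $]\gamma[_{*}$ on realizations (via proposition \ref{gamet}). You have simply written out the details the paper leaves implicit --- in particular the transition-map bookkeeping behind the full faithfulness of $\gamma_\dagger$, which is exactly the tube-wise full faithfulness of $]\gamma'[_{!}$ that the paper invokes.
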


\begin{proof}
Follows the analogous assertions about $]\gamma[_{!}$, $]\gamma[_{!} \circ ]\gamma[^{-1}$ and $]\gamma[_*$ (and from proposition \ref{gamet}).
\end{proof}

In particular, we see that the functor $\alpha_{*}$ is exact and fully faithful (and preserves colimits) if $\alpha : U \hookrightarrow X$ is a formal open embedding.

Let us also mention that our definition of $j_U^\dagger$ is compatible with Berthelot's (\cite{Berthelot96c*}, 2.1.1):

%%%%%%%%%%%%%%%%%%%
\begin{prop} \label{limBer}
Assume $\alpha : U \hookrightarrow X$ is an open embedding.
if $E$ is an abelian sheaf on $T$ and $(X', V')$ is an overconvergent space over $T$, then we have
\[
(j_U^\dagger E)_{X'} = \varinjlim j_*j^{-1} E_{X'}
\]
when $j$ runs through the inclusions of open neighborhoods of $\,]U'[_{V'}$ in $\,]X'[_{V'}$ (if $U'$ denotes the inverse image of $U$ in $X'$).
\end{prop}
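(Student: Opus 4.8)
The plan is to reduce the statement to the colimit formula of Lemma~\ref{FKlim}, applied inside the tube $\,]X'[_{V'}$ rather than in a full adic space. First I would unwind the definition of the overconvergent sections functor: by construction its realization is $(j_U^\dagger E)_{X'} = ]\alpha'[_!\,]\alpha'[^{-1}E_{X'}$, where $\alpha' : U' \hookrightarrow X'$ denotes the inverse image of $\alpha$ and $]\alpha'[ : \,]U'[_{V'} \hookrightarrow \,]X'[_{V'}$ the induced map on tubes (here $E_{X'}$ is taken with respect to the fixed structural map $(X',V') \to T$).

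Second, I would observe that since $\alpha$ is an \emph{open} embedding and we work in the analytic setting, the tube functor turns open embeddings into closed ones; hence $]\alpha'[$ is a \emph{closed} embedding. This is exactly the identity $\alpha_* = \alpha_\dagger$ recorded above (via the comparison of proposition~\ref{gamet}). Writing $i := ]\alpha'[$, this replaces $]\alpha'[_!$ by $]\alpha'[_* = i_*$, so that $(j_U^\dagger E)_{X'} = i_* i^{-1} E_{X'}$.

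Third, I would establish the colimit identity $i_* i^{-1}\mathcal F = \varinjlim_j j_* j^{-1}\mathcal F$ for $\mathcal F := E_{X'}$, where $j$ runs over the open neighborhoods of $\,]U'[_{V'}$ in $\,]X'[_{V'}$. The comparison map is the canonical one coming from the factorizations $\,]U'[_{V'} \hookrightarrow U_j \hookrightarrow \,]X'[_{V'}$ (unit of adjunction followed by $j_*$), and the argument is that of Lemma~\ref{FKlim}: since $V'$ is analytic and $U' \hookrightarrow X' \hookrightarrow P'$ is a formal embedding, proposition~4.27 of \cite{LeStum17*} shows that $\,]U'[_{V'}$ is stable under generalization, hence equals the intersection of its open neighborhoods in $\,]X'[_{V'}$. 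The asserted isomorphism is then checked on stalks.

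The main obstacle is that the ambient space is now the tube $\,]X'[_{V'}$ and not a full analytic adic space, so I cannot invoke Lemma~\ref{FKlim} verbatim and must re-run its proof in this relative situation. The crux is the stalk computation: for $x \in \,]U'[_{V'}$ both sides compute $\mathcal F_x$, while the genuine work is the vanishing of $\varinjlim_j (j_* j^{-1}\mathcal F)_x$ at points $x \notin \,]U'[_{V'}$, which relies precisely on the stability under generalization of $\,]U'[_{V'}$ inside $\,]X'[_{V'}$ (inherited from its stability inside $V'$). I would also take care that the reversal of open and closed under the tube functor is applied correctly, so that $]\alpha'[_!$ indeed coincides with $i_*$ and the whole reduction is legitimate.
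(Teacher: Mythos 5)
Your proof is correct and follows essentially the same route as the paper, whose entire proof consists of the single line ``Checked on stalks as in lemma \ref{FKlim}.'' Your write-up simply makes explicit what the paper leaves implicit: unwinding the definition $(j_U^\dagger E)_{X'} = \,]\alpha'[_!\,]\alpha'[^{-1}E_{X'}$, using that in the analytic case the tube of a formal open embedding is a closed embedding of topological spaces (so $]\alpha'[_! = \,]\alpha'[_*$), and re-running the stalk argument of lemma \ref{FKlim} inside $\,]X'[_{V'}$, where stability under generalization of $\,]U'[_{V'}$ (inherited from $V'$) identifies it with the intersection of its open neighborhoods.
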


\begin{proof}
Checked on stalks as in lemma \ref{FKlim}.
\end{proof}

%%%%%%%%%%%%%%
\begin{xmp}
Assume that we are given a formal open embedding $\alpha : U \hookrightarrow X$ with closed complement $\beta : Z \hookrightarrow X$.
Then,
\begin{enumerate}
\item There exist two triple of adjoint functors $\alpha_{!}, \alpha^{-1}, \alpha_{*} : \widetilde T_{U} \to \widetilde T$ as well as $\beta_{!}, \beta^{-1}, \beta_{*}  : \widetilde T_{Z} \to \widetilde T$ with $\alpha_{*}$ exact and all direct images fully faithful (be careful that the exceptional direct image has to be modified a bit for abelian sheaves).
Moreover $\alpha^{-1} \circ \beta_{*} = 0$ and $\beta^{-1} \circ \alpha_{*} = 0$.
\item
There exists a couple of functors $\underline \Gamma_{U}$ and $\underline \Gamma_{Z}$ on abelian sheaves on $T$ which preserve injectives and colimits.
They give rise to exact triangles
\[
\mathrm R\underline \Gamma_{U}E  \to E \to \mathrm R\beta_{*}E_{|Z} \to \cdots \quad \mathrm{and} \quad \mathrm R\underline \Gamma_{Z}E  \to E \to \alpha_{*}E_{|U} \to \cdots
\]
\item There exists a  fully faithful functor $\beta_\dagger$ from abelian sheaves on $T_{Z}$ to abelian sheaves on $T$ (we have $\alpha_{\dagger} = \alpha_{*}$) and two functors $j^\dagger_{U}, j^\dagger_{Z}$ on abelian sheaves on $T$.
They are all exact an preserve all colimits (but not injectives).
\end{enumerate}
\end{xmp}

We can improve on the previous examples as follows:

%%%%%%%%%%%%%%
\begin{prop} \label{dagab}
Assume that we are given a formal open embedding $\alpha : U \hookrightarrow X$ with closed complement $\beta : Z \hookrightarrow X$.
Then,
\begin{enumerate}
\item We have $\beta^{-1} \circ \beta_{\dagger} = \mathrm{Id}$ and $\alpha^{-1} \circ \beta_{\dagger} = 0$ .
\item If $E'$ is an abelian sheaf on $T_{Z}$, then there exists an exact sequence
\begin{align} \label{fondseq}
0 \to \beta_{\dagger}E' \to \beta_{*}E' \to j_{U}^\dagger\beta_{*}E' \to 0.
\end{align}
\item \label{dagap3}Any exact sequence
\[
0 \to \beta_{\dagger}E' \to E \to \alpha_{*}E'' \to 0
\]
 of abelian sheaves on $T$ is the pullback of the previous one through a unique map $\alpha_{*}E'' \to j_{U}^\dagger\beta_{*}E'$.
\end{enumerate}
\end{prop}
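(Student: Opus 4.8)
The plan is to verify all three assertions by passing to realizations on the tubes, where they become the elementary open-closed decomposition of sheaves on a topological space, and to obtain the universal property in the final assertion by an adjunction argument. Throughout, I would fix an overconvergent space $(X',V')$ over $T$, set $W:=\,]X'[_{V'}$, and let $\beta':Z'\hookrightarrow X'$ and $\alpha':U'\hookrightarrow X'$ be the inverse images of $\beta$ and $\alpha$. Recall that on analytic tubes the tube of the closed subscheme $Z$ is \emph{open} in $W$ while the tube of the open subscheme $U$ is \emph{closed}, and that $\iota:=\,]\beta'[$ and $\kappa:=\,]\alpha'[$ are complementary. By definition \ref{newf} and proposition \ref{gamet}, the relevant realizations are $(\beta_\dagger E')_{X'}=\iota_!E'_{Z'}$, $(\beta_*E')_{X'}=\iota_*E'_{Z'}$, $(\alpha_*E'')_{X'}=\kappa_*E''_{U'}$, and $(j^\dagger_UF)_{X'}=\kappa_*\kappa^{-1}F_{X'}$ (using $\kappa_!=\kappa_*$ since $\kappa$ is closed).

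For the first assertion I would \emph{not} compute $\beta^{-1}$ naively on $W$ (this is exactly the content of the warning after proposition \ref{gamet}), but instead use that restriction along a morphism of overconvergent sites commutes with realization. Realizing $\beta^{-1}\beta_\dagger E'$ on a space $(Z'',V'')$ over $T_Z$ amounts to realizing $\beta_\dagger E'$ over $T$ on $(Z'',V'')$; since the formal scheme $Z''$ already maps to $Z$, the inverse image of $\beta$ is an isomorphism, so $\,]\beta''[_!=\mathrm{Id}$ and we recover $E'_{Z''}$. Hence $\beta^{-1}\beta_\dagger=\mathrm{Id}$. For a space over $T_U$ the inverse image of $Z$ is empty because $U\cap Z=\emptyset$, so the same computation gives $\,]\beta''[_!=0$ and therefore $\alpha^{-1}\beta_\dagger=0$.

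For the second assertion I would apply the standard exact sequence $0\to j_!j^{-1}G\to G\to i_*i^{-1}G\to 0$ (for an open embedding $j$ with closed complement $i$) on $W$ with $j=\iota$, $i=\kappa$, and $G:=(\beta_*E')_{X'}=\iota_*E'_{Z'}$. Since $\iota^{-1}\iota_*=\mathrm{Id}$, its three terms are precisely $(\beta_\dagger E')_{X'}$, $(\beta_*E')_{X'}$ and $(j^\dagger_U\beta_*E')_{X'}$. The two maps are the realizations of global natural transformations on $T$ (the counit $j^\dagger_Z\to\mathrm{Id}$, giving $\beta_\dagger=j^\dagger_Z\beta_*\to\beta_*$, and the unit $\mathrm{Id}\to j^\dagger_U$), so they glue to morphisms of sheaves on $T$ and the sequence \eqref{fondseq} is exact by conservativity of realizations (corollary \ref{comlim}).

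Finally, for the assertion \ref{dagap3} I would argue by adjunction. Applying the exact functor $\beta^{-1}$ to the given extension and using $\beta^{-1}\beta_\dagger=\mathrm{Id}$ together with $\beta^{-1}\alpha_*=0$ yields a canonical isomorphism $E'\xrightarrow{\sim}\beta^{-1}E$; its inverse, read through the adjunction $\mathrm{Hom}_T(E,\beta_*E')\cong\mathrm{Hom}_{T_Z}(\beta^{-1}E,E')$, produces a map $\psi:E\to\beta_*E'$ restricting to the inclusion $\beta_\dagger E'\hookrightarrow\beta_*E'$. Passing to cokernels and identifying $\operatorname{coker}(\beta_\dagger E'\hookrightarrow\beta_*E')$ with $j^\dagger_U\beta_*E'$ through \eqref{fondseq} gives the desired $\phi:\alpha_*E''\to j^\dagger_U\beta_*E'$; since the resulting morphism of short exact sequences is the identity on the left-hand terms, $E\cong\beta_*E'\times_{j^\dagger_U\beta_*E'}\alpha_*E''$ is the pullback. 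Uniqueness of $\phi$ follows from uniqueness of $\psi$, which holds because $\mathrm{Hom}_T(\alpha_*E'',\beta_*E')\cong\mathrm{Hom}_{T_Z}(\beta^{-1}\alpha_*E'',E')=0$. The main obstacle will be the careful bookkeeping in the first two paragraphs—tracking the closed/open swap on tubes and avoiding the naive (false) identity $(\gamma^{-1}E)_{V'}=\,]\gamma[^{-1}E_{V'}$—whereas the adjunction giving \ref{dagap3} is then essentially formal.
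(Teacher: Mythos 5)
Your proof is correct. For the first two assertions it coincides with the paper's own argument: assertion (1) is obtained by realizing on overconvergent spaces lying over $T_Z$ (where the inverse image of $\beta$ is the identity, so $]\beta''[_!=\mathrm{Id}$) and over $T_U$ (where it is empty), exactly as in the paper; and assertion (2) is the standard open/closed sequence $0 \to \iota_!\iota^{-1}\mathcal G \to \mathcal G \to \kappa_*\kappa^{-1}\mathcal G \to 0$ on each tube applied to $\mathcal G = \,]\beta'[_*E'_{Z'}$, which is precisely the displayed sequence in the paper's proof. Where you genuinely diverge is assertion (3): the paper deduces it from the second isomorphism of lemma \ref{exthom}, namely $\mathrm{Hom}(\alpha_*E'', j_U^\dagger\beta_*E') \simeq \mathrm{Ext}(\alpha_*E'', \beta_\dagger E')$, which is itself proved by a derived-category computation ($\mathrm{RHom}(\alpha_*E'', \mathrm R\beta_*E') = 0$ together with the triangle coming from the sequence \eqref{fondseq}). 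You instead construct the classifying map by hand: apply the exact functor $\beta^{-1}$ to the given extension to get $E' \simeq \beta^{-1}E$, transport the inverse isomorphism through the adjunction $(\beta^{-1},\beta_*)$ to a map $\psi \colon E \to \beta_*E'$ extending the inclusion $\beta_\dagger E' \subset \beta_*E'$, and pass to cokernels to produce $\phi$. Both routes rest on the same vanishing $\mathrm{Hom}(\alpha_*E'',\beta_*E') = \mathrm{Hom}(\beta^{-1}\alpha_*E'',E') = 0$, which gives uniqueness of $\psi$ and hence of $\phi$ (here you should add the one-line remark that $E \to \alpha_*E''$ is an epimorphism, so equality of the middle maps forces equality of the cokernel maps). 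The trade-off: your argument is more elementary and self-contained, staying entirely at the level of underived functors and short exact sequences, and it avoids the slightly awkward forward reference to a lemma stated after the proposition; the paper's route, on the other hand, produces the Hom--Ext isomorphism of lemma \ref{exthom} as a reusable statement, which is needed again later (for instance in lemmas \ref{bijcons} and \ref{mainstrat}), so the derived packaging is not wasted effort in the context of the paper.
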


\begin{proof}
Let $(X',V')$ be an overconvergent space over $T$ and $\alpha', \beta'$ the inverse images of $\alpha, \beta$.
If $(X',V')$ lives over $T_{Z}$ (resp.\ $T_{U}$) then we have $\beta'= \mathrm{Id}$ (resp.\ $\beta'= 0$).
The first assertion follows.
If $E'$ is an abelian sheaf on $T_{Z}$ and $Z'$ denotes the inverse image of $Z$ in $X'$, then there exists an exact sequence
\[
0 \to ]\beta'[_{!}E'_{Z'} \to ]\beta[_{*}E'_{Z'} \to ]\alpha[_{*}]\alpha[^{-1}]\beta[_{*}E'_{Z'} \to 0
\]
(always true for a complement pair of open and closed subsets) and we get the second assertion.
Finally, the last assertion will follow from the second isomorphism in lemma \ref{exthom} below.
\end{proof}

As a consequence of the second assertion, if $E'$ is a complex of abelian sheaves on $T_{Z}$, then there exists a distinguished triangle
\[
\beta_{\dagger}E' \to \mathrm R\beta_{*}E' \to j_{U}^\dagger\mathrm R\beta_{*}E' \to \cdots.
\]

Let us also mention this other consequence of the proposition:

%%%%%%%%%%%%%%%%
\begin{cor} \label{cartone}
If $E$ is an abelian sheaf on $T$, then the diagram
\[
\xymatrix{ E \ar[r] \ar[d] & j_U^\dagger E \ar[d] \\ \beta_{*}\beta^{-1} E \ar[r] &j_U^\dagger \beta_{*}\beta^{-1} E
}
\]
is cartesian.
\end{cor}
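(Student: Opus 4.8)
The plan is to reduce the statement to a single tube and there recognize the classical recollement square for a complementary open/closed pair. Being cartesian is a finite limit condition, so by Corollary \ref{comlim} the square is cartesian in $\widetilde T$ if and only if, for every overconvergent space $(X',V')$ over $T$, the realized square is cartesian in $\widetilde{]X'[_{V'}}$. The reason for passing to an individual tube is essential: on $\widetilde T$ the pair of restriction functors $\alpha^{-1},\beta^{-1}$ is \emph{not} jointly conservative (the open subtopoi $\widetilde T_U$ and $\widetilde T_Z$ are disjoint but do not cover $\widetilde T$), whereas on a fixed tube the analogous topological pair is conservative because the tube genuinely splits.

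So I would fix $(X',V')\to T$, write $F := E_{X'}$, and let $\alpha',\beta'$ be the inverse images of $\alpha,\beta$ on $X'$. In the analytic situation the tube exchanges open and closed, so $i := ]\alpha'[\colon ]U'[_{V'}\hookrightarrow ]X'[_{V'}$ is a \emph{closed} immersion and $j := ]\beta'[\colon ]Z'[_{V'}\hookrightarrow ]X'[_{V'}$ is the complementary \emph{open} immersion; set $C := ]U'[_{V'}$ and $O := ]Z'[_{V'}$, a complementary closed/open pair in $]X'[_{V'}$. By the definition of $j_U^\dagger$ one has $(j_U^\dagger E)_{X'} = i_!i^{-1}F = i_*i^{-1}F$, and by Proposition \ref{gamet} one has $(\beta_*\beta^{-1}E)_{X'} = j_*(\beta^{-1}E)_{Z'}$. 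To rewrite the latter I would use that for the \emph{closed} embedding $\beta$ the transition map $j^{-1}F\to (\beta^{-1}E)_{Z'}$ is an isomorphism, so that $(\beta_*\beta^{-1}E)_{X'} = j_*j^{-1}F$ and, applying $i_*i^{-1}$, $(j_U^\dagger\beta_*\beta^{-1}E)_{X'} = i_*i^{-1}j_*j^{-1}F$. The realized square is then the recollement square attached to $F$ and the pair $(i,j)$, its horizontal maps induced by the unit $\mathrm{id}\to i_*i^{-1}$ and its vertical maps by the unit $\mathrm{id}\to j_*j^{-1}$.

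It then remains to check that this recollement square is cartesian, and for this I would apply the exact, jointly conservative pair $(i^{-1},j^{-1})$ (conservative since $]X'[_{V'}=C\sqcup O$). Applying $j^{-1}$ annihilates both right-hand corners (as $j^{-1}i_*=0$) and turns the left vertical into the identity of $j^{-1}F$ by the triangle identity for $j^{-1}j_*\cong\mathrm{id}$, so the restricted square is trivially cartesian. Applying $i^{-1}$ turns both horizontal maps into identities by the triangle identity for $i^{-1}i_*\cong\mathrm{id}$ and makes the two vertical maps coincide (each equal to $i^{-1}$ of the unit $F\to j_*j^{-1}F$), so again the restricted square is trivially cartesian. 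Since cartesian squares are detected by $(i^{-1},j^{-1})$, the realized square is cartesian, and Corollary \ref{cartone} follows.

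The one genuinely non-formal input, and the step I would be most careful about, is the isomorphism $j^{-1}F\cong(\beta^{-1}E)_{Z'}$: realization must commute with restriction along the \emph{open} tube immersion $j=]\beta'[$ coming from the closed formal embedding $\beta$, even though $(\gamma^{-1}E)_{V'}\neq ]\gamma[^{-1}E_{V'}$ for a general embedding. This is precisely where the hypothesis that $\beta$ is closed (its tube being open) is used. Alternatively, one may argue directly on $\widetilde T$: the bottom row completes to the fundamental exact sequence \eqref{fondseq} for $E':=\beta^{-1}E$, the top row to the analogous sequence $0\to\beta_\dagger\beta^{-1}E\to E\to j_U^\dagger E\to 0$, and the unit $E\to\beta_*\beta^{-1}E$ to a morphism between these two short exact sequences which is the identity on the common kernel $\beta_\dagger\beta^{-1}E$; a morphism of short exact sequences that is an isomorphism on kernels has cartesian right-hand square. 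This route is attractive because it invokes the preceding proposition directly, but the obstacle is merely displaced: producing the top short exact sequence (identifying $\ker(E\to j_U^\dagger E)$ with $\beta_\dagger\beta^{-1}E$) rests on the very same transition-map computation.
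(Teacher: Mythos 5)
Your overall architecture is sound and, as you observe yourself at the end, it is in substance the same argument as the paper's: the paper packages the recollement square as a morphism of short exact sequences obtained from proposition \ref{dagab}, while you verify cartesianness realization by realization using corollary \ref{comlim}, proposition \ref{gamet} and the conservative pair $]\alpha'[^{-1}$, $]\beta'[^{-1}$ on the tube. The gap is exactly the step you flag as ``the one genuinely non-formal input'': the claim that the transition map $]\beta'[^{-1}E_{X'} \to (\beta^{-1}E)_{Z'}$ is an isomorphism \emph{because} $\beta$ is closed. Closedness of $\beta$ only tells you that $]\beta'[$ is an open immersion of tubes and that $]\beta'[^\dagger = ]\beta'[^{-1}$; it does not make the transition map of an arbitrary abelian sheaf invertible. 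Invertibility of transition maps is the defining property of a \emph{crystal}, and nothing in the site forces it here: the morphism $(Z',V') \to (X',V')$ is not invertible (the formal scheme changes), and the topology only refines the adic side, so $E_{X'}$ and $(\beta^{-1}E)_{Z'}$ are independent data linked by nothing more than the transition map. Concretely, using proposition \ref{realsh}, define an abelian sheaf $E$ on $T=(X,V)$ by $E_{(X',V')} := \mathbb Z_{]X'[_{V'}}$ when the structural map $X' \to X$ factors through $Z$, and $E_{(X',V')} := 0$ otherwise, with the identity (resp.\ zero) transition maps; the cocycle and normalization conditions hold trivially. Then $E_X = 0$, hence $(j_U^\dagger E)_X = 0$, while $(\beta_*\beta^{-1}E)_X = ]\beta[_*\mathbb Z_{]Z[_V}$, so the fibre product in the realized square is $]\beta[_!\mathbb Z_{]Z[_V} \neq 0$ (take $X = \mathbb A_{\mathbb F_p}$, $Z = \{0\}$, $V = \mathbb D_{\mathbb Q_p}$). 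The square is not cartesian, so no argument can close this gap at the stated level of generality.

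In other words, your proof (and the statement itself) is correct precisely for those sheaves whose transition maps along the embeddings $(Z',V') \hookrightarrow (X',V')$ are isomorphisms, in particular for crystals, where the statement is anyway subsumed by proposition \ref{clop}. You should also be aware that the paper's own proof suffers from the same defect, and your closing diagnosis of it is accurate: the exactness of its top row $0 \to \beta_\dagger\beta^{-1}E \to E \to j_U^\dagger E \to 0$ amounts, realization by realization, to the identity $\ker\bigl(E_{X'} \to ]\alpha'[_*]\alpha'[^{-1}E_{X'}\bigr) = ]\beta'[_!(\beta^{-1}E)_{Z'}$, which is the very same transition-map isomorphism (even writing down the arrow $\beta_\dagger\beta^{-1}E \to E$ requires inverting it); moreover its appeal to assertion \eqref{dagap3} of proposition \ref{dagab} needs $j_U^\dagger E$ to be of the form $\alpha_*E''$, which is the analogous condition along $(U',V') \hookrightarrow (X',V')$. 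So the correct repair is not a cleverer proof but a stronger hypothesis on $E$ (crystal, or invertible transition maps along the relevant embeddings), under which your recollement argument goes through verbatim.
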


\begin{proof}
Follows from assertion \eqref{dagap3} of proposition \ref{dagab} which implies the existence of a morphism of exact sequences
\[
\xymatrix{ 0 \ar[r] & \beta_{\dagger}\beta^{-1}E \ar[r] \ar@{=}[d] & E \ar[r] \ar[d] &j_U^\dagger E \ar[d] \ar[r] & 0\\ 0 \ar[r] & \beta_{\dagger}\beta^{-1}E \ar[r] & \beta_{*}\beta^{-1} E \ar[r] & j_U^\dagger \beta_{*}\beta^{-1} E \ar[r] & 0.
} \qedhere
\]
\end{proof}

Note that there exists an analogous (homotopy) cartesian diagram in the derived category.

We used above the following result:

%%%%%%%%%%%%%%%%%%%
\begin{lem} \label{exthom}
Assume that we are given a formal open embedding $\alpha : U \hookrightarrow X$ with closed complement $\beta : Z \hookrightarrow X$.
If $E'$ (resp.\ $E''$) is an abelian sheaf on $T_Z$ (resp.\ $T_U$), then
\begin{enumerate}
\item 
$
\mathrm{Hom}(\alpha_{*}E'', \beta_{\dagger}E') = 0$,
\item
$
\mathrm{Hom}(\alpha_{*}E'', j_{U}^\dagger\beta_{*} E') \simeq \mathrm{Ext}(\alpha_{*}E'', \beta_{\dagger}E'),
$
\item 
$
\mathrm{RHom}(\beta_{\dagger}E',\alpha_{*}E'') = 0.
$
\end{enumerate}
\end{lem}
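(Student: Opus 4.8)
The plan is to reduce everything either to a computation of $\mathrm{Hom}$-groups on the individual tubes (via proposition \ref{realsh}, which makes the family of realization functors faithful and exact) or to one of the adjunctions attached to the open immersions of topoi $\alpha\colon \widetilde{T_U}\hookrightarrow\widetilde T$ and $\beta\colon \widetilde{T_Z}\hookrightarrow\widetilde T$, combined with the vanishings $\alpha^{-1}\circ\beta_\dagger=0$ and $\beta^{-1}\circ\alpha_*=0$ recorded earlier (proposition \ref{dagab} and the disjointness of $U$ and $Z$). For the first assertion I would argue directly on realizations: by proposition \ref{gamet} the realization of $\alpha_*E''$ on $(X',V')$ is $]\alpha'[_*E''_{U'}$, supported on the closed tube $]U'[_{V'}$, whereas by definition the realization of $\beta_\dagger E'$ is $]\beta'[_!E'_{Z'}$, whose stalks vanish off the open tube $]Z'[_{V'}$. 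Since $U\cap Z=\emptyset$ these two tubes are disjoint, so at every point of $]X'[_{V'}$ either the source or the target stalk is zero; hence every morphism of realizations is zero, and proposition \ref{realsh} upgrades this to $\mathrm{Hom}_T(\alpha_*E'',\beta_\dagger E')=0$.

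The third assertion is the cleanest. Because $\alpha$ is a formal open embedding, $\alpha_*$ is exact and fully faithful, and as a right adjoint of the exact functor $\alpha^{-1}$ it preserves injectives; it therefore carries an injective resolution of $E''$ to an injective resolution of $\alpha_*E''$. Feeding this into the adjunction $\alpha^{-1}\dashv\alpha_*$ yields
\[
\mathrm{RHom}_T(\beta_\dagger E',\alpha_*E'')\simeq \mathrm{RHom}_{T_U}(\alpha^{-1}\beta_\dagger E',E''),
\]
and the right-hand side vanishes since $\alpha^{-1}\beta_\dagger=0$ by proposition \ref{dagab}.

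The second assertion is the heart of the lemma (here $\mathrm{Ext}$ means $\mathrm{Ext}^1$), and it is where I expect the real work. I would start from the fundamental exact sequence \eqref{fondseq}, namely $0\to\beta_\dagger E'\to\beta_*E'\to j_U^\dagger\beta_*E'\to 0$, and apply $\mathrm{RHom}_T(\alpha_*E'',-)$ to obtain the long exact sequence
\[
\cdots\to\mathrm{Ext}^i(\alpha_*E'',\beta_\dagger E')\to\mathrm{Ext}^i(\alpha_*E'',\beta_*E')\to\mathrm{Ext}^i(\alpha_*E'',j_U^\dagger\beta_*E')\to\mathrm{Ext}^{i+1}(\alpha_*E'',\beta_\dagger E')\to\cdots.
\]
The first assertion already kills $\mathrm{Hom}(\alpha_*E'',\beta_\dagger E')$, so the desired isomorphism $\mathrm{Hom}(\alpha_*E'',j_U^\dagger\beta_*E')\simeq\mathrm{Ext}^1(\alpha_*E'',\beta_\dagger E')$ will be exactly the connecting map, provided I show $\mathrm{Ext}^0(\alpha_*E'',\beta_*E')=\mathrm{Ext}^1(\alpha_*E'',\beta_*E')=0$.

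The main obstacle is precisely that $\beta_*$ is \emph{not} exact: the tube of the closed set $Z$ is open, so $]\beta'[_*$ is a genuine open pushforward and $\beta_*E'\neq\mathrm R\beta_*E'$. To get around this I would first compute with the derived functor: the derived adjunction $\beta^{-1}\dashv\mathrm R\beta_*$ gives
\[
\mathrm{RHom}_T(\alpha_*E'',\mathrm R\beta_*E')\simeq\mathrm{RHom}_{T_Z}(\beta^{-1}\alpha_*E'',E')=0,
\]
using $\beta^{-1}\alpha_*=0$. Then I would transfer this vanishing to $\beta_*E'=H^0(\mathrm R\beta_*E')$ through the truncation triangle $\beta_*E'\to\mathrm R\beta_*E'\to\tau_{\geq 1}\mathrm R\beta_*E'\to{}$: applying $\mathrm{RHom}_T(\alpha_*E'',-)$, the middle term contributes nothing by the display above, and since $\tau_{\geq 1}\mathrm R\beta_*E'$ is concentrated in degrees $\geq 1$ it contributes nothing in $\mathrm{Ext}^{\leq 0}$; the long exact sequence then forces $\mathrm{Ext}^i(\alpha_*E'',\beta_*E')=0$ for $i\leq 1$, which is exactly what is needed. (I should take care to invoke \eqref{fondseq} and proposition \ref{dagab} only in their parts that are proved independently, so as not to create circularity with the deduction of proposition \ref{dagab}.\ref{dagap3} from this lemma.)
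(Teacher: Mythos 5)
Your proof is correct, and you rightly flag and avoid the potential circularity: you only use parts of proposition \ref{dagab} (the vanishing $\alpha^{-1}\circ\beta_\dagger=0$ and the sequence \eqref{fondseq}) that the paper proves independently of this lemma. The comparison with the paper's proof is as follows. For assertion (3) your argument is essentially the paper's: adjunction plus $\alpha^{-1}\circ\beta_\dagger=0$, with "$\alpha_*$ preserves injectives'' playing the role of the paper's phrase "the result holds for all abelian sheaves''. For assertion (1) you take a genuinely different, more geometric route: a stalkwise support argument on realizations, using that $\,]U'[_{V'}$ (closed) and $\,]Z'[_{V'}$ (open) are complementary; the paper instead argues by adjunction, $\mathrm{Hom}(\alpha_*E'',\beta_*E')=\mathrm{Hom}(\beta^{-1}\alpha_*E'',E')=0$, and then uses the inclusion $\beta_\dagger E'\subset\beta_*E'$ — shorter, and the same adjunction computation is then reused (for injectives) to get the key vanishing $\mathrm{RHom}(\alpha_*E'',\mathrm R\beta_*E')=0$, which both you and the paper need for (2). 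For assertion (2) the underlying input is identical, but the packaging differs: the paper applies $\mathrm{RHom}(\alpha_*E'',-)$ to the \emph{derived} triangle $\beta_\dagger E'\to\mathrm R\beta_*E'\to j_U^\dagger\mathrm R\beta_*E'\to\cdots$ and gets $\mathrm{RHom}(\alpha_*E'',j_U^\dagger\mathrm R\beta_*E')\simeq\mathrm{RHom}(\alpha_*E'',\beta_\dagger E'[1])$ in one stroke, taking $H^0$ at the end (exactness of $j_U^\dagger$ identifies $H^0(j_U^\dagger\mathrm R\beta_*E')$ with $j_U^\dagger\beta_*E'$); you stay with the underived sequence \eqref{fondseq} and supply the needed vanishing $\mathrm{Ext}^{\leq 1}(\alpha_*E'',\beta_*E')=0$ by a truncation argument on $\mathrm R\beta_*E'$. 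Your version is slightly longer but makes explicit the degree bookkeeping that the paper compresses into "the second assertion formally follows'', which is a genuine gain in readability.
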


\begin{proof}
We know that $\beta^{-1}\alpha_{*}E'' = 0$ and it follows that
\[
\mathrm{Hom}(\alpha_{*}E'', \beta_{*}E') = \mathrm{Hom}(\beta^{-1}\alpha_{*}E'', E') = 0.
\]
We obtain the first assertion because $\beta_{\dagger}E' \subset \beta_{*}E'$.
Moreover, this being true for any sheaf, we actually have
\[
\mathrm R\mathrm{Hom}(\alpha_{*}E'', \mathrm R\beta_{*}E') = 0.
\]
Using the exact sequence \eqref{fondseq}, we obtain
\[
\mathrm{RHom}(\alpha_{*}E'', j_{U}^\dagger\mathrm R\beta_{*} E') \simeq \mathrm{RHom}(\alpha_{*}E'', \beta_{\dagger}E'[1]).
\]
The second assertion formally follows.
Finally, since $\alpha^{-1}\beta_{\dagger}E'' \subset \alpha^{-1}\beta_{*}E'' = 0$, we have
\[
\mathrm{Hom}(\beta_{\dagger}E'', \alpha_{*}E') = \mathrm{Hom}(\alpha^{-1}\beta_{\dagger}E'', E') = 0
\]
and we obtain the last assertion because the result holds for all abelian sheaves.
\end{proof}

The same proof provides a local version:
\begin{align*}
&\mathcal H\mathrm{om}(\alpha_{*}E'', \beta_{\dagger}E') = 0,
\\&
\mathcal H\mathrm{om}(\alpha_{*}E'', j_{U}^\dagger \beta_{*} E') \simeq \mathcal E\mathrm{xt}(\alpha_{*}E'', \beta_{\dagger}E'),
\\&
\mathrm R\mathcal H\mathrm{om}(\beta_{\dagger}E'', \alpha_{*}E') = 0.
\end{align*}

For the next statement, we need some preparation (the result will be used to do noetherian induction when we come to constructible crystals).

%%%%%%%%%%%%%%%%%
\begin{dfn} \label{fildef}
A finite filtration
\[
X = X_0 \supset X_1 \supset \cdots \supset X_d \supset X_{d+1} = \emptyset
\]
is said to be \emph{constructible} if, for $i=0, \ldots, d$, each $X_{i+1}$ is closed and nowhere dense in $X_i$.
\end{dfn}

Equivalently, it means that the open complement $U_i$ of $X_{i+1}$ in $X_i$ is everywhere dense in $X_i$: this is called a \emph{good stratification}\footnote{We will avoid this terminology because will have to consider later the notion of a stratification on a module.} in \cite[\href{https://stacks.math.columbia.edu/tag/09Y0}{Tag 09Y0}]{stacks-project}. 

We denote for each $0 \leq i < j \leq d$, by $\beta_{i,j} : X_j \hookrightarrow X_i$ the inclusion map.
If $I = (i_1, \ldots, i_k)$ is an ordered subset of $(0, \ldots, d)$, then we consider for an abelian sheaf $E$ on $T$,  the following barbarian expression:
\[
j_{I}^{\dagger} E := \beta_{0i_1*}j_{U_{i_1}}^\dagger \beta_{i_1i_2*}j_{U_{i_2}}^\dagger \ldots \beta_{i_{k-1}i_k*}j_{U_{i_k}}^\dagger \beta_{0i_k}^{-1} E.
\]
For example, we have
\[
j^\dagger_{(0)}E= j^\dagger_{U_0}E, \quad  j^\dagger_{(d)} =\beta_{0d*}\beta_{0d}^{-1}E \quad \mathrm{and} \quad j_{(0,d)}^{\dagger} E := j_{U_{0}}^\dagger \beta_{0d*}\beta_{0d}^{-1} E.
\]
If $I \subset J$, then there exists a canonical map $j_{I}^{\dagger} E \to j_{J}^{\dagger}  E$ and we denote by $j_{\bullet}^{\dagger}  E$ the corresponding diagram.
Note that there exists an augmentation map $E \to j_{\bullet}^{\dagger}  E$.

This diagram has a nicer description on a realization $(X',V')$: if we denote for each $i = 0, \ldots, d$, by $\gamma'_i : U'_i \hookrightarrow X'$ the inverse image of the inclusion map, then we may set for any sheaf on $\,]X'[_{V'}$,
\[
]\gamma'_{I}[_*]\gamma'_{I}[^{-1}\mathcal F := ]\gamma'_{i_1}[_*]\gamma'_{i_1}[^{-1} \ldots ]\gamma'_{i_k}[_*]\gamma'_{i_k}[^{-1} \mathcal F.
\]
It is not difficult to see that
\[
\left(j_{\bullet}^{\dagger}  E\right)_{V'} \simeq ]\gamma'_{\bullet}[_*]\gamma'_{\bullet}[^{-1} E_{V'}.
\]

We have as in proposition 2.1.1 of \cite{AbeLazda22}:

%%%%%%%%%%%%%%%%%%
\begin{prop} \label{limstr}
If $E$ is an abelian sheaf on $T$ and $X$ is endowed with a constructible filtration, then
\[
E \simeq \varprojlim j_{\bullet}^{\dagger}  E.
\]
\end{prop}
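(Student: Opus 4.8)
The plan is to argue by induction on the length $d$ of the constructible filtration, using the cartesian square of Corollary \ref{cartone} as the inductive engine. Everything takes place over the \emph{analytic} overconvergent site $T$ (so that the functors $j_{U_i}^\dagger$ are available), and since that square already lives on $T$ there is no need to pass to realizations. When $d=0$ we have $X_1=\emptyset$, hence $U_0=X$ and the open embedding $U_0\hookrightarrow X$ is the identity, so $j_{(0)}^\dagger E=j_{U_0}^\dagger E=E$; the indexing poset of nonempty subsets of $\{0\}$ is a single point and the augmentation $E\to\varprojlim j_\bullet^\dagger E$ is visibly an isomorphism.

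For the inductive step, let $\mathcal P$ denote the poset of nonempty subsets $I\subseteq\{0,\dots,d\}$ ordered by inclusion, and split it as $\mathcal P=\mathcal A\sqcup\mathcal B$ with $\mathcal A=\{I:0\in I\}$ and $\mathcal B=\{I:0\notin I\}$; note that $\mathcal A$ is upward closed, $\mathcal B$ is downward closed, and every cross-morphism runs from $\mathcal B$ to $\mathcal A$. Write $E_1:=E_{|X_1}=\beta_{0,1}^{-1}E$, a sheaf on $T_{X_1}$ carrying the inherited constructible filtration $X_1\supset\cdots\supset X_{d+1}=\emptyset$. Factoring $\beta_{0i}=\beta_{0,1}\circ\beta_{1,i}$ and using the compatibility of $j^\dagger$ and of pushforward with restriction to $T_{X_1}$, one checks that for $I\in\mathcal B$ one has $j_I^\dagger E\simeq\beta_{0,1*}\bigl(j_I^\dagger E_1\bigr)$, the right-hand expression being formed for the filtration of $X_1$. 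As $\beta_{0,1*}$ is a right adjoint it preserves limits, so the inductive hypothesis applied to $E_1$ yields $\varprojlim_{\mathcal B}j_\bullet^\dagger E\simeq\beta_{0,1*}\beta_{0,1}^{-1}E$. On the other side, $I\mapsto I\setminus\{0\}$ identifies $\mathcal A$ with the poset of \emph{all} subsets of $\{1,\dots,d\}$, whose initial object $\{0\}$ has value $j_{\{0\}}^\dagger E=j_{U_0}^\dagger E$; hence $\varprojlim_{\mathcal A}j_\bullet^\dagger E\simeq j_{U_0}^\dagger E$.

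It then remains to glue the two partial limits. Because the only cross-morphisms are the minimal ones $b\subseteq\{0\}\cup b$ (all others factoring through these inside $\mathcal A$), a cone over $j_\bullet^\dagger E$ is the same datum as a compatible pair of cones over $\mathcal A$ and over $\mathcal B$ that agree after mapping into $\varprojlim_{b\in\mathcal B}j_{\{0\}\cup b}^\dagger E$. Since $j_{\{0\}\cup b}^\dagger E=j_{U_0}^\dagger(j_b^\dagger E)$ and $j_{U_0}^\dagger$ is exact, this common target equals $j_{U_0}^\dagger\beta_{0,1*}\beta_{0,1}^{-1}E$, the two comparison maps being $j_{U_0}^\dagger$ applied to the unit $E\to\beta_{0,1*}\beta_{0,1}^{-1}E$ and the unit $\beta_{0,1*}\beta_{0,1}^{-1}E\to j_{U_0}^\dagger\beta_{0,1*}\beta_{0,1}^{-1}E$. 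Thus $\varprojlim_{\mathcal P}j_\bullet^\dagger E$ is exactly the fibre product of $j_{U_0}^\dagger E\to j_{U_0}^\dagger\beta_{0,1*}\beta_{0,1}^{-1}E\leftarrow\beta_{0,1*}\beta_{0,1}^{-1}E$, i.e.\ the cartesian square of Corollary \ref{cartone} for the open $U_0$ with closed complement $X_1$; that corollary identifies the fibre product with $E$, and unwinding shows the resulting map $E\to\varprojlim j_\bullet^\dagger E$ is the augmentation, completing the induction. The main obstacle is precisely the bookkeeping of this last step: justifying the cofinality reduction to the minimal cross-morphisms, identifying the poset limit over $\mathcal P$ with the single fibre product, and matching its corner and structure maps with those of Corollary \ref{cartone}. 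This rests on the two formal inputs used above --- exactness of $j_{U_0}^\dagger$ (so that it commutes with the finite limits involved) and the commutation of $\beta_{0,1*}$ with limits and with the $j^\dagger$-construction under restriction to $T_{X_1}$ --- which together make the decomposition $\mathcal P=\mathcal A\sqcup\mathcal B$ collapse onto the elementary square we already control.
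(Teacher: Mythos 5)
Your proof is correct and follows essentially the same route as the paper's: induction on the length of the filtration, the identifications $j_I^\dagger E \simeq \beta_{01*}j_I^\dagger\beta_{01}^{-1}E$ for $0\notin I$ and $j_{\{0\}\cup I'}^\dagger E \simeq j_{U_0}^\dagger j_{I'}^\dagger E$, commutation of $\beta_{01*}$ (right adjoint) and of the exact functor $j_{U_0}^\dagger$ with the finite limits involved, and Corollary \ref{cartone} to conclude. The only difference is presentational: where the paper says ``putting all this together provides the formula,'' you spell out the poset decomposition $\mathcal P = \mathcal A \sqcup \mathcal B$ and the identification of $\varprojlim_{\mathcal P}$ with the fibre product, which is exactly the implicit bookkeeping.
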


\begin{proof}
We consider the induced filtration on $X_1$.
For $I = (i_1, \ldots, i_k) \subset (1, \ldots, d)$,  if $E'$ is an abelian sheaf on $T_{X_1}$, we will write
\[
j'^{\dagger}_{I} E' := \beta_{1i_2*}j_{U_{i_2}}^\dagger \beta_{i_2i_3*}j_{U_{i_3}}^\dagger \ldots \beta_{i_{k-1}i_k*}j_{U_{i_k}}^\dagger \beta_{1i_k}^{-1} E'.
\]
Note that, given $I = (i_1, i_2, \ldots, i_k) \subset (0, \ldots, d)$, we have the following alternatives
\[
j_{I}^{\dagger} E = \left\{ \begin{array}{lll}
\beta_{01*}j'^{\dagger}_{I} \beta_{01}^{-1}E& \mathrm{if}\ I \subset (1, \ldots, d)
\\
\\  j^\dagger_{U_0} \beta_{01*}j'^{\dagger}_{I'} \beta_{01}^{-1}E &\mathrm{if}\   I = (0) \cup I'.
\end{array}\right.
\]
Now, we may assume by induction that if $E'$ is an abelian sheaf on $T_{X_1}$, then
\[
E' \simeq \varprojlim j'^{\dagger}_{\bullet}  E'.
\]
As a consequence, we get
\[
\beta_{01*}\beta_{01}^{-1} E \simeq \beta_{01*} \varprojlim j'^{\dagger}_{\bullet} \beta_{01}^{-1}E \simeq \varprojlim \beta_{01*}j'^{\dagger}_{\bullet} \beta_{01}^{-1}E
\]
and
\[
j^\dagger_{U_0} \beta_{01*}\beta_{01}^{-1} E \simeq  j^\dagger_{U_0}  \varprojlim \beta_{01*}j'^{\dagger}_{\bullet} \beta_{01}^{-1}E \simeq \varprojlim  j^\dagger_{U_0}   \beta_{01*}j'^{\dagger}_{\bullet} \beta_{01}^{-1}E.
\]
On the other hand, we know from corollary \ref{cartone} that the diagram
\[
\xymatrix{ E \ar[r] \ar[d] &  j^\dagger_{U_0} E \ar[d] \\ \beta_{01*}\beta_{01}^{-1} E \ar[r] & j^\dagger_{U_0} \beta_{01*}\beta_{01}^{-1} E
}
\]
is cartesian.
Putting all this together provides the formula.
\end{proof}

There exists again an analog for a complex $E$ of abelian sheaves on $T$: we define
\[
\mathrm Rj_{I}^{\dagger} E := \mathrm R\beta_{0i_1*}j_{U_{i_1}}^\dagger \mathrm R\beta_{i_1i_2*}j_{U_{i_2}}^\dagger \ldots \mathrm R\beta_{i_{k-1}i_k*}j_{U_{i_k}}^\dagger \mathrm R\beta_{0i_k}^{-1} E
\]
(this may not be the derived functor of $j_{I}^{\dagger} E$ because $j^\dagger_{U_i}$ does not preserve injectives).
Then we have
\[
E \simeq \mathrm R\varprojlim \mathrm Rj_{\bullet}^{\dagger}  E
\]
(see the proof of proposition 2.1.1 in \cite{AbeLazda22}).

%%%%%%%%%%%%%%
\section{Crystals}

We want now to define the notion of a crystal on an overconvergent site which is the natural coefficient for cohomology.
Again, we will finish with a fine study of the behavior of a crystal along a formal embedding.

%%%%%%%%%%%%%%%%%%%%%%%
\subsection{The structural sheaf}

An overconvergent site is naturally a ringed site as we shall now explain.

Recall that if $(X,V)$ is an overconvergent space, then we denote by $i_{X} : \,]X[_{V} \hookrightarrow V$ the inclusion map.
We endow the tube $\,]X[_V$ with the sheaf of rings
$
\mathcal O_{V}^\dagger := i_{X}^{-1}\mathcal O_{V}.
$
We will sometimes write $\mathcal O_{\,]X[_{V}}^\dagger$ in order to specify the role of $X$ but we may as well write $\mathcal O^\dagger_X$ when this is $V$ that plays a minor role.
When we consider an $\mathcal O_V^\dagger$-module, we may say $\mathcal O_V^\dagger$-module \emph{on $X$} in order to specify the formal scheme.
Note that the ring $\mathcal O_V^\dagger$ is identical to the structural ring $\mathcal O_{\,]X[_{V}^\dagger}$ of the germ $\,]X[_V^\dagger$ of the prepseudo-adic space $(V, \,]X[_V)$.

We let $(f,u) : (X',V') \to (X,V)$ be a morphism of overconvergent spaces.
We promote the map $]f[_{u} : \,]X'[_{V'} \to \,]X[_{V}$ to a morphism of ringed spaces
\begin{align} \label{basmor}
]f[_{u} : (\,]X'[_{V'},\mathcal O_{V'}^\dagger)   \to (\,]X[_{V}, \mathcal O_{V}^\dagger)
\end{align}
using the canonical map
\begin{align} \label{cantr}
]f[_{u}^{-1}\mathcal O_{V}^\dagger =]f[_{u}^{-1} i_{X}^{-1}\mathcal O_{V}= i_{X'}^{-1} u^{-1}\mathcal O_{V}  \to i_{X'}^{-1} \mathcal O_{V'} = \mathcal O_{V'}^\dagger.
\end{align}
In order to avoid any confusion, we will denote by $]f[_{u}^\dagger$ the inverse image functor (instead of $]f[_{u}^*$).
We may also write $]f[^\dagger$ (resp.\ $u^\dagger$) in the case $u = \mathrm{Id}_V$ (resp.\ $f = \mathrm{Id}_X$). 
Alternatively, we could again consider the germ $\,]X[_V^\dagger$ that comes with its natural structural sheaf $\mathcal O_{\,]X[^\dagger_V}$ and $]f[_u$ is then the natural morphism of ringed germs.

There exists another description of the inverse image functor that may be sometimes more convenient to use:

%%%%%%%%%%%%%
\begin{lem}
If  $\mathcal F$ is an $\mathcal O_{V}^\dagger$-module, then
\[
]f[_{u}^\dagger \mathcal F =i_{X'}^{-1}u^*i_{X*}\mathcal F.
\]
\end{lem}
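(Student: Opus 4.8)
The plan is to unwind both sides into tensor-product expressions and then match them term by term. The only geometric input I will use is the commutative square of topological spaces
\[
i_X \circ\, ]f[_u = u \circ i_{X'},
\]
which holds because $]f[_u$ is by construction the restriction of $u$ to the tubes (the morphism $(f,u)$ maps $\,]X'[_{V'}$ into $\,]X[_V$). Everything else is formal manipulation of inverse and direct image functors.

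First I would expand the left-hand side: by definition of the inverse image functor along the morphism of ringed spaces \eqref{basmor}, for an $\mathcal O_V^\dagger$-module $\mathcal F$ one has
\[
]f[_u^\dagger \mathcal F = \mathcal O_{V'}^\dagger \otimes_{]f[_u^{-1}\mathcal O_V^\dagger} ]f[_u^{-1}\mathcal F.
\]
For the right-hand side I would first observe that, since the structural map $i_{X'}^{-1}\mathcal O_{V'} = \mathcal O_{V'}^\dagger$ is the identity, the $\mathcal O$-module pullback along $i_{X'}$ agrees with the plain sheaf inverse image $i_{X'}^{-1}$, and that $i_{X'}^{-1}$, being an inverse image functor, commutes with tensor products and with the formation of the base ring. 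Expanding $u^* i_{X*}\mathcal F = \mathcal O_{V'} \otimes_{u^{-1}\mathcal O_V} u^{-1} i_{X*}\mathcal F$ and applying $i_{X'}^{-1}$ then gives
\[
i_{X'}^{-1}u^* i_{X*}\mathcal F = i_{X'}^{-1}\mathcal O_{V'} \otimes_{i_{X'}^{-1}u^{-1}\mathcal O_V} i_{X'}^{-1}u^{-1} i_{X*}\mathcal F.
\]

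Now I would rewrite each factor using the commutative square. The base ring becomes $i_{X'}^{-1}\mathcal O_{V'} = \mathcal O_{V'}^\dagger$ and $i_{X'}^{-1}u^{-1}\mathcal O_V = (u \circ i_{X'})^{-1}\mathcal O_V = (i_X \circ\, ]f[_u)^{-1}\mathcal O_V = ]f[_u^{-1}i_X^{-1}\mathcal O_V = ]f[_u^{-1}\mathcal O_V^\dagger$, which matches the base ring on the left. For the module factor the same rewriting yields $i_{X'}^{-1}u^{-1} i_{X*}\mathcal F = ]f[_u^{-1} i_X^{-1} i_{X*}\mathcal F$, and here I would invoke the fact that $i_X : \,]X[_V \hookrightarrow V$ is an embedding of topological spaces, so that the adjunction unit is an isomorphism $i_X^{-1} i_{X*}\mathcal F \simeq \mathcal F$. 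Substituting produces exactly the tensor expression for $]f[_u^\dagger\mathcal F$, which finishes the argument.

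The computation is entirely formal, so I do not expect a genuine obstacle; the only points that deserve a word of care are that $i_{X'}^{-1}$ may be treated as the module pullback, because $i_{X'}$ carries $\mathcal O_{V'}$ to $\mathcal O_{V'}^\dagger$ by definition, and that $i_X^{-1}i_{X*} = \mathrm{Id}$, which is legitimate precisely because the tube is a subspace of $V$ endowed with its subspace topology.
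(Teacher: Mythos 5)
Your proof is correct and follows essentially the same route as the paper's: both expand the module pullback as a tensor product, use the commutative square $i_X \circ\, ]f[_u = u \circ i_{X'}$ together with $i_X^{-1}i_{X*} = \mathrm{Id}$, and pull the plain inverse image through the tensor product. The only difference is cosmetic — you start from the right-hand side and work backwards, while the paper runs the same chain of identifications from left to right.
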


\begin{proof} We have
\begin{align*}
]f[_{u}^\dagger \mathcal F &=  \mathcal O_{V'}^\dagger \otimes_{]f[_{u}^{-1}\mathcal O_{V}^\dagger } ]f[_{u}^{-1} \mathcal F
\\&= i_{X'}^{-1}\mathcal O_{V'} \otimes_{]f[_{u}^{-1}i_{X}^{-1}\mathcal O_{V} }  ]f[_{u}^{-1} i_{X}^{-1}i_{X*}\mathcal F
\\&=  i_{X'}^{-1}\mathcal O_{V'} \otimes_{i_{X'}^{-1} u^{-1}\mathcal O_{V}}i_{X'}^{-1}u^{-1}i_{X*}\mathcal F
\\&=  i_{X'}^{-1} (\mathcal O_{V'} \otimes_{u^{-1}\mathcal O_{V}}u^{-1}i_{X*}\mathcal F)
\\& = i_{X'}^{-1}u^*i_{X*}\mathcal F. \qedhere
\end{align*}
\end{proof}

The same computations also show that, more generally, if $\mathcal F'$ is an $\mathcal O_V$-module such that $i_X^{-1}\mathcal F' = \mathcal F$, then $]f[_{u}^\dagger \mathcal F =i_{X'}^{-1}u^*\mathcal F'$.
For example, $]f[_{u}^\dagger \mathcal F = i_{X'}^{-1}u^*i_{X!}\mathcal F$ when $V$ is analytic (this last condition is made to insure that exceptional direct image does exist).

As a particular case, note that if $(X,V)$ is an overconvergent space, $\gamma : Y \hookrightarrow X$ is a formal embedding and $\mathcal F$ is an $ \mathcal O_{V}^\dagger$-module on $X$, then, for the morphism of overconvergent spaces $(Y, V) \hookrightarrow (X,V)$, we have
\[
]\gamma[^\dagger \mathcal F = i_{Y}^{-1}i_{X*}\mathcal F =  ]\gamma[^{-1} i_{X}^{-1}i_{X*} \mathcal F =  ]\gamma[^{-1} \mathcal F.
\]

For further use, let us mention the following (recall that a morphism of ringed sites is said to be flat when inverse image is exact):

%%%%%%%%%
\begin{lem} \label{flatan}
If $(X,V)$ analytic and $u$ is flat in the neighborhood of $X$, then the morphism of ringed spaces $]f[_{u}$ also is flat.
\end{lem}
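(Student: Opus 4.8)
The plan is to verify flatness \emph{stalk by stalk}, using the standard criterion that a morphism of ringed spaces is flat exactly when, at each point of the source, the stalk of the structure sheaf is flat over the stalk of the structure sheaf pulled back from the target. Concretely, flatness of $]f[_u$ amounts to $\mathcal O_{V'}^\dagger$ being flat over $]f[_u^{-1}\mathcal O_V^\dagger$ via the structural map \eqref{cantr}, and this is a pointwise condition on the tube $\,]X'[_{V'}$.

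First I would unwind the stalks. Since $\mathcal O_V^\dagger = i_X^{-1}\mathcal O_V$ and $\mathcal O_{V'}^\dagger = i_{X'}^{-1}\mathcal O_{V'}$, for a point $x' \in \,]X'[_{V'}$ the stalk of $\mathcal O_{V'}^\dagger$ is simply the adic stalk $\mathcal O_{V',x'}$, while the stalk of $]f[_u^{-1}\mathcal O_V^\dagger$ is $\mathcal O_{V,\,]f[_u(x')} = \mathcal O_{V,u(x')}$, because $]f[_u$ is induced on tubes by $u$ and agrees with $u$ on points. Moreover, as \eqref{cantr} exhibits the structural map as $i_{X'}^{-1}$ applied to the structural map $u^{-1}\mathcal O_V \to \mathcal O_{V'}$ of the morphism $u$ of ringed spaces, the induced transition map on stalks at $x'$ is exactly the stalk at $x'$ of $u$. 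Hence the lemma reduces to the statement that $\mathcal O_{V',x'}$ is flat over $\mathcal O_{V,u(x')}$ for every $x'$ in the tube $\,]X'[_{V'}$.

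It then remains to feed in the hypothesis. ``Flat in the neighborhood of $X$'' supplies an open neighborhood $W$ of $\,]X[_V$ in $V$ such that $u$ is flat over $W$, i.e. $u$ is flat at every point of $V'$ lying above $W$. Here the \emph{analytic} hypothesis is what makes the tube cooperate: by its good behaviour under generalization (as recalled in the proof of Lemma \ref{FKlim}) one has $\,]X'[_{V'} \subseteq u^{-1}(\,]X[_V) \subseteq u^{-1}(W)$, so every $x' \in \,]X'[_{V'}$ is a flat point of $u$, and the stalk criterion above closes the argument. The main obstacle is precisely this last verification, namely pinning down the meaning of ``flat in the neighborhood of $X$'' and confirming, via analyticity, that the whole tube lies inside the open flat locus of $u$; once that inclusion is in hand, the reduction to adic stalks makes the conclusion immediate.
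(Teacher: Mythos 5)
Your proof is correct, but it takes a genuinely different route from the paper's. The paper argues functorially, using the identity $]f[_{u}^\dagger \mathcal F = i_{X'}^{-1}u^*i_{X*}\mathcal F$ established just before the lemma: analyticity makes $\,]X[_{V}$ locally closed in $V$, so one may assume it is closed, and then $i_{X*}$, $u^*$ (after shrinking to the flat locus) and $i_{X'}^{-1}$ are all exact, whence the composite $]f[_{u}^\dagger$ is exact. You instead verify flatness pointwise: the stalk at $x' \in \,]X'[_{V'}$ of the structural map \eqref{cantr} is the local homomorphism $\mathcal O_{V,u(x')} \to \mathcal O_{V',x'}$ of $u$ itself, so exactness of $]f[_{u}^\dagger$ reduces to flatness of $u$ at the points of the tube. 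Your argument is more elementary: it never involves the pushforward $i_{X*}$, which is precisely the place where (local) closedness of the tube, hence analyticity, enters the paper's proof. What the paper's route buys is a proof that stays at the level of the functor identity, in the same style as the surrounding lemmas; what yours buys is generality, since the stalk criterion for exactness holds on any ringed topological space.

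One inaccuracy, which is not a gap: your appeal to analyticity is misplaced. The inclusion $\,]X'[_{V'} \subseteq u^{-1}(\,]X[_{V})$ has nothing to do with stability of tubes under generalization (the point recalled in lemma \ref{FKlim}); it is automatic from functoriality of tubes, and is in fact the very statement — which you yourself use earlier — that $]f[_{u}$ is the restriction of $u$ and takes values in $\,]X[_{V}$. Consequently your argument actually proves the lemma without the analyticity hypothesis; and it is also insensitive to how one reads ``flat in the neighborhood of $X$'' (flat over an open neighborhood of $\,]X[_{V}$ in $V$, as you take it, or flat on an open neighborhood of $\,]X'[_{V'}$ in $V'$), since either reading makes every point of $\,]X'[_{V'}$ a flat point of $u$.
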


\begin{proof}
Since $(X,V)$ is analytic, $\,]X[_{V}$ is locally closed in $V$ and we may therefore assume that it is closed.
Then, $i_{X'}^{-1}$, $u^*$ and $i_{X*}$ are all exact functors.
\end{proof}

We can now make the following fundamental definition:

%%%%%%%%%%%%%%%
\begin{dfn}
The \emph{structural sheaf}  $\mathcal O^\dagger$ on the absolute overconvergent site $\mathbf{Ad}^\dagger$ is defined by
\[
\Gamma((X,V), \mathcal O^\dagger) := \Gamma(\,]X[_{V}, \mathcal O_{V}^\dagger)
\]
with obvious transition maps.
\end{dfn}

It follows from proposition \ref{realsh} that $\mathcal O^\dagger$ is a sheaf of rings with realization $\mathcal O_{V}^\dagger$ on $(X,V)$.
Any overconvergent site $T$ inherits a structural sheaf $\mathcal O^\dagger_{T}  := \mathcal O^\dagger_{|T}$ and any morphism of overconvergent sites $f : T' \to T$ is automatically a flat morphism of ringed sites since $f^{-1}\mathcal O_{T}^\dagger = \mathcal O_{T'}^\dagger$.
When $T = (X/O)^\dagger$ or $T = (X,V/O)^\dagger$, we will simply write $\mathcal O^\dagger_{X/O}$ and $\mathcal O^\dagger_{X,V/O}$ respectively (and not repeat the $\dagger$).

%%%%%%%%%%%%%%%
\subsection{Modules}

We need to specialize our former results on abelian sheaves on overconvergent sites to the case of modules on the corresponding ringed site.

If $(X, V)$ is an overconvergent space, we can promote $\varphi_{V}$ (resp.\ $\psi_V$) to a morphism of ringed sites (resp.\ ringed topoi) and obtain the following analog to lemma \ref{adjret}:

%%%%%%%%%%%%%
\begin{lem}
If $(X, V)$ is an overconvergent space, then there exists a morphism of ringed sites
\[
\varphi_{V} : \left((X,V), \mathcal O^\dagger_{(X,V)}\right)  \to \left(\,]X[_{V}, \mathcal O_{V}^\dagger\right)
\]
and a morphism of ringed topoi
\[
\psi_V : \left(\widetilde{\,]X[_{V}}, \mathcal O_{V}^\dagger\right) \to \left(\widetilde{(X,V)}, \mathcal O^\dagger_{(X,V)}\right)
\]
such that $\widetilde \varphi_V$ is an adjoint retraction to $\psi_{V}$.
\end{lem}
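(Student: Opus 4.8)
The plan is to observe that all the underlying topos-theoretic content is already contained in Lemma \ref{adjret}, so the only genuinely new task is to equip the morphism of sites $\varphi_V$ and the morphism of topoi $\psi_V$ with compatible maps of structural sheaves, and then to check that the adjoint retraction respects these. Making $\varphi_V$ into a morphism of ringed sites requires, on top of the morphism of sites of Lemma \ref{adjret}, a ring homomorphism $\varphi_V^{-1}\mathcal O_V^\dagger \to \mathcal O^\dagger_{(X,V)}$; dually, making $\psi_V$ into a morphism of ringed topoi requires a ring homomorphism $\psi_V^{-1}\mathcal O^\dagger_{(X,V)} \to \mathcal O_V^\dagger$. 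I would supply these two maps first and then verify the retraction.

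For $\varphi_V$ I would build the structure map realization by realization. By Lemma \ref{pulbk}, for every $(f,u) : (X',V') \to (X,V)$ the realization of $\varphi_V^{-1}\mathcal O_V^\dagger$ on $V'$ is $]f[_{u}^{-1}\mathcal O_V^\dagger$, and the canonical map \eqref{cantr} provides a ring homomorphism $]f[_{u}^{-1}\mathcal O_V^\dagger \to \mathcal O_{V'}^\dagger = (\mathcal O^\dagger_{(X,V)})_{V'}$. These are compatible with the transition maps, the cocycle condition being inherited from the functoriality of \eqref{cantr}, so by Proposition \ref{realsh} they glue to a morphism $\varphi_V^{-1}\mathcal O_V^\dagger \to \mathcal O^\dagger_{(X,V)}$, which is a ring homomorphism because each \eqref{cantr} is. For $\psi_V$ there is nothing to add: since $\psi_V^{-1} = \varphi_{V*}$ and, by the very definition of realization, $\varphi_{V*}\mathcal O^\dagger_{(X,V)} = (\mathcal O^\dagger_{(X,V)})_V = \mathcal O_V^\dagger$, the required structure map is the canonical identity, so $\psi_V$ is automatically a morphism of ringed topoi.

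It then remains to lift the adjoint retraction of Lemma \ref{adjret}. The underlying relation $\varphi_V \circ \psi_V = \mathrm{Id}$ and the adjunction $\varphi_{V*} = \psi_V^{-1}$ (equivalently $\psi_V^* = \varphi_{V*}$ on modules, \emph{with no tensor correction}, since $\psi_V^{-1}\mathcal O^\dagger_{(X,V)} = \mathcal O_V^\dagger$) come for free from that lemma. The one point requiring attention is that \eqref{cantr} is \emph{not} an isomorphism in general, so module pullback along $\varphi_V$ genuinely involves the tensor product $\mathcal O^\dagger_{(X,V)} \otimes_{\varphi_V^{-1}\mathcal O_V^\dagger} \varphi_V^{-1}(-)$ and does not reduce to $\varphi_V^{-1}$. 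To get the retraction on modules I would apply $\varphi_{V*}$ and use that $\varphi_{V*}$ commutes with tensor products together with $\varphi_{V*}\varphi_V^{-1} = \mathrm{Id}$ (both recorded after Lemma \ref{adjret}), giving $\varphi_{V*}\bigl(\mathcal O^\dagger_{(X,V)} \otimes_{\varphi_V^{-1}\mathcal O_V^\dagger}\varphi_V^{-1}\mathcal F\bigr) \simeq \mathcal O_V^\dagger \otimes_{\mathcal O_V^\dagger}\mathcal F = \mathcal F$; the compatibility of the structure maps under composition then reduces to the fact that \eqref{cantr} for the identity morphism is the identity of $\mathcal O_V^\dagger$, which is immediate. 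I expect this bookkeeping, tracking where the tensor correction appears for $\varphi_V$ but not for $\psi_V$, to be the only mildly delicate step, everything else being a direct transcription of Lemma \ref{adjret}.
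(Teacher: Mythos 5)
Your proposal is correct and follows essentially the same route as the paper: the paper's entire proof is the observation that, by definition, $\psi_{V}^{-1}\mathcal O^\dagger_{(X,V)} = \mathcal O_{V}^\dagger$, after which the assertion ``formally follows'' --- exactly your central point that $\psi_V$ needs no tensor correction. Your additional bookkeeping (gluing the structure map $\varphi_V^{-1}\mathcal O_V^\dagger \to \mathcal O^\dagger_{(X,V)}$ from the maps \eqref{cantr} via Proposition \ref{realsh}, and deriving $\varphi_{V*}\circ\varphi_V^* = \mathrm{Id}$ from the facts recorded after Lemma \ref{adjret}) is a faithful expansion of what the paper leaves implicit.
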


\begin{proof}
By definition, we have $
\psi_{V}^{-1}\mathcal O^\dagger_{(X,V)} =  \mathcal O_{V}^\dagger.
$
The assertion formally follows.
\end{proof}

As a consequence of the proposition, we see that there exists again a sequence of adjoints
\[
\varphi_{V}^{*}, \quad \varphi_{V*} = \psi_{V}^{-1}, \quad \psi_{V*}.
\]
Moreover, $\varphi_{V}^{*}$ is fully faithful, or equivalently, $\varphi_{V*} \circ \varphi_{V}^{*} = \mathrm{Id}$ (but the inverse image $\varphi_{V}^{*}$ is not exact anymore).

The commutative diagram of lemma \ref{comphi} is actually a commutative diagrams of \emph{ringed sites} so that we always have
\[
\varphi_{V'}^* \circ ]f[_u^\dagger = (f,u)^{-1} \circ \varphi_V^*.
\]

There  also exists an analog to lemma \ref{pulbk}:

%%%%%%%%%%%%%%%%%
\begin{lem} \label{crisfi2}
If $(f,u) : (X',V') \to (X,V)$ is a morphism of overconvergent spaces and $\mathcal F$ is an $\mathcal O_{V}^\dagger$-module, then 
\[
(\varphi_{V}^{*}\mathcal F)_{V'} = ]f[_{u}^{\dagger}\mathcal F.
\]
\end{lem}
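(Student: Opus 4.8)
The plan is to follow verbatim the strategy used to prove lemma \ref{pulbk}, simply replacing the inverse image of abelian sheaves by the inverse image of \emph{modules} throughout. The essential input will be the ringed refinement of lemma \ref{comphi} recorded just above the statement, namely the identity
\[
(f,u)^{-1} \circ \varphi_V^* = \varphi_{V'}^* \circ ]f[_u^\dagger
\]
of functors from $\mathcal O_V^\dagger$-modules to $\mathcal O^\dagger_{(X',V')}$-modules. Here I exploit that any morphism of overconvergent sites, in particular $(f,u)$ seen as a morphism of representable sites, is flat with $(f,u)^{-1}\mathcal O^\dagger_{(X,V)} = \mathcal O^\dagger_{(X',V')}$; consequently the module pullback $(f,u)^*$ agrees with $(f,u)^{-1}$, which is precisely what allows the restriction $E_{|(X',V')}$ of a module to be computed as $(f,u)^{-1}E$.

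First I would unwind the definition of the realization functor. Since the structural map of $(X',V')$ over $T=(X,V)$ is $(f,u)$, one has by definition
\[
(\varphi_V^*\mathcal F)_{V'} = \varphi_{V'*}\bigl((\varphi_V^*\mathcal F)_{|(X',V')}\bigr) = \varphi_{V'*}(f,u)^{-1}\varphi_V^*\mathcal F.
\]
Next I would substitute the ringed commutation identity to rewrite $(f,u)^{-1}\varphi_V^*\mathcal F = \varphi_{V'}^*\,]f[_u^\dagger\mathcal F$, giving
\[
(\varphi_V^*\mathcal F)_{V'} = \varphi_{V'*}\varphi_{V'}^*\,]f[_u^\dagger\mathcal F.
\]
Finally I would invoke the full faithfulness of $\varphi_{V'}^*$, equivalently the retraction identity $\varphi_{V'*}\circ\varphi_{V'}^* = \mathrm{Id}$ noted just before the statement, to conclude that $(\varphi_V^*\mathcal F)_{V'} = ]f[_u^\dagger\mathcal F$, as desired.

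I do not expect any genuine obstacle: the lemma is a formal consequence of the ringed version of lemma \ref{comphi} together with the retraction $\varphi_{V'*}\circ\varphi_{V'}^* = \mathrm{Id}$, and it is the exact analogue of lemma \ref{pulbk}. The only point requiring care is the bookkeeping of which inverse image is meant at each stage — the module pullbacks $\varphi_V^*$ and $]f[_u^\dagger$ on the one hand, and the restriction $(f,u)^{-1}$ along the morphism of overconvergent sites on the other — together with the observation that the latter is automatically flat and identifies the two structural sheaves, so that no spurious tensor factor appears. Once this is settled the three displayed equalities chain together exactly as in the abelian case.
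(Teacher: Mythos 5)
Your proposal is correct and follows exactly the paper's own proof: the same three-step chain $(\varphi_V^*\mathcal F)_{V'} = \varphi_{V'*}(f,u)^{-1}\varphi_V^*\mathcal F = \varphi_{V'*}\varphi_{V'}^*\,]f[_u^\dagger\mathcal F = ]f[_u^\dagger\mathcal F$, using the ringed-site version of lemma \ref{comphi} and the retraction $\varphi_{V'*}\circ\varphi_{V'}^* = \mathrm{Id}$. The extra remarks on flatness and the identification of structural sheaves are consistent with the paper's setup and do not change the argument.
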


\begin{proof}
Follows as lemma \ref{pulbk} from lemma \ref{comphi} (in which the morphisms are seen as morphisms of ringed sites):
\[
(\varphi_{V}^{*}\mathcal F)_{V'} = \varphi_{V'*}(f,u)^{-1}\varphi_{V}^{*}\mathcal F = \varphi_{V'*}\varphi_{V'}^*]f[_{u}^{\dagger}\mathcal F= ]f[_{u}^{\dagger}\mathcal F. \qedhere
\]
\end{proof}

If $T$ is an overconvergent site, $E$ is a an $\mathcal O^\dagger_{T}$-module (we will simply say \emph{a module on $T$}) and $(f,u) : (X',V') \to (X,V)$ is a morphism of overconvergent spaces over $T$, then the transition map $]f[_{u}^{-1} E_{V} \to E_{V'}$ extends uniquely to an $\mathcal O_{V}^\dagger$-linear transition map $]f[_{u}^{\dagger} E_{V} \to E_{V'}$.
Then there exists also an analog to lemma \ref{alttr}:

%%%%%%%%%%%%%
\begin{lem} \label{alttr2}
If $T$ is an overconvergent site and $(f,u) : (X',V') \to (X,V)$ is a morphism of overconvergent spaces over $T$, then the linear transition map for a module $E$ on $T$ is the realization on $V'$ of the adjunction map $\varphi_{V}^{*}\varphi_{V*}E \to E$.
\end{lem}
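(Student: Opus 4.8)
The plan is to mirror the proof of lemma~\ref{alttr}, systematically replacing each abelian-sheaf input by its $\mathcal O^\dagger$-linear counterpart. Recall from the discussion preceding the statement that the linear transition map $]f[_u^\dagger E_V \to E_{V'}$ is, by definition, the unique $\mathcal O_V^\dagger$-linear extension of the abelian transition map $]f[_u^{-1}E_V \to E_{V'}$, and that lemma~\ref{alttr} already identifies the latter with the realization on $V'$ of the abelian adjunction counit $\varphi_V^{-1}\varphi_{V*}E \to E$.

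The key observation I would use is that the $\mathcal O^\dagger$-linear counit $\varphi_V^*\varphi_{V*}E \to E$ is precisely the linearization of this abelian counit, in the sense that it sits in the commutative triangle
\[
\xymatrix{
\varphi_V^{-1}\varphi_{V*}E \ar[r] \ar[dr] & \varphi_V^*\varphi_{V*}E \ar[d] \\
& E,
}
\]
whose horizontal arrow is the canonical comparison from the sheaf inverse image to the module inverse image. Realizing this triangle on $V'$ and applying lemma~\ref{crisfi2} to the realization $\mathcal F = E_V$ identifies $(\varphi_V^*\varphi_{V*}E)_{V'} = (\varphi_V^* E_V)_{V'}$ with $]f[_u^\dagger E_V$, while lemma~\ref{pulbk} (exactly as in the proof of lemma~\ref{alttr}) identifies $(\varphi_V^{-1}\varphi_{V*}E)_{V'}$ with $]f[_u^{-1}E_V$. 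Hence the realized counit is an $\mathcal O_V^\dagger$-linear map $]f[_u^\dagger E_V \to E_{V'}$ whose precomposition with the canonical map $]f[_u^{-1}E_V \to ]f[_u^\dagger E_V$ is, by lemma~\ref{alttr}, the abelian transition map.

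It then suffices to invoke uniqueness: the linear transition map was defined as the \emph{unique} $\mathcal O_V^\dagger$-linear extension of the abelian one, and the realized linear counit is such an extension, so the two coincide. I expect the only point requiring care to be the compatibility of the two linearization routes—linearize-then-realize versus realize-then-linearize—which reduces to the naturality of the comparison $\varphi_V^{-1} \to \varphi_V^*$ with respect to realization; this is formal given lemmas~\ref{crisfi2} and~\ref{comphi}, so I do not anticipate a genuine obstacle.
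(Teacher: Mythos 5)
Your proposal is correct and takes essentially the same route as the paper: the paper's entire proof is ``Follows from lemma~\ref{crisfi2}'', and your expanded argument---realize the module-theoretic counit, identify its source with $]f[_{u}^{\dagger}E_{V}$ via lemma~\ref{crisfi2}, and match it against the abelian transition map of lemma~\ref{alttr} by uniqueness of the linear extension of a semi-linear map---is precisely the intended unwinding of that citation.
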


\begin{proof}
Follows from lemma \ref{crisfi2}.
\end{proof}

Finally, there also exists an analog to proposition \ref{realsh}:

%%%%%%%%%%%%%%%
\begin{prop} \label{realsh2}
If $T$ is an overconvergent site, then there exists an equivalence between the category of modules $E$ on $T$ and the category made of collections of $\mathcal O_{V}^\dagger$-modules $E_{V}$ endowed with $\mathcal O_{V'}^\dagger$-linear transition maps $\eta^\dagger : ]f[_{u}^{\dagger}E_{V} \to E_{V'}$ when $(f,u) : (X', V') \to (X, V)$ is a morphism over $T$, satisfying the usual compatibility conditions.
\end{prop}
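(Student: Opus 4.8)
The plan is to deduce this from the additive statement, Proposition \ref{realsh}, by keeping track of the module structure. Recall that the realization functor commutes with tensor products, that a module on $T$ is an abelian sheaf $E$ together with a multiplication $\mu : \mathcal O^\dagger_T \otimes E \to E$ satisfying the module axioms, and that $\mathcal O^\dagger_T$ has realization $\mathcal O_V^\dagger$ on each $(X,V) \to T$.

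First I would produce the functor from modules to the data described. Given a module $E$, Proposition \ref{realsh} presents its underlying abelian sheaf as the collection $(E_V)$ together with additive transition maps $\eta^{-1}$. Realizing $\mu$ on each $(X,V)$ and using that realization commutes with $\otimes$ yields a map $\mathcal O_V^\dagger \otimes E_V \to E_V$; since the family of realizations is conservative and commutes with the relevant limits and colimits (corollary \ref{comlim}), the module axioms for $\mu$ hold if and only if they hold on every realization, so each $E_V$ acquires an $\mathcal O_V^\dagger$-module structure. The additive transition map $\eta^{-1} : ]f[_{u}^{-1} E_V \to E_{V'}$ is then compatible with the ring map \eqref{cantr} and the module structures, hence (as noted before lemma \ref{alttr2}) extends uniquely to the $\mathcal O_{V'}^\dagger$-linear map $\eta^\dagger : ]f[_{u}^{\dagger} E_V \to E_{V'}$. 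This produces the asserted collection $(E_V, \eta^\dagger)$. Conversely, starting from a collection $(E_V, \eta^\dagger)$ satisfying the compatibility conditions, I would recover the additive data by precomposing each $\eta^\dagger$ with the canonical map $]f[_{u}^{-1} E_V \to ]f[_{u}^{\dagger} E_V$, obtaining an additive transition system to which Proposition \ref{realsh} associates an abelian sheaf $E$; the multiplications $\mathcal O_V^\dagger \otimes E_V \to E_V$ then assemble into a morphism $\mathcal O^\dagger_T \otimes E \to E$ (again by Proposition \ref{realsh}, applied to the sheaf $\mathcal O^\dagger_T \otimes E$ whose realizations are the $\mathcal O_V^\dagger \otimes E_V$), endowing $E$ with an $\mathcal O^\dagger_T$-module structure. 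The passage between additive transition maps compatible with the module structures and $\mathcal O^\dagger$-linear transition maps is a bijection by the universal property of the tensor product defining $]f[_{u}^{\dagger}$, so the two constructions are mutually inverse.

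The main obstacle is purely bookkeeping: one must check that this extension–restriction correspondence is compatible with composition, that is, that the $\mathcal O^\dagger$-linear cocycle condition for the $\eta^\dagger$ matches exactly the additive cocycle condition of Proposition \ref{realsh}, and likewise for the normalization condition. This follows from the functoriality of $]f[_{u}^{\dagger}$ and the naturality of the canonical map $]f[_{u}^{-1} \to ]f[_{u}^{\dagger}$ — so that extending $]f'[_{u'}^{-1}(\eta^{-1})$ followed by $\eta'^{-1}$ agrees with $]f'[_{u'}^{\dagger}(\eta^\dagger)$ followed by $\eta'^\dagger$ — but it does require unwinding the two tensor-product extensions and matching them up. Everything else is formal once Proposition \ref{realsh} is in hand.
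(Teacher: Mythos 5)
Your proposal is correct and follows essentially the same route as the paper: the paper's own proof is a one-line reduction to Proposition \ref{realsh}, using exactly the bijection you describe between $\mathcal O_{V'}^\dagger$-linear maps $]f[_{u}^{\dagger}E_{V} \to E_{V'}$ and the (automatically semi-linear) maps $]f[_{u}^{-1}E_{V} \to E_{V'}$ given by the universal property of extension of scalars. Your additional bookkeeping --- transferring the module structure through the realizations via conservativity and compatibility with tensor products, and matching the two cocycle conditions --- is the detail the paper leaves implicit.
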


\begin{proof}
Follows from proposition \ref{realsh} since the \emph{linear} transition maps $]f[_{u}^{\dagger}E_{V} \to E_{V'}$ correspond bijectively to the (automatically semi-linear) transition maps $]f[_{u}^{-1}E_{V} \to E_{V'}$.
\end{proof}

Again, there exists a more formal interpretation.
Let us denote by $\mathrm{Mod}^\dagger$ the fibered category\footnote{This is actually a stack.} over $\mathbf {Ad}^\dagger$ whose fiber over $(X,V)$ is opposite to the category of $\mathcal O_V^\dagger$-modules.
If $(f,u) : (Y,W) \to (X,V)$ is a morphism and $\mathcal F$ (resp. $\mathcal G$) is an $\mathcal O_V^\dagger$-module (resp. an $\mathcal O_W^\dagger$-module), then a morphism $\mathcal G$ to $\mathcal F$ over $(f,u)$ is a morphism of sheaves $]f[_u^{\dagger}\mathcal F \to \mathcal G$.
If $T$ is an overconvergent site, then proposition \ref{realsh2} may be rephrased by saying that there exists an equivalence
\[
\mathrm{Mod}(T) \simeq \mathrm{Hom}_{\mathbf{Ad}^\dagger}(T, \mathrm{Mod}^\dagger)
\]
or even, if we pull back $\mathrm{Mod}^\dagger$ to a fibered category $\mathrm{Mod}_T^\dagger$ over $T$, an equivalence
\[
\mathrm{Mod}(T) \simeq \mathrm{Hom}_T(T, \mathrm{Mod}_T^\dagger)
\]
between the category of $\mathcal O^\dagger_T$-modules and the category of sections of $\mathrm{Mod}^\dagger_T$.

Recall that a module $E$ on $T$ is said to be \emph{flat} if the functor
\[
E' \mapsto E \otimes_{\mathcal O_{T}^\dagger} E'
\]
is exact on modules.
Then we have the following elementary properties:

%%%%%%%%%%%%%%
\begin{prop} \label{flatmod}
\begin{enumerate}
\item If $T$ is an overconvergent site, then a module $E$ on $T$ is flat if and only $E_{V}$ is a flat $\mathcal O_{V}^\dagger$-modules for all realizations of $E$.
\item If $f : T' \to T$ is a morphism of overconvergent sites, then $f^{-1}$ preserves flatness.
\item \label{flatmod3}If $\gamma : Y \hookrightarrow X$ is a formal embedding and $T$ is an analytic overconvergent site over $X^\dagger$, then $\gamma_{\dagger}$ preserves flatness.
\end{enumerate}
\end{prop}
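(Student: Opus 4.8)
The plan is to deduce all three assertions from the first one, and to prove (1) through the stalk criterion for flatness, exploiting the fact recorded just after Corollary \ref{comlim} that the topos $\widetilde T$ has enough points, namely the genuine points $x \in \,]X[_V$ attached to the overconvergent spaces $(X,V)$ over $T$. The organising principle throughout is that flatness is detected on realizations, and that on each realization it is detected on stalks of the ringed tube $(\,]X[_V,\mathcal O_V^\dagger)$.

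First I would prove (1). Over a ringed topos with enough points, flatness is stalk-local: $E$ is a flat $\mathcal O^\dagger_T$-module if and only if, for every point, the corresponding stalk of $E$ is flat over the corresponding stalk of $\mathcal O^\dagger_T$. Applying this to the points $x \in \,]X[_V$ of $\widetilde T$, the very definition of the realization $E_V = \varphi_{V*}E_{|(X,V)}$ identifies the stalk of $E$ at such a point with $(E_V)_x$, and the stalk of $\mathcal O^\dagger_T$ with $(\mathcal O_V^\dagger)_x$. Hence $E$ is flat if and only if $(E_V)_x$ is flat over $(\mathcal O_V^\dagger)_x$ for every realization $(X,V)$ and every $x \in \,]X[_V$. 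Since flatness of a module on the ringed space $(\,]X[_V,\mathcal O_V^\dagger)$ is itself a stalk-local condition, this is precisely the requirement that each $E_V$ be a flat $\mathcal O_V^\dagger$-module, which is the content of (1). (One could instead prove the ``if'' direction by the tensor argument, using that realization is conservative and commutes with $\otimes$; but the stalk formulation handles both directions uniformly.)

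Assertion (2) is then immediate. If $f : T' \to T$ is a morphism of overconvergent sites and $(X',V') \to T'$ is any overconvergent space over $T'$, then composing the structural map with $f$ exhibits $(X',V')$ as an overconvergent space over $T$, and the compatibility noted after the definition of realization gives $(f^{-1}E)_{V'} = E_{V'}$. Thus every realization of $f^{-1}E$ is also a realization of $E$; since $E$ is flat, these are all flat by (1), and applying (1) again shows $f^{-1}E$ is flat. For assertion (3) I would again test on realizations. By the construction of $\gamma_\dagger$, for an overconvergent space $(X',V')$ over $T$ with $\gamma' : Y' \hookrightarrow X'$ the inverse image of $\gamma$, one has $(\gamma_\dagger E)_{V'} = \,]\gamma'[_!E_{Y'}$, where, $T$ being analytic, $]\gamma'[ : \,]Y'[_{V'} \hookrightarrow \,]X'[_{V'}$ is a locally closed embedding of topological spaces along which the structural sheaves are compatible, $]\gamma'[^{-1}\mathcal O^\dagger_{X'} = \mathcal O^\dagger_{Y'}$. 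By (1) it suffices to check that extension by zero along such an embedding carries a flat module to a flat module, and this is a stalk computation: $(\,]\gamma'[_!\mathcal F)_x = \mathcal F_x$ for $x \in \,]Y'[_{V'}$ and vanishes off $\,]Y'[_{V'}$, while the structure-sheaf stalks agree along the embedding; hence the former stalk is flat when $\mathcal F$ is flat and the latter is flat trivially. Since $E$ flat forces each $E_{Y'}$ flat by (1), we conclude that $\gamma_\dagger E$ is flat.

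The only genuinely delicate point is the identification used in (1): matching the stalk of $E$ at a point $x \in \,]X[_V$ of the topos $\widetilde T$ with the stalk $(E_V)_x$ of its realization, and likewise for the structural sheaf, so that the abstract stalk-local flatness criterion on $\widetilde T$ translates faithfully into the stalk-local criterion on each ringed tube. Once this dictionary is established, (2) and (3) are formal, the latter relying only on the elementary behaviour of extension by zero on stalks.
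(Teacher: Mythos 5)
Your proposal is correct and follows essentially the same route as the paper: assertion (1) via the conservative family of points $x \in \,]X[_V$ and the stalk criterion for flatness, assertion (2) as a formal consequence of the compatibility of realizations with restriction, and assertion (3) via the formula $(\gamma_\dagger E)_{V'} = \,]\gamma'[_!E_{Y'}$ together with the fact that extension by zero preserves flatness. You merely spell out the stalk identifications that the paper's proof leaves implicit.
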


\begin{proof}
Since the points $x \in \,]X[_{V}$ where $(X,V)$ is an overconvergent space over $T$ form a conservative family of points, we know that a module $E$ on $T$ is flat if and only if all stalks $E_{V,x}$ are flat.
The first assertion follows.
The second one is completely formal.
The last assertion follows from the first one because, if $(X', V')$ is an overconvergent space over $T$ and $\gamma' : Y' \hookrightarrow X'$ denote the inverse image of $\gamma$, then we have $(\gamma_{\dagger} E)_{X'} = ]\gamma'[_{!}E_{Y'}$ and  we know that $]\gamma'[_{!}$ preserves flatness (as any exceptional direct image does).
\end{proof}

It follows from the last assertion of the proposition that $\alpha_{*}$ preserves flatness when $\alpha : U \hookrightarrow X$ is a formal open embedding (in the analytic case).

For the sake of completeness, let us also state the following:

%%%%%%%%%%%%
\begin{prop} \label{inthom}
Let $T$ be an overconvergent site and $E, E'$ two modules on $T$.
Then for any overconvergent space $(X,V)$ over $T$, we have
 \[
(E \otimes_{\mathcal O_{T}^\dagger} E')_{V} \simeq E_{V} \otimes_{\mathcal O_{V}^\dagger} E'_{V}.
\]
and
\[
\mathcal H\mathrm{om}_{\mathcal O_{T}^\dagger}(E, E')_{V} \simeq \mathcal H\mathrm{om}_{\mathcal O_{V}^\dagger}(E_{V}, E'_{V}).
\]
\end{prop}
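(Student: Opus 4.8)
The plan is to reduce both isomorphisms to the corresponding $\mathbb Z$-linear statements, which are already available: recall from Section 2.2 that the realization functor $E \mapsto E_V$ on \emph{abelian} sheaves commutes with all limits, colimits, tensor products and internal Hom. The key preliminary observation is that the module-theoretic realization agrees, on underlying abelian sheaves, with this abelian realization. Indeed, writing $s \colon (X,V) \to T$ for the structural morphism, the realization factors as $E_V = \varphi_{V*}(s^{-1}E)$, and both $s^{-1}$ and $\varphi_{V*} = \psi_V^{-1}$ are inverse-image functors for morphisms of ringed sites that are \emph{strict}, in the sense that $s^{-1}\mathcal O_T^\dagger = \mathcal O_{(X,V)}^\dagger$ and $\psi_V^{-1}\mathcal O_{(X,V)}^\dagger = \mathcal O_V^\dagger$. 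Hence the module pullback coincides with the abelian pullback in each case, so the underlying abelian sheaf of $E_V$ is the abelian realization of the underlying abelian sheaf of $E$; moreover realization carries $\mathcal O_T^\dagger$ to $\mathcal O_V^\dagger$.

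For the tensor product, one argument is immediate: realization is a composite of two strict inverse images, hence monoidal, so it commutes with $\otimes$ over the structural ring. Alternatively, and in a way that prepares the internal Hom, I would use the standard coequalizer presentation
\[
E \otimes_{\mathcal O_T^\dagger} E' = \mathrm{coker}\left(E \otimes_{\mathbb Z} \mathcal O_T^\dagger \otimes_{\mathbb Z} E' \rightrightarrows E \otimes_{\mathbb Z} E'\right),
\]
apply the abelian realization (which commutes with cokernels and with $\otimes_{\mathbb Z}$), and use $(\mathcal O_T^\dagger)_V = \mathcal O_V^\dagger$ to recognize the result as $E_V \otimes_{\mathcal O_V^\dagger} E'_V$.

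For the internal Hom there is no monoidality shortcut, so I would instead express $\mathcal O_T^\dagger$-linearity as an equalizer: the sheaf $\mathcal H\mathrm{om}_{\mathcal O_T^\dagger}(E,E')$ is the kernel of the linearity map
\[
\mathcal H\mathrm{om}_{\mathbb Z}(E,E') \longrightarrow \mathcal H\mathrm{om}_{\mathbb Z}(\mathcal O_T^\dagger \otimes_{\mathbb Z} E,\, E')
\]
sending $\phi$ to $(a \otimes x) \mapsto \phi(ax) - a\,\phi(x)$. Applying the abelian realization, which commutes with kernels, with $\mathcal H\mathrm{om}_{\mathbb Z}$, and with $\otimes_{\mathbb Z}$, and using once more $(\mathcal O_T^\dagger)_V = \mathcal O_V^\dagger$, turns this into the analogous kernel presentation of $\mathcal H\mathrm{om}_{\mathcal O_V^\dagger}(E_V, E'_V)$ on $\,]X[_V$. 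The resulting isomorphisms are $\mathcal O_V^\dagger$-linear because realization is compatible with the module structures.

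The main obstacle is the internal Hom, and specifically the input that the abelian realization commutes with $\mathcal H\mathrm{om}_{\mathbb Z}$ for \emph{arbitrary} sheaves — a genuinely strong property, since inverse images do not usually commute with internal Hom. This is exactly the content recorded in Section 2.2, and it is what makes the equalizer reduction legitimate; the remaining work is the routine verification that the resulting abelian isomorphism respects the $\mathcal O_V^\dagger$-module structure, which follows from the naturality of all the functors involved.
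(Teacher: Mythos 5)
Your proof is correct and ultimately rests on the same formal fact as the paper's, but it takes a visibly different route. The paper's entire proof is one sentence: realization is ``essentially a pullback'', hence commutes with tensor product and internal Hom, with SGA4 IV.12 cited directly at the level of $\mathcal O$-modules. You instead take as sole input the $\mathbb Z$-linear statement recorded in Section 2.2 (realization commutes with limits, colimits, $\otimes_{\mathbb Z}$ and $\mathcal H\mathrm{om}_{\mathbb Z}$), observe that module realization agrees with abelian realization because the structure ring realizes to the structure ring (indeed $s^{-1}\mathcal O_{T}^\dagger = \mathcal O_{(X,V)}^\dagger$ and $\varphi_{V*}\mathcal O^\dagger_{(X,V)} = \mathcal O_{V}^\dagger$, both stated in Section 3), and then descend to the $\mathcal O$-linear statements via the cokernel presentation of $\otimes_{\mathcal O_T^\dagger}$ and the kernel presentation of $\mathcal H\mathrm{om}_{\mathcal O_T^\dagger}$. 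What your route buys is self-containedness — nothing outside the paper is invoked, and the argument isolates exactly one substantive input, commutation of realization with $\mathcal H\mathrm{om}_{\mathbb Z}$ — at the cost of the (genuinely routine) naturality checks; what the paper's route buys is brevity, since the localization formalism of SGA4 delivers the module-level statements in a single step.

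One point you flag deserves emphasis, because it is where all the content of the proposition sits: commutation of $\varphi_{V*}$ with internal Hom is \emph{not} a consequence of realization preserving limits and colimits. By adjunction, $\varphi_{V*}$ commutes with internal Hom for all pairs of sheaves precisely when its left adjoint satisfies the projection formula $\varphi_V^{-1}(\mathcal A \otimes \varphi_{V*}E) \simeq \varphi_V^{-1}\mathcal A \otimes E$; this is exactly what the paper's citation of SGA4 is standing in for, and it is exactly as unproved in your argument (via the Section 2.2 remark) as in the paper's. So on that crux the two proofs are on identical footing: you quote Section 2.2, the paper quotes SGA4, and neither establishes it; your write-up at least makes this dependency explicit rather than burying it in the phrase ``completely formal''.
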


\begin{proof}
Completely formal since realization is essentially a pullback (see \cite{SGA4}, IV,12 for example).
\end{proof}

%%%%%%%%%%%%%
\subsection{Cohomological base change}

In order to study preservation of crystals under direct image, it will be essential to investigate cohomological base change.
We let $T$ be an overconvergent site over an overconvergent space $(C,O)$ and $E$ be a complex of modules on $T$.
Recall that we denote by $p_{T/O} : \widetilde T \to \widetilde{]C[}_{O}$ the canonical map.

%%%%%%%%%%%%%
\begin{dfn}
Let $(f,u) : (C',O') \to (C,O)$ be a morphism of overconvergent spaces.
We will say that $E$ \emph{commutes with (cohomological) base change to $(C',O')$} if
\[
\forall k \in \mathbb N, \quad ]f[_{u}^\dagger \mathrm R^kp_{T/O*}E \simeq \mathrm R^kp_{T_{O'}/O'*}E.
\]
We say that $E$ \emph{commutes with (cohomological) base change} if this holds for all such $(C',O')$.
\end{dfn}

Alternatively, we could consider a \emph{derived base change} condition:
\[
\mathrm L]f[_{u}^\dagger \mathrm Rp_{T/O*}E \simeq \mathrm Rp_{T'/O'*}E
\]
(which is equivalent to ordinary base change when cohomology is flat - or when $u$ is flat if $(C,O)$ is analytic).

Clearly, these properties are of local nature on $(C,O)$.

Cohomological base change is usually hard to prove.
Let us however mention the following:

%%%%%%%%
\begin{lem} \label{transba}
\begin{enumerate}
\item
Assume $E$ satisfies base change to $(C',O')$ and let $(g,v) : (C'' , O'') \to (C', O')$ be another morphism of overconvergent spaces, then $E$ satisfies base change to $(C'', O'')$ if and only if $E_{|T_{O'}}$ satisfies base change to $(C'', O'')$.
\item If $E$ satisfies base change, and $(C',O') \to (C,O)$ is any morphism of overconvergent spaces, then $E_{|T_{O'}}$ also satisfies base change.
\end{enumerate}
\end{lem}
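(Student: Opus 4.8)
The plan is to reduce everything to the compatibility of cohomological base change with composition of morphisms, which is exactly what the two statements express once one unwinds the definitions. First I would set up notation carefully: writing $T_{O'} := T \times_{(C,O)} (C',O')$ and $T_{O''} := T \times_{(C,O)} (C'',O'')$, the hypothesis that $E$ satisfies base change to $(C',O')$ means $]f[_u^\dagger \mathrm R^k p_{T/O*}E \simeq \mathrm R^k p_{T_{O'}/O'*}(E_{|T_{O'}})$ for all $k$, and similarly the conclusions are statements comparing the restriction of $\mathrm R^k p_{T/O*}E$ along the composite $]f\circ g[_{u\circ v}$ with the relative cohomology computed over $(C'',O'')$. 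The key geometric input is that the relevant square of overconvergent sites is $2$-cartesian: the natural map $T_{O''} \to T_{O'}$ fits into a $2$-cartesian diagram over $(C'',O'') \to (C',O')$, and likewise for the outer square over $(C'',O'')\to(C,O)$. This is the statement that base-changing $T$ in two stages is the same as base-changing in one, which is formal for fibered categories.

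Next I would invoke Lemma \ref{cartcdb} (and the underlying base-change Lemma \ref{basfib} from the appendix) to identify the realizations. By Proposition \ref{bicom}(2), for each overconvergent space the derived pushforward $p_{T/O}$ restricts compatibly, so the functor $\mathrm R p_{T_{O'}/O'*}(E_{|T_{O'}})$ on $\,]C'[_{O'}$ is precisely the object whose further pushforward along $]g[_v$ one wants to compare with $\mathrm R p_{T_{O''}/O''*}$. The crucial identity is the transitivity of the tube functor, namely $]f\circ g[_{u\circ v}^\dagger \simeq ]g[_v^\dagger \circ ]f[_u^\dagger$, which follows from functoriality of the inverse-image functors on tubes (as recorded in Lemma \ref{comphi}). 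Combining this with the hypothesis for $(C',O')$, statement (1) becomes the equivalence
\[
]g[_v^\dagger \mathrm R^k p_{T_{O'}/O'*}(E_{|T_{O'}}) \simeq \mathrm R^k p_{T_{O''}/O''*}(E_{|T_{O''}}),
\]
which is exactly the assertion that $E_{|T_{O'}}$ satisfies base change to $(C'',O'')$. So (1) is a formal consequence of transitivity once the hypothesis for $(C',O')$ is fed in, and the ``if and only if'' follows because the left-hand sides agree after applying the invertible operation $]g[_v^\dagger$ to the established isomorphism.

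For statement (2) I would argue similarly but more directly: taking $(C',O')$ arbitrary over $(C,O)$ and any test morphism $(C'',O'') \to (C',O')$, the global base-change hypothesis applied to the composite $(C'',O'')\to(C,O)$ together with transitivity of $]\cdot[^\dagger$ and the $2$-cartesian identification of $T_{O''}$ yields the required isomorphism for $E_{|T_{O'}}$; since $(C'',O'')$ was arbitrary, $E_{|T_{O'}}$ satisfies base change. The main obstacle I anticipate is purely bookkeeping rather than conceptual: one must check that the comparison maps furnished by Lemma \ref{basfib} are genuinely the \emph{same} as the base-change maps implicit in the definition (so that ``$\simeq$'' on both sides refers to compatible isomorphisms), and that the $2$-categorical transitivity square commutes on the nose rather than merely up to unspecified coherence. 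I expect no serious difficulty here because all the functors in play ($\varphi_{V*}$, the tube inverse images, and the restriction functors) are exact and preserve injectives by Lemma \ref{cartcdb}, so the comparison isomorphisms derive termwise from the underived ones and the coherence is automatic.
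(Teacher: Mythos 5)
Your proof is correct and takes essentially the same route as the paper: the paper's entire argument is the commutative triangle obtained by applying $]g[_v^\dagger$ to the base-change isomorphism for $(C',O')$ and using transitivity $]f\circ g[^\dagger_{u\circ v} = ]g[_v^\dagger \circ ]f[_u^\dagger$, which identifies the sources of the two base-change maps into $\mathrm R^kp_{T_{O''}/O''*}E_{|T_{O''}}$ --- precisely your displayed equivalence, with the same bookkeeping caveat about compatibility of the comparison maps left implicit. Your deduction of assertion (2) from the global hypothesis via assertion (1) is likewise the paper's one-line reduction.
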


\begin{proof} The second assertion follows from the first one which itself results from the commutativity of the diagram
\[
\xymatrix{
]f\circ g[^\dagger \mathrm R^kp_{T/O*}E \ar[rr]^\simeq \ar[dr] && ]g[^\dagger\mathrm R^kp_{T_{O'}/O'*} E_{|T_{O'}'} \ar[dl]
\\
&\mathrm R^kp_{T_{O''}/O''*} E_{|T_{O''}}.
}  \qedhere
\]
\end{proof}

%%%%%%%%%%%%%%%%%%%%%%%%%%%%%%%
\subsection{Crystals}

At last, we arrive at the definition of a crystal.

%Recall first that a transition map is said to be \emph{cartesian} when it comes from a cartesian morphism in the underlying fibered category of the overconvergent site.
%This is automatic when the category is fibered in groupoids and in particular in the case of a classic overconvergent site.

We let $T$ be an overconvergent site.

%%%%%%%%%%%%
\begin{dfn}
A module $E$ on $T$ is a \emph{crystal} if all transition maps are isomorphisms
\[
\eta^{-1} : ]f[_{u}^{\dagger}E_{V} \simeq E_{V'}.
\]
\end{dfn}

One may also say \emph{isocrystal} when $T$ is an analytic overconvergent site (which is the only case where one can really say something).
The cohomology of an isocrystal (with respect to some structural map) is also called \emph{rigid cohomology}.

We will denote by $\mathrm{Cris}(T) \subset \mathrm{Mod}(T)$ the full subcategory of crystals on $T$.
There exists an alternative description of this category of crystal as a category of \emph{cartesian} sections:
\[
\mathrm{Cris}(T) \simeq \mathrm{Hom}_{\mathbb Fib(T)}(T, \mathrm{Mod}^\dagger_T)
\]
When $T$ is fibered in groupoids, we also have
\[
\mathrm{Cris}(T) \simeq \mathrm{Hom}_{\mathbb Fib(\mathbf {Ad}^\dagger)}(T, \mathrm{Mod}^\dagger)
\]
(but not otherwise).

One may also define a \emph{derived crystal} as a complex $E$ of modules on $T$ such that all \emph{derived} transitions maps are isomorphisms $\mathrm L]f[_{u}^{\dagger}E_{V} \simeq E_{V'}$.
For flat modules, both notions coincide.

%%%%%%%%%%%
\begin{xmp}
\begin{enumerate}
\item 
A module $E$ on $T$ is quasi-coherent (resp.\ finitely presented, resp. locally finite free) if and only if it is a \emph{crystal} with quasi-coherent (resp.\ finitely presented, resp. locally finite free) realizations.
We use the terminology of \cite[\href{https://stacks.math.columbia.edu/tag/03DL}{Tag 03DL}]{stacks-project}.
Note however that $\mathcal O_T^\dagger$ is \emph{not} a coherent ring in general.
\item
Let $\mathcal V$ be discrete valuation ring with residue field $k$ and fraction field $K$ of characteristic zero.
Let $S$ be a formal scheme which is locally topologically of finite type over $\mathcal V$.
Let $X$ be a variety over $S_k$.
As a consequence of theorem \ref{crismic} below, the category of overconvergent isocrystals on $X/S$ (definition 2.3.6 of \cite{Berthelot96c*}) is equivalent to the category of finitely presented modules on $X/S^{\mathrm{an}}$ (or equivalently finitely presented crystals).
\end{enumerate}
\end{xmp}

Clearly, a module $E$ on $T$ is a crystal if and only if $E_{|(X,V)}$ is a crystal for all overconvergent spaces $(X,V)$ over $T$.
The next result will therefore be quite useful:

%%%%%%%%
\begin{lem} \label{eqrep}
If $(X,V)$ is an overconvergent space, then $\varphi_{V*}$ and $\varphi_{V}^*$ induce an equivalence
\[
\mathrm{Cris}(X,V) \simeq \mathrm{Mod}(\mathcal O_V^\dagger).
\]
In particular, an $\mathcal O^\dagger_{(X,V)}$-module $E$ is a crystal if and only if $\varphi_{V}^*\varphi_{V*}E \simeq E$.
\end{lem}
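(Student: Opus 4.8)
The plan is to show that the adjoint pair $(\varphi_V^*, \varphi_{V*})$ restricts to an equivalence between $\mathrm{Cris}(X,V)$ and $\mathrm{Mod}(\mathcal O_V^\dagger)$. From the discussion preceding the lemma we already know that $\varphi_V^*$ is fully faithful, that is, $\varphi_{V*}\circ\varphi_V^* = \mathrm{Id}$. So the entire content lies in analyzing the counit $\varphi_V^*\varphi_{V*}E \to E$ and in checking that both functors stay inside the relevant subcategories. I would organize everything around the ``in particular'' assertion, from which the equivalence then follows formally.

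First I would prove that assertion. By lemma \ref{alttr2}, for any module $E$ on $(X,V)$ and any morphism $(f,u) : (X',V') \to (X,V)$, the linear transition map $]f[_u^\dagger E_V \to E_{V'}$ is exactly the realization on $V'$ of the counit $\varphi_V^*\varphi_{V*}E \to E$. On the other hand, proposition \ref{realsh}, and explicitly corollary \ref{comlim}, tell us that the family of realizations is conservative, so a morphism of modules is an isomorphism precisely when all of its realizations are. Combining these two facts gives the chain of equivalences: $E$ is a crystal (all transition maps are isomorphisms) $\iff$ every realization of the counit is an isomorphism $\iff$ the counit $\varphi_V^*\varphi_{V*}E \to E$ is itself an isomorphism. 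This is exactly the ``in particular'' statement.

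With this criterion in hand the equivalence is immediate. For any $\mathcal O_V^\dagger$-module $\mathcal F$, the module $\varphi_V^*\mathcal F$ is a crystal: since $\varphi_{V*}\varphi_V^* = \mathrm{Id}$, the triangle identity forces the counit evaluated at $\varphi_V^*\mathcal F$ to be an isomorphism, so the criterion places $\varphi_V^*\mathcal F$ in $\mathrm{Cris}(X,V)$ (concretely, lemma \ref{crisfi2} identifies its realizations as $(\varphi_V^*\mathcal F)_{V'} = ]f[_u^\dagger\mathcal F$, with tautological transition maps). Hence $\varphi_V^*$ factors through $\mathrm{Cris}(X,V)$ and $\varphi_{V*}$ restricts to a functor out of it. Conversely, the criterion says the counit $\varphi_V^*\varphi_{V*}E \to E$ is an isomorphism for every crystal $E$. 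We therefore have $\varphi_{V*}\circ\varphi_V^* = \mathrm{Id}$ and $\varphi_V^*\circ\varphi_{V*} \simeq \mathrm{Id}$ on $\mathrm{Cris}(X,V)$, which is the asserted equivalence.

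The argument is essentially formal once lemmas \ref{crisfi2} and \ref{alttr2} are available, and I do not anticipate a real obstacle. The only point deserving care is the conservativity of the realization functors, which I would extract from proposition \ref{realsh} (via corollary \ref{comlim}); the single conceptual observation driving the whole proof is that the crystal condition is nothing other than the requirement that the counit of the $(\varphi_V^*, \varphi_{V*})$-adjunction be invertible.
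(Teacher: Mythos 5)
Your proposal is correct and follows essentially the same route as the paper's proof: both rest on lemma \ref{alttr2} identifying the realizations of the counit $\varphi_V^*\varphi_{V*}E \to E$ with the transition maps, together with conservativity of realizations and the full faithfulness of $\varphi_V^*$, so that $\mathrm{Cris}(X,V)$ is precisely the essential image of $\varphi_V^*$. Your reorganization (proving the ``in particular'' criterion first and then deducing the equivalence, with the triangle-identity check that $\varphi_V^*\mathcal F$ is a crystal) only makes explicit steps the paper leaves implicit.
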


\begin{proof}
We already know that $\varphi^*_V$ is fully faithful.
Now, a module $E$ on $T$ belongs to the essential image of $\varphi^*_V$ if and only if the adjunction map $\varphi_{V}^*\varphi_{V*}E \to E$ is bijective.
But we know from lemma \ref{alttr2} that the realization of this adjunction map on some $(X', V')$ over $(X,V)$ is exactly the transition map  $]f[_{u}^{\dagger}E_{V} \to E_{V'}$.
And this is an isomorphism if and only if $E$ is a crystal.
\end{proof}

As a consequence, we see that $\mathrm{Cris}(X,V)$ is an abelian category with tensor product and internal Hom that has enough injectives.
Be careful however that  $\mathrm{Cris}(X,V)$ is \emph{not} a weak Serre subcategory of $\mathrm{Mod}(X,V)$: the inclusion functor is only right exact.
In general, we have the following:

%%%%%%%%%%
\begin{prop} \label{catcrys}
$\mathrm{Cris}(T)$ is an additive subcategory of $\mathrm{Mod}(T)$ which is stable under colimit, extension and tensor product.
\end{prop}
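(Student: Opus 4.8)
The plan is to reduce everything to the level of realizations and transition maps, exploiting Proposition \ref{realsh2} together with the criterion, recalled just before lemma \ref{eqrep}, that a module $E$ is a crystal if and only if its restriction $E_{|(X,V)}$ is a crystal for every overconvergent space $(X,V)$ over $T$. Additivity is then immediate: $\mathrm{Cris}(T)$ is a \emph{full} subcategory of the additive category $\mathrm{Mod}(T)$, the zero module is visibly a crystal, and closure under finite direct sums is a special case of closure under colimits, treated below.

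For colimits and tensor products I would argue directly on the transition maps. The realization functor $E \mapsto E_V$ commutes with all colimits and with tensor products (corollary \ref{comlim} and proposition \ref{inthom}), while each inverse image $]f[_{u}^\dagger$, being a left adjoint and monoidal, commutes with colimits and with $\otimes$ as well. Hence for a colimit $E = \varinjlim E_i$ of crystals the transition map $]f[_{u}^\dagger E_V \simeq \varinjlim\, ]f[_{u}^\dagger (E_i)_V \to \varinjlim (E_i)_{V'} \simeq E_{V'}$ is a colimit of isomorphisms, so an isomorphism; and for crystals $E, E'$ the map $]f[_{u}^\dagger(E_V \otimes E'_V) \simeq\, ]f[_{u}^\dagger E_V \otimes\, ]f[_{u}^\dagger E'_V \to E_{V'} \otimes E'_{V'}$ is an isomorphism because both factors are. (Closure under cokernels comes along for free, cokernels being colimits; closure under kernels should not hold, consistent with $\mathrm{Cris}(X,V)$ not being a weak Serre subcategory.)

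Stability under extension is the delicate point, precisely because the inclusion $\mathrm{Cris}(X,V) \hookrightarrow \mathrm{Mod}(X,V)$ is only right exact, so one cannot naively apply $]f[_{u}^\dagger$ to a short exact sequence. Since restriction to $(X,V)$ is exact, being inverse image along a flat morphism of ringed sites with $f^{-1}\mathcal O_T^\dagger = \mathcal O_{(X,V)}^\dagger$, I would reduce to the representable case $T = (X,V)$ and use lemma \ref{eqrep}: $E$ is a crystal exactly when the adjunction $c : \varphi_V^*\varphi_{V*}E \to E$ is an isomorphism. Given $0 \to E' \to E \to E'' \to 0$ with $E', E''$ crystals, applying the exact functor $\varphi_{V*}$ and then the right-exact functor $\varphi_V^*$, and using $\varphi_{V*}\varphi_V^* = \mathrm{Id}$ together with the crystal property of $E', E''$, yields a commutative ladder whose top row $E' \to \varphi_V^*\varphi_{V*}E \to E'' \to 0$ is exact at its two right-hand terms, whose bottom row is the original short exact sequence, and whose outer vertical maps are identities. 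The hard part is then the diagram chase identifying $c$ as an isomorphism: surjectivity follows from right-exactness of the top row, and injectivity follows from its exactness at the middle term $\varphi_V^*\varphi_{V*}E$ (an element killed by $c$ maps to $0$ in $E''$, hence lifts from $E'$, hence is itself $0$ by injectivity of $E' \hookrightarrow E$). This injectivity step, where one must see that right-exactness of $\varphi_V^*$ already supplies exactly the exactness needed, is the part I expect to require the most care.
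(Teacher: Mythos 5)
Your proof is correct and takes essentially the same route as the paper's: reduce to the representable case $T=(X,V)$ and exploit lemma \ref{eqrep}, i.e.\ the adjunction $\varphi_V^* \dashv \varphi_{V*}$ with $\varphi_{V*}\circ\varphi_V^* = \mathrm{Id}$. Your ladder-and-chase argument for extensions just makes explicit what the paper compresses into the assertion that $\varphi_V^*$ ``preserves extensions'', and your transition-map treatment of colimits and tensor products is the same computation phrased realization-by-realization.
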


\begin{proof}
We may assume that $T = (X,V)$ is representable in which case the assertion follows from lemma \ref{eqrep} since  $\varphi_{V}^*$ preserves colimits, extensions and tensor products.
\end{proof}

We want to stress out the fact that $\mathrm{Cris}(T)$ is \emph{not} an abelian subcategory (and in particular not a weak Serre subcategory) of the category of all modules on $T$ (and that there is no internal Hom either in general).
In particular, the extension assertion means that any extension of crystals in the category of modules is necessarily a crystal but there might exists other extensions in the category of crystals (which do not come from an extension in the category of all modules).
Nevertheless, we have the following:

%%%%%%%%%%%%
\begin{lem} \label{flatex}
A sequence $0 \to E' \to E \to E'' \to 0$ in $\mathrm{Cris}(T)$ is always exact when it is exact in $\mathrm{Mod}(T)$ and the converse holds when $E''$ is flat.
\end{lem}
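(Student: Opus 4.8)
The plan is to treat the two implications separately, translating exactness into properties of realizations and of the right-exact inverse-image functors $]f[_u^\dagger$. Write $a\colon E'\to E$ and $b\colon E\to E''$ for the two maps. I will use freely that, by Corollary \ref{comlim}, a sequence of modules on $T$ is exact in $\mathrm{Mod}(T)$ if and only if every realization is exact over $\mathcal O_V^\dagger$, and that, by Proposition \ref{catcrys}, the cokernel in $\mathrm{Mod}(T)$ of a morphism of crystals is again a crystal. The basic device is the following \emph{reflection principle}: since $\mathrm{Cris}(T)$ is a full subcategory of $\mathrm{Mod}(T)$, if the kernel (resp. cokernel) of a morphism of crystals, computed in $\mathrm{Mod}(T)$, happens to be a crystal, then it automatically satisfies the universal property of the kernel (resp. cokernel) in $\mathrm{Cris}(T)$. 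Combined with Proposition \ref{catcrys}, this shows in particular that cokernels in $\mathrm{Cris}(T)$ and in $\mathrm{Mod}(T)$ coincide.

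For the first assertion, assume the sequence is exact in $\mathrm{Mod}(T)$. Then $b$ is an epimorphism in $\mathrm{Mod}(T)$, so its cokernel vanishes; as this cokernel also computes the cokernel in $\mathrm{Cris}(T)$, the map $b$ is an epimorphism in $\mathrm{Cris}(T)$ and $E''\simeq\mathrm{coker}_{\mathrm{Cris}}(a)$. Moreover $E'=\ker_{\mathrm{Mod}(T)}(b)$ is, by hypothesis, a crystal, so by the reflection principle it is also the kernel of $b$ in $\mathrm{Cris}(T)$. Hence the sequence is short exact in $\mathrm{Cris}(T)$, and no flatness is needed here.

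For the converse, suppose the sequence is exact in $\mathrm{Cris}(T)$ and that $E''$ is flat. Since $b$ is an epimorphism in $\mathrm{Cris}(T)$, the agreement of cokernels shows it is an epimorphism in $\mathrm{Mod}(T)$, so we obtain a short exact sequence $0\to K\to E\xrightarrow{b}E''\to 0$ in $\mathrm{Mod}(T)$ with $K:=\ker_{\mathrm{Mod}(T)}(b)$. The heart of the argument is to prove that $K$ is a crystal. Fix a morphism $(f,u)\colon(X',V')\to(X,V)$ over $T$. Realizing gives an exact sequence $0\to K_V\to E_V\to E''_V\to 0$ of $\mathcal O_V^\dagger$-modules, and by Proposition \ref{flatmod} the realization $E''_V$ is flat. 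Because the last term is flat, applying the right-exact functor $]f[_u^\dagger$ keeps the sequence left-exact, so $0\to\,]f[_u^\dagger K_V\to\,]f[_u^\dagger E_V\to\,]f[_u^\dagger E''_V\to 0$ is exact. Using the crystal transition isomorphisms of $E$ and $E''$ to identify $]f[_u^\dagger E_V\simeq E_{V'}$ and $]f[_u^\dagger E''_V\simeq E''_{V'}$ compatibly with $b$, this exactness reads $]f[_u^\dagger K_V\simeq\ker(b_{V'})=K_{V'}$, the isomorphism being the natural transition map of $K$; hence $K$ is a crystal.

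Once $K$ is known to be a crystal, the reflection principle gives $K=\ker_{\mathrm{Cris}(T)}(b)$, and comparing with the hypothesis $E'=\ker_{\mathrm{Cris}(T)}(b)$ produces a canonical isomorphism $E'\simeq K$ compatible with the maps into $E$. Therefore the exact sequence $0\to K\to E\to E''\to 0$ in $\mathrm{Mod}(T)$ is, up to this isomorphism, the sequence $0\to E'\to E\to E''\to 0$, which is thus exact in $\mathrm{Mod}(T)$. The one delicate point, and the sole place where the hypothesis enters, is the crystal claim for $K$: it is precisely the flatness of $E''$ that upgrades the a priori only right-exact behaviour of the functors $]f[_u^\dagger$ to the left-exactness needed for the transition maps of $K$ to be isomorphisms.
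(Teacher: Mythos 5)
Your proof is correct, and it gets the two directions right (no flatness for Mod-exact $\Rightarrow$ Cris-exact; flatness only for the converse). The mechanism at its core is the same as the paper's — the whole point is that the ringed inverse images $]f[_u^\dagger$ are only right exact, and flatness of the cokernel restores left exactness — but your organization is genuinely different. The paper's proof is a one-liner: it reduces to the representable case $T=(X,V)$, invokes the equivalence $\mathrm{Cris}(X,V)\simeq\mathrm{Mod}(\mathcal O_V^\dagger)$ of Lemma \ref{eqrep} given by $\varphi_{V*}$ and $\varphi_V^*$, and concludes "from the formal properties of $\varphi_V^*$" (exactness in $\mathrm{Cris}$ becomes exactness of realizations, and $\varphi_V^*$ applied to a short exact sequence with flat cokernel stays exact, since by Lemma \ref{crisfi2} its realizations are exactly the $]f[_u^\dagger$ of the sequence). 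You instead stay on a general overconvergent site $T$, set up the reflection principle for the full subcategory $\mathrm{Cris}(T)\subset\mathrm{Mod}(T)$, use Proposition \ref{catcrys} to identify cokernels in the two categories, and then do by hand the key computation that $K=\ker_{\mathrm{Mod}}(b)$ is a crystal (realization exactness, flatness of $E''_V$ via Proposition \ref{flatmod}, and the five-lemma-type comparison of transition maps). What your version buys is that it makes explicit the point the paper leaves implicit — why one "may assume $T$ representable" — since you never need that reduction: exactness in $\mathrm{Cris}(T)$ is handled directly through kernel/cokernel universal properties. What the paper's version buys is brevity, by packaging all the realization bookkeeping into the adjoint pair $(\varphi_V^*,\varphi_{V*})$. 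One small point worth stating explicitly in your write-up: the isomorphism $]f[_u^\dagger K_V\simeq K_{V'}$ you produce is the transition map of $K$ because the transition maps of $K$, $E$, $E''$ form a morphism from the exact row $0\to\,]f[_u^\dagger K_V\to\,]f[_u^\dagger E_V\to\,]f[_u^\dagger E''_V\to 0$ to the exact row $0\to K_{V'}\to E_{V'}\to E''_{V'}\to 0$, with the right two vertical maps isomorphisms; you assert this compatibility but a referee would want the diagram.
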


\begin{proof}
Again, we may assume that $T = (X,V)$ is representable and the statement then follows from the formal properties of $\varphi_{V}^*$.
\end{proof}

Usually, when we call a sequence \emph{exact}, we always mean in the strong sense, that is in $\mathrm{Mod}(T)$ but we will try to make it clear when some ambiguity might persist.

\begin{xmp}
We consider the overconvergent space $(\mathbb A_{\mathbb F_p} \hookrightarrow \mathbb A \leftarrow \mathbb D_{\mathbb Q_p})$ and denote by $t$ the coordinate.
Realization provides an equivalence between crystals on $(\mathbb A_{\mathbb F_p} , \mathbb D_{\mathbb Q_p})$ and $\mathcal O_{\mathbb D_{\mathbb Q_p}}$-modules.
Multiplication by $t$ on $\mathcal O_{\mathbb D_{\mathbb Q_p}}$ corresponds to a morphism of crystals $\mathcal O^\dagger_{(\mathbb A_{\mathbb F_p} , \mathbb D_{\mathbb Q_p})} \overset t \to \mathcal O^\dagger_{(\mathbb A_{\mathbb F_p} , \mathbb D_{\mathbb Q_p})}$ whose kernel (in the category of crystals) is $0$ (because multiplication by $t$ is injective in $\mathcal O_{\mathbb D_{\mathbb Q_p}}$).
We consider now the overconvergent space $(\mathrm{Spec}(\mathbb F_p) \hookrightarrow \mathrm{Spec}(\mathbb Z) \leftarrow \mathrm{Spa}(\mathbb Q_p))$ as living over $(\mathbb A_{\mathbb F_p} , \mathbb D_{\mathbb Q_p})$ via the zero section.
On this space, our morphism of crystals induces the zero map $\mathbb Q_p \overset 0 \to \mathbb Q_p$ (by definition because the zero section is defined by $t=0$) whose kernel is $\mathbb Q_p$ which is not zero.
This shows that a sequence might be exact in $\mathrm{Cris}(T)$ but not in $\mathrm{Mod}(\mathcal O_{T}^\dagger)$.
\end{xmp}

%%%%%%%%%%%%%%
\begin{lem} \label{loccrys}
Let $f:T' \to T$ be a morphism of overconvergent sites and $E \in \mathrm{Mod}(T)$.
If $E \in \mathrm{Cris}(T)$, then $E_{|T'} \in \mathrm{Cris}(T')$ and the converse holds when $f$ is local epimorphism.
\end{lem}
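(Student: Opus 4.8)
The plan is to prove the two implications separately: the first is formal, while the converse requires a localization argument on the tubes.

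\textbf{First implication.} Suppose $E\in\mathrm{Cris}(T)$. Every overconvergent space $s'\colon(X,V)\to T'$ over $T'$ composes with $f$ to an overconvergent space $f\circ s'\colon(X,V)\to T$, and every morphism over $T'$ yields one over $T$; moreover, by the compatibility of realization with pullback noted after the definition of realization, $(E_{|T'})_V=E_V$, and the induced transition maps coincide. Hence the family of transition maps of $E_{|T'}$ is a subfamily of that of $E$. Since all transition maps of a crystal are isomorphisms, the same holds for $E_{|T'}$, so $E_{|T'}\in\mathrm{Cris}(T')$.

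\textbf{Converse, reduction to the representable case.} Now assume $f$ is a local epimorphism and $E_{|T'}\in\mathrm{Cris}(T')$. Recall that a module is a crystal if and only if its restriction to every overconvergent space $(X,V)\to T$ is a crystal (the remark preceding lemma \ref{eqrep}), so I fix such an $(X,V)$ and reduce to showing that $E_{|(X,V)}$ is a crystal on the representable site $(X,V)$. Because $f$ is a local epimorphism and the topology on $T$ is the one inherited from the adic side, the object $(X,V)$ admits a covering family $\{(X,V_i)\to(X,V)\}$ whose members are open immersions $V_i\hookrightarrow V$ on the adic side and each of which lifts along $f$ to some $s_i\colon(X,V_i)\to T'$ (this is the content of the covering property, as in the local section furnished by proposition \ref{lsect}). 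For each $i$ we then have $E_{|(X,V_i)}=(E_{|T'})_{|(X,V_i)}$, which is a crystal by the first implication applied to $s_i$.

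\textbf{Localizing the transition maps.} It remains to check, for every morphism $(h,\cdot)\colon(Z,U)\to(X,V)$ over $T$, that the transition map $]h[^\dagger E_V\to E_U$ is an isomorphism of sheaves on $]Z[_U$. Pulling the covering back along $(Z,U)\to(X,V)$ gives an open cover with $U_i:=U\times_V V_i$, and the tubes $]Z[_{U_i}$ cover $]Z[_U$; since a morphism of sheaves is an isomorphism exactly when it is so after restriction to a covering, it suffices to work on each $]Z[_{U_i}$. There the cocycle condition of proposition \ref{realsh} factors the restricted transition map as the composite of the transition map of the open immersion $(X,V_i)\to(X,V)$, which is an isomorphism by the normalization condition, with the transition map of $(Z,U_i)\to(X,V_i)$, which is an isomorphism because $E_{|(X,V_i)}$ is a crystal. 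Hence the map is an isomorphism on each $]Z[_{U_i}$, therefore on $]Z[_U$; so $E_{|(X,V)}$ is a crystal, and as $(X,V)$ was arbitrary, $E\in\mathrm{Cris}(T)$.

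\textbf{Main obstacle.} The delicate point is not the cocycle bookkeeping but the justification that the covering supplied by the local epimorphism can be taken to consist of open immersions on the adic side, so that the covering-morphism transition maps are automatically isomorphisms by the normalization condition; once the covering is understood this concretely (which is where the inherited nature of the topology is used), isomorphy of the transition maps becomes genuinely local on the tubes and the argument closes.
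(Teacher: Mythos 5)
Your proof is correct and takes essentially the same approach as the paper's: reduce to the representable case $T = (X,V)$, use the local epimorphism property to obtain, locally on $V$, lifts of open pieces $(X,V_i) \to (X,V)$ to $T'$ (i.e.\ local sections), and conclude by the easy direction together with the fact that the crystal condition can be checked on a covering. The paper compresses your last two steps into ``the question being local on $V$, we may assume that $T' \to (X,V)$ has a section and are done''; your write-up simply makes that implicit localization of the transition maps explicit.
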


\begin{proof}
Only the converse implication needs a proof and it is sufficient to consider the case where $T = (X,V)$.
The question being local on $V$, we may assume that $T' \to (X,V)$ has a section and are done.
\end{proof}

As an obvious consequence, we see that the same property holds for a local isomorphism even if the definition of a crystal is site-theoretic (and not topos-theoretic).
Note also that, since $\mathrm{Mod^\dagger}$ is a stack on $\mathbf{Ad}^\dagger$, the functor 
\[
f^{-1} : \mathrm{Cris}(T') \to \mathrm{Cris}(T)
\] 
is fully faithful (resp.\ an equivalence) when $T' \to T$ is a local epimorphism (resp.\ a local isomorphism).

%%%%%%%%%%%%
\begin{lem} \label{homcrys}
If $E, E' \in \mathrm{Cris}(T)$, then
\[
E \otimes_{\mathcal O_{T}^\dagger} E' \in \mathrm{Cris}(T) \quad (\mathrm{resp.} \
\mathcal H\mathrm{om}_{\mathcal O_{T}^\dagger}(E, E') \in \mathrm{Cris}(T)
\]
when $E$ is \emph{finitely presented}).
\end{lem}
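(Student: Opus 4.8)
The plan is to reduce at once to the representable case and then transport both operations through the equivalence of Lemma \ref{eqrep}. Since a module is a crystal precisely when all of its restrictions to representable sites $(X,V)$ are crystals (the remark preceding Lemma \ref{eqrep}), and since restriction along a morphism of overconvergent sites commutes with $\otimes_{\mathcal O^\dagger}$ in general and with $\mathcal H\mathrm{om}_{\mathcal O^\dagger}$ once the first argument is finitely presented, it suffices to treat $T = (X,V)$. There Lemma \ref{eqrep} identifies $\mathrm{Cris}(X,V)$ with $\mathrm{Mod}(\mathcal O_V^\dagger)$ via the adjoint pair $\varphi_V^*, \varphi_{V*}$, so I may write $E = \varphi_V^*\mathcal F$ and $E' = \varphi_V^*\mathcal F'$ with $\mathcal F = E_V$, $\mathcal F' = E'_V$, and $\mathcal F$ finitely presented in the second case. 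The whole point is then to show that $\varphi_V^*$ commutes with the two operations, so that $E \otimes_{\mathcal O_T^\dagger} E'$ and $\mathcal H\mathrm{om}_{\mathcal O_T^\dagger}(E,E')$ again lie in the essential image of $\varphi_V^*$, hence are crystals by Lemma \ref{eqrep}.

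For the tensor product this is immediate: $\varphi_V^*$ is the inverse image functor of a morphism of ringed sites, hence symmetric monoidal, so $E \otimes_{\mathcal O_T^\dagger} E' \simeq \varphi_V^*(\mathcal F \otimes_{\mathcal O_V^\dagger} \mathcal F')$. (This merely re-proves the tensor part of Proposition \ref{catcrys}.)

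For the internal Hom the natural adjunction map
\[
\varphi_V^* \mathcal H\mathrm{om}_{\mathcal O_V^\dagger}(\mathcal F, \mathcal F') \to \mathcal H\mathrm{om}_{\mathcal O_T^\dagger}(\varphi_V^*\mathcal F, \varphi_V^*\mathcal F')
\]
is the candidate isomorphism, its source being a crystal; by Corollary \ref{comlim} it is enough to check it on every realization, i.e. for each morphism $(f,u) : (X',V') \to (X,V)$ over $T$. Using Lemma \ref{crisfi2}, Proposition \ref{inthom} and the crystal identities $E_{V'} \simeq \,]f[_u^\dagger\mathcal F$ and $E'_{V'} \simeq \,]f[_u^\dagger\mathcal F'$, this reduces to the purely local statement on tubes
\[
]f[_u^\dagger \mathcal H\mathrm{om}_{\mathcal O_V^\dagger}(\mathcal F, \mathcal F') \simeq \mathcal H\mathrm{om}_{\mathcal O_{V'}^\dagger}(]f[_u^\dagger\mathcal F, \,]f[_u^\dagger\mathcal F').
\]
I would prove this by choosing, locally on $]X[_V$, a finite presentation $(\mathcal O_V^\dagger)^p \to (\mathcal O_V^\dagger)^q \to \mathcal F \to 0$ (possible since $\mathcal F$ is finitely presented), writing $\mathcal H\mathrm{om}_{\mathcal O_V^\dagger}(\mathcal F,\mathcal F')$ as the kernel of an induced map $(\mathcal F')^q \to (\mathcal F')^p$, and comparing with the analogous kernel obtained after pulling the presentation back along $]f[_u$.

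The main obstacle is exactly this last commutation of pullback with internal Hom. The functor $]f[_u^\dagger$ is only right exact, so it does not automatically preserve the kernel describing $\mathcal H\mathrm{om}$, and the finite-presentation hypothesis on $\mathcal F$ is precisely what is meant to control this: for $\mathcal F = (\mathcal O_V^\dagger)^q$ one has $\mathcal H\mathrm{om}_{\mathcal O_V^\dagger}(\mathcal F,\mathcal F') = (\mathcal F')^q$, which manifestly commutes with any pullback, and the finiteness of the presentation is what should propagate this to $\mathcal F$ itself. I expect the delicate point to be upgrading the comparison from a mere natural morphism to an isomorphism, that is, verifying that $]f[_u^\dagger$ does preserve the relevant kernel under the finiteness assumption; this is where the hypothesis that $E$ is finitely presented must carry the entire argument.
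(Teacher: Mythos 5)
Your tensor argument is fine and matches the paper's (which simply invokes commutation of $]f[_u^\dagger$ with $\otimes$), but the Hom half has a genuine gap, and it sits exactly where you say you ``expect the delicate point to be.'' The statement you reduce to,
\[
]f[_u^\dagger\, \mathcal H\mathrm{om}_{\mathcal O_V^\dagger}(\mathcal F, \mathcal F') \simeq \mathcal H\mathrm{om}_{\mathcal O_{V'}^\dagger}(]f[_u^\dagger \mathcal F,\, ]f[_u^\dagger \mathcal F'),
\]
cannot be proved from finite presentation of $\mathcal F$ alone. Your plan --- write $\mathcal H\mathrm{om}(\mathcal F,\mathcal F')$ as the kernel of $(\mathcal F')^q \to (\mathcal F')^p$ and propagate from the free case --- requires $]f[_u^\dagger$ to preserve that kernel, i.e.\ left exactness, i.e.\ flatness of the morphism of ringed spaces $]f[_u$. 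That is exactly the content of lemma \ref{flatan}, which needs the extra hypothesis that $u$ be flat near $X$; it is not available for an arbitrary morphism of overconvergent spaces over $T$. Without flatness the commutation is simply false, even for finitely presented first argument: already for rings, with $A = k[x] \to B = k[x]/(x)$, $M = A/(x)$ and $N = A$, one has $\mathrm{Hom}_A(M,N) = 0$ while $\mathrm{Hom}_B(M \otimes_A B, N \otimes_A B) = k$. The same phenomenon occurs on overconvergent spaces, e.g.\ along the zero section $(\mathrm{Spec}(\mathbb F_p), \mathrm{Spa}(\mathbb Q_p)) \to (\mathbb A_{\mathbb F_p}, \mathbb D_{\mathbb Q_p})$ of the paper's own example, with $\mathcal F = \mathcal O_{\mathbb D_{\mathbb Q_p}}/(t)$ and $\mathcal F' = \mathcal O_{\mathbb D_{\mathbb Q_p}}$. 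So the finite-presentation hypothesis cannot ``carry the entire argument''; some exactness input on $]f[_u^\dagger$ is indispensable, and your proof as written does not supply it.

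For what it is worth, your reduction (to the representable case via lemma \ref{eqrep}, then to the commutation statement on tubes via lemma \ref{crisfi2} and proposition \ref{inthom}) lands on precisely the assertion that the paper's own one-line proof invokes: that $]f[_u^\dagger$ commutes with internal Hom when the first argument is finitely presented. The paper treats this as known, whereas you attempt to prove it and --- correctly --- find that right exactness plus a finite presentation does not suffice. So structurally your argument is the same as the paper's; the difference is that the remaining step is asserted there and left open here, and as it stands that step is the whole difficulty.
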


\begin{proof}
Follows from the fact that $]f[_u^\dagger$ will commute with tensor product (resp.\ internal hom as long as the first argument is finitely presented).
\end{proof}

Finally, our main theorem of \cite{LeStum17*} has the following implication:

%%%%%%%%%%%%%
\begin{thm}\label{geomcrys}
If $V$ is a geometric materialization of a formal scheme $X$ over an analytic space $O$, then there exists an equivalence of categories
\[
\mathrm{Cris}(X/O)^\dagger \simeq \mathrm{Cris}(X,V/O)^\dagger.
\]
\end{thm}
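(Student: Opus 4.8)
The plan is to exhibit the claimed equivalence as the pullback functor along the canonical morphism $g \colon (X,V/O)^\dagger \to (X/O)^\dagger$ from example \eqref{compbs} of section \ref{fundxmp}, and to argue that $g^{-1}$ restricts to an equivalence between the corresponding categories of crystals. The proof thus splits into a geometric input, supplied by the fibration theorem, and a formal descent argument for the stack $\mathrm{Mod}^\dagger$.

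First I would record that $g$ is a local isomorphism. This is precisely the content of theorem \ref{strfib}: since $V$ is a geometric materialization of $X$ over $O$ and $O$ is analytic, the strong fibration theorem gives that $(X,V) \to (X/O)^\dagger$ is a local epimorphism and that the induced map on full images $(X,V/O)^\dagger \to (X/O)^\dagger$ is a local isomorphism. All of the genuine geometry is therefore already packaged in theorem \ref{strfib}, and no further analysis of the materialization is needed at this stage.

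The second step is purely formal. By lemma \ref{loccrys}, pullback of a crystal along any morphism of overconvergent sites is again a crystal, and the crystal condition can be tested after pullback along a local epimorphism. Combining this with the fact that $\mathrm{Mod}^\dagger$ is a stack on $\mathbf{Ad}^\dagger$, the remark following lemma \ref{loccrys} shows that $g^{-1}$ is fully faithful for a local epimorphism and an equivalence for a local isomorphism. Applying this to the local isomorphism $g$ yields the desired equivalence $\mathrm{Cris}(X/O)^\dagger \simeq \mathrm{Cris}(X,V/O)^\dagger$.

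The step demanding the most care is the passage from the equivalence of underlying topoi $\widetilde{(X,V/O)^\dagger} \simeq \widetilde{(X/O)^\dagger}$ (already noted after theorem \ref{strfib}) to an equivalence of \emph{crystal} categories. Being a crystal is a site-theoretic notion --- a cartesian section of $\mathrm{Mod}^\dagger$ --- rather than a topos-theoretic one, so an equivalence of topoi alone does not transport crystals. The point where something beyond this is genuinely required is the essential surjectivity of $g^{-1}$: a crystal on $(X,V/O)^\dagger$ must be \emph{descended} to a cartesian section over $(X/O)^\dagger$, and it is exactly descent for the stack $\mathrm{Mod}^\dagger$ along the bicovering $g$ that produces the necessary gluing datum. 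Full faithfulness follows in the same vein from the sheaf property of the internal Hom on the stack. Once this descent is invoked, the remaining verifications are routine.
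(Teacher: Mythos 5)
Your proposal is correct and follows exactly the paper's own route: theorem \ref{strfib} supplies the local isomorphism $(X,V/O)^\dagger \to (X/O)^\dagger$, and lemma \ref{loccrys} together with the stack property of $\mathrm{Mod}^\dagger$ (the remark following that lemma) upgrades this to an equivalence of crystal categories. Your elaboration of why the stack property, rather than the mere equivalence of topoi, is what yields essential surjectivity is a faithful unpacking of the same argument the paper compresses into one line.
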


\begin{proof}
Follows from lemma \ref{loccrys} and theorem \ref{strfib}.
\end{proof}

This will prove to be the fundamental argument that will allow us to give a down to earth description of crystals.

%%%%%%%%%%%%%%%%
\subsection{Cohomologically crystalline complexes}

It is usually very hard to study preservation of crystals under direct image and this is a very important issue in the theory (see for example \cite{Lazda16}).
We can however reduce the question to base change in cohomology (which is unfortunately not easy to deal with either).

Since we are interested in computing cohomology, we introduce the following:

%%%%%%%%%%%%%%%
\begin{dfn}
Let $f:T' \to T$ be a morphism of overconvergent sites.
A complex $E$ of $\mathcal O_{T'}$-modules is called \emph{(cohomologically) $f$-crystalline} if $\mathrm R^kf_{*}E$ is a crystal for all $k \in \mathbb N$.
\end{dfn}

We may also say that $E$ is \emph{derived $f$-crystalline} if $\mathrm Rf_{*}E$ is a derived crystal.
When all $\mathrm R^kf_{*}E$ are flat, both definitions are equivalent.

It is very hard in general to show that a complex $E$ is $f$-crystalline.

%%%%%%%%%%%%%%
\begin{xmp}
\begin{enumerate}
\item
It follows from proposition \ref{alphacr} below that if $\alpha : U \hookrightarrow X$ is a formal open embedding and $T$ is an analytic overconvergent site over $X^\dagger$, then any crystal on $T_{U}$ is $\alpha$-crystalline.
\item This is not true anymore for closed embeddings:
if $\beta : \{0\} \hookrightarrow \mathbb A$ is the inclusion of the origin, then $\mathcal O^\dagger_{\{0\}}$ is \emph{not} $\beta$-crystalline.
\item If $T$ is an overconvergent site, then there exists a morphism of overconvergent sites $f : \mathbb A^{n}_T \to T$ and one can show that $\mathcal O^\dagger_{\mathbb A^{n}_T}$ is $f$-crystalline when $T$ is analytic and defined over $\mathbb Q$.
More precisely, we will show later, as a consequence of corollary \ref{cohdR} below, that in this case, $\mathrm Rf_*\mathcal O^\dagger_{\mathbb A^{n}_T} = \mathcal O^\dagger_{T}$.
\item Let $(C \hookrightarrow S \leftarrow O)$ be an analytic convergent space over $\mathbb Q$.
Let $X$ be a smooth scheme over $C$ and $f : Y \to X$ a finite étale morphism.
We will show in theorem \ref{finet} that any finitely presented crystal on $(Y/O)^\dagger$ is $f$-crystalline.
\item One can conjecture that the previous results extends to a proper smooth morphism.
This seems out of reach at this point.
\end{enumerate}
\end{xmp}

Being $f$-crystalline is local on the base:

%%%%%%%%%%%
\begin{lem} \label{locry}
Assume that we are given a $2$-cartesian diagram of overconvergent sites
\[
\xymatrix{T''' \ar[r]^{f'} \ar[d]^{g'} &T'' \ar[d]^{g} \\ T' \ar[r]^{f}& T},
\]
and a complex $E$ of modules on $T'$.
If $E$ is $f$-crystalline, then $g'^{-1}E$ is $f'$-crystalline and the converse holds when $g$ is a local epimorphism.\end{lem}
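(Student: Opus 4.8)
The plan is to derive both implications from a single cohomological base change isomorphism for the given square, combined with the behaviour of crystals under inverse image recorded in lemma \ref{loccrys}. Because every morphism of overconvergent sites is a flat morphism of ringed sites with exact inverse image, the essential point should be an identification
\[
g^{-1}\mathrm R^kf_*E \simeq \mathrm R^kf'_*\!\left(g'^{-1}E\right), \qquad k \in \mathbb N.
\]
This is the base change already exploited in the proof of lemma \ref{cartcdb}: lemma \ref{basfib} of the appendix yields $g^{-1}\circ f_* = f'_*\circ g'^{-1}$ at the level of underived functors, and, the inverse image functors being exact and preserving injectives, this upgrades to the derived functors; taking cohomology and using exactness of $g^{-1}$ once more (so that it commutes with $\mathrm R^k$) gives the displayed formula. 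First I would spell out that our square is $2$-cartesian in the form required by lemma \ref{basfib}, so that this isomorphism is available.

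Granting the isomorphism, both implications are formal. For the forward direction, if $E$ is $f$-crystalline then each $\mathrm R^kf_*E$ is a crystal on $T$; by lemma \ref{loccrys} its inverse image $g^{-1}\mathrm R^kf_*E$ is a crystal on $T''$, and the base change identity rewrites this as $\mathrm R^kf'_*(g'^{-1}E)$, so $g'^{-1}E$ is $f'$-crystalline. For the converse, assume $g$ is a local epimorphism and $g'^{-1}E$ is $f'$-crystalline; then $g^{-1}\mathrm R^kf_*E \simeq \mathrm R^kf'_*(g'^{-1}E)$ is a crystal on $T''$ for every $k$, and the converse half of lemma \ref{loccrys} (valid precisely because $g$ is a local epimorphism) descends this to show that $\mathrm R^kf_*E$ is a crystal on $T$, i.e.\ that $E$ is $f$-crystalline.

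I expect the only genuine obstacle to lie in the careful justification of the cohomological base change isomorphism, and specifically in verifying that the inverse image functors preserve injectives, which is what promotes the underived base change of lemma \ref{basfib} to the statement about each $\mathrm R^k$. Once that is secured, the remainder is a direct application of lemma \ref{loccrys} in its two halves, with no further computation required.
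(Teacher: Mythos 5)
Your proposal is correct and matches the paper's own proof essentially verbatim: the paper establishes the same base change identity $g^{-1}\mathrm R^kf_{*}E \simeq \mathrm R^kf'_{*}g'^{-1}E$ (via lemma \ref{basfib}, exactness of inverse images, and preservation of injectives) and then applies the two halves of lemma \ref{loccrys}, using that $g^{-1}$ detects crystals when $g$ is a local epimorphism. No difference in approach worth noting.
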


\begin{proof}
We have
\[
\mathrm R^kf'_{*} g'^{-1} E = g^{-1} \mathrm R^kf_{*}E
\]
because $g^{-1}$ is exact and preserves injective and crystals.
Moreover, if $g$ is a local epimorphism, then we showed in lemma \ref{loccrys} that $g^{-1}$ detects crystals.
\end{proof}

Assume that we are given a sequence of morphisms $T'' \overset f \to T' \overset g \to T$.
Then, it directly follows from the definition that if $E$ is derived $f$-crystalline, then $\mathrm Rf_{*} E$ is derived $g$-crystalline if and only if $E$ est derived $(g \circ f)$-crystalline.
Also in a distinguished triangle $E' \to E \to E'' \to \cdots$, if two of them are derived $f$-crystalline, so is the third.
There does not exist a non-derived analog of these statements and it is necessary to rely on spectral sequences in practice.

Being $f$-crystalline is closely related to satisfying cohomological base change:

%%%%%%%%%%%
\begin{prop} \label{crsiba}
Let $f:T' \to T$ be a morphism of overconvergent sites.
A complex $E$ on $T'$ is $f$-crystalline if and only if for all overconvergent space $(X, V)$ over $T$, the complex $E_{|T'_{V}}$ satisfies (cohomological) base change over $V$.
\end{prop}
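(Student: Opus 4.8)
The plan is to prove both implications by unwinding the definitions of "$f$-crystalline" and "commutes with cohomological base change" and relating them through the second assertion of proposition \ref{bicom}, which computes the realizations of a derived pushforward as an absolute cohomology on a fiber. First I would fix notation: the statement concerns a complex $E$ on $T'$, a morphism $f \colon T' \to T$, and the question is whether each $\mathrm R^kf_*E$ is a crystal on $T$. By the criterion for being a crystal (the transition maps being isomorphisms, as in the definition and lemma \ref{eqrep}), $\mathrm R^kf_*E$ is a crystal precisely when, for every morphism $(g,u) \colon (X',V') \to (X,V)$ of overconvergent spaces over $T$, the linear transition map
\[
]g[_u^\dagger (\mathrm R^kf_*E)_V \longrightarrow (\mathrm R^kf_*E)_{V'}
\]
is an isomorphism. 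So the whole statement is really a comparison, ranging over all such $(g,u)$, between transition maps for the sheaves $\mathrm R^kf_*E$ on $T$ and base-change maps for the restricted complexes $E_{|T'_V}$ over the overconvergent spaces $(X,V)$.

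Next I would invoke proposition \ref{bicom}(2): for an overconvergent space $(X,V)$ over $T$ with $T'_V := T' \times_T (X,V)$, one has a canonical identification
\[
(\mathrm Rf_*E)_V \simeq \mathrm Rp_{T'_V/V*}E_{|T'_V},
\]
and hence $(\mathrm R^kf_*E)_V \simeq \mathrm R^kp_{T'_V/V*}E_{|T'_V}$, which is exactly the absolute cohomology appearing in the base-change condition for $E_{|T'_V}$. The key point is then to check that under this identification the transition map for $\mathrm R^kf_*E$ along $(g,u)$ corresponds to the cohomological base-change map for $E_{|T'_V}$ along the induced morphism $(X',V') \to (X,V)$. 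This is a compatibility of two canonical maps, and I would establish it by realizing both as the same adjunction map: by lemma \ref{alttr2} the transition map is the realization on $V'$ of the adjunction $\varphi_V^*\varphi_{V*}(\mathrm Rf_*E) \to \mathrm Rf_*E$, while the base-change map is constructed from the $2$-cartesian square relating $T'_{V'}$ and $T'_V$ together with lemma \ref{cartcdb}/lemma \ref{basfib}. Both should reduce, after applying $\varphi_{V'*}$, to the same map built from the base-change morphism $]g[_u^\dagger \mathrm Rp_{T'_V/V*} \to \mathrm Rp_{T'_{V'}/V'*}]{-}[$.

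With that dictionary in place, both directions follow formally. If $E$ is $f$-crystalline, then every $\mathrm R^kf_*E$ is a crystal, so all its transition maps are isomorphisms, and by the dictionary the base-change maps for each $E_{|T'_V}$ are isomorphisms; since every $(C',O') \to (X,V)$ factors as such a morphism of overconvergent spaces over $T$, this says precisely that $E_{|T'_V}$ commutes with base change. Conversely, if $E_{|T'_V}$ commutes with base change for all $(X,V)$ over $T$, then all the transition maps of each $\mathrm R^kf_*E$ are isomorphisms, so each $\mathrm R^kf_*E$ is a crystal and $E$ is $f$-crystalline. The main obstacle I anticipate is precisely the middle step: verifying cleanly that the transition map and the base-change map agree under the identification of proposition \ref{bicom}(2). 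This is the kind of two-cartesian-square compatibility that is conceptually routine but notationally delicate, and it is where one must be careful that the $2$-categorical base-change isomorphisms from lemma \ref{basfib} are the ones genuinely matching the adjunction map of lemma \ref{alttr2}; once that coherence is pinned down, the equivalence is immediate.
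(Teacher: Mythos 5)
Your proposal is correct and follows essentially the same route as the paper: both translate ``$\mathrm R^kf_*E$ is a crystal'' into the isomorphy of transition maps and then invoke proposition \ref{bicom}(2) to identify $(\mathrm R^kf_*E)_V$ with $\mathrm R^kp_{T'_V/V*}E_{|T'_V}$, so that the crystal condition becomes verbatim the base-change condition. The compatibility of the transition map with the base-change map, which you flag as the delicate middle step, is exactly the content the paper treats as immediate from proposition \ref{bicom} (whose proof already rests on lemma \ref{basfib}), so your extra caution is a refinement of, not a departure from, the paper's argument.
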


\begin{proof}
We consider the property for $E$ of being $f$-crystalline.
It means that
\[
]f[_{u}^\dagger (\mathrm R^kf_{*}E)_{V} \simeq (\mathrm R^kf_{*}E)_{V'}
\]
whenever $(f,u) : (X',V') \to (X,V)$ is a morphism of overconvergent spaces.
Using proposition \ref{bicom}, this condition also reads
\[
]f[_{u}^\dagger \mathrm R^kp_{T'_{V}/V*}E_{|T'_{V}} \simeq \mathrm R^kp_{T'_{V'}/V'*}E_{|T'_{V'}}
\]
This exactly means that $E_{|T'_{V}}$ satisfies base change to $(X',V')$.
\end{proof}

Note that there exists again an obvious derived analog to this statement.

%%%%%%%%%%%
\begin{xmp}
\begin{enumerate}
\item
If $f:T \to (X,V)$ is a morphism of overconvergent sites, then a complex $E$ on $T$ is $f$-crystalline if and only if the complex $E$ satisfies (cohomological) base change over $V$.
This follows from lemma \ref{transba}.
\item
If $f:T \to (X,V/O)^\dagger$ is a morphism of overconvergent sites, then a complex $E$ on $T$ is $f$-crystalline if and only if the complex $E_{|T_V/V}$ satisfies (cohomological) base change over $V$.
This follows from lemma \ref{locry}.
Moreover, theorem \ref{strfib} implies that the same applies to $f:T \to (X/O)^\dagger$ when $V$ is a geometric materialization.
\item Let $(X,V) \to (C,O)$ be a morphism of overconvergent spaces and $f : Y \to X$ be morphism of formal schemes.
We consider the induced morphism of overconvergent sites $f : (Y/O)^\dagger \to (X,V/O)^\dagger$ (using the same letter $f$).
Then a crystal $E$ on $(Y/O)^\dagger$ is $f$-crystalline if and only $E_{|(Y/V)^\dagger}$ satisfies base change over $V$: we have
\[
(Y/O)^\dagger \times_{(X,V/O)^\dagger} (X,V) = (Y/V)^\dagger.
\]
\item Let $f : Y \to X$ be morphism of formal schemes over some overconvergent analytic space $(C,O)$.
We consider the induced morphism of overconvergent sites $f : (Y/O)^\dagger \to (X/O)^\dagger$ (using the same letter $f$ again).
Let $V$ be a geometric materialization for $X$ over $O$.
Then a crystal $E$ on $(Y/O)^\dagger$ is $f$-crystalline if and only $E_{|(Y/V)^\dagger}$ satisfies base change over $V$.
This follows again from theorem \ref{strfib}.
\item In the last two examples, if we are given a morphism $(Y,W) \to (X,V)$, we cannot replace $(Y/O)^\dagger$ by $(Y,W/O)^\dagger$ because the diagram
\[
\xymatrix{(Y,W/V)^\dagger \ar@{^{(}->}[r] \ar[d] & (Y/V)^\dagger \ar[d]
\\ (Y,W/O)^\dagger \ar@{^{(}->}[r] & (Y/O)^\dagger }
\]
is not cartesian in general.
\end{enumerate}
\end{xmp}

Here is a non trivial (even if it may sound so) example:

%%%%%%%%%%%%%%
\begin{prop}
Let $(f,u) : (X',V') \to (X,V)$ be a morphism of overconvergent spaces.
Assume $f$ is bijective and $u : V' \to V$ is a finite universal homeomorphism.
Then any crystal $E'$ on $(X',V')$ is $(f,u)$-crystalline.
\end{prop}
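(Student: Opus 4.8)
The plan is to reduce the statement to a cohomological base change question on the tubes and then settle it geometrically. By the example following Proposition~\ref{crsiba}, a complex on the representable site $(X',V')$ is $(f,u)$-crystalline if and only if it satisfies cohomological base change over $V$; so it suffices to verify that $E'$ satisfies base change to an arbitrary $(g,w) : (X'',V'') \to (X,V)$. First I would rewrite both sides of the desired isomorphism using the realization formula $\mathrm Rp_{(X',V')/V*}E' = \mathrm R]f[_{u*}E'_{V'}$ (the example after Definition~\ref{absco}, via Lemma~\ref{comphi}). Forming the fibre product $(X',V')\times_{(X,V)}(X'',V'')$ and passing to tubes yields a cartesian square of tube spaces whose right edge is $]f[_u$ and whose bottom edge is $]g[_w$. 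The base change assertion then becomes exactly that $]g[_w^\dagger \mathrm R^k]f[_{u*}E'_{V'}$ agrees with $\mathrm R^k$ of the pushforward of $]f[_u^{-1}E'_{V'}$ along the opposite (left) edge of that square. In short, everything reduces to topological base change along the map $]f[_u : \,]X'[_{V'} \to \,]X[_V$ induced on tubes.

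The geometric heart of the argument is to identify $]f[_u$. Since $u$ is a universal homeomorphism it is in particular a homeomorphism of the underlying spaces $V' \to V$, and compatibility always gives the inclusion $]X'[_{V'} \subseteq u^{-1}(\,]X[_V)$. I would prove the reverse inclusion using the two remaining hypotheses: because $u$ is finite and $f : X' \to X$ is bijective, the specialization data defining the tubes match up, forcing $]X'[_{V'} = u^{-1}(\,]X[_V)$. Consequently $]f[_u$ is the restriction of the homeomorphism $u$ to the full preimage of $\,]X[_V$, hence is itself a homeomorphism of tube spaces (and in any case a finite morphism). I expect this identification — that the bijectivity of $f$ together with the finiteness of $u$ makes the tube of $X'$ precisely the preimage of the tube of $X$ — to be the only genuinely delicate point.

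Granting this, the cohomological input is formal. For a homeomorphism $]f[_u$ the functor $]f[_{u*}$ is exact, so $\mathrm R^k]f[_{u*}E'_{V'}$ vanishes for $k>0$ and equals $]f[_{u*}E'_{V'}$ for $k=0$, while a homeomorphism commutes with the (ringed) pullback $]g[_w^\dagger$ along the cartesian square; this gives the base change isomorphism in every degree, the module structures being carried along automatically. Even if one only establishes finiteness of $]f[_u$ rather than its being a homeomorphism, the same conclusion follows from the vanishing of the higher direct images of a finite map together with finite base change. In either case $E'$ satisfies base change over $V$ and is therefore $(f,u)$-crystalline.
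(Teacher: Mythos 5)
Your route is in substance the paper's own: the paper proves this proposition by establishing lemma \ref{univhome}, namely $\mathrm R(f,u)_*\varphi_{V'}^*\mathcal F' \simeq \varphi_V^*\,]f[_{u*}\mathcal F'$, and the proof of that lemma is exactly your outline — compute the realizations of the pushforward through the cartesian square (lemma \ref{cartcdb}), use that the maps induced on the tubes are homeomorphisms to make the pushforward exact, and then verify the remaining base change $]g[_v^\dagger\,]f[_{u*}\mathcal F' \simeq \,]f'[_{u'*}\,]g'[_{v'}^\dagger\mathcal F'$ by a stalk computation in which the finiteness of $u$ is what identifies the local rings of $W' = W\times_V V'$ with ordinary (not completed) tensor products. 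On this last point your ``the cohomological input is formal'' is too quick: commuting a homeomorphism past the \emph{ringed} pullback $]g[_v^\dagger$ is precisely this stalk computation, and ``finite base change'' for coherent sheaves does not apply verbatim to arbitrary $\mathcal O^\dagger$-modules; still, this part can be completed and matches the paper.

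The genuine gap is the step you yourself single out as delicate: bijectivity of $f$ together with finiteness (or universal homeomorphy) of $u$ do \emph{not} force $\,]X'[_{V'} = u^{-1}(\,]X[_V)$. The tube upstairs is computed through the formal scheme $P'$, which these hypotheses do not constrain at all. Concretely, take
\[
(X' \hookrightarrow P' \leftarrow V') = \left(\mathrm{Spec}(\mathbb F_p) \hookrightarrow \widehat{\mathbb A}_{\mathbb Z_p} \leftarrow \mathbb D_{\mathbb Q_p}\right),
\]
the origin of the special fibre together with the closed unit disc, mapping to $(X \hookrightarrow P \leftarrow V) = (\mathrm{Spec}(\mathbb F_p) \hookrightarrow \mathrm{Spf}(\mathbb Z_p) \leftarrow \mathbb D_{\mathbb Q_p})$ via $f = \mathrm{id}$, the structure map $P' \to P$, and $u = \mathrm{id}$. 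Here $f$ is an isomorphism and $u$ is the identity, yet $\,]X'[_{V'}$ is the open unit disc while $u^{-1}(\,]X[_V)$ is the whole closed disc, so $]f[_u$ is a strict open immersion rather than a homeomorphism; pushing a crystal forward along this $(f,u)$ then genuinely fails to satisfy base change at points of $\mathbb D_{\mathbb Q_p}$ lying in the closure of the open disc. So the identification you want cannot be ``forced'' from the stated hypotheses: it is an additional geometric input. To be fair, the paper's own proof of lemma \ref{univhome} asserts that $]f'[_{u'}$ is a homeomorphism with no more justification, so your proposal faithfully reproduces the paper's argument together with its unproved point; what saves the intended application (lemma \ref{eqdif}, for the projections $p_i^{(n)} : V^{(n)} \to V$) is that there the underlying space of $V^{(n)}$ is that of the diagonal and the tube of $X$ in $V^{(n)}$ is carried homeomorphically onto $\,]X[_V$ — a fact specific to infinitesimal neighborhoods of the diagonal, not a consequence of ``$f$ bijective and $u$ a finite universal homeomorphism.''
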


\begin{proof}
It follows from lemma \ref{eqrep} and from lemma \ref{univhome} below that
\[
\mathrm R(f,u)_*E' \simeq \varphi_V^*]f[_{u*}\varphi_{V'*}E'
\]
is a crystal.
\end{proof}

As a consequence (and equivalently), we see that $E'$ satisfies base change with respect to $(f,u)$.

We used in the above proof the following result which is completely analogous to proposition 4.2.4 in \cite{LeStum11}:

%%%%%%%%%%%%%%
\begin{lem} \label{univhome}
Let $(f,u) : (X',V') \to (X,V)$ be a morphism of overconvergent spaces.
Assume $f$ is bijective and $u : V' \to V$ is a finite universal homeomorphism.
If $\mathcal F'$ is an $\mathcal O_{V'}$-module, then
\[
\mathrm R(f,u)_*\varphi_{V'}^*\mathcal F' \simeq \varphi_V^*]f[_{u*}\mathcal F'.
\]
\end{lem}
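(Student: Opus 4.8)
The plan is to produce a canonical comparison morphism between the two members and then verify it is an isomorphism by passing to realizations, where it turns into an ordinary base-change statement on the tubes. To build the map $\varphi_V^*]f[_{u*}\mathcal F' \to \mathrm R(f,u)_*\varphi_{V'}^*\mathcal F'$, I would use the adjunction $(f,u)^{*}\dashv \mathrm R(f,u)_*$: it suffices to give a map $(f,u)^{*}\varphi_V^*]f[_{u*}\mathcal F' \to \varphi_{V'}^*\mathcal F'$. Since the square of lemma \ref{comphi} is an equality of ringed sites, $(f,u)^{-1}\varphi_V^* = \varphi_{V'}^*]f[_u^\dagger$, so the source rewrites as $\varphi_{V'}^*]f[_u^\dagger]f[_{u*}\mathcal F'$, and applying $\varphi_{V'}^*$ to the counit $]f[_u^\dagger]f[_{u*}\mathcal F'\to\mathcal F'$ produces the wanted map. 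Both members live on the representable site $(X,V)$, so by the conservativity of realizations (proposition \ref{realsh} and corollary \ref{comlim}) it is enough to check this map is an isomorphism after realizing on every overconvergent space $(g,w)\colon (X'',V'')\to (X,V)$.

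Next I would reduce the realized statement to base change on tubes. Forming the $2$-cartesian square with $(X''',V''') := (X',V')\times_{(X,V)}(X'',V'')$ and writing $(f'',w''),(g',w')$ for the pullbacks of $(f,u),(g,w)$, lemma \ref{cartcdb} computes the realization of the left-hand side as $\mathrm R]f''[_{w''*}(\varphi_{V'}^*\mathcal F')_{V'''}$, while lemma \ref{crisfi2} gives $(\varphi_{V'}^*\mathcal F')_{V'''} = ]g'[_{w'}^\dagger\mathcal F'$ and $(\varphi_V^*]f[_{u*}\mathcal F')_{V''} = ]g[_w^\dagger]f[_{u*}\mathcal F'$. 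Thus the comparison morphism realizes to the base-change map
\[
]g[_w^\dagger]f[_{u*}\mathcal F'\longrightarrow \mathrm R]f''[_{w''*}]g'[_{w'}^\dagger\mathcal F'
\]
attached to the cartesian square of tubes, and it remains to prove this is an isomorphism.

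The geometric input is that $]f[_u$ is a homeomorphism on tubes: as $u$ is a finite universal homeomorphism and $f$ is bijective, the induced map is a homeomorphism, in analogy with proposition 4.2.4 of \cite{LeStum11} (using the behaviour of tubes established in \cite{LeStum17*}). Finiteness and the universal-homeomorphism property are stable under base change, so $]f''[_{w''}$ is again a homeomorphism; hence $\mathrm R]f''[_{w''*} = ]f''[_{w''*}$ is exact, it is monoidal (being induced by an equivalence of topoi), and the purely topological identity $]f''[_{w''*}]g'[_{w'}^{-1} = ]g[_w^{-1}]f[_{u*}$ holds. In particular the right-hand side is concentrated in degree $0$, so no derived subtleties survive.

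The hard part will be the \emph{ringed} base change, as opposed to this topological one. Unwinding each $\dagger$-pullback as a topological inverse image tensored by the structure sheaf, and using the monoidality of $]f''[_{w''*}$ together with associativity of the tensor product, the displayed map becomes an isomorphism exactly when
\[
\mathcal O_{V''}^\dagger \otimes_{]g[_w^{-1}\mathcal O_V^\dagger}]g[_w^{-1}]f[_{u*}\mathcal O_{V'}^\dagger \;\simeq\; ]f''[_{w''*}\mathcal O_{V'''}^\dagger,
\]
i.e.\ when the formation of the finite pushforward $]f[_{u*}\mathcal O_{V'}^\dagger$ commutes with base change along $]g[_w$. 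This is precisely where the hypothesis that $u$ be \emph{finite} (not merely a homeomorphism of tubes) is indispensable: a finite morphism corresponds to a coherent algebra whose formation is compatible with the fibre product $V'''=V'\times_{V}V''$, so this base change holds unconditionally. Granting it, the comparison map is an isomorphism on every realization, and the lemma follows.
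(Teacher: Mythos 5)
Your proof is correct, and its skeleton is the one the paper uses: check the morphism on realizations over an arbitrary overconvergent space over $(X,V)$, apply lemma \ref{cartcdb} to the $2$-cartesian square and lemma \ref{crisfi2} to compute both realizations, use the fact that the induced tube maps are homeomorphisms to replace the derived pushforward by the exact underived one, and thereby reduce the lemma to a base-change isomorphism on tubes in which finiteness of $u$ is the essential input. The only real divergence is the endgame. The paper unwinds the $\dagger$-functors through $i_{X'}^{-1}$ and $i_{X'*}$ back to the ambient adic spaces, reducing to $v^*u_*\mathcal F \simeq u'_*v'^{*}\mathcal F$ for an \emph{arbitrary} $\mathcal O_{V'}$-module $\mathcal F$, and proves this by a stalk computation in which the homeomorphism hypotheses identify the relevant points and finiteness yields $\mathcal O_{W',y'} \simeq \mathcal O_{W,u'(y')} \otimes_{\mathcal O_{V,u(v'(y'))}} \mathcal O_{V',v'(y')}$. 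You instead use monoidality of pushforward along a homeomorphism to reduce the arbitrary-module case formally to the structure-sheaf case, and then quote the compatibility of finite morphisms (coherent algebras) with fibre products. That reduction is valid and cleanly isolates where finiteness enters; but be aware that the quoted compatibility is not a formal triviality — it is precisely the content of the paper's stalk computation (finite base change of locally noetherian adic spaces requires no completion of the tensor product), so a complete write-up would still have to prove it or cite Huber's theory of finite morphisms: your route relocates, rather than removes, the hard point. On the other hand, your version has two genuine merits over the paper's exposition: the comparison map is constructed explicitly by adjunction from the counit $]f[_u^\dagger\, ]f[_{u*}\mathcal F' \to \mathcal F'$, whereas the paper never says where its map comes from; and the formal reduction to the structure sheaf makes transparent that a single geometric fact carries the whole lemma.
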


\begin{proof}
If $(g,v) : (Y,W) \to (X,V)$ is a morphism of overconvergent spaces, then there exists a cartesian diagram
\[
\xymatrix{(Y',W') \ar[r]^{(f',u')} \ar[d]^{(g',v')} & (Y,W) \ar[d]^{(g,v)} \\ (X',V') \ar[r]^{(f,u)} & (X,V) }
\]
Since $]f'[_{u'*}$ is a homeomorphism, it follows from lemma \ref{cartcdb} that
\[
(\mathrm R(f,u)_*\varphi_{V'}^*\mathcal F')_W = \mathrm R]f'[_{u'*}(\varphi_{V'}^*\mathcal F')_{W'} = ]f'[_{u'*}(\varphi_{V'}^*\mathcal F')_{W'} = ]f'[_{u'*}]g'[_{v'}^\dagger \mathcal F'.
\]
On the other hand, we have
\[
(\varphi_V^*]f[_{u*}\mathcal F')_W = ]g[_v^\dagger]f[_{u*}\mathcal F'
\]
and we are therefore reduced to prove the base change
\[
]g[_v^\dagger]f[_{u*}\mathcal F' \simeq  ]f'[_{u'*}]g'[^\dagger \mathcal F'.
\]
If we set $\mathcal F :=  i_{X'*}\mathcal F'$, then we have $\mathcal F' = i_{X'}^{-1}\mathcal F$.
Then, we compute on one hand
\[
]g[_v^\dagger]f[_{u*}\mathcal F' = i_{Y}^{-1}v^*i_{X*}]f[_{u*} i_{X'}^{-1}\mathcal F =  i_{Y}^{-1}v^*u_*i_{X'*} i_{X'}^{-1}\mathcal F = i_{Y}^{-1}v^*u_*\mathcal F.
\]
Moreover, since both $u'$ and $]f'[_{u'}$ are homeomorphisms, we have $]f'[_{u'*} \circ i_{Y'}^{-1} = i_Y^{-1} \circ u'_*$ and we get on the other hand
\[
]f'[_{u'*}]g'[^\dagger \mathcal F' = ]f'[_{u'*}i_{Y'}^{-1}v'^{*}i_{X'*} \mathcal F' = i_Y^{-1} u'_*v'^{*}\mathcal F.
\]
It is therefore sufficient to finally prove the base change formula
\[
v^*u_*\mathcal F \simeq  u'_*v'^{*}\mathcal F.
\]
This may be checked on stalks and since $u'$ is bijective, it is sufficient to prove that for $y' \in W'$, we have
\[
(v^*u_*\mathcal F)_{u'(y')} \simeq  (u'_*v'^{-1}\mathcal F)_{u'(y')}.
\]
Using the fact that both $u$ and $u'$ are homeomorphisms, we have on the one hand
\begin{align*}
(v^*u_*\mathcal F)_{u'(y')} &\simeq \mathcal O_{W,u'(y')} \otimes_{\mathcal O_{V,v(u'(y'))}} (u_*\mathcal F)_{v(u'(y'))}
\\ &\simeq \mathcal O_{W,u'(y')} \otimes_{\mathcal O_{V,u(v'(y'))}} (u_*\mathcal F)_{u(v'(y'))}
\\ &\simeq \mathcal O_{W,u'(y')} \otimes_{\mathcal O_{V,u(v'(y'))}} \mathcal F_{v'(y')}
\end{align*}
and on the other hand, since $u$ is finite,
\begin{align*}
(u'_*v'^{*}\mathcal F)_{u'(y')} &\simeq (v'^{*}\mathcal F)_{y'}
\\&\simeq \mathcal O_{W',y'} \otimes_{\mathcal O_{V',v'(y')}} \mathcal F_{v'(y')}
\\&\simeq (\mathcal O_{W,u'(y')} \otimes_{\mathcal O_{V,u(v'(y'))}} \mathcal O_{V',v'(y')}) \otimes_{\mathcal O_{V',v'(y')}} \mathcal F_{v'(y')}
\\ &\simeq \mathcal O_{W,u'(y')} \otimes_{\mathcal O_{V,u(v'(y'))}} \mathcal F_{v'(y')}.\qedhere
\end{align*}
\end{proof}

%%%%%%%%%%%%%%%%%%%%%%%%
\subsection{Embeddings again}

We will now investigate the behavior of a crystal along a formal embedding.
We let $\gamma : Y \hookrightarrow X$ be a formal embedding and $T$ an \emph{analytic} overconvergent site over $X^\dagger$ but we first recall the following general result:

%%%%%%%%%%%
\begin{lem} \label{dirchan}
If $h : W \hookrightarrow V$ is a locally closed embedding of topological spaces, $\mathcal A$ a sheaf of rings on $V$, $\mathcal F$ an $\mathcal A$-module and $\mathcal G$ an $h^{-1}\mathcal A$-module, then
\[
h_{!}(h^{-1}\mathcal F \otimes_{h^{-1}\mathcal A} \mathcal G) \simeq \mathcal F \otimes_{\mathcal A} h_{!}\mathcal G
\]
and
\[
\mathcal H\mathrm{om}_{\mathcal A} (h_!\mathcal F, \mathcal G) \simeq h_{*}\mathcal H\mathrm{om}_{h^{-1}\mathcal A} (\mathcal F, h^{-1} \mathcal G).
\]
\end{lem}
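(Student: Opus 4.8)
The plan is to prove the two identities separately and, in each case, to reduce to the elementary situations of an open and of a closed embedding. A locally closed embedding factors as $h = j \circ i$, with $i : W \hookrightarrow U$ a closed embedding into an open subset $j : U \hookrightarrow V$; since $h_! = j_! i_!$, $h^{-1} = i^{-1} j^{-1}$ and $h_* = j_* i_*$ (and $j^! = j^{-1}$, as $j$ is open), both formulas for $h$ follow by composing the corresponding formulas for $i$ and for $j$. So it is enough to treat an open immersion and a closed immersion.

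For the first (projection) formula I would exhibit the canonical comparison morphism between the two sides — it exists because $h^{-1}h_! \simeq \mathrm{Id}$ and $h^{-1}$ is monoidal for the tensor products over $\mathcal A$ and $h^{-1}\mathcal A$ — and then check that it is an isomorphism on stalks. This is immediate: since $h_!$ is extension by zero, both sides vanish at a point $x \notin h(W)$, while at $x = h(w)$ one has $(h_!\mathcal K)_x = \mathcal K_w$ and $h^{-1}$ commutes with the formation of stalks, so both stalks are identified with $\mathcal F_x \otimes_{\mathcal A_x} \mathcal G_w$. A natural stalkwise isomorphism of sheaves is an isomorphism, which proves the first identity.

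The second (internal Hom) formula is the sheaf-theoretic incarnation of the adjunction between $h_!$ and its right adjoint $h^!$ (which coincides with $h^{-1}$ for an open immersion, as recalled before the statement). The main obstacle is that here one cannot argue on stalks, since $\mathcal H\mathrm{om}$ is only left exact and does not commute with the formation of stalks unless its first argument is finitely presented; so I would instead argue open by open. For an open immersion $h$ and each open $U \subseteq V$, writing $W_U := h^{-1}(U)$, evaluation on $U$ gives $\mathrm{Hom}_{\mathcal A|_U}(h_!\mathcal F|_U, \mathcal G|_U)$ on the left and $\mathrm{Hom}_{h^{-1}\mathcal A|_{W_U}}(\mathcal F|_{W_U}, (h^{-1}\mathcal G)|_{W_U})$ on the right, and the adjunction $h_! \dashv h^{-1}$ yields a natural bijection between them. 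Because the restriction of $h$ along $U \hookrightarrow V$ is again an embedding of the same type, these bijections commute with the restriction maps and glue to the asserted isomorphism of sheaves. The closed case $h = i$ is handled by the same open-by-open argument, now using the right adjoint $i^!$ of $i_! = i_*$, and composing the two elementary cases through $h = j \circ i$ then delivers the general locally closed statement.
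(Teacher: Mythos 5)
Your handling of the first (projection) formula is correct and is in substance the paper's own argument: the paper disposes of both identities by a stalk computation, and for the tensor formula this is legitimate, since tensor products and $h_!$ commute with the formation of stalks. (Only your construction of the global comparison map is too quick: for a locally closed $h$ there is neither a counit $h_!h^{-1}\to\mathrm{Id}$ nor a unit $\mathrm{Id}\to h_!h^{-1}$ on all modules, so the map is best assembled from the factorization $h=j\circ i$, where the open and the closed projection formulas each come with a natural map -- pointing in opposite directions, which is harmless since each is a stalkwise isomorphism.)

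The second formula (read, as you implicitly and necessarily do, with $\mathcal F$ an $h^{-1}\mathcal A$-module on $W$ and $\mathcal G$ an $\mathcal A$-module on $V$) is where your proof has a genuine gap, located exactly in the sentence where you claim that the closed case, ``now using the right adjoint $i^!$ of $i_!=i_*$'', combines with the open case to give the statement. What the adjunction argument proves for a closed embedding is
\[
\mathcal H\mathrm{om}_{\mathcal A}(i_*\mathcal F,\mathcal G)\simeq i_*\mathcal H\mathrm{om}_{i^{-1}\mathcal A}(\mathcal F,i^!\mathcal G),
\]
and composing with the open case yields $h_*\mathcal H\mathrm{om}_{h^{-1}\mathcal A}(\mathcal F,h^!\mathcal G)$, where $h^!=i^!\circ j^{-1}$. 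Your conclusion silently replaces $i^!$ by $i^{-1}$. These functors genuinely differ for a closed embedding: $i^!\mathcal G\subset i^{-1}\mathcal G$ is the subsheaf of germs supported on $W$, and the formula with $i^{-1}$ is in fact \emph{false} there. Take $V=\mathbb R$, $W=\{0\}$, $\mathcal A=\mathcal G=\mathbb Z_V$ the constant sheaf and $\mathcal F=\mathbb Z_W$. Any morphism $i_*\mathbb Z_W\to\mathbb Z_V$ over an open $O\ni 0$ sends the section $1$ to a locally constant function vanishing on $O\setminus\{0\}$, hence to $0$, so $\mathcal H\mathrm{om}_{\mathcal A}(i_*\mathcal F,\mathcal G)=0$; whereas $i_*\mathcal H\mathrm{om}_{i^{-1}\mathcal A}(\mathcal F,i^{-1}\mathcal G)=i_*\mathbb Z_W$ is a nonzero skyscraper (while $i^!\mathcal G=0$ here, so the $i^!$-version does hold). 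Thus your argument establishes the Hom identity exactly when $h$ is an open embedding, which is precisely the case $h^!=h^{-1}$, and the closed case cannot be repaired by any argument.

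Note that your initial suspicion was the right instinct: stalks of $\mathcal H\mathrm{om}$ are not Homs of stalks, and this is exactly the point that the paper's one-line proof passes over as well. For a general locally closed $h$ the second identity is only correct with $h^!$ in place of $h^{-1}$ (or under additional hypotheses on $\mathcal G$ forcing $h^!\mathcal G=h^{-1}\mathcal G$); so the gap in your write-up is not an omission you could have filled, but the precise place where the statement itself needs to be amended or restricted to open embeddings.
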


\begin{proof}
This is readily checked on stalks.
\end{proof}

Before going any further, let us mention the following consequence of the first isomorphism of lemma \ref{dirchan}: if $\mathcal B$ is a sheaf of rings on $W$, then $\mathrm{Mod}(\mathcal B) \simeq \mathrm{Mod}(h_!\mathcal B)$ and in particular $\mathrm{Mod}(h^{-1}\mathcal A) \simeq \mathrm{Mod}(h_!h^{-1}\mathcal A)$ (and we can write $h_*$ instead of $h_!$ when $h$ is a closed embedding).

We can apply these considerations in the following situations:
\begin{enumerate}
\item If $(X,V)$ is any analytic space, then
\[
\mathrm{Mod}(\mathcal O_{V}^\dagger) \simeq \mathrm{Mod}(i_{X!}i_X^{-1}\mathcal O_V)
\]
(and we can use $i_{X*}$ when $\,]X[_V$ is closed in $V$).
\item With $\gamma : Y \hookrightarrow X$ as above,
\[
\mathrm{Mod}(\mathcal O_{Y}^\dagger) \simeq \mathrm{Mod}(]\gamma[_{!}]\gamma[^{-1}\mathcal O_{X}^\dagger).
\]
(and we can use $]\alpha[_*$ in the case of a formal open embedding $\alpha$).
\end{enumerate}

Recall that we introduced in definition \ref{newf} two new functors: overconvergent direct image $\gamma_\dagger$ and overconvergent sections $j_Y^\dagger$.

%%%%%%%%%%%%%%%%%%%%%%
\begin{prop}
We have
$
\mathrm{Mod}(\mathcal O_{T_Y}) \simeq \mathrm{Mod}(j_Y^\dagger \mathcal O_{T})
$.
\end{prop}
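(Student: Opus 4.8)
The plan is to realize the equivalence through the overconvergent direct image $\gamma_\dagger$ and the restriction functor $\gamma^{-1}$ attached to the open immersion of topoi $\widetilde{T_Y} \hookrightarrow \widetilde T$, and to verify that they are quasi-inverse by reducing everything, via proposition \ref{realsh}, to the realization-wise local equivalence $\mathrm{Mod}(\mathcal B) \simeq \mathrm{Mod}(h_!\mathcal B)$ recorded just after lemma \ref{dirchan} (applied to $h = ]\gamma'[$ and $\mathcal B = ]\gamma'[^{-1}\mathcal O_{V'}^\dagger$).

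First I would set up the two functors. If $E$ is an $\mathcal O_{T_Y}$-module, then each realization $E_{Y'}$ is an $]\gamma'[^{-1}\mathcal O_{V'}^\dagger$-module, so the first isomorphism of lemma \ref{dirchan} makes $(\gamma_\dagger E)_{X'} = ]\gamma'[_! E_{Y'}$ a module over $]\gamma'[_!]\gamma'[^{-1}\mathcal O_{V'}^\dagger = (j_Y^\dagger\mathcal O_T)_{X'}$; by proposition \ref{realsh} these glue to a $j_Y^\dagger\mathcal O_T$-module, so $\gamma_\dagger$ gives a functor $\mathrm{Mod}(\mathcal O_{T_Y}) \to \mathrm{Mod}(j_Y^\dagger\mathcal O_T)$. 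Conversely, since the inverse image of $\gamma$ along any overconvergent space $(Y',V')$ lying over $T_Y$ is the identity, one finds $\gamma^{-1}(j_Y^\dagger\mathcal O_T) = \mathcal O_{T_Y}$, so restriction $\gamma^{-1}$ sends $j_Y^\dagger\mathcal O_T$-modules to $\mathcal O_{T_Y}$-modules.

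Next I would compare the composites on realizations. For $E$ on $T_Y$ and $(Y',V')$ over $T_Y$ the inverse image of $\gamma$ is the identity, whence $(\gamma^{-1}\gamma_\dagger E)_{Y'} = (\gamma_\dagger E)_{Y'} = ]\mathrm{Id}[_! E_{Y'} = E_{Y'}$, so $\gamma^{-1}\gamma_\dagger \simeq \mathrm{Id}$. For a $j_Y^\dagger\mathcal O_T$-module $M$, each $M_{X'}$ is a module over $]\gamma'[_!]\gamma'[^{-1}\mathcal O_{V'}^\dagger$, hence by the local equivalence lies in the image of $]\gamma'[_!$ and satisfies $M_{X'} \simeq ]\gamma'[_!]\gamma'[^{-1}M_{X'}$; identifying $]\gamma'[^{-1}M_{X'}$ with $(\gamma^{-1}M)_{Y'}$ gives $(\gamma_\dagger\gamma^{-1}M)_{X'} = ]\gamma'[_!(\gamma^{-1}M)_{Y'} \simeq M_{X'}$. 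That all these isomorphisms respect transition maps is precisely the functoriality of $]\cdot[_!$ under the cartesian squares used to define $\gamma_\dagger$, namely the identity $]f[^{-1}]\gamma'[_! = ]\gamma''[_!]g[^{-1}$; so proposition \ref{realsh} promotes the realization-wise isomorphisms to the desired equivalence.

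The main obstacle I anticipate is not conceptual but lies in making precise the category $\mathrm{Mod}(j_Y^\dagger\mathcal O_T)$: for the open part of $\gamma$ the sheaf of rings $j_Y^\dagger\mathcal O_T$ is non-unital, and one must check that its module objects are exactly the non-degenerate ones, i.e. those $M$ with $M_{X'} \simeq ]\gamma'[_!]\gamma'[^{-1}M_{X'}$ for every realization. This is what both legitimizes the local equivalence and forces the transition map $]\gamma'[^{-1}M_{X'} \to M_{Y'}$ along $(\gamma',\mathrm{Id})$ to be an isomorphism. Once this is granted --- it is the global counterpart over $T$ of the equivalence $\mathrm{Mod}(\mathcal O_Y^\dagger) \simeq \mathrm{Mod}(]\gamma[_!]\gamma[^{-1}\mathcal O_X^\dagger)$ noted before the statement --- the remaining steps are the routine compatibilities furnished by lemma \ref{dirchan} and proposition \ref{realsh}, and if convenient one may first reduce to the representable case $T = (X,V)$ using that $\gamma_\dagger$ and $j_Y^\dagger$ commute with restriction along morphisms $T' \to T$.
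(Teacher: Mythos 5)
Your argument is, in substance, the paper's own proof written out in full: both descend to realizations via proposition \ref{realsh} (or rather its module version, proposition \ref{realsh2}), invoke the object-wise equivalence $\mathrm{Mod}(\mathcal B) \simeq \mathrm{Mod}(h_!\mathcal B)$ recorded after lemma \ref{dirchan}, and use the base-change identity $]f[^{-1} \circ ]\gamma'[_{!} = ]\gamma''[_{!} \circ ]g[^{-1}$ to handle transition maps; the paper leaves the quasi-inverse pair implicit, while you name it ($\gamma_\dagger$ and $\gamma^{-1}$). One small slip first: you have the open and closed cases backwards. For a formal \emph{open} embedding $\alpha$ the induced map on tubes is a \emph{closed} immersion, so $]\alpha'[_{!} = ]\alpha'[_{*}$ and the ring $]\alpha'[_{*}]\alpha'[^{-1}\mathcal O_{V'}^\dagger$ is unital; it is the formal \emph{closed} embedding $\beta$, whose tube map is an open immersion, that produces the genuinely non-unital extension by zero.

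The serious issue is your assertion that non-degeneracy ``forces the transition map $]\gamma'[^{-1}M_{X'} \to M_{Y'}$ along $(\gamma',\mathrm{Id})$ to be an isomorphism''. That invertibility is precisely what makes $\gamma_\dagger\gamma^{-1}M \simeq M$, and it does not follow from the module structure. Indeed, using proposition \ref{realsh2}, define a module $M$ over $j_Y^\dagger\mathcal O_T$ by $M_{X'} := ]\gamma'[_{!}]\gamma'[^{-1}\mathcal O_{V'}^\dagger$ when the structural map $X' \to X$ does not factor through $Y$, and $M_{Y''} := 0$ when it does, taking as transition maps those of $j_Y^\dagger\mathcal O_T$ between objects of the first kind and the zero maps (which are semilinear) otherwise. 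Membership in $T_Y$ is constant along coverings, since coverings come from the adic side and all their members share the same formal scheme and structural map; hence this family satisfies the cocycle and normalization conditions and defines a sheaf of modules, and every realization is non-degenerate (each is $]\gamma'[_{!}$ of something). Yet $\gamma^{-1}M = 0$ while $M \neq 0$ as soon as $T$ has an object $(X',V')$ outside $T_Y$ with $\,]Y'[_{V'} \neq \emptyset$ (e.g.\ $T = (X,V)$ itself with $Y \subsetneq X$), so the transition maps towards $T_Y$ are not isomorphisms and $\gamma_\dagger\gamma^{-1}M \not\simeq M$. Restriction and $\gamma_\dagger$ are therefore not quasi-inverse on all non-degenerate $j_Y^\dagger\mathcal O_T$-modules. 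To be fair, the paper's two-line proof jumps over exactly the same point --- it silently identifies compatible families indexed by the objects of $T$ with compatible families indexed by the objects of $T_Y$ --- so you have correctly isolated where the content of the statement lies; but the justification you give there is not valid. To repair either argument one must build the invertibility into the right-hand category (i.e.\ take for $\mathrm{Mod}(j_Y^\dagger\mathcal O_T)$ the essential image of $\gamma_\dagger$, the modules $M$ with $\gamma_\dagger\gamma^{-1}M \simeq M$), or find an argument that excludes modules such as the one above.
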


\begin{proof}
Giving a $j_Y^\dagger \mathcal O_{T}$-module is equivalent to giving a compatible family of $]\gamma'[_{!}]\gamma'[^{-1}\mathcal O_{V'}^\dagger$-modules when $(X',V')$ is an overconvergent space over $T$ and $\gamma'$ denotes the inverse image of $\gamma$.
The assertion therefore follows from the previous considerations.
\end{proof}

%%%%%%%%%%%%%%%%
\begin{lem} \label{comgam}
Let $(f,u) : (X'', V'') \to (X',V')$ be a morphism over $T$, $g : Y'' \to Y'$ the map induced between the inverse images of $Y$, and $\gamma', \gamma''$ the inverse images of $\gamma$.
Then,
\[
]f[^\dagger \circ  ]\gamma'[_{!} =  ]\gamma''[_{!} \circ ]g[^\dagger.
\]
\end{lem}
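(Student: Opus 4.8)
The plan is to deduce the stated identity of $\mathcal O^\dagger$-module functors from the corresponding identity on abelian sheaves, the extra ingredient being the projection formula from lemma \ref{dirchan}. Recall that the morphism $(f,u)$ produces, as in the construction of $\gamma_\dagger$ preceding definition \ref{newf}, a cartesian square of tubes whose horizontal maps are the locally closed embeddings $]\gamma'[ : \,]Y'[_{V'} \hookrightarrow \,]X'[_{V'}$ and $]\gamma''[ : \,]Y''[_{V''} \hookrightarrow \,]X''[_{V''}$ and whose vertical maps are $]g[$ and $]f[$. Base change for exceptional direct image along this cartesian square gives the sheaf-level identity $]f[^{-1} \circ\, ]\gamma'[_{!} = ]\gamma''[_{!}\, \circ\, ]g[^{-1}$. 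I would fix an $\mathcal O_{V'}^\dagger$-module $\mathcal G$ on $\,]Y'[_{V'}$ and unwind $]f[^{\dagger}(]\gamma'[_{!}\mathcal G)$, recalling that $]f[^{\dagger}\mathcal F = \mathcal O_{V''}^\dagger \otimes_{]f[^{-1}\mathcal O_{V'}^\dagger} ]f[^{-1}\mathcal F$.

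First I would apply the sheaf-level base change to push the inverse image past $]\gamma'[_!$, obtaining
\[
]f[^{\dagger}(]\gamma'[_{!}\mathcal G) = \mathcal O_{V''}^\dagger \otimes_{]f[^{-1}\mathcal O_{V'}^\dagger} ]f[^{-1}(]\gamma'[_{!}\mathcal G) = \mathcal O_{V''}^\dagger \otimes_{]f[^{-1}\mathcal O_{V'}^\dagger} ]\gamma''[_{!}(]g[^{-1}\mathcal G).
\]
Then I would invoke the first isomorphism of lemma \ref{dirchan} with the locally closed embedding $h = ]\gamma''[$, the ring $\mathcal A = ]f[^{-1}\mathcal O_{V'}^\dagger$, the $\mathcal A$-module $\mathcal F = \mathcal O_{V''}^\dagger$ (which is an $\mathcal A$-algebra via the canonical map \eqref{cantr}), and $]g[^{-1}\mathcal G$ in the role of the $h^{-1}\mathcal A$-module. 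This pulls $]\gamma''[_{!}$ outside the tensor product and yields
\[
]f[^{\dagger}(]\gamma'[_{!}\mathcal G) \simeq ]\gamma''[_{!}\left(]\gamma''[^{-1}\mathcal O_{V''}^\dagger \otimes_{]\gamma''[^{-1}]f[^{-1}\mathcal O_{V'}^\dagger} ]g[^{-1}\mathcal G\right).
\]

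It then remains to recognize the inner tensor product as $]g[^{\dagger}\mathcal G$. Here I would use that $]\gamma''[^{-1}\mathcal O_{V''}^\dagger$ is precisely the structural ring $\mathcal O_{V''}^\dagger$ of the tube of $Y''$ (since $i_{Y''} = i_{X''}\circ ]\gamma''[$), and that commutativity of the square, $]f[\circ ]\gamma''[ = ]\gamma'[\circ ]g[$, gives $]\gamma''[^{-1}]f[^{-1}\mathcal O_{V'}^\dagger = ]g[^{-1}]\gamma'[^{-1}\mathcal O_{V'}^\dagger = ]g[^{-1}\mathcal O_{V'}^\dagger$, the pullback along $]g[$ of the structural ring of the tube of $Y'$. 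With these identifications the inner term becomes $\mathcal O_{V''}^\dagger \otimes_{]g[^{-1}\mathcal O_{V'}^\dagger} ]g[^{-1}\mathcal G = ]g[^{\dagger}\mathcal G$, which closes the argument. The only genuinely delicate point, and the step I would treat most carefully, is the bookkeeping of base rings: the projection formula must be applied over $]f[^{-1}\mathcal O_{V'}^\dagger$ rather than over $\mathcal O_{V''}^\dagger$, because it is the restriction of this pulled-back ring that matches $]g[^{-1}\mathcal O_{V'}^\dagger$ on the tube of $Y''$. Everything else is formal and, if one prefers, can be verified on stalks.
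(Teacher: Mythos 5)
Your proof is correct and follows essentially the same route as the paper's: the sheaf-level base change $]f[^{-1}\circ\,]\gamma'[_{!} = ]\gamma''[_{!}\circ\,]g[^{-1}$ from the cartesian square of tubes, the unwinding of $]f[^\dagger$ as a tensor product over $]f[^{-1}\mathcal O_{V'}^\dagger$, and the first isomorphism of lemma \ref{dirchan} applied over that pulled-back ring, with the same identifications $]\gamma''[^{-1}\mathcal O^\dagger_{V''} = i_{Y''}^{-1}\mathcal O_{V''}$ and $]\gamma''[^{-1}]f[^{-1}\mathcal O_{V'}^\dagger = ]g[^{-1}\mathcal O_{V'}^\dagger$. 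The only cosmetic difference is that the paper expands both sides separately and matches them via lemma \ref{dirchan}, whereas you transform one side into the other; the base-ring bookkeeping you flag as the delicate point is precisely what the paper's two displayed computations make explicit.
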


\begin{proof}
There exists a commutative diagram of topological spaces
\[
\xymatrix{\,]Y''[\ar@{^{(}->}[rr]^{]\gamma''[}\ar[d]^{]g[} \ar@/^2pc/[rrrr]^{i_{Y''}}&&\,]X''[ \ar[d]^{]f[} \ar@{^{(}->}[rr]^{i_{X''}}&& V'' \ar[d]^u
\\\,]Y'[ \ar@{^{(}->}[rr]^{]\gamma'[}  \ar@/_2pc/[rrrr]^{i_{Y'}}&&\,]X'[ \ar@{^{(}->}[rr]^{i_{X'}}&& V' }
\]
with a cartesian left hand square so that $]f[^{-1} \circ ]\gamma'[_{!} = ]\gamma''[_{!} \circ ]g[^{-1}$.
We compute on the one side
\begin{align*}
]f[^\dagger ]\gamma'[_{!} \mathcal F &= \mathcal O_{V''}^\dagger  \otimes_{]f[^{-1}\mathcal O_{V'}^\dagger } ]f[^{-1}]\gamma'[_{!} \mathcal F
\\ &= i_{X''}^{-1} \mathcal O_{V''} \otimes_{i_{X''}^{-1} u^{-1}\mathcal O_{V'} } ]\gamma''[_{!}  ]g[^{-1}\mathcal F,
\end{align*}
and on the other side
\begin{align*}
 ]\gamma''[_{!} ]g[^\dagger \mathcal F &=]\gamma''[_{!} (\mathcal O_{V''}^\dagger  \otimes_{]g[^{-1}\mathcal O_{V'}^\dagger  } ]g[^{-1} \mathcal F)
\\&=]\gamma''[_{!} (i_{Y''}^{-1} \mathcal O_{V''} \otimes_{i_{Y''}^{-1}u^{-1} \mathcal O_{V'} } ]g[^{-1} \mathcal F)
\\&=]\gamma''[_{!} (]\gamma''[^{-1}i_{X''}^{-1} \mathcal O_{V''} \otimes_{]\gamma''[^{-1}i_{X''}^{-1}u^{-1} \mathcal O_{V'} } ]g[^{-1} \mathcal F).
\end{align*}
Our assertion therefore follows from the first isomorphism of lemma \ref{dirchan}.
\end{proof}

%%%%%%%%%%%%%%%%
\begin{prop} \label{crisdag}
Both functors $\gamma_\dagger$ and $j_Y^\dagger$ preserve crystals.
Moreover, if $E$ is a crystal on $T$, then $j_{Y}^\dagger E = \gamma_\dagger\gamma^{-1}E$.
\end{prop}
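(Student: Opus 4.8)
The plan is to treat $\gamma_\dagger$ first and then reduce the statement for $j_Y^\dagger$ to the displayed formula $j_Y^\dagger E = \gamma_\dagger\gamma^{-1}E$, both preservation results being checked on realizations via proposition \ref{realsh2}.

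First I would show that $\gamma_\dagger$ preserves crystals. Let $E$ be a crystal on $T_Y$ and let $(f,u)\colon (X'',V'')\to (X',V')$ be a morphism over $T$, inducing $g\colon Y''\to Y'$ between the inverse images of $Y$ and $\gamma',\gamma''$ as in lemma \ref{comgam}. By the definition of $\gamma_\dagger$ the transition map of $\gamma_\dagger E$ along $(f,u)$ is the map $]f[_u^\dagger\,]\gamma'[_{!}E_{Y'}\to\,]\gamma''[_{!}E_{Y''}$; lemma \ref{comgam} identifies its source with $]\gamma''[_{!}\,]g[^\dagger E_{Y'}$, so that it is simply $]\gamma''[_{!}$ applied to the transition map $]g[^\dagger E_{Y'}\to E_{Y''}$ of $E$ for the induced morphism over $T_Y$. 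Since $E$ is a crystal this transition map is an isomorphism, and $]\gamma''[_{!}$ is exact, hence the transition map of $\gamma_\dagger E$ is an isomorphism. Thus $\gamma_\dagger E$ is a crystal.

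Next I would establish the formula $j_Y^\dagger E=\gamma_\dagger\gamma^{-1}E$ for a crystal $E$ on $T$, again realization by realization. On an overconvergent space $(X',V')$ over $T$ one has $(j_Y^\dagger E)_{X'}=\,]\gamma'[_{!}\,]\gamma'[^{-1}E_{X'}$ and $(\gamma_\dagger\gamma^{-1}E)_{X'}=\,]\gamma'[_{!}(\gamma^{-1}E)_{Y'}$, so it suffices to produce a natural isomorphism $(\gamma^{-1}E)_{Y'}\simeq\,]\gamma'[^{-1}E_{X'}$. Because realization is insensitive to the change of site $\gamma\colon T_Y\to T$, the left-hand side is just $E_{Y'}$, the realization of $E$ on the overconvergent space $(Y',V')$ viewed over $T$. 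The morphism $(\gamma',\mathrm{Id})\colon (Y',V')\to (X',V')$ over $T$ then yields the transition map $]\gamma'[^\dagger E_{X'}\to E_{Y'}$, which is an isomorphism as $E$ is a crystal; and since $]\gamma'[^\dagger\mathcal F=\,]\gamma'[^{-1}\mathcal F$ for a module $\mathcal F$ under a formal embedding (the identity recorded in the discussion of the structural sheaf), this reads $]\gamma'[^{-1}E_{X'}\simeq E_{Y'}$. These isomorphisms are compatible with transition maps, so by proposition \ref{realsh2} they glue to the desired isomorphism of modules on $T$.

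Granting the formula, $j_Y^\dagger$ preserves crystals: if $E$ is a crystal on $T$ then $\gamma^{-1}E$ is a crystal on $T_Y$ by lemma \ref{loccrys}, hence $\gamma_\dagger\gamma^{-1}E$ is a crystal by the first step, and $j_Y^\dagger E=\gamma_\dagger\gamma^{-1}E$. The hard part is the identification $(\gamma^{-1}E)_{Y'}\simeq\,]\gamma'[^{-1}E_{X'}$: the paper explicitly warns that $(\gamma^{-1}E)_{V'}\neq\,]\gamma[^{-1}E_{V'}$ for a general sheaf, so the formula is genuinely a crystalline phenomenon, and the whole argument hinges on invoking the crystal property of $E$ precisely at the transition map attached to $(\gamma',\mathrm{Id})$.
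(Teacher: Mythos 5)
Your proof is correct and follows essentially the same route as the paper's (much terser) argument: preservation by $\gamma_\dagger$ via lemma \ref{comgam}, the identity $j_Y^\dagger E = \gamma_\dagger\gamma^{-1}E$ extracted from the definitions together with the crystal transition map along $(\gamma',\mathrm{Id})$ and the equality $]\gamma'[^\dagger = ]\gamma'[^{-1}$, and preservation by $j_Y^\dagger$ as a formal consequence. You have merely made explicit the details (in particular where the crystal hypothesis enters) that the paper compresses into one sentence.
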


\begin{proof}
The last assertion directly follows from the definitions and the assertion on $j_Y^\dagger$ is then a consequence of the assertion on $\gamma_{\dagger}$ which itself results from lemma \ref{comgam}.
\end{proof}

As a consequence, we see that if $T$ is defined over some (analytic) overconvergent space $(C,O)$, then the absolute cohomology with support in $Y$ of a \emph{crystal} $E$ is 
\[
\mathrm Rp_{T/O*}j^{\dagger}_{Y}E = \mathrm Rp_{T/O*}\gamma_\dagger E_{|Y}.
\]
In other words, we have for all $k \in \mathbb N$,
\[
\mathcal H^k_Y(T/O, E) = \mathcal H^k(T/O, \gamma_\dagger E_{|Y}).
\]

For further use, we also mention the following consequence of the proposition:

%%%%%%%%%
\begin{cor} \label{comphi2}
If $(X,V)$ is an analytic overconvergent space, then
\[
\varphi^*_{X,V} \circ ]\gamma[_! = \gamma_\dagger \circ \varphi_{Y,V}^* \quad \mathrm{and} \quad \varphi^*_{X,V} \circ ]\gamma[_! \circ ]\gamma[^{-1} = j_Y^\dagger \circ \varphi^*_{X,V}.
\]
\end{cor}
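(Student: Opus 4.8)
The plan is to reduce both identities to the equivalence $\mathrm{Cris}(X,V)\simeq\mathrm{Mod}(\mathcal O_V^\dagger)$ of Lemma \ref{eqrep}, under which a crystal on the representable site $(X,V)$ is completely recorded by its realization on $V$ itself, since $\varphi_{V*}E=E_V$ and $\varphi^*_{X,V}$ is a quasi-inverse.

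First I would note that both functors in the first identity take values in $\mathrm{Cris}(X,V)$: the functor $\varphi^*_{X,V}$ lands in crystals by Lemma \ref{eqrep}, while $\gamma_\dagger\varphi^*_{Y,V}\mathcal F$ is a crystal because $\varphi^*_{Y,V}\mathcal F$ is one and $\gamma_\dagger$ preserves crystals (Proposition \ref{crisdag}). It therefore suffices to compare their realizations on $V$. Applying $\varphi_{V*}$ to the left-hand functor gives $\varphi_{V*}\varphi^*_{X,V}]\gamma[_!\mathcal F=]\gamma[_!\mathcal F$. For the right-hand functor, the defining formula for the overconvergent direct image computes its realization on the identity object $(X,V)$, over which the inverse image of $\gamma$ is $\gamma$ itself, as $(\gamma_\dagger\varphi^*_{Y,V}\mathcal F)_V=]\gamma[_!(\varphi^*_{Y,V}\mathcal F)_V=]\gamma[_!\mathcal F$, where the last step is Lemma \ref{crisfi2} applied to the identity morphism. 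The two realizations agree functorially in $\mathcal F$, so Lemma \ref{eqrep} yields a natural isomorphism $\varphi^*_{X,V}\circ]\gamma[_!\simeq\gamma_\dagger\circ\varphi^*_{Y,V}$. (One could instead check equality on every realization $(X',V')\to(X,V)$ by combining Lemma \ref{crisfi2} with Lemma \ref{comgam}, but the single realization on $V$ already determines the crystal.)

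For the second identity I would feed $\mathcal F=]\gamma[^{-1}\mathcal G$ into the first, for $\mathcal G$ an $\mathcal O_V^\dagger$-module, obtaining $\varphi^*_{X,V}]\gamma[_!]\gamma[^{-1}\mathcal G=\gamma_\dagger\varphi^*_{Y,V}]\gamma[^{-1}\mathcal G$. The ringed-site form of Lemma \ref{comphi}, namely $\varphi^*_{Y,V}\circ]\gamma[^\dagger=\gamma^{-1}\circ\varphi^*_{X,V}$, together with the identity $]\gamma[^\dagger=]\gamma[^{-1}$ valid for a formal embedding (the structure sheaf pulls back to the structure sheaf, so no tensor factor appears), rewrites the right-hand side as $\gamma_\dagger\gamma^{-1}\varphi^*_{X,V}\mathcal G$. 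Since $\varphi^*_{X,V}\mathcal G$ is a crystal by Lemma \ref{eqrep}, Proposition \ref{crisdag} identifies $\gamma_\dagger\gamma^{-1}\varphi^*_{X,V}\mathcal G$ with $j_Y^\dagger\varphi^*_{X,V}\mathcal G$, which is precisely the claim.

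The only real point of care, rather than a genuine obstacle, will be the bookkeeping of module versus set-theoretic inverse images: I must use that for the formal embedding $\gamma$ the $\dagger$-inverse image $]\gamma[^\dagger$ coincides with $]\gamma[^{-1}$, and likewise that $\gamma^{-1}$ viewed as a morphism of ringed overconvergent sites needs no extra tensor factor because $\gamma^{-1}\mathcal O^\dagger_{(X,V)}=\mathcal O^\dagger_{(Y,V)}$. Keeping these identifications straight is what legitimizes the chain of equalities; everything else is formal.
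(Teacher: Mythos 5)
Your proposal is correct and follows essentially the same route as the paper: the first identity is established by noting both sides are crystals and comparing realizations on $V$ via the equivalence of Lemma \ref{eqrep} (the paper writes this as the chain $\gamma_\dagger\varphi^*_{Y,V}\mathcal G=\varphi^*_{X,V}(\gamma_\dagger\varphi^*_{Y,V}\mathcal G)_{X,V}=\varphi^*_{X,V}]\gamma[_!\mathcal G$), and the second by combining $j_Y^\dagger E=\gamma_\dagger\gamma^{-1}E$ for crystals (Proposition \ref{crisdag}) with the ringed-site commutation $\gamma^{-1}\varphi^*_{X,V}=\varphi^*_{Y,V}]\gamma[^{-1}$ and the first identity. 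The paper runs the second chain in the opposite direction, but the ingredients and logic are identical, including the implicit use of $]\gamma[^\dagger=]\gamma[^{-1}$ that you make explicit.
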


\begin{proof}
If $\mathcal G$ is an $\mathcal O^\dagger_{V}$-module on $Y$, then $\varphi^*_{Y,V}\mathcal G$ is a crystal on $(Y,V)$ and therefore $\gamma_\dagger \varphi^*_{Y,V}\mathcal G$ is a crystal on $(X,V)$.
It follows that
\begin{align*}
\gamma_\dagger \varphi^*_{Y,V}\mathcal G &= \varphi^*_{X,V} \varphi_{X,V*}\gamma_\dagger \varphi^*_{Y,V} \mathcal G.
\\&=\varphi^*_{X,V}  (\gamma_\dagger\varphi^*_{Y,V} \mathcal G)_{X,V} 
\\&= \varphi^*_{X,V}]\gamma[_!(\varphi^*_{Y,V} \mathcal G)_{Y,V} 
\\&= \varphi^*_{X,V}]\gamma[_! \mathcal G.
\end{align*}
If $\mathcal F$ is an  $\mathcal O^\dagger_{V}$-module on $X$, then
\begin{align*}
j_Y^\dagger \varphi^*_{X,V}\mathcal F & = \gamma_\dagger \gamma^{-1} \varphi^*_{X,V} \mathcal F
\\ &=  \gamma_\dagger \varphi^*_{Y,V} ]\gamma[^{-1}\mathcal F
\\ &= \varphi^*_{X,V}]\gamma[_!  ]\gamma[^{-1}\mathcal F. \qedhere
\end{align*}

\end{proof}

%%%%%%%%%%%%%%%%%%%%%%%%%%%%
\begin{lem} \label{lochom}
\begin{enumerate}
\item
If $E$ is a crystal on $T$ and $E'$ is a module on $T_{|Y}$, then
\[
\gamma_{\dagger}(\gamma^{-1}E \otimes_{\mathcal O^\dagger_{T_{Y}}} E') \simeq E \otimes_{\mathcal O^\dagger_{T}} \gamma_{\dagger}E'
\]
and
\[\mathcal H\mathrm{om}_{\mathcal O^\dagger_{T}}(\gamma_{\dagger}E', E) \simeq \gamma_{*}\mathcal H\mathrm{om}_{\mathcal O^\dagger_{T_{Y}}}(E', \gamma^{-1} E).
\]
\item
If $E$ is a crystal on $T$ and $E'$ is a module on $T$, then
\[
j_Y^\dagger(E \otimes_{\mathcal O^\dagger_{T}} E') \simeq E \otimes_{\mathcal O^\dagger_{T}} j_Y^\dagger E'
\]
and
\[\mathcal H\mathrm{om}_{\mathcal O^\dagger_{T}}(j_Y^\dagger E', E) \simeq j_Y^\dagger \mathcal H\mathrm{om}_{\mathcal O^\dagger_{T}}(E', E).
\]
\end{enumerate}
\end{lem}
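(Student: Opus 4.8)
The plan is to reduce all four isomorphisms to a computation on realizations. By proposition \ref{realsh2} a morphism of modules on $T$ is an isomorphism precisely when its realization on every overconvergent space $(X',V')$ over $T$ is an isomorphism, compatibly with transition maps. Writing $\gamma' : Y' \hookrightarrow X'$ for the inverse image of $\gamma$ and $\mathcal A := \mathcal O_{V'}^\dagger$ for the structural ring on the tube $]X'[_{V'}$, propositions \ref{inthom} and \ref{gamet} together with the defining formulas $(\gamma_\dagger E)_{X'} = ]\gamma'[_!E_{Y'}$ and $(j_Y^\dagger E)_{X'} = ]\gamma'[_!]\gamma'[^{-1}E_{X'}$ express the realization of each side in terms of $]\gamma'[_!$, $]\gamma'[_*$, $]\gamma'[^{-1}$, $\otimes$ and $\mathcal H\mathrm{om}$ on $]X'[_{V'}$. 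The one extra ingredient is that $E$ is a crystal: its transition map gives $E_{Y'} = ]\gamma'[^{\dagger}E_{X'} = ]\gamma'[^{-1}E_{X'}$, and this identity is exactly what makes the two sides coincide.

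For part (1) I would argue as follows. Realizing the left-hand side of the tensor formula and inserting the crystal identity gives $]\gamma'[_!\big(]\gamma'[^{-1}E_{X'} \otimes_{]\gamma'[^{-1}\mathcal A} E'_{Y'}\big)$, which the projection formula of lemma \ref{dirchan} (with $h=]\gamma'[$, $\mathcal F = E_{X'}$, $\mathcal G = E'_{Y'}$) rewrites as $E_{X'} \otimes_{\mathcal A} ]\gamma'[_!E'_{Y'}$, the realization of the right-hand side. For the internal-Hom formula, the left-hand side realizes to $\mathcal H\mathrm{om}_{\mathcal A}(]\gamma'[_!E'_{Y'}, E_{X'})$, and the second (adjunction) isomorphism of lemma \ref{dirchan} turns this into $]\gamma'[_*\mathcal H\mathrm{om}_{]\gamma'[^{-1}\mathcal A}(E'_{Y'}, ]\gamma'[^{-1}E_{X'})$; on the other side proposition \ref{gamet} realizes $\gamma_*$ as $]\gamma'[_*$ and proposition \ref{inthom} (on $T_Y$) realizes the inner Hom as $\mathcal H\mathrm{om}_{]\gamma'[^{-1}\mathcal A}(E'_{Y'}, (\gamma^{-1}E)_{Y'})$, so that the crystal identity $(\gamma^{-1}E)_{Y'} = ]\gamma'[^{-1}E_{X'}$ closes the gap. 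The cocycle and normalization conditions of proposition \ref{realsh2} reduce to the naturality of the maps in lemmas \ref{dirchan} and \ref{comgam}, which I would record but not expand.

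Part (2) begins the same way: the tensor formula realizes to $]\gamma'[_!\big(]\gamma'[^{-1}E_{X'} \otimes_{]\gamma'[^{-1}\mathcal A} ]\gamma'[^{-1}E'_{X'}\big)$, and the projection formula yields $E_{X'} \otimes_{\mathcal A} ]\gamma'[_!]\gamma'[^{-1}E'_{X'}$, the realization of $E \otimes j_Y^\dagger E'$ --- a step that does not even require $E$ to be a crystal. The internal-Hom formula is the step I expect to be the main obstacle. Its left-hand side realizes, again by the adjunction isomorphism of lemma \ref{dirchan}, to $]\gamma'[_*\mathcal H\mathrm{om}_{]\gamma'[^{-1}\mathcal A}(]\gamma'[^{-1}E'_{X'}, ]\gamma'[^{-1}E_{X'})$, and the cleanest route is to recognise this as the realization of $\gamma_*\mathcal H\mathrm{om}_{\mathcal O^\dagger_{T_Y}}(G, \gamma^{-1}E)$, where $G$ is the module on $T_Y$ with realization $]\gamma'[^{-1}E'_{X'}$ and $j_Y^\dagger E' = \gamma_\dagger G$; in other words part (2) is pulled back onto part (1). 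What remains is to match this against the realization $]\gamma'[_!]\gamma'[^{-1}\mathcal H\mathrm{om}_{\mathcal A}(E'_{X'}, E_{X'})$ of $j_Y^\dagger\mathcal H\mathrm{om}(E',E)$, and the delicate point is precisely the interplay between $]\gamma'[_*$ and $]\gamma'[_!$ for the locally closed embedding $]\gamma'[$ together with the commutation of $]\gamma'[^{-1}$ with $\mathcal H\mathrm{om}$; this is where I would concentrate the care, tracking the natural comparison map and using the crystal identity $E_{Y'} = ]\gamma'[^{-1}E_{X'}$ once more.
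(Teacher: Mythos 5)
Your treatment of part (1) coincides with the paper's own proof: realize on an arbitrary $(X',V')$ over $T$, insert the crystal identity $(\gamma^{-1}E)_{Y'} = \,]\gamma'[^{-1}E_{X'}$, and conclude with the two isomorphisms of lemma \ref{dirchan} together with propositions \ref{inthom} and \ref{gamet}. The tensor half of part (2) is also fine as a direct computation (the paper instead just asserts that all of part (2) follows formally from part (1)), and your remark that this half needs no crystal hypothesis on $E$ is correct.

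The Hom half of part (2) is a genuine gap, for two reasons. First, the auxiliary module $G$ on $T_Y$ ``with realization $]\gamma'[^{-1}E'_{X'}$'' does not exist when $E'$ is not a crystal. An object of $T_Y$ is an object $(U,W)$ of $T$ whose structural map factors through $Y$; it is therefore its own inverse image of $Y$, so your prescription applied with ambient space $Y'$ itself forces $G_{Y'} = E'_{Y'}$, while applied with ambient $X'$ it forces $G_{Y'} = \,]\gamma'[^{-1}E'_{X'}$. These two values disagree exactly when the transition map of $E'$ along $(Y',V') \to (X',V')$ is not an isomorphism, i.e.\ precisely in the non-crystal case the statement is about (this is the paper's warning that $(\gamma^{-1}E')_{V'} \neq \,]\gamma'[^{-1}E'_{V'}$ in general, equivalently that $j_Y^\dagger E' \neq \gamma_\dagger\gamma^{-1}E'$ for a mere module); so the reduction of (2) to (1) never gets started. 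Second, even granting some reformulation, what you have written only restates the two sides: the entire content of the claim is the identification
\[
]\gamma'[_*\,\mathcal H\mathrm{om}\left(]\gamma'[^{-1}E'_{X'},\,]\gamma'[^{-1}E_{X'}\right) \simeq \,]\gamma'[_!\,]\gamma'[^{-1}\mathcal H\mathrm{om}\left(E'_{X'},E_{X'}\right),
\]
which requires exchanging $]\gamma'[_!$ with $]\gamma'[_*$ and commuting $]\gamma'[^{-1}$ past $\mathcal H\mathrm{om}$. Neither is a formal property of a locally closed embedding, and neither follows from the lemmas you cite: when $\gamma$ is a formal open embedding the tube embedding is closed, so $]\gamma'[_! = ]\gamma'[_*$ but $]\gamma'[^{-1}$ does not commute with $\mathcal H\mathrm{om}$; when $\gamma$ is a formal closed embedding the tube embedding is open and the failure is the opposite one. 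Announcing that this is ``where I would concentrate the care'' leaves the one nontrivial step of part (2) unproved; a complete argument must construct the comparison map and show it is an isomorphism, or exhibit part (2) as an honestly formal consequence of part (1), which your $G$-construction does not achieve.
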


\begin{proof}
The second assertion formally follows from the first one that we shall now prove.
We let $(X',V')$ be an overconvergent space over $T$ and we denote as usual by $\gamma' : Y' \hookrightarrow X'$ the inverse image of $\gamma$.
Note that since $E$ is a crystal, we have $(\gamma^{-1}E)_{Y',V'} =  ]\gamma'[^{-1}E_{X',V'}$.
Then, using proposition \ref{inthom} and lemma \ref{dirchan}, we can compute
\begin{align*}
(\gamma_{\dagger}(\gamma^{-1}E \otimes E'))_{X',V'} &= ]\gamma'[_{!}(\gamma^{-1}E \otimes E')_{Y',V'}
\\&= ]\gamma'[_{!}((\gamma^{-1}E)_{Y',V'} \otimes E'_{Y',V'})
\\&= ]\gamma'[_{!}(]\gamma'[^{-1}E_{X',V'} \otimes E'_{Y',V'})
\\&= E_{X',V'} \otimes  ]\gamma'[_{!}E'_{Y',V'}
\\&= E_{X',V'} \otimes  (\gamma_{\dagger}E')_{X',V'}
\\&= (E \otimes  \gamma_{\dagger}E')_{X',V'}.
\end{align*}
In the same way, we have
\begin{align*}
\mathcal H\mathrm{om}(\gamma_{\dagger}E', E)_{X',V'} &= \mathcal H\mathrm{om}((\gamma_{\dagger}E')_{X',V'}, E_{X',V'})
\\&= \mathcal H\mathrm{om}(]\gamma'[_{!}E'_{Y',V'}, E_{X',V'})
\\&= ]\gamma'[_{*}\mathcal H\mathrm{om} (E'_{Y',V'}, ]\gamma'[^{-1}E_{X',V'})
\\&= ]\gamma'[_{*}\mathcal H\mathrm{om}(E'_{Y',V'}, (\gamma^{-1}E)_{Y',V'})
\\&= ]\gamma'[_{*}\mathcal H\mathrm{om}(E', (\gamma^{-1}E)_{Y',V'})
\\&= (\gamma_{*}\mathcal H\mathrm{om}(E', \gamma^{-1} E))_{X',V'}. \qedhere
\end{align*}
\end{proof}

%%%%%%%%%%%%%
\begin{prop} \label{equivgam}
There exists a pair of adjoint functors
\[
\gamma_{\dagger} : \mathrm{Cris}(T_{Y}) \leftrightarrows \mathrm{Cris}(T) : \gamma^{-1}
\]
(resp.
\[
j_Y^{\dagger} : \mathrm{Cris}(T) \leftrightarrows \mathrm{Cris}(T) : \gamma_*\gamma^{-1}).
\]
\end{prop}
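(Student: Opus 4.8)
The plan is to deduce both adjunctions from the internal-Hom formulas of Lemma \ref{lochom} by passing to global sections, the second pair then being a formal consequence of the first together with the tautological adjunction $\gamma^{-1} \dashv \gamma_*$ attached to the morphism of ringed topoi $\gamma : \widetilde{T_Y} \to \widetilde T$.

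First I would check that the functors are well defined on crystals: $\gamma_\dagger$ preserves crystals by Proposition \ref{crisdag}, and $\gamma^{-1}$ does too since inverse image always preserves crystals (Lemma \ref{loccrys}), so $\gamma_\dagger : \mathrm{Cris}(T_Y) \to \mathrm{Cris}(T)$ and $\gamma^{-1} : \mathrm{Cris}(T) \to \mathrm{Cris}(T_Y)$ make sense. To produce the adjunction $\gamma_\dagger \dashv \gamma^{-1}$ it suffices to exhibit a bijection $\mathrm{Hom}_{T}(\gamma_\dagger E', E) \simeq \mathrm{Hom}_{T_Y}(E', \gamma^{-1}E)$, natural in $E' \in \mathrm{Cris}(T_Y)$ and $E \in \mathrm{Cris}(T)$. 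I would start from the first isomorphism of Lemma \ref{lochom}, namely $\mathcal H\mathrm{om}_{\mathcal O^\dagger_T}(\gamma_\dagger E', E) \simeq \gamma_* \mathcal H\mathrm{om}_{\mathcal O^\dagger_{T_Y}}(E', \gamma^{-1}E)$, valid because $E$ is a crystal, and apply $\Gamma(T, -)$. On the left this yields $\mathrm{Hom}_{\mathrm{Mod}(T)}(\gamma_\dagger E', E)$ by the defining property of the internal Hom; on the right, since $\gamma^{-1}\mathcal O^\dagger_T = \mathcal O^\dagger_{T_Y}$ one has $\Gamma(T, \gamma_* \mathcal K) = \mathrm{Hom}_{T}(\mathcal O^\dagger_T, \gamma_*\mathcal K) = \mathrm{Hom}_{T_Y}(\mathcal O^\dagger_{T_Y}, \mathcal K) = \Gamma(T_Y, \mathcal K)$, so the right-hand side becomes $\mathrm{Hom}_{\mathrm{Mod}(T_Y)}(E', \gamma^{-1}E)$. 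Naturality is inherited from that of the isomorphism in Lemma \ref{lochom}, and since crystals form a full subcategory this is exactly the required adjunction.

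For the second pair I would argue purely formally. By Proposition \ref{crisdag} one has $j_Y^\dagger E' = \gamma_\dagger \gamma^{-1}E'$ for a crystal $E'$, so for $E', E \in \mathrm{Cris}(T)$ the first adjunction, applied to the crystal $\gamma^{-1}E'$ on $T_Y$, gives $\mathrm{Hom}_T(j_Y^\dagger E', E) = \mathrm{Hom}_T(\gamma_\dagger \gamma^{-1}E', E) \simeq \mathrm{Hom}_{T_Y}(\gamma^{-1}E', \gamma^{-1}E)$, and the tautological adjunction $\gamma^{-1} \dashv \gamma_*$ for the ringed topos morphism $\gamma$ rewrites this last group as $\mathrm{Hom}_T(E', \gamma_*\gamma^{-1}E)$. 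Composing gives the natural bijection $\mathrm{Hom}_T(j_Y^\dagger E', E) \simeq \mathrm{Hom}_T(E', \gamma_*\gamma^{-1}E)$, that is $j_Y^\dagger \dashv \gamma_*\gamma^{-1}$.

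The point worth flagging is that $\gamma_*$ does not preserve crystals in general, so $\gamma_*\gamma^{-1}E$ need not be a crystal and the right adjoint in the second pair must be read as the composite $\gamma_*\gamma^{-1} : \mathrm{Cris}(T) \to \mathrm{Mod}(T)$, the adjunction being the natural isomorphism of Hom-groups just established. Relatedly, the whole content sits in Lemma \ref{lochom}: it is there, and only there, that the crystal hypothesis on $E$ intervenes, and it is precisely this hypothesis that produces the counit $\gamma_\dagger\gamma^{-1}E \to E$, which for an open formal embedding $\gamma = \alpha$, where $]\alpha[$ is a closed embedding of tubes and $\gamma_\dagger = \alpha_*$, would not exist otherwise. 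The main thing to be careful about is therefore not to confuse this crystal-level adjunction $\alpha_* \dashv \alpha^{-1}$ with the ordinary topos adjunction $\alpha^{-1} \dashv \alpha_*$, which points the other way and holds for all modules.
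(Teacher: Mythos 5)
Your proposal is correct and follows essentially the same route as the paper's own proof, which likewise obtains the first adjunction by globalizing the sheaf-Hom isomorphism of lemma \ref{lochom} and then deduces the second pair formally from the first; you have merely spelled out the details (the passage $\Gamma(T,\gamma_*\mathcal K)=\Gamma(T_Y,\mathcal K)$, the identification $j_Y^\dagger E'=\gamma_\dagger\gamma^{-1}E'$ from proposition \ref{crisdag}, and the composition with the tautological adjunction $\gamma^{-1}\dashv\gamma_*$) that the paper compresses into two lines. Your closing caveat — that $\gamma_*\gamma^{-1}E$ need not itself be a crystal, so the second adjunction must be read as a natural bijection of Hom-groups computed in $\mathrm{Mod}(T)$ via the full embedding $\mathrm{Cris}(T)\subset\mathrm{Mod}(T)$ — is also the right reading of the statement and is consistent with the paper's warning that $\gamma_*$ does not preserve crystals.
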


\begin{proof}
The second assertion follows from the first one.
Moreover, we know from lemma \ref{lochom} that if $E' \in \mathrm{Cris}(T_{Y})$ and $E \in \mathrm{Cris}(T)$, then
\[
\mathrm{Hom}(\gamma_{\dagger}E', E) \simeq \mathrm {Hom}(E', \gamma^{-1} E). \qedhere
\] 
\end{proof}

As a consequence, we see that $\gamma_{\dagger}$ induces an equivalence between $\mathrm{Cris}(T_{Y})$ and the full subcategory of $\mathrm{Cris}(T)$ made of crystals $E$ satisfying $j^\dagger_{Y}E\simeq E$.

%%%%%%%%%%%%%%%%%%%%%%%%
\subsection{Complementary embeddings}

We shall now apply our previous considerations to the case of a complementary pair made of an open and a closed embeddings.
Thus, we denote by $\alpha : U \hookrightarrow X$ a formal open embedding and by $\beta : Z \hookrightarrow X$ the embedding of a closed complement. 
We let $T$ be an analytic overconvergent site over $X^\dagger$.

%%%%%%%%%%%%%%
\begin{prop} \label{alphacr}
If $E$ is a crystal on $T$, then,
\begin{enumerate}
\item \label{alphacr1} both $\alpha_{*}$ and $j_U^\dagger$ preserve crystals and $j_{U}^\dagger E = \alpha_*\alpha^{-1}E$,
\item There exists a pair of adjoint functors
\[
\alpha_{*} : \mathrm{Cris}(T_{U}) \leftrightarrows \mathrm{Cris}(T) : \alpha^{-1},
\]
\item if $(X',V')$ is an overconvergent space over $T$ and $U'$ denotes the inverse image of $U$, then
\[
(\mathrm R\underline \Gamma_{U} E)_{V'} \simeq \mathrm R\underline \Gamma_{\,]U'[} E_{V'}.
\]
\end{enumerate}
\end{prop}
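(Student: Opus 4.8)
For the first two assertions, the plan is to reduce everything to the results already established for a general formal embedding. Since $\alpha$ is a formal \emph{open} embedding we have $\alpha_* = \alpha_\dagger$, so Proposition \ref{crisdag} applied to $\gamma = \alpha$ gives at once that $\alpha_* = \alpha_\dagger$ and $j_U^\dagger$ preserve crystals and that, for a crystal $E$, one has $j_U^\dagger E = \alpha_\dagger \alpha^{-1} E = \alpha_* \alpha^{-1} E$; this is assertion \eqref{alphacr1}. Likewise, the adjunction $\alpha_* : \mathrm{Cris}(T_U) \leftrightarrows \mathrm{Cris}(T) : \alpha^{-1}$ of the second assertion is precisely Proposition \ref{equivgam} specialized to $\gamma = \alpha$, once more using $\alpha_\dagger = \alpha_*$. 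Neither of these requires any new work.

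For the third assertion, the plan is to realize the defining distinguished triangle of $\mathrm R\underline\Gamma_U E$, namely
\[
\mathrm R\underline\Gamma_U E \to E \to \mathrm R\beta_* \beta^{-1} E \to \cdots,
\]
and exploit that the realization functor $(-)_{V'} = \varphi_{V'*}\circ s^{-1}$ is exact, hence carries distinguished triangles to distinguished triangles on $\,]X'[_{V'}$. Applying Proposition \ref{gamet} to the closed embedding $\beta$, together with the crystal identity $(\beta^{-1}E)_{Z'} = \,]\beta'[^{-1} E_{V'}$ used in the proof of Lemma \ref{lochom}, the realized triangle reads
\[
(\mathrm R\underline\Gamma_U E)_{V'} \to E_{V'} \to \mathrm R]\beta'[_* ]\beta'[^{-1} E_{V'} \to \cdots,
\]
where $\beta' : Z' \hookrightarrow X'$ is the inverse image of $\beta$ and the second arrow is the realization of the adjunction unit.

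The substance is then to recognize this last triangle as the local cohomology triangle of a closed subspace. This is where I expect the only genuine subtlety: one must invoke that, in the analytic setting, the tube exchanges open and closed, so that $\,]U'[$ is \emph{closed} in $\,]X'[_{V'}$ while $\,]Z'[ = \,]X'[_{V'} \setminus \,]U'[$ is its \emph{open} complement, with $]\beta'[$ the associated open immersion. Granting this, the standard triangle for sections with support in the closed subset $\,]U'[$ is exactly
\[
\mathrm R\underline\Gamma_{\,]U'[} E_{V'} \to E_{V'} \to \mathrm R]\beta'[_* ]\beta'[^{-1} E_{V'} \to \cdots,
\]
again with the adjunction unit as second arrow. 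Matching the two triangles—identical third terms and identical map $E_{V'} \to \mathrm R]\beta'[_*]\beta'[^{-1}E_{V'}$—identifies the two cones and yields $(\mathrm R\underline\Gamma_U E)_{V'} \simeq \mathrm R\underline\Gamma_{\,]U'[} E_{V'}$. The one point I would check with care is that the realized unit map truly coincides with the topological adjunction unit, which should follow from the naturality built into Proposition \ref{gamet}.
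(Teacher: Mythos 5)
Your proposal is correct and follows essentially the same route as the paper: the first two assertions are read off from Propositions \ref{crisdag} and \ref{equivgam} via $\alpha_\dagger = \alpha_*$, and the third is obtained by realizing the defining triangle of $\mathrm R\underline\Gamma_U E$, identifying its third term with $\mathrm R]\beta'[_*]\beta'[^{-1}E_{V'}$ through Proposition \ref{gamet} and the crystal property, and matching it against the local cohomology triangle of the closed subset $\,]U'[$. The only difference is that you make explicit the open/closed exchange on tubes and the compatibility of the two unit maps, points the paper leaves implicit.
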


\begin{proof}
Only the last assertion deserves a proof because the first ones are just particular cases of propositions \ref{crisdag} and \ref{equivgam}.
If we denote by $\beta' : Z' \hookrightarrow X'$ the inverse image of $\beta$, then there exists two distinguished triangles
\[
\mathrm R\underline \Gamma_{U}E  \to E \to \mathrm R\beta_{*}\beta^{-1}E \to \cdots
\]
and
\[
\mathrm R\underline \Gamma_{\,]U'[}E_{X',V'}  \to E_{X',V'} \to \mathrm R]\beta'[_{*}]\beta'[^{-1}E_{X',V'} \to \cdots
\]
It is therefore sufficient to notice that
\begin{align*}
(\mathrm R\beta_{*}\beta^{-1}E)_{X',V'} = \mathrm R]\beta'[_{*}(\beta^{-1}E)_{Z',V'} = \mathrm R]\beta'[_{*}]\beta'[^{-1}E_{X',V'}
\end{align*}
because $E$ is a crystal.
\end{proof}

%%%%%%%%%%%%%
\begin{prop} \label{clop}
\begin{enumerate}
\item The functor $\underline \Gamma_{Z}$ induces an exact functor
\[
\underline \Gamma^\dagger_{Z} : \mathrm{Cris}(T) \to \mathrm{Cris}(T),
\]
\item if $E \in \mathrm{Cris}(T)$, then
\[
\underline \Gamma^\dagger_{Z}E = \mathrm R\underline \Gamma_{Z}E = \beta_{\dagger}E_{|Z} = j_{Z}^\dagger E,
\]
\item \label{clop3} If $E \in \mathrm{Cris}(T)$, then there exists an exact sequence
\[
0 \to \underline \Gamma^\dagger_{Z} E \to E \to j^\dagger_{U}E \to 0.
\]
\end{enumerate}
\end{prop}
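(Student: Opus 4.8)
The plan is to make part (2) the heart of the argument, compute $\mathrm R\underline\Gamma_Z E$ on each realization, and then read off parts (1) and (3) formally.

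First I would prove (2). Let $E$ be a crystal on $T$. The defining triangle of the second example above, $\mathrm R\underline\Gamma_Z E \to E \to \alpha_* E_{|U} \to \cdots$, is a distinguished triangle in the derived category of $T$, and since the realization functor $E \mapsto E_{V'}$ is exact it maps this to a distinguished triangle of sheaves on $\,]X'[_{V'}$, for any overconvergent space $(X',V')$ over $T$ with inverse images $\alpha',\beta'$ of $\alpha,\beta$. By proposition \ref{alphacr}, $\alpha_* E_{|U} = j_U^\dagger E$, and by proposition \ref{gamet} (with $\mathrm R\alpha_* = \alpha_*$, as $\alpha_* = \alpha_\dagger$ is exact) together with the crystal identity $(\alpha^{-1}E)_{U',V'} = \,]\alpha'[^{-1} E_{V'}$, its realization is $]\alpha'[_* ]\alpha'[^{-1} E_{V'}$. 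The geometric input (corollary 4.26 of \cite{LeStum17*}, analytic case) is that since $U$ is open the tube $\,]U'[_{V'}$ is \emph{closed} in $\,]X'[_{V'}$, and since $Z$ is closed the tube $\,]Z'[_{V'}$ is the complementary \emph{open}; thus $]\alpha'[$ is a closed embedding with complementary open embedding $]\beta'[$. Hence $]\alpha'[_*$ is exact and $E_{V'} \to\, ]\alpha'[_* ]\alpha'[^{-1} E_{V'}$ is surjective with kernel $]\beta'[_! ]\beta'[^{-1} E_{V'}$.

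Consequently $(\mathrm R\underline\Gamma_Z E)_{V'} \simeq\, ]\beta'[_! ]\beta'[^{-1} E_{V'}$, concentrated in degree zero, and since $E$ is a crystal this equals $(\beta_\dagger \beta^{-1}E)_{V'} = (\beta_\dagger E_{|Z})_{V'}$. As this holds compatibly across all realizations, $\mathrm R\underline\Gamma_Z E \simeq \beta_\dagger E_{|Z}$, which by proposition \ref{crisdag} is exactly $j_Z^\dagger E$; in particular it is a crystal concentrated in degree zero. Setting $\underline\Gamma_Z^\dagger E := \mathrm R\underline\Gamma_Z E$ establishes (2).

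Part (1) then follows at once: on crystals $\underline\Gamma_Z^\dagger = j_Z^\dagger = \beta_\dagger \circ \beta^{-1}$ is a composite of $\beta^{-1}$ and $\beta_\dagger$, both exact and crystal-preserving (proposition \ref{crisdag} and the exactness of $\gamma_\dagger$), so it is an exact functor $\mathrm{Cris}(T) \to \mathrm{Cris}(T)$. For part (3), the degree-zero concentration proved in (2) collapses the distinguished triangle $\mathrm R\underline\Gamma_Z E \to E \to \alpha_* E_{|U} \to \cdots$ into a short exact sequence $0 \to \underline\Gamma_Z^\dagger E \to E \to j_U^\dagger E \to 0$; it is exact in $\mathrm{Mod}(T)$ because on each $(X',V')$ it realizes to the standard open--closed sequence $0 \to\, ]\beta'[_! ]\beta'[^{-1} E_{V'} \to E_{V'} \to\, ]\alpha'[_* ]\alpha'[^{-1} E_{V'} \to 0$, and realizations are conservative and exact.

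The main obstacle is the local computation in (2): one must correctly track how the tube functor exchanges open and closed in the analytic setting, so that $]\alpha'[$ becomes a closed embedding (making $]\alpha'[_*$ exact and the adjunction map surjective) while its complement $]\beta'[$ is the open embedding whose extension by zero $]\beta'[_!$ computes the kernel. Once this local picture and the crystal identity $(\gamma^{-1}E)_{V'} =\, ]\gamma'[^{-1} E_{V'}$ are in place, everything else is formal.
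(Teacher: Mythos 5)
Your proof is correct and follows essentially the same route as the paper: realize the distinguished triangle $\mathrm R\underline\Gamma_Z E \to E \to \alpha_*E_{|U} \to \cdots$ on an arbitrary $(X',V')$ using the crystal property, compare it with the standard short exact sequence $0 \to\, ]\beta'[_!\,]\beta'[^{-1}E_{V'} \to E_{V'} \to\, ]\alpha'[_*\,]\alpha'[^{-1}E_{V'} \to 0$ for the complementary open/closed pair of tubes, and identify $(\mathrm R\underline\Gamma_Z E)_{V'}$ with $(\beta_\dagger E_{|Z})_{V'}$. The paper compresses the derivation of assertions (1) and (3) into ``everything then follows from our previous results,'' whereas you spell them out; the content is the same.
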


\begin{proof}
If $(X',V')$ is an overconvergent space over $T$ and $\alpha', \beta'$ denote the inverse images of $\alpha, \beta$, then
the distinguished triangle
\[
\mathrm R\underline \Gamma_{Z}E \to E \to \alpha_{*}\alpha^{-1}E \to \cdots
\]
realizes itself on $(X',V')$ as
\[
(\mathrm R\underline \Gamma_{Z}E)_{V'} \to E_{V'} \to ]\alpha'[_{*}]\alpha'[^{-1}E_{V'} \to \cdots
\]
because $E$ is a crystal.
On the other hand, there exists a standard short exact sequence
\[
0 \to ]\beta'[_{!}]\beta'[^{-1}E_{V'} \to E_{V'} \to ]\alpha'[_{*}]\alpha'[^{-1}E_{V'} \to 0.
\]
We obtain an identification
\[
(\mathrm R\underline \Gamma_{Z}E)_{V'} \simeq ]\beta'[_{!}]\beta'[^{-1}E_{V'} = (\beta_{\dagger}E_{|Z})_{V'},
\]
again because $E$ is a crystal.
Everything then follows from our previous results.
\end{proof}

As a consequence of proposition \ref{clop}, we see that if $E$ is a crystal on $T$, then
\[
\underline \Gamma^\dagger_{Z}E=E \Leftrightarrow j^\dagger_{U}E = 0 \quad \mathrm{and} \quad \underline \Gamma^\dagger_{Z}E=0 \Leftrightarrow E =j^\dagger_{U}E.
\]

Note also that if $E$ and $E'$ are two crystals on $T$, then
\[
\mathcal H\mathrm{om}_{\mathcal O^\dagger_{T}}(j_U^\dagger E', \underline \Gamma_Z^\dagger E) = 0
\]
and lemma \ref{lochom} therefore implies that
\[
\mathcal H\mathrm{om}_{\mathcal O^\dagger_{T}}(j_U^\dagger E', j_U^\dagger E) \simeq
\mathcal H\mathrm{om}_{\mathcal O^\dagger_{T}}(j_U^\dagger E', E) \simeq j_U^\dagger \mathcal H\mathrm{om}_{\mathcal O^\dagger_{T}}(E', E).
\]
It then follows from lemma \ref{limBer} that if $(X', V')$ is an overconvergent space over $T$, then
\begin{align*}
\mathcal H\mathrm{om}_{\mathcal O^\dagger_{T}}(j_U^\dagger E', j_U^\dagger E)_{V'}
& \simeq (j_U^\dagger \mathcal H\mathrm{om}_{\mathcal O^\dagger_{T}}(E', E))_{V'}
\\ & \simeq \varinjlim j_*j^{-1} \mathcal H\mathrm{om}_{\mathcal O^\dagger_{T}}(E', E)_{V'}
\\ & \simeq \varinjlim j_*j^{-1} \mathcal H\mathrm{om}_{\mathcal O^\dagger_{V'}}(E'_{V'}, E_{V'})
\end{align*}
when $j$ runs through the inclusions of open neighborhoods of $\,]Y[_{V'}$ in $\,]X[_{V'}$.

It is not true at all that $\underline \Gamma_{Y}E = j_{Y}^\dagger E$ in general but in the case of an open embedding, we have the following comparison theorem:

%%%%%%%%%%%
\begin{cor}
If $E \in \mathrm{Cris}(T)$, then there exists a distinguished triangle
\[
\mathrm R\underline \Gamma_{U}E \to j_{U}^\dagger E \to  j_{U}^\dagger \mathrm R\beta_{*}E_{|Z} \to \cdots.
\]
\end{cor}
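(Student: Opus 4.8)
The plan is to obtain the triangle by applying the exact functor $j_U^\dagger$ to the distinguished triangle
\[
\mathrm R\underline\Gamma_U E \to E \to \mathrm R\beta_* E_{|Z} \to \cdots
\]
(the one already used in the proof of proposition \ref{alphacr}, valid for any complex of abelian sheaves and in particular for the crystal $E$) and then identifying the resulting first term. Since $j_U^\dagger$ is exact, it carries this triangle to a distinguished triangle
\[
j_U^\dagger\mathrm R\underline\Gamma_U E \to j_U^\dagger E \to j_U^\dagger\mathrm R\beta_* E_{|Z} \to \cdots,
\]
so the whole statement reduces to showing that the natural augmentation $\mathrm R\underline\Gamma_U E \to j_U^\dagger\mathrm R\underline\Gamma_U E$ coming from the transformation $\mathrm{Id}\to j_U^\dagger$ is an isomorphism.

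I would check this last point on realizations, which is licit since the family of realizations is conservative (proposition \ref{realsh}). Fix an overconvergent space $(X',V')$ over $T$ and write $\alpha' : U' \hookrightarrow X'$ for the inverse image of $\alpha$. In the analytic case the tube of the open $U'$ is \emph{closed} in $]X'[_{V'}$, so $i := {]\alpha'[}$ is a closed immersion of topological spaces and $]\alpha'[_! = {]\alpha'[}_*$; hence by the very definition of $j_U^\dagger$ one has $(j_U^\dagger K)_{V'} = i_* i^{-1} K_{V'}$ for every complex $K$, and $i_* i^{-1}$ is exact. On the other hand proposition \ref{alphacr} gives $(\mathrm R\underline\Gamma_U E)_{V'} \simeq \mathrm R\underline\Gamma_{]U'[}E_{V'}$, a complex whose restriction to the open complement $]Z'[_{V'}$ vanishes, i.e.\ which is supported on the closed subset $]U'[_{V'}$. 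For a closed immersion $i$ the unit $K' \to i_* i^{-1} K'$ is an isomorphism on any complex $K'$ supported on $]U'[_{V'}$, because both sides have stalk $K'_x$ at $x \in \,]U'[_{V'}$ and vanish elsewhere. Therefore the augmentation becomes an isomorphism after realization, hence is an isomorphism, and the triangle follows.

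The only genuine content is this identification of the first term, and its heart is the geometric fact that the tube of the open $U$ is closed: this turns $j_U^\dagger$ into the ordinary functor $i_* i^{-1}$ along a closed immersion, which fixes exactly the complexes supported on the tube of $U$, precisely where $\mathrm R\underline\Gamma_U E$ lives. Everything else is formal, namely exactness of $j_U^\dagger$, naturality of the augmentation $\mathrm{Id}\to j_U^\dagger$, and conservativity of realization.
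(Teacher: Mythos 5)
Your proof is correct, but it takes a genuinely different route from the paper's. The paper argues with a $3\times 3$ diagram: the middle row is the localization triangle $\mathrm R\underline\Gamma_{U}E \to E \to \mathrm R\beta_{*}E_{|Z} \to \cdots$, the two columns are the sequence $\underline\Gamma^\dagger_{Z}E \to E \to j_{U}^\dagger E$ of proposition \ref{clop} (valid because $E$ is a crystal) and the distinguished triangle $\beta_{\dagger}E_{|Z} \to \mathrm R\beta_{*}E_{|Z} \to j_{U}^\dagger\mathrm R\beta_{*}E_{|Z}$ coming from proposition \ref{dagab}, the top row is the isomorphism $\underline\Gamma^\dagger_{Z}E \simeq \underline\Gamma^\dagger_{Z}\mathrm R\beta_{*}E_{|Z}$ (``left as an exercise'' there), and the bottom row is then distinguished by the $3\times3$ lemma for triangulated categories. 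You instead apply the exact functor $j_{U}^\dagger$ directly to the localization triangle --- legitimate, since an exact functor descends to derived categories --- and reduce the whole statement to the isomorphism $\mathrm R\underline\Gamma_{U}E \simeq j_{U}^\dagger\mathrm R\underline\Gamma_{U}E$, which you check on realizations: analyticity of $T$ makes $\,]U'[_{V'}$ closed in $\,]X'[_{V'}$, so $j_{U}^\dagger$ realizes as $i_{*}i^{-1}$ along a closed immersion, while proposition \ref{alphacr} (which is where crystallinity of $E$ enters) identifies the realization of $\mathrm R\underline\Gamma_{U}E$ with $\mathrm R\underline\Gamma_{\,]U'[}E_{V'}$, whose cohomology sheaves are supported on that closed subset, so the unit map is an isomorphism. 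Both arguments rest on the same localization triangle, but yours avoids the triangulated $3\times3$ lemma and the unproved ``exercise'' isomorphism, at the price of descending to tubes; it also isolates a reusable fact of independent interest, namely that $j_{U}^\dagger$ fixes any complex whose realizations are supported on the tubes of $U$, which is the same phenomenon as the paper's top-row isomorphism transposed from $Z$ to $U$. One cosmetic point: your stalk computation should formally be run on cohomology sheaves rather than on the complexes themselves, but since $i_{*}i^{-1}$ is exact and commutes with taking cohomology, this is immediate.
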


\begin{proof}
In the following diagram, all other lines and columns are distinguished, and it follows that the bottom line must be distinguished too:
\[
\xymatrix{
& \underline \Gamma^\dagger_{Z}E \ar[r]^-{\simeq} \ar[d] &  \underline \Gamma^\dagger_{Z} \mathrm R\beta_{*}E_{|Z} \ar[d]  &
\\ \mathrm R\underline \Gamma_{U}E \ar[r] \ar@{=}[d] &E \ar[r] \ar[d]& \mathrm R\beta_{*}E_{|Z} \ar[r] \ar[d] &  \cdots
\\ \mathrm R\underline \Gamma_{U}E \ar[r] & j_{U}^\dagger E \ar[r] \ar[d]&  j_{U}^\dagger \mathrm R\beta_{*}E_{|Z} \ar[r] \ar[d] &  \cdots
\\ & \vdots & \vdots &
}
\]
(the isomorphism upstairs is left as an exercise).
\end{proof}

Assume that $T$ is defined over some overconvergent space $(C,O)$.
Then the absolute cohomology with support in $U$ (resp. $Z$) of a \emph{crystal} $E$ is
\[
\mathrm Rp_{T/O*}j^{\dagger}_{U}E = \mathrm Rp_{T_U/O*} E_{|U} \quad \mathrm{(resp.}\ \mathrm Rp_{T/O*}j^{\dagger}_{Z}E = \mathrm Rp_{T/O*}\beta_{\dagger}E_{|Z}).
\]
In other words, we have for all $k \in \mathbb N$,
\[
\mathcal H^k_U(T/O, E) = \mathcal H^k(T_U/O, E_{|U}) \quad \mathrm{(resp.}\
\mathcal H^k_Z(T/O, E) = \mathcal H^k(T/O, \beta_\dagger E_{|Z})).
\]

%%%%%%%%%%%%%%%%
\section{Calculus}

So far, the notion of a crystal is a very abstract notion.
We explain here how it is related to differential calculus.

%%%%%%%%%%%%%%%%%%%%
\subsection{Overconvergent stratifications}

We let $(X \hookrightarrow P \leftarrow V) \to (C \hookrightarrow S \leftarrow O)$ be a formal morphism of overconvergent spaces.
We will assume that the morphism $P \to S$ is locally noetherian and that $X$ and has self-products\footnote{Otherwise, it would be necessary to invoke hypercoverings.}.
This last condition means that, for all $k \in \mathbb N$, the fibered products
\[
V(k) := \underbrace {V \times_{O} \cdots \times_{O} V}_{k+1}
\]
are representable (in our category of adic spaces locally of noetherian type).
Note that this condition is always satisfied when $V$ is locally of finite type over $O$.
We may then consider the overconvergent space $(X \hookrightarrow P(k) \leftarrow V(k))$.
We will denote by $\delta : V \to V(1)$ the diagonal map and by $p_{1}, p_{2} : V(1) \to V$ as well as $p_{12}, p_{13}, p_{23} : V(2) \to V(1)$ the various projections.
In this situation, we will still use the same letters for the morphisms induced on the tubes in order to make the notation lighter.

%%%%%%%%%%%%%%%%%%%%%%%
\begin{dfn} \label{overst}
An \emph{overconvergent stratification} on an $\mathcal O_{V}^\dagger$-module $\mathcal F$ is a \emph{(Taylor) isomorphism}
$\epsilon : p_{2}^\dagger\mathcal F \simeq p_{1}^\dagger\mathcal F$ of $\mathcal O_{V(1)}^\dagger$-modules such that
\[
p_{13}^\dagger(\epsilon) =p_{12}^\dagger(\epsilon) \circ p_{23}^\dagger(\epsilon)
\]
on $\,]X[_{V(2)}$ (cocycle condition).
\end{dfn}

Note that the \emph{normalization condition} $\delta^\dagger(\epsilon) = \mathrm{Id}_{\mathcal F}$ on $\,]X[_{V}$ is then automatic.

With obvious morphisms, $\mathcal O_{V}^\dagger$-modules endowed with an overconvergent stratification form a category $\mathrm{Strat}(X,V/O)^\dagger$ which is functorial in $(X,V) \to (C,O)$. 

%%%%%%%%%%%%%%
\begin{lem} \label{straO}
If $V$ is flat over $O$ in the neighborhood of $X$, then the category $\mathrm{Strat}(X,V/O)^\dagger$ is abelian and the forgetful functor
\[
\mathrm{Strat}(X,V/O)^\dagger \to \mathrm{Mod}(\mathcal O_{V}^\dagger)
\]
is exact, faithful and detects (and preserves) flatness when $O$ is analytic.
\end{lem}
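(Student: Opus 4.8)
The plan is to show that everything is controlled by the exactness of the pullback functors along the two projections, and that this exactness is exactly what the flatness hypothesis buys us. First I would observe that, since $V(1) = V \times_{O} V$, each projection $p_{i} : V(1) \to V$ is a base change of the structural map $V \to O$, which is flat in a neighborhood of $X$ by hypothesis; hence $p_{1}, p_{2}$ (and likewise the three projections $V(2) \to V(1)$) are flat near $X$. Reading flatness of a morphism of ringed spaces on stalks — the stalk of $\mathcal O^\dagger_{V}$ at a point $x \in \,]X[_{V}$ is simply $\mathcal O_{V,x}$ — this yields that the functors $p_{1}^\dagger, p_{2}^\dagger$ (and $p_{12}^\dagger, p_{13}^\dagger, p_{23}^\dagger$) are exact; in the analytic case this is precisely lemma \ref{flatan}. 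This is the one place the hypothesis is used, and the whole statement rests on it.

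Next I would build the abelian structure by transporting kernels and cokernels from $\mathrm{Mod}(\mathcal O_{V}^\dagger)$. Given a morphism $\phi : (\mathcal F, \epsilon_{\mathcal F}) \to (\mathcal G, \epsilon_{\mathcal G})$ in $\mathrm{Strat}(X,V/O)^\dagger$, I would form $\mathcal K := \ker \phi$ and $\mathcal C := \mathrm{coker}\,\phi$ in $\mathrm{Mod}(\mathcal O_{V}^\dagger)$. Because $p_{1}^\dagger$ and $p_{2}^\dagger$ are exact they commute with these kernels and cokernels, and since $\phi$ intertwines $\epsilon_{\mathcal F}$ and $\epsilon_{\mathcal G}$ (definition of a morphism of stratified modules), the Taylor isomorphisms restrict to isomorphisms $p_{2}^\dagger \mathcal K \simeq p_{1}^\dagger \mathcal K$ and $p_{2}^\dagger \mathcal C \simeq p_{1}^\dagger \mathcal C$. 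The cocycle condition of definition \ref{overst} for $\mathcal K$ and $\mathcal C$ then follows by naturality from that of $\mathcal F$ and $\mathcal G$, using exactness of the $V(2)$-pullbacks; this realizes $\mathcal K$ and $\mathcal C$ as a kernel and a cokernel in $\mathrm{Strat}(X,V/O)^\dagger$, computed by the forgetful functor. The comparison of image and coimage is inherited from $\mathrm{Mod}(\mathcal O_{V}^\dagger)$, so the category is abelian and the forgetful functor is exact. Faithfulness is immediate, since a morphism of stratified modules is by definition a morphism of the underlying modules.

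Finally, for the flatness assertions (where $O$, hence $V$ and $V(1)$, is analytic), detection is formal: an exact and faithful functor between abelian categories reflects exact sequences, so if the underlying module $\mathcal F$ is flat, then tensoring any short exact sequence of $\mathrm{Strat}(X,V/O)^\dagger$ with $(\mathcal F, \epsilon)$ stays exact after forgetting — the tensor product being computed on underlying modules, compatibly with the stratifications by proposition \ref{inthom} — hence stays exact in $\mathrm{Strat}(X,V/O)^\dagger$, so $(\mathcal F, \epsilon)$ is flat. The hard part will be the converse, preservation: from flatness of $(\mathcal F,\epsilon)$ in $\mathrm{Strat}(X,V/O)^\dagger$ I must deduce flatness of the bare module $\mathcal F$, and here the obstacle is that an arbitrary short exact sequence of $\mathcal O_{V}^\dagger$-modules carries no stratification against which to test. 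The plan is to reduce flatness of $\mathcal F$ to a stalkwise condition via proposition \ref{flatmod}, and to exploit the diagonal $\delta : V \to V(1)$: it is a section of $p_{1}$, so $]p_{1}[$ is faithfully flat (flat by lemma \ref{flatan}, surjective because of $\delta$), which lets me descend flatness along $p_{1}$ while the Taylor isomorphism $\epsilon$ identifies $p_{1}^\dagger \mathcal F$ with $p_{2}^\dagger \mathcal F$. Controlling this descent — that is, promoting the intrinsic flatness in $\mathrm{Strat}(X,V/O)^\dagger$ to a statement about enough genuine short exact sequences of modules — is the step I expect to require the most care.
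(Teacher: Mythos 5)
Your construction of the abelian structure is correct and is exactly what the paper dismisses as ``easily checked'': flatness of $V$ over $O$ near $X$ makes the projections $p_1,p_2$ (and the $V(2)$-projections) flat near the tube, so the pullbacks $p_i^\dagger$ are exact (lemma \ref{flatan}, or your stalk computation), and kernels and cokernels of a morphism of stratified modules then inherit Taylor isomorphisms and the cocycle condition by naturality; exactness and faithfulness of the forgetful functor follow. The gap is in the flatness statement. You interpret flatness of $(\mathcal F,\epsilon)$ \emph{intrinsically} (tensoring is exact on short exact sequences in $\mathrm{Strat}(X,V/O)^\dagger$), observe correctly that detection is then formal, but leave open the direction you call preservation --- intrinsic flatness of $(\mathcal F,\epsilon)$ implies flatness of the bare module $\mathcal F$ --- and the strategy you sketch cannot close it. Faithfully flat descent along $]p_1[$ says that $\mathcal F$ is flat as soon as $]p_1[^\dagger\mathcal F$ is; the Taylor isomorphism identifies $]p_1[^\dagger\mathcal F$ with $]p_2[^\dagger\mathcal F$; but flatness of either pullback is itself equivalent to flatness of $\mathcal F$ (pullback always preserves flat modules, and descent gives the converse), so the loop ``$\mathcal F$ flat $\Leftrightarrow ]p_1[^\dagger\mathcal F$ flat $\Leftrightarrow ]p_2[^\dagger\mathcal F$ flat'' never receives any input from the hypothesis that $\mathcal F\otimes-$ is exact on stratified sequences. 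To proceed along your lines you would have to manufacture, from an arbitrary short exact sequence of $\mathcal O_V^\dagger$-modules, enough sequences lying in $\mathrm{Strat}$ to test against, and nothing in the sketch does this.

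The difficulty dissolves once one uses the notion of flatness the paper actually works with. The only flatness defined in the paper is for modules on a ringed (overconvergent) site, and by proposition \ref{flatmod} such a module is flat if and only if all of its realizations are flat; an object of $\mathrm{Strat}(X,V/O)^\dagger$ is declared flat through the equivalence with $\mathrm{Cris}(X,V/O)^\dagger$ of proposition \ref{equiV}, i.e.\ as a module on the site $(X,V/O)^\dagger$. With this reading both directions are immediate: if the crystal $E$ attached to $(\mathcal F,\epsilon)$ is flat, then in particular its realization $E_V=\mathcal F$ is flat (proposition \ref{flatmod}); conversely, if $\mathcal F$ is flat then every realization $E_W=]f[_u^{\dagger}\mathcal F$ is flat, because module pullback preserves flatness --- on stalks this is the base-change computation $(]f[_u^{\dagger}\mathcal F)_x=\mathcal O^\dagger_{W,x}\otimes_{\mathcal O^\dagger_{V,u(x)}}\mathcal F_{u(x)}$, the same kind of argument as in lemma \ref{flatan}, which is why the paper cites it --- hence $E$ is flat. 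So the repair is not to complete your descent argument but to replace the intrinsic notion of flatness by the site-theoretic one; under your interpretation the assertion you set out to prove is genuinely harder than what the lemma is used for, and it is not established in the paper either.
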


\begin{proof}
Detection (and preservation) of flatness follows from lemma \ref{flatan} and the other properties are easily checked.
\end{proof}

Without the flatness hypothesis, it is still true that the category is additive with cokernels and that the forgetful functor is right exact and faithful.
Note also that, if $\mathcal F, G \in \mathrm{Strat}(X,V/O)^\dagger$, then there exists a canonical overconvergent stratification on
\[
\mathcal F \otimes_{\mathcal O_{V}^\dagger} \mathcal G \quad (\mathrm{resp.} \
\mathcal H\mathrm{om}_{\mathcal O_{V}^\dagger}(\mathcal F, \mathcal G)
\]
when $\mathcal F$ is \emph{finitely presented}).

There exists a cohomology theory for overconvergent stratifications that we shall describe now.
Let us denote for all $k \in \mathbb N$, by $p(k) : V(k) \to O$ the structural map and $p_{1}(k) : V(k) \to V$ the first projection.
Then, $\mathcal H^{k\dagger}(\mathcal F)$ is the cohomology of the ``bicomplex''
\[
\mathrm R p(0)_{*}p_{1}(0)^\dagger \mathcal F \overset{d^0} \to \mathrm R p(1)_{*}p_{1}(1)^\dagger \mathcal F \overset{d^1}\to \cdots \overset{d^{k-1}}\to \mathrm R p(k)_{*}p_{1}(k)^\dagger \mathcal F \overset{d^k}\to \cdots
\]
on $]C[_{O}$ with horizontal differentials
\[
d^k = (\epsilon \otimes \mathrm{Id}) \circ p_{\widehat 1}(k)^{\dagger} - p_{\widehat 2}(k)^{\dagger} + \cdots + (-1)^k p_{\widehat {k+2}}(k)^{\dagger}
\]
if $p_{\widehat i}(k) : V(k+1) \to V(k)$ denotes the projection that forgets the $i$th component for $i=1, \ldots, k+2$.

In particular, we have
\[
\mathcal H^{0\dagger}(\mathcal F) = \ker (\epsilon \circ p_{2}^{\dagger} - p_{1}^{\dagger} : p(0)_{*}\mathcal F \to p(1)_{*}p_{1}^\dagger \mathcal F).
\]
Also, if $\mathcal F, G \in \mathrm{Strat}(X,V/O)^\dagger$ with $\mathcal F$ finitely presented, then
\[
\mathrm{Hom}_{\mathrm{Strat}(X,V/O)^\dagger}(\mathcal F, \mathcal G) \simeq \Gamma\left(]C[_O, \mathcal H^{0\dagger}\left(\mathcal H\mathrm om_{\mathcal O_{V}^\dagger}(\mathcal F, \mathcal G)\right)\right).
\]

Recall now that we denote by $(X,V/O)^\dagger$ the category of overconvergent spaces $(Y,W)$ endowed with a morphism $f : Y \to X$ that can be extended to \emph{some} morphism $(f,u) : (Y,W) \to (X,V)$ (but the morphism $u : W \to V$ itself is not part of the data).
Then, it follows from lemma \eqref{eqrep} that if $E$ is a crystal on $(X,V)$, it is equivalent to give an overconvergent stratification on $E_V$ or a descent datum (see section \ref{apdesc} of the appendix) on $E$ with respect to the projection $j_V : (X,V) \to (X,V/O)^\dagger$ (or equivalently $(X,V) \to (X/O)^\dagger$).
In other words, there exists an equivalence of categories:
\[
\mathrm{Strat}(X,V/O)^\dagger \simeq \mathrm{Cris}\left((X,V) \to (X,V/O^\dagger)\right)
\]
(where the right hand side denotes the category of objects endowed with a descent datum).
Moreover, the cohomology of the stratified module is related to cohomological descent (see section \ref{formaco} of the appendix\footnote{We only do the case of abelian sheaves in the appendix but our results are still valid here.}).
More precisely, one can identify $\mathcal H^{k\dagger}(E_V)$ with the value on $\,]X[_O$ of the cohomology of the complex $\mathrm Rj_{V\epsilon *}j_{V\epsilon}^{-1}E$.
We shall not use here these considerations and rely on more down to earth arguments.

%%%%%%%%%%%%%%%%
\begin{prop} \label{equiV}
There exists an equivalence of categories
\[
\xymatrix@R=0cm{ \mathrm{Cris}(X,V/O)^\dagger \ar[r]^\simeq & \mathrm{Strat}(X,V/O)^\dagger \\ E \ar[r]& (E_{V}, \epsilon : p_{2}^\dagger E_{V} \simeq E_{V^2} \simeq p_{1}^\dagger E_{V}).
}
\]
\end{prop}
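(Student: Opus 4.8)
The plan is to realize the desired equivalence as an instance of descent along the canonical covering $j_V \colon (X,V) \to (X,V/O)^\dagger$. By construction $(X,V/O)^\dagger$ is the full image of the map $(X,V) \to (X/O)^\dagger$ (example \eqref{compbs} of section \ref{fundxmp}), so $j_V$ is a local epimorphism, being the epimorphic part of the epi--mono factorization of that map. Since the inclusion $(X,V/O)^\dagger \hookrightarrow (X/O)^\dagger$ is a monomorphism (a sieve), fibre products over $(X,V/O)^\dagger$ of objects already lying over it agree with fibre products over $(X/O)^\dagger$. First I would record the identification of the self-fibre-products of $(X,V)$: an object mapping to $(X/O)^\dagger$ carries only the formal datum $Y \to X$ and the base map to $(C,O)$, so that a pair of maps to $(X,V)$ agreeing there is the same as a map to $V(1) = V\times_O V$ over $O$. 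Thus $(X,V)\times_{(X,V/O)^\dagger}(X,V) \simeq (X,V(1))$ and the triple product is $(X,V(2))$, with the two (resp.\ three) projections inducing on the tubes exactly $p_1,p_2$ (resp.\ $p_{12},p_{13},p_{23}$).

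Next, since $\mathrm{Mod}^\dagger$ is a stack on $\mathbf{Ad}^\dagger$, modules satisfy effective descent along the covering $j_V$: an $\mathcal O^\dagger_{(X,V/O)^\dagger}$-module is the same as an $\mathcal O^\dagger_{(X,V)}$-module equipped with a descent datum relative to $j_V$ (appendix \ref{apdesc}). To upgrade effectivity to crystals I would invoke lemma \ref{loccrys}: because $j_V$ is a local epimorphism, a descended module $E$ is a crystal if and only if its restriction $j_V^{-1}E$ is. Hence $\mathrm{Cris}(X,V/O)^\dagger$ is equivalent to the category of crystals on $(X,V)$ equipped with a descent datum for $j_V$.

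It then remains to translate this data through realization. By lemma \ref{eqrep} a crystal on $(X,V)$ is the same as the $\mathcal O_V^\dagger$-module $E_V$, and by lemma \ref{crisfi2} the realization on $V(1)$ (resp.\ $V(2)$) of the pullbacks $p_i^*E$ (resp.\ $p_{ij}^*E$) is $p_i^\dagger E_V$ (resp.\ $p_{ij}^\dagger E_V$). A descent datum for $j_V$ therefore becomes an isomorphism $\epsilon \colon p_2^\dagger E_V \simeq p_1^\dagger E_V$ of $\mathcal O_{V(1)}^\dagger$-modules, whose cocycle identity on $(X,V(2))$ realizes as $p_{13}^\dagger(\epsilon) = p_{12}^\dagger(\epsilon)\circ p_{23}^\dagger(\epsilon)$ and whose normalization is automatic, i.e.\ precisely an overconvergent stratification in the sense of definition \ref{overst} (as already observed just before the statement). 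Composing these two equivalences yields the functor $E \mapsto (E_V,\epsilon)$, and unwinding both directions gives the claim.

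The main obstacle is not any deep input but the bookkeeping of the middle step: one must check with care that the self-fibre-products of $(X,V)$ over $(X,V/O)^\dagger$ are computed by the $(X,V(k))$ and that the projections induce exactly the $p_i$ and $p_{ij}$ on the tubes, and then match the orientation of the descent isomorphism with the $p_1/p_2$ convention of definition \ref{overst}. Once the nerve of $j_V$ is identified with $(X,V(\bullet))$, the effectivity of descent for the crystal subcategory follows formally from the stack property of $\mathrm{Mod}^\dagger$ together with lemma \ref{loccrys}, and the cocycle and normalization conditions transcribe mechanically.
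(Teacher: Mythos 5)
Your proof is correct, but it is not the paper's proof: it is precisely the alternative that the paper records in a single sentence after its own argument (``Alternatively, the assertion formally follows from the fact that the morphism $j_V : (X,V) \to (X,V/O)^\dagger$ is a local epimorphism and in particular a morphism of effective descent with respect to (the stack of) crystals''), and which it explicitly declines to use just before the statement, preferring ``more down to earth arguments''. The paper instead builds a quasi-inverse by hand: given $(\mathcal F,\epsilon)\in\mathrm{Strat}(X,V/O)^\dagger$ and an object $(Y,W)$ of $(X,V/O)^\dagger$, it \emph{chooses} a factorization $(f,u):(Y,W)\to (X,V)$, sets $E_W:=\,]f[_u^\dagger\mathcal F$, and defines the transition map along any $(g,v):(Y',W')\to(Y,W)$ by pulling $\epsilon$ back along $(f',(v\circ u,u')):(Y',W')\to(X,V(1))$, the cocycle condition making this independent of the choices and a crystal. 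Both arguments therefore hinge on the same bookkeeping you isolate (the self-products of $(X,V)$ over $(X,V/O)^\dagger$ are the $(X,V(k))$, and pullbacks realize as $]f[_u^\dagger$ by lemma \ref{crisfi2}), but they package it differently: the explicit construction is self-contained given the cited lemmas and exhibits the descended crystal concretely (its realization on any $(Y,W)$ is $]f[_u^\dagger\mathcal F$, which is what gets used downstream, e.g.\ in lemma \ref{coiso}), whereas your route outsources existence to a general principle --- stacks satisfy effective descent, in the descent-data sense, along local epimorphisms of classic overconvergent sites --- which the paper asserts only in remarks and whose proof (identification of sections over $(X,V/O)^\dagger$ with the limit over the \v Cech nerve) is not actually carried out in its appendix. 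One imprecision to tighten in your write-up: the stack property of $\mathrm{Mod}^\dagger$ gives descent of \emph{cartesian} sections, i.e.\ of crystals, not of the category of all modules; so either apply it directly to $\mathrm{Cris}$ --- which makes your appeal to lemma \ref{loccrys} unnecessary --- or justify the descent of all modules topos-theoretically. Both repairs are routine, and in fact everything here is elementary, because $j_V$ is by construction a \emph{sectionwise} epimorphism of presheaves, so its \v Cech groupoid is fiberwise equivalent to $(X,V/O)^\dagger$; this is weaker machinery than the general local-epimorphism statement, and it is all your argument needs.
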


\begin{proof}
We describe a quasi-inverse.
Let $(\mathcal F, \epsilon) \in \mathrm{Strat}(X,V/O)^\dagger$.
If $(Y,W)$ is an overconvergent space over $(X,V/O)^\dagger$ (so that $f : Y \to X$ comes with the data), then we pick up some factorization $(f,u) : (Y,W) \to (X,V)$ and we set $E_{W} := ]f[_{u}^\dagger \mathcal F$.
If $(g,v) : (Y',W') \to (Y,W)$ is any morphism over $(X,V/O)^\dagger$ and $(f',u') : (Y',W') \to (X,V)$ is the chosen factorization for $(Y',W')$, then we pull the stratification $\epsilon$ back along the morphism
\[
(f', (v \circ u, u')) : (Y',W') \to (X,V(1))
\]
in order to obtain an isomorphism $]g[_{v}^\dagger E_{W} \simeq ]g[_{v}^\dagger]f[_{u}^\dagger \mathcal F \simeq ]f'[_{u'}^\dagger \mathcal F = E_{W'}$.
We can then use the cocycle condition to verify that this defines a crystal.
\end{proof}

Alternatively, the assertion formally follows from the fact that the morphism $j_V : (X,V) \to (X,V/O)^\dagger$ is a local epimorphism and in particular a morphism of effective descent with respect to  (the stack of) crystals.

%%%%%%%%%%%%%%%%
\begin{cor} \label{crisab}
If $V$ is flat over $O$ in the neighborhood of $X$, then the category $\mathrm{Cris}(X,V/O)^\dagger$ is abelian and the forgetful functor
\[
\mathrm{Cris}(X,V/O)^\dagger \to \mathrm{Mod}(\mathcal O_{V}^\dagger)
\]
is exact, faithful and detects (and preserves) flatness when $O$ is analytic.
\end{cor}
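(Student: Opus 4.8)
The plan is to deduce the corollary directly from the equivalence of Proposition \ref{equiV} combined with Lemma \ref{straO}. The essential point is that the equivalence of Proposition \ref{equiV} is compatible with the forgetful functors down to $\mathrm{Mod}(\mathcal O_V^\dagger)$. Indeed, that equivalence sends a crystal $E$ to the pair $(E_V, \epsilon)$, so composing it with the forgetful functor $\mathrm{Strat}(X,V/O)^\dagger \to \mathrm{Mod}(\mathcal O_V^\dagger)$ that discards $\epsilon$ recovers exactly the realization $E \mapsto E_V$, which \emph{is} the forgetful functor $\mathrm{Cris}(X,V/O)^\dagger \to \mathrm{Mod}(\mathcal O_V^\dagger)$ appearing in the statement. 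In other words, the two forgetful functors sit in a commutative triangle with the equivalence.

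First I would invoke Proposition \ref{equiV} to identify $\mathrm{Cris}(X,V/O)^\dagger$ with $\mathrm{Strat}(X,V/O)^\dagger$ as categories. Since an equivalence of categories transports all finite limits and colimits, the fact that $\mathrm{Strat}(X,V/O)^\dagger$ is abelian under the flatness hypothesis (Lemma \ref{straO}) immediately yields that $\mathrm{Cris}(X,V/O)^\dagger$ is abelian as well.

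Next, because of the commutative triangle just described, every property that Lemma \ref{straO} asserts for the forgetful functor $\mathrm{Strat}(X,V/O)^\dagger \to \mathrm{Mod}(\mathcal O_V^\dagger)$ transfers verbatim to the forgetful functor $\mathrm{Cris}(X,V/O)^\dagger \to \mathrm{Mod}(\mathcal O_V^\dagger)$: exactness and faithfulness are preserved since precomposition with an equivalence changes neither, and detection (resp.\ preservation) of flatness when $O$ is analytic follows for the same reason.

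There is essentially no obstacle here, the single point requiring verification being the compatibility of the realization functor with the forgetful functor on stratifications; and this is immediate from the explicit description of the quasi-inverse given in the proof of Proposition \ref{equiV}, where the realization on $V$ of the crystal attached to $(\mathcal F, \epsilon)$ is $\mathcal F$ itself.
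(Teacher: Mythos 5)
Your proposal is correct and is essentially the paper's own argument: the paper's proof consists of the single line ``Follows from lemma \ref{straO},'' with the equivalence of Proposition \ref{equiV} (and its compatibility with the realization/forgetful functors) used implicitly, exactly as you spell out. You have merely made explicit the commutative triangle that the paper takes for granted.
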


\begin{proof}
Follows from lemma \ref{straO}.
\end{proof}

There also exists an isomorphism in cohomology:

%%%%%%%%%%%%%
\begin{lem} \label{coiso}
If $E$ is a crystal on $(X,V/O)^\dagger$, then 
\[
\mathcal H^k((X,V/O)^\dagger, E) \simeq \mathcal H^{k\dagger}(E_{V}).
\]
\end{lem}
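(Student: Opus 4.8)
The plan is to compute the absolute cohomology $\mathrm Rp_{T/O*}E$, where $T:=(X,V/O)^\dagger$, by Čech (cohomological) descent along the covering $j_V\colon(X,V)\to T$ and to identify the resulting totalisation with the bicomplex that defines $\mathcal H^{\bullet\dagger}(E_V)$. Recall that $j_V$ is a local epimorphism, as noted just before Proposition \ref{equiV}, so that in $\widetilde T$ the sheaf represented by $(X,V)$ covers the terminal object. Its Čech nerve is the simplicial overconvergent space $[k]\mapsto(X,V(k))$: the $(k+1)$-fold self-product of $(X,V)$ over $T$ is $(X,V(k))$ because two morphisms $(Y,W)\to(X,V)$ agreeing over $T$ share the same underlying map $Y\to X$ and so amount to a single morphism to $V\times_O\cdots\times_O V=V(k)$. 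The face maps of the nerve are precisely the partial projections $p_{\widehat i}(k)$, and I write $a_k\colon(X,V(k))\to T$ for the structural morphisms, so that $p_{T/O}\circ a_k=p_{(X,V(k))/O}$.

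First I would invoke cohomological descent for the abelian sheaf $E$ along this nerve. Here no hypothesis on crystals is needed — the subtlety emphasised in the appendix concerns descent of crystals, whereas we only descend the underlying abelian sheaf — so the classical theory applies: the descent (Čech-to-derived-functor) spectral sequence identifies $\mathrm Rp_{T/O*}E$ with the totalisation of the cosimplicial object $[k]\mapsto\mathrm Rp_{(X,V(k))/O*}(E_{|(X,V(k))})$, using the functoriality $\mathrm Rp_{T/O*}\circ\mathrm Ra_{k*}=\mathrm Rp_{(X,V(k))/O*}$ of derived pushforward.

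Next I would compute each level. As $(X,V(k))$ is representable with structural map inducing $p(k)$ on the tubes, the example following Definition \ref{absco} gives $\mathrm Rp_{(X,V(k))/O*}(E_{|(X,V(k))})\simeq\mathrm Rp(k)_*E_{V(k)}$. Since $E$ is a crystal, the transition isomorphism attached to the first projection $p_1(k)\colon(X,V(k))\to(X,V)$ over $T$ is, by definition, an identification $p_1(k)^\dagger E_V\simeq E_{V(k)}$. Hence the level of degree $k$ becomes $\mathrm Rp(k)_*p_1(k)^\dagger E_V$, which is exactly the $k$-th entry of the bicomplex defining $\mathcal H^{\bullet\dagger}(E_V)$.

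It then remains to match the differentials, and this is where I expect essentially all the work to lie. Each face map $p_{\widehat i}(k)$ induces a map $\mathrm Rp(k)_*p_1(k)^\dagger E_V\to\mathrm Rp(k+1)_*p_1(k+1)^\dagger E_V$; the point is that the same crystal identifies $E_{V(k)}$ with $p_i(k)^\dagger E_V$ for \emph{every} index $i$, and any two of these identifications differ by a pullback of the Taylor isomorphism $\epsilon$. Tracking how $\epsilon$ reconciles the chosen first-projection convention with the face indexed by the first coordinate, the alternating sum of the faces becomes
\[
d^k=(\epsilon\otimes\mathrm{Id})\circ p_{\widehat 1}(k)^{\dagger}-p_{\widehat 2}(k)^{\dagger}+\cdots+(-1)^k p_{\widehat{k+2}}(k)^{\dagger},
\]
exactly the differential of the definition, the cocycle condition on $\epsilon$ ensuring $d^{k+1}\circ d^k=0$. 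The totalisation of the descent complex then coincides with the defining bicomplex of $\mathcal H^{\bullet\dagger}(E_V)$, and taking cohomology yields $\mathcal H^k((X,V/O)^\dagger,E)\simeq\mathcal H^{k\dagger}(E_V)$. The main obstacle is precisely this bookkeeping of $\epsilon$ — together with the care needed to see that the \emph{totalisation} (not merely the associated spectral sequence) recovers $\mathrm Rp_{T/O*}E$ on the nose.
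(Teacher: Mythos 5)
Your proposal is correct and follows essentially the same route as the paper: cohomological descent along the local epimorphism $j_V : (X,V) \to (X,V/O)^\dagger$ (whose Čech nerve is $[k] \mapsto (X,V(k))$, exactly as you identify it), realization of each level via the representable-case formula $\mathrm Rp_{(X,V(k))/O*}E_{|(X,V(k))} \simeq \mathrm Rp_{V(k)*}E_{V(k)}$, the crystal transition isomorphisms $p_{1}(k)^\dagger E_V \simeq E_{V(k)}$ to rewrite the levels, and a final check that the differentials match the ones defining $\mathcal H^{k\dagger}(E_V)$. The only difference is one of emphasis: the paper dismisses the differential bookkeeping as clear, while you correctly flag it (and the role of the Taylor isomorphism $\epsilon$) as the place where the actual verification lies.
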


\begin{proof}
Let us denote by
\[
p : (X,V/O)^\dagger \to (C,O) \quad \mathrm{and} \quad p(k) : (X,V(k)) \to (C,O)
\]
the structural maps.
Since $j_V : (X,V) \to (X,V/O)^\dagger$ is a local epimorphism, there exists an isomorphism
\[
\mathrm Rp_{*}E \simeq [\mathrm Rp(0)_{*}E_{|(X,V(0))} \overset {d^0}\to \mathrm Rp(1)_{*}E_{|(X,V(1))} \overset {d^1}\to \cdots].
\]
On the right, we mean the bicomplex with horizontal differentials given by the usual formulas $d^k = \sum_{i=0}^{k+1} d_{k}^{i}$ where $d_{k}^{i}$ denotes the $i$th coface map in degree $k$.
If we apply $\varphi_{O}$ to this complex, we obtain an isomorphism\footnote{The complex on the right may be called the \emph{\v Cech-Alexander complex} of $E$ on $V$.}
\[
\mathrm Rp_{(X,V/O)^\dagger*}E \simeq [\mathrm Rp_{V(0)*}E_{V(0)} \overset {d^0}\to \mathrm Rp_{V(1)*}E_{V(1)} \overset {d^1}\to \cdots]
\]
on $]C[_{O}$.
If we denote by $p_{1(k)} : (X,V(k)) \to (X,V)$ the first projection, then we can pull at each level back along the transition map $p_{1(k)}^\dagger E_{V} \simeq E_{V(k)}$ and obtain an isomorphism
\[
\mathrm Rp_{(X,V/O)^\dagger*}E \simeq [\mathrm Rp_{V(0)*}p_{1(0)}^\dagger E_{V} \overset {d^0}\to \mathrm Rp_{V(1)*}p_{1(1)}^\dagger E_{V} \overset {d^1}\to \cdots].
\]
It only remains to check that the horizontal differentials coincide which should be clear.
\end{proof}

This last result can also be formally obtained from cohomological descent.

Recall that, if $(C,O)$ is an overconvergent space and $X \to C$ is a morphism of formal schemes, then we denote by $(X/O)^\dagger$ the category of overconvergent spaces $(Y,W)$ over $(C,O)$ endowed with a factorization $Y \to X \to C$.
Also a geometric materialization of $X$ over $O$ is a partially proper formally smooth (and right cartesian) formal morphism of analytic spaces $(X \hookrightarrow P \leftarrow V) \to (C  \hookrightarrow S \leftarrow O)$.

%%%%%%%%%%%%
\begin{thm}
Let $(C,O)$ be an overconvergent space,  $X$ a formal scheme over $C$ and $V$ a geometric materialization of $X$ over $O$.
Then,
\begin{enumerate}
\item
the category $\mathrm{Cris}(X/O)^\dagger$ is abelian,
\item
the realization functor $\mathrm{Cris}(X/O)^\dagger \to \mathrm{Mod}(\mathcal O_{V}^\dagger)$ is exact, faithful and detects (and preserves) flatness,
\item it induces an equivalence
\[
\mathrm{Cris}(X/O)^\dagger \simeq \mathrm{Strat}(X,V/O)^\dagger,
\]
\item 
if $E$ is a crystal on $(X/O)^\dagger$, then for all $k \in \mathbb N$,
\[
\mathcal H^k((X/O)^\dagger, E) \simeq \mathcal H^{k\dagger}(E_{V}).
\] 
\end{enumerate}
\end{thm}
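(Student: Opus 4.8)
The plan is to assemble the theorem from the results already established for the materialization site $(X,V/O)^\dagger$, transporting them across the equivalence furnished by the geometric materialization. The key preliminary observation is that the two technical hypotheses needed downstream are automatic here: since a geometric materialization is by definition a morphism of \emph{analytic} overconvergent spaces, the base $O$ is analytic; and since it is \emph{formally smooth}, there is a neighbourhood $Q$ of $X$ in $P$ with $Q \to S$ formally smooth, hence flat, so that the realization $V \to O$ is flat in a neighbourhood of $X$. Thus Corollary \ref{crisab} and Lemma \ref{coiso} both apply to $(X,V/O)^\dagger$.

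For parts (1)--(3), I would first invoke Theorem \ref{geomcrys} to get an equivalence $\mathrm{Cris}(X/O)^\dagger \simeq \mathrm{Cris}(X,V/O)^\dagger$, this equivalence being the restriction functor $E \mapsto E_{|(X,V/O)^\dagger}$ along the local isomorphism of Theorem \ref{strfib}. Because realization commutes with restriction, $(E_{|(X,V/O)^\dagger})_V = E_V$, so under this equivalence the realization functor of the statement is identified with the forgetful functor of Corollary \ref{crisab} (which is realization on $V$ by Lemma \ref{eqrep}). Parts (1) and (2) then follow directly from Corollary \ref{crisab}, and part (3) follows by further composing with the equivalence $\mathrm{Cris}(X,V/O)^\dagger \simeq \mathrm{Strat}(X,V/O)^\dagger$ of Proposition \ref{equiV}.

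For part (4), I would use the equivalence of topoi $\widetilde{(X,V/O)^\dagger} \simeq \widetilde{(X/O)^\dagger}$ coming from the local isomorphism $g : (X,V/O)^\dagger \to (X/O)^\dagger$ of Theorem \ref{strfib}. Since $g$ is bicovering, $\mathrm Rg_* E_{|(X,V/O)^\dagger} \simeq E$, and the structural maps to $]C[_O$ are compatible ($p_{(X,V/O)^\dagger/O} = p_{(X/O)^\dagger/O} \circ g$), so absolute cohomology is preserved: $\mathcal H^k((X/O)^\dagger, E) \simeq \mathcal H^k((X,V/O)^\dagger, E_{|(X,V/O)^\dagger})$. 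Applying Lemma \ref{coiso} to the crystal $E_{|(X,V/O)^\dagger}$ then yields $\mathcal H^{k\dagger}(E_V)$.

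The proof is thus essentially a bookkeeping of compatibilities, and I expect no serious obstacle. The one point deserving genuine care is the compatibility claim underlying parts (3) and (4): one must check that the equivalence of Theorem \ref{geomcrys} is indeed induced by restriction along $g$, so that realization on $V$ is preserved on the nose, and that the identification of topoi in Theorem \ref{strfib} commutes with the projections to $]C[_O$. Verifying the passage from formal smoothness of $P \to S$ near $X$ to flatness of $V \to O$ near $X$ on the adic side is the only step that invokes something beyond pure formalism.
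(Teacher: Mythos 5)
Your proposal is correct and follows essentially the same route as the paper: the paper's proof consists precisely of citing the local isomorphism $(X,V/O)^\dagger \to (X/O)^\dagger$ from theorem \ref{strfib} and then transferring proposition \ref{equiV}, corollary \ref{crisab} and lemma \ref{coiso} across it. Your write-up merely makes explicit the bookkeeping (realization commuting with restriction, compatibility of the projections to $\,]C[_O$, and flatness of $V$ over $O$ near $X$ coming from formal smoothness) that the paper leaves implicit.
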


\begin{proof}
It is shown in theorem \ref{strfib} (as a consequence of the main result of \cite{LeStum17*}) that 
that there exists a local isomorphism $(X,V/O)^\dagger \to (X/O)^\dagger$.
Then our assertions follow from proposition \ref{equiV}, corollary \ref{crisab} and  lemma \ref{coiso}.
\end{proof}

Of course, the first assertion, being local in nature, is still valid if we only assume that $X$ is locally formally of finite type over $C$ and $O$ is analytic.

%%%%%%%%%%%%%%%%%%%%
\subsection{Overconvergent connections}

We will now develop an infinitesimal version of the notion of an overconvergent stratification and study its relation with the notion of an integrable connection.
This is standard material and we will not give too many details.

We let $(X,V) \to (C,O)$ be a morphism of overconvergent spaces with $V$ locally of finite type over $O$.

We denote by $V^{(n)}$ the $n$th infinitesimal neighborhood of $V$ in $V(1)$: if $\mathcal I$ denotes the ideal of the diagonal embedding $V \hookrightarrow V(1)$, then $V^{(n)}$ is the subspace defined by $\mathcal I^{n+1}$ in $V(1)$.
By definition, the diagonal map $\delta^{(n)} : V \to V^{(n)}$ is a nilpotent immersion that allows us to identify the underlying spaces of $V^{(n)}$ and $V$.
We will denote by $p_{1}^{(n)}, p_{2}^{(n)} : V^{(n)} \to V$ the projections which are all finite universal homeomorphisms.
Unless otherwise specified, when we consider $V^{(n)}$ as an adic space over $V$, we always mean that we are using $p_{1}^{(n)}$ (and make it specific when we use $p_{2}^{(n)}$).

%%%%%%%%%%%%%%%%%%%%%%%
\begin{dfn} \label{stratdef}
A \emph{stratification} on an $\mathcal O_{V}^\dagger$-module $\mathcal F$ is a compatible family of $ \mathcal O_{V^{(n)}}^\dagger$-linear \emph{(Taylor) isomorphisms} $\epsilon_{n} : p_{2}^{(n)\dagger}\mathcal F \simeq p_{1}^{(n)\dagger}\mathcal F$ that satisfy the usual cocycle condition.
\end{dfn}

Note that, since the tubes all have the same underlying space, we may as well write
\[
\epsilon_{n} : \mathcal O^\dagger_{V^{(n)}} \otimes'_{\mathcal O^\dagger_{V}} \mathcal F \simeq  \mathcal F \otimes_{\mathcal O^\dagger_{V}} \mathcal O^\dagger_{V^{(n)}}
\]
where the $\otimes'$ indicates that we are using $p_{2}^{(n)}$ as structural map on the left.
With obvious morphisms, $\mathcal O_{V}^\dagger$-modules endowed with a stratification form a category $\mathrm{Strat}(X,V/O)$ which is functorial in $(X,V) \to (C,O)$.
This is an additive category with cokernels and the forgetful functor $\mathrm{Strat}(X,V/O) \to \mathrm{Mod}(\mathcal O_{V}^\dagger)$ is right exact and faithful.
Actually, if $V$ is smooth in the neighborhood of $X$, then this is even an abelian category and the forgetful functor is exact.
Moreover, if $\mathcal F, G \in \mathrm{Strat}(X,V/O)$, then there exists a canonical stratification on
\[
\mathcal F \otimes_{\mathcal O_{V}^\dagger} \mathcal G \quad (\mathrm{resp.} \
\mathcal H\mathrm{om}_{\mathcal O_{V}^\dagger}(\mathcal F, \mathcal G)
\]
if $\mathcal F$ is \emph{finitely presented}).

There also exists a cohomology theory for stratifications:
let us denote by $p^{(n)} : V^{(n)} \to O$ (and $p = p^{(0)}$ for short) the structural map and by $p_{1}^{(n)} : V^{(n)} \to V$ the first projection.
Then, $\mathcal H^{k}(\mathcal F)$ is the total cohomology of a ``bicomplex''\footnote{It would be necessary to introduce more notation such as $V^{(n)}(k)$ in order to write down the higher terms.}
\[
\mathrm R p_{*} \mathcal F \overset{d^0} \to \mathrm R\varprojlim_{n} \mathrm R p^{(n)}_{*}p_{1}^{(n)\dagger} \mathcal F \overset{d^1}\to \cdots 
\]
with horizontal differentials given by $d_n^0 : \epsilon_{n} \circ {p_2^{(n)\dagger}} - {p_1^{(n)\dagger}}$, etc.
As a particular case, we have
\[
\mathcal H^{0}(\mathcal F) = \varprojlim_{n} \ker \left(\epsilon_n \circ {p_2^{(n)\dagger}} - {p_1^{(n)\dagger}} : p_{*}\mathcal F \to p^{(n)}_{*}p_{1}^{(n)\dagger} \mathcal F\right).
\]
Also, if $\mathcal F, G \in \mathrm{Strat}(X,V/O)$ with $\mathcal F$ finitely presented, then
\[
\mathrm{Hom}_{\mathrm{Strat}(X,V/O)}(\mathcal F, \mathcal G) \simeq \Gamma\left(]C[_O, \mathcal H^{0}\left(\mathcal H\mathrm om_{\mathcal O_{V}^\dagger}(\mathcal F, \mathcal G)\right)\right).
\]

Recall that we introduced in definition \ref{overst} the notion of an overconvergent stratification.
There exists an obvious forgetful functor
\begin{equation} \label{forgstr}
\mathrm{Strat}(X,V/O)^\dagger \to \mathrm{Strat}(X,V/O)
\end{equation}
which is faithful and induces a morphism in cohomology $\mathcal H^{k\dagger}(\mathcal F) \to \mathcal H^k(\mathcal F)$ (actually defined at the complex level).
Note however that there is no reason for this forgetful functor to be fully faithful (nor for the morphism in cohomology to be bijective) in general.

Now, we let $\Omega^{1\dagger}_{V}$ be the the kernel of ${\delta^{(1)}}^{\dagger}$ so that we have an exact sequence
\[
0 \to \Omega^{1\dagger}_{V} \to \mathcal O_{V^{(1)}}^\dagger \to \mathcal O_{V}^\dagger \to 0.
\]
The map
\[
{p_2^{(1)\dagger}} - {p_1^{(1)\dagger}} :  \mathcal O_V \to \mathcal O_{V^{(1)}}
\]
induces a derivation $d : \mathcal O_{V}^\dagger \to \Omega^{1\dagger}_{V}$.
Alternatively, we have $\Omega^{1\dagger}_{V} = i_X^{-1}\Omega^{1}_{V}$ and the derivation of $ \mathcal O_{V}^\dagger$ is induced by the derivation $\mathrm d$ of $\mathcal O_V$.

%%%%%%%%%%%%%%%%%%%%%%%
\begin{dfn}
A \emph{connection} on an $\mathcal O_{V}^\dagger$-module $\mathcal F$ is an $\mathcal O_{O}^\dagger$-linear map
\[
\nabla : \mathcal F \mapsto \mathcal F \otimes_{\mathcal O_{V}^\dagger} \Omega^{1\dagger}_{V}
\]
that satisfies the Leibniz rule.
\emph{Integrability} also is defined in the usual way.
A \emph{horizontal} map is an $\mathcal O_{V}^\dagger$-linear map which is compatible with the connections.
\end{dfn}

Together with horizontal maps, $\mathcal O_{V}^\dagger$-modules endowed with an integrable connection form a category $\mathrm{MIC}(X,V/O)$ which is functorial in $(X,V/O)$.
We can always build the de Rham complex
\[
\mathcal F \otimes_{\mathcal O_{V}^\dagger}\Omega^{\bullet\dagger}_{V}
\]
of an $\mathcal O_{V}^\dagger$-module $\mathcal F$ endowed with an integrable connection, and define its de Rham cohomology $\mathcal H^k_{\mathrm dR}(\mathcal F)$ as the cohomology of the complex
\[
\mathrm Rp_{\mathrm{dR}} \mathcal F := \mathrm Rp_*\left(\mathcal F \otimes_{\mathcal O_{V}^\dagger}\Omega^{\bullet\dagger}_{V}\right).
\]
If we are given a stratification $\epsilon$ on an $\mathcal O_{V}^\dagger$-module $\mathcal F$, then the map
\[
\epsilon \circ {p_2^{(1)\dagger}} - {p_1^{(1)\dagger}} : \mathcal F \to p_1^{(1)\dagger}\mathcal F
\]
induces a connection $\nabla$ on $\mathcal F$.
This provides us with a faithful functor
\[
\mathrm{Strat}(X,V/O) \to \mathrm{MIC}(X,V/O).
\]

%%%%%%%%%%%
\begin{dfn}
An integrable connection $\nabla$ on  an $\mathcal O_{V}^\dagger$-module $\mathcal F$ is said to be \emph{overconvergent} if it comes from an overconvergent stratification (via the functor \eqref{forgstr}).
\end{dfn}

We will denote by $\mathrm{MIC}(X,V/O)^\dagger$ the category of $\mathcal O_{V}^\dagger$-modules endowed with an overconvergent integrable connection.

If $V$ is smooth over $O$, then $V^{(n)}$ is (finite) flat over $V$.
More precisely, if we are given étale coordinates $\underline x := x_1, \ldots, x_d$ on $V$, and we let for all $i=1, \ldots, d$, $\xi_i := p_2^{(n)*}(x_i) - p_1^{(n)*}(x_i)$, then $\mathcal O_{V^{(n)}} \simeq \mathcal O_{V}[\underline \xi]/(\underline \xi)^{n+1}$ as an $\mathcal O_{V}$-algebra.
Here, as usual, we are using $p_1^{(n)}$ as structural map and we can define the higher divided derivatives through the formula
\[
p_2^{(n)*}: \mathcal O_{V} \to \mathcal O_{V^{(n)}}, \quad f \mapsto \sum_{\underline k \in \mathbb N^d} \underline\partial^{[\underline k]}(f) \xi^{\underline k}.
\]
Moreover, $\Omega^1_V$ is a free module on $\mathrm dx_1, \ldots, dx_d$ and we can write
\[
\mathrm d: \mathcal O_{V} \to \Omega^1_V, \quad f \mapsto \sum_{i=1}^d \partial_i(f) \mathrm dx_i.
\]
When $O$ is defined over $\mathbb Q$, then $\partial^{[\underline k]} = \frac 1{\underline k!} \underline \partial^{\underline k}$.
Using this explicit description, it is not hard to show the following:

%%%%%%%%%%%%%%%%
\begin{lem} \label{MICQ}
Assume $V$ is smooth over $O$ in the neighborhood of $X$ and $O$ is defined over $\mathbb Q$.
Then there exists an equivalence
\[
\mathrm{Strat}(X,V/O) \simeq \mathrm{MIC}(X,V/O)
\]
and, if $\mathcal F$ is an $\mathcal O_{V}^\dagger$-module endowed with a stratification, for all $k \in \mathbb N$, an isomorphism 
\[
\mathcal H^k(\mathcal F) \simeq \mathcal H^k_{\mathrm{dR}}(\mathcal F)
\]

\end{lem}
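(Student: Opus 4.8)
The plan is to work locally on $V$ and to reduce both assertions to the explicit coordinate description recalled just above the statement. The categories $\mathrm{Strat}(X,V/O)$ and $\mathrm{MIC}(X,V/O)$, as well as the two cohomology theories, are all functorial and of local nature on $V$, and the smoothness hypothesis provides, locally on $V$, a system of étale coordinates $\underline x = x_1, \ldots, x_d$. I may therefore assume that such coordinates are given, so that $\mathcal O_{V^{(n)}} \simeq \mathcal O_V[\underline\xi]/(\underline\xi)^{n+1}$ and the divided derivatives $\underline\partial^{[\underline k]}$ are available, with $\underline\partial^{[\underline k]} = \frac 1{\underline k!}\underline\partial^{\underline k}$ precisely because $O$ is defined over $\mathbb Q$.

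First I would construct a quasi-inverse to the faithful functor $\mathrm{Strat}(X,V/O) \to \mathrm{MIC}(X,V/O)$. Starting from an $\mathcal O_V^\dagger$-module $\mathcal F$ equipped with an integrable connection $\nabla = \sum_i \partial_i \otimes \mathrm dx_i$, I define the Taylor isomorphisms by $\epsilon_n(m) = \sum_{|\underline k| \leq n} \underline\partial^{[\underline k]}(m)\,\underline\xi^{\underline k}$ modulo $(\underline\xi)^{n+1}$. These are manifestly compatible in $n$, and their degree-one part recovers $\nabla$; the point to verify is the cocycle condition, which—through the comultiplication under which the coordinate differences $\xi_i$ add—is exactly equivalent to the commutation relations $\partial_i\partial_j = \partial_j\partial_i$, that is, to integrability. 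Conversely, the same coassociativity forces the coefficients of \emph{any} overconvergent stratification to coincide with $\frac 1{\underline k!}\underline\partial^{\underline k}$ once its linear term $\nabla$ is known, so that a stratification is determined by its associated connection. This last step is where the hypothesis that $O$ is defined over $\mathbb Q$ is indispensable, and it shows that the two constructions are mutually inverse.

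For the cohomological statement I would identify the stratification complex with the de Rham complex by a formal Poincaré lemma. Since $V$ is smooth, each $V^{(n)}$ is finite flat over $V$, whence $\mathrm Rp^{(n)}_* = p^{(n)}_*$ and $p^{(n)}_* p_1^{(n)\dagger}\mathcal F \simeq \mathrm Rp_*\bigl(\mathcal F \otimes_{\mathcal O_V^\dagger} \mathcal O_V^\dagger[\underline\xi]/(\underline\xi)^{n+1}\bigr)$. Passing to the limit over $n$, the ``bicomplex'' defining $\mathcal H^k(\mathcal F)$ becomes the image under $\mathrm Rp_*$ of the linearisation of $\mathcal F \otimes_{\mathcal O_V^\dagger} \Omega^{\bullet\dagger}_V$. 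The formal de Rham complex of $\mathcal O_V^\dagger[[\underline\xi]]$ is, over $\mathbb Q$, a resolution of $\mathcal O_V^\dagger$, the contracting homotopy being given by integration in the $\xi_i$ (which again requires denominators); tensoring this homotopy equivalence with $\mathcal F$ and applying $\mathrm Rp_*$ yields the isomorphism $\mathcal H^k(\mathcal F) \simeq \mathcal H^k_{\mathrm{dR}}(\mathcal F)$, compatibly with the equivalence of categories established in the previous step.

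The main obstacle I anticipate is the cohomological comparison rather than the equivalence of categories. One must keep careful track of the higher infinitesimal neighbourhoods $V^{(n)}(k)$ that occur in the upper rows of the bicomplex and of the derived inverse limit $\mathrm R\varprojlim_n$, and check that the formal Poincaré homotopy assembles into an honest morphism of complexes that survives $\mathrm Rp_*$. As with the equivalence, every step depends essentially on the divided-power denominators being available, that is, on $O$ being defined over $\mathbb Q$.
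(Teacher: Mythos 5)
Your proposal is correct and follows essentially the same route as the paper: the paper's proof simply says ``Standard'' and records the local correspondence $\nabla(s) = \sum_i \nabla_i(s) \otimes \mathrm dx_i \Leftrightarrow \epsilon_n(s) = \sum_{|\underline k| \leq n} \frac 1{\underline k!}\underline\nabla^{\underline k}(s) \otimes \underline\xi^{\underline k}$ in étale coordinates, which is exactly your Taylor-isomorphism construction, with the cocycle condition matching integrability and the denominators supplied by the hypothesis over $\mathbb Q$. Your added care about the cohomological comparison (formal Poincaré lemma with integration homotopy, tracking $\mathrm R\varprojlim_n$) fills in details the paper leaves implicit under the word ``standard,'' but it is the same argument.
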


\begin{proof}
Standard. Locally, the equivalence is given by
\[
\nabla(s) = \sum_{i=1}^d \nabla_i(s) \otimes \mathrm dx_i \Leftrightarrow \epsilon_n(s) = \sum_{|\underline k| \leq n} \frac 1{\underline k!} \underline \nabla^{\underline k}(s) \otimes \underline \xi^{\underline k}. \qedhere
\] 
\end{proof}

We can gather our results in a general perspective.
If we are given a morphism of overconvergent spaces $(X,V) \to (C,O)$ with $V$ flat locally of finite type over $O$, then there exists a commutative diagram
\begin{equation} \label{gloper}
\xymatrix{
\mathrm{Cris}(X/O)^\dagger \ar[r] \ar[d]^{(\star)} &  \mathrm{MIC}(X,V/O)^\dagger \ar@{^{(}->}[r] &\mathrm{MIC}(X,V/O)
\\ \mathrm{Cris}(X,V/O)^\dagger \ar[r]^-\simeq &  \mathrm{Strat}(X,V/O)^\dagger \ar[r] \ar@{->>}[u] &\mathrm{Strat}(X,V/O) \ar[u]^{(\star)}.}
\end{equation}
Moreover, when $V$ is a geometric materialization for $X$ over $O$ defined over $\mathbb Q$, then the $(\star)$'s also are equivalences.

%%%%%%%%%%%%%%
\subsection{Differential operators}

We will now develop the theory of \emph{overconvergent} differential operators.
Again, what follows is quite standard and we will omit to write down the details.

Let $(f,u) : (X,V) \to (C,O)$ be a morphism of overconvergent spaces with $V$ locally of finite type over $O$.

Recall that we denote by $V^{(n)}$ the $n$th infinitesimal neighborhood of $V$, we use the diagonal map $\delta^{(n)} : V \to V^{(n)}$ to identify the underlying spaces of $V^{(n)}$ and $V$ and we denote by $p_{1}^{(n)}, p_{2}^{(n)} : V^{(n)} \to V$ the projections.
In order to keep track of the ring structure, it will be convenient here to maintain $p_{1*}^{(n)}, p_{2*}^{(n)}$ in the notations (even if they act trivially).

%%%%%%%%%
\begin{dfn}
An $\mathcal O_O^\dagger$-linear map $D : \mathcal F \to \mathcal G$ between  two $\mathcal O_{V}^\dagger$-modules is a \emph{differential operator} of order at most $n$ if it factors as
\[
D : \mathcal F \to p_{2*}^{(n)}p_{2}^{(n)\dagger}\mathcal F \simeq p_{1*}^{(n)}p_{2}^{(n)\dagger}\mathcal F \overset{\widetilde D}\to \mathcal G
\]
where the first map is the adjunction map and $\widetilde D$ is $\mathcal O_{V}^\dagger$-linear. 
\end{dfn}

Be careful that the middle isomorphism is \emph{not} $\mathcal O_{V}^\dagger$-linear so that $D$ is not $\mathcal O_{V}^\dagger$-linear either.
Alternatively, we could write
\[
D : \mathcal F \to \mathcal O_{V^{(n)}}^\dagger \otimes'_{\mathcal O_V^\dagger} \mathcal F \to \mathcal G
\]
because our spaces share the same underlying topological space.
Then, the first map is ``right linear'' and the second one is ``left linear''.
Note also that $\widetilde D$ is uniquely determined by $D$.
This notion is functorial downstairs in the sense that when $(g,v) : (X',V') \to (X,V)$ is a pullback along some morphism $(C',O') \to (C,O)$, then $D$ pulls back to a differential operator $]g[_V^\dagger D : ]g[_V^\dagger\mathcal F \to ]g[_V^\dagger \mathcal G$.
Also, when $O$ is analytic, if $\gamma : Y \hookrightarrow X$ is a formal embedding, then both $]\gamma[^{-1}$ and $]\gamma[_!$ send a differential operator to a differential operator.

We will denote by
\[
\mathcal D\mathrm{iff}_n(\mathcal F,\mathcal G) \simeq \mathcal H\mathrm{om}_{\mathcal O_{V}^\dagger}(p_{1*}^{(n)}p_{2}^{(n)\dagger}\mathcal F, \mathcal G)
\]
the $\mathcal O_{V}^\dagger$-module of differential operators of order at most $n$ and set
\[
\mathcal D\mathrm{iff}(\mathcal F,\mathcal G) =\bigcup_{n \in \mathbb N} \mathcal D\mathrm{iff}_n(\mathcal F,\mathcal G).
\]
We will simply write $\mathcal D\mathrm{iff}_n(\mathcal F)$ and $\mathcal D\mathrm{iff}(\mathcal F)$ and when $\mathcal F = \mathcal G$.
A standard example of differential operators of order $1$ is given by the differentials in the de Rham complex $\mathcal F \otimes_{\mathcal O_{V}^\dagger}\Omega^{\bullet\dagger}_{V}$ of an $\mathcal O_{V}^\dagger$-module  $\mathcal F$ endowed with an integrable connection.
The composition of two differential operators $D$ and $E$ of orders at most $n$ and $m$ respectively, is a differential operator of order at most $n+m$ and we have
\[
\widetilde{D \circ E} = \widetilde D \circ {p_2^{(n)\dagger}}(\widetilde E).
\]
As a byproduct, we meet the $\mathcal O_{V}^\dagger$-algebra of differential operators  which can be defined as
\[
\mathcal D_{V^\dagger} = \mathcal D\mathrm{iff}\left(\mathcal O_{V}^\dagger \right)
\]
(we keep the dagger downstairs in order to avoid confusion with Berthelot's $\mathcal D^\dagger$ ring which is much more sophisticated).
Any stratification on an $\mathcal O_{V}^\dagger$-module $\mathcal F$ will define a structure of $\mathcal D_{V^\dagger}$-module on $\mathcal F$.
More precisely, the composite map
\[
\mathcal O_{V^{(n)}}^\dagger \otimes'_{\mathcal O_{V}^\dagger} \mathcal F \overset{\epsilon_n}\simeq  \mathcal F \otimes_{\mathcal O_{V}^\dagger} \mathcal O_{V^{(n)}}^\dagger \to \mathcal H\mathrm{om}_{\mathcal O_{V}^\dagger}(\mathcal H\mathrm{om}_{\mathcal O_{V}^\dagger}(\mathcal O_{V^{(n)}}^\dagger,  \mathcal O_{V}^\dagger), \mathcal F)
\]
provides by adjunction a map
\[
\xymatrix{
 \mathcal D\mathrm{iff}_n\left(\mathcal O_{V}^\dagger \right) \ar@{=}[d] \ar[r] &  \mathcal D\mathrm{iff}_n\left(\mathcal F\right)  \ar@{=}[d]
\\
\mathcal H\mathrm{om}( \mathcal O_{V^{(n)}}^\dagger,  \mathcal O_{V}^\dagger) \ar[r] & \mathcal H\mathrm{om}( \mathcal O_{V^{(n)}}^\dagger \otimes'_{\mathcal O_{V}^\dagger} \mathcal F, \mathcal F)}
\]
which defines a structure of $\mathcal D_{V^\dagger}$-module on $\mathcal F$.
When $V$ is smooth in the neighborhood of $X$, this is an equivalence of categories.
This formally follows from the fact that $V^{(n)}$ is then finite \emph{flat} over $V$.
Actually, if we are given étale coordinates $\underline x$ on $V$, then the divided higher derivatives $\underline \partial^{[\underline k]}$ for $\underline k \in \mathbb N^d$ are differential operators of order $|\underline k|$ on $\,]X[_V$ and they form a basis for $\mathcal D_{V^\dagger}$.
Note that we can enhance our diagram \eqref{gloper} by adding the morphism
\[
  \mathrm{Strat}(X,V/O)^\dagger \overset {(\star)} \longrightarrow \mathrm{Mod}(\mathcal D_{V^\dagger}).
\]

%%%%%%%%%%%%
\subsection{Linearization of differential operators}

We will explain now how one can build a canonical complex on an overconvergent site from a complex of differential operators.
This technic will allow us to interpret later the cohomology of a crystal as a de Rham cohomology.

Let $(f,u) : (X,V) \to (C,O)$ be a morphism of overconvergent spaces with $V$ locally of finite type over $O$.
We will denote as usual by $j_V : (X,V) \mapsto (X,V/O)^\dagger$ the canonical map.

We first need to extend a bit our notion of a differential operator:

%%%%%%%%%
\begin{dfn}
An $\mathcal O_{(C,O)}^\dagger$-linear map $D : E \to E'$ between  two $\mathcal O_{(X,V)}^\dagger$-modules is a \emph{differential operator} of order at most $n$ if it factors as
\[
D : E \to p_{2*}^{(n)}(p_{2}^{(n)})^{-1}E \simeq p_{1*}^{(n)}(p_{2}^{(n)})^{-1}E \overset{\widetilde D}\to E'
\]
where the first map is the adjunction map and $\widetilde D$ is $\mathcal O_{(X,V)}^\dagger$-linear. 
\end{dfn}

Then, we have the following:

%%%%%%%%%%%%%%%
\begin{lem} \label{eqdif}
The functors $\varphi_{V*}$ and $\varphi_V^*$ induce an equivalence between the category of crystals and differential operators on $(X,V)$ and the category of $\mathcal O_V^\dagger$-modules and differential operators.
\end{lem}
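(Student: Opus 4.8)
The plan is to build directly on lemma \ref{eqrep}, which already provides the equivalence on objects: $\varphi_V^*$ is fully faithful with essential image $\mathrm{Cris}(X,V)$, and $\varphi_{V*}$ is a quasi-inverse. What remains is to check that, under these two functors, differential operators are carried to differential operators bijectively. The key reduction is that on either side a differential operator $D$ of order at most $n$ amounts to the same datum as its $\mathcal O^\dagger$-linear companion $\widetilde D$ (which is uniquely determined by $D$), the operator being recovered by precomposing $\widetilde D$ with the canonical adjunction map. So I would reduce the whole statement to a bijection between the linear companions $\widetilde D$ that is compatible with those adjunction maps.

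The heart of the matter is then to identify the two sources of $\widetilde D$. On the tube the relevant object is $p_{1*}^{(n)}p_2^{(n)\dagger}\mathcal F$, whereas on $(X,V)$ it is $p_{1*}^{(n)}(p_2^{(n)})^{-1}E$ with $E=\varphi_V^*\mathcal F$. Since the projections $p_1^{(n)},p_2^{(n)}\colon V^{(n)}\to V$ are finite universal homeomorphisms, lemma \ref{univhome} applies and gives $p_{1*}^{(n)}\varphi_{V^{(n)}}^*\simeq \varphi_V^*p_{1*}^{(n)}$ (the derived pushforward collapsing to the ordinary one because the map is a homeomorphism). Combining this with the ringed-site version of lemma \ref{comphi}, namely $\varphi_{V^{(n)}}^*\circ p_2^{(n)\dagger}=(p_2^{(n)})^{-1}\circ\varphi_V^*$, one obtains
\[
\varphi_V^*\left(p_{1*}^{(n)}p_2^{(n)\dagger}\mathcal F\right)\simeq p_{1*}^{(n)}(p_2^{(n)})^{-1}\varphi_V^*\mathcal F.
\]
This is exactly the compatibility needed to match the two definitions of a differential operator.

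Granting this identification, full faithfulness of $\varphi_V^*$ yields, for all $\mathcal O_V^\dagger$-modules $\mathcal F,\mathcal G$, a bijection between $\mathcal O_V^\dagger$-linear maps $p_{1*}^{(n)}p_2^{(n)\dagger}\mathcal F\to\mathcal G$ and $\mathcal O_{(X,V)}^\dagger$-linear maps $p_{1*}^{(n)}(p_2^{(n)})^{-1}\varphi_V^*\mathcal F\to\varphi_V^*\mathcal G$, that is, an isomorphism $\mathcal D\mathrm{iff}_n(\mathcal F,\mathcal G)\simeq\mathcal D\mathrm{iff}_n(\varphi_V^*\mathcal F,\varphi_V^*\mathcal G)$ at the level of linear companions. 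I would then verify that this bijection is compatible with the order filtration and with composition, using the formula $\widetilde{D\circ E}=\widetilde D\circ p_2^{(n)\dagger}(\widetilde E)$ and its counterpart on $(X,V)$, so that $\varphi_V^*$ and $\varphi_{V*}$ indeed define the asserted equivalence of categories.

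The main obstacle is bookkeeping rather than conceptual: one must check that $\varphi_V^*$ sends the canonical adjunction (unit) map $\mathcal F\to p_{1*}^{(n)}p_2^{(n)\dagger}\mathcal F$ to the adjunction map $E\to p_{1*}^{(n)}(p_2^{(n)})^{-1}E$, so that a genuine differential operator $D$, and not merely its companion $\widetilde D$, is carried to a differential operator. This comes down to confirming that the isomorphism of lemma \ref{univhome} is compatible with the units and counits of the adjunctions attached to $p_1^{(n)}$ and $p_2^{(n)}$, which is delicate to track but otherwise routine; once it is in place, the equivalence, together with its compatibility with composition and the order filtration, follows formally.
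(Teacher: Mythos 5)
Your proposal is correct and follows essentially the same route as the paper: the proof there consists precisely of the identification $\varphi_V^*\bigl(p_{i*}^{(n)}p_{2}^{(n)\dagger}\mathcal F\bigr) \simeq p_{i*}^{(n)}(p_{2}^{(n)})^{-1}\varphi_V^*\mathcal F$ (for $i=1,2$), obtained by applying lemma \ref{univhome} to the finite universal homeomorphisms $p_i^{(n)}$ together with the ringed-site version of lemma \ref{comphi}, exactly as in your central display. The remaining bookkeeping you describe (adjunction units, composition, order filtration) is left implicit in the paper's proof.
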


\begin{proof}
This is \emph{not} completely trivial.
The point is to show that if $\mathcal F$ is an $\mathcal O_V^\dagger$-module and $i=1,2$, then
\[
p_{i*}^{(n)}(p_{2}^{(n)})^{-1}\varphi_V^*\mathcal F = \varphi_V^*p_{i*}^{(n)}p_{2}^{(n)\dagger}\mathcal F.
\]
Since $p_i$ is a finite universal homeomorphism, this follows from lemma \ref{univhome}.
\end{proof}

The next lemma is the first step for the fundamental process of linearization of differential operators.

%%%%%%%%%%%%%%
\begin{lem} \label{lin}
If $E$ and $E'$ are two $\mathcal O_{(X,V)}^\dagger$-modules and $D : E \to E'$ is a differential operator, then $j_{V*}D$ is $\mathcal O^\dagger_{X,V/O}$-linear.
\end{lem}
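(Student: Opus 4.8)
The plan is to check $\mathcal O^\dagger_{X,V/O}$-linearity after realization. By proposition \ref{realsh2} a morphism of $\mathcal O^\dagger_{X,V/O}$-modules is the same datum as a morphism of the underlying abelian sheaves which is linear on every realization; since $j_{V*}D$ is manifestly a morphism of abelian sheaves, it suffices to show that each realization $(j_{V*}D)_{V'}$ is $\mathcal O^\dagger_{V'}$-linear, where $(X',V')$ ranges over the overconvergent spaces above $(X,V/O)^\dagger$. So I fix such an $(X',V')$ and set $W := V'\times_O V$, which is representable because $V$ is locally of finite type over $O$.

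First I would compute the realization. The square
\[
\xymatrix{(X',W) \ar[r]^-{p_1} \ar[d]_-{p_2} & (X',V') \ar[d] \\ (X,V) \ar[r]_-{j_V} & (X,V/O)^\dagger}
\]
is $2$-cartesian with representable top arrow, so lemma \ref{cartcdb}, applied to the morphism $D$ rather than to an object, yields
\[
(j_{V*}D)_{V'} \simeq\ ]p_1[_*\,(D_W),
\]
where $D_W$ denotes the realization of $D$ on $(X',W)$ taken along the projection $p_2$ to $(X,V)$.

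The decisive step is to recognize $D_W$ as a differential operator relative to the \emph{new} base $V'$. Indeed $p_2:(X',W)\to (X,V)$ is precisely the pullback of $(X,V)$ along the structural morphism $(X',V')\to(C,O)$, so by the functoriality downstairs of differential operators recalled above, $D$ pulls back to a differential operator $D_W:E_W\to E'_W$ over the base $(X',V')$; concretely the infinitesimal neighbourhoods $W^{(n)}=V^{(n)}\times_O V'$ computing $D_W$ are the base changes of the $V^{(n)}$ computing $D$, and the factorization of $D$ through them base-changes termwise. By the very definition of a differential operator over $V'$, the operator $D_W$ is then $]p_1[^{-1}\mathcal O^\dagger_{V'}$-linear, where $]p_1[$ is the structural projection $W\to V'$.

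Finally, $]p_1[_*$ sends a $]p_1[^{-1}\mathcal O^\dagger_{V'}$-linear morphism to an $\mathcal O^\dagger_{V'}$-linear one, by the adjunction between $]p_1[^{-1}$ and $]p_1[_*$ (multiplication by a section $a$ of $\mathcal O^\dagger_{V'}$ is intertwined with multiplication by $]p_1[^{-1}a$). As $\mathcal O^\dagger_{V'}=(\mathcal O^\dagger_{X,V/O})_{V'}$, this shows that $(j_{V*}D)_{V'}$ is $(\mathcal O^\dagger_{X,V/O})_{V'}$-linear for every $(X',V')$, whence $j_{V*}D$ is $\mathcal O^\dagger_{X,V/O}$-linear. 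The one point I would treat with care is the base-change identification in the third paragraph, namely that realization commutes with the formation of the infinitesimal neighbourhoods so that $D_W$ is genuinely relative to $V'$ and not merely to $O$: this is exactly where the failure of $D$ to be $\mathcal O^\dagger_V$-linear gets traded for linearity over the pulled-back base ring, and it is the heart of the statement.
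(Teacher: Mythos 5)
Your proof is correct in substance but follows a genuinely different route from the paper's, which is much shorter and never passes to realizations. The paper applies $j_{V*}$ directly to the defining factorization $E \to p_{2*}^{(n)}(p_{2}^{(n)})^{-1}E \simeq p_{1*}^{(n)}(p_{2}^{(n)})^{-1}E \overset{\widetilde D}\to E'$ and observes that, since $j_{V}\circ p_{1}^{(n)} = j_{V}\circ p_{2}^{(n)} = j_{V^{(n)}}$, the two pushforwards $j_{V*}p_{1*}^{(n)}$ and $j_{V*}p_{2*}^{(n)}$ are literally \emph{equal} to $j_{V^{(n)}*}$; hence the structure-swapping middle isomorphism, which is the only non-linear piece of $D$, becomes an equality of $\mathcal O^\dagger_{X,V/O}$-modules and linearity of $j_{V*}D$ is immediate. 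Your argument unwinds exactly the same phenomenon realization by realization: after the base-change formula $(j_{V*}D)_{V'}\simeq\,]p_1[_*D_W$ from lemma \ref{cartcdb}, the role of the identity $j_{V}\circ p_{1}^{(n)}=j_{V}\circ p_{2}^{(n)}$ is played by the identity $p_1\circ q_1^{(n)}=p_1\circ q_2^{(n)}$ for the two infinitesimal projections $q_1^{(n)}, q_2^{(n)} : W^{(n)}\rightrightarrows W$, which are both morphisms over $V'$. The paper's route buys brevity (three lines, no base change, no infinitesimal bookkeeping); yours buys an explicit description of the realizations of $j_{V*}D$, in the spirit of how the paper later handles $\mathrm RL$.

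One caution on your third paragraph: the phrase ``by the very definition of a differential operator over $V'$'' is slightly circular, since in the paper's definition linearity over the base is a \emph{precondition}, not a consequence. Moreover, when $E$ is not a crystal, $D_W$ is not literally a tube-level differential operator in the paper's sense: the middle term of the realized factorization is the realization of $(p_{2}^{(n)})^{-1}E$ on $(X',W^{(n)})$, which differs from $q_2^{(n)\dagger}E_W$ in general (this is precisely why lemma \ref{eqdif} requires crystals). What is true, and what your argument actually needs, is that each term of the realized factorization is linear over $]p_1[^{-1}\mathcal O^\dagger_{V'}$: the adjunction map because $q_2^{(n)}$ is a $V'$-morphism, the middle map because the $q_1^{(n)}$- and $q_2^{(n)}$-module structures restrict to the same $]p_1[^{-1}\mathcal O^\dagger_{V'}$-structure, and $\widetilde D_W$ because it is $\mathcal O_W^\dagger$-linear. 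With that rephrasing your proof goes through.
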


\begin{proof}
It is sufficient to show that if $E$ is an $\mathcal O_{(X,V)}^\dagger$-module, then the isomorphism
\[
j_{V*}p_{2*}^{(n)}(p_{2}^{(n)})^{-1} E \simeq  j_{V*}p_{1*}^{(n)}(p_{2}^{(n)})^{-1}E
\]
is $\mathcal O^\dagger_{X,V/O}$-linear.
But, for $i=1,2$, we have $j_{V} \circ p_{i}^{(n)} = j_{V^{(n)}}$ and we actually have an equality.
\end{proof}

%%%%%%%%%%%%%%
\begin{lem} \label{enough}
The category of $\mathcal O_{(X,V)}^\dagger$-modules and differential operators has enough injectives.
\end{lem}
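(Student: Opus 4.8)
The plan is to realize the category $\mathcal C$ of $\mathcal O_{(X,V)}^\dagger$-modules and differential operators as the source of an exact faithful functor into a genuine module category that has enough injectives, and then to import injectives across that functor by means of a right adjoint. First I would record the routine (but essential) preliminary facts: $\mathcal C$ is abelian, and a sequence is exact in $\mathcal C$ exactly when its underlying sequence of abelian sheaves is exact. The point is that an operator of order $0$ is $\mathcal O^\dagger_{(X,V)}$-linear, that the sheaf-theoretic kernel and cokernel of a differential operator are again objects of $\mathcal C$ with their canonical inclusion and projection, and that these represent the kernel and cokernel in $\mathcal C$ (any operator killed by, or vanishing on, the relevant map already does so on underlying sheaves). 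Here the one nontrivial input is that the jet functors $J^n := p_{1*}^{(n)}(p_2^{(n)})^{-1}$, through which every differential operator factors by definition, are \emph{exact}: the inverse image $(p_2^{(n)})^{-1}$ is exact, and $p_{1*}^{(n)}$ is exact because $p_1^{(n)}$ is a finite universal homeomorphism, so that it induces a homeomorphism on each tube and the corresponding pushforward is exact (this is the content of lemma \ref{univhome}, together with lemma \ref{cartcdb}). In particular the forgetful functor $\mathrm{for}\colon \mathcal C \to \mathrm{Mod}(\mathcal O^\dagger_{(C,O)})$, sending a module to its underlying $\mathcal O^\dagger_{(C,O)}$-module and a differential operator to its underlying linear map, is exact and faithful.

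Next I would transfer injectives along $\mathrm{for}$, exactly as one does for $\mathcal D$-modules. The target $\mathrm{Mod}(\mathcal O^\dagger_{(C,O)})$ is a module category on a ringed topos, hence has enough injectives. The heart of the argument is to construct a right adjoint $R$ to $\mathrm{for}$, playing the role of the co-induction $\mathcal H\mathrm{om}_{\mathcal O}(\mathcal D,-)$ familiar from the theory of $\mathcal D$-modules. Concretely, writing $\mathcal P^n := J^n\mathcal O_{(X,V)}^\dagger$ for the sheaf of principal parts, I would set
\[
R(M) := \varprojlim_n \mathcal H\mathrm{om}_{\mathcal O^\dagger_{(C,O)}}(\mathcal P^n, M),
\]
equipped with the $\mathcal O_{(X,V)}^\dagger$-module structure coming from the second structure on $\mathcal P^n$, and verify the adjunction isomorphism $\mathcal D\mathrm{iff}(E, R(M)) \simeq \mathrm{Hom}_{\mathcal O^\dagger_{(C,O)}}(E, M)$, natural in $E \in \mathcal C$: a differential operator $E \to R(M)$ of order $\le n$ is an $\mathcal O_{(X,V)}^\dagger$-linear map $J^nE \to R(M)$, and one identifies these, compatibly as $n$ grows, with arbitrary $\mathcal O^\dagger_{(C,O)}$-linear maps $E \to M$. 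The unit $\eta_E \colon E \to R(\mathrm{for}\,E)$ is then a monomorphism, since its component in degree $n=0$ recovers the identity.

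Granting the adjunction, the conclusion is formal. Because $\mathrm{for}$ is exact, its right adjoint $R$ preserves injectives; because $R$ is a right adjoint it is left exact and preserves monomorphisms. Thus, given $E \in \mathcal C$, I would choose a monomorphism $\mathrm{for}(E) \hookrightarrow I$ into an injective $\mathcal O^\dagger_{(C,O)}$-module $I$, apply $R$ to get $R(\mathrm{for}\,E) \hookrightarrow R(I)$, and precompose with the unit to obtain a monomorphism $E \hookrightarrow R(I)$ with $R(I)$ injective in $\mathcal C$; this gives enough injectives. The main obstacle, and the only place where real work is needed, is the construction of $R$ and the verification of the adjunction $\mathrm{for} \dashv R$ — that is, that differential operators into $R(M)$ are precisely linear maps into $M$. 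This is where the special geometry of the infinitesimal neighbourhoods $V^{(n)}$ (their projections to $V$ being finite universal homeomorphisms, so that the $J^n$ are exact and the passage to the limit over $n$ is well behaved) must be exploited, rather than any purely formal categorical input. As an alternative to the co-induction, one could instead linearize: by lemma \ref{lin} the functor $j_{V*}$ carries differential operators to $\mathcal O^\dagger_{X,V/O}$-linear maps and embeds $\mathcal C$ into $\mathrm{Mod}(\mathcal O^\dagger_{X,V/O})$, and one would then transfer injectives through that embedding; but establishing the requisite full faithfulness and adjunction there seems no easier than constructing $R$ directly.
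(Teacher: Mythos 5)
Your argument does not go through, and the place where it fails is exactly the step you flag as ``the only place where real work is needed'': the adjunction $\mathrm{for}\dashv R$ does not exist, and your co-induction formula does not satisfy the identity you need. A first, minor, point is directional: the transition maps $\mathcal P^{n+1}\twoheadrightarrow\mathcal P^n$ make the $\mathcal H\mathrm{om}_{\mathcal O^\dagger_{(C,O)}}(\mathcal P^n,M)$ into a \emph{direct} system, so your $\varprojlim_n$ should at best be a $\varinjlim_n$. The serious point is that, with $R(M)$ so defined, the natural map sending a base-linear $\phi : E\to M$ to the order-zero morphism $\Psi(\phi): E \to R(M)$, $e \mapsto (p \mapsto \phi(\bar p\,e))$ (where $\bar p$ is the image of $p$ under the augmentation $\mathcal P^n \to \mathcal O^\dagger_{(X,V)}$), is \emph{not} a bijection onto differential operators $E \to R(M)$: if $\partial$ is a derivation (take $E = \mathcal O^\dagger_{(X,V)}$ and an étale coordinate), then $\Psi(\phi)\circ\partial$ and $\Psi(\phi\circ\partial)$ are distinct differential operators into $R(M)$ --- their difference is $e \mapsto (p \mapsto \phi(\partial(\bar p)\,e))$, nonzero by the Leibniz rule --- although both have the same image $\phi\circ\partial$ under composition with the counit (evaluation at $1$). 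So $\Psi(\phi)\circ\partial$ cannot be in the image of $\Psi$, and the claimed identification of differential operators into $R(M)$ with base-linear maps out of $E$ fails. Structurally, this was to be expected: your category $\mathcal C$ is not a category of modules over any sheaf of rings (under $E \mapsto \mathcal D_{V^\dagger}\otimes_{\mathcal O^\dagger_V} E$ it is equivalent to \emph{induced} $\mathcal D$-modules, and a co-induced module $\mathcal H\mathrm{om}(\mathcal D_{V^\dagger},M)$ is not induced), so restriction/co-induction formalism simply does not apply. Your preliminary claim that $\mathcal C$ is abelian with sheaf-theoretic kernels and cokernels is also false: the kernel of the derivation $\mathrm d : \mathcal O^\dagger_V \to \Omega^{1\dagger}_V$ consists of locally constant sections and is not an $\mathcal O^\dagger_V$-submodule, hence is not even an object of $\mathcal C$.

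The idea you are missing --- and it reduces the lemma to two lines --- is that no new injective objects need to be manufactured: an injective $\mathcal O_{(X,V)}^\dagger$-\emph{module} $I$ is already injective in the category of modules and differential operators. Indeed, a differential operator $F \to I$ of order at most $n$ is the same thing as an $\mathcal O_{(X,V)}^\dagger$-linear map $p_{1*}^{(n)}(p_{2}^{(n)})^{-1}F \to I$, and the jet functor $p_{1*}^{(n)}(p_{2}^{(n)})^{-1}$ is exact (precisely the exactness of $p_{1*}^{(n)}$ and $(p_{2}^{(n)})^{-1}$ that you record, but deploy elsewhere). So given an $\mathcal O_{(X,V)}^\dagger$-linear injection $u : F' \hookrightarrow F$ and a differential operator $D : F' \to I$ of order $\le n$, one applies the jet functor to get an injection $p_{1*}^{(n)}(p_{2}^{(n)})^{-1}F' \hookrightarrow p_{1*}^{(n)}(p_{2}^{(n)})^{-1}F$, extends the linear map $\widetilde D$ along it using injectivity of $I$ in $\mathrm{Mod}(\mathcal O_{(X,V)}^\dagger)$, and composes with the adjunction map $F \to p_{1*}^{(n)}(p_{2}^{(n)})^{-1}F$ to obtain an extension of $D$ to $F$. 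Since every object $E$ admits a linear injection $E \hookrightarrow I$ into an injective module, and such an injection is in particular a monomorphism of $\mathcal C$, this yields enough injectives. This is the paper's proof: it uses the same exactness ingredient as your proposal, but spends it on showing that the old injectives still serve, rather than on a co-induction adjoint that does not exist.
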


\begin{proof}
If we are given an injective $\mathcal O_{(X,V)}^\dagger$-linear morphism $E \hookrightarrow I$ where $I$ is an injective module, then we get an injective $\mathcal O_{(X,V)}^\dagger$-linear morphism
\[
p_{1*}^{(n)}(p_{2}^{(n)})^{-1}E \hookrightarrow p_{1*}^{(n)}(p_{2}^{(n)})^{-1}I
\]
by exactness of $p_{1*}^{(n)}$ and $(p_{2}^{(n)})^{-1}$.
The assertion follows.
\end{proof}

Thanks to lemmas \ref{lin} and \ref{enough}, we can make the following definition:

%%%%%%%%%%%%
\begin{dfn}
If $\mathcal F$ is a complex of $\mathcal O_{V}^\dagger$-modules and differential operators, then its \emph{(derived) linearization} is the complex of $\mathcal O^\dagger_{X,V/O}$-modules
\[
\mathrm{R}L\mathcal F := \mathrm Rj_{V*}\varphi_V^*\mathcal F.
\]
\end{dfn}

We could also introduce the notion of (underived) linearization $L\mathcal F$ but then $\mathrm RL$ would not be the  derived functor of $L$.
We can make a list of basic properties:

%%%%%%%%%%%%%%%%
\begin{lem} \label{RLstuff}
Let $\mathcal F$ be a complex of $\mathcal O_{V}^\dagger$-modules and differential operators.
Then,
\begin{enumerate}
\item  $\mathrm Rp_{X,V/O*}\mathrm RL\mathcal F \simeq  \mathrm R]f[_{u*}\mathcal F$.
\item  If $(X',V')$ is an overconvergent space over $(X,V/O)^\dagger$, then
\[
(\mathrm{R}L\mathcal F)_{V'} = \mathrm R]p_{1}[_*]p_2[^\dagger \mathcal F
\]
where $p_1 : V' \times_O V \to V'$ and $p_2 : V' \times_O V \to V$ denote the projections.
\item If $(g,v) : (X',V') \to (X,V)$ is a pullback along some morphism $(C',O') \to (C,O)$, then $(\mathrm RL\mathcal F)_{|(X',V')} \simeq \mathrm RL]g[_v^\dagger\mathcal F$.
\item If $\gamma : Y \hookrightarrow X$ is a formal embedding, then $(\mathrm RL\mathcal F)_{|Y} \simeq \mathrm RL\mathcal F_{|\,]Y[_V}$.
\item Assume $O$ is analytic and let $\alpha : U \hookrightarrow X$ be a formal open embedding.
If $\mathcal G$ is a complex of $\mathcal O_V^\dagger$-modules and differential operators on $U$, then $\alpha_*\mathrm RL\mathcal G \simeq \mathrm RL]\alpha[_*\mathcal G$.
\end{enumerate}
\end{lem}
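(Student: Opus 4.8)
The plan is to read all five assertions off the definition $\mathrm RL\mathcal F=\mathrm Rj_{V*}\varphi_V^*\mathcal F$, using the realization machinery of Section~2 together with lemmas~\ref{eqdif}, \ref{lin} and \ref{enough} to make sense of $\varphi_V^*$ and of the derived pushforward $\mathrm Rj_{V*}$ on complexes of \emph{modules and differential operators}. Since the family of realizations is conservative (corollary~\ref{comlim}), the identities (2)--(5) may be checked after realizing on an arbitrary overconvergent space over the relevant site, which reduces each of them to a formula already established on the tubes; only the global statement (1) is proved by composing structural morphisms. Throughout, lemma~\ref{lin} is what ensures that the complexes produced by $j_{V*}$ (resp.\ $]p_1[_*$) are genuinely $\mathcal O^\dagger$-linear even though $\mathcal F$ has differential operators as differentials.

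For (1) I would write $p_{(X,V/O)^\dagger}\circ j_V=p_{(X,V)/O}$, so that $\mathrm Rp_{(X,V/O)^\dagger*}\mathrm RL\mathcal F=\mathrm Rp_{(X,V)/O*}\varphi_V^*\mathcal F$; the example following definition~\ref{absco} identifies the right-hand side with $\mathrm R]f[_{u*}(\varphi_V^*\mathcal F)_V$, and by lemma~\ref{eqdif} one has $(\varphi_V^*\mathcal F)_V=\varphi_{V*}\varphi_V^*\mathcal F=\mathcal F$, whence the claim. For (2) I would realize on $(X',V')$ and invoke the formula for $\mathrm Rj_{V*}$ from the examples after proposition~\ref{bicom}, giving $(\mathrm RL\mathcal F)_{V'}=\mathrm R]p_1[_*(\varphi_V^*\mathcal F)_{V'\times_O V}$; lemma~\ref{crisfi2} then evaluates the realization of $\varphi_V^*\mathcal F$ along the second projection as $]p_2[^\dagger\mathcal F$, which is exactly the asserted expression.

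Parts (3), (4) and (5) are compatibilities of $\mathrm Rj_{V*}\varphi_V^*$ with a change of ambient space, and I would treat each by realizing and reducing to (2) or to a known base change. For (3), realizing on an $(X'',V'')$ over $(X',V'/O')^\dagger$ and applying (2) to both sides, the two answers are $\mathrm R]p_1[_*]p_2[^\dagger\mathcal F$ and $\mathrm R]q_1[_*]q_2[^\dagger]g[_v^\dagger\mathcal F$; since $V''\times_{O'}V'\simeq V''\times_O V$ and $]g[_v\circ]q_2[=]p_2[$, these coincide. For (4), the identity $(\mathrm Rj_{V*}E)_{|Y}=\mathrm R(j_V)_{Y*}E_{|Y}$ (direct image along a formal embedding, lemma~\ref{basfib}) reduces the claim to $\gamma^{-1}\varphi_V^*\mathcal F=\varphi_{Y,V}^*]\gamma[^{-1}\mathcal F$, which follows from the compatibility of $\varphi^*$ with inverse images noted after lemma~\ref{comphi}, using $]\gamma[^\dagger=]\gamma[^{-1}$ for a formal embedding. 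For (5), I would use $\alpha_*=\alpha_\dagger$ and its exactness to obtain $\mathrm Rj_{V*}\alpha_*=\alpha_*\mathrm R(j_{V})_{U*}$ from the functoriality $j_V\circ\alpha=\alpha\circ j_{V,U}$, and then corollary~\ref{comphi2} in the form $\varphi_{X,V}^*]\alpha[_*=\alpha_\dagger\varphi_{U,V}^*$ (valid because $]\alpha[_!=]\alpha[_*$ for an open $\alpha$, as proposition~\ref{gamet} forces) to move $\varphi^*$ past $]\alpha[_*$.

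The main difficulty is bookkeeping rather than a single hard point: one must work consistently in the category of modules and differential operators, where lemmas~\ref{eqdif} and~\ref{enough} legitimize applying $\varphi_V^*$ and deriving $j_{V*}$ while lemma~\ref{lin} guarantees linearity of the output. The steps most in need of care are the two base changes in (3) and (4) and the exactness inputs in (5), which must be checked at the level of derived functors (so that $\mathrm R\alpha_*=\alpha_*$ and the restriction, realization and pushforward operations commute as claimed); once these are in place, conservativity of realizations (corollary~\ref{comlim}) closes every argument.
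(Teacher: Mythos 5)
Your proposal is correct and, for parts (1), (2), (4) and (5), it is essentially the paper's own proof: the same composition of structural maps and the identity $\varphi_{V*}\varphi_V^*=\mathrm{Id}$ for (1); the same appeal to the formula $(\mathrm Rj_{V*}E)_{V'} \simeq \mathrm R]p_1[_*E_{V'\times_O V}$ together with lemma \ref{crisfi2} for (2); the same use of lemma \ref{basfib} plus the compatibility $\gamma^{-1}\circ\varphi_X^*=\varphi_Y^*\circ]\gamma[^{-1}$ for (4); and the same exactness of $\alpha_*$ plus the identity $\varphi_X^*\circ]\alpha[_*=\alpha_*\circ\varphi_U^*$ for (5) (your citation of corollary \ref{comphi2} there is in fact more accurate than the paper's, which attributes this to lemma \ref{comphi}). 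The genuine difference is (3): the paper applies lemma \ref{basfib} to the $2$-cartesian square formed by $j_V$, $j_{V'}$ and $(g,v)$, getting $(g,v)^{-1}\mathrm Rj_{V*}=\mathrm Rj_{V'*}(g,v)^{-1}$ directly, and then commutes $\varphi^*$ past $(g,v)^{-1}$; you instead realize both sides on an arbitrary $(X'',V'')$ and match the two outputs of (2) via the identification $V''\times_{O'}V'\simeq V''\times_O V$, which is the concrete counterpart of the $2$-cartesianness the paper exploits abstractly. As written, though, this step has a rigor deficit: conservativity of realizations (corollary \ref{comlim}) tests whether a \emph{given} morphism is an isomorphism, so you must first exhibit the canonical comparison map $(\mathrm RL\mathcal F)_{|(X',V')}\to\mathrm RL]g[_v^\dagger\mathcal F$ (the standard base-change morphism for $(g,v)^{-1}$ and $\mathrm Rj_{V*}$, composed with $(g,v)^{-1}\varphi_V^*=\varphi_{V'}^*]g[_v^\dagger$) and then verify that your two applications of (2) compute the realizations of that map; exhibiting abstractly isomorphic realizations, without a map or without checking compatibility with the transition maps of proposition \ref{realsh2}, does not by itself produce an isomorphism on the site. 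Once that map is named, your realization check closes the argument, but at that point the paper's route via lemma \ref{basfib} is the shorter one.
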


\begin{proof}
From the second example after definition \ref{absco}, we get
\begin{align*}
\mathrm Rp_{X,V/O*}\mathrm RL\mathcal F &=\mathrm Rp_{X,V/O*}\mathrm Rj_{V*}\varphi_V^*\mathcal F \\&=\mathrm Rp_{X,V/O*}\varphi_V^*\mathcal F \\&= \mathrm R]f[_{u*}(\varphi_V^*\mathcal F)_V \\&=  \mathrm R]f[_{u*}\mathcal F
\end{align*}
and the first assertion is proved.
From proposition \ref{bicom} (and the examples thereafter), we derive the second assertion:
\begin{align*}
(\mathrm{R}L\mathcal F)_{V'} &= (\mathrm Rj_{V*}\varphi_V^*\mathcal F)_{V'} \\&= \mathrm R]p_{1}[_*(\varphi_V^*\mathcal F)_{V' \times_O V} \\&= \mathrm R]p_{1}[_*]p_2[^\dagger \mathcal F.
\end{align*}
We now prove the third assertion: we have thanks to lemmas \ref{basfib} and \ref{comphi},
\begin{align*}
(\mathrm RL\mathcal F)_{|(X',V')} &= (g,v)^{-1}\mathrm Rj_{V*}\varphi_V^*\mathcal F \\&= \mathrm Rj_{V'*}(g,v)^{-1}\varphi_V^*\mathcal F \\&=\mathrm Rj_{V'*}\varphi_{V'}^* ]g[_v^\dagger \mathcal F \\&= \mathrm RL]g[_v^\dagger\mathcal F.
\end{align*}
The arguments for the fourth assertion are the same.
We use indices $X$ and $Y$ (instead of $V$ which is fixed) for the various functors.
We have, thanks again to lemmas \ref{basfib} and \ref{comphi},
\begin{align*}
(\mathrm RL\mathcal F)_{|Y} &= \gamma^{-1}\mathrm Rj_{X*}\varphi_X^*\mathcal F \\&=\mathrm Rj_{Y*} \gamma^{-1}\varphi_X^*\mathcal F \\&=\mathrm Rj_{Y*}\varphi_Y^* \gamma^{-1}\mathcal F  \\&= \mathrm RL\mathcal F_{|\,]Y[_V}.
\end{align*}
We know prove the last assertion (with $X$ and $U$ as indices) using the fact that $\alpha_*$ is exact and that $\varphi_X^* \circ ]\alpha[_* = \alpha_* \circ \varphi_U^*$ from lemma \ref{comphi}:
\begin{align*}
\alpha_*\mathrm RL\mathcal G &= \alpha_*\mathrm Rj_{U*}\varphi_U^*\mathcal G \\&= \mathrm Rj_{X*}\alpha_*\varphi_U^*\mathcal G \\&=  \mathrm Rj_{X*}\varphi_X^*]\alpha[_*\mathcal G
\\&= \mathrm RL]\alpha[_*\mathcal G. \qedhere
\end{align*}
\end{proof}

Unfortunately, the last assertion will not hold for $\gamma_*$ in general (and it is not likely to hold for $\gamma_\dagger$ either\footnote{Or else I missed something.}).

%%%%%%%%%%%%%%%%%%%
\subsection{de Rham crystals}

Roughly speaking, a de Rham crystal is a crystal whose cohomology can be computed \emph{à la} de Rham.
We will make this precise.

Let $(f,u) : (X,V) \to (C,O)$ be a morphism of overconvergent spaces with $V$ locally of finite type over $O$.

If $\mathcal F$ is an $\mathcal O_V^\dagger$-module endowed with an integrable connection, then we will write
\[
\mathrm R]f[_{u,\mathrm{dR}}\mathcal F := \mathrm R]f[_{u*}\left(\mathcal F \otimes_{\mathcal O_{V}^\dagger}\Omega^{\bullet\dagger}_V\right) \quad \mathrm{and} \quad \mathrm{R}L_{\mathrm{dR}}\mathcal F := \mathrm{R}L\left(\mathcal F \otimes_{\mathcal O_{V}^\dagger}\Omega^{\bullet\dagger}_{V}\right).
\]

The following is an immediate consequence of lemma \ref{RLstuff} that we state for future reference:

%%%%%%%%%
\begin{prop} \label{RLstuff2}
Let $\mathcal F$ be an $\mathcal O_V^\dagger$-module endowed with an integrable connection.
\begin{enumerate}
\item We have
\[
\mathrm Rp_{X,V/O*}\mathrm RL_{\mathrm{dR}} \mathcal F =  \mathrm R]f[_{u,\mathrm{dR}}\mathcal F.
\]
\item If $(X',V')$ is an overconvergent space over $(X,V/O)^\dagger$, then
\[
(\mathrm{R}L_{\mathrm{dR}}\mathcal F)_{V'} = \mathrm R]p_{1}[_{\mathrm{dR}}]p_2[^\dagger \mathcal F.
\]
\item 
If $(g,v) : (X',V') \to (X,V)$ is a pullback along some morphism $(C',O') \to (C,O)$, then
\[
(\mathrm RL_{\mathrm{dR}}\mathcal F)_{|(X',V')} \simeq \mathrm RL_{\mathrm{dR}}]g[_v^\dagger\mathcal F.
\]
\item If $Y \hookrightarrow X$ is a formal embedding, then
\[
(\mathrm RL_{\mathrm{dR}}\mathcal F)_{|Y} \simeq \mathrm RL_{\mathrm{dR}}\mathcal F_{|Y}.
\]
\item Assume $O$ is analytic and let $\alpha : U \hookrightarrow X$ be a formal open embedding.
If $\mathcal G$ an $\mathcal O_{V}^\dagger$-module with an integrable connection on $U$, then
\[
\alpha_*\mathrm RL_{\mathrm{dR}}\mathcal G \simeq \mathrm RL_{\mathrm{dR}}]\alpha[_*\mathcal G. \qed
\]
\end{enumerate}
\end{prop}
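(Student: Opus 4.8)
The plan is to reduce every assertion to Lemma \ref{RLstuff} applied to the de Rham complex. By definition $\mathrm{R}L_{\mathrm{dR}}\mathcal F = \mathrm{R}L(\mathcal F \otimes_{\mathcal O_V^\dagger}\Omega^{\bullet\dagger}_V)$, and, as recalled before the definition of $\mathcal D_{V^\dagger}$, the differentials of the de Rham complex are differential operators of order one. Hence $\mathcal F \otimes_{\mathcal O_V^\dagger}\Omega^{\bullet\dagger}_V$ is genuinely a complex of $\mathcal O_V^\dagger$-modules and differential operators, to which Lemma \ref{RLstuff} applies verbatim. Each of the five assertions is then obtained by feeding this complex into the corresponding assertion of Lemma \ref{RLstuff} and recognizing the output as a de Rham complex. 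For the first assertion I would simply invoke Lemma \ref{RLstuff}(1), giving $\mathrm{R}p_{X,V/O*}\mathrm{R}L(\mathcal F \otimes \Omega^{\bullet\dagger}_V) \simeq \mathrm{R}]f[_{u*}(\mathcal F \otimes \Omega^{\bullet\dagger}_V)$, whose right-hand side is $\mathrm{R}]f[_{u,\mathrm{dR}}\mathcal F$ by definition.

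For the assertions (2)--(4) the only extra ingredient is that the relevant functor commutes with the formation of the de Rham complex, which in each case reduces to a standard compatibility of $\Omega^{1\dagger}$ and of the connection with the operation in question. For (2) one uses that $\Omega^{\bullet\dagger}$ of the second projection $p_2 : V' \times_O V \to V$ computes the relative de Rham complex of $p_1 : V' \times_O V \to V'$, so that $]p_2[^\dagger(\mathcal F \otimes \Omega^{\bullet\dagger}_V) = ]p_2[^\dagger\mathcal F \otimes \Omega^{\bullet\dagger}_{V'\times_O V/V'}$ and $\mathrm{R}]p_1[_*$ of it is by definition $\mathrm{R}]p_1[_{\mathrm{dR}}]p_2[^\dagger\mathcal F$. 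For (3) one uses compatibility of $\Omega^{\bullet\dagger}$ with the base change $V' = V \times_O O'$, giving $]g[_v^\dagger(\mathcal F \otimes \Omega^{\bullet\dagger}_V) = ]g[_v^\dagger\mathcal F \otimes \Omega^{\bullet\dagger}_{V'}$. For (4) one uses that restriction along $]\gamma[$ carries $\Omega^{\bullet\dagger}_V$ and the connection to the corresponding data on the tube $]Y[_V$, via $\Omega^{1\dagger}_V = i_X^{-1}\Omega^1_V$; with these identifications Lemma \ref{RLstuff}(2)--(4) deliver the stated formulas.

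For the last assertion I would apply Lemma \ref{RLstuff}(5) to the de Rham complex of $\mathcal G$ on $U$ and then check that $]\alpha[_*$ commutes with $(-)\otimes\Omega^{\bullet\dagger}_V$. Here, since $O$ is analytic, the tube of the open $U$ is closed, so $]\alpha[ : \,]U[_V \hookrightarrow \,]X[_V$ is a closed embedding of topological spaces and $]\alpha[_* = ]\alpha[_!$; the first isomorphism of Lemma \ref{dirchan} then yields the projection formula $]\alpha[_*(\mathcal G \otimes ]\alpha[^{-1}\Omega^{\bullet\dagger}_V) \simeq ]\alpha[_*\mathcal G \otimes \Omega^{\bullet\dagger}_V$, and because $]\alpha[_*$ sends differential operators to differential operators this identifies $]\alpha[_*$ of the de Rham complex of $\mathcal G$ with the de Rham complex of $]\alpha[_*\mathcal G$. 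The one genuine (and minor) point throughout is precisely this interchange of the various pullback and pushforward functors with the formation of the de Rham complex; everything else is immediate from Lemma \ref{RLstuff}, which is exactly why the statement is recorded as a formal consequence.
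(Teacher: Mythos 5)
Your proof is correct and is exactly the paper's (implicit) argument: the paper records this proposition as an immediate consequence of Lemma \ref{RLstuff}, obtained by feeding the de Rham complex $\mathcal F \otimes_{\mathcal O_V^\dagger}\Omega^{\bullet\dagger}_V$ (a complex of modules and differential operators) into each assertion of that lemma. The compatibilities you check — base change for $\Omega^{1\dagger}$ in (2) and (3), restriction to tubes in (4), and the projection formula via Lemma \ref{dirchan} together with the fact that $]\alpha[_* = ]\alpha[_!$ preserves differential operators in (5) — are precisely the routine verifications the paper leaves to the reader.
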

%%%%%%%%%%%%%%%%%
\begin{lem}
If $E$ is a crystal on $(X,V/O)^\dagger$, then there exists an adjunction map
\[
E \to \mathrm{RL}_{\mathrm{dR}}E_V.
\]
\end{lem}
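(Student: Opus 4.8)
The plan is to construct the map as the unit of the adjunction $(j_V^{-1},\mathrm Rj_{V*})$, lifted from the degree-zero term of the linearized de Rham complex to the whole complex. First I would exploit the crystal hypothesis: since $E$ is a crystal on $(X,V/O)^\dagger$, its restriction $j_V^{-1}E = E_{|(X,V)}$ is a crystal on the representable site $(X,V)$ by Lemma \ref{loccrys}, and Lemma \ref{eqrep} gives a canonical identification $\varphi_V^*E_V \simeq j_V^{-1}E$, where $E_V = \varphi_{V*}j_V^{-1}E$. The adjunction unit then produces a morphism
\[
E \longrightarrow \mathrm Rj_{V*}\,j_V^{-1}E \simeq \mathrm Rj_{V*}\varphi_V^*E_V = \mathrm{R}L(E_V),
\]
that is, a map of $E$ into the degree-zero term of $\mathrm{R}L_{\mathrm{dR}}E_V = \mathrm{R}L\!\left(E_V \otimes_{\mathcal O_V^\dagger}\Omega^{\bullet\dagger}_V\right)$.

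The second move is to promote this to a morphism of complexes $E \to \mathrm{R}L_{\mathrm{dR}}E_V$ by checking that the composite of the unit with the first linearized differential $\mathrm{R}L(\nabla)\colon \mathrm{R}L(E_V) \to \mathrm{R}L(E_V\otimes\Omega^{1\dagger}_V)$ vanishes. Because realizations form a conservative family (Proposition \ref{realsh}), it suffices to verify this after evaluation on an arbitrary $(X',V')$ over $(X,V/O)^\dagger$. By Lemma \ref{RLstuff}(2) and Proposition \ref{RLstuff2}(2), on such a space the unit becomes the map $E_{V'} \to \mathrm R]p_1[_*\,]p_2[^\dagger E_V$ which factors through the adjunction $E_{V'}\to \mathrm R]p_1[_*\,]p_1[^\dagger E_{V'}$ followed by the crystal transition isomorphism $]p_1[^\dagger E_{V'} \simeq \,]p_2[^\dagger E_V$ along the two projections of $V'\times_O V$; hence its image consists of sections pulled back along $p_1$. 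Under the Taylor isomorphism that defines it, the connection on $]p_2[^\dagger E_V$ relative to $V'$ corresponds to the trivial connection on $]p_1[^\dagger E_{V'}$, which annihilates such pulled-back sections, so $\mathrm{R}L(\nabla)$ kills the image of the unit, as required. The bookkeeping needed to make this rigorous — representing $\mathrm{R}L_{\mathrm{dR}}E_V$ by applying $j_{V*}$ to an injective resolution of $\varphi_V^*(E_V\otimes\Omega^{\bullet\dagger}_V)$ in the category of $\mathcal O^\dagger_{(X,V)}$-modules and differential operators — is legitimate by Lemma \ref{enough} (enough injectives) and Lemma \ref{lin} ($j_{V*}$ sends differential operators to $\mathcal O^\dagger$-linear maps), and it is this that converts the vanishing into an actual morphism in the derived category of $(X,V/O)^\dagger$.

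The main obstacle is exactly this last point: since $\mathrm{R}L$ is a derived functor, $\mathrm{R}L_{\mathrm{dR}}E_V$ is the total complex of a double complex (the de Rham direction against an injective resolution), so the vanishing must be carried out at the chain level and not merely degreewise in cohomology. Everything else is formal. I would close by remarking that the resulting morphism is precisely the unit of the adjunction between realization $E \mapsto E_V$ (with its overconvergent connection) and derived de Rham linearization $\mathrm{R}L_{\mathrm{dR}}$, which is what justifies calling it an \emph{adjunction map}; whether it is an isomorphism is the content of the overconvergent Poincaré lemma proved later.
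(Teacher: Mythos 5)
Your first step (identifying $j_V^{-1}E = \varphi_V^*E_V$ via the crystal property and taking the unit of adjunction) matches the paper, but the way you handle the vanishing needed to land in the full de Rham complex has a genuine gap. You correctly observe that the vanishing must hold at the chain level, yet the verification you offer is a realization-by-realization argument about the \emph{derived} objects $\mathrm R]p_1[_*\,]p_2[^\dagger E_V$. Conservativity of realizations (proposition \ref{realsh}) says that a map which is an isomorphism on every realization is an isomorphism; it does \emph{not} say that a morphism in the derived category vanishing on every realization is zero -- vanishing of derived-category morphisms is not a local condition (think of a locally trivial but globally nonzero class in $\mathrm{Ext}^1$, i.e.\ a nonzero map $F \to G[1]$). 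And even if you did prove that the composite $E \to \mathrm RL(E_V) \to \mathrm RL(E_V\otimes\Omega^{1\dagger}_V)$ vanishes in the derived category, that would only factor $E$ through a cocone of the first differential; it would not produce a map into the total complex $\mathrm RL_{\mathrm{dR}}E_V$, for which you need chain-level vanishing (or control of all higher obstructions). So the core step of your plan is carried out at a level where it cannot deliver what you need.

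The paper's proof avoids this entirely by staying \emph{underived} until the last line. The unit $E \to j_{V*}j_V^{-1}E$ is an honest map of sheaves; since $j_V \circ p_i^{(1)} = j_{V^{(1)}}$ for $i=1,2$, both compositions
\[
E \to j_{V*}j_V^{-1}E \xrightarrow{\ p_i^{-1}\ } j_{V^{(1)}*}j_{V^{(1)}}^{-1}E
\]
equal the unit for $j_{V^{(1)}}$, so $p_2^{-1}-p_1^{-1}$ kills the image of the unit as a map of sheaves -- no realization computation at all. Using $j_V^{-1}E = \varphi_V^*E_V$, the difference $p_2^{-1}-p_1^{-1}$ is identified with $j_{V*}\varphi_V^*(\epsilon_1\circ p_2^\dagger - p_1^\dagger)$, i.e.\ with the linearized connection $j_{V*}j_V^{-1}(\nabla)$; this is exactly your ``sections pulled back along $p_1$ are killed by the connection'' intuition, but made precise on the first infinitesimal neighborhood $V^{(1)}$ (where $\nabla$ is actually defined) rather than on the full product $V'\times_O V$, and at the sheaf level where the statement makes sense. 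One thus gets a genuine map of complexes $E \to j_{V*}\varphi_V^*\bigl(E_V\otimes_{\mathcal O_V^\dagger}\Omega^{\bullet\dagger}_V\bigr)$, and the desired map is obtained by composing with the canonical natural transformation from the underived to the derived functor, $j_{V*} \Rightarrow \mathrm Rj_{V*}$. If you reorganize your argument this way -- prove the vanishing for the underived unit against the underived differential, then push into $\mathrm RL_{\mathrm{dR}}E_V$ at the very end -- your realization heuristic becomes unnecessary and the construction goes through.
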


\begin{proof}\footnote{The last argument in the proof of \cite{LeStum11}, proposition 4.3.10.2) is not correct as written.}
Since, for $i=1,2$, we have $j_{V} \circ p_{i}^{(1)} = j_{V^{(1)}}$, functoriality of adjunction maps provides us with a commutative diagram (we simply write $p_i$ instead of $p_i^{(1)}$ in oder to make the notations lighter)
\[
\xymatrix{E \ar[r]_-{j_V^{-1}}  \ar@/^2pc/[rr]^-{j_{V^{(1)}}^{-1}} &j_{V*} j_V^{-1}E \ar[r]_-{p_i^{-1}} & j_{V^{(1)*}}j_{V^{(1)}}^{-1}E},
\]
from which we derive
\[
\xymatrix{E \ar[rr]_-{j_V^{-1}} \ar@/^2pc/[rrrr]^-{0} &&j_{V*} j_V^{-1}E \ar[rr]_-{p_2^{-1}-p_1^{-1}} && j_{V^{(1)*}}j_{V^{(1)}}^{-1}E}.
\]
Since $E$ is a crystal on $(X,V/O)^\dagger$, then $j_V^{-1}E$ is a crystal on $(X,V)$ and therefore we have
\[
j_V^{-1}E = \varphi_V^*\varphi_{V*}j_V^{-1}E = \varphi_V^*E_V.
\]
It follows that $j_{V*} j_V^{-1}E = j_{V*}\varphi_V^*E_V$.
We also have
\begin{align*}
j_{V^{(1)*}}j_{V^{(1)}}^{-1}E &= j_{V^{*}} p_{1*}(p_1)^{-1}j_{V}^{-1}E
\\&= j_{V^{*}} p_{1*}(p_1)^{-1}\varphi_V^*E_V
\\&= j_{V^{*}} p_{1*}\varphi_{V^{(1)}}^*]p_1[^{\dagger}E_V
\\& = j_{V*}\varphi_V^*]p_1[_*]p_1[^\dagger E_V.
\end{align*}
Now, $p_1^{-1}$ (resp.\ $p_2^{-1}$) acts as $j_{V*}\varphi_V^*(p_1^\dagger)$ (resp.\ $j_{V*}\varphi_V^*(\epsilon_1 \circ p_2^\dagger)$)
and there exists therefore a commutative diagram
\[
\xymatrix{j_{V*} j_V^{-1}E \ar[rrr]^-{p_2^{-1}-p_1^{-1}} \ar@{=}[d] &&& j_{V^{(1)*}}j_{V^{(1)}}^{-1}E \ar@{=}[d]\\  j_{V*}\varphi_V^*E_V \ar[rrr]^-{j_{V*} j_V^{-1}(\epsilon_1 \circ p_2^{\dagger}-p_1^{\dagger})}   &&& j_{V*}\varphi_V^*]p_1[_*]p_1[^\dagger E_V.}
\]
It follows that the following diagram is also commutative
\[
\xymatrix{E \ar[rr] \ar@/^2pc/[rrrr]^-{0} &&j_{V*}\varphi_V^*E_V \ar[rr]_-{j_{V*} j_V^{-1}(\nabla)}   && j_{V*}\varphi_V^*(E_V \otimes_{\mathcal O_{V}^\dagger}\Omega^{1\dagger}_{V})}
\]
and we obtain the expected adjunction
\[
E \to j_{V*} \varphi_V^*(E_V \otimes_{\mathcal O_{V}^\dagger}\Omega^{\bullet\dagger}_{V})  \to \mathrm Rj_{V*} \varphi_V^*(E_V \otimes_{\mathcal O_{V}^\dagger}\Omega^{\bullet\dagger}_{V}) = \mathrm{RL}_{\mathrm{dR}}E_V. \qedhere
\]
\end{proof}

We can now write down the condition that will make it possible to compute the cohomology of a crystal as a de Rham cohomology:

%%%%%%%%%%%%
\begin{dfn}
A crystal $E$ on $(X,V/O)^\dagger$ is \emph{de Rham} if the adjunction map is an isomorphism:
\[
E \simeq \mathrm{RL}_{\mathrm{dR}}E_V.
\]
A crystal $E$ on $(X/O)^\dagger$ is \emph{de Rham on $V$} if $E_{|(X,V/O)^\dagger}$ is de Rham.
\end{dfn}

It is worth mentioning as a consequence of proposition \ref{RLstuff2} that \emph{if} $E$ is de Rham, then
\[
\mathrm Rp_{X,V/O*}E =  \mathrm R]f[_{u,\mathrm{dR}}E_V
\]
and, if $(X',V')$ is an overconvergent space over $(X,V/O)^\dagger$, then
\[
E_{V'} \simeq  \mathrm R]p_{1}[_{\mathrm{dR}}]p_2[^\dagger E_V.
\]

This notion will allow us to compute the cohomology of a crystal as follows:

%%%%%%%%%%%%%%%
\begin{thm} \label{cohdr}
Let $(C,O)$ be an overconvergent analytic space, $X$ a formal scheme over $C$ and $E$ a crystal on $(X/O)^\dagger$.
If $E$ is de Rham on a geometric materialization $V$ of $X$ over $O$, then
\[
\mathrm Rp_{X/O*}E =  \mathrm R]f[_{u,\mathrm{dR}}E_V.
\]
\end{thm}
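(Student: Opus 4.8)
The plan is to reduce the computation of absolute cohomology on $(X/O)^\dagger$ to the auxiliary site $(X,V/O)^\dagger$, where both the de Rham hypothesis and the linearization formula are directly at our disposal. Everything will then follow by concatenating already-established results, so the genuine content lives in those earlier theorems rather than in any new argument here.

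First I would invoke theorem \ref{strfib}. Since $V$ is a geometric materialization of $X$ over $O$, the canonical morphism $g : (X,V/O)^\dagger \to (X/O)^\dagger$ is a local isomorphism, hence $\mathrm Rg_* \circ g^{-1} = \mathrm{Id}$ on the level of derived categories. Combining this with the general transitivity of absolute cohomology along a morphism of overconvergent sites (namely $\mathrm Rp_{T'/O*}E' = \mathrm Rp_{T/O*}\mathrm Rg_*E'$, recorded just after definition \ref{absco}), and writing $E_{|(X,V/O)^\dagger} := g^{-1}E$, one obtains
\[
\mathrm Rp_{X/O*}E \simeq \mathrm Rp_{X,V/O*}\,E_{|(X,V/O)^\dagger}.
\]
One small point to verify along the way is the identification $(E_{|(X,V/O)^\dagger})_V = E_V$ of realizations, which holds because realization on an overconvergent space lying over $T$ is unchanged under restriction to any subsite still containing that space (as noted after the definition of realization).

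Next I would unwind the hypothesis. To say that $E$ is de Rham on $V$ is, by definition, exactly to say that $E_{|(X,V/O)^\dagger}$ is a de Rham crystal, i.e.\ that the adjunction map $E_{|(X,V/O)^\dagger} \to \mathrm{RL}_{\mathrm{dR}}E_V$ is an isomorphism. Substituting this into the displayed isomorphism and applying the first assertion of proposition \ref{RLstuff2}, which gives $\mathrm Rp_{X,V/O*}\mathrm{RL}_{\mathrm{dR}}\mathcal F \simeq \mathrm R]f[_{u,\mathrm{dR}}\mathcal F$ for any $\mathcal O_V^\dagger$-module $\mathcal F$ equipped with an integrable connection (here $\mathcal F = E_V$), yields
\[
\mathrm Rp_{X,V/O*}\,E_{|(X,V/O)^\dagger} \simeq \mathrm Rp_{X,V/O*}\mathrm{RL}_{\mathrm{dR}}E_V \simeq \mathrm R]f[_{u,\mathrm{dR}}E_V.
\]
Chaining the two displays gives $\mathrm Rp_{X/O*}E \simeq \mathrm R]f[_{u,\mathrm{dR}}E_V$, as required.

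The main obstacle is that there is essentially none left at this stage: the theorem is a formal consequence of the local isomorphism of theorem \ref{strfib} (which itself rests on the strong fibration theorem of \cite{LeStum17*}) together with the linearization identities of proposition \ref{RLstuff2} and the de Rham definition. The only care needed is bookkeeping—checking that restriction along $g$ commutes with realization on $V$ so that the symbol $E_V$ means the same thing on both sites, and that the integrable connection on $E_V$ used in proposition \ref{RLstuff2} is the overconvergent one supplied by the crystal structure via the equivalence $\mathrm{Cris}(X,V/O)^\dagger \simeq \mathrm{Strat}(X,V/O)^\dagger$ of proposition \ref{equiV}.
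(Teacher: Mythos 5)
Your proof is correct and follows exactly the same route as the paper, whose proof is the one-line ``Follows from theorem \ref{strfib} as usual (and the first assertion of lemma \ref{RLstuff})'': reduce to $(X,V/O)^\dagger$ via the local isomorphism of theorem \ref{strfib}, unwind the de Rham hypothesis, and apply the linearization identity of proposition \ref{RLstuff2}. Your version simply makes explicit the bookkeeping (transitivity of $\mathrm Rp_{T/O*}$ and compatibility of realizations) that the paper leaves implicit.
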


\begin{proof}
Follows from theorem \ref{strfib} as usual (and the first assertion of lemma \ref{RLstuff}).
\end{proof}

Of course, for this result to be useful, we will have to produce de Rham crystals but it will be necessary for that to introduce some finiteness conditions.
For the moment, we want to list some formal properties.
First of all, this notion behaves quite well with respect to formal embeddings:
%%%%%%%%%%%%%%%
\begin{prop} \label{dRbas}
\begin{enumerate}
\item
If $Y \hookrightarrow X$ is a formal embedding and $E$ is a de Rham crystal on $(X,V/O)^\dagger$, then $E_{|Y}$ is a de Rham crystal on $(Y,V/O)^\dagger$.
\item
Assume $O$ is analytic et let $\alpha : U \hookrightarrow X$ be a formal open embedding.
If $E$ de Rham crystal on $(U,V/O)^\dagger$, then $\alpha_*E$ is a de Rham crystal on $(X,V/O)^\dagger$.
\item \label{dRbas3} Assume $V$ is smooth over $O$ analytic and let $\beta : Z \hookrightarrow X$ be a formal closed embedding.
If $E$ is a de Rham crystal on $(X,V/O)^\dagger$, then $\underline \Gamma^\dagger_Z E$ is a also a de Rham crystal.
\end{enumerate}
\end{prop}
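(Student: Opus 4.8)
The plan is to deduce all three statements from the naturality of the adjunction map $E \to \mathrm{RL}_{\mathrm{dR}}E_V$, combined with the compatibility properties of the linearization functor collected in proposition \ref{RLstuff2}. Parts (1) and (2) are essentially formal consequences of the fourth and fifth assertions there, and part (3) will be obtained by combining them with the fundamental exact sequence of proposition \ref{clop} and a two-out-of-three argument for distinguished triangles.

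For part (1), since $E$ is a crystal we have $(E_{|Y})_V = ]\gamma[^{-1}E_V$, and restricting the isomorphism $E \simeq \mathrm{RL}_{\mathrm{dR}}E_V$ along the formal embedding $\gamma : Y \hookrightarrow X$ gives an isomorphism $E_{|Y} \simeq (\mathrm{RL}_{\mathrm{dR}}E_V)_{|Y}$. By the fourth assertion of proposition \ref{RLstuff2} the target is $\mathrm{RL}_{\mathrm{dR}}(E_V)_{|Y} = \mathrm{RL}_{\mathrm{dR}}(E_{|Y})_V$, so it remains only to check that the composite is the adjunction map for $E_{|Y}$, which follows from the naturality of the construction of the adjunction map with respect to restriction along $\gamma$. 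For part (2), the module $\alpha_*E$ is again a crystal and $(\alpha_*E)_V = ]\alpha[_*E_V$ because $\alpha_* = \alpha_\dagger$ in the analytic case. Applying the exact functor $\alpha_*$ to $E \simeq \mathrm{RL}_{\mathrm{dR}}E_V$ and invoking the fifth assertion of proposition \ref{RLstuff2} yields $\alpha_*E \simeq \alpha_*\mathrm{RL}_{\mathrm{dR}}E_V \simeq \mathrm{RL}_{\mathrm{dR}}]\alpha[_*E_V = \mathrm{RL}_{\mathrm{dR}}(\alpha_*E)_V$, and one identifies this with the adjunction map again by naturality.

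For part (3), write $\alpha : U \hookrightarrow X$ for the open complement of $\beta$. By proposition \ref{clop} there is an exact sequence of crystals
\[
0 \to \underline \Gamma^\dagger_Z E \to E \to j_U^\dagger E \to 0
\]
which, realized on each $(X',V')$ as the standard short exact sequence attached to the closed-open pair, is in fact exact in $\mathrm{Mod}((X,V/O)^\dagger)$. Now $j_U^\dagger E = \alpha_*E_{|U}$, and $E_{|U}$ is de Rham on $(U,V/O)^\dagger$ by part (1) (an open embedding being a formal embedding), hence $j_U^\dagger E$ is de Rham by part (2), while $E$ is de Rham by hypothesis. Applying realization (which is exact) gives a short exact sequence $0 \to (\underline \Gamma^\dagger_Z E)_V \to E_V \to (j_U^\dagger E)_V \to 0$ of $\mathcal O_V^\dagger$-modules carrying overconvergent connections with horizontal maps. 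Since $V$ is smooth over $O$ the sheaves $\Omega^{i\dagger}_V$ are locally free, so $-\otimes_{\mathcal O_V^\dagger}\Omega^{\bullet\dagger}_V$ is exact; after applying the triangulated functor $\mathrm{R}L = \mathrm Rj_{V*}\varphi_V^*$ we obtain a distinguished triangle relating the $\mathrm{RL}_{\mathrm{dR}}$ of the three realizations. By naturality of the adjunction map this triangle receives a morphism from the distinguished triangle attached to the exact sequence above; as the vertical maps over $E$ and over $j_U^\dagger E$ are isomorphisms, two-out-of-three forces the adjunction map for $\underline \Gamma^\dagger_Z E$ to be an isomorphism.

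The main obstacle is the bookkeeping in part (3): one must verify that the bottom row is genuinely distinguished, which is exactly where smoothness of $V$ is used (it guarantees that $\Omega^{\bullet\dagger}_V$ is a complex of locally free modules, so that tensoring preserves the short exact sequence), and, more importantly, that the adjunction maps assemble into an honest morphism of distinguished triangles. This last point reduces to the naturality of the construction $E \mapsto (E \to \mathrm{RL}_{\mathrm{dR}}E_V)$, which is also the verification needed in parts (1) and (2); I would establish this naturality once, directly from the unit-based construction of the adjunction map, and reuse it in all three parts.
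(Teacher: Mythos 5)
Your overall strategy coincides with the paper's: parts (1) and (2) are exactly the ``immediate consequences'' of the compatibility assertions for linearization (lemma \ref{RLstuff} / proposition \ref{RLstuff2}), and your part (3) inlines the paper's lemma \ref{exdr}, i.e.\ a two-out-of-three argument applied to the exact sequence $0 \to \underline \Gamma^\dagger_{Z} E \to E \to j^\dagger_{U}E \to 0$ of proposition \ref{clop}, with $j^\dagger_{U}E = \alpha_{*}E_{|U}$ handled by parts (1) and (2).

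There is, however, one misjustified step in part (3). You pass to realizations, tensor with $\Omega^{\bullet\dagger}_{V}$, and then claim a distinguished triangle results from ``applying the triangulated functor $\mathrm{R}L = \mathrm Rj_{V*}\varphi_V^*$''. But $\mathrm{R}L$ is not a triangulated functor on complexes of $\mathcal O_{V}^\dagger$-modules: in its definition $\varphi_V^*$ is applied underived, and $\varphi_V^*$ is \emph{not} exact (the paper stresses this repeatedly: $\mathrm{R}L$ is not the derived functor of $L$, and $\mathrm{Cris}$ is not a weak Serre subcategory of $\mathrm{Mod}$). So applying $\mathrm{R}L$ to a short exact sequence of modules with connection does not in general yield a distinguished triangle. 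The step is saved here only because all three terms are realizations of crystals and the sequence upstairs is exact in $\mathrm{Mod}(X,V/O)^\dagger$: by lemma \ref{eqrep}, $\varphi_V^*$ applied to the realization sequence is identified with $j_V^{-1}$ applied to the sequence of crystals, and $j_V^{-1}$ \emph{is} exact; smoothness of $V$ then guarantees that tensoring with the flat modules $\varphi_V^{*}\Omega^{k\dagger}_{V}$ preserves exactness, and only after that does $\mathrm Rj_{V*}$ produce the morphism of distinguished triangles. This is precisely the order of operations in the paper's proof of lemma \ref{exdr}: one stays upstairs with $j_V^{-1}$ rather than descending to the tube and coming back up through the non-exact $\varphi_V^*$. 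You have all the needed ingredients (you even noted exactness in $\mathrm{Mod}$), but they must be deployed before passing to the tube.
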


\begin{proof}
The first two assertions are immediate consequences of the last assertions of lemma \ref{RLstuff}.
For the third one, we denote by $\alpha : U \hookrightarrow X$ the open complement of $Z$.
Then, we know from proposition \ref{clop} that there exists an exact sequence
\[
0 \to  \Gamma^\dagger_{Z} E \to E \to j^\dagger_{U}E \to 0
\]
and  $j^\dagger_{U}E = \alpha_*E_{|U}$.
Our assertion therefore follows from the first part and lemma \ref{exdr} below.
\end{proof}

As a consequence of the last assertion of this proposition, we see that if $E$ is a crystal on $(Z, V/O)^\dagger$ that extends to some de Rham crystal on $(X,V/O)^\dagger$, then $\beta_\dagger E$ is de Rham.
Unfortunately, we do not know if the same holds without this extension property.
This may be a deep problem.

We used above the following result:

%%%%%%%%%%%%
\begin{lem} \label{exdr}
Assume $V$ is smooth over $O$.
Let $0 \to E' \to E \to E'' \to 0$ be a sequence of crystals on $(X,V/O)^\dagger$ which is exact in $\mathrm{Mod}(X,V/O)^\dagger$.
If two of them are de Rham, then so is the third.
\end{lem}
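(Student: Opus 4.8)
The plan is to reduce the statement to the two-out-of-three property for a morphism of distinguished triangles, the two rows being the short exact sequence itself and its image under $\mathrm{RL}_{\mathrm{dR}}(-)_V$, linked by the adjunction maps. The heart of the matter is to check that $\mathrm{RL}_{\mathrm{dR}}$ carries our short exact sequence of crystals to a distinguished triangle; once this is granted, naturality of the adjunction map $E \to \mathrm{RL}_{\mathrm{dR}}E_V$ furnishes a morphism of triangles, and if two of the three vertical maps are isomorphisms then so is the third.

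First I would pass to the representable site $(X,V)$. Since $j_V : (X,V) \to (X,V/O)^\dagger$ is a morphism of overconvergent sites, it is a flat morphism of ringed sites (we have $j_V^{-1}\mathcal O^\dagger = \mathcal O^\dagger$), so $j_V^{-1}$ is exact. Applying it to $0 \to E' \to E \to E'' \to 0$ and using that each of $E',E,E''$ is a crystal, so that $j_V^{-1}E^{(\cdot)} = \varphi_V^*E^{(\cdot)}_V$ by lemma \ref{eqrep}, yields an exact sequence $0 \to \varphi_V^*E'_V \to \varphi_V^*E_V \to \varphi_V^*E''_V \to 0$ of $\mathcal O^\dagger_{(X,V)}$-modules. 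This is the step where exactness is genuinely used, and it is also the reason one works through $j_V^{-1}$ rather than $\varphi_V^*$ directly, the latter not being exact in general.

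Next, smoothness of $V$ over $O$ enters: it makes each $\Omega^{i\dagger}_V = \wedge^i\Omega^{1\dagger}_V$ a locally free $\mathcal O_V^\dagger$-module, whence $\varphi_V^*\Omega^{i\dagger}_V$ is locally free over $\mathcal O^\dagger_{(X,V)}$. Tensoring the exact sequence above with the termwise locally free de Rham complex $\varphi_V^*\Omega^{\bullet\dagger}_V$ therefore preserves exactness in each degree, producing a short exact sequence of complexes
\[
0 \to \varphi_V^*E'_V \otimes \varphi_V^*\Omega^{\bullet\dagger}_V \to \varphi_V^*E_V \otimes \varphi_V^*\Omega^{\bullet\dagger}_V \to \varphi_V^*E''_V \otimes \varphi_V^*\Omega^{\bullet\dagger}_V \to 0
\]
on $(X,V)$ (all tensor products over $\mathcal O^\dagger_{(X,V)}$). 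Applying the triangulated functor $\mathrm Rj_{V*}$ turns this into the required distinguished triangle $\mathrm{RL}_{\mathrm{dR}}E'_V \to \mathrm{RL}_{\mathrm{dR}}E_V \to \mathrm{RL}_{\mathrm{dR}}E''_V \to [1]$, using $\mathrm{RL}_{\mathrm{dR}}\mathcal F = \mathrm Rj_{V*}\varphi_V^*(\mathcal F \otimes_{\mathcal O_V^\dagger}\Omega^{\bullet\dagger}_V)$ together with $\varphi_V^*(\mathcal F \otimes \mathcal G) = \varphi_V^*\mathcal F \otimes \varphi_V^*\mathcal G$.

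Finally I would assemble the morphism of triangles from $0 \to E' \to E \to E'' \to 0$, itself a distinguished triangle in $\mathrm D(\mathrm{Mod}(X,V/O)^\dagger)$, to the triangle just produced, the vertical arrows being the adjunction maps constructed in the lemma preceding this one. The one delicate point I expect is the naturality of these adjunction maps in $E$, since the map was built somewhat by hand; checking that it is functorial (so that the squares commute) is the main bookkeeping obstacle, though it follows from functoriality of the adjunction morphisms $j_V^{-1}\dashv j_{V*}$ and $\varphi_V^*\dashv\varphi_{V*}$ used in that construction. Granting commutativity, the conclusion is the elementary fact that in a morphism of distinguished triangles any two vertical isomorphisms force the third, so that whenever two of $E',E,E''$ are de Rham the third is as well.
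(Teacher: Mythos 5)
Your proposal is correct and follows essentially the same route as the paper's own proof: pass through the exact functor $j_V^{-1}$, identify $j_V^{-1}E^{(\cdot)}$ with $\varphi_V^*E^{(\cdot)}_V$ via the crystal property, use smoothness to make the terms of $\varphi_V^*\Omega^{\bullet\dagger}_V$ flat so the sequence of de Rham complexes stays exact, apply $\mathrm Rj_{V*}$, and conclude by two-out-of-three for the resulting morphism of distinguished triangles. The only cosmetic differences are that the paper invokes flatness rather than local freeness of $\Omega^{k\dagger}_V$ and does not dwell on the naturality of the adjunction maps, which you rightly flag as routine bookkeeping.
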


\begin{proof}
Since this is an exact sequence of $\mathcal O^\dagger_{X,V/O}$-modules, the sequence
\[
0 \to j_V^{-1}E' \to  j_V^{-1}E \to  j_V^{-1}E'' \to 0
\]
is also exact.
Since we are dealing with crystals, this is the same as the sequence
\[
0 \to \varphi_V^*E'_V \to \varphi_V^*E_V \to \varphi_V^*E''_V \to 0.
\]
Now, since $V$ is smooth over $O$, each $\Omega^{k\dagger}_{V}$ is flat and the same therefore holds for $\varphi_V^{*}\Omega^{k\dagger}_{V}$. It follows that the sequence of de Rham complexes
\[
0 \to \varphi_V^*(E'_V \otimes_{\mathcal O_{V}^\dagger}\Omega^{\bullet\dagger}_{V}) \to \varphi_V^*(E_V \otimes_{\mathcal O_{V}^\dagger}\Omega^{\bullet\dagger}_{V}) \to \varphi_V^*(E''_V \otimes_{\mathcal O_{V}^\dagger}\Omega^{\bullet\dagger}_{V}) \to 0
\]
is also exact.
Applying $\mathrm Rj_{V*}$ provides us with a morphism of distinguished triangles
\[
\xymatrix{E' \ar[r] \ar[d] & E \ar[r] \ar[d] & E'' \ar[r] \ar[d]  & \cdots \\  \mathrm{RL}_{\mathrm{dR}}E'_V \ar[r] &  \mathrm{RL}_{\mathrm{dR}}E_V \ar[r] &   \mathrm{RL}_{\mathrm{dR}}E''_V \ar[r] & \cdots 
}
\]
If two of the arrows are bijective, then so is the third.
\end{proof}

Recall from lemma \ref{flatex} that when $E''$ is flat, exactness in $\mathrm{Mod}(X,V/O)^\dagger$ is equivalent to exactness in $\mathrm{Cris}(X,V/O)^\dagger$ but we have to be careful in general.

Our next result will show that, when $E$ is de Rham, cohomological base change reduces to base change in de Rham cohomology:

%%%%%%%%%%%%%
\begin{lem}
If $(g,v) : (X',V') \to (X,V)$ is a pullback along some morphism $(C',O') \to (C,O)$ and $E$ is a de Rham crystal on $(X,V/O)^\dagger$, then $E_{|(X',V'/O)^\dagger}$ is also a de Rham crystal.
Moreover, $E$ commutes with (cohomological) base change to $(C',O')$ if and only if
\[
\forall k \in \mathbb N, \quad ]g[_{v}^\dagger \mathrm R^k]f[_{u,\mathrm{dR}}E_V \simeq \mathrm R]f'[_{u',\mathrm{dR}} ]g'[_{v'}^\dagger E_V
\]
(with obvious notations).
\end{lem}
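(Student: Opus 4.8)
The plan is to deduce both assertions from Proposition \ref{RLstuff2}, after first pinning down the realization of the pullback. To match the displayed formula I write $]g'[_{v'} : \,]X'[_{V'} \to \,]X[_V$ for the map on tubes induced by the projection $(X',V') \to (X,V)$, and $]g[_v : \,]C'[_{O'} \to \,]C[_O$ for the map induced by the base morphism $(C',O') \to (C,O)$. The composite $(X',V') \to (X',V'/O')^\dagger \to (X,V/O)^\dagger$ equals $j_V$ precomposed with the projection, so by Lemma \ref{alttr2} the realization of $E_{|(X',V'/O')^\dagger}$ on $(X',V')$ is computed by the transition map along the projection; since $E$ is a crystal this transition map is an isomorphism, whence $(E_{|(X',V'/O')^\dagger})_{V'} \simeq \,]g'[_{v'}^\dagger E_V$, carrying the pulled-back integrable connection.

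For the first assertion I would apply inverse image along $(X',V'/O')^\dagger \to (X,V/O)^\dagger$ to the adjunction isomorphism $E \simeq \mathrm{RL}_{\mathrm{dR}}E_V$. Proposition \ref{RLstuff2}(3) identifies the pullback of the target with $\mathrm{RL}_{\mathrm{dR}}(\,]g'[_{v'}^\dagger E_V) = \mathrm{RL}_{\mathrm{dR}}(E_{|(X',V'/O')^\dagger})_{V'}$, so $E_{|(X',V'/O')^\dagger}$ is \emph{abstractly} isomorphic to the linearized de Rham complex of its own realization. The delicate point --- and the main obstacle --- is to verify that the isomorphism so produced really \emph{is} the adjunction map of $E_{|(X',V'/O')^\dagger}$, and not merely some isomorphism; this is exactly the kind of compatibility whose naive treatment was flagged as incorrect in \cite{LeStum11}. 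It amounts to showing that the construction of the adjunction map commutes with base change, which follows from the same computations (via Lemmas \ref{basfib} and \ref{comphi}) that prove Proposition \ref{RLstuff2}(3): the counit of $j_V^{-1} \dashv j_{V*}$ and the map inducing the connection both pull back correctly along $(g',v')$. Granting this, the adjunction map of $E_{|(X',V'/O')^\dagger}$ is an isomorphism, i.e.\ it is de Rham.

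For the equivalence I would unwind both conditions using Proposition \ref{RLstuff2}(1). Since $E$ is de Rham, $\mathrm R^k p_{X,V/O*}E \simeq \mathrm R^k\,]f[_{u,\mathrm{dR}}E_V$; and since $E_{|(X',V'/O')^\dagger}$ is de Rham with realization $]g'[_{v'}^\dagger E_V$, the same proposition gives $\mathrm R^k p_{X',V'/O'*}E_{|(X',V'/O')^\dagger} \simeq \mathrm R^k\,]f'[_{u',\mathrm{dR}}\,]g'[_{v'}^\dagger E_V$. By the universal property of the fibre product $(X',V') = (X,V)\times_{(C,O)}(C',O')$ one identifies the base-change site $(X,V/O)^\dagger \times_{(C,O)}(C',O')$ with $(X',V'/O')^\dagger$, compatibly with the structural maps to $\,]C'[_{O'}$; this identification is routine but worth stating explicitly, since the analogous fibre products over $(X,V/O)^\dagger$ discussed after Proposition \ref{crsiba} are known to misbehave. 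Substituting the two displayed de Rham expressions into the defining condition $]g[_v^\dagger \mathrm R^k p_{X,V/O*}E \simeq \mathrm R^k p_{X',V'/O'*}E_{|(X',V'/O')^\dagger}$ turns it into exactly $]g[_v^\dagger \mathrm R^k\,]f[_{u,\mathrm{dR}}E_V \simeq \mathrm R^k\,]f'[_{u',\mathrm{dR}}\,]g'[_{v'}^\dagger E_V$, and as this holds termwise in $k$ the two conditions are equivalent.
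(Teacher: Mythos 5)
Your proof is correct and takes essentially the same route as the paper's: the paper's entire proof is a one-line appeal to the base-change property of the derived linearization (assertion (3) of lemma \ref{RLstuff}, equivalently of proposition \ref{RLstuff2}), which is exactly the engine you use for the first claim, combined with assertion (1) of proposition \ref{RLstuff2} to rewrite both pushforwards in the second claim. The two subtleties you single out --- that the pulled-back isomorphism is in fact the adjunction map of $E_{|(X',V'/O')^\dagger}$, and that the base-changed site $(X,V/O)^\dagger \times_{(C,O)} (C',O')$ identifies with $(X',V'/O')^\dagger$ --- are left implicit in the paper, so your write-up is, if anything, more complete.
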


\begin{proof}
This is a consequence of the third assertion of lemma \ref{RLstuff}.
\end{proof}

As usual, the same holds on $(X/O)^\dagger$ when $V$ is a geometric materialization of $X$ over $O$.

%%%%%%%%%%%%%%%%%%%
\begin{prop} \label{dRcrys}
Let $(Y, W) \to (X,V) \to (C,O)$ be a sequence of morphisms of overconvergent spaces.
We consider the induced morphism of overconvergent sites $f : (Y/O)^\dagger \to (X,V/O)^\dagger$.
We assume that $W$ is a geometric materialization for $Y$ over $V$.
Then a crystal $E$ on $(Y/O)^\dagger$ which is de Rham on $W$ is $f$-crystalline if and only if for all $(g,v) : (X',V') \to (X,V)$, we have
\[
\forall k \in \mathbb N, \quad ]g[_{v}^\dagger \mathrm R^k]f[_{u,\mathrm{dR}}E_V \simeq \mathrm R]f'[_{u',\mathrm{dR}} ]g'[_{v'}^\dagger E_V
\]
(with obvious notations).
\end{prop}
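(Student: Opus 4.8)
The plan is to reduce the assertion to the two results that immediately precede it: the characterization of $f$-crystalline complexes by cohomological base change, and the translation of cohomological base change into de Rham base change for de Rham crystals. First I would invoke the third example following Proposition \ref{crsiba}. Since $f : (Y/O)^\dagger \to (X,V/O)^\dagger$ is precisely the morphism treated there and
\[
(Y/O)^\dagger \times_{(X,V/O)^\dagger} (X,V) = (Y/V)^\dagger,
\]
the crystal $E$ is $f$-crystalline if and only if $E_{|(Y/V)^\dagger}$ satisfies cohomological base change over $V$, i.e.\ commutes with base change to every $(g,v) : (X',V') \to (X,V)$. This confines the content to a base change statement, over the intermediate base $(X,V)$, for the site $(Y/V)^\dagger$, whose fibre $(Y/V)^\dagger \times_{(X,V)} (X',V')$ is $(Y'/V')^\dagger$ with $Y' := Y\times_X X'$.

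Next I would apply the lemma directly preceding this proposition, reinterpreted with $(X,V)$ in the role of the base $(C,O)$, with $Y$ in the role of $X$ and with $W$ in the role of the geometric materialization. The inputs are that $W$ is a geometric materialization of $Y$ over $V$ and that $E$ is de Rham on $W$, so that $E_{|(Y/V)^\dagger}$ is a de Rham crystal in the relevant sense. One also uses that pulling back along $(g,v)$ preserves both the property of being a geometric materialization (partial properness, formal smoothness and the right cartesian condition are all stable under base change, with $Y' := Y\times_X X'$ and $W' := W\times_V V'$) and the property of being a de Rham crystal (first assertion of that lemma). By the remark after that lemma, which permits replacing $(Y,W/V)^\dagger$ by $(Y/V)^\dagger$ precisely because $W$ is a geometric materialization (Theorem \ref{strfib}), the statement that $E_{|(Y/V)^\dagger}$ commutes with base change to $(X',V')$ is equivalent to the isomorphisms
\[
]g[_{v}^\dagger \mathrm R^k]f[_{u,\mathrm{dR}}E_V \simeq \mathrm R]f'[_{u',\mathrm{dR}} ]g'[_{v'}^\dagger E_V
\]
for all $k$. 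Letting $(g,v)$ range over all morphisms to $(X,V)$ matches the condition in the statement, and composing the two equivalences yields the result.

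The step requiring the most care is the alignment of the two base changes: one must ensure that the identification of $\mathrm R^k p_{(Y/V)^\dagger/V*}$ with relative de Rham cohomology (Theorem \ref{cohdr} together with Proposition \ref{RLstuff2}) is compatible with the respective base change morphisms, so that the cohomological base change isomorphism corresponds termwise to the de Rham one. This compatibility, together with the verification that the de Rham formalism is taken relative to the intermediate base $V$ (so that the relevant $\Omega^{1\dagger}$ and the phrase ``de Rham on $W$'' refer to the materialization of $Y$ \emph{over} $V$), is already packaged into the preceding lemma; the remaining work is the bookkeeping of matching hypotheses, and no geometric input beyond Proposition \ref{crsiba}, that lemma, and Theorem \ref{strfib} is needed.
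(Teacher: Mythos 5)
Your proposal is correct and takes essentially the same route as the paper: the paper's own (one-sentence) proof simply invokes the third example after Proposition \ref{crsiba} to reduce $f$-crystallinity to cohomological base change of $E_{|(Y/V)^\dagger}$ over $V$, leaving the application of the immediately preceding lemma (and its remark, which uses Theorem \ref{strfib} and the geometric materialization hypothesis to pass between $(Y,W/V)^\dagger$ and $(Y/V)^\dagger$) implicit. Your write-up just makes those implicit steps explicit, including the correct observation that the de Rham formalism must be read relative to the intermediate base $V$.
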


\begin{proof}
We know from the examples after proposition \ref{crsiba} that $E$ is $f$-crystalline if and only if $E_{|(Y/V)^\dagger}$ satisfies base change over $V$.
\end{proof}

Again, the same holds on $(X/O)^\dagger$ when $V$ is a geometric materialization of $X$ over $O$.
It is however important to impose that $W$ is a geometric materialization and we cannot remove this assumption and just replace $(Y/O)^\dagger$ with some $(Y,W/O)^\dagger$.

%%%%%%%%%%%%%%
\section{Constructible crystals}

We will study the notion of a finitely presented crystal which corresponds to the classical notion of overconvergent isocrystal and introduce the more general notion of a constructible crystal.

%%%%%%%%%%%%%%%%%%%%%
\subsection{Cohomology on tubes}

We want to compute the cohomology of abelian sheaves on tubes but we start with a very general definition (we replace the usual Hausdorff condition with quasi-separatedness):

%%%%%%%%%%%%%%%%
\begin{dfn}
A quasi-separated topological space $V$ is
\begin{enumerate}
\item \emph{paracompact} if any open covering has a locally finite refinement,
\item \emph{countable at infinity} if it has a countable covering by quasi-compact subspaces,
\item \emph{taut} if the closure of any quasi-compact subspace if quasi-compact.
\end{enumerate}
\end{dfn}

When $V$ is Hausdorff (which is not our concern here), one recovers the usual notions (and the last one is automatic).
A quasi-separated \emph{analytic} space is paracompact if and only if has an open covering by coherent open subspaces meeting only finitely many others if and only if it is a disjoint union of analytic subspaces that are countable at infinity and taut.
A quasi-separated \emph{analytic} space $V$ is countable at infinity (resp.\ and taut) if and only if $V = \bigcup_{n \in \mathbb N} V_{n}$ with $V_{n}$ quasi-compact open (resp.\ and $\overline V_{n} \subset V_{n+1}$).
All these equivalences actually hold when $V$ is a locally closed subset of an analytic space (and in particular this applies to tubes).

%%%%%%%%%%%%%%%
\begin{xmp}
\begin{enumerate}
\item A coherent analytic space is paracompact.
\item As a particular case, an affinoid analytic space or a proper space over a coherent analytic space is paracompact.
\item If $V$ is a quasi-projective space over a coherent analytic space, then $V$ is paracompact.
\item A tube in a coherent analytic space is paracompact (this is proved in lemma \ref{tateret} below).
\end{enumerate}
\end{xmp}

Many results below generalize quite well to locally spectral spaces (or more precisely to valuative spaces) but we will stick to the geometric situation.

Unless otherwise specified, we let $(X, V)$ be an overconvergent space with $V$ Tate (there exists a global topologically nilpotent unit) and paracompact.
We may then also say that the overconvergent space $(X,V)$ is \emph{Tate paracompact}.

%%%%%%%%%%%%%%
\begin{lem} \label{tateret}
The tube $\,]X[_{V}$ also is paracompact.
\end{lem}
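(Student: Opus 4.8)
The statement is that if $(X,V)$ is a Tate paracompact overconvergent space, then the tube $\,]X[_V$ is paracompact. Recall that the tube is a locally closed subset of the analytic space $V$, and that the paper has just recorded the useful characterization: a locally closed subset of an analytic space is paracompact if and only if it is a disjoint union of subspaces each of which is countable at infinity and taut, equivalently (in the countable-at-infinity-and-taut case) of the form $\bigcup_{n} W_n$ with $W_n$ quasi-compact open and $\overline{W}_n \subset W_{n+1}$. So the plan is to exploit these equivalences rather than to work directly with arbitrary open coverings.

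**The main strategy.** Since $V$ is paracompact, by the cited equivalence we may write $V$ as a disjoint union of pieces that are countable at infinity and taut, and in fact reduce to the case where $V$ itself is countable at infinity and taut, say $V = \bigcup_{n} V_n$ with $V_n$ quasi-compact open and $\overline{V}_n \subset V_{n+1}$. The tube $\,]X[_V$ is the inverse image under specialization (up to the closure/interior subtlety recalled in the introduction) of the tube of $X$ in $P$, hence a locally closed subset of $V$. First I would intersect the exhaustion with the tube: set $W_n := \,]X[_V \cap V_n$. Each $W_n$ is then a locally closed subset of the quasi-compact $V_n$, and $\,]X[_V = \bigcup_n W_n$ gives a countable exhaustion by quasi-compact subspaces, so $\,]X[_V$ is countable at infinity. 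The remaining point is tautness: one must check that the closure within $\,]X[_V$ of a quasi-compact subspace is again quasi-compact, which, together with countability at infinity, yields paracompactness by the stated equivalence.

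**The key technical input.** The heart of the argument is a tautness/closure property of the tube inside $V$. Here the Tate hypothesis should enter: for a Tate analytic space the tube $\,]X[_V$ is known (from the foundational material of \cite{LeStum17*}, which controls the tube as the inverse image of the tube in $P$ via specialization, and its behavior under closure and interior) to be \emph{taut} as a subset of $V$ — the closure of a quasi-compact subset of the tube stays quasi-compact. Concretely, I would show that for the exhaustion above one can arrange $\overline{W}_n^{\,]X[_V} \subset W_{n+1}$, using $\overline{V}_n \subset V_{n+1}$ together with the fact that taking closures inside the locally closed subset $\,]X[_V$ is controlled by taking closures in $V$ and then intersecting. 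This is where the boolean description of the tube (open parts give closures, closed parts give interiors, as recalled in the introduction) must be handled carefully so that the intersection of a closed tube-condition with a taut exhaustion of $V$ remains taut.

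**The main obstacle.** I expect the genuine difficulty to lie precisely in controlling the closure operation for the tube, because the tube mixes a closure (for the open part of the locally closed embedding $X \hookrightarrow P$) and an interior (for the closed part), so it is neither open nor closed in $V$ in general. The naive hope that $\overline{W}_n^{\,]X[_V}$ is just $\overline{V_n}\cap \,]X[_V$ can fail, and one must verify that the relative closure inside the tube does not pick up points escaping to infinity. The clean way around this is to reduce, via the boolean/standard argument already invoked in the paper's definition of the tube, to the two extreme cases: $X$ closed in $P$, where $\,]X[_V$ is (essentially) all of $V$ and tautness is inherited directly, and $X$ open in $P$, where the tube is a closure of a specialization preimage and tautness follows from the Tate assumption guaranteeing that this closure interacts well with the quasi-compact exhaustion. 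Assembling these two cases through the exhaustion $\{V_n\}$ then delivers a taut, countable-at-infinity exhaustion of $\,]X[_V$, and the previously stated equivalence converts this into paracompactness.
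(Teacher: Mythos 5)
Your argument breaks down at its first concrete step: you claim that $W_n := \,]X[_V \cap V_n$ is quasi-compact because it is ``a locally closed subset of the quasi-compact $V_n$''. Only \emph{closed} subsets of quasi-compact spaces are automatically quasi-compact; open subsets of quasi-compact adic (spectral) spaces need not be, and that is exactly the situation here. When $X$ is closed in $P$, the tube is an \emph{open} subset of $V$ (anti-continuity of the tube), and for instance with $X$ the origin of the formal affine line over $\mathbb Z_p$ and $V$ the closed unit disk over $\mathbb Q_p$, the intersection $\,]X[_V \cap V$ is the open unit disk, which is not quasi-compact --- the paper itself notes right after the lemma that even for quasi-compact $V$ the tube need not be quasi-compact. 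So your exhaustion is not by quasi-compact subspaces and countability at infinity does not follow this way. The same issue undermines your tautness step, which you in any case do not prove: you only assert that tautness of the tube ``is known from the foundational material'', which begs precisely the point of the lemma.

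What is missing is the actual key input, namely proposition 4.28 of \cite{LeStum17*}: when $X$ is \emph{closed} in $P$ and $V$ is Tate, the tube admits an intrinsic exhaustion $\,]X[_V = \bigcup_n [X]_{V,n}$ by \emph{retrocompact} open subsets of $V$ satisfying $\overline{[X]}_{V,n} \subset [X]_{V,n+1}$ (this is where the Tate hypothesis genuinely enters, through the topologically nilpotent unit). Retrocompactness is what makes each intersection $[X]_{V,n}\cap V_n$ quasi-compact, and the nested closures on both sides of the intersection give tautness at once; this is the paper's proof. Note also that your reduction to the two extreme cases is inverted: the \emph{open} case is the easy one (the tube of an open subscheme is closed in $V$, hence paracompact as a closed subset of a paracompact space), and it serves to reduce to $X$ closed in $P$, which is the substantive case; your claim that for $X$ closed the tube ``is (essentially) all of $V$'' is false in general, as the disk example above shows.
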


\begin{proof}
By definition, we are given a locally closed embedding $X \hookrightarrow P$ and a morphism $V \to P^{\mathrm{ad}}$.
We may assume that $X$ is closed in $P$ because a closed subset of a paracompact analytic space is automatically paracompact\footnote{Be careful that a closed subspace of a quasi-separated space is not quasi-separated in general. This is nevertheless true for locally spectral spaces.} (and the tube is anti-continuous on analytic spaces).
We may also assume that $V = \bigcup_{n \in \mathbb N} V_{n}$ with $V_{n}$ quasi-compact open in $V$ and $\overline V_{n} \subset V_{n+1}$.
We know from proposition 4.28 of \cite{LeStum17*} that $\,]X[_{V} = \bigcup_{n \in \mathbb N}[X]_{V,n}$ with $[X]_{V,n}$ open and retrocompact in $V$  and $\overline {[X]}_{V,n}\subset[X]_{V,n+1}$.
It follows that $\,]X[_{V} = \bigcup_{n \in \mathbb N}([X]_{V,n} \cap V_{n})$ with $[X]_{V,n} \cap V_{n}$ quasi-compact open in $V$ and $\overline {[X]_{V,n} \cap V_{n}} \subset[X]_{V,n+1} \cap V_{n+1}$.
\end{proof}

Note that this applies in particular when $V$ is quasi-compact in which case the tube itself need not be quasi-compact.

The proof shows that the analogous result holds with ``countable at infinity (and taut)'' instead of ``paracompact''.
Note also that, conversely, if we only assume $V$ to be quasi-separated and taut but that $\,]X[_V$ is known to be paracompact (or countable at infinity), then there exists a paracompact (or countable at infinity) neighborhood of $\,]X[_V$ in $V$.

%%%%%%%%%%%%%%
\begin{prop} \label{limsh}
If $\mathcal F$ is a sheaf on $V$, then
\[
\varinjlim_{V'} \Gamma(V', \mathcal F) \simeq \Gamma(\,]X[_{V}, i_{X}^{-1}\mathcal F )
\]
where $V'$ runs through all open neighborhoods of $\,]X[_{V}$ in $V$.
\end{prop}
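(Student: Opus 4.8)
The plan is to analyze the canonical map
\[
\lambda : \varinjlim_{V'} \Gamma(V', \mathcal F) \to \Gamma(\,]X[_{V}, i_X^{-1}\mathcal F)
\]
induced by restriction, where $V'$ ranges over the system of open neighborhoods of $\,]X[_{V}$ in $V$ (this system is filtered, since the intersection of two neighborhoods is again one). First I would reduce to the case where the tube is \emph{closed} in the ambient space. Since $X \hookrightarrow P$ is locally closed, it is open in its closure $\overline X$, so $\overline X\setminus X$ is closed in $P$; the tubes $\,]\overline X[_{V}$ and $\,]\overline X\setminus X[_{V}$ are then open in $V$, and $\,]X[_{V} = \,]\overline X[_{V}\setminus \,]\overline X\setminus X[_{V}$ is closed in the open set $\,]\overline X[_{V}$. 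As $\,]\overline X[_{V}$ is open in $V$, the open neighborhoods of $\,]X[_{V}$ that are contained in $\,]\overline X[_{V}$ are cofinal among all of them and $i_X^{-1}\mathcal F$ is unchanged, so I may replace $V$ by $\,]\overline X[_{V}$ and assume $Z := \,]X[_{V}$ is closed in $V$. By (the proof of) lemma~\ref{tateret}, $V$ is then paracompact and admits an exhaustion $V = \bigcup_{n} W_{n}$ with each $W_{n}$ quasi-compact open and $\overline{W}_{n} \subseteq W_{n+1}$; tautness makes each $\overline{W}_{n}$ quasi-compact, so each $K_{n} := Z \cap \overline{W}_{n}$ is quasi-compact.

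Injectivity of $\lambda$ is formal and uses no finiteness hypothesis. If $s \in \Gamma(V', \mathcal F)$ restricts to $0$ on $Z$, then $s_{z} = 0$ in $\mathcal F_{z} = (i_X^{-1}\mathcal F)_{z}$ for every $z \in Z$, so $s$ vanishes on an open set $V'' \subseteq V'$ containing $Z$, i.e.\ $s$ becomes $0$ in the colimit.

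For surjectivity I would proceed in two steps. \emph{Quasi-compact step:} over a quasi-compact subset of a quasi-separated (locally spectral) space, $\Gamma$ commutes with the filtered colimit of sections over shrinking neighborhoods — this follows from the fact that on a quasi-compact quasi-separated space $\Gamma$ commutes with filtered colimits of sheaves, together with the sheaf-level identity of lemma~\ref{FKlim}. Hence a given $t \in \Gamma(Z, i_X^{-1}\mathcal F)$ is, over each $K_{n}$, represented by an honest section $s_{n} \in \mathcal F(U_{n})$ on some open neighborhood $U_{n}$ of $K_{n}$, with $s_{n}|_{Z \cap U_{n}} = t|_{Z \cap U_{n}}$. \emph{Patching step:} two such representatives agree along $Z$, hence (by the same quasi-compact principle applied to their difference) on an open neighborhood of the relevant overlap; using the nestedness $\overline{W}_{n} \subseteq W_{n+1}$ I would inductively shrink the $U_{n}$ to a locally finite family $U_{n}'$ that genuinely agrees on overlaps, and glue them into a section $s$ on $V' := \bigcup_{n} U_{n}'$, an open neighborhood of $Z$, with $s|_{Z} = t$. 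This exhibits a preimage of $t$ and proves surjectivity.

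The main obstacle is the patching step: assembling the locally defined $s_{n}$ into a single section over one open neighborhood. This is the classical extension-of-sections-from-a-closed-set argument, but the adic spaces here are not Hausdorff, so normality is unavailable; its role is played by tautness together with the countable exhaustion with nested closures, which force local finiteness and make the inductive shrinking converge. Equivalently, the entire surjectivity amounts to the assertion that on this paracompact $V$ the functor $\Gamma(V,-)$ commutes with the filtered colimit $\varinjlim_{V'} j'_{*}j'^{-1}\mathcal F$ of lemma~\ref{FKlim}, the content once more being concentrated in controlling the colimit along the countable exhaustion.
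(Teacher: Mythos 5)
Your proposal is correct and follows essentially the same route as the paper's own proof: reduce to the case where the tube is closed by replacing $V$ with $\,]\overline X[_{V}$, settle the quasi-compact situation via lemma \ref{FKlim} together with the commutation of filtered colimits with sections on quasi-compact (coherent) spaces, and then glue along a countable exhaustion $V = \bigcup_n V_n$ with $\overline V_n \subset V_{n+1}$. The only point your patching step leaves as a plan is carried out explicitly in the paper by the choice $W_n := U_n \cap V_n \setminus \overline V_{n-2}$, which makes $W_n \cap W_m = \emptyset$ whenever $|n-m|>1$, yields agreement on the remaining (consecutive) overlaps, and still covers the tube because every point of $\,]X[_{V}$ lies in some $V_n \setminus V_{n-1}$.
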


\begin{proof}
Injectivity is automatic and only surjectivity needs a proof.
Assume first that $\,]X[_{V}$ is quasi-compact.
As mentioned after proposition 4.27 of \cite{LeStum17*}, $\,]X[_{V}$ is stable under generalization.
It follows that $\,]X[_{V} = \bigcap_{V'} V'$ when $V'$ runs through the (quasi-compact) open neighborhoods of $\,]X[_{V}$ in $V$.
Our assertion then follows from lemma \ref{FKlim} because filtered direct limits commute with global sections on coherent topological spaces.
%corollary 3.1.11 in chapter 0 of \cite{FujiwaraKato18} (and can also easily be checked by hand).
We now turn to the general case.
Thanks to lemma \ref{tateret}, we can replace $V$ with $]\overline X[_{V}$ where $\overline X$ denotes a closure of $X$ and assume that $\,]X[_{V}$ is closed in $V$.
We can also assume that $V = \bigcup_{n \in \mathbb N} V_{n}$ with $V_{n}$ quasi-compact open and $\overline V_{n} \subset V_{n+1}$.
Now, for each $n \in \mathbb N$, $\,]X[_{V_{n}}$ is closed in $V_{n}$ which is quasi-compact and therefore $\,]X[_{V_{n}}$ itself is quasi-compact.
Therefore, if $s \in \Gamma(\,]X[_{V}, i_{X}^{-1}\mathcal F )$, then there exists an open subset $U'_{n} \subset V_{n}$ containing $\,]X[_{V_{n}}$ and $s'_{n} \in \Gamma(U'_{n}, \mathcal F)$ such that $s'_{n|\,]X[_{V_{n}}} = s_{|\,]X[_{V_{n}}}$.
In particular, we have $s'_{n+1|\,]X[_{V_{n}}}= s'_{n|\,]X[_{V_{n}}}$ and there exists therefore an open subset $U_{n} \subset U'_{n} \cap U'_{n+1}$ containing $\,]X[_{V_{n}}$ such that $s'_{n+1|U_{n}}= s'_{n|U_{n}}$ (using again the fact that $\,]X[_{V_{n}}$ is quasi-compact).
Now, we consider the open subset $W_{n} := U_{n} \cap V_{n} \setminus \overline V_{n-2}$ (with the convention that $V_{n} = \emptyset$ if $n < 0$) and we define $s_{n} := s'_{n|W_{n}} \in \Gamma(W_{n}, \mathcal F)$.
Since $W_{n} \subset U_{n}$ (and $W_{n+1} \subset U_{n+1})$, we have:
\[
s_{n+1|W_{n+1} \cap W_{n}} = s'_{n+1|W_{n+1} \cap W_{n}} = s'_{n|W_{n+1} \cap W_{n}} = s_{n|W_{n+1} \cap W_{n}}.
\]
Moreover, $W_{n} \cap W_{m} \subset V_{n} \setminus \overline V_{n-2} \cap V_{m} \setminus \overline V_{m-2} = \emptyset$ if $|n-m| >1$.
Therefore, if we set $W = \bigcup_{n \in \mathbb N} W_{n}$, then there exists a unique $\widetilde s \in \Gamma(W, \mathcal F)$ such that $\widetilde s_{|W_{n}} = s_{n}$ for all $n \in \mathbb N$.
It follows that
\[
\widetilde s_{|\,]X[_{W_{n}}} = \widetilde s_{|W_{n}|\,]X[_{W_{n}}} = s_{n|\,]X[_{W_{n}}} = s_{|\,]X[_{W_{n}}}.
\]
It only remains to check that $\,]X[_{V} \subset W$, and we will then have $\widetilde s_{|\,]X[_{V}} = s_{|\,]X[_{V}}$.
But if $x \in V$, then there exists $n \in \mathbb N$ such that $x \in V_{n}$ and $x \notin V_{n-1}$ (or else $V$ is quasi-compact).
If we assume that $x \in \,]X[_{V}$, then we will have $x \in \,]X[_{V_{n}} \subset U_{n}$ but also $x \notin \overline V_{n-2} \subset V_{n-1}$ so that $x \in W_{n}$ and we are done. 
\end{proof}

%%%%%%%%%%%%%%%%%%
\begin{xmp}
In proposition \ref{limsh}, it is not sufficient to assume that $\bigcap_{V'} V' = \,]X[_V$: it is necessary to let $V'$ run through all (or a cofinal system of) open neighborhoods of $\,]X[_V$ in $V$.
In the case $X = \emptyset$ and $V = \mathbb D^-_{\mathbb Q_p}(0,1) $, if we let $V_n = \mathbb D^-_{\mathbb Q_p}(0, 1) \setminus \mathbb D_{\mathbb Q_p}(0, p^{1/n})$, then
\[
\varinjlim_{V_n} \Gamma(V_n, \mathcal O_V) = \mathcal R \neq \{0\} = \Gamma(\,]X[_{V}, i_{X}^{-1}\mathcal O_V)
\]
where $\mathcal R$ is the Robba ring.
\end{xmp}

%%%%%%%%%%%%%
\begin{cor} \label{limhom}
If $\mathcal F$ and $\mathcal G$ are two $\mathcal O_{V}$-modules, then
\[
\varinjlim_{V'} \mathrm{Hom}_{\mathcal O_{V'}}(\mathcal F_{|V'}, \mathcal G_{|V'}) = \mathrm{Hom}_{\mathcal O_{V}^\dagger}(i_{X}^{-1}\mathcal F, i_{X}^{-1}\mathcal G)
\]
where $V'$ runs through the open neighborhoods of $\,]X[_{V}$ in $V$.
\end{cor}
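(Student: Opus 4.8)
The plan is to deduce the statement from Proposition \ref{limsh} applied to an internal Hom sheaf. First I would form the $\mathcal O_V$-module $\mathcal H := \mathcal H\mathrm{om}_{\mathcal O_V}(\mathcal F, \mathcal G)$ on $V$. For every open neighborhood $V'$ of $\,]X[_V$, the open immersion $j' : V' \hookrightarrow V$ satisfies $j'^{*}\mathcal H\mathrm{om}_{\mathcal O_V}(\mathcal F, \mathcal G) = \mathcal H\mathrm{om}_{\mathcal O_{V'}}(\mathcal F_{|V'}, \mathcal G_{|V'})$, since internal Hom is local and hence commutes with restriction to an open subset. Taking global sections gives
\[
\mathrm{Hom}_{\mathcal O_{V'}}(\mathcal F_{|V'}, \mathcal G_{|V'}) = \Gamma\bigl(V', \mathcal H\mathrm{om}_{\mathcal O_V}(\mathcal F, \mathcal G)\bigr),
\]
and these identifications are compatible with the restriction maps attached to inclusions $V'' \subseteq V'$. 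Passing to the colimit therefore identifies the left-hand side of the corollary with $\varinjlim_{V'}\Gamma(V', \mathcal H)$.

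Next I would invoke Proposition \ref{limsh} for the sheaf $\mathcal H$; this is legitimate because $(X,V)$ is Tate paracompact throughout this subsection, which is exactly the standing hypothesis of that proposition. It yields
\[
\varinjlim_{V'}\Gamma\bigl(V', \mathcal H\bigr) \simeq \Gamma\bigl(\,]X[_V, i_X^{-1}\mathcal H\bigr).
\]
It then remains to identify $\Gamma(\,]X[_V, i_X^{-1}\mathcal H\mathrm{om}_{\mathcal O_V}(\mathcal F, \mathcal G))$ with $\mathrm{Hom}_{\mathcal O_V^\dagger}(i_X^{-1}\mathcal F, i_X^{-1}\mathcal G) = \Gamma(\,]X[_V, \mathcal H\mathrm{om}_{\mathcal O_V^\dagger}(i_X^{-1}\mathcal F, i_X^{-1}\mathcal G))$. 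For this I would use the canonical comparison morphism
\[
i_X^{-1}\mathcal H\mathrm{om}_{\mathcal O_V}(\mathcal F, \mathcal G) \longrightarrow \mathcal H\mathrm{om}_{\mathcal O_V^\dagger}(i_X^{-1}\mathcal F, i_X^{-1}\mathcal G)
\]
(recall $\mathcal O_V^\dagger = i_X^{-1}\mathcal O_V$), of the same type already exploited for realizations in Proposition \ref{inthom}, and argue that it is an isomorphism; taking global sections then finishes the proof.

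The main obstacle is precisely this last identification: inverse image does not commute with internal Hom for a general module, and for the non-open, non-closed restriction $i_X$ the comparison map is an isomorphism on the stalk at $x \in \,]X[_V$ only once a finite presentation of $\mathcal F$ is available near $x$. This is therefore the step where the finiteness implicit in the intended applications genuinely enters: one reduces to checking $\mathcal H\mathrm{om}_{\mathcal O_V}(\mathcal F, \mathcal G)_x \simeq \mathcal H\mathrm{om}_{\mathcal O_V^\dagger}(i_X^{-1}\mathcal F, i_X^{-1}\mathcal G)_x$ at each such $x$, which holds for finitely presented (coherent) $\mathcal F$. For a genuinely infinite $\mathcal F$ such as $\bigoplus_{\mathbb N}\mathcal O_V$ the two sides differ, a colimit of products on the left against a product of colimits on the right, so a coherence hypothesis on $\mathcal F$ is what makes this identification — and hence the equality of the corollary — go through.
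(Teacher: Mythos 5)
Your chain of identifications is exactly the paper's own proof, read in the opposite direction: the paper computes
\begin{align*}
\mathrm{Hom}_{\mathcal O_{V}^\dagger}(i_{X}^{-1}\mathcal F, i_{X}^{-1}\mathcal G) &= \Gamma(\,]X[_{V}, \mathcal H\mathrm{om}_{\mathcal O_{V}^\dagger}(i_{X}^{-1}\mathcal F, i_{X}^{-1}\mathcal G) )
= \Gamma(\,]X[_{V}, i_{X}^{-1}\mathcal H\mathrm{om}_{\mathcal O_{V}}(\mathcal F, \mathcal G) )
\\&= \varinjlim_{V'} \Gamma(V', \mathcal H\mathrm{om}_{\mathcal O_{V'}}(\mathcal F_{|V'}, \mathcal G_{|V'}) )
= \varinjlim_{V'} \mathrm{Hom}_{\mathcal O_{V'}}(\mathcal F_{|V'}, \mathcal G_{|V'}),
\end{align*}
with proposition \ref{limsh} (applied to the sheaf $\mathcal H\mathrm{om}_{\mathcal O_V}(\mathcal F,\mathcal G)$) justifying the passage to the colimit, exactly as in your plan. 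So as far as the route is concerned, you and the paper coincide.

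The interesting point is the one you flag: the second equality above, i.e.\ the commutation of $i_X^{-1}$ with internal Hom, is asserted by the paper without comment, and it is precisely the step that requires a hypothesis. Your analysis of it is correct: since $\mathcal O_V^\dagger = i_X^{-1}\mathcal O_V$, the map on stalks of structure sheaves is the identity, so the comparison map $i_X^{-1}\mathcal H\mathrm{om}_{\mathcal O_V}(\mathcal F,\mathcal G) \to \mathcal H\mathrm{om}_{\mathcal O_V^\dagger}(i_X^{-1}\mathcal F, i_X^{-1}\mathcal G)$ is a stalkwise isomorphism as soon as $\mathcal F$ is locally finitely presented, but not in general. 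Your counterexample is also sound: for $\mathcal F = \bigoplus_{\mathbb N}\mathcal O_V$ and $\mathcal G = \mathcal O_V$, the left-hand side of the corollary is $\varinjlim_{V'}\prod_{\mathbb N}\Gamma(V',\mathcal O_V)$ whereas the right-hand side is $\prod_{\mathbb N}\varinjlim_{V'}\Gamma(V',\mathcal O_V)$, and (say in the Monsky--Washnitzer situation) a sequence of overconvergent functions whose radii of overconvergence shrink to $1$ lies in the latter but in the image of no single term of the former. So the statement as printed is not correct for arbitrary $\mathcal O_V$-modules and should carry a finite-presentation (coherence) hypothesis on $\mathcal F$, as you conclude. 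This is harmless downstream: every later invocation --- extending maps $\mathcal O_{V}^{n} \to \mathcal O_{V}^{m}$ and the full faithfulness of $\varinjlim_{V'}\mathrm{Mod}_{\mathrm{coh}}(\mathcal O_{V'}) \to \mathrm{Mod}_{\mathrm{coh}}(\mathcal O_V^\dagger)$ in theorem \ref{glumd}, and lemma \ref{extlim} --- takes $\mathcal F$ finite free or coherent; the one statement that inherits the problem verbatim is lemma \ref{locshea}, which is itself only ever applied to coherent modules.
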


\begin{proof}
It follows from proposition \ref{limsh} that
\begin{align*}
\mathrm{Hom}_{\mathcal O_{V}^\dagger}(i_{X}^{-1}\mathcal F, i_{X}^{-1}\mathcal G) &= \Gamma(\,]X[_{V}, \mathcal H\mathrm{om}_{\mathcal O_{V}^\dagger}(i_{X}^{-1}\mathcal F, i_{X}^{-1}\mathcal G) )
\\&= \Gamma(\,]X[_{V}, i_{X}^{-1}\mathcal H\mathrm{om}_{\mathcal O_{V}}(\mathcal F, \mathcal G) )
\\&= \varinjlim \Gamma(V', \mathcal H\mathrm{om}_{\mathcal O_{V}}(\mathcal F_{|V}, G_{|V})_{|V'})
\\&= \varinjlim\Gamma(V', \mathcal H\mathrm{om}_{\mathcal O_{V'}}(\mathcal F_{|V'}, G_{|V'}) )
\\&= \varinjlim \mathrm{Hom}_{\mathcal O_{V'}}(\mathcal F_{|V'}, G_{|V'}). \qedhere
\end{align*}
\end{proof}

Note that the analogous result also holds for sheaves of sets or abelian sheaves for example (with the same proof).

For further use, we also prove the following:

%%%%%%%%%%%%%%%%%
\begin{lem} \label{locshea}
Let $\mathcal F$ be an $\mathcal O_{V}^\dagger$-module, $V = \bigcup_{i\in I} V_{i}$ an open covering and, for each $i\in I$, $\mathcal G_{i}$ an $\mathcal O_{V_{i}}$-module such that  $\mathcal G_{i|\,]X[_{V_{i}}} \simeq \mathcal F_{|\,]X[_{V_{i}}}$.
Then, there exists a neighborhood $V'$ of $\,]X[_{V}$ in $V$ and an $\mathcal O_{V'}$-module $\mathcal G$ such that $\mathcal G_{|\,]X[_{V}} \simeq \mathcal F$ and for all $i \in I$, $\mathcal G_{|V' \cap V_{i}} \simeq \mathcal G_{i|V' \cap V_{i}}$.
\end{lem}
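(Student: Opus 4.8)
The plan is to reinterpret the hypotheses as gluing data living on the tube and to thicken that data into honest gluing data of $\mathcal O_{V'}$-modules over a single neighbourhood $V'$ of $\,]X[_V$; the conceptual input is that maps of $\mathcal O_V^\dagger$-modules on the tube are the filtered colimit of maps of $\mathcal O$-modules over the neighbourhoods of $\,]X[_V$, which is exactly the content of corollary \ref{limhom}. As a harmless preliminary I would replace $V$ by $\,]\overline X[_V$, using lemma \ref{tateret} and the remark following it, so that $\,]X[_V$ is closed in $V$ and $V$ is paracompact. Fixing isomorphisms $\theta_i : i_X^{-1}\mathcal G_i \simeq \mathcal F$ over $\,]X[_{V_i}$ and setting $\phi_{ij} := \theta_j^{-1}\circ \theta_i$ produces isomorphisms $i_X^{-1}(\mathcal G_{i|V_i\cap V_j}) \simeq i_X^{-1}(\mathcal G_{j|V_i\cap V_j})$ over $\,]X[_{V_i\cap V_j}$ obeying the cocycle identity $\phi_{ik}=\phi_{jk}\circ\phi_{ij}$ over the triple tubes.

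Applying corollary \ref{limhom} on each overlap, the surjectivity of the colimit extends $\phi_{ij}$ and $\phi_{ij}^{-1}$ to $\mathcal O$-linear maps on some neighbourhood of $\,]X[_{V_i\cap V_j}$, and its injectivity lets me shrink until these are mutually inverse; this yields an isomorphism $\widetilde\phi_{ij}$ of $\mathcal O$-modules over a neighbourhood $W_{ij}$ of the double tube, and one more use of injectivity forces the cocycle relation $\widetilde\phi_{ik}=\widetilde\phi_{jk}\circ\widetilde\phi_{ij}$ over a neighbourhood $W_{ijk}$ of each triple tube. The remaining task is to find a single neighbourhood $V'$ of $\,]X[_V$ with $V'\cap V_i\cap V_j\subset W_{ij}$ and $V'\cap V_i\cap V_j\cap V_k\subset W_{ijk}$ for all indices; over such a $V'$ the data $(\mathcal G_{i|V'\cap V_i},\widetilde\phi_{ij})$ is genuine gluing data which I glue to an $\mathcal O_{V'}$-module $\mathcal G$. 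Restricting the gluing isomorphisms to the tube returns $(\mathcal F,\phi_{ij})$, so $\mathcal G_{|\,]X[_V}\simeq \mathcal F$, and $\mathcal G_{|V'\cap V_i}\simeq\mathcal G_{i|V'\cap V_i}$ holds by construction.

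To produce $V'$ I would lean on paracompactness of $\,]X[_V$ (lemma \ref{tateret}): after passing to a locally finite refinement of $\{V_i\}$, near any point of the tube only finitely many of the conditions $V'\cap V_i\cap V_j\subset W_{ij}$ are relevant, so they can be met by a local shrinking, and these local choices can then be amalgamated exactly by the exhaustion device used in the proof of proposition \ref{limsh} — writing $V=\bigcup_n V_n$ with $V_n$ quasi-compact open and $\overline V_n\subset V_{n+1}$, handling the (essentially finite) gluing on a neighbourhood of each $\,]X[_{V_n}$, and patching the successive pieces over the disjointly telescoping opens $W_n=U_n\cap V_n\setminus\overline V_{n-2}$. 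Compatibility with the original, possibly non-refined, cover then follows from a final application of corollary \ref{limhom}.

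The main obstacle is precisely this last amalgamation. The isomorphisms $\widetilde\phi_{ij}$ are defined only on neighbourhoods $W_{ij}$ that depend on the pair $(i,j)$ and may shrink toward the tube as the indices vary, and there is in general no neighbourhood of $\,]X[_V$ on which all of them — still less the cocycle identities — hold simultaneously. Every individual extension step is a formal consequence of corollary \ref{limhom}; it is the organisation of infinitely many such data into one neighbourhood, forced through by the paracompactness and tautness of the tube, that carries the real content of the lemma.
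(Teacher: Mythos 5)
Your opening moves coincide with the paper's: reduce via lemma \ref{tateret} to the case where $\,]X[_V$ is closed in $V$, turn the identifications $\theta_i$ into transition isomorphisms $\phi_{ij}$ on the double tubes, and use corollary \ref{limhom} to extend these, together with the cocycle identities, to open neighborhoods $W_{ij}$ and $W_{ijk}$. The gap is exactly where you place it yourself, and it is fatal to the way you formulate the remaining step. You ask for a single neighborhood $V'$ of $\,]X[_V$ satisfying $V'\cap V_i\cap V_j\subset W_{ij}$ while keeping the \emph{original} $V_i$; such a $V'$ need not exist, even for a covering by two opens. The set $A_{ij}:=(V_i\cap V_j)\setminus W_{ij}$ is disjoint from the tube, but its closure can meet $\,]X[_V$ at boundary points of $V_i\cap V_j$, so that every neighborhood of the tube meets $A_{ij}$ (picture $V=\mathbb R^2$ with the tube the $x$-axis, $V_1=\{x<1\}$, $V_2=\{x>0\}$, $W_{12}=\{0<x<1,\ |y|<x(1-x)\}$: points $(\epsilon,\epsilon)\in A_{12}$ converge to the origin, which lies on the tube). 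Nothing in your argument excludes this purely topological phenomenon, so no amount of paracompactness or local finiteness will produce your $V'$: one is forced to shrink the covering opens $V_i$ themselves, not merely to pass to a neighborhood of the tube. Your remark about a ``locally finite refinement'' gestures at this, but the asserted compatibility with the original cover ``by a final application of corollary \ref{limhom}'' then runs into the same amalgamation problem one level up, and your last paragraph concedes that the amalgamation is not carried out.

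The paper's proof is organized precisely to avoid this. First it treats the case where $\,]X[_V$ is quasi-compact, so that $I$ may be taken finite and the $V_i$ quasi-compact; after extending the $\phi_{ij}$ and the cocycle relations to neighborhoods $V_{ij}$ and $V_{ijk}$, it invokes corollary 2.2.12 of \cite{FujiwaraKato18} to \emph{shrink the cover} so that one may assume $V_{ij}=V_i\cap V_j$ and $V_{ijk}=V_i\cap V_j\cap V_k$; the data is then honest gluing data on the full (new) overlaps and ordinary gluing applies. Second, the general case is reduced to this one not by amalgamating infinitely many pairwise data simultaneously, but linearly: write $V=\bigcup_{n\in\mathbb N}V_n$ with $V_n$ quasi-compact open and $\overline V_n\subset V_{n+1}$, apply the quasi-compact case over each $V_n$ to obtain a \emph{single module} $\mathcal G_n$ on a neighborhood $U_n$ of $\,]X[_{V_n}$ restricting correctly to each $U_n\cap V_i$, then shrink the $U_n$ so that $\mathcal G_{n+1|U_n}=\mathcal G_n$ and glue along this chain. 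This $\mathbb N$-indexed chain gluing is the device that replaces the simultaneous amalgamation you identified as the obstacle; without it, and without the Fujiwara--Kato shrinking in the finite case, the proposal does not close.
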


\begin{proof}
We will make an extensive use of corollary \ref{limhom}.
We first assume that $\,]X[_{V}$ is quasi-compact.
We may then also assume that $I$ is finite and all $V_{i}$ are quasi-compact.
The identity on $\mathcal F_{|\,]X[_{V_{i} \cap V_{j}}}$ provides a collection of compatible isomorphism $\mathcal G_{i|\,]X[_{V_{i} \cap V_{j}}} \simeq \mathcal G_{j|\,]X[_{V_{i} \cap V_{j}}}$ for $i,j \in I$.
They extend to a family of isomorphisms $\mathcal G_{i|V_{ij}} \simeq \mathcal G_{j|V_{ij}}$ defined on a neighborhood $V_{ij}$ of $\,]X[_{V_{i} \cap V_{j}}$.
Moreover, there exists for all $i,j,k \in I$ an neighborhood $V_{ijk}$ of $\,]X[_{V_{i} \cap V_{j} \cap V_{k}}$ on which the isomorphisms are compatible.
Actually, since $I$ is finite, we can assume thanks to corollary 2.2.12 of \cite{FujiwaraKato18}, that $V_{ij} = V_{i} \cap V_{j}$ and $V_{ijk} = V_{i} \cap V_{j} \cap V_{k}$.
And we are done.
We know turn to the general case and we may assume that $\,]X[_{V}$ is closed in $V$ and $V = \bigcup_{n \in \mathbb N} V_{n}$ with $V_{n}$ quasi-compact open and $\overline V_{n} \subset V_{n+1}$.
Then, we know from the first part that there exists for each $n \in \mathbb N$, an open subset $U_{n} \subset V_{n}$ containing $\,]X[_{V_{n}}$ and a $\mathcal O_{U_{n}}$-module $\mathcal G_{n}$ such that $\mathcal G_{n|\,]X[_{V_{n}}} \simeq \mathcal F_{|\,]X[_{V_{n}}}$ and, for all $i \in I$, $\mathcal G_{n|U_{n} \cap V_{i}} \simeq \mathcal G_{i|U_{n} \cap V_{i}}$.
We can now shrink $U_{n}$ and assume that $\mathcal G_{n+1|U_{n}} = \mathcal G_{n}$.
Our assertion follows.
\end{proof}

We now turn to cohomological concern:

%%%%%%%%%%%%%%
\begin{lem} \label{corhom}
If $\mathcal F$ is a flasque abelian sheaf on $V$, then $i_{X}^{-1}\mathcal F$ is acyclic:
\[
\forall k> 0, \quad \mathrm H^k(\,]X[_{V}, i_{X}^{-1}\mathcal F ) = 0.
\]
\end{lem}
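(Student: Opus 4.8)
The plan is to reduce the vanishing to a flasqueness statement and then invoke the general fact that a flasque abelian sheaf is acyclic on an arbitrary topological space. Concretely, I would show that if $\mathcal F$ is flasque on $V$, then its restriction $i_X^{-1}\mathcal F$ is flasque on the tube $\,]X[_{V}$; once this is known, $\mathrm H^k(\,]X[_{V}, i_X^{-1}\mathcal F)=0$ for all $k>0$ follows with no further hypothesis on the (paracompact) tube. The point worth stressing is that, although the paper will later compute cohomology on tubes by passing to small neighborhoods, the present lemma should be proved beforehand and independently, since it is precisely the input that allows one to restrict a flasque (or injective) resolution of $\mathcal F$ on $V$ to an acyclic resolution on $\,]X[_{V}$.

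To prove flasqueness, recall that every open subset of the tube has the form $\,]X[_{\Omega}=\,]X[_{V}\cap\Omega$ for some open $\Omega\subseteq V$, and that $(X,\Omega)$ is again an overconvergent space. Let $s\in\Gamma(\,]X[_{\Omega}, i_X^{-1}\mathcal F)$. By Proposition \ref{limsh} applied to $(X,\Omega)$, the section $s$ is represented by a genuine section $t\in\Gamma(\Omega',\mathcal F)$ on some open neighborhood $\Omega'$ of $\,]X[_{\Omega}$ in $\Omega$ (hence in $V$). Since $\mathcal F$ is flasque, $t$ extends to $\tilde t\in\Gamma(V,\mathcal F)$, and the image of $\tilde t$ under the canonical map $\Gamma(V,\mathcal F)\to\Gamma(\,]X[_{V}, i_X^{-1}\mathcal F)$ is a global section whose restriction to $\,]X[_{\Omega}$ is again the class of $t$, that is $s$. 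Thus $\Gamma(\,]X[_{V}, i_X^{-1}\mathcal F)\to\Gamma(\,]X[_{\Omega}, i_X^{-1}\mathcal F)$ is surjective for every open $\Omega$, which is the flasqueness of $i_X^{-1}\mathcal F$. Equivalently, one may phrase everything on $V$: since the opens of the tube are exactly the $\,]X[_{\Omega}$, flasqueness of $i_X^{-1}\mathcal F$ amounts to flasqueness of $\mathcal G:=i_{X*}i_X^{-1}\mathcal F=\varinjlim_{V'} j'_*j'^{-1}\mathcal F$ (lemma \ref{FKlim}) on $V$, each term $j'_*j'^{-1}\mathcal F$ being flasque as the pushforward of the open restriction of a flasque sheaf.

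The main obstacle is the appeal to Proposition \ref{limsh} over the possibly non-quasi-compact open tube $\,]X[_{\Omega}$: I must know that the surjectivity $\varinjlim_{\Omega'}\Gamma(\Omega',\mathcal F)\to\Gamma(\,]X[_{\Omega}, i_X^{-1}\mathcal F)$ holds there, i.e.\ that the Tate and paracompactness hypotheses descend to $(X,\Omega)$. When $\,]X[_{\Omega}$ is quasi-compact this is immediate, because $\,]X[_{\Omega}$ is stable under generalization and hence equals the intersection of its open neighborhoods, as in the first part of the proof of Proposition \ref{limsh}; the general case is handled exactly by the gluing argument built on Lemma \ref{tateret}. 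So the real work is to check that an open of the paracompact tube is again covered by the paracompact theory, which is legitimate since, as noted after Lemma \ref{tateret}, the relevant equivalences hold for arbitrary locally closed subsets of analytic spaces and $\,]X[_{\Omega}$ is such a subset of $V$. Granting this, the flasqueness argument goes through verbatim and the lemma follows, since flasque sheaves are acyclic.
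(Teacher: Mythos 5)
Your argument is not the paper's, and it contains a genuine gap at its central step. Everything rests on the claim that $i_X^{-1}\mathcal F$ is \emph{flasque} on $\,]X[_V$, which you prove by applying Proposition \ref{limsh} to the overconvergent space $(X,\Omega)$ for an \emph{arbitrary} open subset $\Omega \subseteq V$. But Proposition \ref{limsh} is stated under the standing hypothesis that the ambient space is Tate and \emph{paracompact}, and paracompactness (equivalently, being a disjoint union of taut spaces countable at infinity) is not inherited by arbitrary open subsets of an analytic adic space: an open subset of a quasi-compact space already need not be countable at infinity or taut. The remark following Lemma \ref{tateret} that you invoke does not help here: it says that the various characterizations of paracompactness remain \emph{equivalent} on locally closed subsets of analytic spaces, not that a locally closed subset of a paracompact space \emph{is} paracompact. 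Your alternative formulation has the same gap in different clothes: each $j'_*j'^{-1}\mathcal F$ is indeed flasque, but a filtered colimit of flasque sheaves need not be flasque, because sections over a non-quasi-compact open do not commute with filtered colimits --- and commuting them is exactly what Proposition \ref{limsh} (hence paracompactness of $\Omega$) would be needed for. So the intermediate claim ``$i_X^{-1}\mathcal F$ is flasque'' is unjustified, and quite possibly false in general.

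The paper sidesteps precisely this difficulty by proving only a weaker property: $i_X^{-1}\mathcal F$ is \emph{quasi-flasque} in the sense of Kempf (\cite{Kempf80}), meaning that restriction maps are surjective between \emph{quasi-compact} opens of the tube. Since every quasi-compact open of $\,]X[_V$ is the tube of $X$ in a quasi-compact open of $V$, this only requires the easy (quasi-compact) case of Proposition \ref{limsh} --- which is exactly the case where your extension argument is sound. The acyclicity of quasi-flasque sheaves is then established on the tube itself, whose paracompactness \emph{is} available by Lemma \ref{tateret}: one writes $\,]X[_V = \bigcup_{n} W_n$ with $W_n$ quasi-compact open, verifies the hypotheses of the acyclicity criterion of \cite[\href{https://stacks.math.columbia.edu/tag/05T8}{Tag 05T8}]{stacks-project} (every sheaf embeds into a quasi-flasque one, namely $i_X^{-1}$ of an injective; quotients of quasi-flasque by quasi-flasque are quasi-flasque by Kempf's proposition 4), and concludes exactness of global sections by Mittag--Leffler along the exhaustion $\{W_n\}$. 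To salvage your route you would have to prove that every open subset of $V$, or of the tube, is again Tate paracompact; that is false in general, so you should instead weaken ``flasque'' to ``quasi-flasque'' and supply the Mittag--Leffler argument, which is what the paper does.
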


\begin{proof}
Recall from section 2 of \cite{Kempf80} that a sheaf $\mathcal G$ on $\,]X[_V$ is said to be \emph{quasi-flasque} if whenever $W' \subset W$ is an inclusion of \emph{quasi-compact} open subsets of $\,]X[_V$, then the restriction map $\Gamma(W, \mathcal G) \to \Gamma(W', \mathcal G)$ is surjective.
Note that any quasi-compact open subset of $\,]X[_V$ is the tube of $X$ in some quasi-compact open subset of $V$.
It then follows from (the easy case of) proposition \ref{limsh} that if $\mathcal F$ is flasque on $V$, then $i_X^{-1}\mathcal F$ is quasi-flasque on $\,]X[_V$.
In particular, it is sufficient to prove that quasi-flasque sheaves on $\,]X[_V$ are acyclic.
We will show that the assumptions from \cite[\href{https://stacks.math.columbia.edu/tag/05T8}{Tag 05T8}]{stacks-project} are satisfied.
First of all any sheaf $\mathcal G$ on $\,]X[_V$ embeds into a quasi-flasque sheaf: there exists an injection $i_{X*}\mathcal G \hookrightarrow \mathcal I$ into an injective and therefore $\mathcal G = i_X^{-1}i_{X*}\mathcal G \hookrightarrow i_X^{-1} \mathcal I$ (which is quasi-flasque from what we saw before).
Assume now that $0 \to \mathcal G' \to \mathcal G \to \mathcal G'' \to 0$ is an exact sequence with $\mathcal G, \mathcal G'$ quasi-flasque.
Then, it formally follows from proposition 4 in \cite{Kempf80} that $\mathcal G''$ also is quasi-flasque.
Now, since $\,]X[_V$ is paracompact, we may assume that $\,]X[_V = \bigcup_{n \in \mathbb N} W_n$ with $W_n$ quasi-compact open and $W_n \subset W_{n+1}$.
It follows from proposition 4 in \cite{Kempf80} again that, for each $n \in \mathbb N$, the sequence
\[
0 \to \mathrm \Gamma(W_n, \mathcal G') \to \Gamma(W_n,\mathcal G) \to \Gamma(W_n,\mathcal G'') \to 0
\]
is exact.
Since $\mathcal G'$ is quasi-flasque, the restriction maps $\mathrm \Gamma(W_{n+1}, \mathcal G') \to  \mathrm \Gamma(W_n, \mathcal G')$ are surjective and we can invoke Mittag-Leffler to conclude that the sequence
\[
0 \to \mathrm \Gamma(\,]X[_V, \mathcal G') \to \Gamma(\,]X[_V,\mathcal G) \to \Gamma(\,]X[_V,\mathcal G'') \to 0
\]
is also exact.
\end{proof}

We can now improve on proposition \ref{limsh}:

%%%%%%%%%%%%%%
\begin{prop} \label{limcoh}
If $\mathcal F$ is a complex of abelian sheaves on $V$, then
\[
\forall k \in \mathbb N, \quad \varinjlim_{V'} \mathrm H^k(V', \mathcal F) \simeq \mathrm H^k(\,]X[_{V}, i_{X}^{-1}\mathcal F )
\]
where $V'$ runs through the open neighborhoods of $\,]X[_{V}$ in $V$.
\end{prop}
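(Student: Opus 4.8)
The plan is to pass to an injective resolution on $V$ and then to apply, degreewise, the two facts already at our disposal: the colimit formula of proposition \ref{limsh} and the acyclicity statement of lemma \ref{corhom}. After truncation (which the conventions of the paper permit) we may assume $\mathcal F$ is bounded below, and I would choose a quasi-isomorphism $\mathcal F \to \mathcal I^\bullet$ with $\mathcal I^\bullet$ a bounded below complex of injective abelian sheaves on $V$.

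First I would record the computation of both sides through this single resolution. On the source side, for every open neighborhood $V'$ of $\,]X[_{V}$ the restriction $\mathcal I^\bullet_{|V'}$ is a complex of flasque, hence $\Gamma(V', -)$-acyclic, sheaves quasi-isomorphic to $\mathcal F_{|V'}$, so that $\mathrm H^k(V', \mathcal F) \simeq \mathrm H^k(\Gamma(V', \mathcal I^\bullet))$. On the target side, since $i_{X}^{-1}$ is exact, $i_{X}^{-1}\mathcal I^\bullet$ is a resolution of $i_{X}^{-1}\mathcal F$, and each $i_{X}^{-1}\mathcal I^n$ is acyclic on $\,]X[_{V}$ by lemma \ref{corhom} (injective sheaves being flasque); therefore $\mathrm H^k(\,]X[_{V}, i_{X}^{-1}\mathcal F) \simeq \mathrm H^k(\Gamma(\,]X[_{V}, i_{X}^{-1}\mathcal I^\bullet))$.

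It then remains to interchange the filtered colimit with cohomology. Proposition \ref{limsh}, applied in each degree $n$ to the sheaf $\mathcal I^n$, yields an isomorphism of complexes $\varinjlim_{V'}\Gamma(V', \mathcal I^\bullet) \simeq \Gamma(\,]X[_{V}, i_{X}^{-1}\mathcal I^\bullet)$, the transition maps being compatible with the differentials by functoriality. Because filtered colimits of abelian groups are exact, they commute with the formation of cohomology of a complex, so
\[
\mathrm H^k(\,]X[_{V}, i_{X}^{-1}\mathcal F) \simeq \mathrm H^k\!\left(\varinjlim_{V'}\Gamma(V', \mathcal I^\bullet)\right) \simeq \varinjlim_{V'}\mathrm H^k(\Gamma(V', \mathcal I^\bullet)) \simeq \varinjlim_{V'}\mathrm H^k(V', \mathcal F),
\]
which is the asserted formula.

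The genuine content is carried entirely by proposition \ref{limsh} and lemma \ref{corhom}; the remaining ingredients (exactness of $i_{X}^{-1}$, flasqueness of restrictions of injectives, and exactness of filtered colimits) are standard. The one point that deserves attention, and the closest thing to an obstacle, is making sure both acyclic resolutions are set up so that proposition \ref{limsh} can be invoked \emph{degreewise}: once $\mathcal I^\bullet$ is fixed and its restrictions and pullbacks are identified as the relevant acyclic resolutions, the interchange is formal, using that the open neighborhoods of $\,]X[_{V}$ form a filtered system (finite intersections of such neighborhoods are again neighborhoods), which is exactly what legitimizes the exactness of the colimit.
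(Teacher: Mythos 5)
Your proof is correct and follows essentially the same route as the paper's: choose an injective resolution, use lemma \ref{corhom} to see that $i_X^{-1}\mathcal I^k$ is acyclic on the tube, apply proposition \ref{limsh} degreewise, and conclude by exactness of filtered colimits. The extra remarks you include (truncation, flasqueness of restrictions of injectives, filteredness of the system of neighborhoods) are points the paper leaves implicit, but the argument is the same.
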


\begin{proof}
We choose an injective resolution $\mathcal F \simeq \mathcal I^\bullet$.
Since filtered direct limits are exact, we have
\[
\varinjlim_{V'} \mathrm H^k(V', \mathcal F) \simeq \mathrm H^k\left(\varinjlim_{V'} \Gamma(V', \mathcal I^\bullet) \right).
\]
On the other hand, it follows from lemma \ref{corhom} that all $i_X^{-1}\mathcal I^k$ are acyclic, and therefore
\[
\mathrm H^k(\,]X[_{V}, i_{X}^{-1}\mathcal F )\simeq  \mathrm H^k\left(\Gamma(]X[_{V}, i_X^{-1} \mathcal I^\bullet) \right).
\]
Our assertion then follows from proposition \ref{limsh}.
\end{proof}

Let us finish with a relative variant of proposition \ref{limsh}:

%%%%%%%%%
\begin{prop} \label{dirim}
Let
\[
\xymatrix{Y \ar@{^{(}->}[r] \ar[d]^f & Q \ar[d] & \ar[l] W \ar[d]^u \\ X \ar@{^{(}->}[r] & P & \ar[l] V}
\]
be a left cartesian morphism of analytic overconvergent spaces with $u$ quasi-compact quasi-separated (and any $V$).
If $\mathcal F$ is an abelian sheaf on $W$, then
\[
\mathrm R]f[_{u*}i_Y^{-1}\mathcal F \simeq i_X^{-1}\mathrm Ru_*\mathcal F.
\]
\end{prop}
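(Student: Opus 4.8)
The plan is to reduce the statement to a comparison of two filtered colimits over systems of open neighborhoods and to prove that one of these systems is cofinal in the other by a compactness argument in the constructible topology.

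First I would record that the square of topological spaces
\[
\xymatrix{ ]Y[_W \ar[r]^{i_Y} \ar[d]_{]f[_u} & W \ar[d]^u \\ ]X[_V \ar[r]^{i_X} & V }
\]
is cartesian, i.e.\ $]Y[_W = u^{-1}(]X[_V)$. This follows from the left cartesianness of the morphism together with the compatibility of tubes with left cartesian morphisms established in \cite{LeStum17*}: the tube of $Y$ in $Q$ is the preimage of the tube of $X$ in $P$ under $Q^{\mathrm{ad}} \to P^{\mathrm{ad}}$, so $]Y[_W$ is the preimage of $]X[_P$ under $W \to P^{\mathrm{ad}}$, which is $u^{-1}(]X[_V)$ by commutativity. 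In particular, for any quasi-compact open $V'' \subset V$ one gets $]f[_u^{-1}(]X[_{V''}) = ]Y[_{u^{-1}(V'')}$, which I will use below.

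Next I would compare the two sides after applying $\mathrm R\Gamma(U,-)$ for $U$ ranging over the quasi-compact opens of $]X[_V$. These form a basis, and by the stalk formula for hypercohomology it is enough to treat them: the canonical base change morphism $i_X^{-1}\mathrm Ru_*\mathcal F \to \mathrm R]f[_{u*}i_Y^{-1}\mathcal F$ being an isomorphism on a basis of opens forces it to be an isomorphism in the derived category. Write $U = ]X[_{V''}$ with $V''$ quasi-compact open in $V$ (possible by the remark used in the proof of Lemma \ref{corhom}), and set $W'' := u^{-1}(V'')$, which is quasi-compact since $u$ is quasi-compact. Both $(X,V'')$ and $(Y,W'')$ are then Tate paracompact, so Proposition \ref{limcoh} applies to each. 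On the left, using that $\mathrm Ru_*\mathcal F$ restricts to $\mathrm R(u|_{W''})_*$ over $V''$, I obtain
\[
\mathrm R\Gamma(U, i_X^{-1}\mathrm Ru_*\mathcal F) \simeq \varinjlim_{V'''} \mathrm R\Gamma(u^{-1}(V'''), \mathcal F),
\]
where $V'''$ runs over the open neighborhoods of $]X[_{V''}$ in $V''$; on the right, using the cartesian identification $]f[_u^{-1}(U) = ]Y[_{W''}$,
\[
\mathrm R\Gamma(U, \mathrm R]f[_{u*}i_Y^{-1}\mathcal F) \simeq \varinjlim_{W'''} \mathrm R\Gamma(W''', \mathcal F),
\]
where $W'''$ runs over the open neighborhoods of $]Y[_{W''} = u^{-1}(]X[_{V''})$ in $W''$.

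The heart of the matter, and the step I expect to be the main obstacle, is to show that the subsystem $\{u^{-1}(V''')\}$ is cofinal among the $\{W'''\}$: given an open $W''' \supseteq u^{-1}(]X[_{V''})$ in $W''$, I must produce an open neighborhood $V'''$ of $]X[_{V''}$ in $V''$ with $u^{-1}(V''') \subseteq W'''$. Here I would pass to the constructible topology. The set $C := W'' \setminus W'''$ is closed, hence pro-constructible, in the spectral space $W''$; since $u|_{W''}$ is a spectral (quasi-compact, quasi-separated) map, its image $u(C)$ is pro-constructible in $V''$ and is disjoint from $]X[_{V''}$ because $C$ is disjoint from $u^{-1}(]X[_{V''})$. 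Since $V$ is analytic, $]X[_{V''}$ is stable under generization and therefore equals the filtered intersection of its quasi-compact open neighborhoods $V'''_\alpha$. In the constructible topology $V''$ is compact, $u(C)$ is closed and each $V'''_\alpha$ is open; as $u(C) \cap \bigcap_\alpha V'''_\alpha = \emptyset$, compactness yields an $\alpha_0$ with $u(C) \cap V'''_{\alpha_0} = \emptyset$, and then $V''' := V'''_{\alpha_0}$ satisfies $u^{-1}(V''') \subseteq W'''$. This cofinality identifies the two colimits above; since all the isomorphisms are natural and compatible with the base change morphism, the proposition follows.
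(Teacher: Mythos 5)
Your proof is correct in substance but follows a genuinely different route from the paper's. The paper derives the statement in one stroke from corollary 1.2.2 of \cite{AbeLazda20}: for a quasi-compact quasi-separated map of locally spectral spaces, the stalk of $\mathrm R^k u_*\mathcal F$ at a point $v$ is $\mathrm H^k(u^{-1}(\mathrm G(v)), \mathcal F)$, where $\mathrm G(v)$ denotes the set of generalizations of $v$; since $V$ is analytic, $\,]X[_V$ is stable under generalization, so $\mathrm G(v)$ is the same whether computed in $V$ or in $\,]X[_V$, and since $]f[_u$ inherits quasi-compactness and quasi-separatedness from $u$ (via the same cartesian square of tubes that you start with), the two stalks in question are literally the same group. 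Your argument instead re-proves this comparison by hand: you reduce to quasi-compact opens, apply the neighborhood-colimit description of cohomology on tubes (proposition \ref{limcoh}), and establish cofinality of the family $\{u^{-1}(V''')\}$ among the open neighborhoods of $\,]Y[_{W''}$ by a compactness argument in the constructible topology. This is essentially the Fujiwara--Kato style of proof that the paper explicitly mentions as an alternative (corollary 3.17 of \cite{FujiwaraKato18}); what your route buys is self-containedness (only spectral-space generalities plus results already proved in the paper), at the cost of length, whereas the paper's stalkwise argument is shorter but outsources the real content to \cite{AbeLazda20}. Your cofinality step --- the image of the pro-constructible set $C$ under a spectral map is pro-constructible, the quasi-compact generalization-stable tube is the filtered intersection of its quasi-compact open neighborhoods, and compactness of the constructible topology then yields a single $V'''_{\alpha_0}$ disjoint from $u(C)$ --- is exactly right and is the key point.

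One technical slip should be repaired: you assert that $(X,V'')$ and $(Y,W'')$ are Tate paracompact, but a quasi-compact open of an arbitrary analytic space need not be Tate (nor even quasi-separated), so proposition \ref{limcoh} does not apply verbatim as you invoke it. The remedy is standard: both sides of the asserted isomorphism are compatible with restriction to opens of $V$, so you may localize and assume from the outset that $V$ is Tate affinoid. Then $V''$ is Tate (the topologically nilpotent unit restricts), quasi-compact and quasi-separated, and $W''=u^{-1}(V'')$ is likewise Tate (a continuous ring homomorphism preserves topological nilpotence), quasi-compact (as $u$ is quasi-compact) and quasi-separated (as $u$ is quasi-separated over a quasi-separated base); all of these are therefore paracompact, and your appeals to proposition \ref{limcoh} --- in its easy, quasi-compact case, since $\,]X[_{V''}$ and $\,]Y[_{W''}$ are quasi-compact by your choice of $U$ --- become legitimate.
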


\begin{proof}
We could rely on our previous results or refer to corollary 3.17 of \cite{FujiwaraKato18} but we shall rather derive it from corollary 1.2.2 of \cite{AbeLazda20}.
First of all, the diagram
\[
\xymatrix{\,]Y[_W \ar@{^{(}->}[r] \ar[d]^{]f[_u} & W \ar[d]^u \\ \,]X[_V \ar@{^{(}->}[r] & V}
\]
is cartesian.
Now, let $v \in \,]X[_V$ and $Y_v := u^{-1}(\mathrm G(v))$ where $\mathrm G(v)$ denotes the set of all generalizations of $v$ in $V$.
Note that $\,]X[_V$ is stable under generalization because $V$ is analytic.
Also, the fact that $u$ is quasi-compact quasi-separated implies the same for $]f[_u$.
Therefore, if we denote by $i_v : Y_v \hookrightarrow W$ the inclusion map, then
\[
\forall k \in \mathbb N, \quad \left(\mathrm R^k]f[_{u*}i_Y^{-1}\mathcal F\right)_v = \mathrm H^k(Y_v, i_v^{-1}\mathcal F) = \left(\mathrm R^ku_*\mathcal F\right)_v. \qedhere
\]
\end{proof}

Note that the condition for $u$ of being \emph{quasi-compact} is necessary in this statement but, unfortunately, it is not satisfied in practice as the case of the relative affine space $\mathbb A_V$ or open unit  disk $\mathbb D^-_V$ shows (see the example after proposition \ref{dirim} below).

%%%%%%%%%%%%%%%%%%%%
\subsection{Cohomology and formal embeddings}

We will apply the results of the previous section to formal embeddings.

We fix here an overconvergent analytic space $(X,V)$ and a formal embedding $\gamma : Y \hookrightarrow X$.
Our results below are somehow more general than those in the previous section in the sense that if we are given an overconvergent space $(X \overset \gamma \hookrightarrow P \leftarrow V)$, then we have $i_X = ]\gamma[$.

%%%%%%%%%%%%
\begin{prop} \label{corgam}
If $V$ is Tate and paracompact and $\mathcal F$ is a sheaf on $\,]X[_V$, then
\[
\varinjlim_{W} \Gamma(W, \mathcal F) \simeq \Gamma(\,]Y[_{V}, ]\gamma[^{-1}\mathcal F )
\]
where $W$ runs through the open neighborhoods of $\,]Y[_{V}$ in $\,]X[_V$.
\end{prop}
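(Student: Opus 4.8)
The plan is to deduce this from proposition \ref{limsh} applied to the overconvergent space $(Y,V)$, so as to reuse the genuine work (paracompactness of the tube and the Mittag-Leffler gluing argument) already done there rather than repeating it.

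First I would observe that the composite $Y \overset{\gamma}{\hookrightarrow} X \hookrightarrow P$ is again a formal embedding, so that $(Y,V)$ is an overconvergent analytic space with the \emph{same} ambient adic space $V$; in particular $V$ is still Tate and paracompact, so proposition \ref{limsh} applies to it. By functoriality of the tube, the tube inclusion of $(Y,V)$ factors as $i_Y = i_X \circ\, ]\gamma[$. Since $i_X : \,]X[_V \hookrightarrow V$ is an embedding of topological spaces we have $i_X^{-1}i_{X*} = \mathrm{Id}$, and therefore
\[
]\gamma[^{-1}\mathcal F = ]\gamma[^{-1}i_X^{-1}i_{X*}\mathcal F = i_Y^{-1}(i_{X*}\mathcal F).
\]
Thus $\Gamma(\,]Y[_V, ]\gamma[^{-1}\mathcal F) = \Gamma(\,]Y[_V, i_Y^{-1}(i_{X*}\mathcal F))$, which is precisely the right hand side of proposition \ref{limsh} for the sheaf $i_{X*}\mathcal F$ on $V$ relative to $(Y,V)$.

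Applying proposition \ref{limsh} then yields
\[
\Gamma(\,]Y[_V, ]\gamma[^{-1}\mathcal F) \simeq \varinjlim_{V'} \Gamma(V', i_{X*}\mathcal F) = \varinjlim_{V'}\Gamma(V' \cap\, ]X[_V, \mathcal F),
\]
where $V'$ runs through the open neighborhoods of $\,]Y[_V$ in $V$ and the last equality is the definition of $i_{X*}$. It then remains to reconcile the two indexing systems: I would check that $V' \mapsto V' \cap\, ]X[_V$ carries the open neighborhoods of $\,]Y[_V$ in $V$ \emph{onto} all open neighborhoods $W$ of $\,]Y[_V$ in $\,]X[_V$. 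Indeed, any such $W$ has the form $W = V'' \cap\, ]X[_V$ for some open $V''$ of $V$ by definition of the subspace topology, and then $]Y[_V \subset W \subset V''$ forces $V''$ to be a neighborhood of $\,]Y[_V$ in $V$. Both systems being filtered for reverse inclusion, this exhibits a cofinal (in fact surjective) comparison, giving $\varinjlim_{V'}\Gamma(V'\cap\, ]X[_V,\mathcal F) \simeq \varinjlim_{W} \Gamma(W,\mathcal F)$, which is the claim.

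The main point requiring care is this final cofinality step together with the factorization $i_Y = i_X \circ\, ]\gamma[$; beyond these everything is formal, and crucially \textbf{no} new paracompactness or gluing estimate is needed, since all the analytic content has been absorbed into proposition \ref{limsh}. Alternatively, one could re-run the proof of proposition \ref{limsh} verbatim on the valuative space $\,]X[_V$ (which is paracompact by lemma \ref{tateret}) with $]\gamma[$ in place of $i_X$, but the reduction above is cleaner and makes the hypotheses transparent, as the Tate and paracompactness assumptions bear on $V$ alone and are shared by $(X,V)$ and $(Y,V)$.
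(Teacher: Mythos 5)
Your proof is correct and is essentially the paper's own argument: both reduce to proposition \ref{limsh} applied to the overconvergent space $(Y,V)$ with the sheaf $i_{X*}\mathcal F$, use the factorization $i_Y = i_X \circ\, ]\gamma[$ together with $i_X^{-1}i_{X*} = \mathrm{Id}$ to identify $]\gamma[^{-1}\mathcal F$ with $i_Y^{-1}i_{X*}\mathcal F$, and pass between the two neighborhood systems via $W = V' \cap\, ]X[_V$ (the paper phrases this as "we may assume $W = \,]X[_{W'}$", which is exactly your cofinality step). No discrepancy to report.
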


\begin{proof}
We may assume that $W = \,]X[_{W'}$ when $W'$ runs through the open neighborhoods of $\,]Y[_V$ in $V$ (possibly repeating $W$).
It then follows from proposition \ref{limsh} that
\begin{align*}
\varinjlim_{W} \Gamma(W, \mathcal F) & \simeq \varinjlim_{W'} \Gamma(W', i_{X*}\mathcal F)
\\ & \simeq \Gamma(\,]Y[_{V}, i_Y^{-1}i_{X*}\mathcal F )
\\ & \simeq \Gamma(\,]Y[_{V}, ]\gamma[^{-1}\mathcal F ). \qedhere
\end{align*}
\end{proof}

%%%%%%%%%%%%%%
\begin{lem} \label{flaskgm}
If $V$ is Tate and paracompact and $\mathcal F$ is a flasque abelian sheaf on $\,]X[_V$, then $]\gamma[^{-1}\mathcal F$ is acyclic.
\end{lem}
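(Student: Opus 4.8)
The plan is to follow verbatim the two-step strategy of the proof of lemma \ref{corhom}, replacing proposition \ref{limsh} by proposition \ref{corgam} at each occurrence. Recall from that proof (following \cite{Kempf80}) that a sheaf $\mathcal G$ on a tube is \emph{quasi-flasque} if the restriction map $\Gamma(W, \mathcal G) \to \Gamma(W', \mathcal G)$ is surjective whenever $W' \subset W$ is an inclusion of quasi-compact open subsets. I would first prove that $]\gamma[^{-1}\mathcal F$ is quasi-flasque on $\,]Y[_V$, and then invoke the fact that quasi-flasque sheaves on the paracompact space $\,]Y[_V$ are acyclic.

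For quasi-flasqueness, I would use that every quasi-compact open subset of $\,]Y[_V$ has the form $\,]Y[_{V_0} = \,]Y[_V \cap V_0$ for some quasi-compact open $V_0 \subset V$ (the tubes $\,]Y[_W$ form a basis of the topology and $V$ is locally spectral). Given $W' \subset W$ quasi-compact open with $W' = \,]Y[_{V_0}$, and $s \in \Gamma(W', ]\gamma[^{-1}\mathcal F)$, proposition \ref{corgam} applied to the Tate paracompact space $(X, V_0)$ exhibits $s$ as the image of some $t \in \Gamma(W_1, \mathcal F)$ for an open neighbourhood $W_1$ of $\,]Y[_{V_0}$ in $\,]X[_{V_0} \subset \,]X[_V$. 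Since $\mathcal F$ is flasque on $\,]X[_V$, I extend $t$ to a global section $\tilde t \in \Gamma(\,]X[_V, \mathcal F)$; the compatibility of proposition \ref{corgam} with shrinking $V_0$ then guarantees that $]\gamma[^{-1}\tilde t$ restricts to $s$ on $W'$. Thus even the restriction $\Gamma(\,]Y[_V, ]\gamma[^{-1}\mathcal F) \to \Gamma(W', ]\gamma[^{-1}\mathcal F)$ is surjective, which a fortiori yields quasi-flasqueness.

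For acyclicity, I first observe that $(Y, V)$ is again Tate and paracompact, these being properties of $V$ alone, so $\,]Y[_V$ is paracompact by lemma \ref{tateret}. The argument is then word for word the second half of the proof of lemma \ref{corhom}, checking the hypotheses of \cite[\href{https://stacks.math.columbia.edu/tag/05T8}{Tag 05T8}]{stacks-project}: any sheaf $\mathcal G$ on $\,]Y[_V$ embeds into a quasi-flasque sheaf, since injecting $]\gamma[_*\mathcal G$ into an injective (hence flasque) sheaf $\mathcal I$ on $\,]X[_V$ and applying the exact functor $]\gamma[^{-1}$ gives $\mathcal G = ]\gamma[^{-1}]\gamma[_*\mathcal G \hookrightarrow ]\gamma[^{-1}\mathcal I$, which is quasi-flasque by the first step; an extension of quasi-flasque sheaves by a quasi-flasque sheaf is quasi-flasque by proposition 4 of \cite{Kempf80}; and writing $\,]Y[_V = \bigcup_{n} W_n$ as an increasing union of quasi-compact opens, the Mittag-Leffler condition furnished by quasi-flasqueness forces exactness on global sections.

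The only real obstacle is the bookkeeping in the second paragraph: the colimit of proposition \ref{corgam} runs over neighbourhoods inside $\,]X[_{V_0}$, whereas the extension by flasqueness takes place inside all of $\,]X[_V$, so I must check that these two identifications are compatible and that $]\gamma[^{-1}\tilde t$ genuinely restricts to $s$. This is immediate once one notes that the isomorphism of proposition \ref{corgam} is natural in $V_0$; everything else is either a property of $V$ transported to $(Y,V)$ or the already-established machinery of lemma \ref{corhom}.
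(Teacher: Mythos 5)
Your argument is correct, but it takes a much longer road than the paper, which disposes of the lemma in one line: since $i_Y = i_X \circ \,]\gamma[$ and $i_X^{-1}i_{X*} = \mathrm{Id}$, one has $]\gamma[^{-1}\mathcal F = i_Y^{-1}i_{X*}\mathcal F$; the direct image $i_{X*}\mathcal F$ of a flasque sheaf is flasque on $V$, and lemma \ref{corhom} applied to the overconvergent space $(Y,V)$ (which is again Tate and paracompact, $Y \hookrightarrow P$ being a formal embedding) gives acyclicity immediately. You, by contrast, never notice this factorization and instead transport the entire quasi-flasque machinery of the proof of lemma \ref{corhom} from the pair $\,]X[_V \subset V$ to the pair $\,]Y[_V \subset \,]X[_V$, substituting proposition \ref{corgam} for proposition \ref{limsh}. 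Your steps do check out: quasi-compact opens of $\,]Y[_V$ are indeed of the form $\,]Y[_{V_0}$ with $V_0$ quasi-compact open in $V$; such a $V_0$ is again Tate and paracompact (quasi-compact plus quasi-separated); flasqueness of $\mathcal F$ on all of $\,]X[_V$ lets you globalize the section produced by proposition \ref{corgam}; and the naturality you flag at the end is genuinely immediate. What the paper's factorization buys is economy: the relative situation is converted into the absolute one already treated, so Kempf's results and \cite[\href{https://stacks.math.columbia.edu/tag/05T8}{Tag 05T8}]{stacks-project} need not be re-invoked. What your route buys is the explicit intermediate fact that $]\gamma[^{-1}$ of a flasque sheaf is quasi-flasque (indeed with surjections from global sections) — though even that would have followed from the factorization combined with the first half of the proof of lemma \ref{corhom}.
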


\begin{proof}
Follows from lemma \ref{corhom} because
\[
\mathrm H^k(\,]Y[_{V}, ]\gamma[^{-1}\mathcal F) = \mathrm H^k(\,]Y[_{V}, i_{Y}^{-1} i_{X*} \mathcal F). \qedhere
\]
\end{proof}

There also exists an analog to proposition \ref{limcoh} that is proved exactly in the same way.

Recall that extension by zero does not preserve injectives in general (and the same problem occurs with overconvergent direct image).
Nevertheless, we have the following:

%%%%%%%%%%%%%%%
\begin{lem} \label{flask}
If $V$ is Tate and paracompact and $\mathcal F$ is a flasque abelian sheaf on $\,]Y[_V$, then $]\gamma[_!\mathcal F$ is acyclic.
\end{lem}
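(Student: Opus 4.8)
The plan is to reduce to an open embedding on the tubes and then resolve $]\gamma[_!\mathcal F$ by flasque (hence acyclic) sheaves. First I would factor the formal embedding as $\gamma=\alpha\circ\beta$, where $\beta\colon Y\hookrightarrow X'$ is a closed embedding into an open formal subscheme $\alpha\colon X'\hookrightarrow X$. Since on analytic spaces the tube turns a closed (resp.\ open) subset into an open (resp.\ closed) one (corollary 4.26 of \cite{LeStum17*}), the induced map $]\beta[\colon\,]Y[_V\hookrightarrow\,]X'[_V$ is an open embedding while $]\alpha[\colon\,]X'[_V\hookrightarrow\,]X[_V$ is closed, and $]\gamma[=\,]\alpha[\circ]\beta[$. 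As $]\alpha[$ is closed we have $]\gamma[_!=\,]\alpha[_*\circ]\beta[_!$ and $\mathrm H^k(]X[_V,]\gamma[_!\mathcal F)=\mathrm H^k(]X'[_V,]\beta[_!\mathcal F)$, so it suffices to treat the open embedding $j:=\,]\beta[\colon W:=\,]Y[_V\hookrightarrow Z:=\,]X'[_V$, whose closed complement $i\colon F\hookrightarrow Z$ is the tube $]U'[_V$ of the open complement $U':=X'\setminus Y$; in other words $i=\,]\iota[$ for the formal open embedding $\iota\colon U'\hookrightarrow X'$.

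Next I would consider the canonical monomorphism $j_!\mathcal F\hookrightarrow j_*\mathcal F$ and the short exact sequence
\[0\to j_!\mathcal F\to j_*\mathcal F\to \mathcal Q\to 0.\]
Applying $j^{-1}$ and $i^{-1}$ identifies $\mathcal Q$ with $i_*(i^{-1}j_*\mathcal F)=i_*\,]\iota[^{-1}j_*\mathcal F$ (since $j^{-1}\mathcal Q=0$ and $i^{-1}j_!\mathcal F=0$). The sheaf $j_*\mathcal F$ is flasque, being the direct image of the flasque sheaf $\mathcal F$, hence acyclic; and $\mathcal Q$ is acyclic as well, because $\mathrm H^k(Z,\mathcal Q)=\mathrm H^k(F,]\iota[^{-1}j_*\mathcal F)$ vanishes in positive degrees by lemma \ref{flaskgm} applied to the flasque sheaf $j_*\mathcal F$ on $]X'[_V$ and the formal open embedding $\iota$. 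The long exact cohomology sequence then forces $\mathrm H^k(Z,j_!\mathcal F)=0$ for every $k\geq 2$ at once.

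The remaining, and only delicate, point is degree one, where the long exact sequence gives $\mathrm H^1(Z,j_!\mathcal F)=\mathrm{coker}\bigl(\Gamma(Z,j_*\mathcal F)\to\Gamma(F,]\iota[^{-1}j_*\mathcal F)\bigr)$; the difficulty is precisely that $j_!$ does \emph{not} preserve flasqueness, so this cokernel must be shown to vanish by hand. Here I would invoke paracompactness of the tube (lemma \ref{tateret}): by proposition \ref{corgam} the target is the filtered colimit $\varinjlim_{W'}\Gamma(W',j_*\mathcal F)$ over the open neighborhoods $W'$ of $F$ in $Z$, and since $j_*\mathcal F$ is flasque each restriction $\Gamma(Z,j_*\mathcal F)\to\Gamma(W',j_*\mathcal F)$ is surjective, whence the map into the colimit is surjective and the cokernel vanishes. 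This completes the argument. The main obstacle throughout is isolating this degree-one surjectivity, which is the only step where the paracompactness hypothesis (and hence proposition \ref{corgam}) is genuinely used; the rest is the formal dévissage through the two flasque sheaves $j_*\mathcal F$ and $\mathcal Q$.
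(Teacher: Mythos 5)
Your proof is correct and follows essentially the same route as the paper's: after reducing to the case of a formal closed embedding (open embedding on tubes), both arguments use the short exact sequence $0 \to j_!\mathcal F \to j_*\mathcal F \to i_*i^{-1}j_*\mathcal F \to 0$, acyclicity of the middle term by flasqueness, acyclicity of the quotient via lemma \ref{flaskgm}, and surjectivity on global sections via proposition \ref{corgam}. The only cosmetic difference is that you make explicit the factorization $\gamma=\alpha\circ\beta$ and the degree-by-degree analysis of the long exact sequence, which the paper leaves implicit.
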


\begin{proof}
We can assume that $Y$ is open or closed in $X$.
In the case of an open embedding $\alpha : U \hookrightarrow X$, then $]\alpha[_!\mathcal F = ]\alpha[_*\mathcal F$ is flasque.
In the case of a closed embedding $\beta : Z \hookrightarrow X$, if we denote by $\alpha$ the inclusion of its open complement, then there exists an exact sequence
\[
0 \to ]\beta[_!\mathcal F \to ]\beta[_*\mathcal F  \to ]\alpha[_*]\alpha[^{-1}]\beta[_*\mathcal F \to 0.
\]
Since $]\beta[_*\mathcal F$ is flasque, it is sufficient to recall that 
\[
\Gamma(\,]X[_V, ]\beta[_*\mathcal F)  \to \Gamma(\,]U[_V, ]\alpha[^{-1}]\beta[_*\mathcal F).
\]
is surjective thanks to proposition \ref{corgam} and that
\[
H^k(\,]U[_V, ]\alpha[^{-1}]\beta[_*\mathcal F) = 0
\]
when $k > 0$ thanks to lemma \ref{flaskgm}.
\end{proof}

Here again, there exists a relative version:

%%%%%%%%%
\begin{prop} \label{dirim}
Let $(f,u) : (X', V') \to (X,V)$ be a morphism of analytic overconvergent spaces with $]f[_u$ \emph{quasi-compact and quasi-separated}.
Denote by $\gamma' : Y' \hookrightarrow X'$ and $f' : Y' \to Y$ the respective inverse images of $\gamma$ and $f$.
Then,
\begin{enumerate}
\item
if $\mathcal F$ is a complex of abelian sheaves on $\,]X'[_{V'}$,
\[
]\gamma[^{-1}\mathrm R]f[_{u*}\mathcal F \simeq \mathrm R]f'[_{u*}]\gamma'[^{-1}\mathcal F,
\]
\item
if $\mathcal F$ is a complex of abelian sheaves on $\,]Y'[_{V'}$,
\[
]\gamma[_!\mathrm R]f'[_{u*}\mathcal F \simeq \mathrm R]f[_{u*}]\gamma'[_!\mathcal F.
\]
\end{enumerate}
\end{prop}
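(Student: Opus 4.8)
The plan is to reduce both assertions to base change for one cartesian square of tubes, and then to obtain the $!$-version by a dévissage of the formal embedding $\gamma$. First I would record the geometry. Since $\gamma' : Y' \hookrightarrow X'$ is the inverse image of $\gamma$, taking tubes produces a commutative square of topological spaces
\[
\xymatrix{
\,]Y'[_{V'} \ar@{^{(}->}[r]^{]\gamma'[} \ar[d]_{]f'[_u} & \,]X'[_{V'} \ar[d]^{]f[_u} \\
\,]Y[_{V} \ar@{^{(}->}[r]^{]\gamma[} & \,]X[_{V}
}
\]
which is cartesian, exactly as in the cartesian diagrams already used to define $\gamma_\dagger$. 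Because $V$ and $V'$ are analytic, corollary 4.26 of \cite{LeStum17*} tells us that $]\gamma[$ and $]\gamma'[$ are locally closed embeddings, so $]\gamma'[^{-1}$, $]\gamma[_!$ and $]\gamma'[_!$ all make sense; and since the square is cartesian with $]f[_u$ quasi-compact and quasi-separated, so is $]f'[_u$.

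Assertion (1) is then precisely topological base change for this cartesian square. I would form the canonical base change morphism $]\gamma[^{-1}\mathrm R]f[_{u*}\mathcal F \to \mathrm R]f'[_{u*}]\gamma'[^{-1}\mathcal F$ by adjunction and check it is an isomorphism on stalks, repeating verbatim the argument used for the relative variant of proposition \ref{limsh}. Concretely, since $]\gamma[$ is an immersion the stalk of the left hand side at $y \in \,]Y[_V$ is the stalk of $\mathrm R]f[_{u*}\mathcal F$ at $]\gamma[(y)$; using corollary 1.2.2 of \cite{AbeLazda20} — legitimate because $]f[_u$ is quasi-compact quasi-separated and tubes in analytic spaces are stable under generalization — both stalks compute as the cohomology of $\mathcal F$ restricted to the fibre over the set of generalizations, and these match by cartesianness. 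This is the analytic heart of the proposition and, as I indicate below, the only genuinely non-formal step.

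For assertion (2) I would first reduce to the cases where $\gamma$ is a formal open, resp.\ a formal closed, embedding, by factoring $\gamma = \beta \circ \alpha$ with $\alpha$ the open immersion into the closure and $\beta$ the closed embedding of the closure; extension by zero is transitive, so $]\gamma[_! = ]\beta[_!]\alpha[_!$ (and likewise for $\gamma'$), and the two base change isomorphisms compose through the intermediate tube, whose vertical map is again quasi-compact quasi-separated. When $\gamma = \alpha$ is a formal \emph{open} embedding, $]\alpha[$ is a topological \emph{closed} embedding, so $]\alpha[_! = ]\alpha[_* = \mathrm R]\alpha[_*$, and the assertion is pure functoriality of pushforward together with commutativity of the square: both sides equal $\mathrm R(]\alpha[\circ]f'[_u)_* = \mathrm R(]f[_u\circ]\alpha'[)_*$.

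The remaining case, $\gamma = \beta$ a formal \emph{closed} embedding, is where the work sits: now $]\beta[$ is a topological \emph{open} embedding and $]\beta[_!$ is genuine extension by zero, with topological closed complement $]\alpha_0[ : \,]U[_V \hookrightarrow \,]X[_V$, the tube of the formal open complement $U$ of $Z$ (the complementary pair recorded by the standard short exact sequence used earlier). I would compare $]\beta[_!\mathrm R]f'[_{u*}\mathcal F$ and $\mathrm R]f[_{u*}]\beta'[_!\mathcal F$ through the recollement triangle attached to $(]\beta[,\,]\alpha_0[)$. Applying $]\beta[^{-1}$, assertion (1) for the open immersion $]\beta[$ shows both complexes give $\mathrm R]f'[_{u*}\mathcal F$; applying $]\alpha_0[^{-1}$, assertion (1) for the closed immersion $]\alpha_0[$ together with $]\alpha_0[^{-1}]\beta[_! = 0$ and $]\alpha_0'[^{-1}]\beta'[_! = 0$ shows both vanish. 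Since a complex with vanishing $]\alpha_0[^{-1}$ is canonically $]\beta[_!]\beta[^{-1}$ of itself (the recollement counit is then an isomorphism), the two complexes are canonically identified, proving (2). Thus the main obstacle is entirely concentrated in assertion (1) and its reliance on the quasi-compactness hypothesis and \cite{AbeLazda20}; once (1) is available, (2) is a formal recollement and dévissage.
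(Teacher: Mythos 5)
Your proposal is correct, but for assertion (1) it takes a genuinely different route from the paper's. The paper never argues on stalks there: it reduces to $V$ Tate affinoid, uses lemma \ref{corhom} to see that $]\gamma'[^{-1}$ of a flasque sheaf is acyclic for $]f'[_{u*}$ (so that only the \emph{underived} identity $]\gamma[^{-1}\,]f[_{u*}\mathcal F \simeq ]f'[_{u*}]\gamma'[^{-1}\mathcal F$ needs proof), and then compares global sections on both sides via proposition \ref{corgam}; quasi-compactness of $]f[_u$ enters precisely to guarantee that the pullbacks of the open neighborhoods of $\,]Y[_V$ in $\,]X[_V$ form a cofinal system of neighborhoods of $\,]Y'[_{V'}$, so the two colimits of sections agree. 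You instead transport the stalkwise generalization-fiber computation of \cite{AbeLazda20} — exactly the technique the paper uses for its relative variant of proposition \ref{limsh} and later for lemma \ref{baspec} — which is legitimate here because tubes in analytic spaces are locally spectral and stable under generalization (so the tube inclusion is strongly generalizing) and the square of tubes is cartesian. Your route buys a pointwise argument that avoids the Tate-affinoid reduction, paracompactness and the flasque-acyclicity lemmas; the paper's route stays self-contained within its neighborhood formalism and makes the role of quasi-compactness explicit. For assertion (2) the two proofs are essentially the same: identical dévissage to the formal open case (trivial, the tube map being a closed immersion so $]\alpha[_! = ]\alpha[_*$) and the formal closed case, where the paper compares the two sides through the morphism of distinguished triangles $]\beta[_! \to \mathrm R]\beta[_* \to ]\alpha[_*]\alpha[^{-1}\mathrm R]\beta[_*$ and applies part (1) to the third term, while you restrict both sides to the open and closed pieces of the recollement and apply part (1) to each — a cosmetic reorganization. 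The one point you should make explicit is that the maps induced between the intermediate tubes (over the closure, over $Z$, over $U$) remain quasi-compact and quasi-separated by base change from $]f[_u$; this is the same inheritance the paper itself invokes without comment, and it does hold for locally closed, generalization-stable subsets of locally spectral spaces.
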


\begin{proof}
Let us start with the first question which is local on $V$.
We can assume that $V$ is Tate affinoid.
We have a commutative diagram with cartesian left hand square:
\[
\xymatrix{\,]Y'[_{V'} \ar@{^{(}->}[r]^{]\gamma[} \ar[d]^{]f'[_u} & \,]X'[_{V'} \ar@{^{(}->}[r]^{i_{X'}} \ar[d]^{]f[_u} &V' \ar[d]^u
\\ \,]Y[_V \ar@{^{(}->}[r]^{]\gamma[}  &  \,]X[_V \ar@{^{(}->}[r]^{i_X}  & V .  }
\]
It follows from lemma \ref{corhom} that if $\mathcal F$ is a flasque abelian sheaf, then $]\gamma'[^{-1}\mathcal F$ is acyclic for $]f'[_{u*}$.
It is therefore sufficient to show that
\[
]\gamma[^{-1} ]f'[_{u*}\mathcal F \simeq ]f'[_{u*}]\gamma'[^{-1}\mathcal F.
\]
Since the question is local on $V$, it is even sufficient to show that
\[
\Gamma(\,]Y[_V, ]\gamma[^{-1} ]f'[_{u*}\mathcal F) \simeq \Gamma(\,]Y'[_{V'}, ]\gamma'[^{-1}\mathcal F).
\]
We know from proposition \ref{corgam} that
\[
\varinjlim_{W} \Gamma(W, ]f'[_{u*} \mathcal F) \simeq \Gamma(\,]Y[_{V}, ]\gamma[^{-1} ]f'[_{u*} \mathcal F )
\]
where $W$ runs through the open neighborhoods of $\,]Y[_{V}$ in $\,]X[_V$.
On the other hand, since $]f[_u$ is quasi-compact, the inverse images $W'$ of $W$ in $\,]X'[_{V'}$ form a cofinal system of neighborhoods of $\,]Y'[_V$ and therefore
\[
\varinjlim_{W} \Gamma(W, ]f'[_{u*} \mathcal F) \simeq \varinjlim_{W'} \Gamma(W', \mathcal F) \simeq \Gamma(\,]Y'[_{V'}, ]\gamma'[^{-1}\mathcal F).
\]

For the second question, it is sufficient to consider the case of an open embedding $\alpha : U \hookrightarrow X$ or a closed embedding $\beta : Z \hookrightarrow X$.
The first case is trivial and we may now assume that $U$ is the open complement of $Z$.
There exists a morphism of distinguished triangles:
\[
\xymatrix{
]\beta[_! \mathrm R]f'[_{u*}\mathcal F \ar[r] \ar[d] &  \mathrm R]\beta[_* \mathrm R]f'[_{u*}\mathcal F  \ar[r]  \ar[d]  & ]\alpha[_*]\alpha[^{-1} \mathrm R]\beta[_*\mathrm R]f'[_{u*}\mathcal F  \ar[r]  \ar[d]  & \cdots
\\
\mathrm R]f[_{u*}]\beta'[_! \mathcal F \ar[r] & \mathrm R]f[_{u*}\mathrm R]\beta'[_* \mathcal F  \ar[r] & \mathrm R]f[_{u*}]\alpha'[_*]\alpha'[^{-1} \mathrm R]\beta'[_* \mathcal F  \ar[r] & \cdots.
}
\]
The middle map is clearly bijective but the right hand map also thanks to the first part.
\end{proof}

%%%%%%%%%%%%%%%%%%%
\begin{xmp}
Be careful that the assertion is not valid anymore when $]f[_u$ is \emph{not quasi-compact}: take $f : X' =  \mathbb A \setminus \{0\} \hookrightarrow X = \mathbb A$, $Y = \{0\}$  and $V = V' = \mathbb D_{\mathbb Q_p}$.
\end{xmp}

There exists an obvious analog for de Rham cohomology that we'd rather state now for further use:

%%%%%%%%%
\begin{cor} \label{dRChang}
In the situation of the proposition,
\begin{enumerate}
\item
if $\mathcal F$ is an $\mathcal O_{V'}^\dagger$-module with an integrable connection on $X'$, then
\[
]\gamma[^{-1}\mathrm R]f[_{u\mathrm{dR}}\mathcal F \simeq \mathrm R]f'[_{u\mathrm{dR}}]\gamma'[^{-1}\mathcal F,
\]
\item
if $\mathcal F$ is an $\mathcal O_{V'}^\dagger$-module with an integrable connection on $Y'$, then
\[
]\gamma[_!\mathrm R]f'[_{u\mathrm{dR}}\mathcal F \simeq \mathrm R]f[_{u\mathrm{dR}}]\gamma'[_!\mathcal F.
\]
\end{enumerate}
\end{cor}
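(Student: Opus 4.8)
The plan is to deduce both assertions from Proposition \ref{dirim} by observing that the de Rham pushforward is just the ordinary derived pushforward of a complex of abelian sheaves, and then checking that $]\gamma[^{-1}$ (resp.\ $]\gamma[_!$) commutes with the formation of the de Rham complex. Recall that, by definition, $\mathrm R]f[_{u,\mathrm{dR}}\mathcal F = \mathrm R]f[_{u*}\left(\mathcal F \otimes_{\mathcal O_{V'}^\dagger}\Omega^{\bullet\dagger}_{V'}\right)$, so in each case the left-hand side is the derived pushforward of the complex of abelian sheaves $\mathcal F \otimes_{\mathcal O_{V'}^\dagger}\Omega^{\bullet\dagger}_{V'}$, to which Proposition \ref{dirim} directly applies.

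For the first assertion I would first check that $]\gamma'[^{-1}$ commutes with forming the de Rham complex. Being an inverse image functor, $]\gamma'[^{-1}$ commutes with tensor products, and since $\mathcal O_{V'}^\dagger = i_{X'}^{-1}\mathcal O_{V'}$ and $\Omega^{\bullet\dagger}_{V'} = i_{X'}^{-1}\Omega^{\bullet}_{V'}$ one gets $]\gamma'[^{-1}\mathcal O_{V'}^\dagger = i_{Y'}^{-1}\mathcal O_{V'}$ and $]\gamma'[^{-1}\Omega^{\bullet\dagger}_{V'} = i_{Y'}^{-1}\Omega^{\bullet}_{V'}$; moreover, because $]\gamma[^{-1}$ sends a differential operator to a differential operator (as recorded in the discussion of overconvergent differential operators), it carries the connection on $\mathcal F$ to the restricted connection on $]\gamma'[^{-1}\mathcal F$. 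Hence $]\gamma'[^{-1}\left(\mathcal F \otimes_{\mathcal O_{V'}^\dagger}\Omega^{\bullet\dagger}_{V'}\right)$ is identified with the de Rham complex of $]\gamma'[^{-1}\mathcal F$, and applying the first part of Proposition \ref{dirim} to the complex $\mathcal F \otimes_{\mathcal O_{V'}^\dagger}\Omega^{\bullet\dagger}_{V'}$ yields exactly the desired isomorphism.

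For the second assertion the analogous ingredient is that $]\gamma'[_!$ commutes with forming the de Rham complex. Here I would invoke the first isomorphism of Lemma \ref{dirchan} with $h = ]\gamma'[$, $\mathcal A = \mathcal O_{V'}^\dagger$, taking the sheaf on $]Y'[_{V'}$ to be $\mathcal F$ and the $\mathcal A$-module on $]X'[_{V'}$ to be each $\Omega^{k\dagger}_{V'}$; this gives $]\gamma'[_!\left(]\gamma'[^{-1}\Omega^{k\dagger}_{V'}\otimes \mathcal F\right)\simeq \Omega^{k\dagger}_{V'}\otimes ]\gamma'[_!\mathcal F$. Since $]\gamma'[_!$ also sends differential operators to differential operators, the differentials match up, so that $]\gamma'[_!$ of the de Rham complex of $\mathcal F$ is the de Rham complex of $]\gamma'[_!\mathcal F$. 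Applying the second part of Proposition \ref{dirim} to $\mathcal F \otimes_{\mathcal O_{V'}^\dagger}\Omega^{\bullet\dagger}_{V'}$ then gives the claim.

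The main obstacle I expect is the bookkeeping in the second assertion: one must make sure that $]\gamma'[_!\mathcal F$ genuinely carries an integrable connection whose de Rham complex agrees, termwise and differential-wise, with $]\gamma'[_!$ applied to the de Rham complex of $\mathcal F$, so that the right-hand side $\mathrm R]f[_{u,\mathrm{dR}}]\gamma'[_!\mathcal F$ is even defined and equals $\mathrm R]f[_{u*}]\gamma'[_!\left(\mathcal F \otimes_{\mathcal O_{V'}^\dagger}\Omega^{\bullet\dagger}_{V'}\right)$; integrability of the induced connection is inherited because it is a stalkwise condition and everything vanishes off $]Y'[_{V'}$. Once this identification is in place, both statements are formal consequences of Lemma \ref{dirchan} and Proposition \ref{dirim}, and no quasi-compactness hypotheses beyond those already assumed for $]f[_u$ in the proposition intervene.
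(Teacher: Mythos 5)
Your proof is correct and follows essentially the same route as the paper: unfold $\mathrm R]f[_{u,\mathrm{dR}}$ as the derived pushforward of the de Rham complex, commute $]\gamma[^{-1}$ (resp.\ $]\gamma[_!$) past $\mathrm R]f[_{u*}$ via Proposition \ref{dirim}, and identify $]\gamma'[^{-1}$ (resp.\ $]\gamma'[_!$, via the first isomorphism of Lemma \ref{dirchan}) of the de Rham complex with the de Rham complex of the corresponding module. The care you take with the differentials and the induced connection in the $]\gamma'[_!$ case is exactly the (implicit) content of the paper's argument.
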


\begin{proof}
If we write indices as $X$ and $Y$ instead of $V$, we have
\begin{align*}
]\gamma[^{-1}\mathrm R]f'[_{u\mathrm{dR}}\mathcal F & \simeq ]\gamma[^{-1}\mathrm R]f[_{u*}\left(\mathcal F \otimes_{\mathcal O_{X}^\dagger}\Omega^{\bullet\dagger}_X\right)
\\ & \simeq \mathrm R]f'[_{u*}]\gamma'[^{-1}\left(\mathcal F \otimes_{\mathcal O_{X}^\dagger}\Omega^{\bullet\dagger}_X\right)
\\ & = \mathrm R]f'[_{u*}\left(]\gamma'[^{-1} \mathcal F \otimes_{\mathcal O_{Y}^\dagger}\Omega^{\bullet\dagger}_Y\right)
\\ & \simeq \mathrm R]f'[_{u\mathrm{dR}}]\gamma'[^{-1}\mathcal F
\end{align*}
and also, thanks to lemma \ref{dirchan},
\begin{align*}
]\gamma[_!\mathrm R]f'[_{u\mathrm{dR}}\mathcal F & \simeq ]\gamma[_!\mathrm R]f[_{u*}\left(\mathcal F \otimes_{\mathcal O_{Y}^\dagger}\Omega^{\bullet\dagger}_Y\right) 
\\& \simeq \mathrm R]f'[_{u*}]\gamma'[_!\left(\mathcal F \otimes_{\mathcal O_{Y}^\dagger}\Omega^{\bullet\dagger}_Y\right)
\\ & \simeq \mathrm R]f'[_{u*}\left(]\gamma'[_! \mathcal F \otimes_{\mathcal O_{X}^\dagger}\Omega^{\bullet\dagger}_X\right)
\\ & \simeq \mathrm R]f'[_{u\mathrm{dR}}]\gamma'[_!\mathcal F. \qedhere
\end{align*}
\end{proof}

%%%%%%%%%%%%%%%%%%%%%
\subsection{Coherent modules}

We will show that, in the Tate paracompact case, a coherent module on a tube always extends to a coherent module on a neighborhood of the tube.

We will always denote with an index ``$\mathrm{coh}$'' the full subcategory made of objects whose underlying module is coherent.

%%%%%%%%%%%%%%%%
\begin{thm} \label{glumd}
If $(X,V)$ is an analytic overconvergent space, then $\mathcal O_{V}^\dagger$ is a coherent sheaf.
If $V$ is Tate and paracompact, then there exists an equivalence of categories
\[
\varinjlim_{V'} \mathrm{Mod}_{\mathrm{coh}}(\mathcal O_{V'}) \simeq \mathrm{Mod}_{\mathrm{coh}}(\mathcal O_{V}^\dagger) 
\]
when $V'$ runs through the open neighborhoods of $X$ in $V$.
\end{thm}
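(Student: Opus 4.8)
The plan is to handle the two assertions in turn, using throughout that $\mathcal{O}_V^\dagger = i_X^{-1}\mathcal{O}_V$ and that, since $V$ is locally of noetherian type, the structure sheaf $\mathcal{O}_V$ is coherent. For the coherence of $\mathcal{O}_V^\dagger$, I would check that every finite type subsheaf $\mathcal{I} \subset (\mathcal{O}_V^\dagger)^n$ is finitely presented near each point $x \in \,]X[_V$. Locally $\mathcal{I}$ is generated by finitely many sections of $(\mathcal{O}_V^\dagger)^n$; by proposition \ref{limsh} applied to $(\mathcal{O}_V)^n$, these extend to sections over some open neighborhood $V'$ of (a piece of) $\,]X[_V$ in $V$, generating a finite type subsheaf $\mathcal{I}' \subset (\mathcal{O}_{V'})^n$ with $i_X^{-1}\mathcal{I}' = \mathcal{I}$ near $x$. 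Since $\mathcal{O}_{V'}$ is coherent, $\mathcal{I}'$ is finitely presented, and because $i_X^{-1}$ is exact and commutes with finite direct sums, pulling the presentation back exhibits $\mathcal{I}$ as finitely presented near $x$.

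For the equivalence, the functor from left to right is restriction along $i_X$, sending a coherent $\mathcal{O}_{V'}$-module $\mathcal{G}$ to $i_X^{-1}\mathcal{G}$. The latter is finitely presented, hence—by the first part, since $\mathcal{O}_V^\dagger$ is now known to be coherent—coherent, so the functor is well defined into $\mathrm{Mod}_{\mathrm{coh}}(\mathcal{O}_V^\dagger)$. Full faithfulness is precisely corollary \ref{limhom}: recalling that an object of the colimit category is represented by a coherent module on some $V'$ and that a morphism is a germ of $\mathcal{O}_{V'}$-linear morphisms, the corollary identifies $\varinjlim_{V'} \mathrm{Hom}_{\mathcal{O}_{V'}}(\mathcal{F}_{|V'}, \mathcal{G}_{|V'})$ with $\mathrm{Hom}_{\mathcal{O}_V^\dagger}(i_X^{-1}\mathcal{F}, i_X^{-1}\mathcal{G})$, where two representatives are first restricted to a common neighborhood.

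The essential surjectivity is the main obstacle. Given a coherent $\mathcal{O}_V^\dagger$-module $\mathcal{F}$, I would first produce local extensions: on the members $V_i$ of an open covering of $V$ chosen so that $\mathcal{F}$ is finitely presented on each $\,]X[_{V_i}$, pick a presentation $(\mathcal{O}_V^\dagger)^{m} \to (\mathcal{O}_V^\dagger)^{n} \to \mathcal{F} \to 0$ over $\,]X[_{V_i}$, extend the defining matrix to a neighborhood via proposition \ref{limsh}, and take the cokernel $\mathcal{G}_i$, a coherent $\mathcal{O}_{V_i}$-module with $\mathcal{G}_{i|\,]X[_{V_i}} \simeq \mathcal{F}_{|\,]X[_{V_i}}$. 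These local models are exactly the input required by lemma \ref{locshea}, which delivers a neighborhood $V'$ of $\,]X[_V$ together with a module $\mathcal{G}$ on $V'$ satisfying $\mathcal{G}_{|\,]X[_V} \simeq \mathcal{F}$ and $\mathcal{G}_{|V' \cap V_i} \simeq \mathcal{G}_{i|V' \cap V_i}$; since coherence is local and each $\mathcal{G}_i$ is coherent, $\mathcal{G}$ is coherent and represents a preimage of $\mathcal{F}$.

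The delicate step is thus the passage from local extensions defined only near the tube to a single coherent module on one genuine neighborhood, and this is precisely what lemma \ref{locshea} accomplishes; its proof in turn rests on the Tate paracompactness of $V$—the exhaustion $V = \bigcup_n V_n$ by quasi-compact opens with $\overline{V}_n \subset V_{n+1}$ from lemma \ref{tateret}—which is what allows the overlap identifications to be shrunk and matched consistently. I expect the only remaining verifications to be the routine checks that restriction and the extension construction are quasi-inverse, which follow formally once full faithfulness and essential surjectivity are in place.
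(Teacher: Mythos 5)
Your proposal is correct and takes essentially the same route as the paper: coherence is proved by extending finitely many sections (equivalently, a morphism) to a genuine neighborhood via proposition \ref{limsh}/corollary \ref{limhom}, invoking coherence of $\mathcal O_{V'}$ and exactness of $i_X^{-1}$; full faithfulness comes from corollary \ref{limhom}; and essential surjectivity is obtained by extending local finite presentations to neighborhoods, taking cokernels, and gluing with lemma \ref{locshea}. The differences are cosmetic only, namely your use of the ``finite type subsheaves of $(\mathcal O_V^\dagger)^n$ are finitely presented'' characterization of coherence where the paper tests kernels of maps $\mathcal O_V^{\dagger n} \to \mathcal O_V^\dagger$, and your direct appeal to proposition \ref{limsh} where the paper cites its corollary \ref{limhom}.
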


\begin{proof}
We first prove that $\mathcal O_{V}^\dagger$ is coherent.
Thus, we want to show that, after replacing $V$ with any open subset, the kernel of any morphism $\mathcal O_{V}^{\dagger n} \to \mathcal O_{V}^\dagger$ is of finite type.
This is a local question and we may therefore assume $(X,V)$ is Tate paracompact (even affinoid).
Using corollary \ref{limhom}, we may shrink $V$ a little bit and extend our morphism to a morphism $\mathcal O_{V}^n \to \mathcal O_{V}$.
Since $\mathcal O_{V}$ is coherent, the kernel of this morphism is of finite type and our assertion follows from the fact that pulling back along $i_{X}$ is exact.

We now turn to the second assertion.
It follows from corollary \ref{limhom} that the functor
\[
\varinjlim_{V'} \mathrm{Mod}_{\mathrm{coh}}(\mathcal O_{V'}) \to \mathrm{Mod}_{\mathrm{coh}}(\mathcal O_{V}^\dagger)
\]
is fully faithful and it only remains to show that it is essentially surjective.
If $\mathcal F$ is a coherent $\mathcal O_{V}^\dagger$-module, then there exists a covering $V = \bigcup_{i \in I} V_{i}$ and for each $i \in I$ a finite presentation
\[
\mathcal O_{V_i}^{\dagger n_{i}}  \to \mathcal O_{V_i}^{\dagger m_{i}} \to \mathcal F_{V_{i}} \to 0.
\]
Using corollary \ref{limhom}, we can shrink each $V_{i}$ a little bit and extend the first map to a morphism $\mathcal O_{V_i}^{n_{i}}  \to \mathcal O_{V_i}^{m_{i}}$.
If we denote by $\mathcal G_{i}$ its cokernel, then there exists an isomorphism $\mathcal G_{i|\,]X[_{V_{i}}} \simeq \mathcal F_{|\,]X[_{V_{i}}}$ and we can finish with lemma \ref{locshea}.
\end{proof}

%%%%%%%%%%%%%%%%%
\begin{prop} \label{hknul}
Let $(X \hookrightarrow P \leftarrow V)$ be an analytic overconvergent space with $P$ affine and $V$ Tate affinoid.
Assume either that
\begin{enumerate}
\item
$X$ closed in $P$ (convergent space), or
\item
$X$ is the complement of a hypersurface in $P$.
\end{enumerate}
If $\mathcal F$ be a coherent $\mathcal O_V^\dagger$-module on $X$, then $\mathrm H^k(\,]X[_V, \mathcal F) = 0$ for $k > 0$.
\end{prop}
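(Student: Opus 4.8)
The plan is to reduce the vanishing to Tate's acyclicity theorem for coherent sheaves on affinoids, by exhibiting the tube as a well-behaved union or intersection of affinoid subdomains of $V$. First I would use theorem \ref{glumd} to extend $\mathcal F$ to a coherent $\mathcal O_{V'}$-module $\mathcal F'$ on some open neighborhood $V'$ of $\,]X[_V$ in $V$, so that $\mathcal F = i_X^{-1}\mathcal F'$; all the affinoid subdomains $V_\lambda$ introduced below will be taken inside $V'$. Since $P$ is affine and noetherian, in case (1) I write $X = V(f_1,\dots,f_r)$ and in case (2) I write $X = \{g \neq 0\}$ for a single function $g$ cutting out the hypersurface.

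In case (2) the tube is the \emph{closed} tube and admits a cofinal system of affinoid \emph{outer} neighborhoods. Concretely, the Laurent subdomains $V_\lambda := \{x \in V : |g(x)| \geq \lambda\}$ are open, Tate affinoid, and form a cofinal system of open neighborhoods of $\,]X[_V$ in $V$ as $\lambda \to 1^-$. Proposition \ref{limcoh} then gives
\[
\mathrm H^k(\,]X[_V, \mathcal F) \simeq \varinjlim_\lambda \mathrm H^k(V_\lambda, \mathcal F'),
\]
and each term on the right vanishes for $k > 0$ by Tate's acyclicity theorem applied to the coherent sheaf $\mathcal F'_{|V_\lambda}$. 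Hence the colimit vanishes and case (2) is complete.

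Case (1) is the genuinely harder one, because now $\,]X[_V = \{|f_i| < 1\ \forall i\}$ is the \emph{open} tube: it is \emph{not} quasi-compact and is its own minimal outer neighborhood, so proposition \ref{limcoh} is useless here. Instead I would exhaust it \emph{from inside} by the Weierstrass subdomains $V_\lambda := \{x \in V : |f_i(x)| \leq \lambda\ \forall i\}$, which are Tate affinoid, satisfy $\overline{V_\lambda} \subset V_{\lambda'}$ for $\lambda < \lambda'$, and cover $\,]X[_V$ as $\lambda \to 1^-$ (compare the exhaustion used in lemma \ref{tateret}). This presents $\,]X[_V$ as a quasi-Stein space. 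The Milnor exact sequence for the increasing union,
\[
0 \to {\varprojlim_n}^1 \mathrm H^{k-1}(V_{\lambda_n}, \mathcal F') \to \mathrm H^k(\,]X[_V, \mathcal F) \to \varprojlim_n \mathrm H^k(V_{\lambda_n}, \mathcal F') \to 0,
\]
together with Tate's theorem ($\mathrm H^{>0}(V_{\lambda_n}, \mathcal F') = 0$), reduces everything in degrees $k \geq 2$ to zero immediately and leaves only the term ${\varprojlim_n}^1 \Gamma(V_{\lambda_n}, \mathcal F')$ in degree $k = 1$.

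The main obstacle is precisely this $\varprojlim^1$-term: I must show that the restriction maps $\Gamma(V_{\lambda_{n+1}}, \mathcal F') \to \Gamma(V_{\lambda_n}, \mathcal F')$ have dense image, so that the inverse system is Mittag-Leffler in the topological sense and its $\varprojlim^1$ vanishes. This is the standard density statement for sections of a coherent module under a relatively compact (inner) inclusion of affinoids $V_{\lambda_n} \subset V_{\lambda_{n+1}}$, that is, the content of Kiehl's theorem on quasi-Stein spaces transported to the adic setting; I would either invoke it directly or reprove the density from a finite presentation of $\mathcal F'$ together with the density of the underlying affinoid algebra maps. Granting this, $\mathrm H^1(\,]X[_V, \mathcal F) = 0$ as well, which finishes case (1).
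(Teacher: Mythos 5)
Your proof is correct and takes essentially the same route as the paper's: in case (2) you extend $\mathcal F$ to a coherent module on a neighborhood via theorem \ref{glumd}, pass to the cofinal system of affinoid rational neighborhoods of the quasi-compact tube, and conclude by proposition \ref{limcoh} and affinoid acyclicity, exactly as the paper does; in case (1) your exhaustion of the open tube by Weierstrass affinoids with dense restriction maps is precisely the quasi-Stein structure the paper exploits. The only difference is one of packaging: where you reprove the quasi-Stein vanishing by hand (Milnor $\varprojlim^1$-sequence plus topological Mittag-Leffler, with density of the algebra maps as input), the paper simply cites the adic analog of theorem 2.4 of \cite{Kiehl67b}, whose proof is the very argument you outline.
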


\begin{proof}
In the first case, we can write $\,]X[_V = \bigcup_{n \in \mathbb N} [X]_{V,n}$ as in proposition 4.28 of \cite{LeStum17*}.
Each $[X]_{V,n}$ is an affinoid open subset of $V$, we have $[X]_{V,n} \subset [X]_{V,n+1}$ and, if $V := \mathrm{Spa}(B, B^+)$, each $\Gamma([X]_{V,n}, \mathcal O_V)$ is a completion of the ring $B$ for some topology.
Hence, the assertion follows from the adic analog of theorem 2.4 in \cite{Kiehl67b}.

In the second case, we denote the hypersurface as $h = 0$ and let $\pi$ be a topologically nilpotent unit.
Then, we set for $n \in \mathbb N$,
\[
V_n := \left\{v \in V \colon v(\pi) \geq v(h^n) \right\}.
\]
This is a rational open subset of $V$ and $\,]X[_V = \bigcap_{n\in \mathbb N} V_n$ thanks to corollary 4.30 of \cite{LeStum17*}.
Since $]X[_V$ is quasi-compact, they forme a fundamental system of open neighborhoods.
It follows from theorem \ref{glumd} that $\mathcal F$ extends to a coherent $\mathcal O_{V_m}$-module $\widetilde {\mathcal F}$ for some $m \in \mathbb N$.
Therefore,
\[
\mathrm H^k(\,]X[_V, \mathcal F) = \varinjlim_{n \geq m} \mathrm H^k(V_n, \widetilde {\mathcal F}) = 0
\]
when $k > 0$ by proposition \ref{limcoh}.
\end{proof}

We may actually do a little better in the second case: it is sufficient to assume that $X$ is a disjoint union of open subsets $X'$ whose locus at infinity is a hypersurface (think of $P = \mathrm{Spec}(\mathbb Z[x,y]/xy)$ and $X$ the complement of $x=y=0$ for example).

%%%%%%%%%%%%%%%%%
\begin{cor}
Let $(X \hookrightarrow P \leftarrow V)$ be an analytic \emph{convergent} space.
Let $\beta : Y \hookrightarrow X$ be a closed formal embedding.
If $\mathcal F$ is a coherent $\mathcal O_V^\dagger$-module on $Y$, then $\mathrm R^k]\beta[_*\mathcal F = 0$ for $k > 0$.
\end{cor}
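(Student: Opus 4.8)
The plan is to deduce this from proposition \ref{hknul} by a local (stalk) argument, since the gap between the two statements is precisely the passage from vanishing of global cohomology to vanishing of the higher direct image sheaf. The only genuine input will be proposition \ref{hknul}; everything else is a reduction to its hypotheses.

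First I would reduce to the affine situation. The assertion $\mathrm R^k]\beta[_*\mathcal F = 0$ concerns a sheaf on the tube $]X[_V$ and is therefore local on $V$. Covering $P$ by affine opens $Q$ and replacing $V$ by the open subsets $V \times_{P^{\mathrm{ad}}} Q^{\mathrm{ad}}$, then covering each of these by Tate affinoid opens (possible because $V$ is analytic), I may assume that $P$ is affine and $V$ is Tate affinoid. In the convergent case we may also assume $]X[_V = V$, and then, as recorded at the end of the embeddings subsection, $]\beta[$ is simply the inclusion $i_Y : ]Y[_V \hookrightarrow V$ of the tube. Note that $Y$ is closed in $P$ (being closed in the closed subscheme $X$), so $(Y \hookrightarrow P \leftarrow V)$ is again an analytic convergent space with $P$ affine and $V$ Tate affinoid, and $\mathcal F$ a coherent $\mathcal O_V^\dagger$-module on $Y$.

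Next I would compute the stalks. The sheaf $\mathrm R^k]\beta[_*\mathcal F$ is the sheafification of the presheaf $W \mapsto \mathrm H^k(]\beta[^{-1}(W),\mathcal F) = \mathrm H^k(]Y[_W,\mathcal F)$ (using $]\beta[^{-1}(W) = ]Y[_V \cap W = ]Y[_W$), so for $x \in V$ its stalk is $\varinjlim_W \mathrm H^k(]Y[_W,\mathcal F)$, the colimit running over open neighborhoods $W$ of $x$ in $V$. The rational (hence Tate affinoid) subdomains of $V$ containing $x$ form a cofinal such system, each still maps to the affine $P^{\mathrm{ad}}$, and $\mathcal F|_{]Y[_W}$ remains coherent since coherence is local. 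Applying the convergent case of proposition \ref{hknul} to each space $(Y \hookrightarrow P \leftarrow W)$ gives $\mathrm H^k(]Y[_W,\mathcal F) = 0$ for $k > 0$. Hence every stalk of $\mathrm R^k]\beta[_*\mathcal F$ vanishes for $k>0$, and the sheaf is zero.

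The remaining verifications are routine geometric compatibilities: that after restricting $P$ to an affine open $Q$ the tube $]Y[_W$ is unchanged (points of $W$ specialize into $Q$, so $]Y[_W = ]Y\cap Q[_W$), that rational subdomains of a Tate affinoid are again Tate affinoid and form a neighborhood basis, and that restriction preserves coherence. The main (though mild) obstacle is to organize this reduction so that the three hypotheses of proposition \ref{hknul} — $P$ affine, $W$ Tate affinoid, $Y$ closed in $P$ — hold \emph{simultaneously} along the cofinal system of neighborhoods $W$ used to compute the stalk; once that is arranged, the vanishing is immediate.
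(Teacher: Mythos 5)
Your proof is correct and takes essentially the same approach as the paper: the paper's own argument likewise observes that $\mathrm R^k]\beta[_*\mathcal F$ is the sheaf associated to the presheaf $\,]X[_{V'} \mapsto \mathrm H^k(\,]Y[_{V'}, \mathcal F)$ and reduces to showing $\mathrm H^k(\,]Y[_V, \mathcal F) = 0$ locally on $V$, which is exactly the convergent case of proposition \ref{hknul}. Your stalkwise formulation, and your explicit check that $Y$ is closed in $P$ and that the hypotheses of proposition \ref{hknul} hold simultaneously along a cofinal system of Tate affinoid neighborhoods, simply spell out details the paper leaves implicit.
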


\begin{proof}
Since $\mathrm R^k]\beta[_*\mathcal F$ is the sheaf associated to $\,]X[_{V'} \mapsto \mathrm H^k(\,]Y[_{V'}, \mathcal F)$, it is sufficient to show that, locally on $V$, we have $\mathrm H^k(\,]Y[_V, \mathcal F) = 0$.
This follows from proposition \ref{hknul}.
\end{proof}

Recall from definition \ref{stratdef} that if $V \to O$ is a morphism of adic spaces which is locally of finite type, we denote by $V^{(n)}$ the $n$th infinitesimal neighborhood of $V$ in $V(1)$ and by $p_{1}^{(n)}, p_{2}^{(n)} : V^{(n)} \to V$ the projections.
A \emph{stratification} on an $O_{V}$-module $\mathcal F$ is a compatible family of $ \mathcal O_{V^{(n)}}$-linear \emph{(Taylor) isomorphisms} $\epsilon_{n} : p_{2}^{(n)*}\mathcal F \simeq p_{1}^{(n)*}\mathcal F$ that satisfy the usual cocycle condition.
They form a category $\mathrm{Strat}(V/O)$.
We can also consider the category $\mathrm{MIC}(V/O)$ of modules on $V$ endowed with an integrable connection with respect to $O$.
There exists a forgetful functor $\mathrm{Strat}(V/O) \to \mathrm{MIC}(V/O)$ which is an equivalence when $O$ is defined over $\mathbb Q$ and $V$ is smooth over $O$.

%%%%%%%%%%%%%%%%
\begin{cor}
Let $(X,V) \to (C,O)$ be a morphism of overconvergent spaces.
Assume $V$ is Tate,  paracompact and locally of finite type over $O$.
Then, there exists an equivalence of categories
\[
\mathrm{Strat}_{\mathrm{coh}}(X,V/O) \simeq \varinjlim_{V'} \mathrm{Strat}_{\mathrm{coh}}(V'/O) 
\]
when $V'$ runs through the open neighborhoods of $X$ in $V$.
\end{cor}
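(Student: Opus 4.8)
The plan is to upgrade the coherent extension equivalence of theorem \ref{glumd} from plain modules to modules carrying a stratification. The comparison functor is restriction to the tube, i.e.\ pullback along $i_X$. The first thing I would record is that each infinitesimal neighbourhood $V^{(n)}$ has the \emph{same} underlying space as $V$, because the diagonal $\delta^{(n)}\colon V\to V^{(n)}$ is a nilpotent immersion; hence $\,]X[_{V^{(n)}}=\,]X[_V$, each $(X,V^{(n)})$ is again Tate and paracompact by lemma \ref{tateret}, and the open neighbourhoods of $X$ in $V^{(n)}$ are precisely the $V'^{(n)}$ attached to open neighbourhoods $V'$ of $X$ in $V$. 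Thus the poset of neighbourhoods is the same at every level $n$, and pulling back along $i_X$ sends a coherent $\mathcal O_{V'}$-module with stratification (definition \ref{stratdef}) to a coherent $\mathcal O_V^\dagger$-module with stratification. This is the functor whose quasi-inverse I must produce.

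For full faithfulness I would reduce to the fully faithful part of theorem \ref{glumd}, equivalently to corollary \ref{limhom}, applied to each $(X,V^{(n)})$. An $\mathcal O_V^\dagger$-linear map between coherent modules on the tube extends, uniquely in the colimit, to an $\mathcal O_{V'}$-linear map on some neighbourhood; the horizontality of the map is the commutation with every $\epsilon_n$, and each such relation is an equality of maps of coherent modules over $V^{(n)}$, so by full faithfulness at level $n$ it already holds after shrinking $V'$.

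For essential surjectivity, starting from a coherent stratified module $(\mathcal F,\{\epsilon_n\})$ on the tube, theorem \ref{glumd} extends $\mathcal F$ to a coherent $\mathcal O_{V'}$-module $\widetilde{\mathcal F}$. Since the $p_i^{(n)}$ are finite, the pullbacks $p_i^{(n)\dagger}\mathcal F$ extend to the coherent modules $p_i^{(n)*}\widetilde{\mathcal F}$ on $V'^{(n)}$, and each Taylor isomorphism $\epsilon_n$ then extends to an isomorphism $\widetilde\epsilon_n$ over a neighbourhood by theorem \ref{glumd} applied to $(X,V^{(n)})$, lemma \ref{locshea} gluing these with the chosen local presentations; the normalisation $\delta^{(n)\dagger}\widetilde\epsilon_n=\mathrm{Id}$ is automatic and the cocycle condition is checked on $V^{(2)}$ by the same extension principle.

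The hard part, common to both steps, is \emph{uniformity in $n$}: a stratification is an infinite compatible family, and extending the $\epsilon_n$ one at a time, or verifying horizontality level by level, a priori forces a neighbourhood shrinking with $n$, whereas an object of $\mathrm{Strat}_{\mathrm{coh}}(V'/O)$ must carry the entire family over one fixed $V'$. To overcome this I would not treat the $\epsilon_n$ separately but package the whole compatible system as a single coherent module over the diagonal completion, that is over the coherent sheaf of rings $\widehat{\mathcal O}_{V(1)}:=\varprojlim_n \mathcal O_{V^{(n)}}$ on $|V|$ (and $\widehat{\mathcal O}_{V(2)}$ for the cocycle). This ringed space has underlying space $|V|$ and is again of noetherian type, Tate and paracompact, so the very argument proving theorem \ref{glumd} (coherence, together with propositions \ref{limsh} and lemma \ref{locshea}) applies to it verbatim and extends the full stratification, its normalisation and its cocycle identity to a single neighbourhood in one stroke. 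Once this completed coherent extension is in place, the remaining verifications are the routine compatibilities, and the stated equivalence follows.
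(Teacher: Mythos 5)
Your first three paragraphs are, in substance, the paper's own proof: the paper disposes of this corollary with the single sentence that it follows from theorem \ref{glumd} applied to both $V$ and $V^{(n)}$, which is exactly your levelwise argument (full faithfulness via corollary \ref{limhom} on each $(X,V^{(n)})$, essential surjectivity by extending $\mathcal F$ and then each Taylor isomorphism $\epsilon_n$). Your fourth paragraph then puts your finger on a point the paper passes over in silence: an object of $\varinjlim_{V'}\mathrm{Strat}_{\mathrm{coh}}(V'/O)$ must carry the \emph{whole} family $\{\epsilon_n\}_{n\in\mathbb N}$, with its compatibilities and cocycle identities, over one fixed $V'$, whereas the levelwise argument produces neighbourhoods shrinking with $n$; a stratification is an infinite sequential limit of data, and filtered colimits of categories do not commute with such limits. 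This concern is legitimate, and raising it is to your credit.

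The gap is in your repair. The claim that the proof of theorem \ref{glumd} applies \emph{verbatim} to the ringed space $(|V|,\widehat{\mathcal O}_{V(1)})$ conceals precisely the difficulty it is meant to solve. First, to package a stratification on the tube as a single morphism of coherent $i_X^{-1}\widehat{\mathcal O}_{V(1)}$-modules you must identify $i_X^{-1}\varprojlim_n\mathcal O_{V^{(n)}}$ with $\varprojlim_n\mathcal O^\dagger_{V^{(n)}}$ (and likewise identify the completed pullbacks of $\widetilde{\mathcal F}$ restricted to the tube with the limits of the $p_i^{(n)\dagger}\mathcal F$). Since $i_X^{-1}$ --- equivalently, by proposition \ref{limsh}, taking sections over the tube --- is a filtered colimit over neighbourhoods, this identification is itself an interchange of $\varinjlim_{V'}$ with $\varprojlim_n$; that is, it is the uniformity problem in another guise, so as written this step is circular. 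Second, theorem \ref{glumd} is not a purely topological statement: its proof uses coherence of $\mathcal O_V$ for analytic adic spaces of noetherian type, corollary \ref{limhom} (internal Hom commuting with $i_X^{-1}$ for finitely presented modules) and the gluing lemma \ref{locshea}. For the completed ring you would have to prove Kiehl/Tate-type acyclicity for $W\mapsto\varprojlim_n\Gamma(W,\mathcal O_{V^{(n)}})$ on rational subdomains, flatness of its restriction maps, and coherence of the resulting sheaf of rings; these are plausible in the locally noetherian setting (noetherianity of adic completions, Artin--Rees, Mittag--Leffler), but they are new lemmas, not instances of the paper's arguments. So your plan is the natural way to make the one-line proof honest, but the key statement it rests on --- a completed analogue of theorem \ref{glumd} together with the identification of both of its sides with the two stratification categories --- is asserted rather than proved.
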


\begin{proof}
Follows from theorem \ref{glumd} applied to both $V$ and $V^{(n)}$.
\end{proof}

%%%%%%%%%%%%%%%%
\begin{cor}
Let $(X,V) \to (C,O)$ be a morphism of overconvergent spaces.
Assume $V$ is Tate, paracompact, smooth over $O$ in the neighborhood of $X$ and $O$ is defined over $\mathbb Q$.
Then, there exists an equivalence of categories
\[
\mathrm{MIC}_{\mathrm{coh}}(X,V/O) \simeq \varinjlim_{V'} \mathrm{MIC}_{\mathrm{coh}}(V'/O) 
\]
when $V'$ runs through the open neighborhoods of $X$ in $V$.
\end{cor}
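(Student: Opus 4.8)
The plan is to reduce everything to the analogous statement for stratifications, which is the content of the previous corollary, and then to translate stratifications into integrable connections on both sides using the hypothesis that $V$ is smooth over $O$ and $O$ is defined over $\mathbb Q$. First I would invoke the previous corollary, which under the present (stronger) hypotheses already provides an equivalence
\[
\mathrm{Strat}_{\mathrm{coh}}(X,V/O) \simeq \varinjlim_{V'} \mathrm{Strat}_{\mathrm{coh}}(V'/O),
\]
where $V'$ runs through the open neighborhoods of $X$ in $V$. Since $V$ is smooth over $O$ in the neighborhood of $X$, those neighborhoods $V'$ that are smooth over $O$ form a cofinal subsystem, so I may restrict the direct system to such $V'$ without changing the colimit.

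Next I would apply lemma \ref{MICQ} on the left-hand (tube) side: because $V$ is smooth over $O$ near $X$ and $O$ is defined over $\mathbb Q$, the forgetful functor induces an equivalence $\mathrm{Strat}(X,V/O) \simeq \mathrm{MIC}(X,V/O)$. This equivalence leaves the underlying $\mathcal O_V^\dagger$-module unchanged, hence preserves coherence and restricts to $\mathrm{Strat}_{\mathrm{coh}}(X,V/O) \simeq \mathrm{MIC}_{\mathrm{coh}}(X,V/O)$. On the right-hand side, for each smooth $V'$ the forgetful functor $\mathrm{Strat}(V'/O) \to \mathrm{MIC}(V'/O)$ is likewise an equivalence (recorded just before the previous corollary), again preserving coherence. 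These equivalences are given locally by the explicit divided-power formula of lemma \ref{MICQ}, so they commute with the restriction maps of the direct system and therefore assemble into an equivalence $\varinjlim_{V'} \mathrm{Strat}_{\mathrm{coh}}(V'/O) \simeq \varinjlim_{V'} \mathrm{MIC}_{\mathrm{coh}}(V'/O)$. Splicing the three equivalences together yields the claim.

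The only points requiring any care are the cofinality of the smooth neighborhoods and the compatibility of the $\mathrm{Strat}\leftrightarrow\mathrm{MIC}$ comparison with the transition maps of the colimit, which guarantees that the comparison descends to the direct limit; neither is a genuine obstacle, since both the stratification-to-connection comparison and the coherence condition depend only on the underlying module and thus commute with restriction.
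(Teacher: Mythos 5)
Your proposal is correct and follows essentially the same route as the paper: the paper's proof is the one-line "Follows from lemma \ref{MICQ}," meaning exactly what you spell out — combine the coherent-stratification corollary with the $\mathrm{Strat}\leftrightarrow\mathrm{MIC}$ equivalences of lemma \ref{MICQ} on the tube side and on each (smooth, cofinal) neighborhood $V'$. Your added care about cofinality of smooth neighborhoods and compatibility with the transition maps just makes explicit what the paper leaves implicit.
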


\begin{proof}
Follows from lemma \ref{MICQ}.
\end{proof}

%%%%%%%%%%%%%%%%%%%%%%%%
\subsection{Constructible crystals}

We introduce here the general notion of a constructible crystal that will play an increasing role.

Recall that, if $X$ is a formal scheme, then we denote by $X^\dagger$ the overconvergent site whose objects are overconvergent spaces $(U,V)$ endowed with a morphism $U \to X$.
%By definition, an overconvergent site $T$ is defined over $X^\dagger$ if the objects $(U,V)$ of $T$ are endowed with a compatible family of structural morphisms $U \to X$.

%%%%%%%%%%%%%%%%
\begin{dfn}
Let $T$ be an overconvergent site and $E$ a module on $T$.
\begin{enumerate}
\item
Let $X$ be a formal scheme and $T \to X^\dagger$ a morphism of overconvergent sites.
Then $E$ is said to be \emph{$X$-constructible} if there exists a locally finite covering of $X$ by locally closed formal subschemes $Y$ such that each $E_{|Y}$ is finitely presented.
\item
$E$ is said to be \emph{constructible} if there exists a formal scheme $X$ and a morphism $T \to X^\dagger$ such that $E$ is $X$-constructible.
\end{enumerate}
\end{dfn}

The formal scheme $X$ will usually be understood from the context and we shall simply say \emph{constructible} in both cases.
This should not matter in practice thanks to the following:

%%%%%%%%%%%%%%%%%%%%%
\begin{lem}
Assume
\begin{enumerate}
\item $T = (X,V)$ where $(X,V)$ be an overconvergent space, or
\item $T = (X/O)^\dagger$ where $(C,O)$ be an overconvergent space and $X \to C$ a morphism of formal schemes, or
\item $T = (X,V/O)^\dagger$ where $(X,V) \to (C,O)$ is morphism of overconvergent spaces.
\end{enumerate}
Then, a module $E$ on $T$ is constructible if and only if it is $X$-constructible.
\end{lem}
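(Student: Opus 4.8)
The plan is to dispose of one implication at once and concentrate on the other. If $E$ is $X$-constructible, then taking the witnessing formal scheme to be $X$ itself, equipped with the canonical morphism $T \to X^\dagger$ attached to $T$ in each of the three listed situations, shows immediately that $E$ is constructible. So only the implication \emph{constructible} $\Rightarrow$ \emph{$X$-constructible} needs an argument. Suppose then that $E$ is constructible, witnessed by a formal scheme $X'$, a morphism of overconvergent sites $g\colon T \to X'^\dagger$, and a locally finite covering of $X'$ by locally closed formal subschemes $Y'_j$ with each $E_{|Y'_j}$ finitely presented. Write $\pi\colon T \to X^\dagger$ for the canonical morphism accompanying $T$. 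The heart of the matter is the factorization claim: \emph{$g$ factors as $g = \phi^\dagger \circ \pi$ for a unique morphism of formal schemes $\phi\colon X \to X'$}, where $\phi^\dagger\colon X^\dagger \to X'^\dagger$ is induced by $\phi$.

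Granting this, I would conclude as follows. Since $(-)^\dagger$ carries the fibre product $X\times_{X'} Y'_j = \phi^{-1}(Y'_j)$ to $X^\dagger \times_{X'^\dagger} (Y'_j)^\dagger$, the equality $g = \phi^\dagger \circ \pi$ together with associativity of fibre products yields a canonical identification
\[
T_{\phi^{-1}(Y'_j)} := T \times_{X^\dagger} (\phi^{-1}Y'_j)^\dagger \simeq T \times_{X'^\dagger} (Y'_j)^\dagger,
\]
under which $E_{|\phi^{-1}(Y'_j)}$ (formed through $\pi$) is identified with $E_{|Y'_j}$ (formed through $g$), hence is finitely presented. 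It then remains to observe that $\{\phi^{-1}(Y'_j)\}$ is again a locally finite covering of $X$ by locally closed formal subschemes: preimages of locally closed subschemes are locally closed, the family still covers because $\phi^{-1}(\bigcup_j Y'_j) = X$, and local finiteness is preserved since for $x\in X$ an open neighbourhood of $\phi(x)$ meeting only finitely many $Y'_j$ pulls back to an open neighbourhood of $x$ meeting only the corresponding $\phi^{-1}(Y'_j)$. This exhibits $E$ as $X$-constructible.

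The main obstacle is thus the factorization claim, i.e.\ identifying morphisms $T \to X'^\dagger$ with morphisms of formal schemes out of the formal scheme $X$ underlying $T$. Since a morphism of overconvergent sites lies over $\mathbf{Ad}^\dagger$, it preserves the underlying overconvergent space, so $g$ amounts to a functorial choice, for each object $(U,W)$ of $T$ with canonical structural map $a\colon U \to X$, of a morphism $b_{(U,W)}\colon U \to X'$; equivalently $g$ is a morphism $T \to X'$ over $\mathbf{FS}$. In the representable case $T = (X,V)$ this is immediate: evaluating $g$ at the terminal object $\mathrm{id}_{(X,V)}$ produces $\phi\colon X \to X'$, and applying functoriality of $g$ to the unique arrow $(Z,W) \to (X,V)$, together with the fact that $X'^\dagger$ is \emph{classic} (fibred in sets, so a morphism lying over $Z \to X$ is unique and exists only when the evident triangle commutes), forces $b_{(Z,W)} = \phi \circ a$.

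For $T = (X/O)^\dagger$ and $T = (X,V/O)^\dagger$ there is no terminal object, and this is the delicate point. First I would check that $b_{(X,W)}$ is independent of the chosen overconvergent structure $W$ on $X$ lying over $(C,O)$: any two are linked by a span of morphisms over $(C,O)$ covering $\mathrm{id}_X$, and the classicity of $X'^\dagger$ forces the images to agree. This defines $\phi$ at least locally on $X$, where a geometric materialization of $X$ over $O$ exists (so that objects $(X,W)$ are available), and uniqueness of $\phi$ lets these local definitions glue to a global $\phi\colon X \to X'$; functoriality then pins down $b_{(U,W)} = \phi \circ a$ for an arbitrary object. This reduction to the representable case — exploiting the existence of local geometric materializations and the rigidity coming from $X'^\dagger$ being classic — is where the real work lies, the rest being the formal bookkeeping with fibre products described above.
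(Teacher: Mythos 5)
Your reduction to the factorization claim, and the bookkeeping that follows from it (pulling back the locally finite locally closed covering along $\phi$ and identifying the restrictions $E_{|\phi^{-1}(Y'_j)} \simeq E_{|Y'_j}$), are correct and match what the paper does by invoking its change-of-base lemma for constructibility; the representable case via the terminal object is also fine. The genuine gap is in your treatment of $T=(X/O)^\dagger$ and $T=(X,V/O)^\dagger$: you produce $\phi$ by evaluating $g$ on objects of the form $(X,W)$ or $(U,W)$ with $U\subset X$ open, and you justify their existence by invoking local geometric materializations of $X$ over $O$. But the lemma makes no finiteness hypothesis on $X \to C$; geometric materializations are only known to exist locally when $X$ is locally formally of finite type over $C$, so for a general morphism of (locally noetherian) formal schemes your argument has nothing to evaluate $g$ on. (Your gluing step also rests on an unproved assertion --- that any two structures $(X,W_1)$, $(X,W_2)$ are linked by a span over $(C,O)$ covering $\mathrm{Id}_X$ --- but this is secondary to the existence problem.)

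The missing idea is that $T$ always has an object whose formal scheme is $X$ itself, carrying no analytic data at all: the overconvergent space $(X \hookrightarrow X \leftarrow \emptyset)$. It admits a formal morphism to $(C \hookrightarrow S \leftarrow O)$ given by $X \to C \hookrightarrow S$ on the middle and $\emptyset \to O$ on the right, and the identity factorization $X \to X \to C$ makes it an object of $(X/O)^\dagger$ (and also of $(X,V/O)^\dagger$, since $(\mathrm{Id}_X, \emptyset \to V)$ extends it to a morphism to $(X,V)$). Evaluating $g$ at $(X,\emptyset)$ produces $\phi : X \to X'$ globally, in one stroke and with no hypotheses. The factorization $g = \phi^\dagger \circ \pi$ then follows by exactly the rigidity argument you used in the representable case: for any object $(U,W)$ of $T$ with structural map $a : U \to X$, the morphisms $(U,\emptyset) \to (U,W)$ (over $\mathrm{Id}_U$) and $(U,\emptyset) \to (X,\emptyset)$ (over $a$) force $b_{(U,W)} = b_{(U,\emptyset)} = \phi \circ a$, because $X'^\dagger$ is fibered in sets. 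This empty-tube device is the paper's route: no materializations, no independence check, and no gluing are needed.
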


\begin{proof}
In all three cases, there exists a canonical morphism $T \to X^\dagger$.
Using assertion \eqref{changx} of lemma \ref{cons1} below, it will be sufficient to show that, given any morphism $T \to X'^\dagger$, there exists a morphism $X \to X'$ such that $T \to X'^\dagger$ factors through $X^\dagger$.
The first case is a particular case of the second one from which one easily derives the last one because $(X,V/O)^\dagger$ is by definition the image of $(X,V)$ into $(X/O)^\dagger$.
We are left with the case $T = (X/O)^\dagger$.
We may then consider the composite map $(X, \emptyset) \to  (X/O)^\dagger \to X'^\dagger$.
It provides a morphism of formal schemes $X \to X'$ which clearly induces a factorization $(X/O)^\dagger \to X^\dagger \to X'^\dagger$ as expected.
\end{proof}

Recall that we introduced the notion of a \emph{constructible filtration} in definition \ref{fildef} (this is also sometimes called a good stratification).

%%%%%%%%%%%%%%%%%%%%
\begin{lem} \label{cons2}
Let $X$ be a formal scheme and $T$ on overconvergent site over $X^\dagger$.
For a module $E$ on $T$, the following are equivalent:
\begin{enumerate}
\item $E$ is $X$-constructible
\item There exists a locally finite covering of $X$ by locally closed formal subschemes $Y$ such that each $E_{|Y}$ is $Y$-constructible.
\item \label{compcon} There exists a dense open subset $U$ with closed complement $Z$ such that $E_{|U}$ is finitely presented and $E_{|Z}$ is $Z$-constructible.
\item (if $\dim(X) = d$) there exists a constructible filtration $\{X_i\}_{i=0}^d$ such that $E_{|X_i\setminus X_{i+1}}$ is finitely presented.
\end{enumerate}
\end{lem}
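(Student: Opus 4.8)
The plan is to prove the cycle $(4) \Rightarrow (1) \Rightarrow (3) \Rightarrow (4)$ together with the equivalence $(1) \Leftrightarrow (2)$, using throughout two facts: that being finitely presented is a \emph{local} property on $X$, since it is tested on the realizations $E_{V'}$, which are sheaves of $\mathcal O_{V'}^\dagger$-modules; and that restriction $E \mapsto E_{|Y}$ along a formal embedding preserves crystals and their finite presentation, by lemma \ref{loccrys} together with the compatibility $(E_{|Y})_{V'} = E_{V'}$ of realizations. I will also use lemma \ref{cons1} to know that the notion is insensitive to the choice of the auxiliary formal scheme mapped to.

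The implication $(4) \Rightarrow (1)$ is immediate: the strata $X_i \setminus X_{i+1}$ of a constructible filtration are pairwise disjoint locally closed formal subschemes of $X$ (each $X_i$ is locally closed in $X$ by induction on $i$, and $X_{i+1}$ is closed in $X_i$), they form a finite, hence locally finite, covering of $X$, and $E$ is finitely presented on each by hypothesis. For $(1) \Rightarrow (2)$ one simply notes that a finitely presented module is $Y$-constructible via the trivial covering of $Y$ by itself. The converse $(2) \Rightarrow (1)$ is a transitivity statement: refining a locally finite covering $\{Y_\alpha\}$ by locally finite coverings $\{Z_{\alpha\beta}\}_\beta$ of the $Y_\alpha$ yields a covering $\{Z_{\alpha\beta}\}$ of $X$ whose members are locally closed in $X$ (a locally closed subscheme of a locally closed subscheme is locally closed); working locally, where $X$ is noetherian, all coverings in sight are finite, so their refinement is finite and thus locally finite globally, and since $(E_{|Y_\alpha})_{|Z_{\alpha\beta}} = E_{|Z_{\alpha\beta}}$ is finitely presented we recover $(1)$.

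The key step is $(1) \Rightarrow (3)$, where I would first extract the dense open locus. As $X$ is noetherian it has finitely many irreducible components, with generic points $\eta_1, \dots, \eta_r$, and each $\eta_j$ lies in some member $Y_{\alpha(j)}$ of the covering; being locally closed and containing $\eta_j$, the subscheme $Y_{\alpha(j)}$ has dense open trace on the component $\overline{\{\eta_j\}}$. Removing from $X$ the pairwise intersections of the components together with the nowhere dense parts of the components lying outside the corresponding $Y_{\alpha(j)}$ produces a dense open $U$ such that, locally around each of its points, $U$ is contained in a single $Y_{\alpha(j)}$; by the local nature of finite presentation, $E_{|U}$ is finitely presented. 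Setting $Z := X \setminus U$, which is closed and nowhere dense (as $U$ is dense), the restricted covering $\{Y_\alpha \cap Z\}$ is locally finite, consists of locally closed subschemes of $Z$, and carries $E$ finitely presented on each piece, so $E_{|Z}$ is $Z$-constructible, which is $(3)$. (For completeness, $(3) \Rightarrow (1)$ glues $\{U\}$ with a constructibility covering of $Z$; local finiteness in $X$ follows from local finiteness inside the closed set $Z$ and the openness of $U$.)

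Finally, $(3) \Rightarrow (4)$ proceeds by induction on $d = \dim X$. With $U, Z$ as in $(3)$, put $X_0 = X$ and $X_1 = Z$; since $Z$ is closed and nowhere dense in the noetherian scheme $X$ it contains no generic point, so $\dim Z \le d-1$. The module $E_{|Z}$ is $Z$-constructible, so the induction hypothesis — the theorem for the lower-dimensional scheme $Z$, in particular $(1) \Rightarrow (4)$ — furnishes a constructible filtration $Z = Z_0 \supset Z_1 \supset \cdots$ of length at most $d-1$ with $E$ finitely presented on each stratum; setting $X_{i+1} := Z_i$ extends it to a constructible filtration of $X$ of length at most $d$ (each step strictly drops the dimension, so the length bound is automatic, possibly after trivial padding at the end), which realizes $(4)$. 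The main obstacle in the whole argument is precisely the dense-open extraction in $(1) \Rightarrow (3)$: one must balance the topology of the locally closed covering pieces against the generic points and check that finite presentation genuinely descends from the local pieces to the single module $E_{|U}$. The only other delicate point is the preservation of local finiteness under refinement and restriction, which is where the (local) noetherian hypothesis on $X$ enters.
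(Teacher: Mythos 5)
Your proposal is correct and follows essentially the same route as the paper's own proof: the key step $(1)\Rightarrow(3)$ extracts the dense open set around the generic (maximal) points exactly as the paper does, $(2)\Rightarrow(1)$ is the paper's ``transitivity of locally finite locally closed coverings,'' and your induction on dimension for $(3)\Rightarrow(4)$ is the paper's noetherian induction in slightly different clothing. One small caveat: in this paper $X$ is only \emph{locally} noetherian, so it may have infinitely many irreducible components; since these form a locally finite family, your construction of $U$ goes through verbatim using the full (possibly infinite) set of maximal points, which is how the paper phrases it.
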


\begin{proof}
Let us assume first that $E$ is $X$-constructible.
If $\xi$ is a maximal point of $X$, then there exists a locally closed subset $Y_\xi$ containing $\xi$ such that $E_{|{Y_\xi}}$ is finitely presented.
Since $\xi$ is maximal, there exists an open neighborhood $U_\xi$ of $\xi$ in $Y_\xi$ and we can set $U := \bigcup_\xi U_\xi$.
This shows that $(1) \Rightarrow (3)$. Moreover $(2) \Rightarrow (1)$ thanks to transitivity of locally finite locally closed coverings, $(3) \Rightarrow (4)$ by noetherian induction and we clearly have $(3) \Rightarrow (1)$ and $(4) \Rightarrow (1)$.
\end{proof}

In assertion $(3)$ (resp.\ $(4)$), we can assume that $U$ (resp.\ each $X_i\setminus X_{i+1}$) is locally irreducible.
When $X$ is affine, we may also assume that $U$ (resp.\ each $X_i \setminus X_{i+1}$) is a disjoint union of open subsets whose locus at infinity is a hypersurface.

Note also that, from assertion \ref{changx} of lemma \ref{cons1} below, it is equivalent in lemma \ref{cons2} to say $Y$-constructible (resp.\ $Z$-constructible) or $X$-constructible.

%%%%%%%%%%%%%%%%%%%%
\begin{lem} \label{cons1}
Let $X$ be a formal scheme and $T$ on overconvergent site over $X^\dagger$.
\begin{enumerate}
\item If $T' \to T$ is a morphism of overconvergent sites and $E$ is an $X$-constructible module on $T$, then $E_{|T'}$ is also $X$-constructible.
The converse holds if $T' \to T$ is a local epimorphism.
\item \label{changx} If $f : X' \to X$ is a morphism of formal schemes and $T$ is defined over $X'^\dagger$, then an $X$-constructible module on $T$ is also $X'$-constructible.
The converse holds if $f$ is formally quasi-finite.
\end{enumerate}
\end{lem}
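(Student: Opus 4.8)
The plan is to treat the two assertions separately, reducing everything to two facts established earlier: finitely presented modules are exactly the crystals with finitely presented realizations, and finite presentation of an $\mathcal O_V^\dagger$-module is local on the tube $\,]X[_V$.

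For assertion (1), observe first that $T'$ sits over $X^\dagger$ through the composite $T' \to T \to X^\dagger$, so $X$-constructibility of $E_{|T'}$ makes sense. Writing $g : T' \to T$ for the structural map and $g_Y : T'_Y \to T_Y$ for its base change along a formal embedding $Y \hookrightarrow X$, lemma \ref{basfib} gives $(E_{|T'})_{|Y} = g_Y^{-1}(E_{|Y})$. Since the inverse image functors $]f[_u^\dagger$ between tubes preserve finite presentation, the realizations of $g_Y^{-1}(E_{|Y})$ are finitely presented whenever those of $E_{|Y}$ are, so the \emph{same} locally finite covering of $X$ witnesses $X$-constructibility of $E_{|T'}$. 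For the converse, I would keep the covering witnessing $X$-constructibility of $E_{|T'}$ and show each $E_{|Y}$ is finitely presented. Local epimorphisms are stable under base change, so $g_Y$ is again one, and lemma \ref{loccrys} forces $E_{|Y}$ to be a crystal (from $g_Y^{-1}(E_{|Y})$ being one); finite presentation of its realizations then descends by working locally on each tube, where the structural map to $T_Y$ lifts through the covering $g_Y$ and the realization of $E_{|Y}$ coincides with that of the finitely presented module $g_Y^{-1}(E_{|Y})$.

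For the first half of assertion (2) the key point is an identification of sites: since $T \to X^\dagger$ factors through $X'^\dagger$ and $(X'\times_X Y)^\dagger = X'^\dagger \times_{X^\dagger} Y^\dagger$, for every locally closed $Y \hookrightarrow X$ one has $T \times_{X^\dagger} Y^\dagger = T \times_{X'^\dagger} (f^{-1}Y)^\dagger$, whence $E_{|Y} = E_{|f^{-1}Y}$ as modules. If $\{Y\}$ is a locally finite locally closed covering of $X$ with each $E_{|Y}$ finitely presented, then $\{f^{-1}Y\}$ is such a covering of $X'$ (preimages of locally closed subschemes are locally closed, and local finiteness is preserved by continuity), and $E_{|f^{-1}Y}=E_{|Y}$ is finitely presented, so $E$ is $X'$-constructible.

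For the converse in (2), I would argue by noetherian induction on $X$, the step being supplied by \ref{cons2}\eqref{compcon}: it suffices to produce a dense open $U \subseteq X$ with $E_{|f^{-1}U}$ finitely presented, the closed complement $Z$ being handled inductively since $f^{-1}(Z)\to Z$ is again formally quasi-finite and $E_{|f^{-1}Z}$ is $f^{-1}(Z)$-constructible. By \ref{cons2}\eqref{compcon} applied to $X'$ there is a maximal dense open $U' \subseteq X'$ with $E_{|U'}$ finitely presented; set $Z' := X'\setminus U'$, a nowhere dense closed subset, and $U := X\setminus \overline{f(Z')}$. Then $f^{-1}(U) \subseteq U'$, so $E_{|f^{-1}U}$ is finitely presented, and the whole matter comes down to the purely topological claim that $\overline{f(Z')}$ is nowhere dense in $X$. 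This claim is where formal quasi-finiteness is indispensable — for the projection $\mathbb A^2 \to \mathbb A^1$ it already fails — and it is the main obstacle. I would establish it by a dimension count: if a maximal point $\xi$ of $X$, the generic point of a component $X_\xi$, lay in $\overline{f(Z')}$, then $X_\xi \subseteq \overline{f(Z')}$ and some irreducible component $Z'_0$ of $Z'$ would satisfy $\overline{f(Z'_0)} = X_\xi$; finite fibres give $\dim Z'_0 = \dim \overline{f(Z'_0)} = \dim X_\xi$, while the component $X'_0 \supseteq Z'_0$ of $X'$ is strictly larger ($Z'$ being nowhere dense), so $\dim X'_0 > \dim X_\xi$, contradicting $\dim X'_0 = \dim \overline{f(X'_0)} = \dim X_\xi$ which quasi-finiteness forces. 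Passing to the underlying reduced schemes legitimizes the use of Chevalley's theorem and the fibre-dimension formula in this formal setting.
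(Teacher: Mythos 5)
Your treatment of assertion (1), of the forward direction of (2), and the skeleton of your converse of (2) --- invoke assertion \eqref{compcon} of lemma \ref{cons2} for $X'$, put $Z:=\overline{f(Z')}$ and $U:=X\setminus Z$, observe that $f^{-1}(U)\subset U'$ makes $E_{|U}$ finitely presented, and handle $Z$ by noetherian induction (which you make explicit where the paper merely asserts that $E_{|Z}$ is constructible) --- all coincide with the paper's proof. The one point the paper does not prove by hand is the density of $U$, which it settles by citing \cite[Tag 03J2]{stacks-project}.

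The genuine gap is in your hand-made proof of exactly that density claim, which you yourself single out as the main obstacle. Your dimension count rests on the equalities $\dim Z'_0=\dim\overline{f(Z'_0)}$ and $\dim X'_0=\dim\overline{f(X'_0)}$, ``which quasi-finiteness forces''. Quasi-finiteness forces no such thing outside the world of schemes of finite type over a field: for the locally noetherian (formal) schemes this lemma is stated for, a dominant quasi-finite morphism can strictly decrease dimension. The open immersion $\mathrm{Spec}(\mathbb Q_p)\hookrightarrow\mathrm{Spec}(\mathbb Z_p)$ of the generic point is quasi-finite and dominant, yet the dimension drops from $1$ to $0$; passing to the underlying reduced schemes does not help, because the reductions occurring in this paper (formal schemes over $\mathrm{Spf}(\mathcal V)$, $\mathrm{Spec}(\mathbb Z)$, etc.) are not varieties over a field, and the fibre-dimension formula you appeal to is a varieties-over-a-field fact. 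In general only the inequalities $\dim Z'_0\leq\dim X_\xi$ and $\dim X'_0\leq\dim X_\xi$ survive, and together with $\dim Z'_0<\dim X'_0$ they are mutually consistent (take $0<1=1$), so your argument reaches no contradiction. The claim itself is true, but the correct mechanism is generization plus discreteness of quasi-finite fibres, not dimension theory: working over a noetherian open of $X$ and using Chevalley (this is where the quasi-compactness built into quasi-finiteness enters), one first checks that a maximal point $\xi\in\overline{f(Z')}$ actually lies in $f(Z')$, say $\xi=f(z)$ with $z\in Z'$; then any generization $z'\rightsquigarrow z$ satisfies $f(z')=\xi$ by maximality of $\xi$, so $z'$ and $z$ lie in the same fibre, which is discrete, forcing $z'=z$. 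Hence $z$ is a maximal point of $X'$ lying in the nowhere dense closed subset $Z'$, a contradiction. This is precisely the content of the Stacks lemma the paper invokes; your proof should either reproduce this argument or simply cite it.
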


\begin{proof}
The first assertion follows from stability under pullback of locally finite locally closed coverings and the fact that being finitely presented is a local notion.
The same argument shows that the condition is sufficient in the second assertion.
For the converse, we let $E$ be an $X'$-constructible module on $T$.
Assertion \eqref{compcon} of lemma \ref{cons2} implies that there exists a dense open subset $U'$ with closed complement $Z'$ such that $E_{|U'}$ is finitely presented and $E_{|Z'}$ is constructible.
If we set $Z := \overline {f(Z')}$ and let $U$ be the open complement of $Z$, then $U$ is dense in $X$ because $f$ is formally quasi-finite (\cite[\href{https://stacks.math.columbia.edu/tag/03J2}{Tag 03J2}]{stacks-project}), $E_{|U}$ is still finitely presented because $f^{-1}(U) \subset U'$ and $E_{|Z}$ is constructible.
\end{proof}

The second assertion of lemma \ref{cons1} applies in particular to a formal embedding $\gamma : Y \hookrightarrow X$,  and we may therefore say that $E$ is $X$-constructible instead of $Y$-constructible (and we shall simply say constructible in practice).

Be careful that, in the definition of a constructible module, each $E_{|Y}$ is automatically a crystal but this need not be the case for $E$ itself (example below).

%%%%%%%%%%%%%%%%%
\begin{prop}
Let $X$ be a formal scheme and $T$ an overconvergent site over $X^\dagger$.
Then, $X$-constructible modules (resp.\ crystals) on $T$ form an additive subcategory of $\mathrm{Mod}(\mathcal O_T)$ which is stable under cokernel, extension and tensor product.
\end{prop}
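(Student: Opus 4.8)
The plan is to test $X$-constructibility one stratum at a time and reduce each of the three operations to an elementary statement about finitely presented modules. The key observation is that for a locally closed formal subscheme $\gamma : Y \hookrightarrow X$, the restriction $E \mapsto E_{|Y} = \gamma^{-1}E$ is the inverse image along the morphism of overconvergent sites $T_Y \to T$, which is flat; hence restriction is exact and commutes with tensor products. Consequently, forming a cokernel, an extension or a tensor product of modules on $T$ and then restricting to $Y$ gives exactly the corresponding cokernel, extension or tensor product of the restrictions, and the question of preservation of constructibility becomes a question on each $Y$.

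First I would reduce to a single stratification. Given finitely many $X$-constructible modules, each carries a locally finite covering of $X$ by locally closed formal subschemes on which its restriction is finitely presented; taking the finite intersections of these coverings produces one locally finite covering of $X$ by locally closed subschemes $Y$ on which all the restrictions are simultaneously finitely presented (restriction of a finitely presented module to a smaller locally closed subscheme stays finitely presented, since finite presentation is preserved by the flat exact inverse image, cf.\ Lemma \ref{cons1}). It then suffices to check the stratum-level statements, which are elementary: a finitely presented module is a crystal with finitely presented realizations, and on realizations the cokernel of a map of finitely presented $\mathcal O_V^\dagger$-modules is finitely presented, an extension of two finitely presented modules is finitely presented, a tensor product of two finitely presented modules is finitely presented, and $\mathcal O_V^\dagger$ is coherent by Theorem \ref{glumd}. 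Exactness of realization turns each operation into the corresponding operation on finitely presented $\mathcal O_V^\dagger$-modules, so each restriction remains finitely presented; meanwhile Proposition \ref{catcrys} guarantees the restrictions stay crystals (cokernels and direct sums are colimits, and the inclusion $\mathrm{Cris}(T_Y) \hookrightarrow \mathrm{Mod}(T_Y)$ is right exact, so a cokernel formed in $\mathrm{Mod}(T_Y)$ is already a crystal; extensions and tensor products are covered directly). This settles the module case, additivity being the direct-sum assertion.

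Finally, the crystal case follows by combining the module case with Proposition \ref{catcrys}: if $E', E''$ are $X$-constructible crystals, then any cokernel, extension, direct sum or tensor product of them computed in $\mathrm{Mod}(T)$ is again a crystal by Proposition \ref{catcrys} and is $X$-constructible by the module case, hence is an $X$-constructible crystal. The only point requiring genuine care is the bookkeeping between the \emph{module} and \emph{crystal} notions of constructibility---in particular that the restrictions to strata remain crystals---and this is exactly where the characterization of finitely presented modules as crystals with finitely presented realizations, together with the stability of crystals under colimits and extensions, does the work. Beyond this bookkeeping there is no serious obstacle, once the local finite-presentation facts and the coherence of $\mathcal O_V^\dagger$ are in hand.
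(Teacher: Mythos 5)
Your proof is correct and follows essentially the same route as the paper: the paper's own argument is exactly the reduction via exactness and tensor-compatibility of restriction to the finitely presented case, which is then formal from Proposition \ref{catcrys}; you merely spell out the common-refinement step and the stratum-level facts that the paper leaves implicit. One small caveat: your appeal to Theorem \ref{glumd} is both misapplied (it requires an \emph{analytic} overconvergent space, a hypothesis absent from the proposition) and unnecessary, since cokernels, extensions and tensor products of finitely presented modules are finitely presented over any ringed site without any coherence assumption.
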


\begin{proof}
Since the restriction maps $E \mapsto E_{|Y}$ are exact and commute with tensor product, it is enough to consider the case of a finitely presented crystal.
This is then completely formal according to proposition \ref{catcrys}.
\end{proof}

$X$-constructible \emph{modules} on $T$ are also stable under internal Hom but this need not be the case for $X$-constructible crystals in general (example below).
There also exists an analog to the proposition for constructible (and not merely $X$-constructible) objects. 
This follows from the fact that the category of morphisms $T \to X^\dagger$ is filtered (use fibered product).

%%%%%%%%%%%%%%
\subsection{The analytic case}

In the analytic situation, there exists a very nice way to describe constructible crystals as we shall shortly see.

%%%%%%%%%%%%%%%%%%%%
\begin{lem} \label{dirconv}
Let $X$ be a formal scheme and $T$ an analytic overconvergent site over $X^\dagger$.
If $\gamma : Y \hookrightarrow X$ is a formal embedding and $E$ is a module on $T_Y$, then $E$ is constructible if and only if $\gamma_\dagger E$ is constructible.
\end{lem}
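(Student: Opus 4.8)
The plan is to reduce everything to the notion of $X$-constructibility and to exploit a single base-change identity for the overconvergent direct image. By Lemma \ref{cons1}\eqref{changx}, applied to the formally quasi-finite embedding $\gamma$, together with the filteredness of the category of morphisms to $X^\dagger$, it is enough to prove that $E$ is $Y$-constructible if and only if $\gamma_\dagger E$ is $X$-constructible. The crucial tool is the compatibility of $\gamma_\dagger$ with restriction recorded after Definition \ref{newf}: for any morphism of overconvergent sites $T' \to T$ one has $\gamma_\dagger(E_{|T'_Y}) \simeq (\gamma_\dagger E)_{|T'}$. Taking $T' = T_W$ for a locally closed formal subscheme $W \hookrightarrow X$, and writing $\gamma_W : Y \cap W \hookrightarrow W$ for the induced embedding, this reads
\[
(\gamma_\dagger E)_{|W} \simeq (\gamma_W)_\dagger\bigl(E_{|Y\cap W}\bigr).
\]
Two special cases do all the work: when $W \subseteq Y$ the embedding $\gamma_W$ is the identity and $(\gamma_W)_\dagger = \mathrm{Id}$, so $(\gamma_\dagger E)_{|W} \simeq E_{|W}$; and when $Y \cap W = \emptyset$ the embedding $\gamma_W$ is that of the empty scheme, whence $(\gamma_\dagger E)_{|W} = 0$.

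For the implication that $\gamma_\dagger E$ constructible forces $E$ constructible, I would simply restrict along $\gamma$. Taking $W = Y$ above gives $(\gamma_\dagger E)_{|Y} \simeq E$. Since $\gamma\colon T_Y \to T$ is a morphism of overconvergent sites, Lemma \ref{cons1}(1) shows that the restriction of the $X$-constructible module $\gamma_\dagger E$ is again $X$-constructible; hence $E$ is $X$-constructible, and so $Y$-constructible by Lemma \ref{cons1}\eqref{changx}.

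For the converse, suppose $E$ is $Y$-constructible. Working locally on $X$ (constructibility is local) I may assume $Y$ is finite-dimensional, so by Lemma \ref{cons2} there is a finite constructible filtration $Y = Y_0 \supset Y_1 \supset \cdots \supset Y_{e+1} = \emptyset$ with each $E_{|Y_i \setminus Y_{i+1}}$ finitely presented. As $Y$ is locally closed in $X$, its closure $\overline Y$ is closed, $Z := \overline Y \setminus Y$ is closed in $X$, and $X \setminus \overline Y$ is open. These organize $X$ into the finite partition by locally closed formal subschemes
\[
X = (X \setminus \overline Y) \ \sqcup\ Z \ \sqcup\ \coprod_{i=0}^{e} (Y_i \setminus Y_{i+1}).
\]
On the first two pieces $\gamma_\dagger E$ restricts to $0$ by the vanishing case above, and on each $Y_i \setminus Y_{i+1} \subseteq Y$ it restricts to $E_{|Y_i\setminus Y_{i+1}}$, which is finitely presented. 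Thus $\gamma_\dagger E$ becomes finitely presented on each member of a finite, hence locally finite, covering of $X$ by locally closed formal subschemes, i.e.\ $\gamma_\dagger E$ is $X$-constructible.

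The one genuinely delicate point is that $\gamma_\dagger$ does \emph{not} preserve finite presentation: the realization $]\gamma'[_! E_{Y'}$ of $\gamma_\dagger E$ is an extension by zero and is typically not coherent near the boundary of the tube. This is exactly why one cannot push a covering witnessing constructibility of $E$ straight through $\gamma_\dagger$; the remedy is the vanishing of $\gamma_\dagger E$ off $Y$, which lets us replace the boundary strata by pieces on which $\gamma_\dagger E$ is the zero module. The only technical care needed is to keep the covering locally finite by locally closed formal subschemes, which is why I pass to a finite constructible filtration (legitimate after localizing, since $X$ is locally noetherian) rather than to an arbitrary locally finite covering.
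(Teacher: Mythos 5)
Your proof is correct and follows essentially the same route as the paper's: both directions rest on the two restriction identities $(\gamma_\dagger E)_{|T_Y} = E$ and $(\gamma_\dagger E)_{|W} = 0$ for $W$ disjoint from $Y$. The paper merely organizes the converse more tersely — it factors $\gamma$ into an open and a closed immersion and covers $X$ by the two pieces $Y$ and its complement, invoking criterion (2) of lemma \ref{cons2} (constructibility of the restrictions to a locally finite locally closed covering suffices), which sidesteps the refinement to finitely presented pieces and hence the local-finiteness issue that you handle by localizing on $X$.
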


\begin{proof}
The condition is sufficient because then $E = (\gamma_\dagger E)_{|T_Y}$.
For the converse implication, we may assume that $\gamma$ is either an open or a closed immersion with complement $Y'$ and recall that then $(\gamma_\dagger E)_{|Y} = E$ and $(\gamma_\dagger E)_{|Y'} = 0$.
\end{proof}

%%%%%%
\begin{xmp}
Let $X$ be a formal scheme and $T$ an analytic overconvergent site over $X^\dagger$.
\begin{enumerate}
\item Any finitely presented module (or equivalently crystal) is constructible (this is what people usually call an ``overconvergent isocrystal'').
Note that the converse also holds if $X$ is a (locally noetherian) formal scheme of dimension zero.
\item More generally, if $\gamma : Y \hookrightarrow X$ is a formal embedding and $E$ is a finitely presented module on $T_{Y}$, then $\gamma_{\dagger} E$ is a constructible crystal on $T$ (but usually not finitely presented).
\item As a particular case, if $\alpha : U \hookrightarrow X$ is a formal open embedding and $E$ is a finitely presented module on $T_{U}$, then $\alpha_{*} E$ is a constructible crystal on $T$ (but usually not finitely presented).
\item If $\beta : Z \hookrightarrow X$ is a formal closed embedding and $E$ is finitely presented on $T_{Z}$, then $\beta_{*} E$ is a constructible module but this is \emph{not} a crystal in general.
\item The usual dual to a constructible crystal is not a (constructible) crystal in general because $(\beta_{\dagger}E)^\vee = \beta_{*}E^\vee$ in the notations of the previous examples.
There should however exist a duality theory for complexes with constructible crystalline cohomology.
\end{enumerate}
\end{xmp}

The main tool for studying constructible crystals on analytic overconvergent sites is the following:

%%%%%%%%%%%%%%%%%%
\begin{prop} \label{extcons}
Let $X$ be a formal scheme and $T$ an analytic overconvergent site over $X^\dagger$.
A crystal $E$ on $T$ is $X$-constructible if and only if there exists an open subset $U$ of $X$ with closed complement $Z$, a constructible crystal $E'$ on $T_{Z}$ (resp.\ $E''$ on $T_{U}$) and a short exact sequence
\[
0 \to \beta_{\dagger}E' \to E \to \alpha_{*}E'' \to 0
\]
where $\alpha : U \hookrightarrow X$ and $\beta : Z \hookrightarrow X$ denote the corresponding formal embeddings.
Moreover, we may assume that $U$ is dense in $X$ and $E''$ is finitely presented.
\end{prop}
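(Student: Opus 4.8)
The plan is to reduce both implications to lemma \ref{cons2}, which characterizes $X$-constructibility through a dense open subset $U$ with closed complement $Z$, combined with the complementary-embedding exact sequences of propositions \ref{clop}, \ref{alphacr} and \ref{dagab}. The decisive observation is that for a \emph{crystal} the abstract functors $\underline\Gamma^\dagger_Z$ and $j_U^\dagger$ become the concrete restrictions $\beta_\dagger E_{|Z}$ and $\alpha_*E_{|U}$, so the required sequence is already built into the theory.

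For the direct implication, suppose $E$ is an $X$-constructible crystal. By assertion \eqref{compcon} of lemma \ref{cons2} I would fix a dense open $\alpha : U \hookrightarrow X$ with closed complement $\beta : Z \hookrightarrow X$ such that $E_{|U}$ is finitely presented and $E_{|Z}$ is $Z$-constructible; both are crystals by lemma \ref{loccrys}. Proposition \ref{clop} then supplies, for the crystal $E$, the short exact sequence
\[
0 \to \underline\Gamma^\dagger_Z E \to E \to j_U^\dagger E \to 0,
\]
and identifies $\underline\Gamma^\dagger_Z E = \beta_\dagger E_{|Z}$, while proposition \ref{alphacr} gives $j_U^\dagger E = \alpha_*\alpha^{-1}E = \alpha_*E_{|U}$. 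Setting $E' := E_{|Z}$ and $E'' := E_{|U}$ produces exactly the desired sequence, with $U$ dense and $E''$ finitely presented, so the ``moreover'' clause is handled simultaneously.

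For the converse, I would start from a short exact sequence $0 \to \beta_\dagger E' \to E \to \alpha_*E'' \to 0$ in $\mathrm{Mod}(T)$ with $E'$, $E''$ constructible crystals. First, $E$ is a crystal: $\beta_\dagger E'$ and $\alpha_*E''$ are crystals by propositions \ref{crisdag} and \ref{alphacr}, and crystals are stable under extension by proposition \ref{catcrys}. Then I apply the exact functors $\alpha^{-1}$ and $\beta^{-1}$. Using $\alpha^{-1}\beta_\dagger = 0$ and $\beta^{-1}\beta_\dagger = \mathrm{Id}$ from proposition \ref{dagab}, together with $\alpha^{-1}\alpha_* = \mathrm{Id}$ and $\beta^{-1}\alpha_* = 0$, the two pulled-back sequences collapse to isomorphisms $E_{|U} \simeq E''$ and $E_{|Z} \simeq E'$. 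Hence $E_{|U}$ and $E_{|Z}$ are constructible, and since $\{U,Z\}$ is a finite locally closed covering of $X$, lemma \ref{cons2} shows that $E$ is $X$-constructible.

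The argument is essentially bookkeeping on top of the established functorial identities, so there is no single deep step; the point demanding the most care is that the identifications $\underline\Gamma^\dagger_Z E = \beta_\dagger E_{|Z}$ and $j_U^\dagger E = \alpha_*E_{|U}$ genuinely require $E$ to be a crystal—for a general module the sequence of proposition \ref{clop} degenerates to a distinguished triangle and the clean short exact sequence is lost. I would also read every short exact sequence in $\mathrm{Mod}(T)$ rather than $\mathrm{Cris}(T)$, so that applying the exact inverse-image functors $\alpha^{-1},\beta^{-1}$ is legitimate (compare lemma \ref{flatex}). Finally, one should note that ``constructible on $T_U$'' agrees with ``$U$-constructible'' by lemma \ref{cons1}\eqref{changx}, so that lemma \ref{cons2} applies verbatim to the covering $\{U,Z\}$.
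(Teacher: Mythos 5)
Your proof is correct and follows essentially the same route as the paper: the forward direction combines lemma \ref{cons2} with the exact sequence of proposition \ref{clop} (and the identifications $\underline\Gamma^\dagger_Z E = \beta_\dagger E_{|Z}$, $j_U^\dagger E = \alpha_*E_{|U}$), while the converse restricts to $U$ and $Z$ to recover $E''$ and $E'$. Your extra verification that $E$ is a crystal in the converse is redundant (the statement already assumes $E$ is a crystal), but harmless.
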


\begin{proof}
Given such data, we will have $E_{|Z} = E'$ and $E_{|U} = E''$ and $E$ is therefore constructible.
Conversely, if $E$ is constructible, we may invoke lemma \ref{cons2}.
Thus, there exists a dense open subset $U$ with closed complement $Z$ such that $E'' := E_{|U}$ is finitely presented and $E' := E_{|Z}$ is constructible.
It only remains to write down the exact sequence of assertion \eqref{clop3} in proposition \ref{clop}.
\end{proof}

As a consequence of proposition \ref{extcons}, we see that, when $X$ is finite dimensional, the category of $X$-constructible crystals on $T$  is the smallest category of crystals (or even modules) which is stable under extension and contains $\gamma_\dagger E$ whenever $\gamma : Y \hookrightarrow X$ is a formal embedding and $E$ is a finitely presented crystal (or equivalently module) on $T_Y$.

As a first application, we prove the following result that will be useful later when we consider descent issues:

%%%%%%%%%%%%%%%
\begin{lem} \label{bijcons}
Let $X$ be a formal scheme and $f : T \to S$ a morphism of analytic overconvergent sites over $X^\dagger$.
Assume that for all formal embedding $Y \hookrightarrow X$, $f_Y^{-1}$ is fully faithful on finitely presented crystals.
Then the same holds for $X$-constructible crystals.
\end{lem}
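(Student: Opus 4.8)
The goal is to show that for any two $X$-constructible crystals $E,F$ on $S$ the natural map $\mathrm{Hom}_S(E,F)\to\mathrm{Hom}_T(f^{-1}E,f^{-1}F)$ is bijective; note that $f^{-1}E,f^{-1}F$ are again $X$-constructible crystals by lemmas \ref{loccrys} and \ref{cons1}, and that $\mathrm{Hom}$ in the category of crystals is computed in the category of modules. I would argue by noetherian induction on $\dim X$. When $\dim X=0$ every constructible crystal is finitely presented (cf.\ the examples after lemma \ref{dirconv}), so the statement is the hypothesis with $Y=X$. For the inductive step I will use repeatedly that $f^{-1}$ is exact and that, by the remark following definition \ref{newf} and by base change (lemma \ref{basfib}), it commutes with restriction $(-)_{|Y}$, with $\alpha_*$, with $\beta_*$ and with $j_U^\dagger$.

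Using lemma \ref{cons2}(3) for $E$ and $F$ at once, I would pick a dense open $\alpha:U\hookrightarrow X$ with closed complement $\beta:Z\hookrightarrow X$, where $\dim Z<\dim X$, such that $E_{|U}$ and $F_{|U}$ are finitely presented. Since $F$ is a crystal, corollary \ref{cartone} together with proposition \ref{alphacr}\eqref{alphacr1} exhibits it as a pullback
\[
F\simeq \alpha_*F_{|U}\times_{\,j_U^\dagger\beta_*F_{|Z}}\beta_*F_{|Z},
\]
and exactness of $f^{-1}$ carries this cartesian square to the analogous one for $f^{-1}F$. Applying the limit-preserving functor $\mathrm{Hom}_S(E,-)$ and the standard adjunctions $\alpha^{-1}\dashv\alpha_*$ and $\beta^{-1}\dashv\beta_*$ for the embeddings $T_U\to T$, $T_Z\to T$, I obtain a cartesian square of abelian groups
\[
\mathrm{Hom}_S(E,F)\simeq \mathrm{Hom}_{S_U}(E_{|U},F_{|U})\times_{\,\mathrm{Hom}_S(E,\,j_U^\dagger\beta_*F_{|Z})}\mathrm{Hom}_{S_Z}(E_{|Z},F_{|Z}),
\]
and the corresponding one on $T$.

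Now $f^{-1}$ induces an isomorphism on the first factor by the hypothesis (both $E_{|U}$ and $F_{|U}$ are finitely presented crystals) and on the second factor by the induction hypothesis applied to $f_Z:T_Z\to S_Z$ over $Z^\dagger$ (the hypothesis for $f$ restricts to $f_Z$, since a formal embedding $Y\hookrightarrow Z$ is one into $X$ and $(f_Z)_Y=f_Y$). Because the fiber product of two isomorphisms over a monomorphism is an isomorphism, it remains only to prove that $f^{-1}$ is \emph{injective} on the base group $\mathrm{Hom}_S(E,\,j_U^\dagger\beta_*F_{|Z})$.

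This last point is the main obstacle. By $\alpha^{-1}\dashv\alpha_*$ it amounts to faithfulness of $f_U^{-1}$ on $\mathrm{Hom}_{S_U}(E_{|U},N)$ with $N:=\alpha^{-1}\beta_*F_{|Z}$; here the source is finitely presented but the target $N$ is a ``boundary'' module rather than a crystal, so the hypothesis does not apply verbatim. The plan is to compute this Hom on realizations via the limit descriptions of proposition \ref{limBer} and corollary \ref{limhom}: the realization $N_{V'}$ is a filtered colimit $\varinjlim_j j_*j^{-1}(-)$ over open neighborhoods of the boundary tube, and since $E_{|U}$ has coherent realizations any morphism $E_{|U}\to N$ factors through a finite stage, i.e.\ through a morphism of finitely presented crystals on a neighborhood, on which $f_U^{-1}$ is injective by hypothesis. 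Making this reduction precise — checking that the neighborhood stages are genuinely of finitely presented crystalline type and that the coherence/colimit factorization is compatible with $f_U^{-1}$ — is where the real work lies; the rest is formal dévissage.
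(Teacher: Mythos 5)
Your dévissage is correct and is in substance the paper's own: the cartesian square you extract from corollary \ref{cartone} (using $j_U^\dagger F=\alpha_*F_{|U}$ for crystals, proposition \ref{alphacr}) encodes exactly the exact sequence $0\to\beta_\dagger F_{|Z}\to F\to\alpha_*F_{|U}\to 0$ of proposition \ref{clop} that the paper feeds into its five-lemma diagrams, and the $U$-factor and $Z$-factor are handled the same way (hypothesis, resp.\ induction). But the point you single out at the end is not a technical loose end; it is the entire content of the lemma beyond formal dévissage, and you leave it unproven. Concretely: injectivity of $f^{-1}$ on $\mathrm{Hom}_S(E,\,j_U^\dagger\beta_*F_{|Z})$, which by lemma \ref{exthom} is the group $\mathrm{Ext}(\alpha_*E_{|U},\beta_\dagger F_{|Z})$, i.e.\ precisely the Ext-injectivity the paper must also establish. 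So the proposal has a genuine gap.

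Moreover, the route you sketch for filling it is not the paper's and is unlikely to work as stated. The stages $j_*j^{-1}(\cdot)$ in the colimit of proposition \ref{limBer}/corollary \ref{limhom} are not crystals, let alone finitely presented ones, so the hypothesis of the lemma can never be applied to a ``finite stage''; and that colimit description concerns realizations on individual overconvergent spaces, whereas $\mathrm{Hom}_S$ imposes compatibility over the whole site, so a morphism $E_{|U}\to\alpha^{-1}\beta_*F_{|Z}$ need not factor through a single stage compatibly with all transition maps. What the paper does instead is a splitting-descent (``section'') argument: it first records that morphisms \emph{into} $\beta_\dagger(-)$ descend along $f$ (invoking the adjunction of proposition \ref{equivgam}, full faithfulness of $\beta_\dagger$, and the induction hypothesis, alongside the adjunction-based descent of morphisms into $\alpha_*(-)$); then, given an extension $0\to\beta_\dagger F_{|Z}\to E\to E_1\to 0$ on $S$ that splits after pullback to $T$, the retraction $E_T\to\beta_\dagger F_{T|Z}$ descends to $S$ and is automatically still a retraction, so the extension splits on $S$. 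This yields exactly the injection $\mathrm{Ext}(\alpha_*E_{|U},\beta_\dagger F_{|Z})\hookrightarrow\mathrm{Ext}(\alpha_*E_{T|U},\beta_\dagger F_{T|Z})$ your fiber-product argument needs (a fiber product of isomorphisms over a monomorphism is indeed an isomorphism, that part is fine). To complete your proof you would have to import this section argument, or some equivalent; nothing in your coherence/colimit sketch substitutes for it.
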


\begin{proof}
It is sufficient to consider the case $Y=X$ and we can proceed by induction on the dimension of $X$ (since this is a local question).
We give ourselves two constructible crystals $E_1$ and $E_2$ on $S$ and we want to show that
\[
\mathrm{Hom} (E_1,E_2)  \simeq \mathrm{Hom} (E_{1T}, E_{2T}).
\]
We know that there exists a dense open subset $U$ of $X$ such that the restrictions $E_1''$ and $E_2''$ to $U$ of $E_1$ and $E_2$ are finitely presented.
We denote by $E_1'$ and $E_2'$ the restrictions of $E_1$ and $E_2$ to the complement $Z$ of $U$.
We will denote as usual by $\alpha : U \hookrightarrow X$ (resp.\  $\beta : Z \hookrightarrow X$) the inclusion map. 
Our assumptions imply that the result holds for $E_1''$ and $E_2''$ (resp.\ for $E'_1$ and $E'_2$ by induction).
Therefore, we have by adjunction,
for any constructible crystal $E$  on $S$ (assuming $E_{|U}$ is finitely presented in the first case),
\begin{align*}
&\mathrm{Hom} (E, \alpha_*E_2'')  \simeq \mathrm{Hom} (E_{T}, \alpha_*E_{2T}'') \quad \mathrm{and}
\\
&\mathrm{Hom} (E,\beta_\dagger E_2')  \simeq \mathrm{Hom} (E_{T},\beta_\dagger E_{2T}').
\end{align*}
Recall also from lemma \ref{exthom} that we actually have
\begin{align*}
&\mathrm{Hom} (\beta_\dagger E_1', \alpha_*E_2'')  = \mathrm{Hom} (\beta_\dagger E_{1T}', \alpha_*E_{2T}'') = 0  \quad \mathrm{and}
\\ &\mathrm{Hom} (\alpha_*E_1'',\beta_\dagger E_2')  = \mathrm{Hom} (\alpha_*E_{1T}'', \beta_\dagger E_{2T}') = 0.
\end{align*}
As a consequence, the upper map as well as the vertical maps are bijective in the diagram
\[
\xymatrix{
\mathrm{Hom} (\beta_\dagger E_1',\beta_\dagger E_2') \ar[d]^-\simeq \ar[r]^-\simeq & \mathrm{Hom} (\beta_\dagger E_{1T}',\beta_\dagger E_{2T}') \ar[d]^-\simeq \\ 
\mathrm{Hom} (\beta_\dagger E_1',E_2) \ar@{^{(}->}[r]  & \mathrm{Hom} (\beta_\dagger E_{1T}', E_{2T}).
}
\]
It follows that we also have
\begin{align} \label{pf4}
\mathrm{Hom} (\beta_\dagger E_1', E_2)  \simeq \mathrm{Hom} (\beta_\dagger E_{1T}', E_{2T}).
\end{align}
Assume now that we are given an extension
\[
0 \to \beta_\dagger E_2' \to E \to E_1 \to 0
\]
that splits over $T$.
Then there exists a section $E_T \to \beta_\dagger E_{2T}'$ which comes necessarily from a section $E \to \beta_\dagger E_{2}'$.
It follows that we have an injection
\[
\mathrm{Ext} (E_1,\beta_\dagger E_2') \hookrightarrow \mathrm{Ext} (E_{1T},\beta_\dagger E_{2T}').
\]

We observe now the following commutative diagram with exact columns:
\[
\xymatrix{
0  \ar[d] & 0  \ar[d] \\
\mathrm{Hom} (\alpha_*E_1'', E_2) \ar[d] \ar@{^{(}->}[r] & \mathrm{Hom} (\alpha_*E_{1T}'', E_{2T}) \ar[d] \\ 
\mathrm{Hom} (\alpha_*E_1'', \alpha_*E_2'')  \ar[d] \ar[r]^-\simeq & \mathrm{Hom} (\alpha_*E_{1T}'', \alpha_*E_{2T}'')  \ar[d] \\ 
\mathrm{Ext} (\alpha_*E_1'',\beta_\dagger E_2') \ar@{^{(}->}[r] & \mathrm{Ext} (\alpha_*E_{1T}'',\beta_\dagger E_{2T}').
}
\]
It follows from the five lemma that the upper map is necessarily bijective: we have
\begin{align} \label{pf2}
\mathrm{Hom} (\alpha_*E_1'', E_2)  \simeq \mathrm{Hom} (\alpha_*E_{1T}'', E_{2T}'').
\end{align}
Now, for the same reason as above (the section argument), we have an injection
\[
\mathrm{Ext} (\alpha_*E_1'', E_2) \hookrightarrow \mathrm{Ext} (\alpha_*E_{1T}'', E_{2T}).
\]
We may then consider the commutative diagram with exact columns:
\[
\xymatrix{
0  \ar[d] & 0  \ar[d] \\
\mathrm{Hom} (\alpha_*E_1'', E_2) \ar[d] \ar[r]^\simeq & \mathrm{Hom} (\alpha_*E_{1T}'', E_{2T}) \ar[d] \\
\mathrm{Hom} (E_1, E_2) \ar[d] \ar@{^{(}->}[r] & \mathrm{Hom} (E_{1T}, E_{2T}) \ar[d] \\ 
\mathrm{Hom} (\beta_{\dagger}E_1',E_2)  \ar[d] \ar[r]^-\simeq & \mathrm{Hom} (\beta_{\dagger}E_{1T}', E_{2T})  \ar[d] \\ 
\mathrm{Ext} (\alpha_*E_1'', E_2) \ar@{^{(}->}[r] & \mathrm{Ext} (\alpha_*E_{1T}'', E_{2T}).
}
\]
It is then sufficient to apply the five lemma again to get
\[
\mathrm{Hom} (E_1, E_2) \simeq \mathrm{Hom} (E_{1T}, E_{2T}). \qedhere
\]
\end{proof}

Using the same section argument as in the proof, we see that we always have injective maps
\[
\mathrm{Ext}(E_1,E_2) \hookrightarrow \mathrm{Ext}(E_{1T},E_{2T}).
\]

%%%%%%%%%%%%%%
\subsection{Constructible modules on a tube}

It is also necessary to develop a theory of constructible modules on the small site of an overconvergent space and we explain now how to do that.

%%%%%%%%%%%%%%%%%%
\begin{dfn}
Let $(X,V)$ be an overconvergent space.
An $\mathcal O_{V}^\dagger$-module $\mathcal F$ on $X$ is \emph{constructible} if there exists a locally finite covering of $X$ by locally closed formal subschemes $Y$ such that each $\mathcal F_{|\,]Y[_V}$ is a coherent $\mathcal O_{V}^\dagger$-module on $Y$.
\end{dfn}

\begin{xmp}
\begin{enumerate}
\item
The category of constructible \emph{crystals} on $(X,V)$ is equivalent to the category of constructible $\mathcal O_{V}^\dagger$-modules on $X$.
In particular, most results about constructible crystals have an analog for constructible $\mathcal O_{V}^\dagger$-modules as we shall see below.
\item
If $(X', V')$ is any overconvergent space over $T$ and $E$ is a constructible module on $T$, then $E_{V'}$ is a constructible $\mathcal O^\dagger_V$-module on $X$.
But the converse is not true.
One may call such a module $E$ \emph{weakly constructible}.
\end{enumerate}
\end{xmp}

Let us first derive some basic properties.

%%%%%%%%%%%%%%%%%%%%
\begin{lem} \label{consbis}
\begin{enumerate}
\item If $(f,u) : (X', V') \to (X, V)$ is a morphism of overconvergent spaces and $\mathcal F$ is a constructible $\mathcal O_{V}^\dagger$-module on $X$, then $]f[^\dagger\mathcal F$ is also constructible.
\item If $V$ is analytic, $\gamma : Y \hookrightarrow X$ is a formal embedding and $\mathcal G$ is an $\mathcal O_{V}^\dagger$-module on $Y$, then $\mathcal G$ is constructible if and only if $]\gamma[_! \mathcal G$ is constructible.
\end{enumerate}
\end{lem}

\begin{proof}
Follows from lemmas \ref{cons1} and \ref{dirconv} applied to crystals on $(X,V)$.
\end{proof}

%%%%%%%%%%%%%%%%%%%%
\begin{lem} \label{consbis}
Let $(X,V)$ be an overconvergent space.
Then, for an $\mathcal O_{V}^\dagger$-module $\mathcal F$ on $X$, the following are equivalent:
\begin{enumerate}
\item $\mathcal F$ is constructible,
\item there exists a locally finite covering of $X$ by locally closed formal subschemes $Y$ such that each $\mathcal F_{|\,]Y[_V}$ is constructible,
\item there exists a dense open subset $U$ with closed complement $Z$ such that $\mathcal F_{|\,]U[_V}$ is finitely presented and $\mathcal F_{|\,]Z[}$ is $Z$-constructible.
\item \label{consbis3}
(if $V$ is analytic) there exists a formal open embedding $\alpha : U \hookrightarrow X$, a complementary closed embedding $\beta : Z \hookrightarrow X$,  a constructible $\mathcal O_{V}^\dagger$-module $\mathcal F''$ on $U$, a constructible $\mathcal O_{V}^\dagger$-module $\mathcal F'$ on $Z$ and a short exact sequence
\begin{align} \label{extdag}
0 \to ]\beta[_{!} \mathcal F' \to \mathcal F \to ]\alpha[_{*}\mathcal F'' \to 0.
\end{align}
Actually, we can assume that $U$ is dense in $X$ and $\mathcal F''$ is coherent.
\end{enumerate}
\end{lem}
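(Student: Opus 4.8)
The plan is to reduce everything to the corresponding statements for \emph{crystals} on the representable site $(X,V)$, which were already established in Lemma \ref{cons2} and Proposition \ref{extcons}. By Lemma \ref{eqrep} the pair $(\varphi_{V*},\varphi_V^*)$ is an equivalence $\mathrm{Cris}(X,V)\simeq\mathrm{Mod}(\mathcal O_V^\dagger)$, and this equivalence exchanges all the operations involved: the realization of $E_{|Y}$ on $(Y,V)$ is $]\gamma[^{-1}E_V=\mathcal F_{|\,]Y[_V}$, so restriction to a formal subscheme corresponds to restriction to its tube; a crystal is finitely presented iff its realization is; and, by the construction of $\gamma_\dagger$ (Definition \ref{newf}) together with Proposition \ref{gamet}, one has $(\beta_\dagger E')_V=\,]\beta[_!E'_V$ and $(\alpha_*E'')_V=\,]\alpha[_*E''_V$. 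In particular a crystal $E$ on $(X,V)$ is $X$-constructible if and only if its realization $\mathcal F=E_V$ is a constructible $\mathcal O_V^\dagger$-module on $X$. I would also note at the outset that, since $\mathcal O_V^\dagger$ is coherent (Theorem \ref{glumd}), ``coherent'' and ``finitely presented'' are interchangeable here, matching the finitely presented crystals of the earlier lemmas.

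With this dictionary the equivalences $(1)\Leftrightarrow(2)\Leftrightarrow(3)$ are immediate: applying $\varphi_V^*$ turns $\mathcal F$ into a crystal $E:=\varphi_V^*\mathcal F$ on $(X,V)$ over $X^\dagger$, and conditions $(1)$, $(2)$, $(3)$ for $\mathcal F$ are exactly conditions $(1)$, $(2)$, $(3)$ of Lemma \ref{cons2} for $E$ read off through the equivalence. So these three conditions transport verbatim.

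For $(4)$ I would argue as follows. Assuming $(1)$, the crystal $E=\varphi_V^*\mathcal F$ is $X$-constructible, so Proposition \ref{extcons} furnishes a dense open $\alpha:U\hookrightarrow X$ with closed complement $\beta:Z\hookrightarrow X$, a constructible crystal $E'$ on $(Z,V)$, a finitely presented crystal $E''$ on $(U,V)$, and a short exact sequence $0\to\beta_\dagger E'\to E\to\alpha_*E''\to0$ in $\mathrm{Mod}(X,V)$. Since realization $\varphi_{V*}$ is exact, applying it yields the exact sequence
\[
0\to\,]\beta[_!\mathcal F'\to\mathcal F\to\,]\alpha[_*\mathcal F''\to0
\]
with $\mathcal F'=E'_V$ constructible on $Z$ and $\mathcal F''=E''_V$ coherent on $U$, which is precisely $(4)$. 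Conversely, given such a sequence, I would recover $(3)$ directly on the tube: recall that on tubes a formal open embedding induces a closed embedding and a formal closed embedding an open embedding, so $\,]Z[_V$ and $\,]U[_V$ form an open--closed complementary pair in $\,]X[_V$; applying $\,]\beta[^{-1}$ gives $\mathcal F_{|\,]Z[_V}=\mathcal F'$ (using $\,]\beta[^{-1}\,]\alpha[_*=0$) and applying $\,]\alpha[^{-1}$ gives $\mathcal F_{|\,]U[_V}=\mathcal F''$ (using $\,]\alpha[^{-1}\,]\beta[_!=0$). With $U$ dense, $\mathcal F_{|\,]U[_V}$ coherent and $\mathcal F_{|\,]Z[_V}$ constructible, this is exactly condition $(3)$, hence $(1)$.

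The only genuinely delicate point is the passage between exact sequences in $\mathrm{Mod}(X,V)$ and in $\mathrm{Mod}(\mathcal O_V^\dagger)$ in condition $(4)$: the forward direction is smooth because realization is exact, whereas the converse requires keeping track of the fact that the open/closed roles are swapped on the tube, which is precisely what makes $\,]\beta[^{-1}\,]\alpha[_*=0$ and $\,]\alpha[^{-1}\,]\beta[_!=0$ hold and lets the restriction computation collapse onto condition $(3)$. Everything else is a routine transcription of Lemma \ref{cons2} and Proposition \ref{extcons} through the realization equivalence.
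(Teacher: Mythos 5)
Your proof is correct and takes exactly the paper's route: the paper's own proof is the one-liner ``Follows from lemma \ref{cons2} and proposition \ref{extcons} applied to crystals on $(X,V)$,'' and your write-up simply spells out the transport dictionary (lemma \ref{eqrep}, the identities $(\beta_\dagger E')_V = \,]\beta[_!E'_V$ and $(\alpha_*E'')_V = \,]\alpha[_*E''_V$ from proposition \ref{gamet}, exactness of $\varphi_{V*}$, and the swap of open/closed roles on tubes) that the paper leaves implicit. Your extra care about ``coherent'' versus ``finitely presented'' via theorem \ref{glumd} is, if anything, more precise than the original.
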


\begin{proof}
Follows from lemma \ref{cons2} and proposition \ref{extcons} applied to crystals on $(X,V)$
\end{proof}

In order to deal with constructible modules, it will be necessary to have a better grasp on that kind of extension that appears in lemma \ref{consbis}.

\begin{lem} \label{extlim}
Let $(X,V)$ be a Tate paracompact space, $\alpha : U \hookrightarrow X$ a formal open embedding,  $\beta : Z \hookrightarrow X$ a complementary closed embedding,  $\mathcal F$ a coherent $\mathcal O_{V}^\dagger$-module on $U$ and $\mathcal G$ a constructible $\mathcal O_{V}^\dagger$-module on $Z$.
Then $\mathcal F$ extends to some $\mathcal F'$ on any sufficiently small neighborhood $V'$ of $U$ in $V$ and 
\[
\mathrm{Ext}_{\mathcal O_{\,]X[_V}^\dagger} (]\alpha[_*\mathcal F,]\beta[_! \mathcal G) \simeq \varinjlim \mathrm {Hom}_{\mathcal O_{\,]Z[_{V'}^\dagger}}(\mathcal F'_{|\,]Z[_{V'}},\mathcal G_{|\,]Z[_{V'}}).
\]
\end{lem}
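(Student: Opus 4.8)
The first assertion — that $\mathcal F$ spreads out to a coherent module $\mathcal F'$ on a neighborhood $V'$ of $\,]U[_V$ in $V$ — is exactly Theorem \ref{glumd}, so nothing new is needed there. For the $\mathrm{Ext}$ computation, the plan is to split the work into two steps: first identify the $\mathrm{Ext}$ group with a plain $\mathrm{Hom}$ of sheaves living on the closed tube $\,]U[_V$, and then rewrite that $\mathrm{Hom}$ as the asserted colimit over neighborhoods. Before anything I would fix the geometry: since $V$ is analytic, the tube turns open into closed and closed into open, so $\,]\alpha[\colon \,]U[_V\hookrightarrow\,]X[_V$ is a \emph{closed} embedding and $\,]\beta[\colon\,]Z[_V\hookrightarrow\,]X[_V$ is its \emph{complementary open} embedding; thus $]\alpha[_*$ is a (fully faithful) closed pushforward and $]\beta[_!$ is extension by zero from an open set. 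This is precisely the complementary open/closed situation on the single ringed space $(\,]X[_V,\mathcal O^\dagger_{\,]X[_V})$.

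For Step 1 I would run the same formal argument as in Lemma \ref{exthom}, now for modules on the tube rather than crystals. One has the standard short exact sequence (the analog of \eqref{fondseq})
\[
0\to\,]\beta[_!\mathcal G\to\,]\beta[_*\mathcal G\to\,]\alpha[_*\,]\alpha[^{-1}\,]\beta[_*\mathcal G\to 0,
\]
together with the vanishing $\mathrm{RHom}_{\mathcal O^\dagger_{\,]X[_V}}(]\alpha[_*\mathcal F,\mathrm R]\beta[_*\mathcal G)=0$, which follows by adjunction from $]\beta[^{-1}\,]\alpha[_*\mathcal F=0$ (disjoint supports). Applying $\mathrm{RHom}(]\alpha[_*\mathcal F,-)$ to the derived fundamental triangle and using this vanishing produces a shift, giving
\[
\mathrm{Ext}_{\mathcal O^\dagger_{\,]X[_V}}(]\alpha[_*\mathcal F,\,]\beta[_!\mathcal G)\simeq\mathrm{Hom}_{\mathcal O^\dagger_{\,]X[_V}}(]\alpha[_*\mathcal F,\,]\alpha[_*\,]\alpha[^{-1}\,]\beta[_*\mathcal G)\simeq\mathrm{Hom}_{\mathcal O^\dagger_{\,]U[_V}}(\mathcal F,\,]\alpha[^{-1}\,]\beta[_*\mathcal G),
\]
the last isomorphism being full faithfulness of the closed pushforward $]\alpha[_*$.

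For Step 2 I would use coherence of $\mathcal F$ to convert the right-hand $\mathrm{Hom}$ over the closed tube into a colimit over open neighborhoods. The formal-embedding analog of Corollary \ref{limhom} (proved exactly as there, via Proposition \ref{corgam}, with $]\alpha[$ in place of $i_X$) applied to the coherent $\mathcal F'$ and to $\mathcal K:=\,]\beta[_*\mathcal G$ yields $\mathrm{Hom}_{\mathcal O^\dagger_{\,]U[_V}}(\mathcal F,\,]\alpha[^{-1}\,]\beta[_*\mathcal G)\simeq\varinjlim_{V'}\mathrm{Hom}_{\mathcal O_{\,]X[_{V'}}}(\mathcal F'|_{\,]X[_{V'}},(]\beta[_*\mathcal G)|_{\,]X[_{V'}})$, where $\,]X[_{V'}=\,]X[_V\cap V'$ runs through the neighborhoods of $\,]U[_V$. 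Since $\,]\beta[$ is an open embedding and $\,]X[_{V'}$ is open in $\,]X[_V$, pushforward commutes with this restriction, so $(]\beta[_*\mathcal G)|_{\,]X[_{V'}}$ is the pushforward of $\mathcal G|_{\,]Z[_{V'}}$ along $\,]Z[_{V'}\hookrightarrow\,]X[_{V'}$, with $\,]Z[_{V'}=\,]X[_{V'}\cap\,]Z[_V$; one more adjunction then replaces each term by $\mathrm{Hom}_{\mathcal O^\dagger_{\,]Z[_{V'}}}(\mathcal F'|_{\,]Z[_{V'}},\mathcal G|_{\,]Z[_{V'}})$. Combining Steps 1 and 2 gives the stated isomorphism.

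The genuinely substantive inputs, and hence the main obstacles, are twofold. First, one must be scrupulous with the open/closed duality of tubes and with the derived-versus-underived pushforward in Step 1: the vanishing must be invoked for $\mathrm R]\beta[_*$ (not for $]\beta[_*$), and the final descent to $\mathrm{Hom}(\mathcal F,\,]\alpha[^{-1}\,]\beta[_*\mathcal G)$ rests on the fact that $\,]\alpha[^{-1}\mathrm R]\beta[_*\mathcal G$ is connective so that a degree-zero source only sees its $H^0$. Second, Step 2 is where coherence is essential and unavoidable: it is exactly what lets $\mathcal{H}\mathrm{om}(\mathcal F',-)$ commute with the inverse image $\,]\alpha[^{-1}$ and with the filtered colimit over neighborhoods, i.e.\ what makes Corollary \ref{limhom} applicable; one also needs the cofinality of the systems $\{\,]X[_{V'}\}$ and $\{V'\}$ so that the indexing matches the statement.
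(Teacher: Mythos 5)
Your proof is correct and takes essentially the same route as the paper's, whose entire argument consists of the ``completely formal'' identification $\mathrm{Ext}(]\alpha[_*\mathcal F,\,]\beta[_!\mathcal G)\simeq\mathrm{Hom}(\mathcal F,\,]\alpha[^{-1}]\beta[_*\mathcal G)$ (your Step 1, i.e.\ the mechanism of lemma \ref{exthom} transported to the complementary pair with $\,]U[_V$ closed and $\,]Z[_V$ open in $\,]X[_V$), followed by corollary \ref{limhom} and adjunction (your Step 2). The only differences are cosmetic: the paper takes neighborhoods of $\,]U[_V$ inside $V$ (implicitly pushing $\mathcal G$ forward to $V$ first) rather than inside $\,]X[_V$, and coherence is really only needed to produce the extension $\mathcal F'$ via theorem \ref{glumd}, since corollary \ref{limhom} is stated for arbitrary modules.
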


\begin{proof}
First of all, it is completely formal that
\[
\mathrm{Hom}_{\mathcal O_{\,]X[_V}^\dagger}  (\mathcal F,]\alpha[^{-1}]\beta[_* \mathcal G) \simeq \mathrm{Ext}_{\mathcal O_{\,]U[_{V}}^\dagger}  (]\alpha[_*\mathcal F,]\beta[_! \mathcal G).
\]
We can then use corollary \ref{limhom} which gives
\[
\mathrm{Hom}_{\mathcal O_{\,]X[_V}^\dagger}  (\mathcal F,]\alpha[^{-1}]\beta[_* \mathcal G) \simeq \varinjlim \mathrm {Hom}_{\mathcal O_{\,]X[_{V'}^\dagger}}(\mathcal F', ]\beta[^{-1}\mathcal G_{|V'})\]
and finish by adjunction.
\end{proof}

\subsection{Stratifications on constructible modules}

In order to recover constructible crystals from constructible modules on tubes, it is necessary to endow them with an (overconvergent) stratification or, better, an integrable connection.
We will denote with an index ``$\mathrm{cons}$'' the full subcategories made of objects whose underlying module is constructible.

%%%%%%%%%%%%%%%%%
\begin{prop} \label{consab}
\begin{enumerate}
\item
If $(X,V)$ is an overconvergent space, then
$\mathrm{Mod}_{\mathrm{cons}}(\mathcal O_{V}^\dagger)$ is a weak Serre subcategory of $\mathrm{Mod}(\mathcal O_{V}^\dagger)$.
\item
Let $(X,V) \to (C,O)$ be a morphism of overconvergent spaces.
Assume $V$ has self products and is flat over $O$ in the neighborhood of $X$.
Then, $\mathrm{Strat}_{\mathrm{cons}}(X,V/O)^\dagger$ is a weak Serre subcategory  of $\mathrm{Strat}(X,V/O)^\dagger$.
\end{enumerate}
In both cases, this is an abelian category and the inclusion functor is exact.
\end{prop}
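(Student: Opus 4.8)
The standard criterion reduces each statement to showing that the full subcategory in question contains $0$ and is closed under kernels, cokernels and extensions formed in the ambient abelian category; once this is known the subcategory is automatically abelian with exact inclusion, which also yields the last sentence (this is the usual characterization of a weak Serre subcategory, see \cite[\href{https://stacks.math.columbia.edu/tag/0754}{Tag 0754}]{stacks-project}). The plan is to deduce everything from two facts. First, over an arbitrary ringed space the coherent modules form a weak Serre subcategory of all modules, so that kernels, cokernels and extensions of coherent modules are again coherent and the two-out-of-three property holds (\cite[\href{https://stacks.math.columbia.edu/tag/01BY}{Tag 01BY}]{stacks-project}). Second, restriction along a formal embedding $\gamma : Y \hookrightarrow X$ with $V$ fixed is the functor $]\gamma[^\dagger = ]\gamma[^{-1}$ on tubes, which is exact since it is a topological inverse image.

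For assertion (1), let $f : \mathcal F \to \mathcal G$ be a morphism in $\mathrm{Mod}_{\mathrm{cons}}(\mathcal O_V^\dagger)$ and fix locally finite locally closed coverings of $X$ witnessing the constructibility of $\mathcal F$ and of $\mathcal G$. First I would pass to a common refinement $\{Y_k\}$ of these two coverings; this is again a locally finite locally closed covering by the transitivity invoked in lemma \ref{cons2}, and on each $\,]Y_k[_V$ both $\mathcal F$ and $\mathcal G$ restrict to coherent modules. Since $]\gamma_k[^{-1}$ is exact it commutes with kernel and cokernel, so $(\ker f)_{|\,]Y_k[_V} = \ker(f_{|\,]Y_k[_V})$ and $(\mathrm{coker}\, f)_{|\,]Y_k[_V} = \mathrm{coker}(f_{|\,]Y_k[_V})$, which are coherent by Tag 01BY; hence $\ker f$ and $\mathrm{coker}\, f$ are constructible. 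Applying the same refinement to a short exact sequence $0 \to \mathcal F' \to \mathcal F \to \mathcal F'' \to 0$ with $\mathcal F'$, $\mathcal F''$ constructible, exactness of the restrictions together with the two-out-of-three part of Tag 01BY shows $\mathcal F$ restricts to a coherent module on each stratum, so $\mathcal F$ is constructible. This establishes closure under kernels, cokernels and extensions, proving (1).

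For assertion (2), the flatness of $V$ over $O$ near $X$ lets me invoke lemma \ref{straO}: the category $\mathrm{Strat}(X,V/O)^\dagger$ is abelian and the forgetful functor $\omega : \mathrm{Strat}(X,V/O)^\dagger \to \mathrm{Mod}(\mathcal O_V^\dagger)$ is exact and faithful. Being exact, $\omega$ carries a kernel, cokernel or extension formed in $\mathrm{Strat}(X,V/O)^\dagger$ to the corresponding one in $\mathrm{Mod}(\mathcal O_V^\dagger)$. Since ``constructible'' for a stratified module means precisely that its underlying module is constructible, assertion (1) shows these underlying modules stay constructible, the stratification being carried along by the ambient abelian structure. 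Hence $\mathrm{Strat}_{\mathrm{cons}}(X,V/O)^\dagger$ is closed under kernels, cokernels and extensions, i.e.\ is a weak Serre subcategory.

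The argument is essentially formal, so the only point I would check with some care is the reduction in (1) to a single stratification on which both modules are simultaneously coherent, together with the fact that coherence is preserved when a coherent module is restricted from a stratum to a finer locally closed formal subscheme; the hard part, such as it is, lies here rather than in any deep computation, everything else being a direct application of the exactness of $]\gamma[^{-1}$ (respectively of $\omega$) and of the classical weak Serre property of coherent modules.
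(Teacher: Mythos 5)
Your proof is correct and follows essentially the same route as the paper's: both reduce everything to the weak Serre property of coherent modules (Stacks Tag 01BY), using exactness of restriction to strata for assertion (1) and the exactness and faithfulness of the forgetful functor from lemma \ref{straO} for assertion (2), with the abelian/exact-inclusion claim following formally. The only difference is that you spell out the common-refinement argument and the kernel/cokernel/extension checks that the paper's terse proof leaves implicit.
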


\begin{proof}
Since $\mathrm{Mod}(\mathcal O_{V}^\dagger)$ is abelian and, thanks to lemma \ref{straO}, the same holds for $\mathrm{Strat}(X,V/O)^\dagger$  under our hypothesis, then, in both cases, the last assertion follows from the first one.
The same lemma tells us that the forgetful functor $\mathrm{Strat}(X,V/O)^\dagger \to \mathrm{Mod}(\mathcal O_{V}^\dagger)$ is exact and faithful.
Moreover, restriction is exact.
Therefore, both assertions follow from the analogous statement for coherent $\mathcal O_{V}^\dagger$-modules (\cite[\href{https://stacks.math.columbia.edu/tag/01BY}{Tag 01BY}]{stacks-project}).
\end{proof}

The next result may sound quite specific but we should not forget where we come from:

%%%%%%%%%%%
\begin{prop}
Assume $O = \mathrm{Spa}(K)$ were $K$ is a non-archimedean field.
Let $(X,V) \to (C,O)$ be a morphism of analytic overconvergent spaces with $V$ locally of finite type over $K$.
Then, a constructible $\mathcal O^\dagger_{V}$-module $\mathcal F$ on $X$ endowed with a stratification is flat.
\end{prop}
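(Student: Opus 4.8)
The plan is to dévissage $\mathcal{F}$ along its constructible structure and reduce to the coherent case, where a stratification forces local freeness. Since flatness of an $\mathcal{O}_V^\dagger$-module is tested on stalks (as in Proposition~\ref{flatmod}), the two elementary pushforward functors appearing in the constructible filtration both preserve it: by assertion~\ref{consbis3} of Lemma~\ref{consbis} there is a dense formal open embedding $\alpha:U\hookrightarrow X$ with complementary closed embedding $\beta:Z\hookrightarrow X$, a coherent module $\mathcal{F}''$ on $U$ and a constructible module $\mathcal{F}'$ on $Z$, fitting into an exact sequence $0\to ]\beta[_{!}\mathcal{F}'\to\mathcal{F}\to ]\alpha[_{*}\mathcal{F}''\to 0$. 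Here, on tubes of analytic spaces, $]\beta[$ is an \emph{open} embedding, so $]\beta[_{!}$ is extension by zero and preserves flatness stalkwise, while $]\alpha[$ is a \emph{closed} embedding, so $]\alpha[_{*}$ is an exceptional direct image and also preserves flatness (cf.\ the proof of Proposition~\ref{flatmod}). As an extension of two flat modules is flat, it suffices to treat the two outer terms.

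To run this dévissage by noetherian induction on $\dim X$, I first observe that $\mathcal{F}'=]\beta[^{-1}\mathcal{F}$ and $\mathcal{F}''=]\alpha[^{-1}\mathcal{F}$ inherit stratifications, because the functors $]\gamma[^{-1}$ attached to a formal embedding carry differential operators to differential operators, hence stratified modules to stratified modules, compatibly with the functoriality of $\mathrm{Strat}(X,V/O)$ in $(X,V)\to(C,O)$. Thus $\mathcal{F}'$ is again a constructible stratified module, now on $Z$ of strictly smaller dimension, and $]\beta[_{!}\mathcal{F}'$ is flat by the inductive hypothesis; likewise $]\alpha[_{*}\mathcal{F}''$ is flat as soon as $\mathcal{F}''$ is. In this way the statement is reduced to the case where $\mathcal{F}$ is \emph{coherent} and carries a stratification.

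In the coherent case the goal is local freeness. Using Theorem~\ref{glumd} and the resulting equivalence $\mathrm{Strat}_{\mathrm{coh}}(X,V/O)\simeq\varinjlim_{V'}\mathrm{Strat}_{\mathrm{coh}}(V'/O)$, I may replace $\mathcal{F}$ by a genuine coherent stratified $\mathcal{O}_{V'}$-module on a neighborhood $V'$ of $X$ whose restriction to the tube recovers $\mathcal{F}$; since flatness over $\mathcal{O}_V^\dagger$ is a stalkwise condition and the stalks agree with those of $\mathcal{O}_{V'}$, it is enough to see this coherent stratified module is locally free. Two features of a stratification do the work. Its torsion subsheaf is again stratified and supported on a proper closed subset, but a nonzero coherent stratified module cannot be so supported: the Taylor isomorphisms $\epsilon_n$ move sections infinitesimally off any such locus, exactly as the Leibniz computation rules out a stratification on $\mathcal{O}/(f)$; hence $\mathcal{F}$ is torsion free. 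Moreover the family $\{\epsilon_n\}$ assembles into a formal trivialization $\widehat{\mathcal{F}}_x\simeq\widehat{\mathcal{O}}_{V',x}\otimes_{\kappa(x)}(\mathcal{F}_x\otimes\kappa(x))$ of the completed stalk, which forces $\mathcal{F}_x$ to be free over the Noetherian local ring $\mathcal{O}_{V',x}$, whence flat.

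The main obstacle is this last step. Producing the formal trivialization from $\{\epsilon_n\}$ requires control of the infinitesimal neighborhoods $V'^{(n)}$, i.e.\ of $\Omega^{1\dagger}_{V'}$, which is transparent only where $V'$ is smooth over $K$, since the divided Taylor expansion $\epsilon_n(s)=\sum_{|\underline k|\le n}\underline\partial^{[\underline k]}(s)\,\underline\xi^{\underline k}$ underlying Lemma~\ref{MICQ} presupposes étale coordinates. I would therefore arrange the dévissage so that the dense open $U$ carrying the coherent part of $\mathcal{F}$ lies inside the smooth locus of $V$ over $K$, pushing the singular or lower-dimensional part into the inductive step. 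The delicate point to secure is precisely that one may always shrink to this smooth locus while keeping $U$ dense in $X$, so that no stratum of the constructible filtration is lost.
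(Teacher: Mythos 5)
Your d\'evissage is exactly the paper's: the exact sequence of assertion \eqref{consbis3} of lemma \ref{consbis}, the fact that $]\beta[_{!}$ and $]\alpha[_{*}$ preserve flatness, stability of flatness under extension, and induction on $\dim X$ correctly reduce everything to a \emph{coherent} stratified module, and theorem \ref{glumd} then lets you work with a genuine coherent stratified $\mathcal O_{V'}$-module on a neighborhood $V'$. The gap is in the last step, and you have diagnosed it yourself: your formal trivialization of the completed stalk is built from divided Taylor expansions in \'etale coordinates, which only exist where $V'$ is smooth over $K$. But the proposed repair --- arranging the d\'evissage so that the dense open carrying the coherent part ``lies inside the smooth locus of $V$'' --- cannot work. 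The constructible filtration lives on the formal scheme $X$, while the smooth locus is a subspace of the adic space $V$; shrinking the dense open $U \subset X$ gives you no control whatsoever on whether the tube $\,]U[_{V}$ meets the non-smooth locus of $V$. Worse, the proposition assumes only that $V$ is locally of finite type over $K$: $V$ may be nowhere smooth (for instance non-reduced), so there may be no smooth locus to retreat to, yet the statement still holds.

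The point you are missing is that \emph{no smoothness is needed at all}, because a stratification (unlike a mere connection) is insensitive to nilpotent thickenings. This is how the paper's lemma \ref{locfree} argues: since $V'$ is locally of finite type over $K$, the rigid points $v$ (those with $\kappa(v)/K$ finite) form a conservative family for coherent modules, and freeness of $\mathcal F_v$ over the noetherian local ring $\mathcal O_{V',v}$ can be tested on the finite-length quotients $A_n = \mathcal O_{V',v}/\mathfrak m_{V',v}^n$. Writing $T_n = \mathrm{Spa}(A_n)$, the immersion $T_0 \hookrightarrow T_n$ is nilpotent, so $\mathrm{Strat}(T_n/K) \simeq \mathrm{Strat}(T_0/K)$; a stratified $A_n$-module therefore descends to a $\kappa(v)$-vector space and is automatically free over $A_n$. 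This crystalline rigidity under infinitesimal thickenings is exactly what replaces your coordinate-based Taylor trivialization, and it requires nothing beyond the finite type hypothesis. (Your intermediate torsion-freeness argument via Leibniz has the same defect --- it presupposes smoothness --- and is in any case insufficient, since torsion-free coherent modules need not be flat; so the whole weight of the proof rests on the step that lemma \ref{locfree} supplies.)
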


\begin{proof}
This is a local question.
By induction on the dimension of $X$, we can assume that $\mathcal F$ is coherent: this follows from assertion \eqref{consbis3} of lemma \ref{consbis} since both $]\beta[_{!}$ and $]\alpha[_{*}$ preserve flatness.
Our assertion now follows from theorem \ref{glumd} and (the classical) lemma \ref{locfree} below.
\end{proof}

%%%%%%%%%%%
\begin{lem} \label{locfree}
Let $K$ be a non-archimedean field and $V$ an adic space locally of finite type over $K$.
Then, a coherent $\mathcal O_V$-module $\mathcal F$ endowed with a stratification with respect to $K$ is automatically locally free.
\end{lem}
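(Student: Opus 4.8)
The statement is local and purely algebraic, so the plan is first to reduce it to a question about stalks and then to use the stratification to trivialize the completed stalk. Since $\mathcal O_V$ is coherent and freeness of a coherent module may be tested stalkwise, it suffices to fix a point $x\in V$ and prove that $M:=\mathcal F_x$ is free over the noetherian local ring $R:=\mathcal O_{V,x}$; write $\mathfrak m$ for its maximal ideal, $\kappa:=\kappa(x)$, and $r:=\dim_\kappa(M\otimes_R\kappa)$. As $K$ has characteristic zero the residue extension $\kappa/K$ is finite separable, so after the finite faithfully flat base change from $K$ to $\kappa$ — which preserves the hypotheses, carries the stratification along, and changes freeness in neither direction — I may assume that $x$ is a $K$-rational point, i.e.\ $\kappa=K$.

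The heart of the argument is a formal trivialization coming from the Taylor isomorphisms $\epsilon_n\colon p_2^{(n)*}\mathcal F\simeq p_1^{(n)*}\mathcal F$ of the stratification. Evaluating the first factor of $V\times_K V$ at the $K$-point $x$ defines, near $x$, a homomorphism $\mathcal O_{V\times_K V}\to\mathcal O_V$ under which the diagonal ideal $I$ maps onto $\mathfrak m$; hence it induces maps $\mathcal O_{V^{(n)}}=\mathcal O/I^{n+1}\to R/\mathfrak m^{n+1}$ through which $p_2^{(n)}$ becomes the natural surjection $R\to R/\mathfrak m^{n+1}$ while $p_1^{(n)}$ factors through $R\to\kappa\to R/\mathfrak m^{n+1}$. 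Restricting $\epsilon_n$ along these maps therefore yields isomorphisms
\[
M/\mathfrak m^{n+1}M \;\xrightarrow{\ \sim\ }\; (M\otimes_R\kappa)\otimes_\kappa R/\mathfrak m^{n+1},
\]
which are compatible as $n$ varies and reduce to the identity modulo $\mathfrak m$ by the normalization condition $\delta^{(n)*}\epsilon_n=\mathrm{id}$. Passing to the limit gives an isomorphism $\widehat M\simeq (M\otimes_R\kappa)\otimes_\kappa\widehat R$ of $\widehat R$-modules, so the $\mathfrak m$-adic completion $\widehat M$ is free of rank $r$ over $\widehat R$.

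It then remains to descend freeness from $\widehat R$ to $R$. Since $\widehat M$ is $\widehat R$-free we have $\mathrm{Tor}_1^{\widehat R}(\widehat M,\kappa)=0$, and flat base change identifies this group with $\mathrm{Tor}_1^R(M,\kappa)\otimes_R\widehat R$; as $R\to\widehat R$ is faithfully flat and $\mathrm{Tor}_1^R(M,\kappa)$ is finitely generated, the latter must vanish, whence $M$ is flat and therefore free over the local ring $R$. I expect the only delicate point to be the construction in the second paragraph: one must check carefully that the evaluation map sends $I$ onto $\mathfrak m$ and that the two structural maps $p_1^{(n)},p_2^{(n)}$ specialize as claimed, so that the restricted $\epsilon_n$ really are isomorphisms onto the constant module and assemble compatibly in the limit. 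Note that, in contrast to the connection-theoretic statement, no smoothness of $V$ over $K$ and no divided powers are needed here, precisely because a full stratification supplies the finite-order isomorphisms $\epsilon_n$ directly; this is the classical fact that a coherent crystal on the infinitesimal site of a $K$-scheme is a vector bundle.
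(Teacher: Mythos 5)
Your second and third paragraphs are a correct (and classical) argument, but the reduction in your first paragraph contains a genuine gap. You fix an \emph{arbitrary} point $x\in V$ and assert that, since $K$ has characteristic zero, the residue extension $\kappa(x)/K$ is finite separable. This is false: on an adic space locally of finite type over $K$ most points are not rigid. For instance, the Gauss point of the closed unit disc over $\mathbb Q_p$ has residue field an infinite extension of $\mathbb Q_p$, and higher-rank points are even worse. Finiteness of $\kappa(x)/K$ has nothing to do with the characteristic --- which, moreover, is not even a hypothesis of the lemma, stated for an arbitrary non-archimedean field. For the same reason you cannot assert that $\mathcal O_{V,x}$ is noetherian at every point (at a rank-two boundary point of the disc the stalk is a Robba-type ring), and without noetherianness both the evaluation construction and your $\mathrm{Tor}$/completion argument collapse. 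As written, your proof only works at points where $\kappa(x)/K$ is finite, i.e.\ at rigid points.

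The repair is exactly the step the paper takes and you skipped: since $V$ is locally of finite type over $K$, the rigid points (those $v$ with $\kappa(v)/K$ finite) form a conservative family for coherent $\mathcal O_V$-modules, so, $\mathcal O_V$ being coherent, it suffices to prove freeness of the stalk at each rigid point; there $\mathcal O_{V,v}$ \emph{is} noetherian, and (after a harmless finite base change, no separability or characteristic assumption being needed) your trivialization of $\widehat M$ by evaluating the Taylor isomorphisms at the point, followed by descent of flatness along $R\to\widehat R$, goes through. Note that the paper then concludes slightly differently: instead of completing, it reduces modulo $\mathfrak m_{V,v}^n$ and invokes the equivalence $\mathrm{Strat}(T_n/K)\simeq\mathrm{Strat}(T_0/K)$ for the nilpotent immersion $T_0\hookrightarrow T_n$; your explicit restriction of $\epsilon_n$ along the point is the same mechanism made concrete, so once the reduction to rigid points is in place the two arguments essentially coincide.
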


\begin{proof} This is classic. Since $V$ is locally of finite type, the \emph{rigid} points (meaning $\kappa(v)/K$ is finite) $v$ of $V$ form a conservative family for coherent $\mathcal O_V$-modules.
It is therefore sufficient to show that $\mathcal F_{V,v}$ is a free $\mathcal O_{V,v}$-module for all such $v$.
In this situation, $\mathcal O_{V,v}$ is noetherian and it is then sufficient to show that $\mathcal F_{v}/\mathrm m_{V,v}^n\mathcal F_v$ is free over $A_n := \mathcal O_{V,v}/\mathrm m_{V,v}^n$ for all $n \in \mathbb N$.
We can therefore assume that $V = T_n := \mathrm{Spa}(A_n)$.
Since $T_0 \hookrightarrow T_n$ is nilpotent, we have $\mathrm{Strat}(T_n/K) \simeq \mathrm{Strat}(T_0/K)$.
We may therefore assume that $n=0$ and we are done since $A_0 = \kappa(v)$ is a field.
\end{proof}

The next statement will prove itself fundamental in the interpretation of a crystal as a module with a connection.
The first part of the proof follows the same pattern as the proof of lemma \ref{bijcons} but I was unable to make both results a consequence of a common lemma.

%%%%%%%%%%%%%%%
\begin{lem} \label{mainstrat}
If
\[
\xymatrix{X \ar@{^{(}->}[r]\ar[d] & P \ar[d]_v & V \ar[l] \ar[d]\\ C \ar@{^{(}->}[r] & S &O\ar[l]}
\]
is a formal morphism of analytic overconvergent spaces which is formally smooth and locally formally of finite type, then the forgetful functor
\[
\mathrm{Strat}_{\mathrm{cons}}(X,V/O)^\dagger \to \mathrm{Strat}_{\mathrm{cons}}(X,V/O)
\]
is fully faithful.
\end{lem}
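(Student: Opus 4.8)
The plan is to prove the bijection induced by the forgetful functor \eqref{forgstr} on morphism sets. Since the forgetful functor is the identity on underlying $\mathcal O_V^\dagger$-linear maps, and a morphism in $\mathrm{Strat}(X,V/O)^\dagger$ (resp.\ $\mathrm{Strat}(X,V/O)$) is simply an $\mathcal O_V^\dagger$-linear map $\phi$ that commutes with the overconvergent Taylor isomorphism $\epsilon$ on $\,]X[_{V(1)}$ (resp.\ with the infinitesimal family $(\epsilon_n)_n$ on the $\,]X[_{V^{(n)}}$), faithfulness is automatic. Moreover, as each nilpotent immersion $V^{(n)} \hookrightarrow V(1)$ gives a restriction of the defining relation, a map commuting with $\epsilon$ commutes with all $\epsilon_n$. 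The whole content is the reverse implication: if the obstruction
\[
s := \epsilon_{\mathcal G}\circ p_2^\dagger\phi - p_1^\dagger\phi\circ \epsilon_{\mathcal F}
\]
vanishes on every infinitesimal neighbourhood of the diagonal, then it vanishes on all of $\,]X[_{V(1)}$.

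First I would reduce to the case where $\mathcal F$ and $\mathcal G$ are \emph{coherent}. Both $\mathrm{Strat}_{\mathrm{cons}}(X,V/O)^\dagger$ and $\mathrm{Strat}_{\mathrm{cons}}(X,V/O)$ are abelian (proposition \ref{consab}), and by assertion \eqref{consbis3} of lemma \ref{consbis} every constructible object fits in a short exact sequence $0 \to ]\beta[_!\mathcal F' \to \mathcal F \to ]\alpha[_*\mathcal F'' \to 0$ with $\alpha:U\hookrightarrow X$ a dense formal open embedding, $\mathcal F''$ coherent on $U$ and $\mathcal F'$ constructible on the lower-dimensional closed complement $Z$. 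The functors $]\beta[_!=\beta_\dagger$ and $]\alpha[_*=\alpha_*$ carry overconvergent and ordinary stratifications (compare proposition \ref{crisdag}) and the forgetful functor commutes with them, so these are exact sequences of stratified modules in both categories. Proceeding by noetherian induction on $\dim X$ and feeding the sequences into the long exact $\mathrm{Hom}$/$\mathrm{Ext}$ sequences in $\mathrm{Strat}^\dagger$ and in $\mathrm{Strat}$, together with the vanishing and adjunction identities of lemma \ref{exthom} and the splitting-of-extensions argument, one compares the two sides by the five lemma \emph{exactly} as in the proof of lemma \ref{bijcons}. This leaves only the coherent case.

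For $\mathcal F,\mathcal G$ coherent the internal hom $\mathcal H:=\mathcal H\mathrm{om}_{\mathcal O_V^\dagger}(\mathcal F,\mathcal G)$ is coherent and carries a stratification, and by the formula for morphisms of stratified modules it suffices to show that the natural comparison map $\mathcal H^{0\dagger}(\mathcal H)\to \mathcal H^0(\mathcal H)$ is an isomorphism. Both sides are kernels of the operator $\epsilon\circ p_2^\dagger - p_1^\dagger$ acting into sections of the \emph{pullback} sheaf $p_1^\dagger\mathcal H$, computed over $\,]X[_{V(1)}$ respectively over the $\,]X[_{V^{(n)}}$, so the statement reduces to the rigidity claim: a section of $p_1^\dagger\mathcal H$ over $\,]X[_{V(1)}$ vanishing to all orders along the diagonal is zero. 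Since the morphism is formally smooth and locally formally of finite type, $V$ is smooth over $O$ near $X$, and I would use the local étale coordinates $x_1,\dots,x_d$ and difference coordinates $\xi_i=p_2^*x_i-p_1^*x_i$ of the discussion preceding lemma \ref{MICQ}; in these coordinates $p_1:\,]X[_{V(1)}\to\,]X[_V$ is a relative open unit polydisk in the $\xi_i$, the diagonal being $\underline\xi=0$ and $V^{(n)}$ being cut out by $(\underline\xi)^{n+1}$. A section of the pullback $p_1^\dagger\mathcal H$ then has a convergent Taylor expansion $\sum_{\underline k}h_{\underline k}\,\underline\xi^{\underline k}$ with coefficients $h_{\underline k}$ local sections of $\mathcal H$, and vanishing on every $\,]X[_{V^{(n)}}$ forces all $h_{\underline k}=0$.

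The hard part will be exactly this rigidity. Krull's intersection theorem gives vanishing only in the \emph{formal} neighbourhood of the diagonal — that is, in the stalks along $\underline\xi=0$, which is precisely the information already encoded by the $\epsilon_n$ — so the genuine work is to upgrade formal vanishing to vanishing on the full tube $\,]X[_{V(1)}$, i.e.\ to see that overconvergent sections are faithfully detected by their Taylor expansions along the diagonal. I would establish this on the relative polydisk by first extending $\mathcal H$ to a coherent sheaf on a neighbourhood of $\,]X[_{V(1)}$ via theorem \ref{glumd} and identifying sections on the tube with limits over neighbourhoods through lemma \ref{limhom}, thereby reducing the injectivity of the Taylor map to the case of the structure sheaf (where a convergent power series with vanishing coefficients is zero), and treating the general coherent case slice by slice over the rigid points of $\,]X[_V$.
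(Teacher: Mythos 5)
Your overall architecture coincides with the paper's: faithfulness is free; the constructible case is reduced to the coherent case by the same d\'evissage (dense open with coherent restriction, exact sequences built from $]\alpha[_*$ and $]\beta[_!$, the vanishing and splitting arguments, and the Hom/Ext five-lemma diagrams patterned on lemma \ref{bijcons}, with noetherian induction on $\dim X$); and the coherent case is reduced, via internal Hom, to showing $\mathcal H^{0\dagger}(\mathcal H) \simeq \mathcal H^{0}(\mathcal H)$, i.e.\ to the injectivity of the Taylor-jet map $p_{1*}p_1^\dagger \mathcal H \to \varprojlim_n p_1^{(n)\dagger}\mathcal H$. Up to that point you are reconstructing the paper's proof.

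The genuine gap is in the rigidity step, in three respects. First, you argue with Taylor expansions directly over the tube $]X[_{V(1)}$, but the tube is only a germ: by lemma \ref{limhom} and theorem \ref{glumd}, sections and homomorphisms over it are \emph{filtered colimits} over neighbourhoods, and a formal Taylor expansion is an infinite \emph{product}, which does not commute with filtered colimits. So knowing that every coefficient $h_{\underline k}$ vanishes over the tube only says that each coefficient dies on some neighbourhood, possibly shrinking with $\underline k$; it does not make the section vanish on any single neighbourhood. The paper circumvents exactly this by first replacing $P$ with its completion along $\overline X$ (so that $X$ is open in $P$) and restricting to the \emph{fibers} $X_V \hookrightarrow \,]X[_V$ and $X_{V(1)} \hookrightarrow \,]X[_{V(1)}$: the fibers are dense in the tubes, so restriction is injective on $p_{1*}p_1^\dagger\mathcal F$, and on the fibers one works with honest adic spaces, where $]X[_{V(1)} \simeq \mathbb D^{-,d}_V$ globally (lemma 4.11 of \cite{LeStum17*}) and no germ colimits intervene. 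Second, your ``reduction to the structure sheaf'' is not a formal step: $-\otimes_A M$ is only right exact, so the injectivity of $A\{\underline\xi/\pi^{1/n}\} \hookrightarrow A[[\underline\xi]]$ does not pass to $A\{\underline\xi/\pi^{1/n}\}\otimes_A M \to A[[\underline\xi]]\otimes_A M$. This is precisely why the paper runs an induction on the number of generators of $M := \Gamma(V,\mathcal F)$, the cyclic case $M = A/I$ being handled by viewing $M$ itself as a Tate ring, so that the map becomes the inclusion of convergent into formal power series over $M$. Third, your fallback ``slice by slice over the rigid points of $\,]X[_V$'' presupposes that rigid points exist and form a conservative family for coherent modules; this is available when $V$ is locally of finite type over a non-archimedean field (lemma \ref{locfree}), but the lemma is stated for arbitrary analytic overconvergent spaces, where $O$ need not live over any field, and no Nullstellensatz-type conservativity is available there.
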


\begin{proof}
We already know that the functor is faithful.
We give ourselves two constructible modules $\mathcal F$ and $\mathcal G$ on $\,]X[_V$ endowed with an overconvergent stratification.
We have to show that any morphism which is compatible with the usual stratifications is automatically compatible with the overconvergent ones.

Let us assume for the moment that the result is known for coherent modules (on any such $X$).
Since $\mathcal F$ and $\mathcal G$ are constructible, there exists a dense open subset $U$ of $X$ such that the respective restrictions $\mathcal F''$ and $\mathcal G''$ to $U$ of $\mathcal F$ and $\mathcal G$ are coherent.
The property therefore holds for $\mathcal F''$ and $\mathcal G''$ on $U$.
Let us denote as usual by $\alpha : U \hookrightarrow X$ the inclusion map.
Since $\,]\alpha[_{*}$ is fully faithful (even with respect to stratifications or overconvergent stratifications), the property is satisfied by $\,]\alpha[_{*}\mathcal F''$ and $\,]\alpha[_{*}\mathcal G''$.
In other words, if we shorten our categories names as $\mathrm{Strat}^\dagger$ and $\mathrm{Strat}$, we have a bijection
\[
\mathrm{Hom}_{\mathrm{Strat}^\dagger}(\,]\alpha[_{*}\mathcal F'', \,]\alpha[_{*}\mathcal G'')  \simeq \mathrm{Hom}_{\mathrm{Strat}}(\,]\alpha[_{*}\mathcal F'', \,]\alpha[_{*}\mathcal G'').
\]
We denote now by $\beta : Z \hookrightarrow X$ the inclusion of a closed complement of $U$ and let $\mathcal F'$ and $\mathcal G'$ be the restrictions of $\mathcal F$ and $\mathcal G$ to $Z$.
Assume that we are given an extension
\[
0 \to  \,]\beta[_{!}\mathcal G' \to \mathcal H \to \,]\alpha[_{*}\mathcal F'' \to 0
\]
in $\mathrm{Strat}^\dagger$ that splits as a sequence of $\mathcal O_{V}^\dagger$-modules.
Then, there exists an isomorphism
\[
\mathcal H \simeq \,]\alpha[_{*}\mathcal F'' \oplus  \,]\beta[_{!}\mathcal G'
\]
of $\mathcal O_{V}^\dagger$-modules and the overconvergent stratification of $\mathcal H$ is given by an isomorphism
\[
\epsilon : p_{2}^\dagger(\,]\alpha[_{*}\mathcal F'' \oplus  \,]\beta[_{!}\mathcal G') \simeq p_{1}^\dagger(\,]\alpha[_{*}\mathcal F'' \oplus  \,]\beta[_{!}\mathcal G')
\]
of $\mathcal O_{V(1)}^\dagger$-modules.
Thanks to lemma \ref{comgam}, this isomorphism may be rewritten
\[
\epsilon : \,]\alpha[_{*}p_{2}^\dagger \mathcal F'' \oplus  \,]\beta[_{!}p_{2}^\dagger\mathcal G' \simeq \,]\alpha[_{*}p_{1}^\dagger\mathcal F'' \oplus  \,]\beta[_{!}p_{1}^\dagger\mathcal G'.
\]
Since
\[
\mathrm{Hom} \left(\,]\alpha[_{*}p_{2}^\dagger \mathcal F'',  \,]\beta[_{!}p_{1}^\dagger\mathcal G'\right) = \mathrm{Hom} \left(  \,]\beta[_{!}p_{2}^\dagger\mathcal G', \,]\alpha[_{*}p_{1}^\dagger \mathcal F''\right) = 0,
\]
our sequence will necessary split in $\mathrm{Strat}^\dagger$.
It follows that the map
\[
\mathrm{Ext}_{\mathrm{Strat}^\dagger}(\,]\alpha[_{*}\mathcal F'', \,]\beta[_{!}\mathcal G') \to \mathrm{Ext}_{\mathrm{Strat}}(\,]\alpha[_{*}\mathcal F'', \,]\beta[_{!}\mathcal G')
\]
is injective.
We may now contemplate the following commutative diagram:
\[
\xymatrix{
0  \ar[d] & 0  \ar[d] \\
\mathrm{Hom}_{\mathrm{Strat}^\dagger}(\,]\alpha[_{*}\mathcal F'', \mathcal G) \ar[d] \ar@{^{(}->}[r] & \mathrm{Hom}_{\mathrm{Strat}}(\,]\alpha[_{*}\mathcal F'', \mathcal G) \ar[d] \\ 
\mathrm{Hom}_{\mathrm{Strat}^\dagger}(\,]\alpha[_{*}\mathcal F'', \,]\alpha[_{*}\mathcal G'')  \ar[d] \ar[r]^-\simeq & \mathrm{Hom}_{\mathrm{Strat}}(\,]\alpha[_{*}\mathcal F'', \,]\alpha[_{*}\mathcal G'')  \ar[d] \\ 
\mathrm{Ext}_{\mathrm{Strat}^\dagger}(\,]\alpha[_{*}\mathcal F'', \,]\beta[_{!}\mathcal G') \ar@{^{(}->}[r] & \mathrm{Ext}_{\mathrm{Strat}}(\,]\alpha[_{*}\mathcal F'', \,]\beta[_{!}\mathcal G').
}
\]
The columns are exact because $\mathrm{Hom}(\,]\alpha[_{*}\mathcal F'', \,]\beta[_{!}\mathcal G') = 0$.
It follows from the five lemma that the upper map is necessarily bijective: we have
\begin{align} \label{pf2}
\mathrm{Hom}_{\mathrm{Strat}^\dagger}(\,]\alpha[_{*}\mathcal F'', \mathcal G)  \simeq \mathrm{Hom}_{\mathrm{Strat}}(\,]\alpha[_{*}\mathcal F'', \mathcal G'').
\end{align}
Assume now that we are given an extension
\[
0 \to \mathcal G \to \mathcal H \to \,]\alpha[_{*}\mathcal F'' \to 0
\]
in $\mathrm{Strat}^\dagger$ that splits in $\mathrm{Strat}$.
Then there exists a section $\,]\alpha[_{*}\mathcal F'' \to \mathcal H$ for this sequence in $\mathrm{Strat}$.
From what we already showed, we know that the property that we aim at proving holds for $\,]\alpha[_{*}\mathcal F''$ and $\mathcal H$.
Therefore, the section is actually defined in $\mathrm{Strat}^\dagger$.
It follows that the map
\begin{align} \label{pf5}
\mathrm{Ext}_{\mathrm{Strat}^\dagger}(\,]\alpha[_{*}\mathcal F'', \mathcal G)  \hookrightarrow \mathrm{Ext}_{\mathrm{Strat}}(\,]\alpha[_{*}\mathcal F'', \mathcal G)
\end{align}
is injective.
We proceed now by induction on the dimension of $X$.
Then, the proposition is valid for $\mathcal F'$ and $\mathcal G'$ on $Z$, and since $\,]\beta[_{!}$ is fully faithful (even with respect to stratifications or overconvergent stratifications), it also holds for $\,]\beta[_{!}\mathcal F'$ and $\,]\beta[_{!}\mathcal G'$.
Hence, we have
\begin{align} \label{pf3}
\mathrm{Hom}_{\mathrm{Strat}^\dagger}(\,]\beta[_{!}\mathcal F', \,]\beta[_{!}\mathcal G')  \simeq \mathrm{Hom}_{\mathrm{Strat}}(\,]\beta[_{!}\mathcal F', \,]\beta[_{!}\mathcal G').
\end{align}
We consider now the commutative square
$$
\xymatrix{
\mathrm{Hom}_{\mathrm{Strat}^\dagger}(\,]\beta[_{!}\mathcal F', \,]\beta[_{!}\mathcal G') \ar[d]^-\simeq \ar[r]^-\simeq & \mathrm{Hom}_{\mathrm{Strat}}(\,]\beta[_{!}\mathcal F', \,]\beta[_{!}\mathcal G') \ar[d]^-\simeq \\ 
\mathrm{Hom}_{\mathrm{Strat}^\dagger}(\,]\beta[_{!}\mathcal F',\mathcal G) \ar@{^{(}->}[r]  & \mathrm{Hom}_{\mathrm{Strat}}(\,]\beta[_{!}\mathcal F', \mathcal G).
}
$$
The vertical maps are bijective because $\mathrm{Hom}(\,]\beta[_{!}\mathcal F', \,]\alpha[_{*}\mathcal G'') = 0$.
If follows that we have an isomorphism
\begin{align} \label{pf4}
\mathrm{Hom}_{\mathrm{Strat}^\dagger}(\,]\beta[_{!}\mathcal F', \mathcal G)  \simeq \mathrm{Hom}_{\mathrm{Strat}}(\,]\beta[_{!}\mathcal F', \mathcal G).
\end{align}
Finally, we observe the commutative diagram with exact columns:
\[
\xymatrix{
0  \ar[d] & 0  \ar[d] \\
\mathrm{Hom}_{\mathrm{Strat}^\dagger}(\,]\alpha[_{*}\mathcal F'', \mathcal G) \ar[d] \ar[r]^\simeq & \mathrm{Hom}_{\mathrm{Strat}}(\,]\alpha[_{*}\mathcal F'', \mathcal G) \ar[d] \\
\mathrm{Hom}_{\mathrm{Strat}^\dagger}(\mathcal F, \mathcal G) \ar[d] \ar@{^{(}->}[r] & \mathrm{Hom}_{\mathrm{Strat}}(\mathcal F, \mathcal G) \ar[d] \\ 
\mathrm{Hom}_{\mathrm{Strat}^\dagger}(\,]\beta[_{\dagger}\mathcal F',\mathcal G)  \ar[d] \ar[r]^-\simeq & \mathrm{Hom}_{\mathrm{Strat}}(\,]\beta[_{\dagger}\mathcal F', \mathcal G)  \ar[d] \\ 
\mathrm{Ext}_{\mathrm{Strat}^\dagger}(\,]\alpha[_{*}\mathcal F'', \mathcal G) \ar@{^{(}->}[r] & \mathrm{Ext}_{\mathrm{Strat}}(\,]\alpha[_{*}\mathcal F'', \mathcal G).
}
\]
It is then sufficient to apply the five lemma again to see that the property is satisfied by $\mathcal F$ and $\mathcal G$.

It remains however to treat the case of coherent modules.
There exists now an internal Hom and it is therefore sufficient to prove that if $\mathcal F$ is a coherent $\mathcal O_{V}^\dagger$-module with an overconvergent stratification, then
\[
\mathcal H^{\dagger 0}(\mathcal F) = \mathcal H^0(\mathcal F).
\]
Recall that there exists,  for all $n \in \mathbb N$, a commutative diagram
\[
\xymatrix{V(1) \ar[rd]^{p_{1}} \ar@/^1pc/[rrd]^{p(1)} \\ & V \ar[r]^p & O \\ V^{(n)} \ar@{^{(}->}[uu] \ar[ru]^{p_{1}^{(n)}} \ar@/_1pc/[rru]_{p^{(n)}(1)} }
\]
where $V(1) := V \times_O V$ and $V^{(n)}$ denotes the $n$th infinitesimal neighborhood of $V$ in $V(1)$.
If we still denote by the same letters the maps induced on the tubes, then there exists a morphism of left exact sequences
\[
\xymatrix{
0 \ar[r] & \mathcal H^{0\dagger}(\mathcal F) \ar[r] \ar[d] & p_{*}\mathcal F \ar[r] \ar@{=}[d] & p(1)_{*}p_{1}^\dagger \mathcal F \ar[d]
\\
0 \ar[r] & \mathcal H^{0}(\mathcal F) \ar[r] & p_{*}\mathcal F \ar[r] &  \varprojlim_{n} p^{(n)}_{*}p_{1}^{(n)\dagger} \mathcal F.
}
\]
It is therefore sufficient to show that the right hand map is injective and we will actually show that  the canonical map
\[
p_{1*}p_{1}^\dagger \mathcal F \to   \varprojlim_{n} p_{1}^{(n)\dagger} \mathcal F
\]
is injective (since we may then push down onto $O$).
Recall that we may always assume that $X$ is open in $P$ after replacing $P$ with its completion along $\overline X$.
We can then consider the open immersions $j :X_V \hookrightarrow \,]X[_V$ and $j(1): X_{V(1)} \hookrightarrow \,]X[_{V(1)}$ of the fibers (which are also the naive tubes here).
Since the fibers are dense in the tubes, the map
\[
p_{1*}p_{1}^\dagger \mathcal F \to j_*p_{1*}j(1)^{-1}p_{1}^\dagger \mathcal F
\]
(where $p_{1*}$ denotes the projection for the fibers on the right) is injective.
It is therefore sufficient to prove our assertion with the tubes replaced by the fibers.
Now, since the fibers are open, we may actually assume that $V = X_V$ and therefore also that $P = X$ (and $V = \,]X[_P$).
Since the question is local and $v$ is formally smooth and locally formally of finite type, it is differentially smooth.
We may therefore assume that there exists a formally étale map $P \to \mathbb A^d_S$, or in other words, formally étale coordinates $\underline x$ (we use multi-index notation).
Then we also obtain étale coordinates $\underline \xi := 1 \otimes \underline x - \underline s \otimes 1$ for the (first) projection $p_1 = P(1) \to P$.
The corresponding formally étale map $P(1) \to \mathbb A^d_P$ provides an isomorphism $\,]X[_{V(1)} \simeq \mathbb D^{-,d}_V$ (first assertion of lemma 4.11 of \cite{LeStum17*}).
Also, if we set (only in this proof) $Z^{(n)} := \mathrm{Spv}(\mathbb Z[\underline \xi]/(\underline \xi)^n)$, then there exists an isomorphism $V^{(n)} \simeq V \times Z^{(n)} =: Z^{(n)}_V$.
We consider now the following diagram
\[
\xymatrix{\mathbb D^{-,d}_V \ar[rd]^{p} \\ & V \\  Z^{(n)}_V \ar@{^{(}->}[uu] \ar[ru]^{p^{(n)}} }
\]
and we have to show that if $\mathcal F$ is a coherent sheaf on $V$, then the map
\[
p_{*}p^* \mathcal F \to   \varprojlim_{n} p^{(n)*} \mathcal F
\]
is injective.
This is done as in lemma 3.4.9 of \cite{LeStum11}.
We can assume that $V = \mathrm{Spa}(A,A^+)$ where $A$ is a Tate algebra with topologically nilpotent unit $\pi$ and it is sufficient to prove that
\[
\Gamma(\mathbb D^{-,d}_V, p^* \mathcal F) \to \Gamma(V, \varprojlim_{n} p^{(n)*} \mathcal F)
\]
is injective.
Actually, we can replace $\mathbb D^{-,d}_V$ with a closed polydisc $\mathbb D_V^{d}(0, \pi^{\frac 1n})$ (because they form on open covering) and then take the limit.
If we write $M := \Gamma(V, \mathcal F)$, we are therefore reduced to showing that the map
\[
A\{\underline \xi/\pi^{\frac 1n}\} \otimes_A M \to A[[\underline \xi]] \otimes_A M
\]
is injective.
By induction on the number of generators of $M$, we can assume that $M$ is a quotient of $A$ so that $M$ becomes itself a Tate algebra and we may finally assume that $M=A$ in which case this is clear.
\end{proof}

In the course of the proof, we showed that
\[
\mathcal H^{\dagger 0}(\mathcal F) \simeq \mathcal H^0(\mathcal F)
\]
when $\mathcal F$ is coherent.
Actually, the same is true more generally when $\mathcal F$ is constructible because then
\[
\xymatrix{
\Gamma(]C[_O, \mathcal H^{\dagger 0}(\mathcal F)) \ar[d]^{\simeq} &  \Gamma(]C[_O, \mathcal H^{0}(\mathcal F)) \ar[d]^{\simeq}  \\  \mathrm{Hom}_{\mathrm{Strat}(X,V/O)^\dagger}(\mathcal O^\dagger_{V}, \mathcal F) \ar[r]^{\simeq}  &\mathrm{Hom}_{\mathrm{Strat}(X,V/O)}(\mathcal O^\dagger_{V}, \mathcal F)
}
\]
(and the same holds for any open subset of $O$).

Note also that it formally follows from the theorem that if $\mathcal F$ and $\mathcal G$ are constructible $\mathcal O_V^\dagger$-modules endowed with an overconvergent stratification, then we have an injection
\[
\mathrm{Ext}_{\mathrm{Strat}^\dagger}(\mathcal F, \mathcal G) \hookrightarrow \mathrm{Ext}_{\mathrm{Strat}}(\mathcal F, \mathcal G).
\]
Unfortunately, it was necessary to do it by hand several times in the first part of the proof.

%%%%%%%%%%%%%%%%%%
\section{Cohomology}

We will show here that a constructible crystal may be seen as a module with an integrable connection and that its cohomology is computed as a de Rham cohomology.

%%%%%%%%%%%%%%%%%%%%
\subsection{The comparison theorem}

We show that a constructible crystal may be seen as a constructible module on a tube endowed with an integrable connection.
If $T$ is an overconvergent site, we denote with an index ``$\mathrm{cons}$'' the full subcategory of $\mathrm{Mod}(T)$ (resp.\ $\mathrm{Cris}(T)$) consisting of constructible modules (resp.\ constructible crystals) on $T$.

%%%%%%%%%%%%%%%
\begin{lem} \label{adlem}
Let $(X,V) \to (C,O)$ be a morphism of analytic overconvergent spaces.
Assume that $V$ has self products and is flat over $O$ in the neighborhood of $X$.
Then $\mathrm{Cris}_\mathrm{cons}(X,V/O)^\dagger$ is an abelian category and the realization functor is an equivalence
\[
\mathrm{Cris}_\mathrm{cons}(X,V/O)^\dagger \simeq \mathrm{Strat}_\mathrm{cons}(X,V/O)^\dagger.
\]
\end{lem}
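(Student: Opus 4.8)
The plan is to cut the general equivalence of Proposition \ref{equiV} down to constructible objects and then import abelianness from Proposition \ref{consab}. Recall that realization $E \mapsto (E_V,\epsilon)$ is already an equivalence $\mathrm{Cris}(X,V/O)^\dagger \simeq \mathrm{Strat}(X,V/O)^\dagger$ by Proposition \ref{equiV}, and that $\mathrm{Cris}_{\mathrm{cons}}(X,V/O)^\dagger$ (resp.\ $\mathrm{Strat}_{\mathrm{cons}}(X,V/O)^\dagger$) is by definition the full subcategory of objects whose underlying module (resp.\ $\mathcal O_V^\dagger$-module) is constructible. So it suffices to show that this equivalence matches the two notions of constructibility; fullness and faithfulness are then automatic, and essential surjectivity follows once the objects are matched.

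The heart of the matter is the compatibility of realization with restriction along a formal embedding $\gamma : Y \hookrightarrow X$. For a crystal $E$ on $(X,V/O)^\dagger$ one has $j_V^{-1}E = \varphi_V^*E_V$ by Lemma \ref{eqrep}, so restricting along the composite $(Y,V) \xrightarrow{(\gamma,\mathrm{Id})} (X,V) \xrightarrow{j_V} (X,V/O)^\dagger$ and applying Lemma \ref{crisfi2} gives
\[
(E_{|Y})_V = (\varphi_V^*E_V)_{(Y,V)} = \,]\gamma[^\dagger E_V = \,]\gamma[^{-1}E_V = (E_V)_{|\,]Y[_V},
\]
where the third equality uses the identity $]\gamma[^\dagger = \,]\gamma[^{-1}$ valid for formal embeddings. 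Next, since $j_V : (X,V) \to (X,V/O)^\dagger$ is a local epimorphism (it generates the site), its pullback $(Y,V) \to (X,V/O)^\dagger_Y$ is again a local epimorphism; by Lemma \ref{loccrys} it detects crystals, and as finite presentation is a local notion, $E_{|Y}$ is finitely presented if and only if its realization $(E_V)_{|\,]Y[_V}$ is a finitely presented (equivalently, by Theorem \ref{glumd}, coherent) $\mathcal O_V^\dagger$-module on $Y$. Comparing the two definitions over the same locally finite locally closed coverings of $X$, we conclude that $E$ is a constructible crystal precisely when $E_V$ is a constructible $\mathcal O_V^\dagger$-module, so realization restricts to an equivalence $\mathrm{Cris}_{\mathrm{cons}}(X,V/O)^\dagger \simeq \mathrm{Strat}_{\mathrm{cons}}(X,V/O)^\dagger$.

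For abelianness, I would invoke the hypotheses directly: since $V$ has self-products and is flat over $O$ in a neighborhood of $X$, Lemma \ref{straO} makes $\mathrm{Strat}(X,V/O)^\dagger$ abelian, and Proposition \ref{consab}\,(2) exhibits $\mathrm{Strat}_{\mathrm{cons}}(X,V/O)^\dagger$ as a weak Serre subcategory, hence itself abelian. Transporting along the equivalence just established shows $\mathrm{Cris}_{\mathrm{cons}}(X,V/O)^\dagger$ is abelian as well.

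The main obstacle is the second paragraph, i.e.\ reconciling the site-theoretic constructibility of a crystal (phrased through the restrictions $E_{|Y}$ being finitely presented in $\mathrm{Mod}$ of the fibered site) with the module-theoretic constructibility of its realization $E_V$ on the tube. Everything hinges on the displayed compatibility $(E_{|Y})_V = (E_V)_{|\,]Y[_V}$ and on the local-epimorphism argument that lets one test finite presentation after realization; the remaining verifications are formal transport of structure along equivalences.
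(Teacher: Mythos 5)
Your proof is correct and takes essentially the same route as the paper: the general equivalence of Proposition \ref{equiV}, detection of constructibility through the local epimorphism $(X,V) \to (X,V/O)^\dagger$ combined with the identification $\mathrm{Cris}_\mathrm{cons}(X,V) \simeq \mathrm{Mod}_\mathrm{cons}(\mathcal O_V^\dagger)$, and abelianness via Lemma \ref{straO} and Proposition \ref{consab}. The only difference is presentational: you unfold the compatibility $(E_{|Y})_V = (E_V)_{|\,]Y[_V}$ and the pullback of the local epimorphism to $(Y,V) \to (X,V/O)^\dagger_Y$, steps the paper compresses into the single remark that the local epimorphism ``detects constructibility''.
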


\begin{proof}
It will follow from proposition \ref{consab} and lemma \ref{straO} that our category is abelian.
Now, we already know from proposition \ref{equiV} that
\[
\mathrm{Cris}(X,V/O)^\dagger \simeq \mathrm{Strat}(X,V/O)^\dagger.
\]
Moreover, the morphism of overconvergent sites $(X,V) \to (X,V/O)^\dagger$ being a local epimorphism, it detects constructibility.
Since
\[
\mathrm{Cris}_\mathrm{cons}(X,V) \simeq \mathrm{Mod}_\mathrm{cons}(\mathcal O_V^\dagger),
\]
the equivalence follows from the faithfulness of the forgetful functors.
\end{proof}

There exists a variant of the lemma:
if $V$ is a good realization for $X$ over $O$, then $\mathrm{Cris}_\mathrm{cons}(X/O)^\dagger$ is an abelian category and the realization functor is an equivalence
\[
\mathrm{Cris}_\mathrm{cons}(X/O)^\dagger \simeq \mathrm{Strat}_\mathrm{cons}(X,V/O)^\dagger.
\]
We can however do a lot better (in the classical case, this is the main result of \cite{LeStum16}):

%%%%%%%%%%%%%%%
\begin{thm} \label{crismic}
If $V$ is a geometric materialization of a formal scheme $X$ over an analytic space $O$ defined over $\mathbb Q$, then $\mathrm{Cris}_\mathrm{cons}(X/O)^{\dagger}$ is an abelian category and
\[
\mathrm{Cris}_\mathrm{cons}(X/O)^{\dagger} \simeq \mathrm{MIC}_\mathrm{cons}(X,V/O)^{\dagger}.
\]
\end{thm}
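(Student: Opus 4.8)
The plan is to factor the desired equivalence through the category of constructible overconvergent stratifications, exploiting that every functor in play preserves the underlying $\mathcal O_V^\dagger$-module and therefore preserves constructibility. First I would apply the variant of lemma \ref{adlem} recorded just after its proof: a geometric materialization $(X,V)\to(C,O)$ is analytic, is smooth and flat over $O$ in a neighborhood of $X$, and is locally formally of finite type (so $V$ has self products), while $(X,V/O)^\dagger \to (X/O)^\dagger$ is a local isomorphism by theorem \ref{strfib}. The realization functor then yields an equivalence
\[
\mathrm{Cris}_\mathrm{cons}(X/O)^\dagger \simeq \mathrm{Strat}_\mathrm{cons}(X,V/O)^\dagger,
\]
and in particular the left-hand category is abelian (this also follows from proposition \ref{consab} together with lemma \ref{straO}). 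This reduces the theorem to identifying $\mathrm{Strat}_\mathrm{cons}(X,V/O)^\dagger$ with $\mathrm{MIC}_\mathrm{cons}(X,V/O)^\dagger$.

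For that identification I would read off the relevant functors from diagram \eqref{gloper}. Since $O$ is defined over $\mathbb Q$ and $V$ is smooth over $O$ near $X$, lemma \ref{MICQ} gives an equivalence $\mathrm{Strat}(X,V/O)\simeq \mathrm{MIC}(X,V/O)$ which restricts to constructible objects because the underlying module is unchanged. Composing the forgetful functor $\mathrm{Strat}_\mathrm{cons}(X,V/O)^\dagger \to \mathrm{Strat}_\mathrm{cons}(X,V/O)$ with this equivalence produces a functor
\[
F : \mathrm{Strat}_\mathrm{cons}(X,V/O)^\dagger \to \mathrm{MIC}_\mathrm{cons}(X,V/O),
\]
whose essential image is, by the very definition of an overconvergent connection, the full subcategory $\mathrm{MIC}_\mathrm{cons}(X,V/O)^\dagger$; thus $F$ is essentially surjective onto that subcategory.

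Finally I would promote $F$ to an equivalence. The decisive input is lemma \ref{mainstrat}, applicable precisely because the defining morphism is formally smooth and locally formally of finite type: it shows that the forgetful functor $\mathrm{Strat}_\mathrm{cons}(X,V/O)^\dagger \to \mathrm{Strat}_\mathrm{cons}(X,V/O)$ is fully faithful, so $F$ is fully faithful into $\mathrm{MIC}_\mathrm{cons}(X,V/O)$. As the inclusion $\mathrm{MIC}_\mathrm{cons}(X,V/O)^\dagger \hookrightarrow \mathrm{MIC}_\mathrm{cons}(X,V/O)$ is fully faithful and $F$ factors through it essentially surjectively, the induced functor $\mathrm{Strat}_\mathrm{cons}(X,V/O)^\dagger \to \mathrm{MIC}_\mathrm{cons}(X,V/O)^\dagger$ is fully faithful and essentially surjective, hence an equivalence; chaining the two equivalences proves the theorem. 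The only genuine difficulty is concentrated in lemma \ref{mainstrat} — upgrading compatibility with ordinary stratifications to compatibility with overconvergent ones on constructible modules — and this is already established, so the remaining work is the bookkeeping above, whose one delicate point is verifying that constructibility is transported coherently by every functor in diagram \eqref{gloper}.
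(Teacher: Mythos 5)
Your proposal is correct and follows essentially the same route as the paper: both factor the equivalence through $\mathrm{Cris}_\mathrm{cons}(X,V/O)^\dagger \simeq \mathrm{Strat}_\mathrm{cons}(X,V/O)^\dagger$ (theorem \ref{strfib} plus lemma \ref{adlem}, whose post-proof variant you invoke directly) and then identify $\mathrm{Strat}_\mathrm{cons}(X,V/O)^\dagger$ with $\mathrm{MIC}_\mathrm{cons}(X,V/O)^\dagger$ via lemmas \ref{MICQ} and \ref{mainstrat}. The only difference is expository: you spell out how full faithfulness (lemma \ref{mainstrat}) and essential surjectivity (the definition of an overconvergent connection, transported through lemma \ref{MICQ}) combine, which the paper leaves implicit.
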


\begin{proof}
We know from theorem \ref{strfib} that $(X,V/O)^\dagger \to (X/O)^\dagger$ is a local isomorphism.
It therefore follows from lemmas \ref{loccrys} and \ref{cons1} that
\[
\mathrm{Cris}_\mathrm{cons}(X/O)^\dagger \simeq \mathrm{Cris}_\mathrm{cons}(X,V/O)^\dagger
\]
and that this is an abelian category.
Now we showed in lemma \ref{adlem} that
\[
\mathrm{Cris}_\mathrm{cons}(X,V/O)^\dagger  \simeq \mathrm{Strat}_\mathrm{cons}(X,V/O)^\dagger.
\]
Finally, lemmas \ref{MICQ} and \ref{mainstrat} imply that
\[
\mathrm{Strat}_\mathrm{cons}(X,V/O)^\dagger \simeq \mathrm{MIC}_\mathrm{cons}(X,V/O)^\dagger. \qedhere
\]
\end{proof}

In the situation of the theorem, we have the following comparison diagram
\[
\xymatrix{
\mathrm{Cris}_\mathrm{cons}(X/O)^\dagger \ar[r]^-\simeq \ar[d]^\simeq &  \mathrm{MIC}_\mathrm{cons}(X,V/O)^\dagger \ar@{^{(}->}[r] &\mathrm{MIC}_\mathrm{cons}(X,V/O)
\\ \mathrm{Cris}_\mathrm{cons}(X,V/O)^\dagger \ar[r]^-\simeq &  \mathrm{Strat}_\mathrm{cons}(X,V/O)^\dagger \ar@{^{(}->}[r] \ar[u]^\simeq &\mathrm{Strat}_\mathrm{cons}(X,V/O) \ar[u]^\simeq.}
\]

%%%%%%%%%%%%%%%
\begin{xmp}
\begin{enumerate}
\item (Overconvergent isocrystals)
Let $\mathcal V$ be discrete valuation ring with residue field $k$ and fraction field $K$ of characteristic zero.
Let $S$ be a formal scheme which is locally topologically of finite type over $\mathcal V$.
Let $X$ be a variety over $S_k$.
Then the category of overconvergent isocrystals on $X/S$ (definition 2.3.6 of \cite{Berthelot96c*}) is equivalent to the category of finitely presented modules on $X/S^{\mathrm{an}}$.
\item (Finite separable field extension)
Let $\mathcal V$ be a discrete valuation ring with residue field $k$ and fraction field $K$ of characteristic zero.
Let $l/k$ be a finite separable extension of $k$.
Then there exists an unramified extension $\mathcal W$ of $\mathcal V$ with residue field $l$.
If we denote by $L$ the fraction field of $\mathcal W$, then the category of constructible (or equivalently finitely presented) crystals on $l/\mathcal V$ is equivalent to the category of finite dimensional $L$-vector spaces.
\item (Lazda-Pàl version)
Let $\mathcal V$ be a discrete valuation ring with perfect residue field $k$ of characteristic $p > 0$ and fraction field $K$ of characteristic zero.
We endow $\mathcal V[[t]]$ with the $p$-adic topology and we work over the overconvergent space
\[
(\eta_{k} := \mathrm{Spec}(k((t))) \hookrightarrow \mathbb A^{\mathrm{b}}_{\mathcal V} := \mathrm{Spf}(\mathcal V[[t]])\leftarrow \mathbb D^{\mathrm{b}}_{K} = \mathrm{Spa}(K \otimes_{\mathcal V} \mathcal V[[t]])).
\]
Any finite separable extension of $\eta_k$ has the form $\eta_l := \mathrm{Spec}(l((s)))$ for a finite separable extension 
$l$ of $k$.
If $\mathcal W$ is an unramified lifting of $l$ with fraction field $L$ and $\mathcal E_L^{\dagger}$ denotes the bounded Robba ring of $L$, then the category of constructible crystals on $\eta_l/\mathbb D^{\mathrm{b}}_{K}$ is equivalent to the category of finite dimensional $\mathcal E_L^{\dagger}$-vector spaces.
\end{enumerate}
\end{xmp}

%%%%%%%%%%%%%%
\subsection{The Roos complex}

We will explain here a method introduced by Abe and Lazda for studying constructibility in \cite{AbeLazda22}.

We let $(X \hookrightarrow P \leftarrow V)$ be a \emph{Tate} (there exists a topologically nilpotent unit defined on $V$) paracompact \emph{convergent} ($X$ closed in $P$) space.

We start with the following technical condition:

%%%%%%%%%%%%%%%
\begin{dfn}
A sheaf $\mathcal F$ on $\,]X[_V$ is \emph{affinoid-acyclic} if, whenever $W$ is an open affinoid subspace of  $\,]X[_V$ that factors as
\[
\xymatrix{Q^{\mathrm{ad}} \ar@{^{(}->}[d] & W \ar[l] \ar@{^{(}->}[d]\\ P^{\mathrm{ad}}  & V \ar[l]}
\]
with $Q$ an affine open subset of $P$, then $\mathrm H^k(W, \mathcal F) = 0$ for $k > 0$.
\end{dfn}

In order to study the stability of affinoid-acyclicity under open embeddings, we introduce the following:

%%%%%%%%%%%%%%%%
\begin{dfn}
A formal open embedding $U \hookrightarrow X$ \emph{very affine} if $U$ is the disjoint union of open subsets  whose locus at infinity is locally principal (the embedding is then affine).
\end{dfn}

%%%%%%%%%%%%%%%%%%
\begin{lem} \label{afcov}
Let $U \hookrightarrow X$ be a very affine formal open embedding.
Assume $P$ is affine.
Then, there exists a decreasing family $\{V_m\}_{m \in \mathbb N} $ of open subsets of $V$ such that, if $W \subset \,]X[_V$ is an affinoid open subset, then the family $\{W \cap V_m\}_{m \in \mathbb N} $ is a cofinal system of affinoid neighborhoods of $\,]U[_{W}$ in $W$.
\end{lem}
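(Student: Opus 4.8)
The plan is to build the family $\{V_m\}$ explicitly out of the equations cutting out the locus at infinity, exactly as in the hypersurface case of proposition \ref{hknul}, and then to prove cofinality by a quasi-compactness argument in the constructible topology. First I would reduce to the principal case. Write $U=\bigsqcup_i U_i$ as in the definition of \emph{very affine}. Since the $U_i$ are disjoint open subsets of $X$, one checks that $\mathrm{sp}^{-1}(U_i)\subset \mathrm{sp}^{-1}(X\setminus U_j)$ for $i\neq j$ and the latter is closed, so the tubes $\,]U_i[_W$ are pairwise disjoint in any $W$; as a finite disjoint union of affinoids is again affinoid, it is enough to treat one piece and then take the (disjoint) union of the resulting opens. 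Because $P$ is affine and $X$ is closed in $P$, the locus at infinity of a single $U_i$, being locally principal, may after a finite affine localization be taken to be $\{h=0\}$ for a single $h\in\Gamma(P,\mathcal O_P)$, so that $U_i=X\cap\{h\neq 0\}$; here the disjoint‑union/rational‑domain structure is what keeps all the pieces affinoid through the gluing.

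With $h$ in hand, let $\pi$ be a topologically nilpotent unit on $V$ (available since $V$ is Tate) and set
\[
V_m:=\{v\in V : v(\pi)\geq v(h^m)\}.
\]
These are rational, hence open, subsets of $V$, and since $h$ is power-bounded on the tube they form a \emph{decreasing} family. Invoking corollary 4.30 of \cite{LeStum17*} (as in the second case of proposition \ref{hknul}) I would identify $\,]U_i[_V=\,]X[_V\cap\bigcap_m V_m$, so that for any affinoid $W\subset\,]X[_V$ one gets $\,]U_i[_W=\bigcap_m (W\cap V_m)$. Each $W\cap V_m$ is a rational domain of the affinoid $W$, hence itself affinoid and a quasi-compact open, and it contains $\,]U_i[_W$; thus each $W\cap V_m$ is an affinoid neighborhood of the tube. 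Moreover $\,]U_i[_W$, being an intersection of the constructible sets $W\cap V_m$, is pro-constructible and therefore quasi-compact.

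The main obstacle is cofinality: that every open neighborhood $\Omega$ of $\,]U_i[_W$ in $W$ contains some $W\cap V_m$. Here I would pass to the constructible topology on the spectral space $W$. The rational domains $W\cap V_m$ are constructible, hence clopen for the constructible topology, and since $\Omega$ is a union of quasi-compact opens its complement $W\setminus\Omega$ is closed, hence quasi-compact, for the constructible topology. The clopen sets $W\setminus(W\cap V_m)$ increase (because $V_m$ decreases) and cover $W\setminus\Omega$ (as $\bigcap_m(W\cap V_m)=\,]U_i[_W\subset\Omega$); by quasi-compactness finitely many suffice, and by monotonicity $W\setminus(W\cap V_m)\supset W\setminus\Omega$ for $m\gg 0$, i.e. $W\cap V_m\subset\Omega$. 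Taking the disjoint union over the pieces $U_i$ and, if needed, the union of the finitely many charts produced in the localization step yields the desired single family $\{V_m\}$ on $V$, valid simultaneously for all affinoid $W\subset\,]X[_V$. The quasi-compactness of the tube and this constructible-topology argument are the crux; the passage from locally principal to principal while preserving affinoidness is the remaining, essentially bookkeeping, point.
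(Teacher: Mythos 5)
Your core construction coincides with the paper's: the same rational subsets $V_m := \{v \in V : v(\pi) \geq v(h^m)\}$, the same identification of the tube with $\,]X[_V \cap \bigcap_m V_m$ via corollary 4.30 of \cite{LeStum17*}, and your constructible-topology compactness argument for cofinality is correct --- it is precisely the spectral-space fact that the paper delegates to that corollary, so this part is a legitimate expansion rather than a deviation.

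The genuine gap is in your reduction to a single equation $h$ and the recombination that follows. You principalize the locus at infinity ``after a finite affine localization'' and then take unions of the resulting opens, asserting the union is disjoint. Neither part holds as stated: the localization charts overlap, and the sets $V_m^{(j)}$ built on two overlapping charts both contain the portion of the tube lying over the overlap, so they cannot be made disjoint; likewise the loci at infinity $\{h_i=0\}$ of distinct pieces $U_i$ may meet inside $P$, so those sets $V_m^{(i)}$ need not be disjoint either. A finite union of rational domains of an affinoid $W$ is affinoid when the pieces are pairwise disjoint (a finite product of Tate rings) but not in general, and affinoid-ness of $W \cap V_m$ is the whole point of the lemma --- it is what feeds into affinoid-acyclicity in proposition \ref{acyc} and the explicit section computations of lemma \ref{GamRoos}. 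The paper avoids this with a different device: for one connected piece it replaces $P$ by its completion along the closure $\overline U$, so that the complement of $U$ in the new $P$ is the entire locus at infinity, cut out by a single global $h$; one equation, one rational condition, no gluing of charts. (Recombining over the finitely many pieces $U_i$ does require an argument, and your own compactness trick supplies it: the tubes $\,]U_i[_W$ are pairwise disjoint and pro-constructible, so the same constructible-topology argument gives $(W\cap V_m^{(i)})\cap(W\cap V_m^{(j)})=\emptyset$ for $m\gg 0$, making the union affinoid. No such repair exists for overlapping localization charts, which is why completion, not localization, is the right move.) A minor further point: disjointness of the tubes $\,]U_i[_W$ is true, but your justification via $\mathrm{sp}^{-1}(U_i)\subset\mathrm{sp}^{-1}(X\setminus U_j)$ is insufficient, since in the analytic case the tube of an open is the \emph{closure} of $\mathrm{sp}^{-1}(U_i)$; one needs these closures to be disjoint, which uses $\overline{U_i}\cap U_j=\emptyset$ together with the chain structure of generalizations in analytic adic spaces.
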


\begin{proof}
We can replace $U$ with one of its connected components and then $P$ with its completion along the closure $\overline U$ of $U$ in $P$.
In this situation, the complement of $U$ in $P$ is a hypersurface $h = 0$.
We set for $m \in \mathbb N$:
\[
V_m := \left\{v \in V \colon v(\pi) \geq v(h^m) \right\}.
\]
Then $W \cap V_m$ is a rational open subset of $W$ and they form a cofinal system of affinoid open neighborhoods of $\,]U[_W$ thanks to corollary 4.30 of \cite{LeStum17*}.
\end{proof}

In order to study the stability of affinoid-acyclicity under closed embeddings, we introduce the following:

%%%%%%%%%%%%%%%%
\begin{dfn}
Let $\beta : Z \hookrightarrow X$ be a closed formal embedding and $\mathcal F$ an abelian sheaf on $\,]Z[_V$.
If, for $n \in \mathbb N$, we denote by $j_n : [Z]_{V,n} \hookrightarrow \,]X[_V$ the inclusion map, then the \emph{Roos complex} of $\mathcal F$ is
\[
\mathcal R_\beta^\bullet \mathcal F \simeq \left[\begin{array} c \prod_n j_{n*}\mathcal F_{|[Z]_{V,n}} \\ \downarrow d \\  \prod_n  j_{n*}\mathcal F_{|[Z]_{V,n}} \end{array} \right]
\]
with $d(\{s_n\}_{n \in \mathbb N}) = \{ s_n - s_{n+1|V_n} \}_{n \in \mathbb N}$.
\end{dfn}

We can now state and prove stability for affinoid-acyclicity:

%%%%%%%%%%%%%%%%%%%
\begin{prop} \label{acyc}
\begin{enumerate}
\item 
If $\alpha : U \hookrightarrow X$ is a very affine formal open embedding and $\mathcal F$ is an affinoid-acyclic abelian sheaf on $\,]X[_V$, then $]\alpha[_*]\alpha[^{-1}\mathcal F$ is also affinoid-acyclic.
\item 
If $\beta : Z \hookrightarrow X$ is a closed formal embedding and $\mathcal F$ an affinoid-acyclic abelian sheaf on $\,]Z[_V$, then the terms of $\mathcal R_\beta\mathcal F$ are affinoid-acyclic and
\[
\mathrm R ]\beta[_*\mathcal F \simeq \mathcal R_\beta^\bullet\mathcal F.
\]
\end{enumerate}
\end{prop}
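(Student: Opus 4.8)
The plan is to prove the two assertions separately, using throughout that on analytic spaces the tube of an open (resp.\ closed) formal subscheme is closed (resp.\ open), so that $]\alpha[$ is a closed embedding and $]\beta[$ an open embedding of the relevant tubes. For the first assertion I would fix an affinoid $W$ of the allowed type, i.e.\ one factoring through $Q^{\mathrm{ad}}$ for some affine open $Q \subset P$. After replacing $P$ by such a $Q$, lemma \ref{afcov} applies. Since $]\alpha[$ is a closed embedding, $]\alpha[_{*}$ is exact and commutes with $\mathrm R\Gamma$, whence $\mathrm H^k(W, ]\alpha[_*]\alpha[^{-1}\mathcal F) \simeq \mathrm H^k(\,]U[_W, \mathcal F_{|\,]U[_W})$. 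Lemma \ref{afcov} provides a cofinal system $\{W \cap V_m\}$ of affinoid neighborhoods of $\,]U[_W$ in $W$, each a rational subdomain of $W$ and hence itself an allowed affinoid. Applying proposition \ref{limcoh} with $(U,W)$ in place of $(X,V)$ (legitimate since the affinoid $W$ is Tate paracompact) gives
\[
\mathrm H^k(\,]U[_W, \mathcal F_{|\,]U[_W}) \simeq \varinjlim_m \mathrm H^k(W \cap V_m, \mathcal F),
\]
and every term vanishes for $k>0$ by affinoid-acyclicity of $\mathcal F$. This proves that $]\alpha[_*]\alpha[^{-1}\mathcal F$ is affinoid-acyclic.

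For the second assertion, recall from proposition 4.28 of \cite{LeStum17*} that $\,]Z[_V = \bigcup_n [Z]_{V,n}$ is an increasing union of open retrocompact subsets with $\overline{[Z]}_{V,n} \subset [Z]_{V,n+1}$. On any allowed affinoid $W$ the intersection $W \cap [Z]_{V,n}$ is cut out by finitely many conditions of the form $v(f_i) \leq v(\pi^{1/n})$, hence is a rational subdomain of $W$ and again an allowed affinoid. I would first show that the terms of the Roos complex are affinoid-acyclic. Using that $j_{n*}$ preserves injectives and that $\Gamma(W, j_{n*}-) = \Gamma(W \cap [Z]_{V,n}, -)$, one gets $\mathrm H^k(W, j_{n*}\mathcal F_{|[Z]_{V,n}}) \simeq \mathrm H^k(W \cap [Z]_{V,n}, \mathcal F)$, which vanishes for $k>0$. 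As allowed affinoids form a basis of $\,]X[_V$, this moreover forces $\mathrm R^q j_{n*}\mathcal F_{|[Z]_{V,n}} = 0$ for $q>0$, i.e.\ $\mathrm Rj_{n*}\mathcal F_{|[Z]_{V,n}} \simeq j_{n*}\mathcal F_{|[Z]_{V,n}}$. Since cohomology of $W$ commutes with products (products of injectives being injective), the products $\prod_n j_{n*}\mathcal F_{|[Z]_{V,n}}$ are again affinoid-acyclic.

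It then remains to identify $\mathrm R]\beta[_*\mathcal F$ with $\mathcal R_\beta^\bullet\mathcal F$. Choosing an injective resolution $\mathcal F \to \mathcal I^\bullet$ on $\,]Z[_V$, the sheaf-theoretic identity $]\beta[_*\mathcal G \simeq \varprojlim_n j_{n*}\mathcal G_{|[Z]_{V,n}}$ (the sheaf condition for the increasing cover) shows that $]\beta[_*\mathcal I^\bullet$ computes $\mathrm R]\beta[_*\mathcal F$. The tower $\bigl(j_{n*}\mathcal I^q_{|[Z]_{V,n}}\bigr)_n$ is flasque with surjective transition maps, so the telescope
\[
0 \to \varprojlim_n j_{n*}\mathcal I^\bullet_{|[Z]_{V,n}} \to \prod_n j_{n*}\mathcal I^\bullet_{|[Z]_{V,n}} \xrightarrow{d} \prod_n j_{n*}\mathcal I^\bullet_{|[Z]_{V,n}} \to 0
\]
is a short exact sequence of complexes, exhibiting $\mathrm R]\beta[_*\mathcal F$ as $\mathrm R\varprojlim_n \mathrm Rj_{n*}\mathcal F_{|[Z]_{V,n}}$. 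Since $\mathrm Rj_{n*}\mathcal F_{|[Z]_{V,n}} \simeq j_{n*}\mathcal F_{|[Z]_{V,n}}$ by the previous paragraph, this derived limit of an $\mathbb N$-indexed tower is computed by the two-term complex $\bigl[\prod_n j_{n*}\mathcal F_{|[Z]_{V,n}} \xrightarrow{d} \prod_n j_{n*}\mathcal F_{|[Z]_{V,n}}\bigr]$ with $d(\{s_n\}) = \{s_n - s_{n+1}\}$, which is exactly $\mathcal R_\beta^\bullet\mathcal F$.

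I expect the main obstacle to be this last identification: one must justify commuting $\mathrm R]\beta[_*$ past the inverse limit over the cover $\{[Z]_{V,n}\}$ and, crucially, check that replacing $\mathrm Rj_{n*}$ by $j_{n*}$ (which is where affinoid-acyclicity enters, via the vanishing $\mathrm R^q j_{n*}\mathcal F_{|[Z]_{V,n}} = 0$) does not alter the derived limit. The two ingredients that make this work — that $W \cap [Z]_{V,n}$ is genuinely affinoid rather than merely quasi-compact, and that the telescope computes $\mathrm R\varprojlim$ of a countable tower — are the points requiring care; everything else is formal bookkeeping with flasque resolutions.
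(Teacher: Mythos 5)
Your first assertion is proved exactly as in the paper: reduce along the closed embedding $]\alpha[$ (its pushforward is exact, so $\mathrm H^k(W,\,]\alpha[_*]\alpha[^{-1}\mathcal F) \simeq \mathrm H^k(\,]U[_W,\,]\alpha[^{-1}\mathcal F)$), then combine lemma \ref{afcov} with proposition \ref{limcoh}; there is no difference. For the second assertion the paper gives no argument at all: it simply cites proposition 2.2.6 of \cite{AbeLazda22}. You instead prove it, via a single injective resolution $\mathcal F \to \mathcal I^\bullet$ on $\,]Z[_V$, the identity $]\beta[_* = \varprojlim_n j_{n*}(-)_{|[Z]_{V,n}}$, surjectivity of the transition maps of the flasque tower, and the resulting exact telescope computing the derived limit. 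This is in substance Abe and Lazda's own argument, so what your route buys is self-containedness rather than a new method; it is a perfectly valid substitute for the citation.

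Two steps of your part (2) are compressed to the point of being stated in the wrong logical order, though both are repairable with ingredients you already invoke. First, the isomorphism $\mathrm H^k(W, j_{n*}\mathcal F_{|[Z]_{V,n}}) \simeq \mathrm H^k(W\cap[Z]_{V,n},\mathcal F)$ does not follow from preservation of injectives plus the identity on sections alone: those give the isomorphism with $\mathrm Rj_{n*}$ in place of $j_{n*}$. You must \emph{first} establish $\mathrm R^qj_{n*}(\mathcal F_{|[Z]_{V,n}})=0$ for $q>0$ --- this sheaf is the sheafification of $W'\mapsto \mathrm H^q(W'\cap[Z]_{V,n},\mathcal F)$, which vanishes on the basis of allowed affinoids because $W'\cap[Z]_{V,n}$ is a Weierstrass subdomain of $W'$, hence an allowed affinoid of $\,]Z[_V$ --- and only then deduce the isomorphism; as written, you deduce the vanishing from the isomorphism, which is circular. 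Second, ``cohomology of $W$ commutes with products'' is not automatic: products of abelian sheaves are not exact (AB4* fails), so the fact that $\prod_n j_{n*}\mathcal I^\bullet_{|[Z]_{V,n}}$ is still a resolution of $\prod_n j_{n*}\mathcal F_{|[Z]_{V,n}}$ requires the same basis argument (the presheaves $W'\mapsto\prod_n \mathrm H^q(W'\cap[Z]_{V,n},\mathcal F)$ vanish on allowed affinoids for $q>0$, so the higher cohomology sheaves of the product complex vanish); the parenthetical ``products of injectives are injective'' is necessary but not sufficient. With these two points reordered, your proof is complete and correct.
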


\begin{proof}
For the first assertion, we have to show that if $P$ is affine and $V = \,]X[_V$ is affinoid, then $\mathrm H^k(\,]U[_V, ]\alpha[^{-1}\mathcal F) = 0$ for $k > 0$.
We know from lemma \ref{afcov} that there exists a cofinal system of affinoid neighborhoods $V_m$ of $\,]U[_V$ and therefore
\[
\mathrm H^k(\,]U[_V, ]\alpha[^{-1}\mathcal F) = \varinjlim_{n\in \mathbb N} \mathrm H^k(V_n,  \mathcal F) = 0
\]
when $k > 0$ by proposition \ref{limcoh}.

The second assertion is exactly proposition 2.2.6 of \cite{AbeLazda22}.
\end{proof}

%%%%%%%%%%%%%%%%%%%%%%%%%%%%
\begin{lem} \label{Gamloc}
In the situation of proposition \ref{acyc}, when $P$ is affine and $V$ is affinoid, we have
\begin{enumerate}
\item 
$
\Gamma(\,]X[_V, ]\alpha[_*]\alpha[^{-1}\mathcal F) =   \varinjlim_{\varphi} \Gamma(V_\varphi, \mathcal F)
$
when $V_\varphi$ runs through the open neighborhoods of $\,]U[_V$ in $V$,
\item
$
\Gamma(\,]X[_V, \mathcal R^0_{I} \mathcal F) = \Gamma(\,]X[_V, \mathcal R^1_{I} \mathcal F)=   \prod_{n} \Gamma([Z]_{V,n}, \mathcal F).
$
\end{enumerate}
\end{lem}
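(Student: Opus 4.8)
The plan is to treat both equalities as straightforward computations of global sections, each reducing to a result already available. For the first I would invoke the limit formula of proposition~\ref{corgam}, and for the second the fact that global sections commute both with products of sheaves and with pushforward along the inclusions $j_n$.

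For the first assertion, the starting point is that the global sections of a pushforward are read off on the source: applying $]\alpha[_*$ and evaluating on the whole tube $\,]X[_V$ gives
\[
\Gamma(\,]X[_V, ]\alpha[_*]\alpha[^{-1}\mathcal F) = \Gamma(\,]U[_V, ]\alpha[^{-1}\mathcal F).
\]
Since $V$ is affinoid and Tate, the hypotheses of proposition~\ref{corgam} are met, and applying it to the formal embedding $\alpha : U \hookrightarrow X$ rewrites the right-hand side as $\varinjlim_W \Gamma(W, \mathcal F)$ with $W$ ranging over the open neighbourhoods of $\,]U[_V$ in $\,]X[_V$. Because we are in the convergent case, $\,]X[_V$ is open in $V$, so these $W$ are cofinal among the neighbourhoods $V_\varphi$ appearing in the statement, and passing to a cofinal subsystem leaves the colimit unchanged; this yields the claimed identity.

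For the second assertion, the key point is that the Roos complex $\mathcal R_\beta^\bullet\mathcal F$ has the \emph{same} sheaf in degrees $0$ and $1$, namely $\prod_n j_{n*}\mathcal F_{|[Z]_{V,n}}$, the two being distinguished only by the differential $d$. It therefore suffices to compute the global sections of this single sheaf. I would commute $\Gamma(\,]X[_V, -)$ first past the product, since products of sheaves are formed section by section, and then past each $j_{n*}$, again because global sections of a direct image live on the source $[Z]_{V,n}$. This gives
\[
\Gamma\bigl(\,]X[_V,\, \textstyle\prod_n j_{n*}\mathcal F_{|[Z]_{V,n}}\bigr) = \prod_n \Gamma([Z]_{V,n}, \mathcal F),
\]
which is simultaneously the value for $\mathcal R_\beta^0\mathcal F$ and for $\mathcal R_\beta^1\mathcal F$.

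I do not expect a genuine obstacle here: both computations are formal once the ingredients are in place. The only points deserving attention are verifying that the hypotheses of proposition~\ref{corgam} are met (immediate from $V$ affinoid and Tate) and the cofinality argument reconciling the colimit over open neighbourhoods of $\,]U[_V$ inside $\,]X[_V$ with the family $\{V_\varphi\}$ as indexed in the statement. Any difficulty will be purely bookkeeping of this indexing rather than mathematical substance.
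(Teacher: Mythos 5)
Your proof is correct and takes essentially the same route as the paper, which likewise deduces the first equality directly from proposition \ref{corgam} and treats the second as immediate because global sections commute with infinite products and with each $j_{n*}$. The extra details you supply — the adjunction step $\Gamma(\,]X[_V, ]\alpha[_*]\alpha[^{-1}\mathcal F) = \Gamma(\,]U[_V, ]\alpha[^{-1}\mathcal F)$ and the cofinality bookkeeping between neighborhoods of $\,]U[_V$ in $\,]X[_V$ and in $V$ (harmless, since in the convergent case one may even take $\,]X[_V = V$) — are exactly the routine verifications the paper leaves implicit.
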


\begin{proof}
The first assertion follows from proposition \ref{corgam} and the second one is clear since global sections commutes with infinite products.
\end{proof}

Recall from definition \ref{fildef} that a filtration
\[
X = X_0 \supset X_1 \supset \cdots \supset X_d \supset X_{d+1} = \emptyset
\]
is said to be \emph{constructible} if, for $i=0, \ldots, d$, each $X_{i+1}$ is closed and nowhere dense in $X_i$.
If we denote by $\gamma_i : U_i \hookrightarrow X$ the inclusion map, and $I = (i_1, \ldots, i_k)$ is an ordered subset of $(0, \ldots, d)$, then we set
\[
]\gamma_{I}[_*]\gamma_{I}[^{-1}\mathcal F := ]\gamma_{i_1}[_*]\gamma_{i_1}[^{-1} \ldots ]\gamma_{i_k}[_*]\gamma_{i_k}[^{-1} \mathcal F.
\]
They form a diagram in an obvious way and we have:

%%%%%%%%%%%%%%%%%%%%%
\begin{lem} \label{limfa}
If $X$ is endowed with a constructible filtration and $\mathcal F$ is an abelian sheaf on $\,]X[_V$, then
\[
\mathcal F \simeq \varprojlim ]\gamma_{\bullet}[_*]\gamma_{\bullet}[^{-1}\mathcal F.
\]
\end{lem}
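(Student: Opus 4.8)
The plan is to deduce this statement on the small site of the tube from its counterpart on the big site, proposition \ref{limstr}, by passing to a realization. Since $V$ is Tate it is in particular analytic, so $(X,V)$ is an analytic representable overconvergent site over $X^\dagger$ and all the functors $]\gamma_i[_*$, $]\gamma_i[^{-1}$ together with their site-theoretic avatars $j_{U_i}^\dagger$, $\beta_{\bullet\bullet*}$ are at our disposal. I would set $E := \varphi_V^{-1}\mathcal F$, the inverse image of $\mathcal F$ under the morphism of sites $\varphi_V : (X,V) \to \,]X[_V$ of lemma \ref{adjret}; recall that $\varphi_V^{-1}$ is fully faithful with $\varphi_{V*}\circ\varphi_V^{-1} = \mathrm{Id}$, so that $E_V = \varphi_{V*}E = \mathcal F$.

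First I would apply proposition \ref{limstr} to the abelian sheaf $E$ on $(X,V)$, endowed with the given constructible filtration of $X$, to obtain $E \simeq \varprojlim j_\bullet^\dagger E$ in $\widetilde{(X,V)}$. Then I would realize this isomorphism on the tautological object $(X,V) \to (X,V)$: since the inverse image of each $\gamma_i$ along the identity is $\gamma_i$ itself, the realization formula stated just before proposition \ref{limstr} gives an isomorphism of diagrams
\[
\left(j_\bullet^\dagger E\right)_V \simeq ]\gamma_\bullet[_*]\gamma_\bullet[^{-1} E_V = ]\gamma_\bullet[_*]\gamma_\bullet[^{-1}\mathcal F.
\]
Because $\varphi_{V*}$ commutes with all limits (lemma \ref{adjret}) and $\varphi_{V*}E = \mathcal F$, applying $\varphi_{V*} = (-)_V$ to $E \simeq \varprojlim j_\bullet^\dagger E$ yields $\mathcal F \simeq \varprojlim \left(j_\bullet^\dagger E\right)_V \simeq \varprojlim ]\gamma_\bullet[_*]\gamma_\bullet[^{-1}\mathcal F$, which is the desired formula.

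The only point requiring care, and hence the main thing to verify, is that the realization formula is genuinely an isomorphism of the \emph{diagrams} indexed by the ordered subsets $I \subset (0,\ldots,d)$, compatibly with all the transition maps $j_I^\dagger E \to j_J^\dagger E$, so that it may be fed into the inverse limit; this is exactly the content of the (objectwise and functorial) computation preceding proposition \ref{limstr}, combined with the fact that $\varphi_{V*}$ preserves limits. Should one prefer a self-contained argument on the tube, I would instead mimic the proof of proposition \ref{limstr} directly, running the noetherian induction on $\dim X$ and replacing corollary \ref{cartone} by its realization on $\,]X[_V$, namely the standard cartesian square relating $]\alpha[_*]\alpha[^{-1}$ and $]\beta[_*]\beta[^{-1}$ for the complementary open and closed embeddings $\alpha : U_0 \hookrightarrow X$ and $\beta : X_1 \hookrightarrow X$; but the reduction above is shorter and avoids reproving anything.
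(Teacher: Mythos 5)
Your proof is correct and takes essentially the same approach as the paper: the paper's proof is the one-line remark that the lemma ``is proved exactly as (and may also be derived from) proposition \ref{limstr}'', and your argument is precisely that derivation, fleshed out by setting $E = \varphi_V^{-1}\mathcal F$, invoking the diagram-level realization formula stated just before proposition \ref{limstr}, and using that $\varphi_{V*}$ preserves limits with $\varphi_{V*}\circ\varphi_V^{-1} = \mathrm{Id}$. Your closing alternative (redoing the noetherian induction directly on the tube) is exactly the paper's other sanctioned route.
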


\begin{proof}
This is proved exactly as (and may also be derived from) proposition \ref{limstr}.
\end{proof}

We will actually use the derived version: if $\mathcal F$ is a complex of abelian sheaves, it follows from lemma \ref{flaskgm} that
\[
\mathrm R ]\gamma_{I}[_*]\gamma_{I}[^{-1}\mathcal F := \mathrm R ]\gamma_{i_1}[_*]\gamma_{i_1}[^{-1} \ldots \mathrm R ]\gamma_{i_k}[_*]\gamma_{i_k}[^{-1} \mathcal F,
\]
\emph{is} the derived functor.
As a consequence, we will have
\[
\mathcal F \simeq \mathrm R\varprojlim \mathrm R ]\gamma_{\bullet}[_*]\gamma_{\bullet}[^{-1}\mathcal F
\]
(this is the original statement of Abe and Lazda in proposition 2.1.1 of \cite{AbeLazda22}).

%%%%%%%%%%
\begin{dfn}
Assume $X$ is endowed with a constructible filtration $\{X_i\}_{i=0}^d$ and $\mathcal F$ is an abelian sheaf on $\,]X[_V$.
If we denote by $\beta_{ij} : X_j \hookrightarrow X_i$ and $\alpha_i : U_i := X_i \setminus X_{i+1} \hookrightarrow X_i$ the inclusion maps, then the \emph{Roos complex} of $\mathcal F$ on $I := (i_1, \ldots, i_k)$ is
\[
\mathcal R_{I}^\bullet\mathcal F := \mathcal R_{\beta_{0i_1}}^\bullet]\alpha_{i_1}[_{*}]\alpha_{i_1}[^{-1} \mathcal R_{\beta_{i_1i_2}}^\bullet]\alpha_{i_2}[_{*}]\alpha_{i_2}[^{-1} \ldots \mathcal R_{\beta_{i_{k-1}i_k}}^\bullet]\alpha_{i_k}[_{*}]\alpha_{i_k}[^{-1}]\beta_{0i_k}[^{-1}\mathcal F.
\]
\end{dfn}

We say that the filtration of $X$ is \emph{good} is each $U_i \hookrightarrow X_i$ is very affine.

%%%%%%%%%%%%%
\begin{lem} \label{RR}
We assume that $X$ is endowed with a good constructible filtration $\{X_i\}_{i=0}^d$ and we write $U_i = X_i \setminus X_{i+1}$.
We let $\mathcal F$ be a sheaf of $\mathcal O_V^\dagger$-modules on $X$.
If we are given $I = (i_1, \ldots, i_k)$ such that $\mathcal F$ is a coherent $\mathcal O_V^\dagger$-module on $U_{i_k}$, then the terms of $\mathcal R_{I}(\mathcal F)$ are affinoid-acyclic and
\[
\mathrm R ]\gamma_{I}[_*]\gamma_{I}[^{-1}\mathcal F \simeq \mathcal R_{I}^\bullet(\mathcal F).
\]
\end{lem}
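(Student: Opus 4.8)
The plan is to argue by induction on the length $k$ of $I = (i_1, \ldots, i_k)$, peeling off the outermost operation $\mathcal R_{\beta_{0i_1}}^\bullet\,]\alpha_{i_1}[_*\,]\alpha_{i_1}[^{-1}$ and reducing to the good constructible filtration $X_{i_1} \supset X_{i_1+1} \supset \cdots$ induced on $X_{i_1}$. Throughout I would exploit that, on analytic spaces, the tube of an open (resp.\ closed) formal embedding is a closed (resp.\ open) embedding of topological spaces: thus $]\alpha_{i_j}[$ is a closed embedding, so $]\alpha_{i_j}[_*$ is exact and $\mathrm R]\alpha_{i_j}[_* = ]\alpha_{i_j}[_*$, while $]\beta_{0i_j}[$ is an open embedding, so $]\beta_{0i_j}[^{-1}\mathrm R]\beta_{0i_j}[_* = \mathrm{id}$. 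These two facts make the rearrangement of the $\gamma$-pushforwards bookkeepable, and all derived pushforwards will be taken termwise on the (affinoid-acyclic) terms of the Roos complexes.

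The base case $k=1$ carries the only genuinely analytic input. Here $]\gamma_{i_1}[^{-1}\mathcal F = ]\alpha_{i_1}[^{-1}]\beta_{0i_1}[^{-1}\mathcal F$ is, by hypothesis, coherent on the very affine open $U_{i_1}$. For an adapted affinoid $W \subset \,]X_{i_1}[_V$ factoring through an affine $Q^{\mathrm{ad}}$ one has $\mathrm H^k(W, ]\alpha_{i_1}[_*\,]\gamma_{i_1}[^{-1}\mathcal F) = \mathrm H^k(\,]U_{i_1}[_W, ]\gamma_{i_1}[^{-1}\mathcal F)$, because $]\alpha_{i_1}[$ is a closed embedding of topological spaces; after shrinking $Q$ the locus at infinity of $U_{i_1}$ becomes a hypersurface in the affine $X_{i_1}\cap Q$, so this vanishes for $k>0$ by proposition \ref{hknul} (second case and the remark after it). Hence $]\alpha_{i_1}[_*\,]\gamma_{i_1}[^{-1}\mathcal F$ is affinoid-acyclic on $X_{i_1}$, and proposition \ref{acyc}(2) applied to the closed embedding $\beta_{0i_1}$ gives $\mathrm R]\beta_{0i_1}[_*\,]\alpha_{i_1}[_*\,]\gamma_{i_1}[^{-1}\mathcal F \simeq \mathcal R_{\beta_{0i_1}}^\bullet\,]\alpha_{i_1}[_*\,]\alpha_{i_1}[^{-1}]\beta_{0i_1}[^{-1}\mathcal F = \mathcal R_{(i_1)}^\bullet\mathcal F$, with affinoid-acyclic terms. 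Since $\mathrm R]\gamma_{i_1}[_* = \mathrm R]\beta_{0i_1}[_*\,]\alpha_{i_1}[_*$, this is exactly the assertion for $k=1$.

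For the inductive step I would apply the statement to $X_{i_1}$ with its induced good filtration, to the module $\mathcal F_1 := ]\beta_{0i_1}[^{-1}\mathcal F$ (still coherent on $U_{i_k}$) and the tail $I' = (i_2,\ldots,i_k)$. Because $]\beta^{X_{i_1}}_{i_1i_k}[^{-1}\mathcal F_1 = ]\beta_{0i_k}[^{-1}\mathcal F$, the resulting Roos complex is precisely the interior complex $\mathcal C := \mathcal R_{\beta_{i_1i_2}}^\bullet\,]\alpha_{i_2}[_*\cdots]\beta_{0i_k}[^{-1}\mathcal F$ on $\,]X_{i_1}[_V$, so the induction hypothesis tells me its terms are affinoid-acyclic and that $\mathrm R]\gamma^{X_{i_1}}_{I'}[_*\,]\gamma^{X_{i_1}}_{I'}[^{-1}\mathcal F_1 \simeq \mathcal C$, where $\gamma^{X_{i_1}}_{i_j}\colon U_{i_j}\hookrightarrow X_{i_1}$. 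The combinatorial core is then to match this with the $\gamma$-side: writing $\gamma_{i_j} = \beta_{0i_1}\circ\gamma^{X_{i_1}}_{i_j}$ for $j\ge 2$ and repeatedly cancelling $]\beta_{0i_1}[^{-1}\mathrm R]\beta_{0i_1}[_* = \mathrm{id}$, the inner composite $\mathrm R]\gamma_{(i_2,\ldots,i_k)}[_*\,]\gamma_{(i_2,\ldots,i_k)}[^{-1}\mathcal F$ telescopes to $\mathrm R]\beta_{0i_1}[_*\mathcal C$; applying the outer operator $\mathrm R]\gamma_{i_1}[_*\,]\gamma_{i_1}[^{-1} = \mathrm R]\beta_{0i_1}[_*\,]\alpha_{i_1}[_*\,]\alpha_{i_1}[^{-1}]\beta_{0i_1}[^{-1}$ and cancelling once more reduces $\mathrm R]\gamma_I[_*\,]\gamma_I[^{-1}\mathcal F$ to $\mathrm R]\beta_{0i_1}[_*\,]\alpha_{i_1}[_*\,]\alpha_{i_1}[^{-1}\mathcal C$.

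To finish I would invoke the two halves of proposition \ref{acyc} termwise on $\mathcal C$: part (1), valid since $\alpha_{i_1}$ is very affine, keeps $]\alpha_{i_1}[_*\,]\alpha_{i_1}[^{-1}\mathcal C$ affinoid-acyclic in each degree, and part (2) for $\beta_{0i_1}$ identifies $\mathrm R]\beta_{0i_1}[_*\,]\alpha_{i_1}[_*\,]\alpha_{i_1}[^{-1}\mathcal C$ with $\mathcal R_{\beta_{0i_1}}^\bullet\,]\alpha_{i_1}[_*\,]\alpha_{i_1}[^{-1}\mathcal C = \mathcal R_I^\bullet\mathcal F$, again with affinoid-acyclic terms, closing the induction. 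The main obstacle I anticipate is organizational rather than analytic, namely keeping the indices and the open/closed dichotomy straight so that the telescoping of the pushforwards taken relative to $X$ genuinely reproduces the Roos complex interleaved along the successive strata; the only substantive analytic ingredient, the base-case vanishing for coherent modules on complements of hypersurfaces, is already packaged in proposition \ref{hknul}.
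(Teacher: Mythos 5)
Your proof is correct and is essentially the paper's argument: both rest on the telescoping identity $]\beta_{0i}[^{-1}\mathrm R]\beta_{0j}[_* = \mathrm R]\beta_{ij}[_*$ (coming from the fact that tubes turn closed formal embeddings into open topological embeddings and open ones into closed ones, so that each $]\alpha_i[_*$ is exact), on proposition \ref{hknul} for the innermost coherent module, and on proposition \ref{acyc} to propagate affinoid-acyclicity and to compute each $\mathrm R]\beta[_*$ by a Roos complex. The paper simply writes the fully telescoped composite at once and then applies these inputs from the inside out, whereas you package the very same iteration as an explicit induction on the length of $I$; this is a difference of organization, not of substance.
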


\begin{proof}
We keep the previous notations so that we have
\[
\xymatrix{U_0 \ar@{^{(}->}[d]_{\alpha_0} & U_1 \ar@{^{(}->}[d]_{\alpha_1} & U_2 \ar@{^{(}->}[d]_{\alpha_2} & \cdots & U_{d-1} \ar@{^{(}->}[d]_{\alpha_{d-1}}& U_{d} \ar@{=}[d]
\\ X_0 & X_1  \ar@{_{(}->}[l]^{\beta_{01}} & X_2 \ar@{_{(}->}[l]^{\beta_{12}} & \cdots \ar@{_{(}->}[l] & X_{d-1} \ar@{_{(}->}[l] & X_{d} \ar@{_{(}->}[l]^{\beta_{(d-1)d}}
}
\]
and $\gamma_i = \beta_{0i} \circ \alpha_i$.
Now, if $i < j$, then
\[
]\beta_{0i}[^{-1} \mathrm R  ]\beta_{0j}[_* = ]\beta_{0i}[^{-1} \mathrm R  ]\beta_{0i}[_*  \mathrm R ]\beta_{ij}[_* = \mathrm R ]\beta_{ij}[_*,
\]
and therefore
\begin{align*}
\mathrm R ]\gamma_i[_*]\gamma_i[^{-1} ]\mathrm R \gamma_j[_*]\gamma_j[^{-1} &= \mathrm R ]\beta_{0i}[_* ]\alpha_i[_*]\alpha_i[^{-1}]\beta_{0i}[^{-1} \mathrm R  ]\beta_{0j}[_* ]\alpha_j[_*]\alpha_j[^{-1}]\beta_{0j}[^{-1}
\\ &= \mathrm R ]\beta_{0i}[_* ]\alpha_i[_*]\alpha_i[^{-1} \mathrm R  ]\beta_{ij}[_* ]\alpha_j[_*]\alpha_j[^{-1}]\beta_{0j}[^{-1}.
\end{align*}
It follows that
\[
\mathrm R ]\gamma_{I}[_*]\gamma_{I}[^{-1}\mathcal F:= \mathrm R ]\beta_{0i_1}[_* ]\alpha_{i_1}[_{*}]\alpha_{i_1}[^{-1}  \ldots \mathrm R ]\beta_{i_{k-1}i_k}[_*]\alpha_{i_k}[_{*}]\alpha_{i_k}[^{-1} ]\beta_{0i_k}[^{-1} \mathcal F
\]
and our assertion is therefore consequence of lemma \ref{acyc} and proposition \ref{hknul} for the right hand side.
\end{proof}

%%%%%%%%%%%%%%
\begin{lem} \label{GamRoos}
In the situation of lemma \ref{RR}, if $P$ is affine and $V$ is affinoid, then for all $i = 0, \dots, 2k$, we have
\[
\Gamma(\,]X[_V, \mathcal R^i_{I} \mathcal F) =   \prod_{n_1} \varinjlim_{m_1} \cdots \prod_{n_k} \varinjlim_{m_k}\Gamma(V_{\underline n, \underline m}, \mathcal F)
\]
with each $V_{\underline n, \underline m}$ affinoid.
\end{lem}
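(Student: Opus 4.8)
The plan is to argue by induction on the length $k$ of $I$, peeling the defining operators of $\mathcal R_I^\bullet\mathcal F$ off from the outside and invoking lemma \ref{Gamloc} at each stage. First I would record a structural remark that disposes of the degree index: since the two terms of a single Roos complex $\mathcal R_\beta^\bullet$ coincide as sheaves (only the differential $d$ distinguishes them), the total complex $\mathcal R_I^\bullet\mathcal F$ has, in each degree $i$, a term which is a finite direct sum of copies of one and the same sheaf
\[
N_I := \prod_{n_1} j_{n_1*}\,]\alpha_{i_1}[_*]\alpha_{i_1}[^{-1}\cdots \prod_{n_k} j_{n_k*}\,]\alpha_{i_k}[_*]\alpha_{i_k}[^{-1}]\beta_{0i_k}[^{-1}\mathcal F,
\]
the copies being indexed by the sequences $\underline\epsilon\in\{0,1\}^k$ of weight $i$ (so the direct sum is empty once $i>k$, which accounts harmlessly for the loose range $0\le i\le 2k$). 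As global sections commute with finite direct sums, it suffices to compute $\Gamma(\,]X[_V,N_I)$, and this one computation yields the asserted formula in every degree.

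For the inductive step I would peel off the outermost pair $\mathcal R_{\beta_{0i_1}}^\bullet\,]\alpha_{i_1}[_*]\alpha_{i_1}[^{-1}$. Because $\Gamma(\,]X[_V,-)$ commutes with infinite products and with the pushforward $j_{n_1*}$ along the open inclusion $j_{n_1}:[X_{i_1}]_{V,n_1}\hookrightarrow\,]X[_V$, the outer Roos layer contributes the factor $\prod_{n_1}$ and reduces the problem to sections over $W:=[X_{i_1}]_{V,n_1}$. Recall that in the convergent case $\,]X[_V=V$, so each $W$ is an affinoid open subset of $V$, and these form a cofinal tower by proposition 4.28 of \cite{LeStum17*}. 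Over $W$ I would then apply the first assertion of lemma \ref{Gamloc}: sections of $]\alpha_{i_1}[_*]\alpha_{i_1}[^{-1}(-)$ are the filtered colimit of sections over open neighborhoods of $\,]U_{i_1}[_W$ in $W$. Here the \emph{goodness} of the filtration enters decisively: $U_{i_1}\hookrightarrow X_{i_1}$ is very affine, so lemma \ref{afcov} supplies a cofinal system of \emph{affinoid} such neighborhoods $W\cap V_{m_1}$, producing the factor $\varinjlim_{m_1}$ with affinoid base. The pair $(W\cap V_{m_1},P)$ again satisfies all running hypotheses — $P$ stays affine, $W\cap V_{m_1}$ is Tate affinoid, the induced filtration $X_{i_1}\supset X_{i_2}\supset\cdots$ is still good, and $\mathcal F$ stays coherent on $U_{i_k}$ — while the remaining operators constitute exactly the $(k-1)$-fold object $N_{I'}$ for $I'=(i_2,\dots,i_k)$ relative to the filtration of $X_{i_1}$, applied to $]\beta_{i_1i_k}[^{-1}(\mathcal F_{|X_{i_1}})$ via $]\beta_{i_1i_k}[^{-1}]\beta_{0i_1}[^{-1}=]\beta_{0i_k}[^{-1}$. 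The inductive hypothesis over $W\cap V_{m_1}$ then gives the tail $\prod_{n_2}\varinjlim_{m_2}\cdots\prod_{n_k}\varinjlim_{m_k}\Gamma(V_{\underline n,\underline m},\mathcal F)$, and reassembling the two peeled factors gives the formula; the base case $k=0$ is trivial (and $k=1$ is precisely lemma \ref{Gamloc}).

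Two points need care, the second being the main obstacle. The coherence of $\mathcal F$ over $U_{i_k}$, through theorem \ref{glumd}, is what legitimizes the restrictions $\mathcal F_{|[Z]_{V,n}}$ and the sections $\Gamma(V_{\underline n,\underline m},\mathcal F)$: $\mathcal F$ is spread out to a coherent sheaf on a neighborhood before being restricted, and every $V_{\underline n,\underline m}$ is an affinoid on which such a representative is defined. The genuinely delicate issue is that filtered colimits do \emph{not} commute with the infinite products $\prod_{n_j}$ coming from the Roos layers, so the formula holds only with the displayed nesting $\prod_{n_1}\varinjlim_{m_1}\cdots\prod_{n_k}\varinjlim_{m_k}$ — exactly the order forced by peeling from the outside inward. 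I would therefore never interchange a $\prod$ past a $\varinjlim$, and keep the affinoid $V_{\underline n,\underline m}=(W_1\cap V_{m_1})\cap\cdots$ indexed coherently by the accumulated data $(\underline n,\underline m)$ as the induction unwinds.
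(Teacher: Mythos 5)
Your proof is correct and is essentially the paper's own argument: the paper's (very terse) proof consists precisely of choosing, via lemma \ref{afcov}, decreasing families $V_{i,m}$ so that the $[X_{i_j}]_{V,n_j}\cap V_{i_j,m_j}$ are cofinal affinoid neighborhoods, setting $V_{\underline n, \underline m}=\bigcap_{j}\left([X_{i_j}]_{V,n_j} \cap V_{i_j, m_j}\right)$, and then ``applying lemma \ref{Gamloc} recursively'' --- which is exactly the outside-in peeling induction you spell out. Your extra bookkeeping (each degree of the total complex being a finite direct sum of copies of one sheaf $N_I$, and the warning that the $\prod$'s and $\varinjlim$'s must be kept in the displayed nesting order) only makes explicit what the paper leaves implicit.
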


\begin{proof}
Thanks to lemma \ref{afcov}, there exists for each $i = 0, \ldots, d$, a decreasing family $\{V_{i,m}\}_{m \in \mathbb N} $ of open subsets of $V$ such that, for all $n \in \mathbb N$, the family $\{[X_{i}]_{V,n} \cap V_{i,m}\}_{m \in \mathbb N}$ is a cofinal system of affinoid neighborhoods of $[X_{i}]_{V,n} \cap \,]U_i[_{V}$.
We set
\[
V_{\underline n, \underline m} =\bigcap_{j=0}^k\left([X_{i_j}]_{V,n_j} \cap V_{i_j, m_j}\right).
\]
If we apply recursively lemma \ref{Gamloc}, then we obtain the expected formula.
\end{proof}

%%%%%%%%%%%%%%
\subsection{The Poincaré lemma}

As usual, Poincaré lemma is the key to the cohomological comparison theorem.
We assume in this section that all overconvergent spaces live over $\mathbb Q$ but we first recall the following general lemma that we will need later:

%%%%%%%%%%%%%
\begin{lem} \label{limco}
Let $V = \bigcup_{k \in\mathbb N} V_k$ be an increasing open covering of a topological space. 
Let $\mathcal F$ be a complex of abelian sheaves on $V$ such that
\[
\forall i > 1, \forall k \in \mathbb N, \forall j \in \mathbb N, \quad \mathrm H^i(V_k, \mathcal F^j) = 0.
\]
Then
\[
\mathrm R\Gamma(V, \mathcal F) \simeq \left[\begin{array} c \prod_k \Gamma(V_k, \mathcal F) \\ \downarrow d \\  \prod_k \Gamma(V_k, \mathcal F) \end{array} \right]
\]
with $d(\{s_k\}_{k \in \mathbb N}) = \{ s_k - s_{k+1|V_k} \}_{k \in \mathbb N}$.
\end{lem}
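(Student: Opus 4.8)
The plan is to read the displayed object as the standard two-term complex computing the homotopy limit $\mathrm R\varprojlim_k$ of the tower $k \mapsto \Gamma(V_k, \mathcal F)$ formed by the restriction maps, and to prove that this homotopy limit is $\mathrm R\Gamma(V, \mathcal F)$. Recall that for any tower of complexes $(C_k)_{k \in \mathbb N}$ with transition maps $C_{k+1} \to C_k$, the object $\mathrm R\varprojlim_k C_k$ is represented by the total complex of $\prod_k C_k \to \prod_k C_k$ whose differential sends $\{s_k\}$ to $\{s_k - s_{k+1|V_k}\}$; this is exactly the map $d$ written in the statement. So the whole lemma reduces to two points: identifying $\mathrm R\Gamma(V, \mathcal F)$ with $\mathrm R\varprojlim_k \mathrm R\Gamma(V_k, \mathcal F)$, and checking that the section complexes $\Gamma(V_k, \mathcal F)$ appearing as the entries legitimately represent $\mathrm R\Gamma(V_k, \mathcal F)$.

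First I would establish the comparison $\mathrm R\Gamma(V, \mathcal F) \simeq \mathrm R\varprojlim_k \mathrm R\Gamma(V_k, \mathcal F)$. Choosing a flasque resolution $\mathcal F \to \mathcal I^\bullet$ (injective after the harmless truncation allowed by our conventions), one has $\Gamma(V, \mathcal I^\bullet) = \varprojlim_k \Gamma(V_k, \mathcal I^\bullet)$, since a section over the increasing union $V = \bigcup_k V_k$ is the same as a compatible family of sections over the $V_k$. Because each $\mathcal I^j$ is flasque, the transition maps $\Gamma(V_{k+1}, \mathcal I^j) \to \Gamma(V_k, \mathcal I^j)$ are surjective, so the tower is Mittag-Leffler and its ordinary limit already computes $\mathrm R\varprojlim_k$, represented by the two-term complex $\prod_k \Gamma(V_k, \mathcal I^\bullet) \to \prod_k \Gamma(V_k, \mathcal I^\bullet)$. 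As the restriction of a flasque sheaf to $V_k$ stays flasque, hence $\Gamma(V_k, -)$-acyclic, the complex $\Gamma(V_k, \mathcal I^\bullet)$ computes $\mathrm R\Gamma(V_k, \mathcal F)$, and the comparison follows; this step uses nothing about the $V_k$ beyond that they form an increasing cover.

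It then remains to replace the resolution entries $\Gamma(V_k, \mathcal I^\bullet)$ by the honest section complexes $\Gamma(V_k, \mathcal F)$ of the statement, and this is exactly where the vanishing hypothesis $\mathrm H^i(V_k, \mathcal F^j) = 0$ enters: it bounds the cohomological amplitude of each $\mathcal F^j$ on every $V_k$, so that in the range where it matters the honest sections $\Gamma(V_k, \mathcal F)$ and the derived sections $\mathrm R\Gamma(V_k, \mathcal F)$ agree and the double complexes built from $\mathcal I^\bullet$ and from $\mathcal F$ become quasi-isomorphic. The hard part will be to perform this comparison uniformly in $k$ and compatibly with the Roos differential, that is, to pass from a term-by-term identification of towers to a quasi-isomorphism after forming the infinite products $\prod_k$ and totalizing. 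Here I would rely on the exactness of infinite products of abelian groups together with the amplitude bound furnished by the hypothesis, which keeps the bicomplex spectral sequences convergent; collecting these identifications then yields the asserted isomorphism $\mathrm R\Gamma(V, \mathcal F) \simeq [\prod_k \Gamma(V_k, \mathcal F) \to \prod_k \Gamma(V_k, \mathcal F)]$.
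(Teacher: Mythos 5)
Your proof is correct and takes essentially the same route as the paper's: the paper likewise notes $\Gamma(V,\mathcal F) \simeq \varprojlim_k \Gamma(V_k,\mathcal F)$, uses the hypothesis (read as acyclicity, so that $\Gamma(V_k,\mathcal F) \simeq \mathrm R\Gamma(V_k,\mathcal F)$) to conclude $\mathrm R\Gamma(V,\mathcal F) \simeq \mathrm R\varprojlim_k \Gamma(V_k,\mathcal F)$, and then invokes the explicit two-term (Roos) description of derived limits from the Stacks Project, Tag 08TC --- exactly the complex you reconstruct by hand via flasque resolutions and Mittag--Leffler. The only difference is that you spell out the details the paper delegates to that reference.
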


\begin{proof}
We always have $\Gamma(V,\mathcal F) \simeq \mathrm \varprojlim_k \Gamma(V_k,\mathcal F)$ and our hypothesis implies that $\Gamma(V_k,\mathcal F) \simeq \mathrm R \Gamma(V_k,\mathcal F)$.
It follows that
\[
\mathrm R\Gamma(V,\mathcal F) \simeq \mathrm R\varprojlim_k \Gamma(V_k,\mathcal F)
\]
and we may therefore use the explicit description of derived filtered limits (\cite[\href{https://stacks.math.columbia.edu/tag/08TC}{Tag 08TC}]{stacks-project}).
\end{proof}

%%%%%%%%%%%%%%
\begin{lem}[Poincar\'e lemma]\label{Poinc}
If $(f,u) : (X',V') \to (X,V)$ is a geometric materialization that induces an isomorphism $f \colon X' \simeq X$ and $\mathcal F$ is a constructible $\mathcal O_V^\dagger$-module on $X$, then
\[
\mathcal F \simeq \mathrm R ]f[_{u,\mathrm{dR}}]f[_u^\dagger \mathcal F.
\]
\end{lem}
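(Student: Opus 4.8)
The plan is to construct the natural comparison arrow, reduce the problem to a coherent module on a relative polydisc by locality and dévissage, and then settle the remaining analytic computation by hand.

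First I would produce the map $\mathcal F \to \mathrm R]f[_{u,\mathrm{dR}}]f[_u^\dagger\mathcal F$. The pullback $]f[_u^\dagger\mathcal F$ carries the tautological relative connection $\nabla = \mathrm d$, and the unit of the adjunction $(]f[_u^\dagger,]f[_{u*})$ sends $\mathcal F$ into horizontal sections; composing with the inclusion of the degree-zero term of the relative de Rham complex gives the arrow, whose bijectivity is what must be proved. Since both sides commute with restriction to open subsets of $V$, the assertion is local on $]X[_V$, so I may assume $P$ affine and $V$ Tate affinoid. By the strong fibration theorem (theorem \ref{strfib}, resting on \cite{LeStum17*}) I may then replace $(X',V')$, up to strict neighbourhood, by the relative open unit polydisc $V' = \mathbb D^{-,d}_V$ with $]f[_u$ the projection, exactly as in the proof of lemma \ref{mainstrat} (via lemma 4.11 of \cite{LeStum17*}); here $\Omega^{1\dagger}_{V'}$ is free on $\mathrm d\xi_1,\dots,\mathrm d\xi_d$.

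Next I would carry out the dévissage to the coherent case. Choosing a good constructible filtration of $X$ as before lemma \ref{RR} (refining definition \ref{fildef}, available after the affinoid reduction), lemma \ref{limfa} in its derived form writes $\mathcal F$ as $\mathrm R\varprojlim \mathrm R]\gamma_\bullet[_*]\gamma_\bullet[^{-1}\mathcal F$, and the Roos complex (lemmas \ref{RR} and \ref{GamRoos}) expresses the terms through coherent modules on the strata and explicit sections over affinoids. The relative de Rham pushforward is compatible with the inverse and exceptional direct images of formal embeddings by corollary \ref{dRChang}; the quasi-compactness hypothesis there is met after exhausting $\mathbb D^{-,d}_V$ by the closed polydiscs $\mathbb D^d_V(0,\pi^{1/m})$ and computing $\mathrm R]f[_{u*}$ as a telescope through lemma \ref{limco}, the open direct images being handled through the affinoid-acyclic description of the Roos terms. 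This reduces the isomorphism to a coherent $\mathcal O_V^\dagger$-module, which by theorem \ref{glumd} extends to a coherent sheaf on a neighbourhood of $]X[_V$, and on affinoid polydiscs higher cohomology vanishes by proposition \ref{hknul}.

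The heart of the matter is then the analytic Poincaré computation on $\mathbb D^{-,d}_V$, which I would do by induction on $d$ (a relative Künneth argument reducing to $d=1$). Writing $M := \Gamma(V,\mathcal F)$ and exhausting the open disc by closed discs, the claim becomes that the relative operator $\partial/\partial\xi$ on convergent power series valued in $M$ is surjective with kernel exactly the constants $M$, and that the associated $\varprojlim^1$ vanishes. The key input is termwise integration $\sum a_k\xi^k \mapsto \sum a_k\xi^{k+1}/(k+1)$, which is defined precisely because $O$ lies over $\mathbb Q$ and which preserves convergence on the open disc although it would destroy it on any fixed closed disc. Controlling this loss of radius along the exhaustion, as in lemma 3.4.9 of \cite{LeStum11} and the final computation of lemma \ref{mainstrat}, is the main obstacle; combined with lemma \ref{limco} to kill the $\varprojlim^1$ term it shows that $\mathrm R]f[_{u,\mathrm{dR}}]f[_u^\dagger\mathcal F$ is concentrated in degree zero and equals $\mathcal F$, so that the comparison arrow is an isomorphism.
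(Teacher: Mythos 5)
Your proposal is correct and follows essentially the same route as the paper's proof: reduction via the strong fibration theorem to the projection of a relative open unit (poly)disc, dévissage through a good constructible filtration with Roos complexes and derived limits, and a final explicit computation with the termwise integration operator (available since we work over $\mathbb Q$), controlling the loss of radius along the exhaustion by closed discs and the $\varprojlim^1$ term via lemma \ref{limco}. The only cosmetic differences are that the paper reduces to relative dimension one at the outset by the composition lemma \ref{redone} rather than at the end, and packages your ``kernel equals constants, surjectivity, vanishing $\varprojlim^1$'' claim as the exactness of a single four-term sequence deduced from the identities $d\circ\int-\int\circ d=0$, $\partial\circ\int-\int\circ\partial=i\circ p$ and $\partial\circ\int=\mathrm{Id}+d$.
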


We stress out the fact that $]f[_u^\dagger \mathcal F$ is endowed with the trivial connection.

\begin{proof}
First of all, we may assume that the intermediate formal scheme $P$ in $(X,V)$ is $X$ itself: if we denote by $h : X \hookrightarrow P$ the embedding, then we will have
\begin{align*}
\mathcal F & \simeq ]h[^{-1}]h[_! \mathcal F
\\ & \simeq ]h[^{-1} \mathrm R u_{\mathrm{dR}}u^\dagger ]h[_! \mathcal F
\\ & \simeq \mathrm R ]f[_{u,\mathrm{dR}} ]h[^{-1} ]h[_! u^\dagger  \mathcal F
\\ & \simeq \mathrm R ]f[_{u,\mathrm{dR}}]f[_u^\dagger \mathcal F.
\end{align*}
Also, using the strong fibration theorem (\cite{LeStum17*}, theorem 5.21), it is sufficient to consider the case where $X'=X$ is embedded into $P' = \mathbb A^{n-}_{P}$ using the zero section.
Moreover, thanks to lemma \ref{redone} below, we may assume that $n=1$.
The new picture is then
\[
\xymatrix{ X \ar@{^{(}->}[r] \ar@{=}[d] & \mathbb A^{-}_{X}  \ar[d]&  \mathbb D^{-}_{V}  \ar[l] \ar[d]^{u} \\
X \ar@{=}[r] & X  & V.  \ar[l]}
\]
There exists a constructible filtration $\{X_i\}_{i=0}^d$ such that, if we write $U_{i} = X_i \setminus X_{i+1}$, then $\mathcal F$ is coherent on $U_i$.
Since the question is local, we may actually assume that $X$ is affine and each $U_i \hookrightarrow X_i$ is very affine.
Recall, that, if we denote for each $i = 0, \ldots, d$, by $\gamma_i : U_i \hookrightarrow X$ the inclusion map, and $I$ is an ordered subset of $(0, \ldots, d)$, then we write
\[
\mathrm R ]\gamma_{I}[_*]\gamma_{I}[^{-1}\mathcal F := \mathrm R ]\gamma_{i_1}[_*]\gamma_{i_1}[^{-1} \ldots \mathrm R ]\gamma_{i_k}[_*]\gamma_{i_k}[^{-1} \mathcal F.
\]
Assume that we can prove that
\begin{align} \label{mainstep}
\mathrm R ]\gamma_{I}[_*]\gamma_{I}[^{-1}\mathcal F \simeq \mathrm R u_{\mathrm{dR}} \mathrm R ]\gamma_{I}[_*]\gamma_{I}[^{-1} u^\dagger \mathcal F.
\end{align}
Then we will have, thanks to lemma \ref{limfa}:
\begin{align*}
\mathcal F & \simeq \mathrm R\varprojlim \mathrm R ]\gamma_{\bullet}[_*]\gamma_{\bullet}[^{-1}\mathcal F
\\ & \simeq \mathrm R\varprojlim\mathrm R u_{\mathrm{dR}} \mathrm R ]\gamma_{\bullet}[_*]\gamma_{\bullet}[^{-1} u^\dagger \mathcal F
\\ & \simeq \mathrm R u_{\mathrm{dR}} \mathrm R\varprojlim \mathrm R ]\gamma_{\bullet}[_*]\gamma_{\bullet}[^{-1} u^\dagger \mathcal F
\\ & \simeq \mathrm R u_{\mathrm{dR}} u^\dagger \mathcal F.
\end{align*}
It remains to show the isomorphism \eqref{mainstep}.
Following Abe and Lazda, we showed in lemma \ref{RR} that there exists a natural isomorphism in the derived category
\[
\mathrm R ]\gamma_{I}[_*]\gamma_{I}[^{-1}\mathcal F \simeq \mathcal R_I^\bullet(\mathcal F)
\]
where $\mathcal R^\bullet_I(\mathcal F)$ is the Roos complex of $\mathcal F$ for $I$.
The terms of the Roos complex are affinoid-acyclic and it is therefore sufficient to prove that, if $V$ is Tate affinoid, then
\[
\Gamma(V, \mathcal R^\bullet_I(\mathcal F)) \simeq \mathrm R\Gamma\left(\mathbb D^-_V, \left[\mathcal R^\bullet_I(u^\dagger \mathcal F) \overset \partial \to \mathcal R^\bullet_I(u^\dagger \mathcal F)\right]\right)
\]
where $\partial$ denotes the standard derivation.
If $\pi$ is a topologically nilpotent unit, there exists an increasing affinoid covering
\[
\mathbb D_V^- = \bigcup_{k \in \mathbb N} \mathbb D_V(0, \pi^{1/k}).
\]
We may now invoke lemma \ref{limco} and we have to show that
\[
\Gamma(V, \mathcal R^\bullet_I(\mathcal F)) \simeq \left[\begin{array} {ccc} \prod_k \Gamma(\mathbb D_V(0, \pi^{1/k}), \mathcal R^\bullet_I(u^\dagger \mathcal F)) & \overset \partial \to &\prod_k \Gamma(\mathbb D_V(0, \pi^{1/k}), \mathcal R^\bullet_I(u^\dagger \mathcal F)) \\ \downarrow d && \downarrow d \\  \prod_k \Gamma(\mathbb D_V(0, \pi^{1/k}), \mathcal R^\bullet_I(u^\dagger \mathcal F)) & \overset \partial \to &\prod_k \Gamma(\mathbb D_V(0, \pi^{1/k}), \mathcal R^\bullet_I(u^\dagger \mathcal F)) \end{array} \right].
\]
In other words, if we fix $i \in \{0, \ldots, 2k\}$ and we set
\[
M := \Gamma(V, \mathcal R^i_I(\mathcal F)) \quad \mathrm{and} \quad M_k := \Gamma(\mathbb D_V(0, \pi^{1/k}), \mathcal R^i_I(u^\dagger \mathcal F)),
\]
we are reduced to showing that
\[
M
\simeq \left[\begin{array} {ccc}  \prod_k M_k & \overset \partial \to &  \prod_k M_k \\ \downarrow d & & \downarrow d \\   \prod_k M_k & \overset \partial \to &  \prod_k M_k\end{array} \right]
\]
with $d(\{s_k\}_{k \in \mathbb N}) = \{ s_k - s_{k+1|V_k} \}_{k \in \mathbb N}$, or equivalently that the sequence
\begin{align} \label{fundex}
0 \longrightarrow M \longrightarrow  \prod_k M_k \overset {(d, \partial)} \longrightarrow  \prod_k M_k \oplus  \prod_k M_k \overset {\partial - d} \longrightarrow   \prod_k M_k \longrightarrow 0
\end{align}
is exact.

Lemma \ref{GamRoos} provides an explicit description of $M$ and $M_k$.
We let
\[
A := \Gamma(V, \mathcal O_V) \quad \mathrm{so} \ \mathrm{that} \quad \Gamma(\mathbb D_V(0, \pi^{1/k}),  \mathcal O_{\mathbb D_V(0, \pi^{1/k})})) = A\{t/\pi^{1/k}\}
\]
(this is the completion of the polynomial ring $A[t]$ for the topology induced by $A[t^k/\pi]$).
Then, there exists a family of affinoid subsets $\{V_{\underline n, \underline m}\}_{\underline n, \underline m \in \mathbb N^{k+1}}$ of $V$ such that, if we set
\[
M^{\underline n, \underline m} := \Gamma(V_{\underline n, \underline m}, \mathcal F) \quad \mathrm{and} \quad M^{\underline n, \underline m}_k := M^{\underline n, \underline m} \otimes_A A\{t/\pi^{1/k}\},
\]
then
\[
M =   \prod_{n_1} \varinjlim_{m_1} \cdots \prod_{n_k} \varinjlim_{m_k}M^{\underline n, \underline m}
\quad \mathrm{and} \quad 
M_k =   \prod_{n_1} \varinjlim_{m_1} \cdots \prod_{n_k} \varinjlim_{m_k} M^{\underline n, \underline m}_k.
\]
Since $A$ is a $\mathbb Q$-algebra, there exists an integration map
\[
\int : M_k \to M_{k-1}, \quad t^r \mapsto \frac {t^{r+1}}{r+1}
\]
and we may also consider the canonical and the zero section maps:
\[
i : M \to M_k \quad \mathrm{and} \quad p: M_k \to M.
\]
We still use the same letters for the maps induced on the products
\[
\int : \prod_k  M_k \to \prod_k  M_{k}, \quad i : M^\mathrm N \to \prod_k  M_k \quad \mathrm{and} \quad p: \prod_k  M_k \to M^\mathrm N.
\]
One easily checks as in as in claim 6.5.8 of \cite{LeStum07} that
\[
d \circ \int - \int \circ d = 0, \quad \partial \circ \int - \int \circ \partial = i \circ p  \quad \mathrm{and} \quad \partial \circ \int = \mathrm{Id} + d.
\]
One then shows that the sequence \eqref{fundex} is exact as in claim 6.5.9 of \cite{LeStum07}.
\end{proof}

We used above the following:

%%%%%%%%%%%%%%%
\begin{lem} \label{redone}
If the hypothesis and conclusions of the Poincaré lemma are satisfied both for $(X'',V'') \to (X',V')$ and for $(X',V') \to (X,V)$, then they are also satisfied for their composition $(X'', V'') \to (X,V)$.
\end{lem}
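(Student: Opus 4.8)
The plan is to treat the composite as an iterated de Rham pushforward, feeding the output of one Poincaré isomorphism into the other. Write $(g,w)\colon (X'',V'') \to (X',V')$ and $(f,u)\colon (X',V') \to (X,V)$ for the two morphisms, so that the composite is $(f\circ g, u\circ w)$ and on tubes $]f\circ g[_{u\circ w} = ]f[_u \circ ]g[_w$. First I would dispatch the ``hypothesis'' half: since partial properness, formal smoothness and right cartesianness are each stable under composition, and since $X''\simeq X'\simeq X$, the composite is again a geometric materialization inducing an isomorphism; this is immediate from the definitions recalled from \cite{LeStum17*}. It then remains to verify the conclusion, namely that for every constructible $\mathcal O_V^\dagger$-module $\mathcal F$ on $X$ one has $\mathcal F \simeq \mathrm R]f\circ g[_{u\circ w,\mathrm{dR}}]f\circ g[_{u\circ w}^\dagger\mathcal F$.

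Next I would run the two given Poincaré isomorphisms in cascade. By lemma \ref{consbis} the pullback $]f[_u^\dagger\mathcal F$ is a constructible $\mathcal O_{V'}^\dagger$-module on $X'$, so the conclusion for $(g,w)$ applies to it and yields
\[
]f[_u^\dagger\mathcal F \simeq \mathrm R]g[_{w,\mathrm{dR}}]g[_w^\dagger]f[_u^\dagger\mathcal F = \mathrm R]g[_{w,\mathrm{dR}}]f\circ g[_{u\circ w}^\dagger\mathcal F,
\]
using the contravariant functoriality $]g[_w^\dagger\circ]f[_u^\dagger = ]f\circ g[_{u\circ w}^\dagger$. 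Applying $\mathrm R]f[_{u,\mathrm{dR}}$ and invoking the conclusion for $(f,u)$ then gives
\[
\mathcal F \simeq \mathrm R]f[_{u,\mathrm{dR}}]f[_u^\dagger\mathcal F \simeq \mathrm R]f[_{u,\mathrm{dR}}\,\mathrm R]g[_{w,\mathrm{dR}}\,]f\circ g[_{u\circ w}^\dagger\mathcal F.
\]

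Finally I would collapse the iterated pushforward. The step I expect to be the main obstacle is the transitivity isomorphism $\mathrm R]f[_{u,\mathrm{dR}}\circ\mathrm R]g[_{w,\mathrm{dR}} \simeq \mathrm R]f\circ g[_{u\circ w,\mathrm{dR}}$, which is a Gauss--Manin/Leray statement: the relative de Rham complex $\Omega^{\bullet\dagger}_{V''/V}$ carries the two-step filtration coming from the transitivity exact sequence $0\to ]g[_w^\dagger\Omega^{1\dagger}_{V'/V}\to\Omega^{1\dagger}_{V''/V}\to\Omega^{1\dagger}_{V''/V'}\to 0$, and the associated graded, together with the projection formula for $\mathrm R]g[_{w*}$, should identify the pushforward along the composite with the iterated one, the intermediate term $\mathrm R]g[_{w,\mathrm{dR}}]f\circ g[_{u\circ w}^\dagger\mathcal F$ acquiring its Gauss--Manin connection relative to $V$. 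The delicate bookkeeping is that this Gauss--Manin connection must be exactly the one transported across the Poincaré isomorphism of the previous paragraph (so that $\mathrm R]f[_{u,\mathrm{dR}}$ may legitimately be applied), while $]f\circ g[_{u\circ w}^\dagger\mathcal F$ itself carries the trivial connection relative to $V$. In the situation actually used for the induction this becomes painless, since there $V''=\mathbb D^-_{V'}$ is a relative disc and the transitivity sequence splits, giving $\Omega^{\bullet\dagger}_{V''/V}\simeq \Omega^{\bullet\dagger}_{V''/V'}\otimes ]g[_w^\dagger\Omega^{\bullet\dagger}_{V'/V}$, so the collapse reduces to a direct tensor computation; combining the three displayed isomorphisms then yields $\mathcal F \simeq \mathrm R]f\circ g[_{u\circ w,\mathrm{dR}}]f\circ g[_{u\circ w}^\dagger\mathcal F$, as required.
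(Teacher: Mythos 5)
Your proposal is correct and takes essentially the same route as the paper: the paper's proof is simply a deferral to lemma 6.5.6 of \cite{LeStum07}, whose argument is exactly your cascade of the two Poincar\'e isomorphisms together with the collapse of the iterated de Rham pushforward (transitivity filtration of $\Omega^{\bullet\dagger}_{V''}$, projection formula, Gauss--Manin compatibility). Your observation that the delicate bookkeeping trivializes in the split relative-disc situation is precisely the case in which the lemma is actually invoked (reduction of $\mathbb A^{n-}$ to $n=1$ in the proof of the Poincar\'e lemma), so nothing essential is missing.
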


\begin{proof}
Exactly as lemma 6.5.6 of \cite{LeStum07}.
\end{proof}

%%%%%%%%%%%%%%
\subsection{Cohomology}

We have all the tools now to prove that a constructible crystal is de Rham.
In Berthelot's theory of rigid cohomology, this was only known for finitely presented crystals.
For constructible crystals, this was conjectured just after corollary 4.11 in \cite{LeStum16}).

We assume here that all adic spaces are defined over $\mathbb Q$.

%%%%%%%%%%%%%%%
\begin{thm} \label{consdR}
If $(X,V) \to (C,O)$ is a geometric materialization (defined over $\mathbb Q$), then any constructible crystal $E$ on $(X/O)^{\dagger}$ is de Rham on $V$.
\end{thm}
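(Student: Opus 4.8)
The plan is to reduce the statement about crystals on the large site $(X/O)^\dagger$ to a statement about a constructible module with trivial connection on a self-materialization, so that the Poincaré lemma (\ref{Poinc}) applies directly. By definition, $E$ is de Rham on $V$ means $E_{|(X,V/O)^\dagger}$ is de Rham, i.e.\ the adjunction map $E_{|(X,V/O)^\dagger} \to \mathrm{RL}_{\mathrm{dR}}E_V$ is an isomorphism. Since $(X,V/O)^\dagger \to (X/O)^\dagger$ is a local isomorphism (theorem \ref{strfib}) and being an isomorphism of modules can be detected locally, it suffices to work on $(X,V/O)^\dagger$ throughout; by the comparison theorem \ref{crismic} the crystal $E$ corresponds to a constructible $\mathcal O_V^\dagger$-module $E_V$ endowed with an overconvergent integrable connection.

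First I would use the description of $\mathrm{RL}_{\mathrm{dR}}$ through its realizations given in proposition \ref{RLstuff2}: for any overconvergent space $(X',V')$ over $(X,V/O)^\dagger$, one has
\[
(\mathrm{RL}_{\mathrm{dR}}E_V)_{V'} \simeq \mathrm R]p_1[_{\mathrm{dR}}]p_2[^\dagger E_V,
\]
where $p_1,p_2$ are the two projections of $V' \times_O V$. Since realization is conservative (corollary \ref{comlim}), checking that the adjunction map is an isomorphism reduces to checking, for every such $(X',V')$, that $E_{V'} \simeq \mathrm R]p_1[_{\mathrm{dR}}]p_2[^\dagger E_V$. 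Now the key geometric point is that the projection $(X', V'\times_O V) \to (X',V')$ is itself a geometric materialization inducing an isomorphism on the formal scheme side, because $V \to O$ is formally smooth and partially proper and these properties are stable under base change along $(X',V') \to (C,O)$. Moreover, since $E$ is a crystal, $]p_2[^\dagger E_V \simeq E_{V'\times_O V}$ carries precisely the connection transported from $E_V$, and restricting further through the diagonal recovers $E_{V'}$ with its trivial relative connection over $V'$.

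The heart of the argument is then to invoke the Poincaré lemma \ref{Poinc} for this materialization $(X',V'\times_O V) \to (X',V')$, applied to the constructible $\mathcal O_{V'}^\dagger$-module $E_{V'}$ on $X'$. That lemma yields exactly
\[
E_{V'} \simeq \mathrm R]p_1[_{\mathrm{dR}}]p_1[^\dagger E_{V'} \simeq \mathrm R]p_1[_{\mathrm{dR}}]p_2[^\dagger E_V,
\]
the last identification coming from the crystal transition isomorphism $]p_2[^\dagger E_V \simeq ]p_1[^\dagger E_{V'}$ over $V'\times_O V$. I would assemble these realization-level isomorphisms into the global statement on $(X,V/O)^\dagger$, checking compatibility with transition maps so that proposition \ref{realsh} (or its module analog \ref{realsh2}) promotes them to a genuine isomorphism of complexes of modules.

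The main obstacle I expect is verifying that $E_{V'}$ is constructible on $X'$ (the hypothesis the Poincaré lemma requires) for \emph{arbitrary} $(X',V')$ over the site, not merely for $V$ itself: constructibility of the crystal $E$ guarantees constructibility of each realization $E_{V'}$ as an $\mathcal O_{V'}^\dagger$-module (this is the content of the ``weakly constructible'' remark after the definition of constructible modules on a tube, combined with lemma \ref{consbis} via pullback $]f[^\dagger$), but I must ensure the geometric materialization hypothesis of \ref{Poinc} — that $p_1$ induces an isomorphism of formal schemes and is a genuine geometric materialization — genuinely holds after the base change, which rests on stability of formal smoothness and partial properness established in \cite{LeStum17*}. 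A secondary technical point is the passage from the pointwise (realization-level) isomorphisms to a single isomorphism on the site, which is formal but requires the naturality of the adjunction map $E \to \mathrm{RL}_{\mathrm{dR}}E_V$ together with the fact that both sides commute with the relevant base changes (the third assertion of proposition \ref{RLstuff2}).
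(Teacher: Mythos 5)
Your proposal is correct and follows essentially the same route as the paper's own proof: reduce to realizations, use proposition \ref{RLstuff2} to identify $(\mathrm{RL}_{\mathrm{dR}}E_V)_{V'}$ with $\mathrm R]p_1[_{\mathrm{dR}}]p_2[^\dagger E_V$, use the crystal transition isomorphism $]p_2[^\dagger E_V \simeq ]p_1[^\dagger E_{V'}$, and conclude by the Poincaré lemma applied to the first projection $p_1 : (X', V'\times_O V) \to (X',V')$, which is a geometric materialization inducing the identity on $X'$. The two points you flag as potential obstacles (constructibility of each realization $E_{V'}$ on $X'$, and stability of the geometric materialization property under base change) are exactly the hypotheses the paper relies on implicitly, and both hold for the reasons you give.
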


\begin{proof}
We have to show that the canonical map $E \mapsto \mathrm RL_{\mathrm{dR}}E_V$ is bijective.
Equivalently, we need to check that for all overconvergent space $(X',V')$ over $(X/O)^\dagger$, we have
\[
E_{V'} \simeq (\mathrm RL_{\mathrm{dR}}E_V)_{V'}.
\]
We denote by $P$ (resp.\ $P'$) the middle formal scheme and we consider the following morphism of overconvergent spaces
\[
\xymatrix{X' \ar@{^{(}->}[r] \ar@{=}[d] & P' \times_S P  \ar[d]^{p_1}& V' \times_O V  \ar[l] \ar[d]^{p_1} \\
X' \ar@{^{(}->}[r] & P'  & V'  \ar[l].}
\]
(and denote by $p_2$ the second projection).
It follows from proposition \ref{RLstuff2} that
\[
(\mathrm{R}L_{\mathrm{dR}} E_V)_{V'} = \mathrm R]p_{1}[_{\mathrm{dR}}]p_2[^\dagger E_V = \mathrm R]p_{1}[_{\mathrm{dR}} ]p_1[^\dagger E_{V'}
\]
because $E$ is a crystal.
Our assertion therefore follows from the Poincaré lemma \ref{Poinc} applied to the first projection $p_1$ (which is a geometric materialization).
\end{proof}

%%%%%%%%%%%%%%%
\begin{cor} \label{cohdR}
If $(f,u) : (X,V) \to (C,O)$ is a geometric materialization (defined over $\mathbb Q$) and $E$ is a constructible crystal on $(X/O)^{\dagger}$, then
\[
\mathrm Rp_{X/O*}E \simeq  \mathrm R]f[_{u,\mathrm{dR}}E_V.
\]
\end{cor}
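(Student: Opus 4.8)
The plan is to obtain this statement as an immediate consequence of the two preceding results, so that essentially no new argument is required. The point is that Theorem~\ref{cohdr} already establishes the desired comparison $\mathrm Rp_{X/O*}E \simeq \mathrm R]f[_{u,\mathrm{dR}}E_V$ for \emph{any} crystal on $(X/O)^\dagger$ that happens to be de Rham on the geometric materialization $V$; thus the only thing left to supply is the verification that our particular constructible crystal $E$ does satisfy this de Rham hypothesis.

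First I would invoke Theorem~\ref{consdR}. Since $(X,V) \to (C,O)$ is a geometric materialization defined over $\mathbb Q$ and $E$ is a constructible crystal on $(X/O)^\dagger$, that theorem asserts precisely that $E$ is de Rham on $V$, i.e.\ that the adjunction map $E_{|(X,V/O)^\dagger} \to \mathrm{RL}_{\mathrm{dR}}E_V$ is an isomorphism. This is the substantive input, and it is where the genuine content lies: it rests in turn on the overconvergent Poincar\'e lemma~\ref{Poinc}, combined with the Abe--Lazda Roos-complex d\'evissage along a constructible filtration of $X$.

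With the de Rham property in hand, the second and final step is simply to feed $E$ into Theorem~\ref{cohdr}, whose hypotheses (an analytic base $(C,O)$, a formal scheme $X$ over $C$, and a crystal that is de Rham on a geometric materialization $V$ of $X$ over $O$) are now all met. Its conclusion is exactly the asserted isomorphism $\mathrm Rp_{X/O*}E \simeq \mathrm R]f[_{u,\mathrm{dR}}E_V$. I do not anticipate any real obstacle at the level of this corollary, since the difficulty has been entirely absorbed into Theorem~\ref{consdR}. The only small point to keep in mind is notational consistency: the morphism $(f,u)$ of the geometric materialization is the same one whose de Rham pushforward appears on the right-hand side, and this is precisely the convention under which Theorems~\ref{consdR} and~\ref{cohdr} are phrased, so chaining the two is immediate.
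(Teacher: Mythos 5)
Your proposal is correct and matches the paper's own proof exactly: Corollary \ref{cohdR} is obtained there as a direct consequence of Theorem \ref{consdR} (which supplies the de Rham property of the constructible crystal $E$ on $V$) followed by Theorem \ref{cohdr} (which converts that property into the cohomological comparison). Nothing further is needed.
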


\begin{proof}
This is a direct consequence of theorems \ref{cohdr} and \ref{consdR}.
\end{proof}

It follows that
\[
\forall k \in \mathbb N, \quad \mathcal H^k({X/O}, E) \simeq  \mathcal H^k_{\mathrm{dR}}(]X[_{V}, E_V)
\]
may be called the \emph{rigid cohomology} of $E$.
In particular, the notion of a finitely presented crystal and its cohomology generalizes the notion of an overconvergent isocrystal and its rigid cohomology in the sense of Berthelot.
This also works for cohomology with support in a subset $Y$ because we will have
\[
\mathrm Rp_{X/O*}j^\dagger_Y E \simeq  \mathrm R]f[_{u,\mathrm{dR}}]\gamma[_!]\gamma[^{-1}E_V.
\]
and therefore
\[
\forall k \in \mathbb N, \quad \mathcal H^k_Y({X/O}, E) \simeq  \mathcal H^k_{\mathrm{dR}}(]X[_{V}, ]\gamma[_!]\gamma[^{-1}E_V).
\]
Note that there is no analog to rigid cohomology with compact support yet. 
This is connected to the difficulty of defining a duality theory for constructible crystals (or equivalently a ``derived'' internal Hom).

In the finitely presented case, one can recover theorem \ref{crismic} form corollary \ref{cohdR} as follows:
\begin{align*}
\mathrm {Hom}(E, E') &= \Gamma\left((X/O)^\dagger, \mathcal H\mathrm{om}(E, E')\right)
\\ &= \Gamma\left(]C[_O, p_{X/O*} \mathcal H\mathrm{om}(E, E')\right)
\\ &= \Gamma\left(]C[_O,  \mathrm R^0]f[_{u,\mathrm{dR}}\mathcal H\mathrm{om}(E, E')_V\right)
\\ &= \Gamma\left(]C[_O,  \mathrm R^0]f[_{u,\mathrm{dR}}\mathcal H\mathrm{om}(E_V, E'_{V})\right)
\\ &= \Gamma\left(]C[_O,  \mathcal H\mathrm{om}_\nabla(E_V, E'_{V})\right)
\\ &=  \mathrm {Hom}_\nabla(E_V, E'_V).\qedhere
\end{align*}

The proof however is not valid in the constructible case due to the lack of internal hom.

Corollary \ref{cohdR} is equivalent to the following relative variant:

%%%%%%%%%
\begin{cor} \label{cohdR2}
If $(f,u) : (Y,W) \to (X,V)$ is a geometric materialization over $(C,O)$ and $E$ is a constructible crystal on $(Y/O)^{\dagger}$, then
\[
(\mathrm Rf_*E)_V \simeq  \mathrm R]f[_{u,\mathrm{dR}}E_W. \qed
\]
\end{cor}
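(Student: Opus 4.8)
The plan is to deduce this relative statement from the absolute comparison of corollary \ref{cohdR}, applied not to the structural map down to $(C,O)$ but to the geometric materialization $(Y,W) \to (X,V)$ itself. The bridge between the relative cohomology $\mathrm Rf_*E$ and an absolute cohomology is provided by the base change formula of proposition \ref{bicom}, which turns the computation of $(\mathrm Rf_*E)_V$ into a computation over the base $(X,V)$.

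First I would write $f \colon (Y/O)^\dagger \to (X/O)^\dagger$ for the morphism of overconvergent sites induced by $f \colon Y \to X$ and view $(X,V)$ as an overconvergent space over $(X/O)^\dagger$ through the canonical map. The second assertion of proposition \ref{bicom} (more precisely the second example following it) identifies the fibre
\[
(Y/O)^\dagger \times_{(X/O)^\dagger} (X,V) \simeq (Y/V)^\dagger,
\]
since $Y \times_X X = Y$, and yields
\[
(\mathrm Rf_*E)_V \simeq \mathrm Rp_{Y/V*}E_{|(Y/V)^\dagger}.
\]
Next I would observe that $E_{|(Y/V)^\dagger}$ is again a constructible crystal: it is a crystal by lemma \ref{loccrys} and constructible by lemma \ref{cons1}, both restriction statements being automatic for a morphism of overconvergent sites. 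Since $(Y,W) \to (X,V)$ is by hypothesis a geometric materialization (and everything is defined over $\mathbb Q$), corollary \ref{cohdR}, applied with $(X,V)$ as base and $(Y,W)$ as materialization, gives
\[
\mathrm Rp_{Y/V*}E_{|(Y/V)^\dagger} \simeq \mathrm R]f[_{u,\mathrm{dR}}\bigl(E_{|(Y/V)^\dagger}\bigr)_W.
\]
Finally, the compatibility of realization with restriction to a subsite gives $\bigl(E_{|(Y/V)^\dagger}\bigr)_W = E_W$, and chaining the three isomorphisms produces the desired formula.

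There is no serious obstacle here: all the substance (the Poincaré lemma \ref{Poinc}, the strong fibration theorem, and the de Rham property) is already packaged inside corollary \ref{cohdR}, and what remains is purely formal. The one point that needs care is the correct identification of the fibre product, namely that it is $(Y/V)^\dagger$ rather than some $(Y,W/V)^\dagger$; the latter would be wrong, precisely because the relevant square involving $(Y,W/O)^\dagger$ fails to be cartesian in general. Once this bookkeeping is pinned down, the equivalence with corollary \ref{cohdR} claimed in the statement is also transparent: specializing $(X,V) = (C,O)$ and invoking the first assertion of proposition \ref{bicom} turns the present corollary back into corollary \ref{cohdR}.
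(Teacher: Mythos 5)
Your proof is correct and follows exactly the route the paper intends: the paper states corollary \ref{cohdR2} as the ``relative variant'' of corollary \ref{cohdR} with no written proof, the equivalence being precisely the reduction via proposition \ref{bicom} and the identification $(Y/O)^\dagger \times_{(X/O)^\dagger} (X,V) = (Y/V)^\dagger$ that you spell out, together with the stability of constructible crystals under restriction (lemmas \ref{loccrys} and \ref{cons1}). Your closing remark that one must use $(Y/V)^\dagger$ and not $(Y,W/V)^\dagger$ matches the paper's own warning (the non-cartesian square in the examples after proposition \ref{crsiba}), so nothing is missing.
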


%%%%%%%%%%%%%%%%%%%%%%
\begin{xmp}
\begin{enumerate}
\item (Monsky-Washnitzer setting)
Let $R$ be a noetherian ring (with the discrete topology) and $S \to \mathrm{Spec}(R)$ any morphism of formal schemes.
Let $A$ be a smooth $R$-algebra and $X := \mathrm{Spec}(A)$.
Let $\Lambda$ be a Tate ring over $\mathbb Q$ with topologically nilpotent unit $\pi$ and $O := \mathrm{Spa}(\Lambda, \Lambda^+) \to S^{\mathrm{ad}}$ any morphism.
Then the category of finitely presented crystals on $(X_S/O)^\dagger$ is equivalent to the category of finite $A_\Lambda^\dagger $-modules endowed with an integrable connection with respect to $\Lambda$ and it induces an isomorphism on cohomology (see section \ref{MWsec} below for the details).
\item (The affine space) 
Recall that if $T$ is an overconvergent site, then there exists a morphism of overconvergent sites $f : \mathbb A^{n}_T \to T$.
Let us show that if $T$ is analytic and defined over $\mathbb Q$, then $\mathrm Rf_*\mathcal O^\dagger_{\mathbb A^{n}_T} = \mathcal O^\dagger_{T}$.
By induction, it is sufficient to consider the case $n=1$.
According to proposition \ref{bicom},2), it is sufficient to prove that if $(X \hookrightarrow P \leftarrow V)$ is an analytic overconvergent space, then
\[
\mathrm Rp_{(\mathbb A_{X}/V)*}\mathcal O^\dagger_{(\mathbb A_{X}/V)} \simeq \mathcal O_{V}^\dagger.
\]
This is a local question and we may therefore assume that $V = \mathrm{Spa}(A,A+)$ is Tate affinoid.
Also, using lemma \ref{basfib}, it is sufficient to consider the case $X=P$.
Thanks to corollary \ref{cohdR}, we can use the geometric materialization $(\mathbb A_{P} \hookrightarrow \mathbb P_{P}  \leftarrow \mathbb A_{V})
$ and we need to show that
\[
\mathrm R]f[_{u,\mathrm{dR}}\mathcal O_{\mathbb A_{V}}^\dagger \simeq \mathcal O_{V}^\dagger.
\]
Actually, this boils down to the short exact sequence
\[
0 \longrightarrow A \longrightarrow A[T]^\dagger \overset {\mathrm d/\mathrm d T} \longrightarrow A[T]^\dagger \longrightarrow 0.
\]
\end{enumerate}
\end{xmp}

For further use, let us prove the following:

%%%%%%%%%%%%%
\begin{lem} \label{finbas}
Let $(f,u) : (X,V) \to (C,O)$ be a geometric materialization over $\mathbb Q$ with $u$ \emph{finite}.
If $E$ is a finitely presented crystal on $(X/O)^\dagger$, then $E$ satisfies base change in cohomology.
Moreover,
\[
p_{X/O*}E = ]f[_{*}E_{V}  \quad \mathrm{and} \quad \mathrm R^kp_{X/O*}E = 0\ \mathrm{for}\ k > 0.
\]
\end{lem}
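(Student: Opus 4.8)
The plan is to deduce everything from the de Rham description of the cohomology established in corollary \ref{cohdR}, exploiting that finiteness of $u$ collapses the de Rham complex. First I would observe that a finitely presented crystal is in particular constructible, so that theorem \ref{consdR} applies and $E$ is de Rham on $V$; corollary \ref{cohdR} then gives
\[
\mathrm Rp_{X/O*}E \simeq \mathrm R]f[_{u,\mathrm{dR}}E_V = \mathrm R]f[_{u*}\left(E_V \otimes_{\mathcal O_V^\dagger}\Omega^{\bullet\dagger}_V\right).
\]

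The key observation is that the de Rham complex degenerates. Since $(f,u)$ is a geometric materialization, the left-hand square of a defining formal morphism is formally smooth and the right-hand square is cartesian, so $V \to O$ is formally smooth in a neighborhood of $X$; being moreover finite, it is formally \'etale there, whence $\Omega^1_{V/O} = 0$ near $X$ and therefore $\Omega^{1\dagger}_V = i_X^{-1}\Omega^1_{V/O} = 0$. Thus the de Rham complex reduces to $E_V$ placed in degree zero and $\mathrm Rp_{X/O*}E \simeq \mathrm R]f[_{u*}E_V$. Finiteness of $u$ also makes the tube map $]f[_u$ finite, so its fibers are finite; by the stalkwise computation of higher direct images (as in proposition \ref{dirim}) this forces $\mathrm R^k]f[_{u*}E_V = 0$ for $k>0$ and $\mathrm R]f[_{u*}E_V = ]f[_{u*}E_V$. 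This yields the ``moreover'' part, namely $p_{X/O*}E = ]f[_{u*}E_V$ (the asserted $]f[_*E_V$) and $\mathrm R^kp_{X/O*}E = 0$ for $k>0$.

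It remains to prove base change. Given a morphism $(C',O')\to(C,O)$, the pulled-back morphism $u':V'\to O'$ is again finite and formally \'etale near $X'$ (both properties being stable under base change), so the de Rham complex on $V'$ collapses as well and both sides are concentrated in degree zero. Since $E$ is de Rham, the cohomological base change condition reduces (via the reformulation in terms of de Rham cohomology stated just before proposition \ref{dRcrys}) to the finite base change formula
\[
]g[^\dagger ]f[_{u*}E_V \simeq ]f'[_{u'*}]g'[^\dagger E_V .
\]
This I would verify on stalks by the same computation as at the end of the proof of lemma \ref{univhome}, the only change being that one now sums over the (finite) fibers of $u$ instead of identifying each fiber with a single point. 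I expect this finite base change, together with the bookkeeping needed to identify the base-changed situation with a geometric materialization so that corollary \ref{cohdR} applies verbatim, to be the only genuine obstacle; the vanishing of $\Omega^{1\dagger}_V$ is what makes the whole argument short.
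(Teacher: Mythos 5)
Your overall strategy is the same as the paper's: invoke theorem \ref{consdR} and corollary \ref{cohdR} to write $\mathrm Rp_{X/O*}E \simeq \mathrm R]f[_{u,\mathrm{dR}}E_V$, then observe that $u$ smooth near $X$ and finite forces $\Omega^{1\dagger}_V = 0$, so the de Rham complex collapses to $E_V$. The gap is in how you obtain the vanishing $\mathrm R^k]f[_{u*}E_V = 0$ for $k>0$. You claim that $]f[_u$ has finite fibers and that ``the stalkwise computation of higher direct images'' then forces vanishing. This reasoning is not valid: on these locally spectral, non-Hausdorff spaces the stalk of $\mathrm R^k]f[_{u*}$ at a point $v$ is the cohomology of the sheaf restricted to the preimage of the \emph{whole set of generalizations} $\mathrm G(v)$ (this is precisely what the Abe--Lazda computation underlying proposition \ref{dirim} and lemma \ref{baspec} gives), not the cohomology of the finite fiber. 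Moreover, even for a genuinely finite morphism, higher direct images of arbitrary abelian sheaves need not vanish: take $u : \mathrm{Spec}(B) \to \mathrm{Spec}(A)$ finite with $A$ a one-dimensional local domain and $B$ a domain having two maximal ideals over the closed point; then $\mathrm H^1$ of the extension by zero of $\mathbb Z$ from the generic point of $\mathrm{Spec}(B)$ is nonzero, so $\mathrm R^1u_* \neq 0$ on abelian sheaves. The vanishing you need is a \emph{coherence} statement, not a topological one, and nothing in your argument makes the finite presentation of $E$ enter.

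The paper supplies exactly the missing ingredients. Working locally on $O$, it first writes $E_V = i_X^{-1}\mathcal F$ with $\mathcal F$ a \emph{coherent} $\mathcal O_V$-module (this extension step uses theorem \ref{glumd}); it then invokes the theorem that a finite morphism of adic spaces kills higher direct images of coherent modules, $\mathrm R^ku_*\mathcal F = 0$ for $k>0$; and finally it transfers this to the tubes via lemma \ref{baspec}, $\mathrm R]f[_*i_X^{-1}\mathcal F = i_C^{-1}\mathrm Ru_*\mathcal F$, which yields both displayed formulas at once. The same pattern is what repairs your base-change step: rather than a stalkwise verification modelled on lemma \ref{univhome} (whose computation leans heavily on $u$ and $u'$ being homeomorphisms, which fails here), the paper reduces $]g[^\dagger]f[_*E_V \simeq ]f'[_*]g'[^\dagger E_V$ to the finite base change theorem for \emph{coherent} sheaves, $v^*u_*\mathcal F = u'_*v'^*\mathcal F$, sandwiched between two applications of lemma \ref{baspec}. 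So the fix is: extend $E_V$ to a coherent sheaf on the ambient space, use coherent vanishing and coherent base change along the finite morphism $u$, and descend to the tubes with lemma \ref{baspec}; ``finite fibers'' alone proves nothing.
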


\begin{proof} 
This is a local question on $O$ and we may therefore assume that $E_{X,V} = i_X^{-1}\mathcal F$ where $\mathcal F$ is a coherent $\mathcal O_V$-module.
Also, since $u$ is smooth in the neighborhood of $X$ and (quasi-) finite, we have $\Omega^{1\dagger}_{\,]X[} =0$ and therefore
\[
\mathrm Rp_{X/O*}E =  \mathrm R]f[_{\mathrm {dR}} E_{V} = \mathrm R]f[_* E_{V} = \mathrm R]f[_* i_X^{-1}\mathcal F.
\]
Since $u$ is finite and $\mathcal F$ coherent, we have $\mathrm R^ku_*\mathcal F = 0$ for $q > 0$.
Therefore, it follows from lemma \ref{baspec} below that
\[
 \mathrm R]f[_* i_X^{-1}\mathcal F = i_C^{-1} \mathrm Ru_* \mathcal F =  i_C^{-1} u_* \mathcal F =  ]f[_* i_X^{-1}\mathcal F = ]f[_{*}E_{V}
\]
and the second assertion is proved.
Now, we give ourselves a morphism $(g,v) : (C',O') \to (C,O)$ and we consider the diagram
\[
\xymatrix{& V' \ar[dd]^{v'} \ar[rr]^{u'} &&O' \ar[dd]^{v} \\ \,]X'[_{V'} \ar[dd]^{]g'[} \ar[rr]^{]f'[} \ar@{^{(}->}[ru] && ]C'[_{O'} \ar[dd]^{]g[} \ar@{^{(}->}[ru]  \\  &V \ar[rr]^{u}&&O \\ \,]X[_{V} \ar[rr]^{]f[} \ar@{^{(}->}[ru]  &&]C[_{O} \ar@{^{(}->}[ru] }
\]
with $X' = X \times_{C} C'$ and $V' = V \times_{O} O'$.
We have to show that
\[
]g[^\dagger \mathrm R^i]f[_{\mathrm {dR}} E_{X,V} \simeq \mathrm R^i]f'[_{\mathrm {dR}} ]g['^\dagger E_{X,V}.
\]
According to the above computations, it reduces to
\[
]g[^\dagger]f[_{*} E_{X,V} \simeq ]f'[_{*} ]g['^\dagger E_{X,V}.
\]
On one hand, there exists the base change formula for finite maps and coherent sheaves:
\[
v^*u_*\mathcal F = u'_*v'^*\mathcal F.
\]
On the other hand, it follows from lemma \ref{baspec} again that
\[
i_C^{-1}u_*\mathcal F = ]f[_*i_X^{-1}\mathcal F \quad \mathrm{and} \quad  i_{C'}^{-1}u'_*v'^*\mathcal F = ]f'[_*i_{X'}^{-1}v'^*\mathcal F.
\]
Putting all this together gives the desired result.
\end{proof}

%%%%%%%%%%%%%%%%%%%
\begin{prop} \label{fincrys}
Let $(Y, W) \to (X,V) \to (C,O)$ be a sequence of morphisms of overconvergent spaces with $W \to V$ \emph{finite}.
We consider the induced morphism of overconvergent sites $f : (Y/O)^\dagger \to (X,V/O)^\dagger$ and assume that $W$ is a geometric materialization for $X$ over $V$.
Then a finitely presented crystal $E$ on $(Y/O)^\dagger$ is $f$-crystalline.
\end{prop}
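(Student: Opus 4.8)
The plan is to invoke Proposition \ref{dRcrys}, which in this exact situation reduces $f$-crystallinity to a base change statement in \emph{de Rham} cohomology, and then to exploit the finiteness of $u\colon W\to V$ to make that de Rham cohomology collapse to an ordinary finite pushforward, in the spirit of Lemma \ref{finbas}. First I would check that $E$ is de Rham on $W$. Since $E$ is finitely presented it is constructible, and as $W$ is a geometric materialization of $Y$ over $V$ with everything defined over $\mathbb Q$, Theorem \ref{consdR}, applied to the geometric materialization $(Y,W)\to(X,V)$ (so that $(X,V)$ plays the role of the base), shows that $E$ is de Rham on $W$. Proposition \ref{dRcrys} then applies, so it remains to verify that for every $(g,v)\colon(X',V')\to(X,V)$ one has
\[
]g[_{v}^\dagger \mathrm R^k]f[_{u,\mathrm{dR}}E_W \simeq \mathrm R]f'[_{u',\mathrm{dR}} ]g'[_{v'}^\dagger E_W
\]
for all $k$, with the obvious notation for the pullback $(Y',W')\to(X',V')$ where $Y'=Y\times_X X'$ and $W'=W\times_V V'$.

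The key simplification is that the relevant de Rham complexes are trivial. Since $u\colon W\to V$ is finite and $W\to V$ is formally smooth in a neighborhood of $Y$, the induced map $]f[_u\colon\,]Y[_W\to\,]X[_V$ is finite and étale, whence $\Omega^{1\dagger}_{W}=0$ (this is exactly the vanishing used in the proof of Lemma \ref{finbas}); as finiteness and smoothness are stable under base change, the same holds for $W'\to V'$ near $Y'$. Consequently $\mathrm R]f[_{u,\mathrm{dR}}E_W=\mathrm R]f[_{u*}E_W$, and likewise on the $(X',V')$ side. Working locally, $E_W=i_Y^{-1}\mathcal F$ for a coherent $\mathcal O_W$-module $\mathcal F$; since $u$ is finite and $\mathcal F$ coherent, $\mathrm R^q u_*\mathcal F=0$ for $q>0$, so by Lemma \ref{baspec} we get $\mathrm R^k]f[_{u*}E_W=0$ for $k>0$ together with $]f[_{u*}E_W=i_X^{-1}u_*\mathcal F$. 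Because $E$ is a crystal, $]g'[_{v'}^\dagger E_W$ is again the realization of $E$ on $W'$, and $W'$ is a finite geometric materialization of $Y'$ over $V'$, so its de Rham complex collapses in the same way.

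With both de Rham complexes reduced to degree zero, the required isomorphism becomes the base change formula $]g[_v^\dagger ]f[_{u*}E_W\simeq ]f'[_{u'*}]g'[_{v'}^\dagger E_W$, and this is established exactly as in Lemma \ref{finbas}: one combines the base change formula $v^*u_*\mathcal F\simeq u'_*v'^*\mathcal F$ for finite morphisms and coherent sheaves with the two instances of Lemma \ref{baspec} that identify $i_X^{-1}u_*\mathcal F$ with $]f[_*i_Y^{-1}\mathcal F$ and its primed analogue. I expect this last base change verification to be the main obstacle: once the finiteness of $u$ has been used to kill the relative differentials and the higher direct images, the whole statement is reduced to the standard finite base change for coherent sheaves transported to the tubes via Lemma \ref{baspec}, which is then routine.
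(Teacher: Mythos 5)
Your proposal is correct and takes essentially the same route as the paper, whose entire proof reads: ``According to proposition \ref{dRcrys}, this follows from lemma \ref{finbas}.'' The additional material you supply — securing the de Rham hypothesis via theorem \ref{consdR} applied to the geometric materialization $(Y,W)\to(X,V)$, and the collapse $\Omega^{1\dagger}_W=0$ plus finite coherent base change through lemma \ref{baspec} — is exactly the content of lemma \ref{finbas} and its proof, so you have merely made explicit what the paper's citation leaves implicit.
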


\begin{proof}
According to proposition \ref{dRcrys}, this follows from lemma \ref{finbas}.
\end{proof}

We still have to check the base change theorem for locally spectral spaces that we used in the proof of lemma \ref{finbas}.
We call a morphism $g : V' \to V$ \emph{strongly generalizing} if
\[
\forall x' \in V', \forall y \rightsquigarrow g(x'), \exists ! y' \rightsquigarrow x', g(y') = y.
\]
For an inclusion, it simply means that $V'$ is stable under generalization in $V$.
This applies in particular to the inclusion of a tube into an analytic space.

%%%%%%%%%%%%%
\begin{lem} \label{baspec}
Let
\[
\xymatrix{W' \ar[r]^{f'} \ar[d]^{g'} &V ' \ar[d]^g  \\ W \ar[r]^f & V
}
\]
be a cartesian diagram of locally spectral spaces and $\mathcal F$ a sheaf on $W$.
If $f$ is quasi-compact quasi-separated and $g$ is strongly generalizing, then
\[
g^{-1}\mathrm Rf_*\mathcal F \simeq \mathrm Rf'_*g'^{-1}\mathcal F.
\]
\end{lem}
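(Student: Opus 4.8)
The plan is to prove that the canonical base change morphism $g^{-1}\mathrm Rf_*\mathcal F \to \mathrm Rf'_*g'^{-1}\mathcal F$ (which exists by adjunction and functoriality of pushforward for the cartesian square) is an isomorphism by checking it on the stalks of its cohomology sheaves. Since the statement is local on $V'$ and on $V$, I would first replace $V'$ and $V$ by quasi-compact open subsets, so that all four spaces become spectral and $f$ stays quasi-compact quasi-separated. It then suffices to prove that for every $x' \in V'$ and every $k$, the induced map on stalks $(\mathrm R^kf_*\mathcal F)_{g(x')} \to (\mathrm R^kf'_*g'^{-1}\mathcal F)_{x'}$ is bijective.

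The essential input is the stalk formula for higher direct images along a quasi-compact quasi-separated morphism of spectral spaces, namely corollary 1.2.2 of \cite{AbeLazda20}, already used in the proof of proposition \ref{dirim}: for a point $y \in V$, writing $\mathrm G(y)$ for the set of generalizations of $y$ and $i_y : W_y := f^{-1}(\mathrm G(y)) \hookrightarrow W$ for the inclusion, one has $(\mathrm R^kf_*\mathcal F)_y \simeq \mathrm H^k(W_y, i_y^{-1}\mathcal F)$. Applying this on both sides with $y := g(x')$ gives
\[
(g^{-1}\mathrm R^kf_*\mathcal F)_{x'} = (\mathrm R^kf_*\mathcal F)_{y} \simeq \mathrm H^k(W_y, i_y^{-1}\mathcal F), \qquad (\mathrm R^kf'_*g'^{-1}\mathcal F)_{x'} \simeq \mathrm H^k(W'_{x'}, i_{x'}'^{-1}g'^{-1}\mathcal F),
\]
where $W'_{x'} := f'^{-1}(\mathrm G(x'))$ and $i_{x'}' : W'_{x'} \hookrightarrow W'$ is the inclusion.

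The heart of the argument is to show that $g'$ restricts to a homeomorphism $W'_{x'} \xrightarrow{\sim} W_y$ under which $i_{x'}'^{-1}g'^{-1}\mathcal F$ corresponds to $i_y^{-1}\mathcal F$, forcing the two cohomology groups above to agree compatibly with the base change map. This is where the strongly generalizing hypothesis enters. Because the square is cartesian, a point of $W'_{x'}$ is a pair $(w,v')$ with $f(w) = g(v')$ and $v' \rightsquigarrow x'$, sent by $g'$ to $w$; the condition that $g$ induce a \emph{bijection} $\mathrm G(x') \xrightarrow{\sim} \mathrm G(y)$ (surjectivity for existence, uniqueness for injectivity) shows that $g'$ maps $W'_{x'}$ bijectively onto $f^{-1}(\mathrm G(y)) = W_y$, the inverse sending $w$ with $f(w) = z \rightsquigarrow y$ to $(w,z')$, where $z' \rightsquigarrow x'$ is the unique generalization above $z$. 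I expect the main obstacle to be upgrading this set-theoretic bijection to a homeomorphism: continuity of $g'$ is automatic, but continuity of the inverse uses that $W'_{x'}$, being by cartesianness the fiber product $W_y \times_{\mathrm G(y)} \mathrm G(x')$, carries the topology transported from $W_y$, which reduces to checking that $\mathrm G(x') \to \mathrm G(y)$ is itself a homeomorphism for the generalization topology of these valuative spaces. Once this is settled, the compatibility of $i_{x'}'^{-1}g'^{-1}\mathcal F$ with $i_y^{-1}\mathcal F$ under the homeomorphism is immediate from the definition of inverse image along the cartesian projection, and the resulting identification of stalks is compatible with the base change morphism by functoriality, which completes the proof.
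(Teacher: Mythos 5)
Your proposal is correct and follows essentially the same route as the paper's proof: both reduce to stalks, invoke corollary 1.2.2 of \cite{AbeLazda20} (valid since $f$, and hence $f'$ by base change, is quasi-compact quasi-separated) to identify $(\mathrm R^kf_*\mathcal F)_{g(x')}$ with $\mathrm H^k(f^{-1}(\mathrm G(g(x'))),\mathcal F)$ and likewise for $f'$, and then use cartesianness together with the strongly generalizing hypothesis to see that $g'$ induces a homeomorphism $f'^{-1}(\mathrm G(x')) \simeq f^{-1}(\mathrm G(g(x')))$. You actually spell out the bijectivity and the topological identification $f'^{-1}(\mathrm G(x')) = f^{-1}(\mathrm G(g(x')))\times_{\mathrm G(g(x'))}\mathrm G(x')$ in more detail than the paper, which simply asserts the homeomorphism.
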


\begin{proof}
If $x' \in V'$, then we can consider the sets $\mathrm G(x')$ and $\mathrm G(g(x'))$ of generalizations of $x'$ and $g(x')$ respectively.
Then, we know from corollary 1.2.2 of \cite{AbeLazda20} that for all $k \in \mathbb N$, we have
\[
(g^{-1}\mathrm R^kf_*\mathcal F)_{x'} = \mathrm H^k(f^{-1}(\mathrm G(g(x')), \mathcal F)
\]
and
\[
(\mathrm R^kf'_*g'^{-1}\mathcal F)_{x'}= \mathrm H^k(f'^{-1}(\mathrm G(x')), g'^{-1} \mathcal F).
\]
Since $g$ is strongly generalizing and the diagram is cartesian, $g'$ induces a homeomorphism $f'^{-1}(\mathrm G(x')) \simeq f^{-1}(\mathrm G(g(x'))$ and we are done.
\end{proof}

%%%%%%%%%%%%%%
\subsection{Monsky-Washnitzer cohomology} \label{MWsec}

We will explain how our results fit in the (generalized) formalism of Monsky-Washnitzer cohomology.

Let $\Lambda$ be a complete Tate ring defined over $\mathbb Q$ with topologically nilpotent unit $\pi$ and $O := \mathrm{Spa}(\Lambda, \Lambda^+)$.
We first consider the overconvergent space
\[
\mathbb A^N \hookrightarrow \mathbb P^N \leftarrow \mathbb A^N_{O}
\]
which is a geometric materialization of the affine space over $(\mathrm{Spec}(\mathbb Z) = \mathrm{Spec}(\mathbb Z)  \leftarrow O)$.
We have
\[
\,]\mathbb A^N[_{\mathbb A^N_{O}} = \bigcap_{n \in \mathbb N} \mathbb D^N_{O}(0, \pi^{-\frac 1n}) \subset \mathbb A^{N}_{O}
\]
where
\[
\mathbb D^N_{O}(0, \pi^{-\frac 1n}) := \left\{v \in  \mathbb A^N_{O} \colon v(\pi T_i^n) \geq 0 \right\}.
\]
Note that $\,]\mathbb A^N[_{\mathbb A^N_{O}} = \overline {\mathbb D}^N_{O}$ is the proper unit polydisc as a topological space but we want to consider the former as a germ in $\mathbb A^N_{O}$.
If we endow the polynomial ring $\Lambda[T_1, \ldots, T_N]$ with the topology coming from $\Lambda_0[\pi T_1^n, \ldots, \pi T_N^n]$ (with $\Lambda_0$ a ring of definition for $\Lambda$), then we have
\[
\mathbb D^N_{O}(0, \pi^{-\frac 1n})  = \mathrm{Spa}(\Lambda[T_1, \ldots, T_N], \Lambda^+ \cup \{\pi T_1^n, \ldots, \pi T_N^n\}).
\]
It is actually more convenient to introduce the completion
\[
\Lambda\{\pi^{1/n}T_1, \ldots, \pi^{1/n}T_N\} :=
 \left\{\sum_{\underline k \in \mathbb N^N} f_{\underline k} T^{\underline k} \colon \quad \pi^{-|\underline k|}f_{\underline k}^n \to 0 \right\}
\]
so that the topology is encoded into the notations.
We consider the weak completion of the polynomial ring (theorem 2.3 of \cite{MonskyWashnitzer68}):
\[
\Lambda_0[T_1, \ldots, T_N]^\dagger :=  \left\{\sum_{\underline k \in \mathbb N^N} f_{\underline k} T^{\underline k} \colon \quad \underline \lim\left( \frac {\mathrm{ord}(f_{\underline k})}{|\underline k|}\right) > 0 \right\}
\]
where $\mathrm{ord}(f) = \inf \{k \in \mathbb N \colon f \in \pi^k \Lambda_0\}$.
Then, we have
\[
\Lambda_0[T_1, \ldots, T_N]^\dagger[1/\pi] = \varinjlim_n \widehat \Lambda\{\pi^{1/n}T_1, \ldots, \pi^{1/n}T_N\}.
\]

Now, let $R$ be a noetherian ring and $X = \mathrm{Spec}(A)$ a smooth affine scheme over $R$.
If we write $A = R[T_1, \ldots, T_N]/I$, then we obtain a closed embedding $X \hookrightarrow \mathbb A^N_R$ and we may consider the corresponding locally closed embedding $X \hookrightarrow \mathbb P^N_{R}$.
We fix a morphism of formal schemes $S \to \mathrm{Spec}(R)$, a formal \emph{thickening} $C \hookrightarrow S$ and a morphism $O \to S^{\mathrm{ad}}$ of adic spaces.
The overconvergent space
\[
X_{C} \hookrightarrow \mathbb P^N_{S} \leftarrow X_{O}
\]
is a geometric materialization over $(C \hookrightarrow S \leftarrow O)$.
Note that we could as well consider the ``generic'' case $C=S = \mathrm{Spec}(R)$ which provides
$(
X \hookrightarrow \mathbb P^N_{R} \leftarrow X_{O}
)$
and we have $\,\,]X_{C}[_{X_{O}} = \,\,]X_{S}[_{X_{O}}  =  \,\,]X[_{X_{O}}$ so that there is no need to mention $C$ inside the tube.
Since
\[
\,\,]X[_{X_{O}} = \,]\mathbb A^{N}[_{ \mathbb A^N_{O}}  \cap X_{O} \subset \mathbb A^{N}_{O},
\]
we see that
\[
\,\,]X[_{X_{O}} = \bigcap_{n \in \mathbb N} V_n \quad \mathrm{with}\quad V_n := \mathbb D^N_{O}(0, \pi^{-\frac 1n}) \cap X_{O} \subset \mathbb A^{N}_{O}.
\]
Note that $V_n$ is affinoid and more precisely, $V_n = \mathrm{Spa}(A_{\Lambda,n}, A_{\Lambda,n}^+)$ where
\[
A_{\Lambda,n} := \widehat \Lambda\{\pi^{1/n}T_1, \ldots, \pi^{1/n}T_N\}/I\widehat \Lambda\{\pi^{1/n}T_1, \ldots, \pi^{1/n}T_N\}
\]
and $A_{\Lambda,n}^+$ is the union of $\Lambda^+$ and the images of $\pi T_1^n, \ldots, \pi T_N^n$ in $A_{\Lambda,n} $.
We can assume that the image of $R$ into $\Lambda$ is contained in $\Lambda_0$, then consider the weak completion
\[
A_{\Lambda_0}^\dagger := \Lambda_0[T_1, \ldots, T_N]^\dagger/I\Lambda_0[T_1, \ldots, T_N]^\dagger
\]
of $A_{\Lambda_0} := \Lambda_{0} \otimes_{R} A$ and set
\[
A_\Lambda^\dagger := A_{\Lambda_0}^\dagger[1/\pi] = \Lambda \otimes_{\Lambda_0} A_{\Lambda_0}^\dagger.
\]
Since (filtered) colimits are right exact, we have an isomorphism $A_\Lambda^\dagger = \varinjlim_n  A_{\Lambda,n}$
and an equivalence
\[
\mathrm{Mod}_{\mathrm{coh}}(A_\Lambda^\dagger) \simeq \varinjlim_n  \mathrm{Mod}_{\mathrm{coh}}(A_{\Lambda,n}).
\]
From this, we derive (with standard notations) an equivalence
\[
\mathrm{MIC}_{\mathrm{coh}}(A_\Lambda^\dagger) = \varinjlim_n  \mathrm{MIC}_{\mathrm{coh}}(A_{\Lambda,n})
\]
and,  if $M$ corresponds to $\{M_n\}_{n \geq n_0}$, an isomorphism (with standard notations again)
\[
M \otimes_{A_\Lambda^\dagger} \Omega^\bullet_{A_\Lambda^\dagger} \simeq \varinjlim_n \left(M \otimes_{A_{\Lambda,n}} \Omega^\bullet_{A_{\Lambda,n}}\right)\simeq A_\Lambda^\dagger \otimes_{A_{\Lambda,n_0}}  \left(M \otimes_{A_{\Lambda,n_0}} \Omega^\bullet_{A_{\Lambda,n_0}}\right).
\]

If $T$ is an overconvergent site, we will denote by $\mathrm{Crys}_{\mathrm{fp}}(T) = \mathrm{Mod}_{\mathrm{fp}}(T)$ the full subcategory of crystals, or equivalently modules, that are finitely presented.

%%%%%%%%%%%%%
\begin{prop}
There exists a fully faithful functor
\[
\xymatrix@R=0cm{
\mathrm{Crys}_{\mathrm{fp}}(X_C/O)^\dagger \ar[r] & \mathrm{MIC}_{\mathrm{coh}}(A_\Lambda^\dagger) 
\\ E \ar[r] &  \mathrm M(E) := \Gamma\left(\,]X[_{X_{O}}, E_{X_O}\right).
}
\]
Moreover, we have
\[
\mathrm R\Gamma(O, \mathrm Rp_{{X_C}/O*}E) =  M(E) \otimes_{A_\Lambda^\dagger} \Omega^\bullet_{A_\Lambda^\dagger} .
\]
\end{prop}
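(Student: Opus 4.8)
The plan is to exhibit $M(-)$ as a composite of three functors and to obtain the cohomology formula from Corollary \ref{cohdR}.

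For the functor itself, I would first invoke Theorem \ref{crismic}: since $X_O$ is a geometric materialization of $X_C$ over $O$ and $O$ is defined over $\mathbb{Q}$, restricting that equivalence to finitely presented objects yields $\mathrm{Crys}_{\mathrm{fp}}(X_C/O)^\dagger \simeq \mathrm{MIC}_{\mathrm{coh}}(X_C,X_O/O)^\dagger$ (a finitely presented crystal realizes to a coherent $\mathcal{O}_{X_O}^\dagger$-module, $\mathcal{O}_{X_O}^\dagger$ being coherent by Theorem \ref{glumd}). Next I would compose with the forgetful functor $\mathrm{MIC}_{\mathrm{coh}}(X_C,X_O/O)^\dagger \hookrightarrow \mathrm{MIC}_{\mathrm{coh}}(X_C,X_O/O)$, which is fully faithful by Lemma \ref{mainstrat} (together with Lemma \ref{MICQ} to translate between stratifications and connections). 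Finally I would use the global sections equivalence $\mathrm{MIC}_{\mathrm{coh}}(X_C,X_O/O) \simeq \mathrm{MIC}_{\mathrm{coh}}(A_\Lambda^\dagger)$: by Theorem \ref{glumd} a coherent $\mathcal{O}_{X_O}^\dagger$-module extends to a coherent module on some affinoid $V_n = \mathrm{Spa}(A_{\Lambda,n},A_{\Lambda,n}^+)$, and passing to the colimit over $n$ identifies these (with their connections) with finite $A_\Lambda^\dagger$-modules, exactly as recorded before the statement. On global sections this composite sends $E$ to $\Gamma(]X[_{X_O}, E_{X_O}) = \varinjlim_n \Gamma(V_n,-) = M(E)$ via Proposition \ref{limsh}. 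As a composite of an equivalence, a fully faithful functor and an equivalence, $M(-)$ is fully faithful.

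For the cohomology statement, note that $C \hookrightarrow S$ is a thickening, so $]C[_O = O$ and $\mathrm{R}\Gamma(O,-) = \mathrm{R}\Gamma(]C[_O,-)$. Corollary \ref{cohdR} applies to the geometric materialization $(f,u)\colon (X_C,X_O) \to (C,O)$ and to the finitely presented, hence constructible, crystal $E$, giving
\[
\mathrm{R}p_{X_C/O*}E \simeq \mathrm{R}]f[_{u,\mathrm{dR}}E_{X_O} = \mathrm{R}]f[_{u*}\left(E_{X_O} \otimes_{\mathcal{O}_{X_O}^\dagger} \Omega^{\bullet\dagger}_{X_O}\right).
\]
Composing derived global sections with $\mathrm{R}]f[_{u*}$ collapses the target, so $\mathrm{R}\Gamma(O, \mathrm{R}p_{X_C/O*}E) \simeq \mathrm{R}\Gamma(]X[_{X_O}, E_{X_O} \otimes_{\mathcal{O}_{X_O}^\dagger} \Omega^{\bullet\dagger}_{X_O})$, and it remains to show that this derived global section equals the naive de Rham complex of $M(E)$.

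The decisive point is acyclicity. The tube $]X[_{X_O} = \bigcap_n V_n$ is quasi-compact (a closed subset of the proper unit polydisc) and the $V_n$ form a fundamental system of affinoid neighborhoods. Each term $E_{X_O} \otimes_{\mathcal{O}_{X_O}^\dagger} \Omega^{k\dagger}_{X_O}$ is coherent, since $X_O$ is smooth over $O$ and $\Omega^{k\dagger}_{X_O}$ is locally free of finite rank, so by Proposition \ref{limcoh} combined with Tate acyclicity on the affinoids $V_n$ (as in Proposition \ref{hknul}) one gets $\mathrm{H}^k(]X[_{X_O}, E_{X_O} \otimes \Omega^{j\dagger}_{X_O}) = \varinjlim_n \mathrm{H}^k(V_n, -) = 0$ for $k > 0$. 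Hence $\mathrm{R}\Gamma(]X[_{X_O},-)$ of the de Rham complex is computed termwise by $\Gamma(]X[_{X_O},-)$, and Proposition \ref{limsh} identifies $\Gamma(]X[_{X_O}, E_{X_O} \otimes_{\mathcal{O}_{X_O}^\dagger} \Omega^{k\dagger}_{X_O})$ with $M(E) \otimes_{A_\Lambda^\dagger} \Omega^k_{A_\Lambda^\dagger}$, using the colimit description of differentials recorded before the statement. I expect the main obstacle to be precisely this final identification: one must verify that global sections carry the overconvergent de Rham differential on the tube to the differential of $A_\Lambda^\dagger$ over $\Lambda$, and that tensoring with the locally free $\Omega^{k\dagger}_{X_O}$ commutes with $\Gamma$ on each affinoid $V_n$ before passing to the colimit. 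Granting this, the two de Rham complexes agree and the formula follows.
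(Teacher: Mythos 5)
Your proposal is correct and follows essentially the same route as the paper: the paper's own proof is a terse appeal to "the above discussion" (the cofinal affinoid system $\{V_n\}$ and the colimit identifications from Theorem \ref{glumd} and its corollaries), declaring the first assertion a reinterpretation of Theorem \ref{crismic} and the second a reinterpretation of Corollary \ref{cohdR}, which is precisely the decomposition you spell out (crismic composed with the fully faithful forgetful functor from Lemmas \ref{mainstrat} and \ref{MICQ} and the colimit equivalence, then cohdR plus termwise acyclicity via Propositions \ref{limsh}, \ref{limcoh} and \ref{hknul}). Your version merely makes explicit the details the paper leaves implicit.
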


\begin{proof}
Since $\{V_n\}$ form a cofinal system of affinoid neighborhood of $X_{S}$ in $X_{O}$, this follows from the above discussion.
More precisely, the first (resp.\ second) assertion is a reinterpretation of theorem \ref{crismic} (resp.\ corollary \ref{cohdR}).
\end{proof}

When $M$ belongs to the essential image, we may call the connection \emph{overconvergent} and reformulate the statement as an equivalence.
It is also possible to give a more concrete description of this property.
We may also notice that, in general, a crystal $E$ is finitely presented if and only if $\mathrm M(E)$ is coherent.

We keep the above assumptions and notations and we also give ourselves a morphism of smooth affine schemes $f : Y \to X$ over $R$.
If we write $Y = \mathrm{Spec}(B)$, then we may choose a presentation $R[T_1, \ldots, T_M] \twoheadrightarrow B$ and deduce a locally closed embedding $Y \hookrightarrow \mathbb P^M_{R}$.
We may then consider the diagram
\[
\xymatrix{Y_C \ar@{=}[d] \ar@{^{(}->}[r] & \mathbb P^M_{S} & Y_O \ar[l] \ar@{=}[d]
\\ Y_C \ar@{^{(}->}[r] \ar[d]^{f_C} & \mathbb P^N_{S} \times \mathbb P^M_{S} \ar[d]^{p_{1}} \ar[u]^{p_{2}} & Y_O \ar[l] \ar[d]^{f_O}
\\X_C \ar@{^{(}->}[r] & \mathbb P^N_{S} & X_O \ar[l]}
\]
where the upper map is a strict neighborhood.

%%%%%%%%%%%%%%%%%%%%%%
\begin{prop} \label{finMW}
\begin{enumerate}
\item If $E$ is a finitely presented crystal on $(X_C/O)^\dagger$, then $f_C^{-1}E$ is a finitely presented crystal on $(Y_C/O)^\dagger$ and
\[
M(f_C^{-1}E) \simeq B^\dagger_\Lambda \otimes_{A^\dagger_\Lambda}M(E).
\]
\item \label{finMW2} If $f$ is finite and $E$ is a finitely presented crystal on $(Y_C/O)^\dagger$, then $f_{C*}E$ is a finitely presented crystal on $(X_C/O)^\dagger$,
\[
M(f_{C*}E) \simeq M(E) \quad \mathrm{and} \quad \mathrm R^k f_{C*}E = 0 \ \mathrm{for}\ k > 0.
\]
\item \label{finMW3} If $f$ is a Galois covering and $E$ is a finitely presented crystal on $(X_C/O)^\dagger$, then $E$ is a direct factor in $f_{C*}f_C^{-1}E$.
\end{enumerate}
\end{prop}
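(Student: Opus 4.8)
The plan is to treat the three assertions in order, exploiting the crystalline computation of cohomology furnished by corollaries \ref{cohdR} and \ref{cohdR2} and the crystalline-preservation result of proposition \ref{fincrys}. Throughout I write $]f_O[$ for the finite map $\,]Y[_{Y_O}\to\,]X[_{X_O}$ induced on tubes.

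For assertion (1), I would first note that $f_C^{-1}$ is inverse image along a morphism of overconvergent sites, hence carries crystals to crystals by lemma \ref{loccrys} and preserves constructibility, and in fact finite presentation, by lemma \ref{cons1} together with the locality of finite presentation under pullback. To compute $\mathrm M(f_C^{-1}E)$, I use that $E$ is a crystal: lemma \ref{crisfi2} gives $(f_C^{-1}E)_{Y_O}=\,]f_O[^\dagger E_{X_O}$, the linear pullback of the realization of $E$ along $]f_O[$. Passing to global sections over the affinoid tubes, and using $A_\Lambda^\dagger=\Gamma(\,]X[_{X_O},\mathcal O^\dagger)$, $B_\Lambda^\dagger=\Gamma(\,]Y[_{Y_O},\mathcal O^\dagger)$, yields the base-change isomorphism $\mathrm M(f_C^{-1}E)\simeq B_\Lambda^\dagger\otimes_{A_\Lambda^\dagger}\mathrm M(E)$; the commutation of the tensor product with these sections is the colimit computation of corollary \ref{limhom}.

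For assertion (2), the displayed diagram exhibits $(Y_C,Y_O)\to(X_C,X_O)$ as a geometric materialization, smoothness and properness coming from the bundle $\mathbb P^N_S\times\mathbb P^M_S\to\mathbb P^N_S$, while the adic map $Y_O\to X_O$ is finite because $f$ is. Thus proposition \ref{fincrys} applies and shows every finitely presented crystal $E$ on $(Y_C/O)^\dagger$ is $f_C$-crystalline, so that each $\mathrm R^k f_{C*}E$ is a crystal. I would then read off these crystals from their realization: corollary \ref{cohdR2} gives $(\mathrm R f_{C*}E)_{X_O}\simeq\mathrm R]f_O[_{\mathrm{dR}}E_{Y_O}$, the relative rigid cohomology of the finite morphism $]f_O[$. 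The main obstacle is to show that this relative de Rham cohomology degenerates, namely $\mathrm R^k]f_O[_{\mathrm{dR}}E_{Y_O}=0$ for $k>0$ and $\mathrm R^0]f_O[_{\mathrm{dR}}E_{Y_O}=\,]f_O[_*E_{Y_O}$. This is the analogue of lemma \ref{finbas}, where smoothness plus finiteness forced $\Omega^{1\dagger}=0$; here the fibres of $f$ are $0$-dimensional, so the finiteness of $]f_O[$ on the tubes should make the higher cohomology vanish and identify $\mathrm R^0$ with the naive pushforward. Granting this, $f_{C*}E=\mathrm R^0 f_{C*}E$ is a crystal with realization $]f_O[_*E_{Y_O}$, which is coherent, hence finitely presented; taking global sections over the finite map gives $\mathrm M(f_{C*}E)\simeq\mathrm M(E)$ and $\mathrm R^k f_{C*}E=0$ for $k>0$.

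For assertion (3), a Galois covering $f$ is finite étale with group $G=\mathrm{Aut}(Y/X)$ of order $d=\deg f$, so assertions (1) and (2) apply and $f_{C*}f_C^{-1}E$ is again a finitely presented crystal. I would produce the splitting by the classical averaging argument: the adjunction unit $\eta\colon E\to f_{C*}f_C^{-1}E$ composed with the normalized trace $\tfrac1d\,\mathrm{tr}_f\colon f_{C*}f_C^{-1}E\to E$ is the identity of $E$, since $\mathrm{tr}_f\circ\eta$ is multiplication by the degree $d$. As everything is defined over $\mathbb Q$, the integer $d$ is invertible, so $\eta$ is a split monomorphism and $E$ is a direct factor of $f_{C*}f_C^{-1}E$. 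The points to verify are that the trace map exists on crystals, being the trace of the finite locally free map $]f_O[$ and hence horizontal for the connection, and that $\mathrm{tr}_f\circ\eta=d$, which is a local and standard verification for finite étale covers.
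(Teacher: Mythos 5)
Your proposal is correct and follows essentially the same route as the paper's own (very terse) proof: assertion (1) from the realization formula $(f_C^{-1}E)_{Y_O} = ]f_O[^\dagger E_{X_O}$, assertion (2) from proposition \ref{fincrys} combined with the finiteness degeneration, and assertion (3) from the trace map argument in characteristic zero. The only refinement worth noting is that the degeneration you leave as ``granting this'' requires no new argument: lemma \ref{finbas} applies verbatim to the geometric materialization $(Y_C,Y_O)\to(X_C,X_O)$ with finite adic map (taking $(X_C,X_O)$ as the base overconvergent space), rather than merely serving as an analogue, and this is exactly how the paper concludes.
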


\begin{proof}
The first assertion follows from the fact that if $E$ is a module on $(X_C/O)^\dagger$, then $(f_C^{-1}E)_{Y_O} = ]f[_C^\dagger E_{X_O}$.
The second one is a consequence of lemma \ref{finbas} (and proposition \ref{fincrys}).
The last one follows from the first two (using the trace map).
\end{proof}

We can globalize:

%%%%%%%%%%%%%%%%%%%%%%%%%%
\begin{thm} \label{finet}
Let $(C \hookrightarrow S \leftarrow O)$ be an analytic convergent space over $\mathbb Q$.
Let $X$ be a smooth scheme over $C$ and $f : Y \to X$ a finite étale map.
If $E$ is a finitely presented crystal on $(Y/O)^\dagger$, then $f_{*}E$ is a finitely presented crystal on $(X/O)^\dagger$ and $R^kf_{*}E = 0$ for $k > 0$.
\end{thm}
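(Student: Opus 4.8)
The plan is to reduce the statement to the affine Monsky--Washnitzer situation of proposition \ref{finMW} and to quote assertion \eqref{finMW2} there. All three conclusions — that $f_*E$ is a crystal, that it is finitely presented, and that $\mathrm R^kf_*E=0$ for $k>0$ — can be tested on the realizations $(\mathrm R^kf_*E)_V$ and on the transition maps, hence are local on $X$: restricting to an open $X'\subset X$ commutes with $\mathrm Rf_*$ by lemma \ref{basfib} applied to the $2$-cartesian square obtained by pulling $f$ back along the open immersion $(X'/O)^\dagger\hookrightarrow (X/O)^\dagger$, and along this pullback $f^{-1}(X')\to X'$ stays finite étale while $E_{|f^{-1}(X')}$ stays a finitely presented crystal; moreover being a crystal, being finitely presented and vanishing are all detected by an open cover of $X$ (lemma \ref{loccrys}). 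It is therefore enough to prove the statement after replacing $X$ by the members of an affine open cover, and after shrinking $S$ accordingly we may assume we are in the Monsky--Washnitzer setting $S=\mathrm{Spf}(R^{/I})$, $C=\mathrm{Spf}(R/I)$ with $R$ noetherian (and, replacing $R$ by $R^{/I}$, $I$-adically complete) and $X$ affine.

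By the theorem of Arabia recalled after the definition of a geometric materialization, any smooth affine scheme over $C$ has the form $X_C$ for a smooth affine $R$-scheme $X_0$, so we may write our base as $X_0{}_C$. The crux of the reduction is to lift $f$ to a \emph{finite} morphism $f_0\colon Y_0\to X_0$ of smooth affine $R$-schemes with $f=(f_0)_C$. This is exactly where the étale hypothesis is used: $Y$ is then smooth over $C$, and, $(R,I)$ being a henselian pair, finite étale covers are rigid, so the finite étale cover $Y\to X_0{}_C$ deforms uniquely along the formal thickening $C\hookrightarrow S$ and — possibly after a further shrinking of $X_0$ to algebraize — comes from a finite étale cover $f_0\colon Y_0\to X_0$; being finite étale over a smooth affine $R$-scheme, $Y_0$ is itself smooth affine over $R$ and $f_0$ is finite.

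With this data in place $f=(f_0)_C$ is the base change of a finite morphism of smooth affine $R$-schemes, and proposition \ref{finMW}\eqref{finMW2} applies verbatim on $(X/O)^\dagger=(X_0{}_C/O)^\dagger$: it gives that $f_*E=(f_0)_{C*}E$ is a finitely presented crystal and that $\mathrm R^kf_*E=0$ for $k>0$. Gluing the affine pieces by the locality recorded in the first paragraph yields the theorem. (Equivalently, one may avoid descending to $R$ by lifting the cover to a finite étale $W\to V$ over a geometric materialization $(X,V)\to(C,O)$, applying proposition \ref{fincrys} to get $f$-crystallinity and then corollary \ref{cohdR2} together with $\Omega^{1\dagger}_{W/V}=0$ and finiteness of $u$ to obtain $(f_*E)_V=\,]f[_{u*}E_W$ coherent and the vanishing of higher direct images; both arguments turn on the same lifting.)

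The only genuine obstacle is this lifting step: producing, uniquely and compatibly with the given crystal, the finite étale cover over the thickening and then algebraizing it so that proposition \ref{finMW}\eqref{finMW2} becomes directly applicable. Once $f$ is exhibited as the special fibre of a finite map of smooth schemes everything else is a citation, so care is only needed to check that the unique deformation supplied by the henselian pair can be taken finite (over a small enough $X$) and that the open cover of $X$ chosen for the locality argument is compatible with putting the base in the form $\mathrm{Spf}(R^{/I})$; neither point is serious.
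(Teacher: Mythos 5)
Your overall route is exactly the paper's: localize on $X$ and $O$ to the affine Monsky--Washnitzer situation $S=\mathrm{Spf}(R^{/I})$, $C=\mathrm{Spec}(R/I)$, lift $f$ to a finite étale morphism of smooth affine $R$-schemes, and quote assertion \ref{finMW2} of proposition \ref{finMW}; the paper compresses the entire lifting step into the citation of \cite{Arabia01}. The gap is in your replacement justification for that lifting. Henselianness of the pair $(R,I)$ is \emph{not} inherited by $(\Gamma(X_0),I\Gamma(X_0))$ for a smooth affine lift $X_0$ of $X$: the pair $(\mathbb Z_p[t],(p))$ is not henselian, and correspondingly an Artin--Schreier cover of $\mathbb A^1_{\mathbb F_p}$ lifts to \emph{no} finite étale cover of $\mathbb A^1_{\mathbb Z_p}$ (any such cover is split over the geometric generic fibre because $\pi_1(\mathbb A^1_{\mathbb C_p})=1$, hence is pulled back from a finite étale $\mathbb Q_p$-algebra, hence has special fibre a constant-field cover, never the geometrically connected Artin--Schreier cover). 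So "rigidity of finite étale covers for the henselian pair $(R,I)$" only yields the compatible system of covers over the infinitesimal neighborhoods $X_0\otimes_R R/I^{n+1}$, i.e.\ a finite étale cover of the $I$-adic \emph{completion} of $X_0$; the actual mathematical content of the step, which you wave off as "not serious", is the algebraization of this formal cover to a finite étale cover of an honest smooth affine $R$-scheme with special fibre $X$. That algebraization is precisely what the paper's appeal to \cite{Arabia01} is doing, and the Artin--Schreier example shows it genuinely fails without some shrinking or re-choice of the lift.

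The step can be repaired, in either of two ways, and then the rest of your argument (the locality reduction via lemmas \ref{basfib}, \ref{loccrys} and \ref{Zcohd}, and the final citation of proposition \ref{finMW}) goes through. First, since you have already made the statement local on $X$, localize further so that $B=\Gamma(Y)$ is monogenic over $A=\Gamma(X)$, say $B=A[y]/(\bar F)$ with $\bar F$ monic and $\mathrm{disc}(\bar F)$ a unit; lift $\bar F$ to a monic $F\in A_0[y]$ over the Arabia lift $A_0$ and invert $\mathrm{disc}(F)$, which is a unit modulo $I$, so that this removes a closed set disjoint from $X$: one gets a finite étale lift over a Zariski shrinking of $X_0$ still containing $X$. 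Second, one can instead use the henselian pair $(\widehat{A_0},I\widehat{A_0})$ (completions, not $(R,I)$) together with a limit argument: the formal cover then descends to a finite étale cover of some \emph{étale} neighborhood of $X$ in $X_0$, which is again a smooth affine $R$-scheme with special fibre $X$ and hence equally usable in proposition \ref{finMW}. As written, though, your henselian-pair invocation proves nothing, and note that your parenthetical alternative (lifting the cover to a finite étale $W\to V$ over a geometric materialization and using propositions \ref{fincrys} and \ref{cohdR2}) silently requires the very same lifting.
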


\begin{proof}
The first question is local on $X$ and $O$.
In particular, we may assume that $O = \mathrm{Spa}(\Lambda, \Lambda^+)$ and that there exists a noetherian ring $R$ and an ideal $I \subset R$ such that $C = R/I$ and $S = R^{/I}$. We may also assume that $X$ is affine.
Thanks to \cite{Arabia01}, we can lift $f$ over $R$ and use assertion \ref{finMW2} of proposition \ref{finMW}.
\end{proof}

%%%%%%%%%%%%%%%%%%%
\section{Descent}

Effective descent in rigid cohomology was addressed only recently by Christopher Lazda in  \cite{Lazda19}.
Cohomological descent was studied earlier (\cite{ChiarellottoTsuzuki03}, \cite{Tsuzuki03}), mostly because the original definition of Berthelot needed some gluing.

There exists a more recent approach of cohomological descent by Zureick-Brown (\cite{ZureickBrown14}).
Unfortunately, a recurrent argument in his proof is not valid\footnote{The argument given on top of page of page 19 in \cite{ZureickBrown14} is not correct either.} and it seems actually necessary to use induction on the dimension as in Tsuzuki's.
More precisely, it is stated in \cite{ZureickBrown14} (remark 5.3 used twice in the proof of lemma 5.14) that if we have a morphism of analytic overconvergent spaces
\[
\xymatrix{Y \ar@{^{(}->}[r] \ar@{->>}[d]^f & Q  \ar[d]^v& V  \ar[l] \ar@{=}[d] \\
X \ar@{^{(}->}[r] & P  & V  \ar[l].}
\]
with $f$ surjective, then $\,]Y[_V = \,]X[_V$.
This is clearly false in general as the following example shows:
assume that $X = \mathbb A_{\mathbb Z_p}$, $P := \widehat {\mathbb A}_{\mathbb Z_p}$ and $V = \mathbb D_{\mathbb Q_p}(0, 1^+)$.
If $Q$ denotes the blowup of the origin in $P$ and $Y$ is the strict transform of $X$, then $\,]X[_V = \mathbb D_{\mathbb Q_p}(0, 1^+)$ is a disk but $\,]Y[_V = \mathbb A_{\mathbb Q_p}(|p|^-,1)$ is an annulus.

We will freely use our results from sections \ref{apdesc}, \ref{formaco} and \ref{apcdes} of the appendix.

%%%%%%%%%%%%%%%%%%%%%
\subsection{Descent on overconvergent spaces}

We consider here descent with respect to  crystals on overconvergent spaces.
In other words, our base site is the absolute overconvergent site.

We first need to translate descent conditions in terms of tubes.

Let $(f,u) : (Y,W) \to (X,V)$ be a morphism of overconvergent spaces which admits self products.
For $i \in \mathbf\Delta$, we let
\[
Y(i) := \underbrace{Y \times_X \cdots \times_X Y}_{i+1} \quad \mathrm{and} \quad W(i) :=  \underbrace{W \times_V \cdots \times_V W}_{i+1})
\]
and we write
\[
(Y,W)(i) := (Y(i), W(i)) \quad \mathrm{and} \quad \,]Y[_W^\dagger(i) := \,]Y(i)[_{W(i)}^\dagger.
\]
Note that $(Y,W)(i)$ (resp.\ $\,]Y[_W^\dagger(i)$) is the fibered product in the category of overconvergent spaces (resp.\ germs of adic spaces).
For the later, we can rely on proposition 4.20 in \cite{LeStum17*}.
It follows from lemma \ref{comphi} that there exists a morphism of (augmented) topoi
\[
\xymatrix{
\widetilde{(Y,W)(\bullet)} \ar[rr]^-{\varphi_W(\bullet)} \ar[d]^{(f,u)_\epsilon} && \widetilde{\,]Y[_W^\dagger(\bullet)} \ar[d]^{]f[_{u\epsilon}}
 \\ \widetilde{(X,V)} \ar[rr]^-{\varphi_V} && \widetilde{\,]X[_V^\dagger}
}
\]
(in which the $\epsilon$ indicates that the morphism lies over the final map $\mathbf\Delta \to \{0\}$).
We will consider $]f[_{u\epsilon}$ as a morphism of \emph{ringed} topoi in the obvious way.
If $\mathcal F$ is an $\mathcal O_V^\dagger$-module such that the adjunction map is an isomorphism
\[
\mathcal F \simeq \mathrm R ]f[_{u\epsilon*} ]f[_{u\epsilon}^{\dagger} \mathcal F,
\]
then we will say that $]f[_u$ satisfies \emph{cohomological descent} with respect to $\mathcal F$.
Be careful that we consider here cohomological descent with respect to \emph{module} pullback (meaning $]f[^\dagger$) and not naive pullback (meaning $]f[^{-1}$) as we do in the appendix.

Here is the first elementary result:

%%%%%%%%%%%%
\begin{prop} \label{germarg}
If $(f, u) : (Y,W) \to (X,V)$ is a morphism of overconvergent spaces and $E$ is a crystal on $(X,V)$, then the following are equivalent:
\begin{enumerate}
\item $(f, u)$ satisfies universal cohomological descent with respect to $E$,
\item $(f, u)$ satisfies cohomological descent with respect to $E$,
\item given any cartesian diagram
\[
\xymatrix{(Y',W') \ar[d]^{(f',u')} \ar[r]^{(g',v')} & (Y,W) \ar[d]^{(f,u)} \\(X',V') \ar[r]^{(g,v)} & (X,V),}
\]
$]f'[_{u'}$ satisfies cohomological descent with respect to $E_{V'}$.
\end{enumerate}
\end{prop}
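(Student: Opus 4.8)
The plan is to prove $(1)\Rightarrow(2)$ trivially, to establish $(2)\Leftrightarrow(3)$ as the heart of the argument, and to close the loop with $(3)\Rightarrow(1)$ using only the transitivity of base change. The guiding principle throughout is that a sheaf (or crystal) on the big representable site $(X,V)$ is determined by its realizations on all overconvergent spaces over it, and that a morphism of sheaves is an isomorphism iff it is one on every realization (corollary \ref{comlim}); hence every descent statement on $(X,V)$ can be tested tube by tube.

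For $(2)\Leftrightarrow(3)$, I would fix an arbitrary object $(g,v)\colon(X',V')\to(X,V)$ and compute the realization on $(X',V')$ of both sides of the putative isomorphism $E\simeq \mathrm R(f,u)_{\epsilon*}(f,u)_\epsilon^\dagger E$. Since the \v Cech nerve is formed by self-products over $(X,V)$, base change preserves it, so the square relating $W'(\bullet)$, $W(\bullet)$, $(X',V')$ and $(X,V)$ is $2$-cartesian with representable vertical maps; the simplicial version of lemma \ref{cartcdb} then yields
\[
\bigl(\mathrm R(f,u)_{\epsilon*}(f,u)_\epsilon^\dagger E\bigr)_{V'}\simeq \mathrm R]f'[_{u'\epsilon*}\bigl((f,u)_\epsilon^\dagger E\bigr)_{W'(\bullet)}.
\]
Because $E$ is a crystal, realization commutes with module pullback (lemma \ref{crisfi2}), so the inner term is $]f'[_{u'\epsilon}^\dagger E_{V'}$, while the left-hand side realizes to $E_{V'}$. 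Thus the realization on $(X',V')$ of the descent isomorphism is exactly the tube-level adjunction
\[
E_{V'}\simeq \mathrm R]f'[_{u'\epsilon*}]f'[_{u'\epsilon}^\dagger E_{V'},
\]
which is cohomological descent for $]f'[_{u'}$ with respect to $E_{V'}$. By corollary \ref{comlim}, the map on $(X,V)$ is an isomorphism precisely when all these realizations are, and this is exactly the content of $(3)$; hence $(2)\Leftrightarrow(3)$.

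The remaining implications are formal. Taking $(g,v)=\mathrm{id}$ gives $(1)\Rightarrow(2)$. For $(3)\Rightarrow(1)$, I would fix a base change $(X',V')\to(X,V)$ and apply the equivalence $(2)\Leftrightarrow(3)$ just proved, now to the morphism $(f',u')$ over the base $(X',V')$ and the crystal $E_{|(X',V')}$: descent for $(f',u')$ holds iff for every $(X'',V'')\to(X',V')$ the tube map of the doubly base-changed morphism satisfies descent with respect to $(E_{|(X',V')})_{V''}$. Since $(E_{|(X',V')})_{V''}=E_{V''}$ by the restriction-compatibility of realization, and since every such $(X'',V'')$ is itself an object over $(X,V)$ for which the iterated base change of $(f,u)$ is a single base change, each of these conditions is one of the hypotheses of $(3)$ for $(X,V)$. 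Hence $(3)$ forces descent for $(f',u')$, i.e.\ $(1)$.

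The main obstacle I expect is not conceptual but the bookkeeping in the core computation: lemma \ref{cartcdb} is stated one level at a time, whereas $(f,u)_\epsilon$ is a morphism over the augmentation $\mathbf\Delta\to\{0\}$, so $\mathrm R(f,u)_{\epsilon*}$ is a totalization of the cosimplicial derived pushforwards. I would therefore need to verify that the realization functor $\varphi_{V'*}$ commutes with this totalization; this follows because $\varphi_{V'*}$ is exact and preserves injectives, so it may be computed levelwise on a cosimplicial injective resolution, and the levelwise base-change identities above then assemble. The same exactness is what lets me pass freely between the underived adjunction and its derived form, so once the simplicial enhancement of lemma \ref{cartcdb} is set up carefully the argument is essentially formal.
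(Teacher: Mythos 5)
Your proposal is correct and follows essentially the same route as the paper: the core is the computation of the realization of $\mathrm R(f,u)_{\epsilon*}(f,u)_{\epsilon}^{-1}E$ on an arbitrary $(X',V')$ over $(X,V)$ — via base change for fibered categories, commutation of $\varphi_{V'*}$ with the simplicial pushforward, and the crystal property — yielding $\mathrm R]f'[_{u'\epsilon*}]f'[_{u'\epsilon}^{\dagger}E_{V'}$, so that conservativity of realizations gives $(2)\Leftrightarrow(3)$, and $(3)\Rightarrow(1)$ follows from stability of condition $(3)$ under pullback. Your explicit treatment of the simplicial bookkeeping (levelwise injectives and exactness of $\varphi_{V'*}$) makes precise a point the paper leaves implicit, but it is the same argument.
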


\begin{proof}
If $E'$ is a crystal on $(X', V')$, then we have (by definition of a crystal)
\[
\varphi_{V'}(\bullet)_*(f',u')_{\epsilon}^{-1}  E'=  ]f'[_{u'\epsilon}^{\dagger} \varphi_{V'*} E'.
\]
It follows that
\begin{align*}
(\mathrm R (f,u)_{\epsilon*} (f,u)_{\epsilon}^{-1} E)_{V'} &= \varphi_{V'*}(g,v)^{-1}\mathrm R (f,u)_{\epsilon*} (f,u)_{\epsilon}^{-1} E
\\ &= \varphi_{V'*}\mathrm R (f',u')_{\epsilon*} (g,v)(\bullet)^{-1} (f,u)_{\epsilon}^{-1} E
\\ &= \mathrm R ]f'[_{u'\epsilon*} \varphi_{V'}(\bullet)_* (f',u')_{\epsilon}^{-1}  (g,v)^{-1} E
\\ &= \mathrm R ]f'[_{u'\epsilon*}  ]f'[_{u'\epsilon}^{\dagger} \varphi_{V'*} (g,v)^{-1} E
\\ &= \mathrm R ]f'[_{u'\epsilon*}  ]f'[_{u'\epsilon}^{\dagger} E_{V'}.
\end{align*}
Thus, the last two conditions are equivalent and the very last therefore implies the first one because the condition is stable under pullback.
\end{proof}

We will also use systematically the fact that, thanks to lemma \ref{eqrep}, $(f, u)$ satisfies effective descent with respect to a crystal $E$ if and only if $]f[_{u}$ satisfies effective descent with respect to $E_{V}$.

Recall that a morphism is said to satisfy \emph{total descent} if it satisfies simultaneously universal cohomological descent and universal effective descent.

The very naive following consequence of proposition \ref{germarg} will prove itself quite useful:

%%%%%%%%%%%%%%%%%%%%%%%%%%%
\begin{cor} \label{cormag}
Let $(f,u) : (Y,W) \to (X,V)$ be a morphism of overconvergent spaces such that $\,]Y[_{W}^\dagger \simeq \,]X[_{V}^\dagger$.
Then, $(f,u)$ satisfies total descent with respect to  crystals.
\end{cor}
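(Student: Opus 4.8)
The plan is to reduce the statement to a computation on the associated germs of adic spaces, using that crystals are governed entirely by their realizations on tubes (lemma \ref{eqrep}) and that cohomological descent conditions translate into statements about the tubes (proposition \ref{germarg}). The hypothesis $\,]Y[_W^\dagger \simeq \,]X[_V^\dagger$ is exactly the condition that the morphism $]f[_u$ is an isomorphism of germs, so the whole argument should consist of transporting descent through this isomorphism.

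First I would handle effective descent. By the remark after proposition \ref{germarg}, $(f,u)$ satisfies effective descent with respect to a crystal $E$ if and only if $]f[_u$ satisfies effective descent with respect to $E_V$ (as $\mathcal O^\dagger$-modules, via the identification $\mathrm{Cris}(X,V) \simeq \mathrm{Mod}(\mathcal O_V^\dagger)$ of lemma \ref{eqrep}). Since $]f[_u : \,]Y[_W^\dagger \to \,]X[_V^\dagger$ is an isomorphism of ringed germs, pullback $]f[_u^\dagger$ is an equivalence on modules, and effective descent along an isomorphism is automatic. For universality I would invoke the third condition of proposition \ref{germarg}: given any cartesian diagram pulling back along $(g,v) : (X',V') \to (X,V)$, the induced morphism on germs $\,]Y'[_{W'}^\dagger \to \,]X'[_{V'}^\dagger$ is again an isomorphism (formation of germ fibered products is compatible with base change, using proposition 4.20 of \cite{LeStum17*}), so the same argument applies fiberwise.

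Next I would treat cohomological descent in the same spirit. By proposition \ref{germarg} it suffices to check cohomological descent of $]f[_u$ with respect to $E_V$, that is, that the adjunction map
\[
E_V \to \mathrm R]f[_{u\epsilon *}]f[_{u\epsilon}^\dagger E_V
\]
is an isomorphism. Here the simplicial object $\,]Y[_W^\dagger(\bullet)$ is the \v Cech nerve of $]f[_u$ in germs, but since $]f[_u$ is an isomorphism, every $\,]Y[_W^\dagger(i) \simeq \,]X[_V^\dagger$ and all the transition maps of the nerve are isomorphisms. Thus the simplicial topos is constant (homotopically trivial), its cohomology collapses to degree zero, and the augmentation map is an isomorphism. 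Again universality follows from the base-change compatibility of germ products, so condition (3) of proposition \ref{germarg} is met for every pullback.

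The main obstacle, such as it is, lies in checking that the formation of self-products $(Y,W)(i)$ and their germs $\,]Y[_W^\dagger(i)$ genuinely commutes with the hypothesis and with base change, so that the isomorphism $\,]Y[_W^\dagger \simeq \,]X[_V^\dagger$ propagates to all simplicial levels and to all cartesian pullbacks. This is precisely where proposition 4.20 of \cite{LeStum17*} (fibered products of germs) and the construction of the augmented simplicial topos preceding corollary \ref{cormag} are needed, and I would simply cite them. Once these compatibilities are in place the collapse of the \v Cech nerve of an isomorphism is formal, and combining it with the effective descent argument gives total descent.

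\begin{proof}
By the observation following proposition \ref{germarg} together with lemma \ref{eqrep}, it suffices to show that $]f[_u$ satisfies universal cohomological descent and universal effective descent with respect to $E_V$ for every crystal $E$ on $(X,V)$.

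We verify condition (3) of proposition \ref{germarg}. Let
\[
\xymatrix{(Y',W') \ar[d]^{(f',u')} \ar[r]^{(g',v')} & (Y,W) \ar[d]^{(f,u)} \\(X',V') \ar[r]^{(g,v)} & (X,V)}
\]
be a cartesian diagram. Since formation of germs of adic spaces commutes with the fibered products defining self-products (proposition 4.20 of \cite{LeStum17*}) and with base change, the hypothesis $\,]Y[_W^\dagger \simeq \,]X[_V^\dagger$ gives an isomorphism of germs $]f'[_{u'} : \,]Y'[_{W'}^\dagger \simeq \,]X'[_{V'}^\dagger$, and moreover $\,]Y'[_{W'}^\dagger(i) \simeq \,]X'[_{V'}^\dagger$ for all $i$, all structure maps of the nerve being isomorphisms.

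For effective descent, $]f'[_{u'}^\dagger$ is then an equivalence of categories of $\mathcal O^\dagger$-modules, so effective descent along it is automatic. For cohomological descent, the augmented simplicial germ $\,]Y'[_{W'}^\dagger(\bullet)$ is constant on $\,]X'[_{V'}^\dagger$, so
\[
\mathrm R]f'[_{u'\epsilon *}]f'[_{u'\epsilon}^\dagger E_{V'} \simeq E_{V'}
\]
and the adjunction map is an isomorphism. Thus $]f'[_{u'}$ satisfies both effective and cohomological descent with respect to $E_{V'}$, which is condition (3) of proposition \ref{germarg}. Hence $(f,u)$ satisfies universal cohomological descent and universal effective descent with respect to every crystal, that is, total descent.
\end{proof}
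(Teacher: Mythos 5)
Your proof is correct and takes essentially the same route as the paper, whose entire proof is the one-line citation of proposition \ref{germarg} (together with lemma \ref{eqrep} for effective descent). What you have done is spell out the details the paper leaves implicit: that the germ isomorphism propagates to all pullbacks and all simplicial levels via proposition 4.20 of \cite{LeStum17*}, and that descent (both effective and cohomological) along an isomorphism of ringed germs is automatic.
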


\begin{proof}
Our assertion follows from proposition \ref{germarg} (and lemma \ref{eqrep} in the case of effective descent).
\end{proof}

%%%%%%%%%%%%%%%%%%%%%
\begin{prop} \label{finffl}
A morphism of analytic overconvergent spaces
\[
\xymatrix{Y \ar@{^{(}->}[r] \ar[d]^f & Q \ar[d]^v & \ar[l] W \ar[d]^u \\ X \ar@{^{(}->}[r] & P & \ar[l] V}
\]
with both squares cartesians and $v$ finite faithfully flat satisfies total descent with respect to  constructible crystals.
\end{prop}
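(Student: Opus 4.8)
The plan is to reduce everything to a statement about the finite faithfully flat map induced on the tubes, and then invoke classical faithfully flat descent together with the vanishing of higher direct images along finite maps. First I would observe that, since both squares are cartesian and $v$ is finite faithfully flat, the left square exhibits $f : Y \to X$ as the pullback of $v$ along $X \hookrightarrow P$, hence $f$ is finite faithfully flat, and likewise the right square exhibits $u : W \to V$ as the pullback of $v^{\mathrm{ad}}$, so $u$ is finite faithfully flat. By Lemma \ref{flatan} the induced map of ringed spaces $]f[_u : (\,]Y[_W, \mathcal O_W^\dagger) \to (\,]X[_V, \mathcal O_V^\dagger)$ is flat, and being finite and surjective it is faithfully flat. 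Proposition \ref{germarg} (together with the remark on effective descent following it, via Lemma \ref{eqrep}) then reduces universal total descent for $(f,u)$ with respect to crystals to cohomological and effective descent for $]f[_u$ with respect to the realizations $E_V$; and since finite faithfully flatness is stable under base change, it suffices to treat each $]f'[_{u'}$ on its own.

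For effective descent I would apply classical faithfully flat descent for modules along the finite flat ring map $\mathcal O_V^\dagger \to ]f[_{u*}\mathcal O_W^\dagger$, which yields the equivalence between $\mathcal O_V^\dagger$-modules and descent data; combined with Lemma \ref{eqrep} this gives effective descent at the level of crystals. The point requiring attention is constructibility. Pullback always preserves it (Lemma \ref{cons1}), so only the converse is at issue: given a descent datum of constructible crystals, the descended module $E_V$ has $]f[_u^\dagger E_V$ coherent on a good stratification of $Y$. Since $\mathcal O_V^\dagger$ is coherent (Theorem \ref{glumd}), finite presentation descends along the faithfully flat $]f[_u$; and because $f$ is finite, hence formally quasi-finite, the images of the strata of $Y$ assemble into a constructible filtration of $X$ on which $E_V$ becomes coherent, so Lemma \ref{cons1} lets us conclude that $E_V$ is $X$-constructible.

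For cohomological descent I would first reduce to the coherent case: using Proposition \ref{extcons} and noetherian induction on $\dim X$, every constructible realization sits in an exact sequence $0 \to \beta_\dagger E' \to E_V \to \alpha_* E'' \to 0$, and the functors $\beta_\dagger$ and $\alpha_*$ commute with $]f[_u^\dagger$ and with the relevant direct images (Lemma \ref{comgam} and Proposition \ref{dirim}), so cohomological descent for $E_V$ follows from the statements for coherent modules on the strata. In the coherent case, finiteness of $u$ makes $\mathrm R^k]f[_{u*}$ vanish for $k>0$ (as in the proof of Lemma \ref{finbas}), so the cosimplicial complex computing $\mathrm R]f[_{u\epsilon*}]f[_{u\epsilon}^\dagger E_V$ collapses to the underived Amitsur complex, whose exactness is precisely faithfully flat descent.

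The main obstacle I anticipate is the descent of constructibility in the effective part: transferring a good stratification from $Y$ down to $X$ and descending the coherence of $E_V$ along the finite flat tube map. This is where the coherence of $\mathcal O_V^\dagger$ (Theorem \ref{glumd}), fppf descent of finite presentation, and the quasi-finiteness input of Lemma \ref{cons1} must all be combined; by contrast the cohomological side, once reduced to coherent sheaves, is the formal conjunction of finiteness and faithfully flat descent.
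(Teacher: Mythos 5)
Your handling of the finitely presented case and of cohomological descent for constructibles essentially matches the paper: reduction via Proposition \ref{germarg} and Lemma \ref{eqrep} to the tube map, localization to affinoid $V$, Theorem \ref{glumd} and Proposition \ref{dirim} to reach ring-level faithfully flat descent, and then, for general constructible crystals, the extension sequence of Proposition \ref{extcons} plus induction on $\dim X$ together with the compatibilities of $\beta_\dagger$ and $\alpha_*$ with pullback and pushforward (the paper packages this last step as Lemma \ref{desdag}).

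The genuine gap is in effective descent for general constructible crystals. You invoke \emph{classical} faithfully flat descent for modules along $\mathcal O_V^\dagger \to ]f[_{u*}\mathcal O_W^\dagger$ to obtain an equivalence between \emph{all} $\mathcal O_V^\dagger$-modules and \emph{all} descent data, and then propose to check constructibility of the descended module afterwards. No such off-the-shelf theorem exists here: fppf descent of modules is a statement about rings (extended to quasi-coherent sheaves on schemes), whereas $]f[_u$ is a morphism of germs of adic spaces whose structure sheaves are related to genuine affinoid algebras only through the neighborhood-colimit mechanism of Theorem \ref{glumd} and Corollary \ref{limhom}, which applies to \emph{coherent} modules only. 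This is precisely why the paper never descends a general constructible module directly: it uses ring-level descent (via Theorems A and B) solely in the coherent case, proves full faithfulness of restriction to the sieve $T$ generated by $(f,u)$ abstractly via Lemma \ref{bijcons} (a step your proposal omits entirely), and obtains essential surjectivity by writing a constructible crystal $F$ on $T$ as an extension $0 \to \beta_\dagger F' \to F \to \alpha_* F'' \to 0$, descending the two pieces by induction and the finitely presented case, and then descending the \emph{extension class} via the comparison $\mathrm{Ext}(\alpha_*E'',\beta_\dagger E') \simeq \mathrm{Ext}(\alpha_*E''_T,\beta_\dagger E'_T)$, proved using Lemma \ref{extlim} (Ext as a colimit of Homs over neighborhoods $V'$) together with full faithfulness on each $V'$. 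Nothing in your proposal plays the role of this Ext-class descent, and even if you descend the graded pieces stratum by stratum, gluing them back requires exactly it. Finally, your appeal to Lemma \ref{cons1} to transfer constructibility does not apply as stated: the converse in its first assertion requires a local epimorphism, and $(Y,W) \to (X,V)$ is not one for the overconvergent topology, while its second assertion compares $X$- and $X'$-constructibility of a single module on a single site, not constructibility across a descent along $(f,u)$.
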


\begin{proof}
Let us first consider the case of finitely presented crystals.
By proposition \ref{germarg} (and lemma \ref{eqrep}), it is sufficient to prove that the morphism of germs $\,]Y[_{W}^\dagger \to \,]X[_{V}^\dagger$ satisfies cohomological and effective descent with respect to  coherent modules.
It follows respectively from proposition 3.10 and proposition 4.20 of \cite{LeStum17*} that $u$ is faithfully flat and that the diagram
\[
\xymatrix{\,]Y[^\dagger_W \ar@{^{(}->}[r] \ar[d] & W \ar[d]^u \\ \,]X[^\dagger_V \ar@{^{(}->}[r] & V}
\]
is cartesian.
Moreover, the question is local on $V$ that we may assume to be affinoid.
Thanks to theorem \ref{glumd} and proposition \ref{dirim}, our assertion will follow from cohomological and effective descent with respect to  coherent modules with respect to the finite faithfully flat morphism of Tate affinoid spaces $u : W \to V$.
Actually, thanks to theorems A and B, this turns out to be an immediate corollary of cohomological and effective descent with respect to faithfully flat ring maps (\cite[\href{https://stacks.math.columbia.edu/tag/023N}{Tag 023N}]{stacks-project} and \cite[\href{https://stacks.math.columbia.edu/tag/023M}{Tag 023M}]{stacks-project}).

We now consider the question of \emph{cohomological} descent with respect to  a general constructible crystal $E$ on $(X,V)$.
We know from proposition \ref{extcons} that there exists a dense open subset $U$ with closed complement $Z$, a constructible crystal $E''$ on $(Z,V)$, a finitely presented crystal $E'$ on $(U,V)$ and a short exact sequence
\[
0 \to \beta_{\dagger}E' \to E \to \alpha_{*}E'' \to 0
\]
where $\alpha : U \hookrightarrow X$ and $\beta : Z \hookrightarrow X$ denote the formal embeddings.
By induction on the dimension of $X$, we can assume that the pullback $Z' \to Z$ of $f$ satisfies total descent with respect to  $E'$.
Our assertion therefore follows from lemma \ref{desdag} below.

It remains to prove \emph{effective} descent with respect to  general constructible crystals.
It is important to recall from the second assertion in lemma \ref{cons1} that a crystal on $(Y,W)$ is constructible if and only if it is $X$-constructible.
There should therefore be no ambiguity here about the notion of constructibility.
Let us denote by $T$ the image of $(Y,W)$ in $(X,V)$, i.e.\ the sieve generated by $(f,u)$ (which is an overconvergent site).
We have to prove that the functor
\[
\mathrm{Cris}_{\mathrm{cons}}(X,V) \to \mathrm{Cris}_{\mathrm{cons}}(T)
\]
is an equivalence\footnote{Our assertion then means that $T \to (X,V)$ is a local isomorphism for the topology of total descent with respect to  constructible crystals.}.
Lemma \ref{bijcons} provides full faithfulness so that descent holds and it only remains to prove essential surjectivity to see that descent is effective.
If $F$ is a constructible crystal on $T$, then there exists a dense open subset $U \subset X$ with closed complement $Z$, a constructible crystal $F'$ on $T_Z$, a finitely presented crystal $F''$ on $T_U$ and a short exact sequence
\[
0 \to \beta_{\dagger}F' \to F' \to \alpha_{*}F'' \to 0
\]
where $\alpha : U \hookrightarrow X$ and $\beta : Z \hookrightarrow X$ denote the formal embeddings.
By induction on the dimension of $X$ (resp.\ from the finitely presented case) there exists a constructible crystal $E'$ on $(Z,V)$ (resp.\ a finitely presented crystal $E''$ on $(U,V)$) such that $F'=E'_T$ (resp.\ $F''=E''_T$).
It is therefore now sufficient to show that
\[
\mathrm{Ext} (\alpha_*E'',\beta_\dagger E')  \simeq \mathrm{Ext} (\alpha_*E_{T}'', \beta_\dagger E_{T}').
\]
The question is local on $V$ from the beginning.
We may therefore assume thanks to lemma \ref{extlim} that $E''$ extends to a finitely presented crystal on some sufficiently small neighborhood $V'$ of $U$ and that
\[
\mathrm{Ext} (\alpha_*E'',\beta_\dagger E') \simeq \varinjlim \mathrm{Hom} (\beta^{-1} E''_{|V'}, E'_{|V'}).
\]
Actually, lemma \ref{extlim} is stated on tubes but realization induces an equivalence between crystals on the overconvergent \emph{space} and modules on the tube.
For the same reason, we also have
\[
 \mathrm{Ext} (\alpha_*E_{T}'', \beta_\dagger E_{T}') \simeq \varinjlim \mathrm{Hom} (\beta^{-1} E''_{T|V'}, E'_{T|V'}).
\]
Actually, the property holds both on $(Y,W)$ and on the product $(Y \times_X Y, W \times_V W)$ and we have the interpretation of a module on $T$ as a module on $(Y,W)$ endowed with a descent datum.
Our assertion therefore follows from full-faithfulness applied on each $V'$.
\end{proof}

We used above the following lemma:

%%%%%%%%%%%%%%%
\begin{lem} \label{desdag}
In the situation of the proposition, assume that we are given a cartesian diagram
\[
\xymatrix{Y' \ar@{^{(}->}[r]^{\gamma'} \ar[d]^{f'} & Y \ar[d]^f \\ X' \ar@{^{(}->}[r]^\gamma & X}
\]
where $\gamma$ is a formal embedding.
If $(f', u)$ satisfies cohomological descent with respect to a crystal $E$, then $(f,u)$ satisfies cohomological descent with respect to $\gamma_\dagger E$.
\end{lem}

\begin{proof}
It is sufficient thanks to proposition \ref{germarg}, to show that
\[
]\gamma[_!\mathrm R ]f'[_{u\epsilon*}  ]f'[_{u\epsilon}^{\dagger} \mathcal F' \simeq \mathrm R ]f[_{u\epsilon*}  ]f[_{u\epsilon}^{\dagger} ]\gamma[_!\mathcal F'
\]
when $\mathcal F'$ is a module on $\,]X'[_V$.
From the fundamental spectral sequence, it is sufficient to prove that for all $i \geq 0$,
\[
]\gamma[_!\mathrm R ]f'(i)[_{u*}  ]f'(i)[_{u}^{\dagger} \mathcal F' \simeq \mathrm R ]f(i)[_{u*}  ]f(i)[_{u}^{\dagger} ]\gamma[_!\mathcal F'.
\]
Since our conditions are stable under pullback, it is sufficient to consider the case $i=0$ and prove that
\[
]\gamma[_!\mathrm R ]f'[_{u*}  ]f'[_{u}^{\dagger} \mathcal F' \simeq \mathrm R ]f[_{u*}  ]f[_{u}^{\dagger} ]\gamma[_!\mathcal F'.
\]
It follows from lemma \ref{comgam} that $]f[_{u}^{\dagger} \circ ]\gamma[_! = ]\gamma'[_! \circ ]f'[_{u}^{\dagger}$
and from proposition \ref{dirim} that
\[
\mathrm R ]f[_{u*} \circ ]\gamma'[_! = \mathrm R (]f[_{u*} \circ ]\gamma'[_!) =  \mathrm R (]\gamma[_! \circ ]f'[_{u*} ) = ]\gamma[_! \circ  \mathrm R ]f'[_{u*}.\qedhere
\]
\end{proof}

%%%%%%%%%%%%%%%%%%%%%%%%%%%%%%
\subsection{Descent on overconvergent sites}

Recall that an overconvergent site is called \emph{classic} if the underlying category is fibered in sets - or equivalently in equivalence relations.
They form a topos that we can identify with the category of all presheaves on the absolute overconvergent site.
Here, we only consider descent with respect to classic overconvergent sites (that we simply call overconvergent sites to make notations lighter).
It is very likely that all results are valid on general overconvergent sites but it would then be necessary to work over the $2$-category of all overconvergent sites\footnote{And I do not feel comfortable with that.}.

We will make extensive use of the following:
\begin{enumerate}
\item Any local epimorphism satisfies total descent,
\item Any morphism which is dominated by a morphism of total descent is automatically of total descent itself,
\item The property is stable under composition,
\item The property is stable under pullback.
\end{enumerate}

Recall that, if $X$ is a formal scheme, then we denote by $X^\dagger$ (resp.\ $X^{\dagger,\mathrm{an}}$) the overconvergent site whose objects are overconvergent spaces (resp.\ overconvergent analytic spaces) $(Y,W)$ endowed with a morphism of formal schemes $Y \to X$.

%%%%%%%%%
\begin{lem} \label{Zcohd}
If $X = \bigcup_{i \in I} X_i$ is an open or a closed covering, then the family $\{X_{i}^{\dagger,\mathrm{an}} \hookrightarrow X^{\dagger,\mathrm{an}} \}_{i \in I}$ satisfies total descent with respect to  crystals.
\end{lem}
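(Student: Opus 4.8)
The plan is to reduce, in both cases, to the single assertion that the family $\{X_{i}^{\dagger,\mathrm{an}} \hookrightarrow X^{\dagger,\mathrm{an}}\}_{i \in I}$ is a local epimorphism for the inherited topology, and then to invoke the first of the four properties recalled just above (a local epimorphism satisfies total descent). Since that property already bundles universal cohomological and universal effective descent, no separate treatment of effectivity is required once the covering claim is settled; this is what makes the statement worth isolating.

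To test whether the family is a local epimorphism it suffices, by the definition of the inherited topology and by the reduction to the representable situation afforded by proposition \ref{germarg} (together with lemma \ref{eqrep} on the effective side), to work over an arbitrary object $(Y,W) \to X^{\dagger,\mathrm{an}}$. Pulling the covering back along the structural map $Y \to X$ produces a covering $Y = \bigcup_{i} Y_{i}$ of the same nature, with $Y_{i} := Y \times_{X} X_{i}$, and the question becomes whether the induced family $\{(Y_{i},W) \to (Y,W)\}$ generates a covering sieve, i.e.\ whether the tubes $]Y_{i}[_{W}$ form an open covering of $]Y[_{W}$.

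First I would dispatch the open case. Here each $Y_{i}$ is open in $Y$, the embeddings $Y_{i} \hookrightarrow Y$ are open formal embeddings, and an open covering of the special fibre is directly part of the topology coming from the adic side; after passing to suitable strict neighbourhoods one sees that the $]Y_{i}[_{W}$ cover $]Y[_{W}$ by opens, so the family is a local epimorphism and fact (1) applies. For the closed case I would exploit the anti-continuity of the tube on analytic spaces, namely that the tube turns a closed subset into an open one (as recalled in the discussion of overconvergent direct image, and following proposition 4.27 of \cite{LeStum17*}): each $]Y_{i}[_{W}$ is then open in $]Y[_{W}$. The remaining point is that these open tubes genuinely \emph{cover} $]Y[_{W}$, which I would extract from the decomposition $]Y[_{W} = \bigcup_{n} [Y]_{W,n}$ of proposition 4.28 of \cite{LeStum17*} and its compatibility with the closed covering $Y = \bigcup_{i} Y_{i}$ (corollary 4.30). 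Granting this, the closed covering again realizes as an open covering of tubes, hence as a local epimorphism, and fact (1) concludes.

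The main obstacle is precisely this last verification in the closed case: that the open tubes $]Y_{i}[_{W}$ of the members of a closed covering cover the ambient tube $]Y[_{W}$. This is the delicate tube-theoretic statement, and it sits in the immediate vicinity of the pathology that invalidates the argument of \cite{ZureickBrown14} discussed at the start of this section, so I would be careful to derive it from the fibration and tube results of \cite{LeStum17*} rather than from any naive manipulation of the specialization map. The open case, by contrast, is routine, and everything else is formal once the covering-of-tubes is in hand.
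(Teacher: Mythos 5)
There is a genuine gap, and it sits exactly where you declared the argument ``routine'': the open case. Your plan rests on reducing both cases to the statement that the tubes $\,]Y_i[_W$ form an \emph{open} covering of $\,]Y[_W$, so that the family is a local epimorphism and total descent follows from property (1). But the anti-continuity of the tube — which you yourself invoke for the closed case — works against you here: on analytic spaces the tube of an \emph{open} formal subscheme is the \emph{closure} of its inverse image under specialization, hence a closed subset of the ambient tube, while it is the tube of a \emph{closed} subscheme that is open. So for an open covering $Y = \bigcup_i Y_i$ the tubes $\,]Y_i[_W$ form a finite \emph{closed} covering of $\,]Y[_W$; this is not a covering for the inherited topology (which comes from open coverings on the adic side), the family $\{(Y_i,W) \to (Y,W)\}$ is not a local epimorphism, and property (1) simply does not apply. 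Your proposal is internally inconsistent on this point: the mechanism you correctly use in the closed case contradicts the claim you make in the open case. You also have the difficulty reversed — the closed covering of $X$ is the easy case (genuine open covering of tubes, hence a local epimorphism), and the open covering of $X$ is the one that requires an argument going beyond the local-epimorphism formalism.

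What the paper actually does is the following: reduce (by locality and stability under pullback, via proposition \ref{germarg} and lemma \ref{eqrep}) to $T = (X,V)$ with $V$ Tate affinoid and $I$ finite, observe that one obtains a finite \emph{closed} (resp.\ open) covering of the germ $\,]X[_V^\dagger$ according as the covering of $X$ is open (resp.\ closed), and then verify total descent with respect to modules directly on the tubes — using a Mayer--Vietoris argument to handle the closed covering of tubes arising from the open case, since closed coverings of topological spaces do satisfy cohomological and effective descent for sheaves of modules even though they are not coverings in the site-theoretic sense. To repair your proof you would need to replace the local-epimorphism reduction in the open case by such a Mayer--Vietoris (or equivalent) descent argument for closed coverings of the tube.
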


\begin{proof}
We have to show that if $T$ is an analytic overconvergent site over $X^\dagger$, then the family $\{T_{X_{i}} \to T_{X}\}$ satisfies total descent with respect to  crystals.
Since the question is local and the conditions are stable under pullback it is sufficient to consider the case $T = (X,V)$ with $V$ Tate affinoid.
Actually, the question is also local on the ``middle'' formal scheme in $(X,V)$ and we may therefore also assume that $I$ is finite.
Since $V$ is analytic, we obtain a finite closed or open covering $\,]X_i[_V^\dagger = \bigcup_{i \in I} \,]X_i[_V^\dagger$.
This covering clearly satisfies total descent with respect to  modules (using Mayer-Vietoris in the first case).
We finish with proposition \ref{germarg} (and lemma \ref{eqrep} for effective descent as usual).
\end{proof}

It is worth mentioning that if $Y \hookrightarrow X$ is a thickening, then $Y^\dagger \hookrightarrow X^\dagger$ satisfies total descent with respect to  crystals (the analytic case follows from the lemma).

The main ingredient for proper and flat descent is the next lemma which is due to Tsuzuki in the classical case (\cite{Tsuzuki03}, proposition 3.4.1).
For this purpose, we introduce Tsuzuki's condition:

$(A_d)$ : If $(X,V)$ is an analytic overconvergent space and $f : Y \to X$ is a partially proper surjective morphism of formal schemes of dimension $\leq d$, then $(Y/V)^\dagger \to (X,V)$ satisfies total descent with respect to   constructible crystals.

Note that, for the property to hold for $f$, it is sufficient that there exists a morphism $(Y,W) \to (X,V)$ that satisfies total descent with respect to   constructible crystals.
And conversely when this morphism is a geometric materialization (because $(Y,W) \to (Y/V)^\dagger$ is then a local epimorphism).

%%%%%%%%%
\begin{lem} \label{birat}
Assume $(A_{d-1})$ holds.
If $(X,V)$ is an analytic overconvergent space and $f : Y \to X$ is a partially proper birational morphism of formal schemes of dimension $\leq d$, then $(Y/V)^\dagger \to (X,V)$ satisfies total descent with respect to   constructible crystals.
\end{lem}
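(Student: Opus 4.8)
The plan is to reduce this site-level statement to a morphism of overconvergent \emph{spaces} and then run Tsuzuki's dévissage along the birational locus. Since $f$ is birational there is a dense open $U \subset X$ over which $f$ is an isomorphism, with nowhere dense closed complement $Z$; as $f$ has dimension $\le d$ we get $\dim Z \le d-1$, and because $f^{-1}(U)$ is dense in $Y$ the exceptional locus $Y_Z := f^{-1}(Z)$ also has dimension $\le d-1$. Total descent being local on $(X,V)$, I may assume, by the local existence of geometric materializations (example (2) above), that $Y$ admits a geometric materialization $W$ over $V$, hence a formal morphism $(Y,W) \to (X,V)$. Since $(Y,W) \to (Y/V)^\dagger$ is a local epimorphism, the domination principle reduces the lemma to proving that $(Y,W) \to (X,V)$ itself satisfies total descent with respect to constructible crystals.

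Write $\alpha : U \hookrightarrow X$ and $\beta : Z \hookrightarrow X$ for the two embeddings and $W_U, W_Z$ for the induced materializations of $Y_U \simeq U$ and of $Y_Z$. Over the open stratum the morphism $(Y_U,W_U) \to (U,V)$ is a local epimorphism: indeed $Y_U \simeq U$ forces $(Y_U/V)^\dagger \simeq (U,V)$, so $(Y_U,W_U) \to (U,V)$ coincides with the local epimorphism $(Y_U,W_U) \to (Y_U/V)^\dagger$, and is therefore of total descent. Over the closed stratum, hypothesis $(A_{d-1})$ gives total descent for $(Y_Z/V)^\dagger \to (Z,V)$, hence, after composing with the local epimorphism $(Y_Z,W_Z) \to (Y_Z/V)^\dagger$, also for $(Y_Z,W_Z) \to (Z,V)$.

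For cohomological descent I would take a constructible crystal $E$ on $(X,V)$ and use the exact sequence $0 \to \beta_\dagger E_{|Z} \to E \to \alpha_* E_{|U} \to 0$ of proposition \ref{clop}, recalling that $\alpha_* = \alpha_\dagger$. Universal cohomological descent is preserved in distinguished triangles, so it suffices to treat the two outer terms. Here I invoke lemma \ref{desdag}, whose proof only uses the general formalism of proposition \ref{germarg} and therefore applies to our morphism $(Y,W) \to (X,V)$: with $\gamma = \beta$ it transports the descent established over $Z$ to $\beta_\dagger E_{|Z}$, and with $\gamma = \alpha$ it transports the (trivial) descent over $U$ to $\alpha_\dagger E_{|U} = \alpha_* E_{|U}$. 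Since the whole construction is stable under pullback, this yields universal cohomological descent for $E$.

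For effective descent I would follow the second half of the proof of proposition \ref{finffl}. Full faithfulness (descent of morphisms and of the gluing datum) follows from lemma \ref{bijcons} once it is known on finitely presented crystals, and there it again splits along $U$, where it is trivial, and $Z$, where it holds by induction. Essential surjectivity is the delicate point: given a constructible crystal on the sieve generated by $(Y,W)$, one writes it via proposition \ref{extcons} as an extension of $\alpha_* F''$ by $\beta_\dagger F'$ with $F''$ finitely presented on $U$ and $F'$ constructible on $Z$, descends $F''$ and $F'$ by the open and closed cases, and glues by matching extension classes through the identification $\mathrm{Ext}(\alpha_* E'',\beta_\dagger E') \simeq \varinjlim \mathrm{Hom}(\beta^{-1} E''_{|V'}, E'_{|V'})$ of lemma \ref{extlim}, the full faithfulness on each small neighborhood $V'$ guaranteeing that the descended classes agree. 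The main obstacle is precisely this last gluing: controlling the $\mathrm{Ext}$ groups uniformly over the cofinal system of neighborhoods $V'$ and checking compatibility of the descent datum on the extension with those on its subquotients, exactly as in \ref{finffl} but now with the open stratum handled by the isomorphism and the closed stratum by the inductive hypothesis $(A_{d-1})$.
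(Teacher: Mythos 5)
Your dévissage along the open/closed decomposition $X = U \sqcup Z$ has two genuine gaps, and both sit exactly at the points the paper's proof is engineered to avoid. First, the cohomological descent gluing: lemma \ref{desdag} is stated and proved only ``in the situation of proposition \ref{finffl}'', i.e.\ for a morphism induced by a \emph{finite} faithfully flat $v$, because its proof rests on the projection formula $]\gamma[_!\circ \mathrm R]f'[_{u*} \simeq \mathrm R]f[_{u*}\circ ]\gamma'[_!$ of proposition \ref{dirim}, which requires $]f[_u$ to be \emph{quasi-compact} and quasi-separated. It is not ``general formalism''. For a geometric materialization $(Y,W)\to(X,V)$ of a birational morphism this hypothesis fails: in the paper's own example ($X=\mathbb A_{\mathbb Z_p}$, $Y$ the strict transform under the blow-up of the origin, $V=\mathbb D_{\mathbb Q_p}(0,1^+)$) the tube of $X$ is the closed disc while the tube of $Y$ is the half-open annulus $\mathbb A_{\mathbb Q_p}(|p|^-,1)$, so the induced map on tubes is not quasi-compact; and the example following proposition \ref{dirim} shows the projection formula genuinely fails in that case. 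So you cannot transport descent from the strata to $\beta_\dagger E_{|Z}$ and $\alpha_*E_{|U}$ this way. Second, the effective descent step is circular: lemma \ref{bijcons} takes as \emph{hypothesis} full faithfulness on finitely presented crystals for every formal embedding $Y'\hookrightarrow X$, and you propose to verify this by the same open/closed splitting; but a finitely presented crystal does not decompose along $U$ and $Z$ into finitely presented pieces (the outer terms of the extension are only constructible), and recombining full faithfulness across strata is precisely what \ref{bijcons} does. The base case --- full faithfulness on finitely presented crystals for a birational morphism --- is never established by your argument.

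The paper's proof takes a different route precisely because of these obstacles. It first reduces (closed coverings, Zariski localization, Chow's lemma, Liu's theorem) to the case where $f$ is the blow-up of a nowhere dense closed $Z\subset X$, realized globally inside a geometric materialization $(Y,\mathbb P^n_V)\to(X,V)$, and then \emph{localizes on $V$ rather than stratifying $X$}: using the open covering $V=\,]\overline Z[_V\cup(V\setminus\overline Z_V)$, either $V=\,]\overline Z[_V$, in which case $\,]Z[_V=\,]X[_V$ and corollary \ref{cormag} lets one replace $X$ by $Z$ (and $Y$ by the exceptional divisor) and invoke $(A_{d-1})$; or $\overline Z_V=\emptyset$, in which case the blow-up is an isomorphism on associated adic spaces away from the center (proposition 3.14 of the companion paper), so the total transform $Y'$ satisfies $\,]Y'[_V=\,]X[_V$ and descent is automatic by corollary \ref{cormag} and proposition \ref{germarg} (with a disjoint-union argument, again using $(A_{d-1})$ and closed coverings, to pass between $Y$ and $Y'$). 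In both cases descent holds because the morphism becomes an isomorphism of germs, so no gluing of descent data across an open/closed decomposition of $X$ --- the step your argument cannot complete --- is ever required.
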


\begin{proof}
Since a closed covering satisfies total descent with respect to  crystals, we can assume that $X$ is an integral scheme.
After replacing $Y$ by one of its irreducible components that dominates $X$, we can assume that $Y$ also is an integral scheme.
Now, $f : Y \to X$ is a proper birational morphism of integral schemes.
Since the question is local for the Zariski topology of $X$, we can also assume that $X$ is affine.
We can then apply Chow's lemma (corollary I.5.7.14 of \cite{GrusonRaynaud71}) and reduce to the case where $f$ is projective.
It then follows from theorem 1.24 in \cite{Liu06} that $f$ is the blowing up of some closed subscheme $Z \subset X$.
We may assume that $Z$ has dimension $< d$ (otherwise $Y=X$ and we are done).

Let us denote by $(X \hookrightarrow P \leftarrow V)$ our overconvergent space.
After completion along the (reduced) closure $\overline X$ of $X$, we can assume that $P$ has dimension $\leq d$.
Now, we denote by $\overline Z$ the (reduced) closure of $Z$ in $P$ and we consider the blowing up $v : Q \to P$ of $\overline Z$ in $P$ so that $Y$ becomes the strict transform of $X$.
Since the question is local on $P$, we can assume that $Q \subset \mathbb P^n_P$.
Then our assertion is equivalent to showing that the geometric materialization $(Y,\mathbb P^n_V) \to (X,V)$ satisfies total descent with respect to constructible crystals.

Recall that we introduced in definition 4.6 of \cite{LeStum17*} the notion of a fiber and that there exists a closed immersion
\[
\overline Z_V := \overline Z^{\mathrm{ad}} \times_{P^{\mathrm{ad}}} V\hookrightarrow ]\overline Z[_V
\]
of the fiber of $\overline Z$ into its tube.
In particular, there exists an open covering
\[
V = ]\overline Z[_V \cup (V \setminus \overline Z_V)
\]
and we can therefore split the verification in two: either $V = ]\overline Z[_V$ or $\overline Z_V = \emptyset$.
Let us first assume that $V = ]\overline Z[_V$ so that $\,]Z[_{V} = \,]X[_{V}$.
Corollary \ref{cormag} then allows us to replace $X$ with $Z$, and consequently $Y$ with the exceptional divisor $E$ on $Y$.
We can conclude with our induction hypothesis.

In order to do the case $\overline Z_V = \emptyset$, we need some preparation.
We denote by $Y'$ the inverse image of $X$ in $Q$ and let $E'$ be the trace of the exceptional divisor on $Y'$.
The situation is as follows:
\[
\xymatrix{E \ar@{^{(}->}[r] \ar@{^{(}->}[d] &E' \ar@{^{(}->}[d]
\\ Y \ar@{^{(}->}[r] \ar[rd]^f &Y' \ar@{^{(}->}[r] \ar[d] & \mathbb P^n_P \ar[d] & \ar[l] \mathbb P^n_V \ar[d] \\ Z \ar@{^{(}->}[r] & X \ar@{^{(}->}[r] & P & \ar[l] V.}
\]
We consider the following morphisms of overconvergent spaces:
\[
\xymatrix{
(Y,\mathbb P^n_V) \coprod (E,\mathbb P^n_V) \ar[r] \ar[d] & (Y,\mathbb P^n_V) \coprod (Z,V) & (Y,\mathbb P^n_V) \coprod (E',\mathbb P^n_V) \ar[l] \ar[d] 
\\
(Y,\mathbb P^n_V) && (Y',\mathbb P^n_V).
}
\]
By induction, both horizontal maps satisfy total descent with respect to   constructible crystals (it is important here to consider geometric materializations).
The same holds trivially for the left hand map but also for the right hand one because $Y' = Y \cup E'$ is a closed covering and we can rely on lemma \ref{Zcohd}.
It is therefore equivalent to prove the property for $(Y,\mathbb P^n_V) \to (X,V)$ or for $(Y',\mathbb P^n_V) \to (X,V)$.

Since we assume now that $\overline Z_V = \emptyset$, the canonical map $V \to P^{\mathrm{ad}}$ factors through the open subset $P^{\mathrm{ad}} \setminus \overline Z^{\mathrm{ad}}$.
On the other hand, we showed in proposition 3.14 of \cite{LeStum17*} that the blowing up $v : Q \to P$ induces an isomorphism $Q^{\mathrm{ad}} \setminus v^{-1}(\overline Z)^{\mathrm{ad}}  \simeq P^{\mathrm{ad}} \setminus \overline Z^{\mathrm{ad}}$.
We may therefore consider the composite map
\[
V \to P^{\mathrm{ad}} \setminus \overline Z^{\mathrm{ad}} \simeq Q^{\mathrm{ad}} \setminus v^{-1}(\overline Z)^{\mathrm{ad}} \hookrightarrow Q^{\mathrm{ad}}
\]
and we obtain a morphism of overconvergent spaces
\[
\xymatrix{Y' \ar@{^{(}->}[r] \ar[d] & Q \ar[d]^v & \ar[l] V \ar@{=}[d] \\ X \ar@{^{(}->}[r] & P & \ar[l] V.}
\]
Since the left hand square is cartesian (and this is why we need to consider $Y'$ and not $Y$), we have $\,]Y'[_V = \,]X[_V$.
Proposition \ref{germarg} implies that $(Y',V) \to (X,V)$ satisfies total descent with respect to crystals.
Since it factors through $(Y',\mathbb P^n_V) \to (X,V)$, we are done.
\end{proof}

The next lemma also is due to Tsuzuki in the classical case (proposition 3.5.1 of \cite{Tsuzuki03}):

%%%%%%%%%%
\begin{lem} \label{finsurj}
Assume $(A_{d-1})$ holds.
If $(X,V)$ is an analytic overconvergent space and $f : Y \to X$ is a finite surjective morphism of formal schemes of dimension $\leq d$, then $(Y/V)^\dagger \to (X,V)$ satisfies total descent with respect to   constructible crystals.
\end{lem}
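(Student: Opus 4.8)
The plan is to follow Tsuzuki's strategy (proposition 3.5.1 of \cite{Tsuzuki03}): use generic flatness to reduce, on a dense open, to the finite faithfully flat case already settled in Proposition \ref{finffl}, and use the hypothesis $(A_{d-1})$ on the closed complement, gluing the two regimes by the fundamental exact sequence for constructible crystals. Since total descent is local on the base and, by Lemma \ref{Zcohd} together with the thickening remark following it, insensitive to nilpotents and to replacing $X$ by the members of a closed covering, I would first reduce to the case where $X$ is integral. By generic flatness there is then a dense open $\alpha : U \hookrightarrow X$ over which $f$ restricts to a finite \emph{faithfully} flat morphism $f_U : Y_U := f^{-1}(U) \to U$ (faithful because $f$ is surjective); write $\beta : Z \hookrightarrow X$ for the closed complement, so that $\dim Z \le d-1$, and $Y_Z := f^{-1}(Z)$, which is finite surjective over $Z$ of dimension $\le d-1$.

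Next I would establish total descent separately over $U$ and over $Z$. Over $Z$, the map $Y_Z \to Z$ is partially proper surjective of dimension $\le d-1$, so $(Y_Z/V)^\dagger \to (Z,V)$ satisfies total descent by $(A_{d-1})$. Over $U$, I want to exhibit a morphism of overconvergent spaces $(Y_U, W) \to (U,V)$ that satisfies total descent and factors the canonical map, for then the principle that a morphism dominated by a morphism of total descent is itself of total descent upgrades this to total descent for $(Y_U/V)^\dagger \to (U,V)$. The finite faithfully flat $\mathcal O_U$-algebra $f_{U*}\mathcal O_{Y_U}$ realizes on the tube as a coherent $\mathcal O_V^\dagger$-algebra; using Theorem \ref{glumd} to extend this coherent algebra to a neighborhood of the tube and Corollary \ref{limhom} to extend its multiplication, I obtain a finite faithfully flat cover of a neighborhood, hence a morphism fitting the hypotheses of Proposition \ref{finffl}, which then yields total descent over $U$. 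This construction of the finite faithfully flat cover in the overconvergent setting (equivalently, the local lifting of $f_U$ to the ambient formal scheme) is the step requiring the most care.

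Finally I would glue the two regimes exactly as in the second and third paragraphs of the proof of Proposition \ref{finffl}. For \emph{cohomological} descent of a constructible crystal $E$ on $(X,V)$, after shrinking $U$ so that $E_{|U}$ is finitely presented (the flat locus and the finite-presentation locus are both dense open, so their intersection still is), Proposition \ref{extcons} gives an exact sequence $0 \to \beta_\dagger E' \to E \to \alpha_* E'' \to 0$ with $E''$ finitely presented on $(U,V)$ and $E'$ constructible on $(Z,V)$. The transfer of Lemma \ref{desdag} then reduces cohomological descent for $E$ to that for $\alpha_* E''$ (handled over $U$) and for $\beta_\dagger E'$ (handled over $Z$ via $(A_{d-1})$, using that the pullback of $\beta$ to $Y$ is $Y_Z \hookrightarrow Y$). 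For \emph{effective} descent, full faithfulness comes from Lemma \ref{bijcons} once it is known on finitely presented crystals, and essential surjectivity is obtained by building the descended crystal piece by piece on $U$ and on $Z$ and splicing the pieces through the $\mathrm{Ext}$-computation of Lemma \ref{extlim}, precisely as in Proposition \ref{finffl}.

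I expect the main obstacle to be the interface between the two regimes: ensuring that the $U$-descent and the $Z$-descent are compatible with the open/closed decomposition of $Y$ and can be combined in both the cohomological and the effective senses, which is exactly what forces the detour through Lemmas \ref{desdag}, \ref{bijcons} and \ref{extlim}. A secondary difficulty is the legitimate production, over $U$, of the finite faithfully flat cover needed to invoke Proposition \ref{finffl}.
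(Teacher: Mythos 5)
Your gluing machinery (generic flatness, the exact sequence from proposition \ref{extcons}, lemmas \ref{desdag}, \ref{bijcons} and \ref{extlim}, and $(A_{d-1})$ over the closed complement) is sound in outline, but the step you yourself flag as delicate --- producing, over the dense open $U$ where $f$ is finite faithfully flat, a morphism fitting the hypotheses of proposition \ref{finffl} --- contains a genuine gap, and it is exactly the point the whole lemma turns on. The sheaf $f_{U*}\mathcal O_{Y_U}$ is a coherent algebra on the \emph{formal scheme} $U$ (which, in the situations of interest, lives in positive characteristic); it has no ``realization on the tube'': realization applies to modules on overconvergent sites, not to coherent sheaves on the underlying formal schemes, and there is no functor carrying coherent $\mathcal O_U$-modules to coherent $\mathcal O_V^\dagger$-modules. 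So theorem \ref{glumd} and corollary \ref{limhom} have nothing to extend. Even stated correctly, what proposition \ref{finffl} needs is a finite faithfully flat morphism $v : Q \to P$ of the \emph{ambient} formal schemes inducing $f_U$ by pullback (both squares cartesian), i.e.\ a lifting of the finite flat cover $Y_U \to U$ across the locally closed embedding $U \hookrightarrow P$ --- and such liftings are obstructed in general (finite flat covers do not lift across thickenings the way \'etale ones do). If this step were a formality, the lemma would follow from generic flatness alone and none of the further work in the paper would be needed.

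The paper's proof is organized precisely so as to make this lifting possible. One first reduces, by induction on the degree of $K(Y)/K(X)$ and repeated use of lemma \ref{birat} (which is where $(A_{d-1})$ enters, via the birational case), to the monogenic situation $X = \mathrm{Spec}(R)$, $Y = \mathrm{Spec}(R[T]/\overline F)$ with $\overline F \in R[T]$. This global presentation, unlike an abstract finite flat morphism, \emph{can} be lifted: one lifts $\overline F$ to $F \in A$ on $P = \mathrm{Spf}(A)$, homogenizes to obtain a closed subscheme $Q \hookrightarrow \mathbb P_P$, and then invokes Gruson--Raynaud flatification (theorem I.5.2.2 of \cite{GrusonRaynaud71}) to modify $P$ outside $X$ so that $v : Q \to P$ becomes proper, flat and generically finite, hence finite faithfully flat, with both squares of the resulting diagram cartesian; only then does proposition \ref{finffl} apply. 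Note also that this route invokes \ref{finffl} once, for the whole of $X$: the open/closed splicing you propose to carry out at the level of the present lemma is already performed inside the proof of \ref{finffl} itself, so even with the lifting repaired your argument would duplicate that part.
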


\begin{proof}
With the same arguments as in the previous proof, we can assume that $X = \mathrm{Spec}(R)$ and $Y = \mathrm{Spec}(S)$ are integral affine \emph{schemes} and we proceed by induction on the degree of the field extension $K(X)\hookrightarrow K(Y)$.
When $K(X) \simeq K(Y)$, the assertion follows from the previous lemma \ref{birat} and we may assume from now on that the field extension is not trivial.
Then there exists $g \in S$ such that $g \notin K(X)$.
If we let $Y' := \mathrm{Spec}(R[g])$, then both maps in the factorization $Y \to Y' \to X$ are still finite surjective.
By induction on the degree, we may therefore assume that $S = R[g]$.
The minimal polynomial $\overline F$ of $g$ over $K(X)$ has integral coefficients $f_0, \dots f_d$ over $R$ and we can replace, thanks to lemma \ref{birat} again, $R$ and $S$ with $R[f_0, \dots f_d]$ and $S[f_0, \dots f_d]$ respectively.
We can therefore assume that $\overline F \in R[T]$ and $S \simeq R[T]/\overline F$.
In particular, $f$ is now finite faithfully flat.

We come to the proof of the assertion and we denote by $(X \hookrightarrow P \leftarrow V)$ our overconvergent space.
Since the question is local, we can assume that $V$ is affinoid, that $P = \mathrm{Spf}(A)$ is affine and that $X$ is supported by the complement $\mathcal U$ of a hypersurface $h=0$ in $P$.
We can lift $\overline F$ to a monic polynomial $F \in A[1/h]$ and after multiplication by a power of $h$, assume that $F \in A$.
The leading coefficient of $F$ will still be invertible in $A[1/h]$ and this provides a finite faithfully flat lifting of $f$.
We can then homogenize $F$ to $F_h \in  A[T_0, T_1]$ and consider the corresponding closed embedding $Q \hookrightarrow \mathbb P_P$.
We can now invoke the famous theorem I.5.2.2 of \cite{GrusonRaynaud71} and assume that the composite map $v : Q  \hookrightarrow \mathbb P_P \to P$ is flat.
More precisely, we can blow-up $\mathrm{Spec}(A)$ outside the open subset $\mathrm D(h)$ in such a way that the morphism $\mathrm{Proj}(A[T_0, T_1]/F_h) \to \mathrm{Spec}(A)$ becomes flat, and then take completions.
Note that $P$ may not be affine anymore but this doesn't matter.
Being proper flat and generically finite, the morphism $v$ is necessarily finite faithfully flat.
By construction, the right cartesian morphism
\[
\xymatrix{Y \ar@{^{(}->}[r] \ar[d]^f & Q \ar[d]^v & \ar[l] W \ar[d] \\ X \ar@{^{(}->}[r] & P & \ar[l] V}
\]
is also left cartesian.
It therefore follows from proposition \ref{finffl} that $(Y,W) \to (X,V)$ (and consequently $(Y/V)^\dagger \to (X,V)$) satisfies total descent with respect to   constructible crystals.
\end{proof}

We define the $h$-topology on the category of formal schemes as the topology generated by partially proper surjective morphisms and Zariski open coverings.

%%%%%%%%%%
\begin{thm} \label{ffcoh}
If $\{X_i \to X\}_{i\in I}$ is a covering for the h-topology on formal schemes, then $\{X_i^{\dagger,\mathrm{an}} \to X^{\dagger,\mathrm{an}}\}_{i\in I}$ satisfies total descent with respect to   constructible crystals.
\end{thm}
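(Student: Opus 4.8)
The plan is to reduce the statement to Tsuzuki's condition $(A_d)$ and to prove $(A_d)$ for all $d$ by induction on $d$, the two decisive sub-cases having already been settled in lemmas \ref{birat} and \ref{finsurj}. By definition the $h$-topology is generated by Zariski open coverings and by partially proper surjective morphisms. Total descent with respect to constructible crystals is local on the base, holds for local epimorphisms, and is stable under composition, pullback and domination; consequently a covering which is refined by a covering of total descent is itself of total descent, and it suffices to treat the two kinds of generators. Zariski coverings are handled by lemma \ref{Zcohd}. After reducing, as in the proof of that lemma, to a representable base $(X,V)$ (using stability under pullback), the whole content thus lies in proving, for every analytic overconvergent space $(X,V)$ and every partially proper surjective $f\colon Y\to X$ of dimension $\le d$, that $(Y/V)^\dagger \to (X,V)$ satisfies total descent, which is precisely $(A_d)$. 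The base case $d<0$ (empty schemes) is vacuous.

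For the inductive step I would assume $(A_{d-1})$; lemmas \ref{birat} and \ref{finsurj} then give total descent for proper birational and for finite surjective morphisms of formal schemes of dimension $\le d$. Given $f$ as above, I first reduce to the integral dominant case. Replacing $X$ and $Y$ by their reductions costs nothing, a thickening being of total descent (as noted after lemma \ref{Zcohd}); the irreducible components of $X$ form a closed covering, to which \ref{Zcohd} applies; and by surjectivity together with the domination property I may replace $Y$ by a single integral component dominating $X$. I am thus left with $X$ and $Y$ integral and $f$ dominant proper surjective.

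The crux is then to dominate such an $f$ by a composite of one proper birational and one finite surjective morphism. Inside the generic fibre $Y_{K(X)}$, a nonempty proper $K(X)$-scheme, I choose a closed point; its closure $Y'\subseteq Y$ is integral, dominates $X$, and is generically finite and proper over $X$, and replacing $Y$ by $Y'$ is again harmless by domination. I would then apply the Raynaud--Gruson flattening theorem (theorem I.5.2.2 of \cite{GrusonRaynaud71}, already used in the proof of \ref{finsurj}): there is a blow-up $\widetilde X \to X$ with nowhere dense centre such that the strict transform $\widetilde Y' \to \widetilde X$ becomes flat; being proper, flat and generically finite, it is finite surjective. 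Now $\widetilde X \to X$ is proper birational and falls under \ref{birat}, while $\widetilde Y' \to \widetilde X$ is finite surjective and falls under \ref{finsurj}, both at level $d$; by stability under composition $\widetilde Y' \to X$ is of total descent, and since it factors through $Y'\to X$, the latter, hence $f$ itself, is of total descent. This establishes $(A_d)$ and, with the first paragraph, the theorem.

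I expect the main obstacle not to be the algebraic-geometric reduction just sketched, which is essentially formal once \ref{birat} and \ref{finsurj} are available, but the careful bookkeeping that transports each scheme-level factorization into the overconvergent setting: one must carry out the blow-up and the flattening compatibly with a fixed middle formal scheme and choose geometric materializations (as with $(Y,\mathbb P^n_V)$ in the proofs of \ref{birat} and \ref{finsurj}) so that the composition and domination arguments take place between sites of the form $(\,\cdot\,/V)^\dagger$ over $(X,V)$, and one must keep track of constructibility throughout (using that $X$-constructibility is insensitive to the formal embedding, lemma \ref{cons1}). As in Tsuzuki's original argument (\cite{Tsuzuki03}), checking that these reductions respect the tubes, rather than enlarging them, is where the real care is required.
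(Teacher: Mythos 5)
Your proof is correct, and its frame coincides with the paper's: theorem \ref{ffcoh} is deduced there from lemma \ref{Zcohd} together with corollary \ref{flpr}, whose proof runs the same induction on dimension through Tsuzuki's condition $(A_d)$, with lemmas \ref{birat} and \ref{finsurj} as the two model cases and Raynaud--Gruson flattening as the geometric input. Where you genuinely diverge is in how a proper surjective morphism is brought within reach of those lemmas. The paper flattens first: over a suitable blow-up $X'\to X$ the strict transform $Y'\to X'$ is proper and faithfully flat, and the paper then invokes a separate faithfully flat case (part (1) of corollary \ref{flpr}), itself handled by refining an fppf covering into a composition of Zariski coverings and finite locally free morphisms via \cite[Tag 05WN]{stacks-project} before applying lemma \ref{finsurj}. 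You instead shrink $Y$ to the closure of a closed point of the generic fibre, so that the morphism becomes generically finite, and flatten afterwards: the strict transform is then proper, flat and generically finite, hence finite surjective, and lemma \ref{finsurj} applies directly. Your route is the more economical one (no fppf case, no Stacks Project refinement) and is closer to Tsuzuki's original reduction; the paper's route has the advantage of establishing descent for faithfully flat morphisms locally formally of finite type (corollary \ref{flpr}, part (1)) as a by-product, a statement the paper wants independently (cf.\ the example following the theorem) and which your argument does not yield. Two points confirming soundness: the fact you use, that a proper flat generically finite morphism is finite, is precisely what the paper itself invokes inside the proof of lemma \ref{finsurj}; and the site-level bookkeeping you flag at the end (organizing the composition as $(\widetilde Y'/V)^\dagger \to (\widetilde X/V)^\dagger \to (X,V)$ and using geometric materializations plus locality of descent so that all pullbacks stay over bases of dimension $\le d$) is indeed needed, but the paper's own one-line deduction of case (2) of corollary \ref{flpr} elides exactly the same point, so it is not a gap peculiar to your argument.
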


\begin{proof}
This follows from lemma \ref{Zcohd} and the second assertion in corollary \ref{flpr} below.
\end{proof}

It remains to prove the following statement (which is actually a particular case) for the theorem to hold:

\begin{cor} \label{flpr}
If $f:Y \to X$ is a morphism of formal schemes which is
\begin{enumerate}
\item
either faithfully flat and locally formally of finite type
\item
or partially proper surjective,
\end{enumerate}
then $Y^{\dagger,\mathrm{an}} \to X^{\dagger,\mathrm{an}}$ satisfies total descent with respect to   constructible crystals.
\end{cor}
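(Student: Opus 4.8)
The plan is to reduce both cases to the two key lemmas just proved (lemma \ref{birat} on birational morphisms and lemma \ref{finsurj} on finite surjective morphisms) together with proposition \ref{finffl} on finite faithfully flat morphisms, using noetherian induction on the dimension of $X$. Since the question is local on $X$ (by lemma \ref{Zcohd}, open coverings satisfy total descent) and the conditions of total descent are stable under pullback and composition, I may freely localize and compose throughout. The key structural device is Tsuzuki's condition $(A_d)$: I will prove by induction on $d = \dim X$ that $(A_d)$ holds, and then derive the faithfully flat case separately.

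First I would establish the proper surjective case, which is exactly condition $(A_d)$. Assuming $(A_{d-1})$, I need to show that for any partially proper surjective $f : Y \to X$ with $\dim X \leq d$, the map $(Y/V)^\dagger \to (X,V)$ satisfies total descent with respect to constructible crystals, for any analytic overconvergent space $(X,V)$. As noted after the statement of $(A_d)$, it suffices to dominate $f$ by a morphism of total descent. The strategy is to factor a general proper surjection through a finite surjection followed by a proper birational morphism: after reducing to $X$ integral and affine (using closed coverings and lemma \ref{Zcohd}) and replacing $Y$ by an irreducible component dominating $X$, I take the normalization of $X$ in the function field $K(Y)$ to obtain $X' \to X$ finite surjective, and then $Y \to X'$ becomes proper birational. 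Lemma \ref{finsurj} handles the finite surjective piece and lemma \ref{birat} handles the birational piece, and composing them (both applicable under $(A_{d-1})$, which feeds into lemmas \ref{birat} and \ref{finsurj}) gives the result. This closes the induction and establishes $(A_d)$ for all $d$, hence the partially proper surjective case of the corollary for $Y^{\dagger,\mathrm{an}} \to X^{\dagger,\mathrm{an}}$, since a geometric materialization of $Y$ over $V$ provides a local epimorphism onto $(Y/V)^\dagger$.

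For the faithfully flat locally formally of finite type case, the plan is to reduce to the finite faithfully flat situation already covered by proposition \ref{finffl}. Working locally on $X$ and on $Y$, a faithfully flat locally formally of finite type morphism can, after localization, be factored or dominated using the structure theory of flat morphisms: by a standard argument (e.g. via the technique in the proof of lemma \ref{finsurj} invoking theorem I.5.2.2 of \cite{GrusonRaynaud71}) one can find, locally, a surjection that refines through a finite faithfully flat map combined with an open covering. Alternatively, since flat plus surjective is an $h$-covering, and we already dispose of the proper surjective case, I would combine a quasi-finite flat refinement with the proper surjective descent already in hand. The upshot is that faithfully flat descent follows from finite flat descent (proposition \ref{finffl}) plus the proper surjective case just proved, applied after a suitable local factorization.

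The main obstacle I anticipate is the bookkeeping in the faithfully flat case: unlike the proper surjective case, where normalization and Chow's lemma give a clean factorization through finite and birational pieces, a faithfully flat locally formally of finite type morphism does not factor so transparently, and one must localize carefully on both source and target to expose a finite faithfully flat core while controlling the remaining open pieces. Ensuring that the chosen local factorization is compatible with the constructible structure (so that lemma \ref{bijcons} and proposition \ref{extcons} continue to apply) is the delicate point. The dimension induction itself is routine once $(A_d)$ is set up, and the proper surjective case is essentially a formal consequence of lemmas \ref{birat} and \ref{finsurj}; the real work is verifying that the flat case genuinely reduces to proposition \ref{finffl} without circularity.
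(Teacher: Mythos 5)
There is a genuine gap in your treatment of the partially proper surjective case. You propose to factor $f$, after reducing to $X$ integral affine and $Y$ an irreducible component dominating $X$, through the normalization $X'$ of $X$ in $K(Y)$, claimed to be finite surjective over $X$, with $Y \to X'$ proper birational. This only makes sense when $K(Y)/K(X)$ is a finite extension, i.e.\ when $f$ is generically finite. A general proper surjection, e.g.\ $\mathbb P^1_X \to X$, induces a transcendental extension of function fields, and the normalization of $X$ in $K(Y)$ is then not finite over $X$ (nor does Stein factorization help: for $\mathbb P^1_X \to X$ it returns $f$ itself). To salvage this route you would first have to replace $Y$ by a closed integral subscheme which is still proper and surjective over $X$ but generically finite, and then use the domination property of total descent together with finiteness of normalization; you never perform this reduction. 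The paper avoids the issue entirely by a different dévissage: it treats the faithfully flat case \emph{first}, reducing to finite flat surjective morphisms via \cite[\href{https://stacks.math.columbia.edu/tag/05WN}{Tag 05WN}]{stacks-project} and lemma \ref{Zcohd} and then applying lemma \ref{finsurj}; it then handles the proper surjective case by Raynaud--Gruson flattening (theorem I.5.2.2 of \cite{GrusonRaynaud71}): there is a blow-up $X' \to X$ such that the strict transform $Y' \to X'$ is flat, hence faithfully flat (being proper and dominant), so that $f$ is dominated by the composition of a morphism covered by the flat case with a proper birational morphism covered by lemma \ref{birat}. No generic finiteness is needed.

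Your faithfully flat case is also not nailed down. You cannot reduce ``directly'' to proposition \ref{finffl}: that proposition requires a finite faithfully flat morphism of \emph{formal schemes} realized by a diagram with both squares cartesian on the adic side, and producing such a realization over an arbitrary $(X,V)$ is precisely the content of the proof of lemma \ref{finsurj} (homogenization plus Raynaud--Gruson to obtain a finite faithfully flat lifting $Q \to P$). The correct reduction is the one the paper uses: refine the fppf covering by Zariski coverings and finite locally free surjections (Tag 05WN again, together with lemma \ref{Zcohd}), then invoke lemma \ref{finsurj}, which already contains the passage from schemes to overconvergent spaces. Concerning the circularity you worry about: in the paper's ordering there is none, since within the induction step (given $(A_{d-1})$) the flat case is proved before the proper case, and the proper case consumes only the flat case and lemma \ref{birat}.
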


\begin{proof}
In both cases, we need to show that if $T$ is an overconvergent analytic space over $X^\dagger$, then $T_Y \to T$ satisfies total descent with respect to   constructible crystals.
Since the question is local on $T$, we may assume that $T =(X',V')$ for some $X' \to X$.
Since our hypothesis are stable under pullback along $X' \to X$, it is sufficient to consider the case $X' = X$.
We are therefore reduced to prove that, under our hypothesis, if $(X,V)$ is an overconvergent analytic space, then $(Y/V)^\dagger \to (X,V)$ satisfies total descent with respect to   constructible crystals.
We proceed by induction on the dimension $d$ of $X$.
We may assume that $(A_{d-1})$ holds.
In the first case, we can assume, thanks to \cite[\href{https://stacks.math.columbia.edu/tag/05WN}{Tag 05WN}]{stacks-project} and lemma \ref{Zcohd}, that $f$ is finite flat surjective and apply lemma \ref{finsurj}.
In the second case, we can assume that $f$ is a proper surjective morphism with $X$ an integral affine scheme.
It follows from theorem I.5.2.2 of \cite{GrusonRaynaud71} that there exists a blowing-up $X' \to X$ such that the strict transform $f' : Y' \to X'$ of $f$ is flat.
Since $f'$ is proper and dominant, it it necessarily faithfully flat.
The theorem therefore follows from lemma \ref{birat} and the faithfully flat case.
\end{proof}

%%%%%%%%%%%
\begin{xmp}
Let $(C,O)$ be an analytic overconvergent space and $X \to C$ is a morphism of formal schemes.
If $\{X_i \to X\}_{i\in I}$ is a covering for the $h$-topology, then the family $\{(X_i/O)^\dagger \to (X,O)\}_{i \in I}$ satisfies total descent with respect to   constructible crystals.
This applies to open or closed coverings, partially proper surjective maps as well as faithfully flat morphisms which are formally of finite type.
\end{xmp}

In the case of effective descent, we can also reformulate the main theorem in terms of stacks.
If we still call \emph{$h$-topology} the topology inherited by the category of overconvergent spaces through the forgetful functor $(X,V) \to X$, then we have:

%%%%%%%%%%%%%%%%%%
\begin{cor}
The category of  constructible crystals on (classic) analytic overconvergent spaces is a stack for the $h$-topology. \qed
\end{cor}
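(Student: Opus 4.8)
The plan is to read this corollary as a direct reformulation of the universal \emph{effective} descent half of theorem \ref{ffcoh}. First I would make the fibered category precise: by the first assertion of lemma \ref{cons1} inverse image along any morphism of analytic overconvergent spaces preserves $X$-constructibility, and by lemma \ref{loccrys} it preserves crystals, so $(X,V) \mapsto \mathrm{Cris}_\mathrm{cons}(X,V)$, equipped with the pullback functors, is a fibered category over the category of analytic overconvergent spaces. Asking whether it is a stack for the inherited $h$-topology therefore makes sense. The next thing to recall is that, by definition, being a stack for a topology means that for every covering family the associated descent functor from the fiber over the base to the category of objects endowed with a descent datum is an equivalence: its full faithfulness is the sheaf condition on morphisms, and its essential surjectivity is the effectivity of descent data. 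In the terminology of section \ref{apdesc}, this is exactly universal effective descent holding for every covering family of the topology.

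Next I would use that the $h$-topology on overconvergent spaces is, by construction, the topology inherited through the forgetful functor $(X,V) \mapsto X$ from the $h$-topology on formal schemes; hence its covering families are generated by the families $\{X_i^{\dagger,\mathrm{an}} \to X^{\dagger,\mathrm{an}}\}$ attached to $h$-coverings $\{X_i \to X\}$ of formal schemes. Theorem \ref{ffcoh} (together with corollary \ref{flpr} and lemma \ref{Zcohd}) asserts precisely that each such generating family satisfies \emph{total} descent with respect to constructible crystals, and total descent includes universal effective descent. So for the generating families the stack axioms are already established.

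Finally, to pass from the generating families to an arbitrary covering sieve of the inherited topology, I would invoke the four formal closure properties of the class of morphisms of total descent recorded at the beginning of this section: local epimorphisms satisfy total descent, the property is stable under composition and under pullback, and any morphism dominated by one of total descent is itself of total descent. These are exactly the stability properties that let the descent condition attached to the generating families propagate to every refinement, composite and localization, hence to every covering of the inherited topology. Consequently the descent functor is an equivalence for every $h$-covering, which is the stack property. The genuine mathematical content — the induction on the dimension, Chow's lemma and the flatification of \cite{GrusonRaynaud71} — has already been consumed inside theorem \ref{ffcoh} and corollary \ref{flpr}; the only real point here, and therefore the part I would write out with care, is the purely formal bookkeeping that matches the stack axioms with the effective half of universal total descent and checks that the four stability properties indeed reduce an arbitrary covering to the generating ones.
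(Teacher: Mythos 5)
Your proposal is correct and follows exactly the route the paper intends: the corollary is stated with \qed precisely because it is the reformulation, via the definition of a stack in section \ref{apdesc} (a stack is a fibered category for which every covering sieve of the site's topology is of universal effective descent), of the effective-descent half of theorem \ref{ffcoh}, applied to the $h$-topology inherited through the forgetful functor $(X,V)\mapsto X$. The formal bookkeeping you spell out — that the universal effective descent sieves form a topology, so it suffices to verify the generating families handled by theorem \ref{ffcoh}, corollary \ref{flpr} and lemma \ref{Zcohd} — is exactly what the paper leaves implicit.
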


%%%%%%%%%%%%%%%
\setcounter{section}{0}
\renewcommand{\theHsection}{\Alph{section}}
\appendix
\section{Appendix: Fibered categories}\label{fibcat}

\subsection{Definition}

Let us briefly review the theory (see \cite{Vistoli*} for example, or \cite{Streicher20} or better: the good old bible \cite{Giraud71}) and fix some notations.
Let $\mathbf B$ be a (base) category and $T$ a category defined over $\mathbf B$ through a morphism $j \colon T \to \mathbf B$ (we will write $j_{T}$ if we want to insist on the dependence on $T$).
A morphism $\psi \colon t \to s$ in $T$ with $j(\psi) = f$ is \emph{(strongly) cartesian} if, given any $g \colon W \to V$ and any $\rho :  u \to t$ with $j(\rho) = f \circ g$, then there exists a unique $\psi$ with $j(\psi) = g$ such that $\rho = \varphi \circ \psi$:
\[
\xymatrix{
u \ar@/^/[rrrd]^\rho \ar@{-->}[rd]^\psi \ar@{|->}[dd] & \\
& t \ar[rr]^\varphi \ar@{|->}[dd] && s \ar@{|->}[dd] \\
W \ar[rd]^g && \\
& V \ar[rr]^f && U &.
}
\]
The category $T$ is said to be \emph{fibered} over $\mathbf B$ if, given any $s \in T$ with $j(s) = U$ and any morphism $f \colon V \to U$ in $\mathbf B$, then,
there exists a \emph{(strongly) cartesian} morphism $\psi \colon t \to s$ with $j(\psi) = f$.
As a standard example, we can consider the codomain fibration $\mathbf B^{\mathbbm 2} \to \mathbf B$ when $\mathbf B$ has fibered products.

If $T$ and $T'$ are two fibered categories over $\mathbf B$, then a functor $u \colon T' \to T$ over $\mathbf B$ is said to be \emph{cartesian} if $u(\varphi)$ is cartesian whenever $\varphi$ is cartesian, and a transformation
\[
\eta \colon \xymatrix{T' \rtwocell^{u}_{v} & T}
\]
is said to be \emph{vertical}, if $j_{T}(\eta_{t}) = \mathrm{Id}_{j_{T'}(t)}$ whenever $t \in T'$.
Fibered categories over $\mathbf B$ endowed with cartesian functors and vertical transformations, make a $2$-category\footnote{Actually, there exists a fibered subcategory $\mathbb Fib \subset \mathbb Cat^{\mathbbm 2} \to \mathbb Cat$ of the codomain fibration whose fiber over $\mathbf B \in \mathbb Cat$ is equal to (the underlying category of) $\mathbb Fib(\mathbf B)$. } $\mathbb Fib(\mathbf B)$.
Any $2$-fibered product of fibered categories over $\mathbf B$ is automatically a fibered category over $\mathbf B$.
Also, pulling back along any functor $\mathbf C \to \mathbf B$ induces a $2$-functor
\[
\mathbb Fib(\mathbf B) \mapsto\mathbb Fib(\mathbf C), \quad T \mapsto T_{\mathbf C} = \mathbf C \times_{\mathbf B} T
\]
(the usual product is identical to the $2$-product in this situation).

\subsection{Fibered categories and presheaves}

Recall now that if $\mathcal T$ is a presheaf of sets on $\mathbf B$, then one can define the \emph{comma} category $\mathbf B_{/\mathcal T}$ as follows: an object is a pair $(U, s)$ with $U \in \mathbf B$ and $s \in \mathcal T(U)$ and a morphism $(V, t) \to (U, s)$ is a morphism $f \colon V \to U$ with $\mathcal T(f)(t) = s$.
The category $\mathbf B_{/\mathcal T}$ is fibered in sets over $\mathbf B$.
If we denote by $\widehat {\mathbf B}$ the category of all presheaves of sets on $\mathbf B$, then this construction provides a fully faithful functor
\[
\widehat {\mathbf B} \hookrightarrow \mathbb Fib(\mathbf B), \quad \mathcal T \mapsto \mathbf B_{/\mathcal T}.
\]
Its image (resp.\ essential image) is the full subcategory of categories fibered in sets (resp.\ in equivalence relations) on $\mathbf B$.
As we shall see below, there exists a more general construction for presheaves of \emph{categories} but this will  \emph{not} lead to a fully faithful functor in general.
At this point, we may also notice that, if $\mathbf C$ is a category fibered in \emph{sets} over $\mathbf B$ (for example, a fibered subcategory of $\mathbf B$) and $T \to \mathbf C$ is any functor, then $T$ is fibered over $\mathbf B$ if and only if $T$ is fibered over $\mathbf C$.

Before going any further, we also want to recall that there exists a fully faithful \emph{Yoneda} functor
\[
\mathbf B \hookrightarrow \widehat{\mathbf B}, \quad U \mapsto \widehat U := \mathrm{Hom}( -, U).
\]
In particular, we may apply the above considerations to the presheaf $\widehat U$ and consider the fibered category $\mathbf B_{/\widehat U}$ that we will simply denote by $\mathbf B_{/U}$.
Thus, an object of $\mathbf B_{/U}$ is a pair made of an object $V$ of $\mathbf B$ and a morphism $V \to U$ (we will say that $V$ is defined \emph{over} $U$).
A morphism $W \to V$ in $\mathbf B_{/U}$ is simply a morphism in $\mathbf B$ which is compatible with the structural maps\footnote{Alternatively, $\mathbf B_{/U}$ is the fiber over $U$ of the codomain fibration $\mathbf B^{\mathbbm 2} \to \mathbf B$.}.
Recall also that Yoneda's lemma then provides a sequence of natural \emph{bijections}
\[
\mathrm{Hom}_{\mathbb Fib(\mathbf B)}(\mathbf B_{/U}, \mathbf B_{/\mathcal T}) \simeq \mathrm{Hom}_{\widehat{\mathbf B}}(\widehat U, \mathcal T) \simeq \mathcal T(U)
\]
when $\mathcal T$ is a presheaf of sets on $\mathbf B$.
In particular, we may always interpret a section of a presheaf as a morphism of fibered categories.

If $T \in \mathbb Fib(\mathbf B)$ and $U \in \mathbf B$, then we let $T(U)$ denote the fiber of $T$ over $U$ which is the  subcategory of $T$ whose objects $s$ (resp.\ morphisms $\varphi$) are defined by the condition $j(s) = U$ (resp.\ $j(\varphi)=\mathrm{Id}_{U}$).
The \emph{$2$-Yoneda lemma} states that there exists a natural \emph{equivalence} of categories
\[
\mathrm{Hom}_{\mathbb Fib(\mathbf B)}(\mathbf B_{/U}, T) \simeq T(U)
\] 
(that depends on the choice of a \emph{cleavage}).
One can show that the assignment $T \colon U \mapsto T(U)$ extends to a \emph{pseudo-presheaf} of categories (but not a genuine presheaf).
On the other hand, the assignment
\[
\widehat T: U \mapsto \mathrm{Hom}_{\mathbb Fib(\mathbf B)}(\mathbf B_{/U}, T)
\]
does define a presheaf of categories on $\mathbf B$.
If we denote by $\mathbb Cat^{\mathbf B^{\mathrm{op}}}$ the $2$-category of presheaves of categories on $\mathbf B$, then we obtain a fully faithful $2$-functor
\[
\mathbb Fib(\mathbf B) \hookrightarrow \mathbb Cat^{\mathbf B^{\mathrm{op}}}, \quad T \mapsto \widehat T.
\]
Moreover this functor has a $2$-adjoint $\mathcal T \mapsto \mathbf B_{/\mathcal T}$ where the fibered category $\mathbf B_{/\mathcal T}$ is defined exactly as above, but now a morphism $(V, t) \to (U,s)$ is a pair $(f, \varphi)$ with $f \colon V \to U$ and $\varphi : t \to \mathcal T(f)(s)$.
Note that we always have $\mathbf B_{/\widehat T} \simeq T$ but the other adjunction map is \emph{not} an equivalence in general (even if, as we saw above, this is the case when $\mathcal T$ is a presheaf of sets).

In practice, we will make some wild identifications.
For example, we will identify an object $U \in \mathbf B$ with the corresponding presheaf of sets $\widehat U$ on $\mathbf B$, and a presheaf $\mathcal T$ (of sets) on $\mathbf B$ with the corresponding fibered category (in sets) $\mathbf B_{/\mathcal T}$ over $\mathbf B$.
By composition, we will identify the object $U \in \mathbf B$ with the fibered category in sets $\mathbf B_{/U}$ over $\mathbf B$.
Hopefully, this should not create any confusion and simply makes the notations lighter.

If $T$ is a fibered category over $\mathbf B$, then the equivalence $\mathbf B_{/\widehat T} \simeq T$ will allow us to consider an object of $T$ as (a couple made of an object $U \in \mathbf B$ and) a morphism $s : U \to T$ (of fibered categories over $\mathbf B$).
A morphism in $T$ then corresponds to a pair made of a morphism $f \colon V \to U$ in $\mathbf B$ and a transformation
\begin{align} \label{celldiag}
\eta : \xymatrix{
 V \xtwocell[0,2]{}\omit{<2>} \ar[rd]^f \ar@/^.2cm/[rr]^t
&& T \\
& U \ar[ru]^s &}
\end{align}
(the morphism is strongly cartesian exactly when $\eta$ is an equivalence and this is automatic when $T$ is fibered in groupoids).
This might sound like a complication but actually allows us to consider an object or a morphism in the fibered category $T$ as an object or a morphism in $\mathbf B$ living somehow \emph{over} $T$.

One can usually replace the base $\mathbf B$ with the category $\widehat{\mathbf B}$ (which has all limits and colimits and universal disjoint sums).
More precisely, there exists a ``fully faithful'' 2-functor\footnote{There exits actually a sequence of fully faithful functors $ \mathbf B \hookrightarrow \widehat{\mathbf B} \hookrightarrow \mathbb Fib(\mathbf B) \hookrightarrow \mathbb Fib(\widehat{\mathbf B})$.}
\[
\mathbb Fib(\mathbf B) \hookrightarrow \mathbb Fib(\widehat{\mathbf B}), \quad T \mapsto T^+
\]
with
\[
T^+(\mathcal F) = \mathrm{Hom}_{\mathbb Fib(\mathbf B)}(\mathbf B_{/\mathcal F}, T).
\]
By ``fully faithful'', we mean that there exists an equivalence
\[
\mathrm{Hom}_{\mathbb Fib(\mathbf B)}(T', T) \simeq \mathrm{Hom}_{\mathbb Fib(\widehat{\mathbf B})}(T'^+, T^+).
\]
In practice, we shall still write $T$ instead of $T^+$.

%%%%%%%%%%%%%%%%%
\subsection{Topology on fibered categories} \label{topfib}

When $T$ is a fibered category over a \emph{site} $\mathbf B$, $T$ will implicitly be endowed with the \emph{inherited} topology (the coarsest topology making $j \colon T \to \mathbf B$ cocontinuous).
When $T$ is fibered in groupoids, this is the same thing as the induced topology (finest topology making the map continuous).
There exists a crystalline description of sheaves on $T$: giving a sheaf $E$ on $T$ is equivalent to giving a sheaf $E_{U}$ on $U$ for each $s : U \to T$ (note that $E_{U}$ actually depends on $s$) and for each diagram \eqref{celldiag}, a morphism $\eta^{-1} : f^{-1}E_{U} \to E_{V}$ satisfying a cocycle condition.

A morphism of fibered categories $u \colon T' \to T$ over $\mathbf B$ is automatically continuous and cocontinuous, providing a sequence of three adjoint functors
\[
u_{!}, u^{-1}, u_{*} \colon \widetilde T' \to \widetilde {T}
\]
on the corresponding topoi (we always denote by $\widetilde {\mathcal C}$ the topos associated to a site $\mathcal C$), the last two of them defining a morphism of topoi.

Base change holds for fibered categories:

%%%%%%%%%%%%%
\begin{lem} \label{basfib}
If for $k=1,2$, $j_{k} : T_{k} \to T$ is a morphism of fibered categories over $\mathbf B$ and $p_{k} : T_{1} \times_{T} T_{2} \to T_{k}$ denotes the projection, then we have $j_{1}^{-1} \circ j_{2*} = p_{2*} \circ p_{1}^{-1}$.
\end{lem}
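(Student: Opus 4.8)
The plan is to deduce the identity from the triple of adjoint functors attached to each of the four morphisms of fibered categories in the $2$-cartesian square, reducing the general statement to the case in which the base of the square is representable. Throughout I read the two composites as mates between the appropriate topoi, so that source and target agree. The starting point is the crystalline description of sheaves recalled in section \ref{topfib} (and its $2$-Yoneda reformulation): a sheaf $F$ on $T_{1}$, together with its morphisms, is determined by the family of its realizations $F_{s}$ at objects $s\colon U\to T_{1}$ (with $U\in\mathbf B$) and by the transition maps $\eta^{-1}$. Since the realization $F\mapsto F_{s}$ is nothing but the inverse image along the representable map $s$, and since inverse image along a morphism of fibered categories is computed by reindexing, one has at once $(j_{1}^{-1}j_{2*}E)_{s}=(j_{2*}E)_{j_{1}\circ s}$ and $(p_{\bullet*}p_{\bullet}^{-1}E)_{s}$ expressed through the pulled-back square. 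Thus it suffices to prove the lemma after base change along each representable $s$, i.e.\ to prove it when the lower-left corner of the square is a representable $U=\mathbf B_{/U}$.

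So the core to establish is: for $j_{2}\colon T_{2}\to T$ and a representable $s\colon U\to T$, with fibre product $U\times_{T}T_{2}$ and projections $\mathrm{pr}\colon U\times_{T}T_{2}\to U$ and $p\colon U\times_{T}T_{2}\to T_{2}$, there is a natural isomorphism
\[
s^{-1}j_{2*}E\;\simeq\;\mathrm{pr}_{*}\,p^{-1}E .
\]
Because every morphism of fibered categories is continuous and cocontinuous, the direct image $j_{2*}$ under the inherited topology is computed (before sheafification is even needed, as it is already a sheaf) by the usual limit formula indexed by the comma category $(s\downarrow j_{2})$. The whole point is that, by the very definition of the $2$-fibered product, this comma category is canonically equivalent to $U\times_{T}T_{2}$ with its projection $\mathrm{pr}$ to $U$: an object over some $V\to U$ is exactly an object of $T_{2}$ lying over $V$ together with an isomorphism of its image in $T$ with that prescribed by $s$. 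Under this equivalence the limit defining $s^{-1}j_{2*}E$ is identified with the sections of $p^{-1}E$ along the fibres of $\mathrm{pr}$, that is, with $\mathrm{pr}_{*}p^{-1}E$. Extending from the representable core back to arbitrary $j_{1}$ is then formal, since realizations are conservative and compatible with all the functors in play (as in proposition \ref{realsh}); equivalently one may run the dual argument on the left-adjoint side, where one checks $j_{2}^{-1}j_{1!}\simeq p_{\bullet!}p_{\bullet}^{-1}$ on representable sheaves using that $(-)_{!}$ sends $h_{t}$ to $h_{u(t)}$ and preserves colimits, and concludes by adjunction.

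The step I expect to be the genuine obstacle is the identification of the indexing comma category $(s\downarrow j_{2})$ with the fibre product $U\times_{T}T_{2}$ \emph{as a site over the inherited topology}, together with the verification that the transition ($2$-cell) data match under this equivalence; one must check that coverings correspond on both sides so that the limit formula really computes the direct image for the inherited topology, and that the cleavage-dependent isomorphisms are coherent. Once this bookkeeping is in place the remaining manipulations — the three adjunction chains, the passage from representable bases to arbitrary $j_{1}$ via realizations, and the naturality of all the identifications — are routine, and the asserted equality $j_{1}^{-1}\circ j_{2*}=p_{2*}\circ p_{1}^{-1}$ follows.
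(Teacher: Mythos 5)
Your reading of the statement as the mate $j_{1}^{-1}\circ j_{2*}=p_{1*}\circ p_{2}^{-1}$ is the right one (as printed, sources and targets do not match), and your dévissage through realizations to the case of a representable lower-left corner is sound --- it is the same mechanism by which the paper applies the lemma, e.g.\ in lemma \ref{cartcdb}. But the proof breaks exactly at the step you yourself flag as ``the genuine obstacle'', and the break is not bookkeeping. The comma category indexing the limit that computes the direct image is \emph{not} equivalent to the $2$-fibre product $U\times_{T}T_{2}$: an object of that comma category is an object $t_{2}$ of $T_{2}$ lying over some $W\to V$ together with a \emph{morphism} $j_{2}(t_{2})\to (\text{pullback of }s)$ in $T$, i.e., after the cartesian--vertical factorization, an arbitrary \emph{vertical} morphism in the fibre $T(W)$; an object of the $2$-fibre product carries an \emph{isomorphism}. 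Your sentence ``an object over some $V\to U$ is exactly an object of $T_{2}$ lying over $V$ together with an isomorphism of its image in $T$ with that prescribed by $s$'' silently replaces morphisms by isomorphisms. As soon as $T$ has non-invertible vertical morphisms the two indexing categories differ, the comparison functor between them is not initial, and the limits disagree.

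In fact no argument can close this gap in the stated generality, because the formula itself then fails. Take $\mathbf B$ to be the punctual site with the chaotic topology (so all inherited topologies are chaotic and sheaves are presheaves), let $T$ be the category with two objects $a,b$ and a single non-identity morphism $b\to a$ (a fibered category over $\mathbf B$), and let $j_{1},j_{2}\colon \mathbf B\to T$ pick out $a$ and $b$. Then $T_{1}\times_{T}T_{2}=\emptyset$ since $a\not\simeq b$, so $p_{1*}p_{2}^{-1}E$ is the terminal sheaf; but
\[
j_{1}^{-1}j_{2*}E=(j_{2*}E)(a)=\mathrm{Hom}(h_{a}\circ j_{2},E)=E^{\mathrm{Hom}_{T}(b,a)}=E,
\]
which differs from a singleton as soon as $E$ does. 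What makes the lemma true in the situations the paper actually cares about is that there $T$ is fibered in groupoids (classic overconvergent sites are fibered in sets or equivalence relations): every vertical morphism is then invertible, the cartesian--vertical factorization lands in the $2$-fibre product, your comparison functor becomes an equivalence, and the rest of your argument goes through. Note finally that the paper's own ``proof'' is not an argument but a one-line citation of \cite[\href{https://stacks.math.columbia.edu/tag/0FN1}{Tag 0FN1}]{stacks-project}; the hypotheses under which such a push--pull formula holds are precisely what your identification elides, so a self-contained proof must either impose a groupoid-type hypothesis on $T$ (harmless for the paper's purposes) or supply the initiality verification that your comma-category claim was meant to provide.
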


\begin{proof}
Follows from  \cite[\href{https://stacks.math.columbia.edu/tag/0FN1}{Tag 0FN1}]{stacks-project}. \end{proof}

It formally follows that we also have $j_{1}^{-1} \circ \mathrm R  j_{2*} = \mathrm R p_{2*} \circ p_{1}^{-1}$ on abelian sheaves.

If $T$ is a fibered category over $\mathbf B$ and we are given $s,t : U \to T$ for some $U \in \mathbf B$, then one defines the presheaf of morphisms $\mathcal H\mathrm{om}_U(s,t)$ on $U$ as follows: given $g : V \to U$, we set
\[
\mathcal H\mathrm{om}_U(s,t)(g) = \mathrm{Hom}(s \circ g ,t \circ g).
\]

%%%%%%%%%%%%
\begin{dfn}
A morphism of fibered categories $f: T' \to T$ is a \emph{local epimorphism} (resp.\ a \emph{local isomorphism})\footnote{Also called a \emph{covering} (resp a \emph{bicovering}).} if
\begin{enumerate}
\item for all $s,t : U \to T'$, the morphism of presheaves
\[
\mathcal H\mathrm{om}_U(s,t) \to  \mathcal H\mathrm{om}_U(f \circ s,f \circ t)
\]
is a local epimorphism (resp.\ a local isomorphism).
\item for all $s : U \to T$, the full image of $T'_{U} \to U$ is a covering sieve of $U$.
\end{enumerate}
\end{dfn}

Usually, a sieve of $U$ is a family of maps $V \to U$ which is stable under composition with any map on the left.
This is equivalent to giving a subpresheaf of $\widehat U$ or a fibered subcategory of $\mathbf B_{/U}$, and we will mostly use the latter approach.

Being a local epimorphism (resp.\ local isomorphism) is local on $T$ in the sense that any pullback of a local epimorphism (resp.\ local isomorphism) is a local epimorphism (resp.\ local isomorphism), and conversely if we pullback along a local epimorphism.
Note also that a morphism of presheaves $\mathcal T' \to \mathcal T$ is a local epimorphism (resp.\ isomorphism)  if and only if the corresponding morphism of fibered categories $\mathbf B_{/\mathcal T} \to \mathbf B_{/\mathcal T}$ is a local epimorphism (resp.\ isomorphism).

Finally, a morphism $T' \to T$ is a local epimorphism (resp.\ a local isomorphism) if and only if the corresponding  morphism $T'^+ \to T^+$ is so when $\widehat {\mathbf B}$ is endowed with its canonical topology (where coverings are local epimorphisms of presheaves).
More generally, one calls a family $\{T_i \to T\}_{i \in I}$ \emph{locally epimorphic} if $\coprod_{i \in I} T_i^+ \to T^+$ is a local epimorphism.
For example, the family $\{U \to T\}_{U \in T}$ of all objects of $T$ is always locally epimorphic.

%%%%%%%%%%%%%%%%%
\subsection{Effective Descent} \label{apdesc}

Fibered categories are useful for descent theory and we shall briefly describe how this works.
Let $T$ be a fibered category over a category $\mathbf B$.
A sieve $R$ of $U \in \mathbf B$ is said to satisfy \emph{effective descent} for $T$ if the canonical functor is an equivalence:
\[
\mathrm{Hom}_{\mathbb Fib(\mathbf B)}(U, T) \simeq \mathrm{Hom}_{\mathbb Fib(\mathbf B)}(R, T)
\]
(or equivalently $T(U) \simeq T(R)$).
When this functor is only fully faithful, one says that $R$ is a sieve of \emph{descent}, so that descent is \emph{effective} when moreover the functor is essentially surjective.
The sieve $R$ is said to be \emph{universally of (effective) descent} for $T$  if $R_V$ is a sieve of (effective) descent in $V$ for all $V \to U$ in $\mathbf B$.
Sieves of universally effective descent for $T$ define a topology on $\mathbf B$ called the \emph{topology of (effective) descent} for $T$.

A family $\{f_i : V_i \to U\}$ of morphisms in $\mathbf B$ is said to satisfy \emph{(effective) descent} for $T$ if the sieve generated by this family satisfies (effective) descent with respect to  $T$.
In the case of a single morphism $f : V \to U$, we will denote by $(V \to U)$ the sieve generated by $f$: this is the image of the morphism $\mathbf B_{/V} \to \mathbf B_{/U}$.
If $f$ admits self products, then  the category $T(V \to U)$ is equivalent to the category of descent data relative to $f$: a \emph{descent datum} is an object $\mathcal F \in T(V)$ endowed with an isomorphism $p_2^*\mathcal F \simeq p_1^*\mathcal F$ on $V \times_U V$ satisfying the cocycle condition on triple products.
Note that, a family $\{f_i : V_i \to U\}$ of morphisms in $\mathbf B$ satisfies (effective) descent if and only if the morphism $\coprod_{i\in I} V_i \to U$ satisfies (effective) descent when the direct sum is disjoint and universal (this can always be achieved by moving into $\widehat {\mathbf B}$).

If $\mathbf B$ is a site and effective descent topology for $T$ is finer than the topology of $\mathbf B$, then $T$ is called a \emph{stack} on $\mathbf B$.
If this only holds for the descent topology, then $T$ is said to be \emph{separated} and sometimes called a \emph{prestack}\footnote{By analogy with presheaves, it might be better to call prestack any fiberred category over a site.}.
Alternatively, it means that any covering sieve for the topology of $\mathbf B$ is universally of (effective) descent with respect to  $T$.
Note that $T$ is a (pre-) stack on $\mathbf B$ if and only if $T^+$ is a (pre-) stack on $\widehat {\mathbf B}$.
As a standard example, if $\mathbf B$ is any site, then the fibered category whose fiber over $U$ is the opposite to the category $\widetilde U$ of sheaves on $U$ is a stack over $\mathbf B$.
Also, if $\mathcal T$ is a presheaf of sets on $\mathbf B$, then the fibered category $\mathbf B_{/\mathcal T}$ is a stack (resp.\ a prestack) if and only $\mathcal T$ is a sheaf (resp.\ is separated).
Be careful however, that if $\mathcal T$ is a sheaf of \emph{categories}, then $\mathbf B_{/\mathcal T}$ is not necessarily a stack but only a prestack in general.
Any fibered category $T$ over a site $\mathbf B$ has a stackification $T^{\sharp}$ which is unique up to a unique $2$-isomorphism.
In the case of \emph{discrete} fibered categories (i.e.\ categories fibered in sets), this is compatible with the notion of associated sheaf.
A morphism $T' \to T$ is a local isomorphism if and only if the corresponding morphism of stacks $ T'^{\sharp} \to T^{\sharp}$ is an equivalence.

Let us finally mention that if $T''$ is a prestack (resp.\ a stack) and $T' \to T$ is a local epimorphism (resp.\ a local isomorphism), then the functor
\[
\mathrm{Hom}_{\mathbb Fib(\mathbf B)}(T, T'') \to \mathrm{Hom}_{\mathbb Fib(\mathbf B)}(T', T'')
\]
is fully faithful (resp.\ an equivalence).

%%%%%%%%%%%%%%%%%
\subsection{$D$-objects} \label{formaco}

We introduce some formalism that will be needed for cohomological descent.

We fix a base site $\mathbf B$ with fibered products\footnote{One may always replace $\mathbf B$ with $\widehat{\mathbf B}$ if necessary or else use hypercoverings.} and we consider the fibered category $\mathbf T$ over $\mathbf B$ whose fiber over any $U \in \mathbf B$ is the opposite to the category $\widetilde U$ of sheaves on $U$ (more generally, $\mathbf T$ could be any category bifibered in dual of topoi).
Given a (small) category $D$, a \emph{$D$-object of $\mathbf B$} is a contravariant functor $U : D \to \mathbf B$.
If we pull $\mathbf T$ back along the $D$-object $U$, then we obtain what is usually called a \emph{$D$-topos} $\mathbf T_U$ (i.e.\ a bifibered category in topoi over $D$).
The category of sections
\[
\widetilde {U} := \mathrm{Hom}_{D}(D,\mathbf T_{U})
\]
is then a topos and we shall call \emph{sheaf over $U$} an object of $\widetilde U$.
Concretely, a sheaf $\mathcal F$ over $U$ is simply a family of sheaves $\mathcal F(i)$ over $U(i)$ for $i \in D$ with compatible transition maps $\lambda^{-1}\mathcal F(j) \to \mathcal F(i)$ (or equivalently $\mathcal F(j) \to \mathcal \lambda_*F(i)$) for $\lambda : i \to j$ in $D$.
It is also convenient to consider the fibered category over $\mathbf{Cat}$ whose fiber over $D$ is the category $\mathrm{Hom}(D^{\mathrm{op}}, \mathbf B)$ of $D$-objects of $\mathbf B$ and pullback is given by $u^{-1}U = U \circ u^{\mathrm{op}}$ for $u : E \to D$.
Now, if $V$ is an $E$-object of $\mathbf B$, it makes sense to talk about a morphism $f : V \to U$ over $u : E \to D$.
Concretely, $f$ is given by a compatible family of morphisms $f(i) : V(i) \to U(u(i))$ in $\mathbf B$.
The morphism $f$ induces a morphism of topoi $f : \widetilde V \to \widetilde U$ whose inverse image is given by $f^{-1}(\mathcal F)(i) = f(i)^{-1}\mathcal F(u(i))$.

For our convenience, we introduce the following \emph{ad hoc} terminology:

%%%%%%%%%%%%
\begin{dfn} \label{defstab}
Let $U$ (resp.\ $V$) be a $D$-object (resp.\ an $E$-object) of $\mathbf B$, $f : V \to U$ a morphism over some $u : E \to D$ and $\mathcal F$ a complex of abelian sheaves on $U$.
We then say that $f$ \emph{stabilizes} $\mathcal F$ if the adjunction map
\[
\mathcal F \to \mathrm Rf_*f^{-1}\mathcal F
\]
is an isomorphism.
\end{dfn}

The following elementary lemma will be useful later:

%%%%%%%%%%%%
\begin{lem} \label{baslem}
If $g : W \to V$ stabilizes $f^{-1}\mathcal F$, then $f$ stabilizes $\mathcal F$ if and only if $f \circ g$ stabilizes $\mathcal F$.
\end{lem}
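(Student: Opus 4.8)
The plan is to reduce everything to the functoriality of unit (adjunction) maps under composition. First I would record the two compatibilities that make the argument run: since $f$ and $g$ are morphisms over $\mathbf B$ between the relevant $D$-topoi, their inverse image functors are exact, so direct images preserve injectives and one has an equality of total derived functors
\[
\mathrm R(f\circ g)_* = \mathrm Rf_* \circ \mathrm Rg_*, \qquad (f\circ g)^{-1} = g^{-1}\circ f^{-1}.
\]
Consequently the target of the adjunction map tested by definition \ref{defstab} for $f\circ g$ is $\mathrm R(f\circ g)_*(f\circ g)^{-1}\mathcal F = \mathrm Rf_*\mathrm Rg_*\,g^{-1}f^{-1}\mathcal F$.

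The key step is to factor the composite unit. I would show that the adjunction map for $f\circ g$ decomposes as
\[
\mathcal F \longrightarrow \mathrm Rf_*f^{-1}\mathcal F \longrightarrow \mathrm Rf_*\mathrm Rg_*\,g^{-1}f^{-1}\mathcal F,
\]
where the first arrow is the unit for $f$ evaluated at $\mathcal F$, and the second arrow is $\mathrm Rf_*$ applied to the unit for $g$ evaluated at $f^{-1}\mathcal F$. This is the standard triangle identity expressing the unit of a composed adjunction as the composite of the two units; I would first verify it for the underlying (underived) adjunctions $(f^{-1},f_*)$ and $(g^{-1},g_*)$ and then promote it to the derived setting using the factorization of $\mathrm R(f\circ g)_*$ above.

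With this factorization in hand the conclusion is immediate. By hypothesis $g$ stabilizes $f^{-1}\mathcal F$, i.e.\ the unit $f^{-1}\mathcal F \to \mathrm Rg_*\,g^{-1}f^{-1}\mathcal F$ is an isomorphism. Since $\mathrm Rf_*$ is a triangulated functor and therefore preserves isomorphisms, the second arrow in the display is an isomorphism. Hence the composite is an isomorphism if and only if the first arrow $\mathcal F \to \mathrm Rf_*f^{-1}\mathcal F$ is, which is exactly the assertion that $f\circ g$ stabilizes $\mathcal F$ if and only if $f$ does.

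The only genuinely delicate point is the compatibility of unit maps under composition together with the identity $\mathrm R(f\circ g)_* = \mathrm Rf_*\circ \mathrm Rg_*$ in the derived category: both are formal, but they rely on the exactness of the inverse image functors for these $D$-topoi, which is what guarantees that no Grothendieck spectral sequence intervenes and that the units compose on the nose. I would cite the standard behaviour of morphisms of topoi for this, after which the rest is the elementary two-out-of-three observation above.
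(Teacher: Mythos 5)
Your proof is correct and follows essentially the same route as the paper: the paper's one-line argument is precisely the chain $\mathrm R(f\circ g)_*(f\circ g)^{-1}\mathcal F = \mathrm Rf_*\mathrm Rg_*g^{-1}f^{-1}\mathcal F = \mathrm Rf_*f^{-1}\mathcal F$, which your factorization of the composed unit makes explicit. You have merely spelled out the compatibility of adjunction maps that the paper leaves implicit, which is a welcome amount of care rather than a different idea.
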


\begin{proof}
We have
\[
\mathrm R(f\circ g)_*(f\circ g)^{-1}\mathcal F = \mathrm Rf_*\mathrm Rg_*g^{-1}f^{-1}\mathcal F = \mathrm Rf_*f^{-1}\mathcal F . \qedhere
\]
\end{proof}

Let us also mention that, if we are given a distinguished triangle 
\[
\mathcal F' \to \mathcal F \to \mathcal F'' \to \cdots
\]
and $f$ stabilizes two of them, then it is also stabilizes the other. 

We shall mostly consider the case $D = \mathbf \Delta^k$ where $k=0,1,2$ and $\mathbf \Delta$ denotes the category of simplexes.
In the case $k=0$ so that $D = \{0\}$, we can identify a $D$-object of $\mathbf B$ with an object $U \in \mathbf B$ (and the corresponding topoi as well).
Let us now consider some morphism $f : V \to U$ in $\mathbf B$.
We can then build the $0$-coskeleton $V(\bullet)$ of $V$ over $U$ which is the simplicial complex given by
\[
V(i) := (V/U)(i) := \underbrace{V \times_U \cdots \times_U V}_{i+1}.
\]
There exists an \emph{augmentation} map $f_\epsilon : V(\bullet) \to U$ over the final functor $\epsilon : \Delta \to \{0\}$ simply given for each $i \in \mathbf\Delta$ by the projection $f(i) : V(i) \to U$.

If we are given another morphism $g: W \to U$ then, we can also consider the bisimplicial complex $V(\bullet) \times_U W(\bullet)$ and we will be interested in the commutative diagram
\begin{align} \label{simdiag}
\xymatrix{V(\bullet) \times_U W(\bullet)\ar[rr]^{g_{\epsilon}(\bullet)} \ar[d]^{f_\epsilon(\bullet)} &&  V(\bullet) \ar[d]^{f_{\epsilon}}  \\ W(\bullet) \ar[rr]^{g_\epsilon} && U}
\end{align}
over
\[
\xymatrix{\mathbf\Delta^2 \ar[rr]^{p_1} \ar[d]^{p_2} &&  \mathbf\Delta \ar[d]^{\epsilon}  \\ \mathbf\Delta \ar[rr]^{\epsilon} && \{0\}.}
\]
For each $i \in \mathbf\Delta$ and any small category $D$, we can consider the inclusion map
\[
i : D \hookrightarrow \mathbf \Delta \times D, \quad j \mapsto (i,j).
\]
Given a $(\mathbf \Delta \times D)$-objet $V$ of $\mathbf B$, we will then write $V(i) := i^{-1}V$ and $\mathcal F(i) := i^{-1}\mathcal F$ if $\mathcal F$ is a sheaf on $V$.
For example,
\[
(V(\bullet) \times_U W(\bullet))(i) = V(i) \times_U W(\bullet) \simeq (V(i) \times_U W/V(i))(\bullet).
\]

Then, we have the following generalization of proposition 3.1.5 of Exposé Vbis of \cite{SGA4}:

%%%%%%%%%%%%%%%%%%%
\begin{lem}[Conrad] \label{Conrad}
Let $f: V \to U$ be a morphism over $p_1 : \mathbf\Delta^2 \to \mathbf\Delta$ and $\mathcal F$ a complex of abelian sheaves on $U$.
The morphism $f$ stabilizes $\mathcal F$ if and only if $f(i) : V(i) \to U(i)$ stabilizes $\mathcal F(i)$ for all $i \in \mathbf\Delta$.
\end{lem}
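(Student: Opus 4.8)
The plan is to test the stabilization condition one level at a time. Since $U$ is a $\mathbf\Delta$-object, the restriction functors $i_U^{-1}$ to the levels $i\in\mathbf\Delta$ are exact and jointly conservative on the derived category: a complex of abelian sheaves on $U$ is acyclic if and only if each of its levels $\mathcal A(i)=i_U^{-1}\mathcal A$ is acyclic, so a morphism is a quasi-isomorphism exactly when its restriction to every level is one. Applying this to the adjunction map $\mathcal F\to\mathrm Rf_*f^{-1}\mathcal F$, we see that $f$ stabilizes $\mathcal F$ (definition \ref{defstab}) if and only if $i_U^{-1}(\mathcal F\to\mathrm Rf_*f^{-1}\mathcal F)$ is an isomorphism for every $i$. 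The whole lemma therefore becomes a statement about what happens after restricting to level $i$.

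First I would record the two elementary compatibilities of inverse images. Writing $i_V\colon\mathbf\Delta\to\mathbf\Delta^2$, $j\mapsto(i,j)$, for the inclusion of the fibre of $p_1$ over $i$, the formula $(f^{-1}\mathcal F)(i,j)=f(i,j)^{-1}\mathcal F(i)$ gives at once $i_V^{-1}f^{-1}\mathcal F=f(i)^{-1}\mathcal F(i)$, and of course $i_U^{-1}\mathcal F=\mathcal F(i)$. The decisive ingredient is then a base change isomorphism
\[
i_U^{-1}\mathrm Rf_*\mathcal G\simeq\mathrm Rf(i)_*\,i_V^{-1}\mathcal G,
\]
natural in the complex $\mathcal G$ on $V$ and compatible with the adjunction units. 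Granting it for $\mathcal G=f^{-1}\mathcal F$ and combining with the previous identities, the level-$i$ restriction of $\mathcal F\to\mathrm Rf_*f^{-1}\mathcal F$ is identified with the adjunction map $\mathcal F(i)\to\mathrm Rf(i)_*f(i)^{-1}\mathcal F(i)$, that is, with the stabilization map of $f(i)$ and $\mathcal F(i)$. The equivalence asserted by the lemma then follows from the conservativity of the first paragraph.

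The heart of the matter, and the step I expect to be the main obstacle, is the base change isomorphism above. It holds because the square of indexing categories
\[
\xymatrix{
\mathbf\Delta \ar[r]^{i_V} \ar[d]_{\epsilon} & \mathbf\Delta^2 \ar[d]^{p_1} \\
\{0\} \ar[r]_{i_U} & \mathbf\Delta
}
\]
is cartesian with $p_1$ the projection, so that $i_V$ identifies $\mathbf\Delta$ with the fibre $p_1^{-1}(i)$. For such a configuration the Beck--Chevalley morphism $i_U^{-1}\mathrm Rf_*\to\mathrm Rf(i)_*\,i_V^{-1}$ is an isomorphism; this is exactly the relative form of the commutation of $\mathrm Rf_*$ with restriction to levels proved, in the absolute case, in proposition 3.1.5 of Expos\'e Vbis of \cite{SGA4}. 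I would establish it from the explicit description of the direct image along a morphism of diagrams of topoi as a limit over the relevant comma category, using that the fibre of the projection $p_1$ over $i$ makes that comma category cofinal, whence the limit collapses to $\mathrm Rf(i)_*$; exactness of $i_U^{-1}$ and $i_V^{-1}$ ensures the computation descends to the derived category, and the compatibility with the adjunction units is then a purely formal verification once one knows $i_V^{-1}f^{-1}=f(i)^{-1}i_U^{-1}$.
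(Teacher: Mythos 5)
Your overall strategy is the same as the paper's: restrict the adjunction map $\mathcal F \to \mathrm Rf_*f^{-1}\mathcal F$ to each level $i$, identify it there with the adjunction map for $f(i)$ and $\mathcal F(i)$, and conclude by conservativity of the family of restrictions. The two inverse-image compatibilities and the \emph{underived} direct-image formula $(f_*\mathcal G)(i)\simeq f(i)_*\mathcal G(i)$ (your cofinality argument; the paper invokes \cite{stacks-project}, Tag 0FN1) are all correct. The gap is the final clause of your argument: exactness of $i_U^{-1}$ and $i_V^{-1}$ does \emph{not} make the base change isomorphism descend to the derived category.

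To see why, run the standard argument: choose an injective resolution $\mathcal G\to\mathcal I^\bullet$ on $V$, so that $i_U^{-1}\mathrm Rf_*\mathcal G = i_U^{-1}(f_*\mathcal I^\bullet) \simeq f(i)_*\bigl(i_V^{-1}\mathcal I^\bullet\bigr)$ by the underived formula applied termwise. Exactness of $i_V^{-1}$ only guarantees that $i_V^{-1}\mathcal G\to i_V^{-1}\mathcal I^\bullet$ is still a resolution; to identify $f(i)_*\bigl(i_V^{-1}\mathcal I^\bullet\bigr)$ with $\mathrm Rf(i)_*\bigl(i_V^{-1}\mathcal G\bigr)$ you must additionally know that the sheaves $i_V^{-1}\mathcal I^j$ are acyclic for $f(i)_*$, and inverse image functors do not preserve injectives in general. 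This is precisely what the paper flags as ``the important point'': lemma 7.6 of \cite{Conrad14}, which asserts that restriction to a level of a diagram topos carries injectives to injectives (the reason being that this restriction admits an exact left adjoint). Adding this one ingredient closes your proof, which then coincides with the paper's; without it, your Beck--Chevalley map is only known to be an isomorphism before deriving.
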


\begin{proof}
We follow the proof given by Brian Conrad.
For an abelian sheaf $\mathcal F$ on $U$ and an abelian sheaf $\mathcal G$ on $V$, we have
\[
(f^{-1}\mathcal F)(i) \simeq f(i)^{-1}\mathcal F(i) \quad  \mathrm{and} \quad  (f_*\mathcal G)(i) \simeq f(i)_*\mathcal G(i).
\]
Actually, the first isomorphism is completely formal and the second one follows from  \cite[\href{https://stacks.math.columbia.edu/tag/0FN1}{Tag 0FN1}]{stacks-project}.
The important point now is lemma 7.6 of \cite{Conrad14} which shows that $\mathcal G(i)$ is injective when $\mathcal G$ is injective.
Our assertion then follows from the fact that the family $\{\mathcal F \mapsto \mathcal F(i)\}_{i \in \mathbf\Delta}$ is conservative.
\end{proof}

%%%%%%%%%%%%%%%%%%%%%%%%%%
\subsection{Cohomological descent} \label{apcdes}

Cohomological descent is a powerful technic that was initiated by Bernard Saint-Donat in Exposé Vbis of \cite{SGA4} (see also \cite{Deligne74}, \cite{Laszlo03} and \cite{Conrad14}).
In \cite{ZureickBrown14}), David Zureick-Brown was able to derive from Conrad's approach that there actually exists a topology of cohomological descent with respect to  any fibered category of abelian sheaves (and not only for a thick bifibered category such as in \cite{SGA4}).

%%%%%%%%%%%%%
\begin{dfn}
A morphism $f : V \to U$ in $\mathbf B$ is said to satisfy \emph{cohomological descent} with respect to a complex $\mathcal F$ on $U$ if the augmentation map $f_\epsilon : V(\bullet) \to U$ stabilizes $\mathcal F$ (see definition \ref{defstab}).
It is called \emph{universally of cohomological descent} with respect to $\mathcal F$ if, given any morphism $\pi : U' \to U$, the inverse image $f' : V' \to U'$ of $f$ satisfies cohomological descent with respect to $\pi^{-1}\mathcal F$.
\end{dfn}

When direct sums are disjoint and universal in $\mathbf B$, a direct sum 
\[
\coprod f_i : \coprod_{i \in I} V_i \to \coprod_{i \in I} U_i
\]
is (universally) of cohomological descent with respect to a direct sum $\coprod_{i \in I} \mathcal F_i$ if and only if each $f_i : V_i \to U_i$ is (universally) of cohomological descent with respect to $\mathcal F_i$.
More generally, we will also say that a family $\{f_i : V_i \to U\}_{i\in I}$ is \emph{(universally) of cohomological descent} with respect to $\mathcal F$ when the morphism $\coprod_{i \in I} V_i \to U$ is (universally) of cohomological descent with respect to $\mathcal F$.
Note that our condition that direct sums are disjoint and universal might always be achieved after replacing $\mathbf B$ with $\widehat{\mathbf B}$.

For explicit computations, one can use the following description:
\[
\mathrm Rf_{\epsilon*}f_\epsilon^{-1}\mathcal F = [\mathrm Rf(0)_*f(0)^{-1} \mathcal F \overset {d_0} \to \cdots \overset {d_{i-1}} \to \mathrm Rf(i)_*f(i)^{-1} \mathcal F\overset {d_i} \to \cdots]
\]
where
\[
d^i = p_{\widehat 1}(i)^{-1} - p_{\widehat 2}(i)^{-1} + \cdots + (-1)^i p_{\widehat {i+2}}(i)^{-1}
\]
if $p_{\widehat k}(i)$ denotes the projection that forgets the $k$th component for $k=1, \ldots, i+2$.
In particular, we see that $f : V \to U$ satisfies cohomological descent with respect to $\mathcal F$ if and only if
\[
\left\{\begin{array}l
\mathcal F = \ker(f(0)_*f(0)^{-1} \mathcal F \to f(1)_*f(1)^{-1} \mathcal F)
\\
\\ \forall k > 0, \quad \mathrm R^kf_{\epsilon*}f_\epsilon^{-1}\mathcal F =0.
\end{array}\right.
\]
Then, there exist spectral sequences
\[
E_1^{i,j} := \mathrm R^jf(i)_*f(i)^{-1} \mathcal F \Rightarrow \mathcal F
\]
and
\[
E_1^{i,j} := \mathrm H^j(U, f(i)^{-1} \mathcal F) \Rightarrow \mathrm H^{i+j}(U, \mathcal F).
\]

From this direct description, one can easily derive the following:

%%%%%%%%%%%%%%
\begin{prop} \label{loccd}
Any local epimorphism $f : V \to U$ is universally of cohomological descent with respect to  abelian sheaves.
\end{prop}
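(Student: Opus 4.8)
The plan is to reduce the statement to the case of a morphism admitting a \emph{section}, where a contracting homotopy settles it, and to carry out this reduction by base change along $f$ combined with the conservativity of pullback along a covering.

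First I would dispose of universality for free. A local epimorphism is stable under base change (as recalled in the appendix), and the base change of the augmentation $f_\epsilon : V(\bullet) \to U$ along any $\pi : U' \to U$ is again the augmentation attached to the base change $f' : V' \to U'$ of $f$ (coskeleta commute with base change), which is still a local epimorphism. Hence it suffices to prove the bare statement: for every local epimorphism $f : V \to U$ and every complex of abelian sheaves $\mathcal F$ on $U$, the augmentation $f_\epsilon$ stabilizes $\mathcal F$ in the sense of Definition \ref{defstab}, that is, the adjunction map $\mathcal F \to \mathrm Rf_{\epsilon*}f_\epsilon^{-1}\mathcal F$ is an isomorphism.

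Second, I would treat the split case: suppose $f$ admits a section $s : U \to V$ with $f \circ s = \mathrm{Id}$. Then the augmented simplicial object $V(\bullet) \to U$ carries an extra degeneracy built from $s$, so that the complex $[\mathrm Rf(0)_*f(0)^{-1}\mathcal F \to \mathrm Rf(1)_*f(1)^{-1}\mathcal F \to \cdots]$ computing $\mathrm Rf_{\epsilon*}f_\epsilon^{-1}\mathcal F$ recalled above admits a contracting homotopy. Its cohomology is therefore concentrated in degree $0$, where it equals $\mathcal F$, and the adjunction map is an isomorphism; this is the classical splitting argument (see \cite{SGA4}, Expos\'e Vbis, and \cite{Conrad14}). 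In particular a morphism with a section stabilizes every $\mathcal F$. For the general case I base change the augmentation $a := f_\epsilon$ along $f$ itself. Since $V \times_U V(\bullet)$ is the $0$-coskeleton of the first projection $p_1 : V \times_U V \to V$, and $p_1$ admits the diagonal section $\delta$, the base-changed augmentation $a' : V \times_U V(\bullet) \to V$ is split, hence stabilizes $f^{-1}\mathcal F$ by the previous step. On the other hand, Lemma \ref{basfib}, applied in each simplicial degree and assembled over $\mathbf\Delta$, yields a base change isomorphism $f^{-1}\mathrm Ra_*a^{-1}\mathcal F \simeq \mathrm Ra'_*(a')^{-1}f^{-1}\mathcal F$ compatible with the respective adjunction maps. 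Combining the two, the pullback $f^{-1}\bigl(\mathcal F \to \mathrm Ra_*a^{-1}\mathcal F\bigr)$ is an isomorphism. Finally, $f$ being a covering, the functor $f^{-1}$ is exact and conservative, so it detects quasi-isomorphisms; therefore $\mathcal F \to \mathrm Rf_{\epsilon*}f_\epsilon^{-1}\mathcal F$ is already an isomorphism on $U$, which is the asserted cohomological descent.

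The main obstacle will be the bookkeeping in the last step: verifying that Lemma \ref{basfib} propagates cleanly through the simplicial augmentations (using that inverse image and derived direct image are computed degreewise, exactly as in the proof of Lemma \ref{Conrad}, where $(f_*\mathcal G)(i) \simeq f(i)_*\mathcal G(i)$ and injectives restrict to injectives), and that the resulting base change isomorphism is compatible with the adjunction units. Only this compatibility lets conservativity of $f^{-1}$ transport invertibility of the pulled-back map back down to $U$; everything else is formal or classical.
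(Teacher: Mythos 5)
Your proof is correct, and it is precisely the classical argument (split case via the extra degeneracy, base change of the augmentation along $f$ itself to gain the diagonal section, then conservativity and exactness of $f^{-1}$ for a covering) that the paper invokes by simply citing lemma 1.4.24 of \cite{Olsson07} rather than writing it out. The degreewise assembly you flag as the main obstacle is exactly what the paper's own toolkit supplies: the derived form of lemma \ref{basfib} gives the base change in each simplicial degree, and the argument in lemma \ref{Conrad} (injectives restrict to injectives degreewise, and $(f_*\mathcal G)(i) \simeq f(i)_*\mathcal G(i)$) guarantees these isomorphisms are compatible with the cosimplicial structure and the adjunction units.
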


\begin{proof}
This is done for example in \cite{Olsson07}, lemma 1.4.24.
\end{proof}

Together with the next lemma \ref{cohdes}, proposition \ref{loccd} implies that being universally of cohomological descent is local downstairs.

%%%%%%%%%%%%%%%%%%
\begin{lem} \label{cohdes}
Consider a cartesian diagram
\[
\xymatrix{V' \ar[r]^{\pi'} \ar[d]^{f'} &  V\ar[d]^f  \\ U' \ar[r]^{\pi} & U}
\]
in $\mathbf B$ and let $\mathcal F$ be a complex of abelian sheaves on $U$.
Suppose $\pi$ is universally of cohomological descent with respect to $\mathcal F$.
Then, $f$ is universally of cohomological descent with respect to $\mathcal F$ if and only if $f'$ is universally of cohomological descent with respect to $\pi^{-1}\mathcal F$.
\end{lem}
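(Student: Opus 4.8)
The plan is to reduce the whole statement to the elementary transitivity principle of Lemma \ref{baslem} together with Conrad's Lemma \ref{Conrad}, applied to the bisimplicial object of diagram \eqref{simdiag} taken with $W=U'$ and $g=\pi$. First I would dispose of the trivial implication: if $f$ is universally of cohomological descent with respect to $\mathcal F$, then its base change $f'$ along $\pi$ is universally of cohomological descent with respect to $\pi^{-1}\mathcal F$, since a base change of $f'$ along any $U''\to U'$ is a base change of $f$ along $U''\to U$ and the pullback of $\pi^{-1}\mathcal F$ to $U''$ is the pullback of $\mathcal F$. For the converse I would note that all the hypotheses ($\pi$ universally of cohomological descent with respect to $\mathcal F$, and $f'$ universally of cohomological descent with respect to $\pi^{-1}\mathcal F$) are stable under an arbitrary base change $\rho\colon U''\to U$, and the conclusion to be proved is exactly cohomological descent of the base-changed $f_{U''}$ with respect to $\rho^{-1}\mathcal F$; hence it suffices to prove the non-universal statement that $f_\epsilon\colon V(\bullet)\to U$ stabilizes $\mathcal F$ in the sense of Definition \ref{defstab}, while retaining the universal hypotheses.

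The heart of the argument is the commutative square from \eqref{simdiag}, with its diagonal identity $f_\epsilon\circ g_\epsilon(\bullet)=\pi_\epsilon\circ f_\epsilon(\bullet)$ on $V(\bullet)\times_U U'(\bullet)$, where $g_\epsilon(\bullet)\colon V(\bullet)\times_U U'(\bullet)\to V(\bullet)$ lies over $p_1$ and $f_\epsilon(\bullet)\colon V(\bullet)\times_U U'(\bullet)\to U'(\bullet)$ lies over $p_2$. I would then establish two stabilization claims. For the first, that $g_\epsilon(\bullet)$ stabilizes $f_\epsilon^{-1}\mathcal F$, Lemma \ref{Conrad} reduces the question, for each $i\in\mathbf\Delta$, to the augmentation of the \v Cech nerve of $V(i)\times_U U'\to V(i)$ stabilizing $f(i)^{-1}\mathcal F$; this is precisely cohomological descent of the base change of $\pi$ along the projection $V(i)\to U$ with respect to $f(i)^{-1}\mathcal F$, which holds because $\pi$ is \emph{universally} of cohomological descent with respect to $\mathcal F$. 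For the second, that $f_\epsilon(\bullet)$ stabilizes $\pi_\epsilon^{-1}\mathcal F$, I would apply Lemma \ref{Conrad} after exchanging the two factors of $\mathbf\Delta^2$, reducing for each $j\in\mathbf\Delta$ to cohomological descent of the base change of $f$ along $U'(j)\to U$ with respect to $\pi(j)^{-1}\mathcal F$; writing $V\times_U U'(j)=V'\times_{U'}U'(j)$ and identifying $\pi(j)^{-1}\mathcal F$ with the pullback of $\pi^{-1}\mathcal F$ along $U'(j)\to U'$, this is exactly the \emph{universal} cohomological descent of $f'$ with respect to $\pi^{-1}\mathcal F$.

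With the two claims in hand the assembly is purely formal, using Lemma \ref{baslem} twice. From the second claim and the hypothesis that $\pi_\epsilon$ stabilizes $\mathcal F$ (cohomological descent of $\pi$, contained in the universal hypothesis), Lemma \ref{baslem} gives that the diagonal composite $\pi_\epsilon\circ f_\epsilon(\bullet)$ stabilizes $\mathcal F$. Since this composite equals $f_\epsilon\circ g_\epsilon(\bullet)$ and the first claim says $g_\epsilon(\bullet)$ stabilizes $f_\epsilon^{-1}\mathcal F$, a second application of Lemma \ref{baslem} yields that $f_\epsilon$ stabilizes $\mathcal F$, i.e. $f$ satisfies cohomological descent with respect to $\mathcal F$. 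Threading this conclusion through the base-change reduction of the first paragraph produces the universal statement.

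The main obstacle, and the only genuinely non-formal input, is the bookkeeping that identifies the fibres of $V(\bullet)\times_U U'(\bullet)$ with \v Cech nerves of the right base changes, so that Lemma \ref{Conrad} applies in both simplicial directions: one must check that fixing the first (resp.\ second) degree turns $g_\epsilon(\bullet)$ (resp.\ $f_\epsilon(\bullet)$) into the augmentation of the base change of $\pi$ (resp.\ $f$), and that the restricted complexes $(f_\epsilon^{-1}\mathcal F)(i)=f(i)^{-1}\mathcal F$ and $(\pi_\epsilon^{-1}\mathcal F)(j)=\pi(j)^{-1}\mathcal F$ match the sheaves named in the respective descent hypotheses. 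Once this is pinned down, the rest is the formal transitivity of Lemma \ref{baslem}. I would keep Proposition \ref{loccd} in mind only as the companion statement with which this lemma is later combined to show that universal cohomological descent is local downstairs; it is not itself needed in the proof.
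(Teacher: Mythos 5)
Your proof is correct and follows essentially the same route as the paper: the same bisimplicial square \eqref{simdiag} with $W=U'$, the same two applications of Conrad's Lemma \ref{Conrad} (one in each simplicial direction) to show the top and left maps stabilize $f_\epsilon^{-1}\mathcal F$ and $\pi_\epsilon^{-1}\mathcal F$ respectively, and the same double use of Lemma \ref{baslem} through the diagonal composite. The only differences are expository: you spell out the reduction from the universal to the non-universal statement and the swap of the two $\mathbf\Delta$-factors needed to apply Lemma \ref{Conrad} to the left vertical map, both of which the paper leaves implicit.
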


\begin{proof}
Only the converse implication needs a proof.
We consider the commutative diagram
\[
\xymatrix{V(\bullet) \times_U U'(\bullet)\ar[rr]^-{\pi_{\epsilon}(\bullet)} \ar[d]^{f_\epsilon(\bullet)} &&  V(\bullet) \ar[d]^{f_{\epsilon}}  \\ U'(\bullet) \ar[rr]^-{\pi_\epsilon} && U.}
\]
Since $f'$ is \emph{universally} of cohomological descent with respect to $\pi^{-1}\mathcal F$, we see that, for all $j \in \mathbf\Delta$, the morphism
\[
f_\epsilon(j) : V(\bullet) \times_U U'(j) = (V' \times_{U'} U'(j)/U'(j))(\bullet) \to U'(j)
\]
stabilizes $\pi(i)^{-1}\mathcal F$.
It follows from lemma \ref{Conrad} that the left vertical map stabilizes $\pi_\epsilon^{-1}\mathcal F$.
The same argument shows that the top horizontal map stabilizes $f_\epsilon^{-1}\mathcal F$.
Our assertion therefore follows from lemma \ref{baslem}.
\end{proof}

Note that the word ``universal'' is essential in the above lemma which is the key that opens the door to a very flexible theory of cohomological descent.

%%%%%%%%%%%%%%%%%%
\begin{prop} \label{compcohd}
Let $f : V \to U$ and $g : W \to V$ be two morphism in $\mathbf B$ and $\mathcal F$ a complex of abelian sheaves on $U$.
If $f \circ g$ is universally of cohomological descent with respect to $\mathcal F$, then $f$ is universally of cohomological descent with respect to $\mathcal F$ and the converse holds if $g$ is universally of cohomological descent with respect to $f^{-1}\mathcal F$.
\end{prop}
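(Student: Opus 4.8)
The plan is to reduce both implications to the base-change Lemma \ref{cohdes}, together with a single auxiliary observation: \emph{a morphism admitting a section is universally of cohomological descent with respect to every complex}. First I would record and prove this sub-lemma, since it is the only ingredient not already in the text. If $p : E \to B$ has a section $s$, then any base change $p' : E\times_B B' \to B'$ again has the (pulled-back) section, so universality reduces to the absolute statement; and a section induces an extra degeneracy on the Čech nerve $\mathrm{cosk}_0(E/B)$, so the augmentation $B \to \mathrm{cosk}_0(E/B)$ is a simplicial homotopy equivalence and hence $\mathcal F \xrightarrow{\sim} \mathrm R p_{\epsilon*}p_\epsilon^{-1}\mathcal F$ for every $\mathcal F$ (the contracting homotopy is induced by actual maps, so it survives passage to $\mathrm R p_{\epsilon*}p_\epsilon^{-1}$). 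I expect this to be the only delicate point: pinning down the extra-degeneracy contraction at the level of complexes rather than single sheaves.

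For the forward implication I would form the cartesian square with $h=f\circ g$ along the bottom,
\[
\xymatrix{W\times_U V \ar[r]^-{\mathrm{pr}_V} \ar[d]_{\mathrm{pr}_W} & V \ar[d]^f \\ W \ar[r]^h & U.}
\]
Since $h$ is universally of cohomological descent with respect to $\mathcal F$, Lemma \ref{cohdes} (with $\pi=h$) reduces the assertion ``$f$ is universally of cohomological descent with respect to $\mathcal F$'' to ``$\mathrm{pr}_W$ is universally of cohomological descent with respect to $h^{-1}\mathcal F$''. But $\mathrm{pr}_W$ admits the section $(\mathrm{id}_W,g)$, which is well defined precisely because $h=f\circ g$; so the sub-lemma applies and $f$ is universally of cohomological descent with respect to $\mathcal F$.

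For the converse I would argue symmetrically, now putting $f$ along the bottom of the cartesian square
\[
\xymatrix{V\times_U W \ar[r]^-{\mathrm{pr}_W} \ar[d]_{\mathrm{pr}_V} & W \ar[d]^h \\ V \ar[r]^f & U.}
\]
As $f$ is universally of cohomological descent with respect to $\mathcal F$, Lemma \ref{cohdes} (with $\pi=f$) reduces ``$h$ is universally of cohomological descent with respect to $\mathcal F$'' to ``$\mathrm{pr}_V$ is universally of cohomological descent with respect to $f^{-1}\mathcal F$''. Here I would use not the sub-lemma but the forward implication already proved: the map $\sigma=(g,\mathrm{id}_W):W\to V\times_U W$ satisfies $\mathrm{pr}_V\circ\sigma=g$, so $g$ factors through $\mathrm{pr}_V$. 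Since $g$ is universally of cohomological descent with respect to $f^{-1}\mathcal F$, the forward implication applied to the composition $g=\mathrm{pr}_V\circ\sigma$ yields that $\mathrm{pr}_V$ is universally of cohomological descent with respect to $f^{-1}\mathcal F$, and the converse follows.

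The two directions are thus independent of each other in the logical order (forward uses only Lemma \ref{cohdes} and the sub-lemma; the converse then invokes the forward statement), so there is no circularity. All the remaining steps are formal once the fiber products are available, and, as throughout the appendix, one may replace $\mathbf B$ by $\widehat{\mathbf B}$ to guarantee the fibered products $W\times_U V$ and $V\times_U W$ needed to build the squares.
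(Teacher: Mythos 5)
Your proof is correct and follows essentially the same route as the paper's: the same cartesian square $V\times_U W$ over $f\circ g$, the same two applications of Lemma \ref{cohdes}, the same section $(g,\mathrm{Id})$, and the same trick of applying the already-proved forward implication to the factorization $g = \mathrm{pr}_V\circ\sigma$ in the converse. The only cosmetic difference is that the paper dispatches your sub-lemma by observing that a morphism with a section is a local epimorphism and citing Proposition \ref{loccd}, instead of reproving it via an extra-degeneracy contraction of the \v{C}ech nerve.
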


\begin{proof}
We consider the cartesian diagram
\[
\xymatrix{V \times_U W \ar[r]^-{p_1} \ar[d]^{p_2} &  V\ar[d]^f  \\ W \ar[r]^{f \circ g} & U.}
\]
Since $p_2$ has a section $s = (g, \mathrm{Id})$, this is a local epimorphism and therefore a morphism of universally cohomological descent with respect to  $(f \circ g)^{-1}\mathcal F$.
Thus, if $f \circ g$ is universally of cohomological descent with respect to $\mathcal F$, it follows from lemma \ref{cohdes} that $f$ also is universally of cohomological descent with respect to $\mathcal F$.
Conversely, assume that $f$ is universally of cohomological descent with respect to $\mathcal F$ and $g$ is universally of cohomological descent with respect to $f^{-1}\mathcal F$.
Since $g = p_1 \circ s$, it follows from the first part that $p_1$ is universally of cohomological descent with respect to $f^{-1}\mathcal F$ and we can apply lemma \ref{cohdes}  again.
\end{proof}

Note that, conversely, one can recover lemma \ref{cohdes} from the proposition.

We also need to check that moving from $\mathbf B$ to $\widehat{\mathbf B}$ is harmless since this does not follow directly from the formalism:

%%%%%%%%%%%%%
\begin{lem}
A morphism $f : V \to U$ in $\mathbf B$ is universally of cohomological descent with respect to a complex $\mathcal F$ if and only if it becomes so in $\widehat{\mathbf B}$.
\end{lem}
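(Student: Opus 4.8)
The plan is to prove the two implications separately, after first recording the compatibility that makes the two notions comparable at all. Write $\widehat f : \widehat V \to \widehat U$ for the image of $f$ under the Yoneda embedding $\mathbf B \hookrightarrow \widehat{\mathbf B}$. Since Yoneda is fully faithful and preserves fibered products, the $0$-coskeleton $V(\bullet)$ formed in $\mathbf B$ maps to the coskeleton of $\widehat f$ formed in $\widehat{\mathbf B}$; moreover, over the representable base $\widehat U$ the fiber of $\mathbf T$ is canonically $\widetilde U$, and the functors $f_\epsilon^{-1}$ and $\mathrm R f_{\epsilon *}$ computed in $\widehat{\mathbf B}$ agree with those computed in $\mathbf B$ (both are read off in the common big topos $\widetilde{\mathbf B} \simeq \widetilde{\widehat{\mathbf B}}$, the canonical topology on $\widehat{\mathbf B}$ presenting the same topos). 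Consequently, for any morphism $\pi : U' \to U$ in $\mathbf B$ the augmentation $f'_\epsilon$ of the base change stabilizes $\pi^{-1}\mathcal F$ in the sense of definition \ref{defstab} if and only if $\widehat{f'}_\epsilon$ does; that is, cohomological descent of $\widehat f$ over a \emph{representable} base coincides with cohomological descent of $f$ over the corresponding object of $\mathbf B$.

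Granting this, the implication ``$\widehat{\mathbf B}\Rightarrow\mathbf B$'' is immediate: universal cohomological descent in $\widehat{\mathbf B}$ quantifies over all base changes $\mathcal U' \to \widehat U$, hence in particular over the representable ones $\widehat{U'} \to \widehat U$, which by Yoneda correspond to the $\mathbf B$-morphisms $U' \to U$, and by the previous paragraph each such base change satisfies cohomological descent exactly when it does in $\mathbf B$.

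For the converse I would argue as follows. Assume $f$ is universally of cohomological descent in $\mathbf B$ with respect to $\mathcal F$, and fix an arbitrary base change $\pi : \mathcal U' \to \widehat U$ in $\widehat{\mathbf B}$; I must show the base change $f'$ of $\widehat f$ is of cohomological descent with respect to $\pi^{-1}\mathcal F$. Choose the canonical local epimorphism $q : W = \coprod_i \widehat{U'_i} \to \mathcal U'$ by representables. By proposition \ref{loccd}, $q$ and each of its base changes is universally of cohomological descent, so the machinery of lemma \ref{cohdes} applies verbatim in $\widehat{\mathbf B}$ (which has fibered products): $f'$ is universally of cohomological descent with respect to $\pi^{-1}\mathcal F$ if and only if its base change $g$ over $W$ is so with respect to $q^{-1}\pi^{-1}\mathcal F$. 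Since sums are disjoint and universal in $\widehat{\mathbf B}$, the latter decomposes over the $\widehat{U'_i}$; each composite $\widehat{U'_i} \to \mathcal U' \xrightarrow{\pi} \widehat U$ is by Yoneda a genuine morphism $\pi_i : U'_i \to U$ of $\mathbf B$, so $g$ is the disjoint sum of the representable base changes $\widehat{f_i}$ of $\widehat f$, and we are reduced to the representable case over each $\widehat{U'_i}$, where the first paragraph and the hypothesis on $f$ supply the descent. (Equivalently one may route the reduction through the two halves of proposition \ref{compcohd} applied to $q \circ g = f' \circ q'$, combined with lemma \ref{baslem} and Conrad's lemma \ref{Conrad}.)

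The hard part is precisely this last reduction. Passing from the presheaf base $\mathcal U'$ to representable pieces forces the levelwise (Conrad) verification over the coskeleton terms $W(j) = W \times_{\mathcal U'} \cdots \times_{\mathcal U'} W$, and a fibered product of representables over a \emph{non}-representable presheaf need not be representable, so a naive induction reappears one simplicial degree later and does not terminate. The point to make is that this regress is harmless: lemmas \ref{cohdes} and \ref{baslem} and Conrad's criterion \ref{Conrad} are already established for arbitrary objects of a base category with fibered products, and their proofs absorb exactly these non-representable coskeleton levels through the \emph{universality} of the local epimorphism $q$, which holds unconditionally by proposition \ref{loccd}. Thus the class of morphisms of universal cohomological descent with respect to $\mathcal F$ is local on the base, stable under the four operations recalled in the appendix, and detected on the generating family of representables; the only work is the careful bookkeeping verifying that every invocation of lemma \ref{cohdes} in the chain above feeds on universality already in hand, so that the datum of $f$ enters only at simplicial degree zero, over representables, where it is governed by the descent hypothesis in $\mathbf B$. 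I expect this bookkeeping, rather than any new idea, to be the genuine obstacle.
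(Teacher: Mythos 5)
You follow the same route as the paper: the implication from $\widehat{\mathbf B}$ to $\mathbf B$ is obtained by restriction to representable base changes, and for the converse one covers the presheaf base by representables, $q : W = \coprod_i \widehat{U'_i} \to \mathcal U'$, and runs the bisimplicial argument of lemma \ref{cohdes} (proposition \ref{loccd} for $q$, lemma \ref{Conrad} levelwise, lemma \ref{baslem} to conclude). The gap is exactly the one you flag in your final paragraph, and your way of dismissing it does not work. In the levelwise verification there are two families of maps: the top one, whose levels are base changes of $q$ and which the universality of $q$ (proposition \ref{loccd}) indeed takes care of, and the left vertical one, whose levels are base changes of $f$ along the coskeleton terms $W(j) = W \times_{\mathcal U'} \cdots \times_{\mathcal U'} W$. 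These $W(j)$ are not representable, and cohomological descent for base changes of $f$ over them is supplied neither by the hypothesis (which only governs base changes inside $\mathbf B$) nor by any property of $q$. Your assertion that universal cohomological descent is ``detected on the generating family of representables'' is the very statement of the lemma, so the argument as written is circular: the regress is real, and it is a missing idea rather than missing bookkeeping.

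What terminates the regress is a concrete observation absent from your proposal (and left implicit in the paper's terse ``use the same arguments as in the proof of lemma \ref{cohdes}''). Each term $U'_{i_0} \times_{\mathcal U'} \cdots \times_{\mathcal U'} U'_{i_k}$ of $W(j)$ is a \emph{subobject} of the representable presheaf $U'_{i_0} \times_{U} \cdots \times_{U} U'_{i_k}$, being obtained from it by pulling back the diagonal $\mathcal U' \to \mathcal U' \times_{\widehat U} \mathcal U'$, which is a monomorphism; and if $P \hookrightarrow Q$ is a monomorphism and $X, Y \to P$, then $X \times_P Y = X \times_Q Y$. Consequently, covering such a subobject $P$ of a representable $Q$ by representables produces a \v Cech coskeleton whose levels are coproducts of fibered products computed over $Q$, hence coproducts of representables; there the hypothesis on $f$, together with compatibility of descent with disjoint universal sums, applies, and the machinery of lemma \ref{cohdes} gives cohomological descent for the base change of $f$ over $P$. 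Feeding this back into your diagram over $\mathcal U'$ then handles the left vertical map: the induction stops at depth two instead of running forever. (Alternatively one could replace the \v Cech coskeleton of $q$ by a hypercovering with representable levels, but that requires cohomological descent along hypercoverings, which proposition \ref{loccd} does not provide.)
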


\begin{proof}
The condition is clearly necessary and we work now over $\widehat{\mathbf B}$.
If we are given a morphism $F \to U$ with $F \in \widehat{\mathbf B}$, then there exists a locally epimorphic family  $\{U_i \to F\}_{\in I}$ with $U_i \in \mathbf B$ and we may consider the commutative diagram
\[
\xymatrix{U_i(\bullet) \times_U V(\bullet) \ar[r] \ar[d] & (V \times_U F)(\bullet) \ar[d] \\ U_i(\bullet) \ar[r] & F.}
\]
We can then use the same arguments as in the proof of lemma \ref{cohdes} in order to see that the right hand side map stabilizes $\mathcal F$.
\end{proof}

As a consequence of this lemma, in order to show that a morphism $G \to F$ in $\widehat{\mathbf B}$ is universally of cohomological descent, it is sufficient to check that it satisfies cohomological descent after any pullback $U \to F$ with $U \in \mathbf B$.

Assume now that $T$ is a category fibered in abelian sheaves on $\mathbf B$.
It means that each $T(U)$ is a subcategory\footnote{We do not require that it is a thick subcategory nor that it is bifibered.} of the category of all abelian sheaves on $U$ and that, given any morphism $f:V \to U$ in $\mathbf B$ and any $\mathcal F \in T(U)$, we have $f^{-1}\mathcal F \in T(V)$.
It follows from proposition \ref{compcohd} that morphisms that are universally of cohomological descent with respect to all $\mathcal F \in T$ define a Grothendieck topology on $\mathbf B$.
Moreover, proposition \ref{loccd} shows that this topology is finer than the original one.

%%%%%%%%%%%%%%%
\begin{dfn}
A morphism $f : V \to U$ in $\mathbf B$ is said to satisfy \emph{total descent} with respect to a fibered category of abelian sheaves $T$ if it is simultaneously of universal effective descent and of universal cohomological descent with respect to all $\mathcal F \in T$.
\end{dfn}

This is usually called \emph{universal effective cohomological descent} but we'd rather make it shorter.
Again, this defines a Grothendieck topology which is finer than the usual topology.

%%%%%%%%%%%%%%%%%%%%%%%%%%%
\addcontentsline{toc}{section}{References}
\printbibliography

\Addresses

\end{document}